\patchcmd{\thebibliography}{*}{}{}{}
\definecolor{darkblue}{rgb}{0,0,0.4} 
\tikzset{zxplane/.style={canvas is zx plane at y=#1,very thin}}
\tikzset{xyplane/.style={canvas is xy plane at z=#1,very thin}}
\tikzset{yzplane/.style={canvas is yz plane at x=#1,very thin}}
\providecommand\@dotsep{5}
\def\listtodoname{List of Todos}
\def\listoftodos{\@starttoc{tdo}\listtodoname}
\def\mathcenter#1{%
	\vcenter{\hbox{$#1$}}%
}
\newcommand{\RR}{\mathbb R}
\newcommand{\CC}{\mathbb C}
\newcommand{\ZZ}{\mathbb Z}
\newcommand{\QQ}{\mathbb Q}
\newcommand{\FF}{\mathbb F}
\newcommand{\NN}{\mathbb N}
\newcommand{\Image}{\mathrm{Im}}
\newcommand{\bD}{\mathbb{D}}
\newcommand{\co}{\nobreak\mskip2mu\mathpunct{}\nonscript
  \mkern-\thinmuskip{:}\penalty300\mskip6muplus1mu\relax}
\newcommand{\from}{\co}
\newcommand{\bdy}{\partial}
\newcommand{\into}{\hookrightarrow}
\newcommand{\lbracket}{[}
\newcommand{\rbracket}{]}
\newcommand{\spinc}{\mathfrak s}
\newcommand{\spinct}{\mathfrak t}
\newcommand{\tors}{\mathit{tors}}
\DeclareMathOperator{\Spinc}{\mathrm{Spin}^c}
\DeclareMathOperator{\Sym}{Sym}
\DeclareMathOperator{\Hom}{Hom}
\DeclareMathOperator{\Map}{Map}
\DeclareMathOperator{\RHom}{RHom}
\DeclareMathOperator{\Ext}{Ext}
\DeclareMathOperator{\Tor}{Tor}
\DeclareMathOperator{\Ab}{Ab}
\DeclareMathOperator{\rank}{rank}
\DeclareMathOperator{\spin}{spin}
\newcommand{\SpinC}{\spin^c}
\DeclareMathOperator{\gr}{gr}
\renewcommand{\emptyset}{\varnothing}%SS added this
\DeclareMathOperator{\SO}{\mathit{SO}}
\DeclareMathOperator{\BSO}{\mathit{BSO}}
\DeclareMathOperator{\GL}{\mathit{GL}}
\theoremstyle{plain}
\numberwithin{equation}{section}
\newtheorem{theorem}[equation]{Theorem}
\newtheorem{proposition}[equation]{Proposition}
\newtheorem{lemma}[equation]{Lemma}
\newtheorem{corollary}[equation]{Corollary}
\newtheorem{conjecture}[equation]{Conjecture}
\newtheorem{observation}[equation]{Observation}
\newtheorem{convention}[equation]{Convention}
\newtheorem{definition}[equation]{Definition}
\newtheorem{construction}[equation]{Construction}
\newtheorem{hypothesis}[equation]{Hypothesis}
\theoremstyle{definition}
\newtheorem{question}[equation]{Question}
\theoremstyle{remark}
\newtheorem{example}[equation]{Example}
\newtheorem{remark}[equation]{Remark}
\newcommand{\Kh}{\mathit{Kh}}
\newcommand{\KhCx}{C_{\mathit{Kh}}}
\newcommand{\BNCx}{C_{\mathit{BN}}}
\newcommand{\SzCx}{C_{\mathit{Sz}}}
\newcommand{\totCx}{C_{\mathit{tot}}}
\newcommand{\HF}{\mathit{HF}}
\newcommand{\red}{\mathit{red}}
\newcommand{\HFa}{\widehat{\mathit{HF}}}
\newcommand{\HFm}{{\HF}^-}
\newcommand{\CF}{{\mathit{CF}}}
\newcommand{\CFa}{\widehat{\mathit{CF}}}
\newcommand{\CFm}{\mathit{CF}^-}
\newcommand{\HFKa}{\widehat{\mathit{HFK}}}
\newcommand{\HFLa}{\widehat{\mathit{HFL}}}
\newcommand{\CFLa}{\widehat{\mathit{CFL}}}
\newcommand{\HFLt}{\widetilde{\mathit{HFL}}}
\newcommand{\HFKt}{\widetilde{\mathit{HFK}}}
\newcommand{\x}{\mathbf x}
\newcommand{\y}{\mathbf y}
\newcommand\HH{\mathit{HH}}
\newcommand\HC{\mathit{HC}}
\newcommand\Hochschild\HH
\newcommand{\Ainf}{A_\infty}
\newcommand{\Alg}{\mathcal{A}}
\newcommand{\zs}{\mathbf{z}}
\newcommand{\ws}{\mathbf{w}}
\newcommand{\alphas}{{\boldsymbol{\alpha}}}
\newcommand{\betas}{{\boldsymbol{\beta}}}
\newcommand{\gammas}{{\boldsymbol{\gamma}}}
\newcommand{\cM}{\mathcal{M}}
\newcommand{\CFDA}{\mathit{CFDA}}
\newcommand{\CFDAa}{\widehat{\CFDA}}
\newcommand{\CFK}{\mathit{CFK}}
\newcommand{\CFKa}{\widehat{\CFK}}
\newcommand{\s}{\mathfrak{s}}
\newcommand{\dg}{\textit{dg} }
\newcommand\Id{\mathbb{I}}
\newcommand\DTP{\mathbin{\otimes^L}}
\newcommand{\Field}{{\FF_2}}
\newcommand{\dbar}{\bar{\partial}}
\newcommand{\Heegaard}{\mathcal{H}}
\newcommand{\HD}{\Heegaard}
\renewcommand{\th}{^\text{th}}
\DeclareMathOperator{\Fix}{Fix}
\newcommand{\Cat}{\mathscr{C}}
\newcommand{\Dat}{\mathscr{D}}
\DeclareMathOperator{\ob}{Ob}
\newcommand{\Denis}{\mathsf{AZ}}
\newcommand{\MirrorDenis}{\overline{\mathsf{AZ}}}
\newcommand{\ol}[1]{\overline{#1}{}}
\newcommand{\wt}[1]{\widetilde{#1}{}}
\newcommand\honestalg[3]{\bigl\lbracket
\begin{smallmatrix} #1\@ifempty{#3}{}{&#3} \\ #2 \end{smallmatrix}
\bigr\rbracket}
\newcommand{\lab}[1]{$\scriptstyle #1$}
\newcommand{\lsup}[2]{{}^{#1}\mskip-.6\thinmuskip#2}
\newcommand{\Filt}{\mathcal{F}}
\newcommand{\pt}{\mathit{pt}}
\newcommand{\Crit}{\mathit{Crit}}
\newcommand{\JSpace}{\mathcal{J}}
\newcommand{\HSpace}{\mathcal{H}}
\newcommand{\cyl}{\mathrm{cyl}}
\newcommand{\ECat}{\mathscr{E}}
\newcommand{\ICat}{\mathscr{I}}
\newcommand{\fix}{\mathit{fix}}
\newcommand{\Complexes}{\mathsf{Kom}}
\DeclareMathOperator{\hocolim}{hocolim}
\newcommand{\KhSymp}{\mathit{Kh}_{\mathit{symp}}}
\newcommand{\rKhSymp}{\widetilde{\mathit{Kh}}_{\mathit{symp}}}
\newcommand{\KCSymp}{\mathcal{C}_{\mathit{Kh},\mathit{symp}}}
\newcommand{\eKCSymp}{\mathcal{C}_{\mathit{Kh}}^{\mathit{symp,free}}}
\newcommand{\HFaNab}{\HFa_{\!\!\nabla}^*}
\DeclareMathOperator{\Hilb}{Hilb}
\newcommand{\CipLag}{\mathcal{K}}
\newcommand{\ssspace}[1]{\mathcal{Y}_{#1}}
\newcommand{\rssspace}[1]{\widetilde{\mathcal{Y}}_{#1}}
\newcommand{\rCipLag}{\widetilde{\CipLag}}
\newcommand{\onto}{\twoheadrightarrow}
\newcommand{\ul}{\underline}
\newcommand{\eCF}[1][{\ZZ/2}]{\CF_{\!#1}}
\newcommand{\eH}[1][{\ZZ/2}]{H_{#1}}
\newcommand{\eHF}[1][{\ZZ/2}]{\HF_{\!#1}}
\newcommand{\eHFa}[1][{\ZZ/2}]{\HFa_{\!#1}}
\newcommand{\eCFa}[1][{\ZZ/2}]{\CFa_{\!#1}}
\newcommand{\ECF}[1][{}]{\widetilde{\CF}^{\raisebox{-4pt}{$\scriptstyle #1$}}}
\newcommand{\ECFm}{\widetilde{\CF}^{\raisebox{-3pt}{$\scriptstyle -$}}}
\newcommand{\HFaDual}{\HFa^{\raisebox{-3pt}{$\scriptstyle *$}}}
\newcommand{\CFaDual}{\CFa^{\raisebox{-3pt}{$\scriptstyle *$}}}
\newcommand{\HFKaDual}{\HFKa^{\raisebox{-3pt}{$\scriptstyle *$}}}
\newcommand{\HFKtDual}{\HFKt^{\raisebox{-3pt}{$\scriptstyle *$}}}
\newcommand{\HFLtDual}{\HFLt^{\raisebox{-3pt}{$\scriptstyle *$}}}
\newcommand{\HFLaDual}{\HFLa^{\raisebox{-3pt}{$\scriptstyle *$}}}
\newcommand*\wtt[1]{\mathpalette\wthelper{#1}}
\newcommand*\wthelper[2]{%
        \hbox{\dimen@\accentfontxheight#1%
                \accentfontxheight#11.3\dimen@
                $\m@th#1\widetilde{#2}$%
                \accentfontxheight#1\dimen@
        }%
}
\newcommand*\accentfontxheight[1]{%
        \fontdimen5\ifx#1\displaystyle
                \textfont
        \else\ifx#1\textstyle
                \textfont
        \else\ifx#1\scriptstyle
                \scriptfont
        \else
                \scriptscriptfont
        \fi\fi\fi3
}
\newcommand{\ewtHF}[1][{\ZZ/2}]{\wtt{\widetilde{\HF}}_{#1}}
\newcommand{\ewtCF}[1][{\ZZ/2}]{\wtt{\widetilde{\CF}}_{#1}}
\newcommand{\free}{\mathrm{free}}
\newcommand{\HomO}[1]{\Hom_{#1}}
\newcommand{\ExtO}[1]{\Ext_{#1}}
\newcommand{\RHomO}[1]{\RHom_{#1}}
\newcommand{\ttop}{\Theta_{\mathit{top}}}
\newcommand{\tbot}{\Theta_{\mathit{bot}}}
\begin{document}
\title[Flexible construction of equivariant Floer homology]{A flexible construction of equivariant Floer homology and applications}

\author{Kristen Hendricks}
 \address{Mathematics Department, University of California\\
   Los Angeles, CA 90095}
   \thanks{KH was supported by NSF grant DMS-1506358.}
\email{\href{mailto:hendricks@math.ucla.edu}{hendricks@math.ucla.edu}}

\author{Robert Lipshitz}
 \address{Department of Mathematics, University of Oregon\\
   Eugene, OR 97403}
\thanks{RL was supported by NSF grant DMS-1149800.}
\email{\href{mailto:lipshitz@uoregon.edu}{lipshitz@uoregon.edu}}

\author{Sucharit Sarkar}
\thanks{SS was supported by NSF grant DMS-1350037}
\address{Department of Mathematics, University of California\\
  Los Angeles, CA 90095}
\email{\href{mailto:sucharit@math.ucla.edu}{sucharit@math.ucla.edu}}

\date{\today}

\begin{abstract}
	Seidel-Smith and Hendricks used equivariant Floer cohomology to define some 
	spectral sequences from symplectic Khovanov homology and Heegaard Floer 
	homology. These spectral sequences give rise to Smith-type inequalities. 
	Similar-looking spectral sequences have been defined by Lee, Bar-Natan, 
	Ozsv\'ath-Szab\'o, Lipshitz-Treumann, Szab\'o, Sarkar-Seed-Szab\'o, and 
	others. In 
	this paper we give another construction of equivariant Floer cohomology 
	with 
	respect to a finite group action and use it to prove some invariance properties of 
	these spectral sequences; prove that some of these spectral sequences agree; 
	improve Hendricks's Smith-type inequalities; give some theoretical and 
	practical computability results for these spectral sequences; define some 
	new spectral sequences conjecturally related to Sarkar-Seed-Szab\'o's; and 
	introduce a new concordance homomorphism and concordance invariants. We also digress to prove 
	invariance of Manolescu's reduced symplectic Khovanov homology.
\end{abstract}

\maketitle

\tableofcontents

%\listoftodos

\section{Introduction}\label{sec:intro}

Given a space $X$ with a $\ZZ/2$-action $\tau\co X\to X$, one can form
the Borel equivariant cohomology
$\eH^*(X)=H^*(X\times_{\ZZ/2}E\ZZ/2)$. (Here and throughout, we always
work over the field $\Field$ with two elements.) The Serre spectral
sequence for the fibration $X\to X\times_{\ZZ/2}E\ZZ/2\to B\ZZ/2$ and
the fact that the cellular chain complex of $B\ZZ/2=\RR P^\infty$
satisfies $C^*(B\ZZ/2;\Field)\cong\Field[\theta]$ combine to give a
spectral sequence $H^*(X)\otimes \Field[\theta]\Rightarrow \eH^*(X)$
which is, transparently, an invariant of the pair $(X,\tau)$. The
$E_2$-page is the group cohomology of $\ZZ/2$ with coefficients in
$H^*(X)$, $H^*(\ZZ/2, H^*(X))$.  If $X$ is a finite CW complex then
Smith theory relates $\eH^*(X)$ with the homology of the fixed set
$X^\fix$ of $\tau$.

Given a chain complex $C_*$ with a $\ZZ/2$ action, there is a purely
algebraic construction of the equivariant cohomology $\eH^*(C_*)$,
which is a module over $\Field[\theta]$, and of the spectral sequence
from $H^*(C_*)\otimes \Field[\theta]$ to $\eH^*(C_*)$, with $E_2$-page
equal to $H^*(\ZZ/2,H^*(C_*))$; see Section~\ref{sec:background} for a
brief review of this homological algebra. Further, up to isomorphism,
$\eH^*(C_*)$, as an $\Field[\theta]$-module, and every page of the
spectral sequence $H^*(C_*)\otimes \Field[\theta]\Rightarrow
\eH^*(C_*)$ depends only on the quasi-isomorphism type of $C_*$ as a
chain complex over the group ring $\Field[\ZZ/2]$.

Seidel-Smith gave an analogue of this story in Lagrangian intersection
Floer homology. In particular, given a symplectic manifold
$(M,\omega)$, a pair of Lagrangians $L_0,L_1\subset M$, and a
symplectic involution $\tau\co M\to M$ satisfying certain geometric
assumptions, they construct a Floer complex $\CF(L_0,L_1)$ with an
action of $\ZZ/2$, and from it an equivariant Floer cohomology
$\eHF(L_0,L_1)$ and a spectral sequence
$\HF^*(L_0,L_1)\otimes\Field[\theta]\Rightarrow \eHF(L_0,L_1)$. Again,
the $E_2$-page is the group cohomology $H^*(\ZZ/2,\HF(L_0,L_1))$ of
$\ZZ/2$ with coefficients in $\HF^*(L_0,L_1)$.\footnote{We will use $\HF$ to denote Floer homology, $\HF^*$ to denote Floer cohomology, and $\eHF$ to denote equivariant Floer cohomology.} 
More remarkably, they
relate $\eHF(L_0,L_1)$ to the Floer cohomology of the fixed parts of
$L_0$ and $L_1$, in the fixed set of $M$.

Seidel-Smith's construction starts by choosing a large Hamiltonian
isotopy of $L_0$ and $L_1$ with certain properties. In particular, it
is not immediately obvious that $\eHF(L_0,L_1)$ is an invariant
of the equivariant isotopy class of $L_0$ and $L_1$. Proving
invariance is the first goal of this paper:
\begin{theorem}\label{thm:SS-invt}
  The quasi-isomorphism type of the chain complex $\CF(L_0,L_1)$ over $\Field[\ZZ/2]$
  is an invariant of $(L_0,L_1)$ up to
  Hamiltonian isotopy through $\ZZ/2$-equivariant Lagrangians.  In
  particular, the equivariant Floer cohomology $\eHF(L_0,L_1)$
  and the spectral sequence
  $\HF^*(L_0,L_1)\otimes\Field[\theta]\Rightarrow \eHF(L_0,L_1)$
  are invariants of the equivariant Hamiltonian isotopy type of
  $(L_0,L_1)$.
\end{theorem}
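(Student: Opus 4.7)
The second assertion of the theorem follows formally from the first, since the equivariant cohomology and the spectral sequence reviewed in Section~\ref{sec:background} are defined purely algebraically from the $\Field[\ZZ/2]$-quasi-isomorphism type. The plan is therefore to establish the first assertion by adapting the standard proof of Hamiltonian invariance of Lagrangian Floer cohomology via continuation maps to the $\ZZ/2$-equivariant setting.

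Given an equivariant Hamiltonian isotopy from $(L_0,L_1)$ to $(L_0',L_1')$ together with Seidel-Smith valid auxiliary data (i.e., large equivariant Hamiltonian perturbations and compatible $\tau$-equivariant families of almost complex structures) at each endpoint, I would first interpolate between the two sets of data through a $\tau$-equivariant family $(H_s,J_s)_{s\in \RR}$. Next, I would define a continuation map $\Phi\co \CF(L_0,L_1)\to \CF(L_0',L_1')$ by counting index-zero solutions of the associated $s$-dependent Floer equation. Because $(H_s,J_s)$ is $\tau$-equivariant, the moduli spaces carry a $\ZZ/2$-action and $\Phi$ is automatically a $\Field[\ZZ/2]$-chain map. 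The standard concatenation argument, performed with $\tau$-equivariant families, then produces a continuation map $\Psi$ in the reverse direction together with a $\tau$-equivariant chain homotopy $\Psi\circ\Phi\simeq\id$ (and symmetrically), showing that $\Phi$ is a quasi-isomorphism of $\Field[\ZZ/2]$-complexes. Independence of $\CF(L_0,L_1)$ from the choice of Seidel-Smith auxiliary data for a fixed pair is the special case of the constant isotopy.

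The main obstacle will be equivariant transversality and compactness of the continuation moduli spaces. The fixed set of $\tau$ typically meets the Lagrangians, so generic $\tau$-equivariant data do not in themselves suffice to achieve regularity; one must either confine continuation trajectories away from the fixed locus by maintaining Seidel-Smith's large stabilizing perturbations throughout the interpolation, or analyze the linearized operators near fixed-set solutions directly in order to verify equivariant regularity. In parallel, one must check compactness for the equivariant families (no unwanted bubbling or strip breaking beyond what the Seidel-Smith setup already controls) so that $\Phi$ and the homotopies are well defined and satisfy the expected relations. Once this analytic package is assembled, the algebraic conclusion that the quasi-isomorphism type over $\Field[\ZZ/2]$ is an invariant, and hence that $\eHF$ and its defining spectral sequence are invariants, is immediate.
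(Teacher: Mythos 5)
Your second-paragraph plan of building a $\tau$-equivariant continuation map and reverse map with equivariant chain homotopies is essentially the ``obvious'' generalization of the non-equivariant proof, but it does not actually circumvent the difficulty you yourself flag in your final paragraph, and it is not what the paper does. The gap is that there is no reason to expect that a generic $\tau$-equivariant one-parameter family of Hamiltonians and almost complex structures yields regular continuation moduli spaces (nor, a fortiori, regular two-parameter families for the chain homotopies). The $\tau$-invariance constraint is an infinite-codimension restriction, and solutions contained in $\Fix(\tau)$ will generically fail to be cut out transversely. Your two suggested fixes are not substantiated: ``maintaining Seidel-Smith's large stabilizing perturbations throughout the interpolation'' does not obviously make sense, since those perturbations are tied to a fixed pair of Lagrangians and a fixed trivialization of the normal data, and the geometric conditions they produce (Seidel-Smith's stable normal triviality, or its consequences~\ref{item:ET:Seidel-Smith}--\ref{item:ET:index-double}) need not persist along an arbitrary equivariant isotopy; and ``analyze the linearized operators near fixed-set solutions directly'' is an aspiration rather than an argument. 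In short, your proposal reduces to exactly the equivariant transversality problem that is the whole obstruction, without resolving it.

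The paper's route is genuinely different in that it never attempts to achieve equivariant transversality. Instead (Section~\ref{sec:equi-complex}), one chooses a $\tau$-equivariant homotopy coherent diagram $F\co\ECat\ZZ/2\to\ol{\JSpace}$ valued in \emph{non-equivariant} almost complex structures, where the equivariance lives only in the indexing category $\ECat\ZZ/2$ (on which $\ZZ/2$ acts freely). Applying Floer theory produces a homotopy coherent diagram of chain complexes, whose homotopy colimit $\ECF(L_0,L_1)$ is a chain complex of \emph{free} $\Field[\ZZ/2]$-modules, with no equivariant transversality needed because at every node of the diagram one may perturb freely. Propositions~\ref{prop:indep-of-cx-str} and~\ref{prop:indep-of-Ham-isotopy} then show that the quasi-isomorphism type of $\ECF(L_0,L_1)$ over $\Field[\ZZ/2]$ is independent of the choices and invariant under equivariant Hamiltonian isotopy (the arguments are morally like your continuation-map argument, but performed at the level of morphisms of homotopy coherent diagrams, where transversality is available). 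Finally, Proposition~\ref{prop:equi-is-equi} shows that under Hypothesis~\ref{hyp:equivariant-transversality}, the freed complex $\ECF(L_0,L_1)$ is quasi-isomorphic over $\Field[\ZZ/2]$ to Seidel-Smith's complex $\CF(L_0,L_1)$ with its literal $\tau$-action; the theorem then follows immediately by combining these three propositions. If you want to salvage your approach, you would need to prove a much stronger statement---that Seidel-Smith's conditions can always be maintained along an arbitrary equivariant isotopy---which seems substantially harder than adopting the paper's detour through non-equivariant data.
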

In fact, the equivariant Floer cohomology is often invariant under
even non-equivariant isotopies; see
Proposition~\ref{prop:non-equi-invar}.

The techniques involved in the proof of Theorem~\ref{thm:SS-invt} are
suggested by Seidel-Smith's discussion of Morse theory~\cite[Section
2(a)]{SeidelSmith10:localization}, and the result would presumably
have appeared in Seidel-Smith's paper had they a use for it. (Indeed,
some version seems implicit in the discussion around~\cite[Formula
(97)]{SeidelSmith10:localization}.) Seidel-Smith's definition of
$\eHF(L_0,L_1)$ requires equivariant transversality, which is
guaranteed only after the large Hamiltonian isotopy. The strategy in
proving Theorem~\ref{thm:SS-invt} is to define $\eHF(L_0,L_1)$ without
using equivariant complex structures, making transversality easier to
achieve. This construction has a simple extension to any finite group,
though we continue to work with coefficients in $\Field$ to avoid
discussing orientations of moduli spaces; see
Section~\ref{sec:2-groups}.  (Since we are working with coefficients
in $\Field$, the construction depends only on the $2$-subgroups of the
finite group, and is therefore not interesting for odd-order groups.)

Seidel-Smith apply their construction to their symplectic Khovanov
homology, to obtain a spectral sequence 
\begin{equation}\label{eq:SS-to-nabla}
\KhSymp(K)\otimes\Field[\theta,\theta^{-1}]\Rightarrow
\HFaNab(\Sigma(K))\otimes\Field[\theta,\theta^{-1}]
\end{equation}
from Seidel-Smith's symplectic Khovanov homology of
$K$~\cite{SeidelSmith6:Kh-symp} to a variant of the Heegaard Floer
cohomology of the branched double cover of the mirror of $K$.
(Seidel-Smith denote $\HFaNab$ by $\Kh_{\mathit{symp,inv}}(K)$; we
choose the notation $\HFaNab(\Sigma(K))$ because Seidel-Smith
conjecture that the invariant is isomorphic to $\HFaDual(\Sigma(K)\#
(S^2\times S^1))$.)  Given a 2-periodic knot $\wt{K}$ with quotient
$K$, they also obtain a spectral sequence
\begin{equation}\label{eq:SS-periodic}
\KhSymp(\wt{K})\otimes\Field[\theta,\theta^{-1}]\Rightarrow\KhSymp(K)\otimes\Field[\theta,\theta^{-1}].
\end{equation}
As a corollary of Theorem~\ref{thm:SS-invt} we obtain:
\begin{corollary}\label{cor:SSss-invt}
  Every page of Seidel-Smith's spectral sequence~\eqref{eq:SS-to-nabla}
  is an invariant of $K$.
\end{corollary}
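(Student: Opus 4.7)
The plan is to deduce the corollary from Theorem~\ref{thm:SS-invt}. Seidel-Smith realize $\KhSymp(K) = \HF^*(L_0, L_\beta)$ for a pair of Lagrangians $L_0, L_\beta$ in a nilpotent slice $\mathcal{Y}$ associated to a plat presentation $\beta$ of $K$, and the spectral sequence~\eqref{eq:SS-to-nabla} comes from a symplectic $\ZZ/2$-action $\tau$ on $\mathcal{Y}$ that preserves both $L_0$ and $L_\beta$. By Theorem~\ref{thm:SS-invt}, each page of this spectral sequence is an invariant of the pair $(L_0, L_\beta)$ up to Hamiltonian isotopy through $\tau$-equivariant Lagrangians.

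It therefore suffices to check that if $\beta$ and $\beta'$ are two plat presentations of the same link $K$, then the pairs $(L_0, L_\beta)$ and $(L_0, L_{\beta'})$ are related by such an equivariant Hamiltonian isotopy. In their proof that $\KhSymp(K)$ is a link invariant, Seidel-Smith already produce (non-equivariant) Hamiltonian isotopies relating such pairs, using that the Birman-type moves between plat presentations of a link lift to natural Hamiltonian deformations of the Lagrangians in $\mathcal{Y}$.

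The remaining task is to verify that these deformations can be chosen $\tau$-equivariantly. The involution $\tau$ is induced from a $\ZZ/2$-symmetry of the configuration of branch points underlying the construction, and each Birman-type move can be realized by a path of branch-point configurations respecting this symmetry. Lifting the corresponding motions to $\mathcal{Y}$ produces Hamiltonian isotopies generated by $\tau$-invariant Hamiltonians, as desired.

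The main step is thus a geometric inspection of Seidel-Smith's construction to confirm that each move relating plat presentations is compatible with $\tau$. This is a case-by-case check rather than a new analytic argument; crucially, our flexible construction removes any requirement for equivariant almost complex structures, so that equivariance of the Hamiltonian isotopy alone is enough to apply Theorem~\ref{thm:SS-invt} and conclude that every page of~\eqref{eq:SS-to-nabla} is a knot invariant.
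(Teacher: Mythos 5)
Your high-level plan (reduce to Theorem~\ref{thm:SS-invt}) is the right instinct, but the execution elides exactly the difficulties that make the paper's argument nontrivial. The paper actually deduces this corollary from Theorem~\ref{thm:KC-equi-invt}, which is proved in Section~\ref{sec:ssseq-invariance} using Manolescu's bridge-diagram reformulation of $\KhSymp$, not Seidel-Smith's plat presentation, and the moves between bridge diagrams are \emph{not} all handled by exhibiting $\tau$-equivariant Hamiltonian isotopies. The proof splits into isotopies, handleslides, and stabilizations, and only the first of these is straightforwardly equivariant.

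Two concrete gaps in your argument. First, for handleslides (your ``Birman-type moves''), the Hamiltonian isotopy produced by Perutz/Seidel-Smith is not obviously $\tau$-equivariant, and the paper explicitly declines to make it so: it instead invokes Proposition~\ref{prop:non-equi-invar-gen-group}, which allows a \emph{non-equivariant} isotopy provided one verifies equivariant transversality for the auxiliary pair $(\CipLag'_A,\CipLag'_{\wt A})$ and that the fundamental class is $D_{2^m}$-fixed. That verification involves analyzing holomorphic disks in the fixed locus of $\tau$, which is a symmetric product of a branched cover (a Heegaard-like space), and appealing to positivity of domains and boundary injectivity there --- a genuinely analytic argument, not a ``geometric inspection.'' Second, and more fundamentally, stabilization of the bridge/plat presentation changes the underlying symplectic manifold (from $\ssspace{n}$ to $\ssspace{n+1}$), so no Hamiltonian isotopy of any kind, equivariant or otherwise, can relate the two; the paper handles this with a neck-stretching degeneration argument in Section~\ref{sec:ssseq-stab-invariance}. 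Your assertion that ``each move can be realized by a path of branch-point configurations respecting the symmetry'' therefore does not reduce the corollary to Theorem~\ref{thm:SS-invt} alone; it sweeps under the rug exactly the content of Theorem~\ref{thm:KC-equi-invt} and the machinery of Proposition~\ref{prop:non-equi-invar-gen-group}.
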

A version of Corollary~\ref{cor:SSss-invt} was observed by
Seidel-Smith~\cite[p.~1496]{SeidelSmith10:localization}, though they
do not spell out the details of the argument. Note that we are not
asserting that the isomorphism between the $E_\infty$-page of the
sequence~\eqref{eq:SS-to-nabla} and
$\HFaNab\otimes\Field[\theta,\theta^{-1}]$ is itself a knot invariant;
see also Seidel-Smith's discussion~\cite[Section
4.4]{SeidelSmith10:localization}.
The fact that $\HFaNab(\Sigma(K))$ is an invariant of $K$ was also
proved by Tweedy~\cite{Tweedy14:anti-diag}; a sketch of a proof was also given by 
Seidel-Smith~\cite[Section 4.4]{SeidelSmith10:localization}.
(In fact, there is a spectral sequence
$\HFaNab(\Sigma(K))\Rightarrow \HFa^*(\Sigma(K)\#(S^2\times S^1))$~\cite[Section 4d]{SeidelSmith6:Kh-symp}, and
Tweedy showed that this spectral sequence is a knot invariant.)

\begin{corollary}\label{cor:SS-periodic-ss-invt}
  Every page of Seidel-Smith's spectral sequence~\eqref{eq:SS-periodic} is an
  invariant of the knot $\wt{K}$ together with its free period, or
  equivalently of the pair $(K,A)$ of the knot $K$ and the axis $A$ of
  the period.
\end{corollary}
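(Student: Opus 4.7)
The plan is to follow the strategy of Corollary~\ref{cor:SSss-invt}: realize the spectral sequence~\eqref{eq:SS-periodic} as an instance of the equivariant Floer spectral sequence of Theorem~\ref{thm:SS-invt}, applied to the $\ZZ/2$-action on Seidel-Smith's setup induced by the free period. Concretely, given a bridge presentation of $\widetilde{K}$ on which the periodic involution acts by permuting the bridges, Seidel-Smith obtain a symplectic involution $\tau$ of the nilpotent slice $\ssspace{n}$ that preserves the Lagrangian branes $L_0, L_1$ whose Floer cohomology computes $\KhSymp(\widetilde{K})$. The restriction of $\tau$ to the fixed set recovers Seidel-Smith's setup for the quotient $K$, so after localizing at $\theta$ the spectral sequence of Theorem~\ref{thm:SS-invt} is exactly~\eqref{eq:SS-periodic}.

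With this identification in hand, Theorem~\ref{thm:SS-invt} immediately implies that every page of~\eqref{eq:SS-periodic} is an invariant of $(L_0, L_1)$ up to $\ZZ/2$-equivariant Hamiltonian isotopy in $\ssspace{n}$. The remaining step is to show that any two equivariant bridge presentations of $(\widetilde{K}, A)$ produce equivariantly isotopic Lagrangian data. I would argue this by working downstairs: equivariant bridge presentations of $(\widetilde{K}, A)$ are in bijection with bridge presentations of $(K, A)$, and any two bridge presentations of $K$ are related by a sequence of the same moves (handleslides, stabilization, braid moves) that Seidel-Smith use in their non-equivariant invariance proof for $\KhSymp$. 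Each such move lifts to a move on $\widetilde{K}$ commuting with the period, and the corresponding Seidel-Smith continuation Hamiltonian on $\ssspace{n}$ lifts to one that is automatically $\ZZ/2$-equivariant.

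The main obstacle is justifying this last equivariant lifting. Since the period acts freely on $\widetilde{K}$, any local modification downstairs has two disjoint lifts upstairs, and the associated Hamiltonian isotopy of $(L_0, L_1)$ can be taken as the sum of the two translates of the local continuation Hamiltonian, which is then $\ZZ/2$-symmetric by construction; compare the discussion of non-equivariant invariance in Proposition~\ref{prop:non-equi-invar}. This amounts to repeating Seidel-Smith's argument with the extra bookkeeping that the continuation data can be chosen equivariantly, with no new geometric input beyond Theorem~\ref{thm:SS-invt} and Seidel-Smith's non-equivariant invariance. Once this chain-level equivariant invariance is in place, Theorem~\ref{thm:SS-invt} delivers the conclusion that every page of~\eqref{eq:SS-periodic} depends only on the pair $(\widetilde{K}, A)$, equivalently on $(K, A)$.
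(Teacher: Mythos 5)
Your overall strategy is sound—realize~\eqref{eq:SS-periodic} as an equivariant Floer spectral sequence for a $\ZZ/2$-action coming from the bridge diagram symmetry, reduce to the invariance machinery of Section~\ref{sec:equi-complex}, and go through the list of bridge moves—and the reduction to pairs of equivariant isotopies, handleslides, and stabilizations matches what the paper does. The step that does not hold up as written is your treatment of handleslides. You propose to make Seidel-Smith's handleslide Hamiltonian $\ZZ/2$-equivariant by taking the sum $H + \tau^* H$ of the two translates of a ``local'' continuation Hamiltonian. But the time-$1$ flow of $H + \tau^* H$ is not the composition of the time-$1$ flows of $H$ and $\tau^* H$ unless the two Hamiltonians Poisson-commute, which requires (essentially) disjoint supports; and the Hamiltonian constructed in Seidel-Smith's handleslide lemma lives on the Hilbert scheme $\ssspace{m}$, where the symplectic form couples the factors near the diagonal and the isotopy is not manifestly a local modification supported away from its $\tau$-translate. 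Even if the supports could be separated, one also needs the combined isotopy to stay in $\ssspace{m}$ and avoid the diagonal, which the averaging argument does not address.

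The paper deliberately sidesteps exactly this difficulty: in the proof of Theorem~\ref{thm:KC-equi-invt} (which Corollary~\ref{cor:SS-periodic-ss-invt} is modeled on), the authors write that ``rather than checking that these isotopies can be made equivariant, we appeal to Proposition~\ref{prop:non-equi-invar-gen-group}.'' That proposition proves equivariant invariance under a possibly non-equivariant Hamiltonian isotopy, provided there exists a $K$-invariant almost complex structure achieving transversality and the top class of $\HF(L_0, L_0')$ is represented by a $K$-invariant cycle; the latter is verified by setting up the triple diagram so that the triangle-map generators are fixed. Stabilizations are then handled by the neck-stretching argument of Section~\ref{sec:ssseq-stab-invariance}, run in the two disjoint stabilization regions simultaneously. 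To repair your argument you should either supply a real proof that the handleslide Hamiltonian can be chosen with $\tau$-disjoint support and that the flows commute, or replace the averaging step with an appeal to Proposition~\ref{prop:non-equi-invar-gen-group} as the paper does.
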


Using Seidel-Smith's localization theorem, Hendricks constructed four
spectral sequences in Heegaard Floer cohomology:
\begin{itemize}
    \item Given a knot $K$ in $S^3$ and a sufficiently large integer $n$, a spectral sequence 
    \begin{equation}\label{eq:Sig-to-K}
    \HFKaDual(\Sigma(K),\widetilde{K})\otimes V^{\otimes n}\otimes \Field[\theta,\theta^{-1}]\Rightarrow \HFKaDual(S^3,K)\otimes V^{\otimes n}\otimes \Field[\theta,\theta^{-1}]
    \end{equation}
    from the knot Floer cohomology of the preimage $\widetilde{K}$ of $K$ in its
    branched double cover to the knot Floer cohomology of $K$ in $S^3$ and a spectral sequence
	\begin{equation}\label{eq:Sig-to-S3}    
    \HFaDual(\Sigma(K))\otimes V^{\otimes n} \otimes \Field[\theta,\theta^{-1}] \Rightarrow V^{\otimes n} \otimes \Field[\theta,\theta^{-1}].
    \end{equation}
  \item Given a 2-periodic knot $\wt{K}$ in $S^3$ with axis $\wt{A}$ whose quotient knot $K$ has axis $A$, and a
    sufficiently large integer $n$, a spectral sequence
    \begin{equation}\label{eq:HFL-periodic}
    \HFLaDual(\wt{K}\cup \wt{A})\otimes V_1^{\otimes (2n-1)}\otimes \Field[\theta,\theta^{-1}]\Rightarrow\HFLaDual(K\cup A)\otimes V_1^{\otimes (n-1)}\otimes \Field[\theta,\theta^{-1}]
    \end{equation}
    from the link Floer cohomology of $\wt{K}\cup A$ to the link Floer
    cohomology of $K\cup A$, and a spectral sequence 
     \begin{equation}\label{eq:HFK-periodic}
    \HFKaDual(\wt{K})\otimes V^{\otimes (2n-1)}\otimes W \otimes \Field[\theta,\theta^{-1}]\Rightarrow\HFKaDual(K)\otimes V^{\otimes (n-1)}\otimes W \otimes \Field[\theta,\theta^{-1}].
    \end{equation}
\end{itemize}
(Here, $V$, $V_1$, and $W$ are 2-dimensional graded vector spaces. In all cases there are similar, albeit more complex, statements for links.)
Theorem~\ref{thm:SS-invt} also implies:
\begin{corollary}\label{cor:Hen-dcov-invt}
  Every page of the spectral sequence~\eqref{eq:Sig-to-K} is an invariant of the pair $(K,n)$.
\end{corollary}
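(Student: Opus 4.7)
The spectral sequence~\eqref{eq:Sig-to-K} arises from Hendricks's construction: one chooses a multipointed Heegaard diagram for $(S^3,K)$ compatible with the auxiliary parameter $n$, lifts it to a $\ZZ/2$-equivariant Heegaard diagram for $(\Sigma(K),\widetilde{K})$, and applies Seidel--Smith's equivariant Floer machinery to the Lagrangian tori $\Torus_{\alphas}$ and $\Torus_{\betas}$ in the symmetric product of the branched cover. The plan is to show that any two choices of data compatible with $(K,n)$ give quasi-isomorphic equivariant Floer chain complexes over $\Field[\ZZ/2]$; then Theorem~\ref{thm:SS-invt}, combined with the algebraic discussion of Section~\ref{sec:background}, immediately implies that every page of the associated spectral sequence is an invariant of $(K,n)$.

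First, any two multipointed Heegaard diagrams for $(S^3,K)$ compatible with the same $n$ are related by a sequence of standard Heegaard moves --- isotopies of the attaching curves, handleslides among the $\alpha$- or $\beta$-circles, and (in the restricted form allowed by fixing $n$) stabilizations --- performed away from the basepoints. Each such move lifts to a $\ZZ/2$-equivariant move on the branched double-cover diagram, since each move can be chosen to be supported in small disks disjoint from the branch set, and one can then symmetrize across the deck involution. For isotopies, the induced deformation of $\Torus_{\alphas}$ and $\Torus_{\betas}$ is a $\ZZ/2$-equivariant Hamiltonian isotopy, so Theorem~\ref{thm:SS-invt} applies directly. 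Independence of auxiliary data such as almost complex structures and admissibility perturbations likewise follows immediately from Theorem~\ref{thm:SS-invt}, since our construction of $\eHF$ --- unlike Seidel--Smith's original one --- does not require equivariant transversality.

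For handleslides and stabilizations, the classical invariance proofs in Heegaard Floer theory use triangle (or quadrilateral) maps into a third set of Lagrangians. I would choose this third set of attaching curves equivariantly in the branched cover, so that the resulting triangle map between the Floer complexes is a morphism of chain complexes over $\Field[\ZZ/2]$. An argument formally parallel to the proof of Theorem~\ref{thm:SS-invt} --- using nonequivariant perturbation data to count triangles, and then building chain homotopies from parameterized moduli spaces as the perturbation data are varied equivariantly --- should show that this map is a quasi-isomorphism over $\Field[\ZZ/2]$, not merely over $\Field$. The main obstacle is organizing these triangle and stabilization arguments without equivariant transversality on moduli of pseudoholomorphic triangles; but this is precisely the technical situation our flexible construction of $\eHF$ is designed to handle, and the homotopy step should run in close analogy with the corresponding step in the proof of Theorem~\ref{thm:SS-invt}.
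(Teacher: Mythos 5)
Your proposal is correct in outline, but it takes a genuinely different route from the paper's for the handleslide and stabilization steps, and that route is harder than you acknowledge. The paper never invokes triangle maps in proving Corollary~\ref{cor:Hen-dcov-invt}. Instead it uses Lemma~\ref{lemma:handleslides} (Perutz's theorem~\cite{Perutz07:HamHand}) to realize handleslides as \emph{Hamiltonian isotopies} of the tori $T_\alpha$, $T_\beta$ in $\Sym^n(\Sigma)$, supported away from the basepoints; these then lift to equivariant Hamiltonian isotopies of $\wt{T}_\alpha$, $\wt{T}_\beta$ in $\Sym^{2n}(\wt{\Sigma})$, and the invariance for \emph{equivariant} isotopies (Proposition~\ref{prop:indep-of-Ham-isotopy}, via Theorem~\ref{thm:SS-invt}) applies directly. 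Stabilizations are similarly dispatched: the paper arranges them near the basepoints $z$, where, for split almost complex structures, the freed Floer complex is literally unchanged. Your alternative---choosing a third set of curves equivariantly and counting triangles---is plausible, and the paper develops the relevant tool (Proposition~\ref{prop:non-equi-invar}), but that proposition is not ``formally parallel to the proof of Theorem~\ref{thm:SS-invt}'' as you suggest: it carries genuine hypotheses, namely the existence of a $\tau$-invariant almost complex structure achieving transversality for all Maslov-index-$\le 1$ bigons between $T_\alpha$ and the handleslid torus, and the representability of the top class of their Floer cohomology by a $\tau$-invariant cycle. You don't verify either of these; they are likely checkable here (using the Maslov-index doubling of Condition~(ET-2)), but the Perutz route is both shorter and avoids the issue altogether. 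Likewise, for stabilizations, the paper's localization argument is strictly simpler than building an equivariant quadrilateral map. So: same conclusion, different key lemma---Perutz's Hamiltonian-isotopy theorem vs.\ an equivariant triangle-map argument---and the former buys a cleaner proof with fewer transversality hypotheses to check.
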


\begin{corollary}\label{cor:Hen-periodic-invt}
  Every page of the spectral sequences~\eqref{eq:HFL-periodic} and~\eqref{eq:HFK-periodic} is  an invariant of the
  integer $n$, the knot $\wt{K}$, and the 2-period $\tau\co
  (S^3,\wt{K})\to (S^3,\wt{K})$ or, equivalently, of the triple $(K,A,n)$.
\end{corollary}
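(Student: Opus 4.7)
The plan is to follow the same strategy as the proof of Corollary~\ref{cor:Hen-dcov-invt} but carried out in the link/knot Floer setting with a $\ZZ/2$-action coming from the periodic involution rather than from the hyperelliptic involution of a branched cover. First I would recall Hendricks's setup: one starts with a $\tau$-equivariant Heegaard diagram $\HD=(\Sigma,\alphas,\betas,\ws,\zs)$ for $(S^3,\wt{K}\cup \wt{A})$ (respectively $(S^3,\wt{K})$) in which the $2n-1$ (respectively $2n-1$ for $\wt{K}$ plus extra basepoints) free basepoints have been added symmetrically so that $\tau$ induces a symplectic involution of $\Sym^g(\Sigma)$ preserving the Lagrangian tori $\Torus_\alpha$ and $\Torus_\beta$. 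For $n$ sufficiently large this is possible and the resulting complex $\CF(\Torus_\alpha,\Torus_\beta)$ is a chain complex over $\Field[\ZZ/2]$ whose filtered homotopy type encodes $\HFLa(\wt{K}\cup \wt{A})\otimes V_1^{\otimes(2n-1)}$ (respectively $\HFKa(\wt{K})\otimes V^{\otimes(2n-1)}\otimes W$).

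Next, applying Theorem~\ref{thm:SS-invt} (together with its extension to filtered complexes, which follows by running the same argument relative to the Alexander filtration coming from $\ws$ and $\zs$) shows that the quasi-isomorphism type of $\CF$ over $\Field[\ZZ/2]$ is invariant under equivariant Hamiltonian isotopies of $(\Torus_\alpha,\Torus_\beta)$. The equivariant spectral sequences~\eqref{eq:HFL-periodic} and~\eqref{eq:HFK-periodic} are the algebraic equivariant spectral sequences of this filtered $\Field[\ZZ/2]$-complex, so every page is determined by this filtered equivariant quasi-isomorphism type.

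The remaining step, which I expect to be the main obstacle, is to check that any two equivariant Heegaard diagrams (with the same number of free basepoints, both large enough) for the same triple $(K,A,n)$ can be connected by a sequence of equivariant Heegaard moves, each of which induces either an equivariant Hamiltonian isotopy or an equivariant stabilization/destabilization. For equivariant isotopies and handleslides supported away from the fixed locus this is standard, and the corresponding statement for Heegaard diagrams in branched double covers is what drives Corollary~\ref{cor:Hen-dcov-invt}; in the periodic setting one must check the analogous equivariant version of Ozsv\'ath--Szab\'o's Heegaard-move connectedness, being careful near the fixed axis $\wt{A}$ and when a handleslide would violate equivariance. The strategy is to first match up the two diagrams away from a neighborhood of $\wt{A}$ by pairs of moves related by $\tau$ (any non-equivariant move $m$ can be replaced by the equivariant pair $m,\tau(m)$ if the supports are disjoint), and then use the extra basepoints on $\wt{A}$ to decouple moves near the fixed set so that the remaining adjustments can also be performed $\tau$-equivariantly.

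Finally, equivariant stabilization changes $n$ but not the filtered $\Field[\ZZ/2]$-quasi-isomorphism type after tensoring with the appropriate factor of $V$, $V_1$, or $W$ equipped with the swap action that $\tau$ induces on the stabilization region; since the spectral sequence of a tensor product with this fixed equivariant piece is determined by the spectral sequence of the original complex, invariance of every page under change of equivariant Heegaard diagram follows. Combined with Theorem~\ref{thm:SS-invt}, this proves that every page of~\eqref{eq:HFL-periodic} and~\eqref{eq:HFK-periodic} is an invariant of $(K,A,n)$, or equivalently of $\wt{K}$ together with its free $\ZZ/2$-action.
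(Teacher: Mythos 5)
Your proposal takes a genuinely different, and harder, route than the paper's. The paper proves Corollary~\ref{cor:Hen-periodic-invt} by mimicking the proof of Corollary~\ref{cor:Hen-dcov-invt}, and the key point there is that one never needs an ``equivariant Heegaard-moves connectedness'' theorem upstairs. Instead one works entirely with the \emph{quotient} diagram $\HD$ for $(S^3,K\cup A)$: any two such multi-pointed diagrams (with a fixed number of basepoints on each component) are connected by the usual non-equivariant isotopies, handleslides, and index 1-2 stabilizations; by~\cite[Section 7.3]{OS04:HolomorphicDisks} and Lemma~\ref{lemma:handleslides}, the isotopies and handleslides are realized by Hamiltonian isotopies of $T_\alpha$, $T_\beta$ in $\Sym^{n}(\Sigma)$; and these Hamiltonian isotopies lift canonically to $\ZZ/2$-equivariant Hamiltonian isotopies of $\wt T_\alpha$, $\wt T_\beta$ upstairs. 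At that point Propositions~\ref{prop:indep-of-cx-str}, \ref{prop:indep-of-Ham-isotopy}, and~\ref{prop:equi-is-equi} give invariance of the filtered $\Field[\ZZ/2]$-quasi-isomorphism type. This sidesteps exactly the step you identify as ``the main obstacle.''

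That obstacle is indeed a real gap in your sketch. You assert that ``any two equivariant Heegaard diagrams $\ldots$ can be connected by a sequence of equivariant Heegaard moves'' and say that ``the corresponding statement for Heegaard diagrams in branched double covers is what drives Corollary~\ref{cor:Hen-dcov-invt}'' --- but that is not how Corollary~\ref{cor:Hen-dcov-invt} is proved; no such equivariant-moves theorem is established or used. Your proposed workaround (replacing a non-equivariant move $m$ by the pair $m,\tau(m)$ when their supports are disjoint, and then ``decoupling'' near the axis) is plausible in spirit but is far from a proof: moves whose supports necessarily meet the fixed set, or whose $\tau$-translates interfere with each other, are exactly the problem, and your sketch does not resolve them. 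The cleanest fix is simply to drop this step in favor of the quotient-and-lift argument above.

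Two smaller points. First, you do not actually need Theorem~\ref{thm:SS-invt} ``together with its extension to filtered complexes'': the complex already splits over $\SpinC$-structures (equivalently Alexander gradings) by the choice of $\eta$ in Hypothesis~\ref{hyp:Floer-defined}, and the filtration on $\eCF$ used to build the spectral sequence is the $\theta$-power filtration, which is automatic from the $\Field[\ZZ/2]$-module structure; Propositions~\ref{prop:indep-of-cx-str} and~\ref{prop:indep-of-Ham-isotopy} already give everything needed. Second, your final paragraph about ``equivariant stabilization changes $n$'' conflates index 1-2 stabilizations (which do not change $n$ and are what is needed for Corollary~\ref{cor:Hen-periodic-invt}, where $n$ is fixed) with basepoint stabilizations of type 0-1-2-3 (which do change $n$ and are the subject of Theorems~\ref{thm:Hen1-small} and~\ref{thm:Hen2-small}). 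For Corollary~\ref{cor:Hen-periodic-invt}, index 1-2 stabilizations may be taken near a basepoint and, with a split complex structure, have no effect on the equivariant complex at all; no tensoring with $V$, $V_1$, or $W$ is required at this stage.
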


Typically, direct computations of Floer cohomology take advantage of
symmetries of the Lagrangians. A side effect of Seidel-Smith's
construction of $\eHF(L_0,L_1)$ is that one tends to lose such
symmetries, making computations challenging. The techniques used to
prove Theorem~\ref{thm:SS-invt} mean that often one can compute the
equivariant Floer cohomology without perturbing the Lagrangians. A
precise statement is given in Proposition~\ref{prop:equi-is-equi};
here are some consequences:
\begin{corollary}\label{cor:Hen-from-diag}
  Let $K$ be a knot in $S^3$ and let
  $\HD=(S^2,\alphas,\betas,\mathbf{z},\mathbf{w})$ be a genus-0
  Heegaard diagram for $K$ with $|\alphas|=n$. Let
  $\wt{\HD}=(\Sigma,\wt{\alphas},\wt{\betas},\wt{\mathbf{z}},\wt{\mathbf{w}})$
  be the double cover of $\HD$ branched along
  $\mathbf{z}\cup\mathbf{w}$ so that $\wt{\HD}$ represents
  $(\Sigma(K),K)$. Then for a generic one-parameter family $\wt{J}$ of
  $\ZZ/2$-invariant almost complex structures on $\Sym^{2n}(\Sigma)$,
  the spectral sequence~\eqref{eq:Sig-to-K} is induced by the double
  complex
 \[
  0\longrightarrow (\CF(T_{\wt{\alpha}},T_{\wt{\beta}}),\bdy_{\wt{J}})
  \stackrel{\Id+\tau^\#}{\longrightarrow} (\CF(T_{\wt{\alpha}},T_{\wt{\beta}}),\bdy_{\wt{J}}) 
  \stackrel{\Id+\tau^\#}{\longrightarrow} \cdots
 \]
 
 An analogous statement holds for the spectral
 sequence~\eqref{eq:HFL-periodic} and any genus-0 Heegaard diagram
 $\HD$ for $K\cup A$ and the induced diagram for $\wt{K}\cup \wt{A}$.
\end{corollary}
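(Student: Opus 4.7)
The plan is to apply Proposition~\ref{prop:equi-is-equi} to the Heegaard Floer setting of the branched double cover. First I would set up the equivariant data: the covering involution on $\Sigma$ induced by the branching along $\mathbf{z}\cup\mathbf{w}$ gives a symplectic involution $\tau$ on $\Sym^{2n}(\Sigma)$ whose fixed set (away from the basepoints) is symplectomorphic to $\Sym^n(S^2)$, with the $\tau$-fixed parts of $T_{\wt\alpha}$ and $T_{\wt\beta}$ corresponding to $T_\alpha$ and $T_\beta$. So the Lagrangians $T_{\wt\alpha}$, $T_{\wt\beta}$ are $\tau$-invariant, and for a generic $\ZZ/2$-invariant one-parameter family $\wt J$ for which the ordinary Floer complex $(\CF(T_{\wt\alpha},T_{\wt\beta}),\partial_{\wt J})$ is transversely cut out, the involution $\tau$ induces a chain map $\tau^\#$ by pushforward on holomorphic disks.

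Next I would invoke Proposition~\ref{prop:equi-is-equi}, which says precisely that under such hypotheses the flexible equivariant Floer cohomology agrees with the $\ZZ/2$-equivariant cohomology of the chain-level complex $(\CF(T_{\wt\alpha},T_{\wt\beta}),\partial_{\wt J},\tau^\#)$. The latter is computed by the standard Tate-style resolution of $\Field$ over $\Field[\ZZ/2]$, namely the displayed double complex
\[
0\longrightarrow \CF \xrightarrow{\Id+\tau^\#} \CF \xrightarrow{\Id+\tau^\#} \CF \xrightarrow{\Id+\tau^\#}\cdots,
\]
with the horizontal filtration giving the spectral sequence $\HF^*(T_{\wt\alpha},T_{\wt\beta})\otimes\Field[\theta]\Rightarrow \eHF(T_{\wt\alpha},T_{\wt\beta})$ produced by Theorem~\ref{thm:SS-invt}. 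To match this with the spectral sequence~\eqref{eq:Sig-to-K}, I would use that both arise from equivariant Floer cohomology of $(T_{\wt\alpha},T_{\wt\beta})$ under the same $\ZZ/2$-action, and that by Corollary~\ref{cor:Hen-dcov-invt} Hendricks's construction produces an invariant agreeing with the Seidel-Smith localization setup (after the usual localization at $\theta$, the $E_\infty$-page is identified with $\HFKaDual(S^3,K)\otimes V^{\otimes n}\otimes\Field[\theta,\theta^{-1}]$ via Seidel-Smith's localization theorem applied to the $\tau$-fixed data).

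The main obstacle is verifying the transversality hypothesis of Proposition~\ref{prop:equi-is-equi} for a generic $\ZZ/2$-invariant family of almost complex structures on $\Sym^{2n}(\Sigma)$. This is not automatic: equivariant transversality for curves mapping into the fixed locus typically fails, which is exactly what forces Seidel-Smith to perturb. In the genus-zero setting, however, the transversality needed is only for the ordinary (non-equivariant) Floer differential of $(T_{\wt\alpha},T_{\wt\beta})$ with respect to an invariant family $\wt J$; the fixed locus of $\tau$ on $\Sym^{2n}(\Sigma)$ has sufficiently large codimension, and the basepoint conditions coming from $\mathbf{z}\cup\mathbf{w}$ prevent holomorphic disks from being supported on the branch locus, so restricting to invariant $\wt J$ still leaves enough freedom for a generic family to be regular. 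The analogous argument, applied to the Heegaard diagram for $K\cup A$ and its branched double cover $\wt K\cup \wt A$, yields the corresponding statement for~\eqref{eq:HFL-periodic}.
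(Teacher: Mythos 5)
Your overall plan — invoke Proposition~\ref{prop:equi-is-equi} for the covering involution $\tau$ on $\Sym^{2n}(\Sigma)$, identify the equivariant complex with the Tate-style bicomplex, and then match with~\eqref{eq:Sig-to-K} via Corollary~\ref{cor:Hen-dcov-invt} and the Seidel-Smith localization — is the paper's strategy. But the step you flag as the main obstacle, verifying Hypothesis~\ref{hyp:equivariant-transversality}, is exactly where your argument breaks down, and the reason you give is wrong.

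You claim that the large codimension of $\mathrm{Fix}(\tau)$ and the basepoint deletions prevent holomorphic disks from lying in the fixed locus. Neither of these is true, and the second is essentially the opposite of what happens. The fixed locus of $\tau$ in $\Sym^{2n}(\wt\Sigma\setminus(\wt{\zs}\cup\wt{\ws}))$ contains the embedded copy of $\Sym^n(S^2\setminus(\zs\cup\ws))$, which in turn contains $T_\alpha$ and $T_\beta$, and the holomorphic disks that lie entirely in this fixed locus are precisely those computing $\HFKa(S^3,K)$ — the target of the spectral sequence. These $\tau$-invariant disks genuinely exist, and they are exactly the ones for which a generic equivariant $\wt J$ does \emph{not} automatically achieve transversality. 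The actual mechanism (Point~\ref{item:ET:index-double} in Section~\ref{sec:equi-equi}) is the Maslov-index doubling observed in~\cite{LeeLipshitz08:gradings}: for the branched double cover of a genus-$0$ diagram, any $\tau$-invariant homotopy class $\phi$ satisfies $\mu(i_*\phi)=2\mu(\phi)$. So one first chooses a generic $1$-parameter family $j$ on $S^2$ so that $\Sym^n(j)$ achieves transversality in the fixed set, then pulls it back to $\wt\jmath$ on $\wt\Sigma$. A generic such $\wt\jmath$ gives transversality for all non-$\tau$-invariant classes, and the doubling together with transversality in the fixed set rules out nontrivial invariant disks of Maslov index $\le1$, so $\Sym^{2n}(\wt\jmath)$ satisfies Hypothesis~\ref{hyp:equivariant-transversality}. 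Without the doubling argument your proof has no control over the invariant disks, and it is precisely the absence of such control in general (e.g., for positive-genus diagrams, or for~\eqref{eq:HFK-periodic} where curves may cross the axis $z$-basepoint) that makes Seidel-Smith's perturbation necessary.
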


\begin{corollary}\label{cor:Hen-nice}
  For any knot $K$ in $S^3$, the spectral
  sequences~\eqref{eq:Sig-to-K},~\eqref{eq:HFL-periodic}, and~\eqref{eq:HFK-periodic} can be
  computed combinatorially from a genus-0 nice Heegaard diagram.
\end{corollary}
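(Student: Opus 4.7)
The plan is to combine Corollary~\ref{cor:Hen-from-diag} (and its evident analogue for the spectral sequence~\eqref{eq:HFK-periodic}, obtained from a genus-$0$ diagram of $\wt{K}$ equivariant under the periodic involution) with the nice-diagram theorem of Sarkar and Wang. By Corollary~\ref{cor:Hen-from-diag}, each of the three spectral sequences is induced by the double complex
\[
0\longrightarrow (\CF(T_{\wt{\alpha}},T_{\wt{\beta}}),\bdy_{\wt{J}})
  \stackrel{\Id+\tau^\#}{\longrightarrow} (\CF(T_{\wt{\alpha}},T_{\wt{\beta}}),\bdy_{\wt{J}})
  \stackrel{\Id+\tau^\#}{\longrightarrow} \cdots
\]
computed on the branched (respectively periodic) double cover $\wt{\HD}$ of $\HD$, with respect to a generic one-parameter family of $\ZZ/2$-invariant almost complex structures $\wt{J}$ on $\Sym^{2n}(\wt\Sigma)$. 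So it suffices to produce both $\bdy_{\wt{J}}$ and the action of $\tau^\#$ combinatorially from $\HD$.

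First, I would observe that $\wt{\HD}$ inherits niceness from $\HD$: the covering map $\wt\Sigma\to \Sigma$ is a local diffeomorphism away from the branch points, which are contained in the basepoints $\mathbf{z}\cup\mathbf{w}$, so every region of $\wt{\HD}$ disjoint from the basepoints is isomorphic to a region of $\HD$ and hence is a bigon or a rectangle. Second, the nice-diagram theorem of Sarkar-Wang (extended to link Floer homology and to diagrams with multiple basepoints by Ozsv\'ath-Stipsicz-Szab\'o) identifies $\bdy_{\wt{J}}$ with the purely combinatorial count of empty embedded bigons and rectangles in $\wt{\HD}$. Third, the involution $\tau^\#$ permutes the generators of $\CF(T_{\wt\alpha},T_{\wt\beta})$ by applying the deck transformation coordinatewise, $\{x_1,\dots,x_{2n}\}\mapsto\{\tau(x_1),\dots,\tau(x_{2n})\}$, which is manifestly combinatorial from the pair $(\HD,\wt{\HD})$. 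Assembling these three ingredients yields a combinatorial description of the entire double complex, and hence of every page of the induced spectral sequence.

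The main subtlety will be reconciling the $\ZZ/2$-invariance of $\wt{J}$ required by Corollary~\ref{cor:Hen-from-diag} with the genericity hypothesis underlying the Sarkar-Wang combinatorial formula. In a nice diagram, however, every Maslov-index-$1$ domain disjoint from the basepoints is an embedded bigon or rectangle admitting a unique holomorphic representative whose count is $1$ independently of the admissible complex structure, so transversality for these domains is automatic; in particular, one can achieve it inside the family of $\ZZ/2$-invariant $\wt{J}$. Consequently $\bdy_{\wt{J}}$ agrees with the Sarkar-Wang combinatorial count, and the corollary follows.
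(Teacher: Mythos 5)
Your argument correctly handles the spectral sequences~\eqref{eq:Sig-to-K} and~\eqref{eq:HFL-periodic}, and there it follows the paper's route: cite Corollary~\ref{cor:Hen-from-diag}, note that the branched double cover of a nice diagram over the basepoints is nice, and invoke Sarkar--Wang. The additional worry you raise about reconciling $\ZZ/2$-invariance with Sarkar--Wang transversality is also correctly dispatched.

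There is a genuine gap, however, in the case of~\eqref{eq:HFK-periodic}. You write that Corollary~\ref{cor:Hen-from-diag} has an ``evident analogue'' for~\eqref{eq:HFK-periodic}; it does not. The reason that corollary applies to~\eqref{eq:Sig-to-K} and~\eqref{eq:HFL-periodic} is that curves there avoid all basepoints, so every $\tau$-invariant homotopy class of Whitney disk has even Maslov index (Condition~\ref{item:ET:index-double}), and therefore Hypothesis~\ref{hyp:equivariant-transversality} can be achieved with a family of $\ZZ/2$-equivariant almost complex structures. For~\eqref{eq:HFK-periodic}, by contrast, holomorphic disks \emph{are} allowed to cross the $z$-basepoint on the axis $\wt A$, which is a branch point of $\wt\Sigma\to\Sigma$. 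The index-doubling relation $\mu(i_*\phi)=2\mu(\phi)$ fails for such disks, so there is no guarantee of equivariant transversality, and one cannot reduce the equivariant cochain complex to the two-row bicomplex of Corollary~\ref{cor:Hen-from-diag}. (This is precisely the ``additional complication'' flagged in the proof of Theorem~\ref{thm:Hen2-small}.) Instead one must work with the full freed/homotopy-coherent complex of Section~\ref{sec:equi-complex}, and to make \emph{that} combinatorial one needs to control, not just the differential, but the entire collection of higher structure maps $G_{f_n,\dots,f_1}$. This is the role of the specially adapted ``desirable'' diagram of Section~\ref{sec:nice-diags}: arranging that the axial $z$-basepoint lies in a bigon downstairs (hence a rectangle upstairs) is what makes the relevant curve counts combinatorial. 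Your claim that an arbitrary genus-$0$ nice diagram works for~\eqref{eq:HFK-periodic} overreaches; the paper explicitly restricts to such desirable diagrams in the remark immediately after the corollary statement.
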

For the spectral sequences~\eqref{eq:Sig-to-K}
and~\eqref{eq:HFL-periodic}, any genus-$0$ nice diagram, such as
Beliakova's planar grid diagrams~\cite{Beliakova10:grid}, suffices. For the
spectral sequence~\eqref{eq:HFK-periodic}, we need a specially adapted
diagram, like those constructed in Section~\ref{sec:nice-diags}.

Using the fact that we can compute the spectral sequences from an
equivariant Heegaard diagram, we can also understand the behavior
under changing $n$:
\begin{theorem}\label{thm:Hen1-small}
  For any knot $K$ in $S^3$ there is a spectral sequence
  $\HFKaDual(\Sigma(K),\wt{K})\otimes\Field[\theta,\theta^{-1}]\Rightarrow \HFKaDual(S^3,K)\otimes\Field[\theta,\theta^{-1}]$ so that the spectral
  sequence~\eqref{eq:Sig-to-K} is obtained by tensoring with
  $V^{\otimes n}$.
\end{theorem}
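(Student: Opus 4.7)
The plan is to realize the $V^{\otimes n}$ factor in~\eqref{eq:Sig-to-K} as arising from free stabilizations of a minimal genus-$0$ Heegaard diagram for $K$, using the computational freedom afforded by Corollary~\ref{cor:Hen-from-diag} to bypass the equivariant transversality concerns that originally forced Hendricks to take $n$ large.

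First, fix a minimal-bridge genus-$0$ Heegaard diagram $\HD_*$ for $(S^3,K)$ chosen so that $\CFKa(\HD_*)$ computes $\HFKaDual(S^3,K)$ with no ambient $V$-factors, and let $\wt\HD_*$ be the branched double cover diagram for $(\Sigma(K), \wt K)$. By Corollary~\ref{cor:Hen-from-diag}, for a generic $\tau$-equivariant almost complex structure $\wt J_*$ on $\Sym(\Sigma)$, the double complex $(\CF(T_{\wt\alpha_*}, T_{\wt\beta_*}), \bdy_{\wt J_*})$ with differential $\Id + \tau^\#$ yields a spectral sequence whose $E_2$-page is $\HFKaDual(\Sigma(K), \wt K)\otimes \Field[\theta, \theta^{-1}]$ and which abuts to $\HFKaDual(S^3, K)\otimes \Field[\theta, \theta^{-1}]$. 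This will play the role of the reduced spectral sequence asserted by the theorem.

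Next, for general $n$, the genus-$0$ diagram $\HD_n$ underlying~\eqref{eq:Sig-to-K} is obtained from $\HD_*$ by inserting $n$ free stabilizations, each a cancelling $(\alpha,\beta)$-pair in a small disk disjoint from the rest of the data. Each downstairs stabilization lifts in the branched double cover to two stabilizations swapped by $\tau$. Choose a $\tau$-equivariant $\wt J_n$ that agrees with $\wt J_*$ outside the stabilization regions and is sufficiently stretched on the necks separating them. A standard equivariant neck-stretching argument, combined with the flexible transversality of Section~\ref{sec:2-groups}, identifies the resulting equivariant Floer complex as a tensor product
\[
\CF(T_{\wt\alpha_n}, T_{\wt\beta_n}) \cong \CF(T_{\wt\alpha_*}, T_{\wt\beta_*}) \otimes Q^{\otimes n}
\]
of complexes of $\Field[\ZZ/2]$-modules, where $Q$ is the local complex of a twin stabilization: a $4$-dimensional, zero-differential complex on which $\tau$ swaps the two off-diagonal generators. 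A direct homological-algebra computation gives $\eHF(Q)\otimes\Field[\theta, \theta^{-1}] \cong V \otimes \Field[\theta, \theta^{-1}]$ with all higher spectral-sequence differentials for $Q$ vanishing, so by K\"unneth the double complex, and hence every page of the spectral sequence, for $\wt \HD_n$ is the tensor product of the one for $\wt \HD_*$ with $V^{\otimes n}$, as required.

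The main obstacle is the equivariant neck-stretching step above: one must verify that a single $\tau$-invariant almost complex structure on $\Sym(\Sigma)$ can be both stretched on each lifted pair of stabilization necks and sufficiently generic to achieve transversality in the flexible sense of Section~\ref{sec:2-groups}. This should follow by combining the usual non-equivariant stabilization-invariance argument for $\CFa$ with the parametric transversality framework of the present paper, since the relevant moduli spaces come in $\tau$-equivariant families and the flexibility of our construction permits genericity to be achieved while preserving the product structure near the necks.
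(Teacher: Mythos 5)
Your key geometric claim is wrong. You assert that ``each downstairs stabilization lifts in the branched double cover to two stabilizations swapped by $\tau$,'' which would give a $4$-dimensional local factor $Q$ carrying a $\tau$-transposition. That is the lift of a stabilization performed \emph{away from} the branch locus. But in Hendricks's construction the diagram $\wt\HD$ is branched over $\zs\cup\ws$, and the $0$-$1$-$2$-$3$ stabilization replaces a $z$-basepoint by a $(z_0,w_0,z_0')$-triple, so the new basepoints lie \emph{on} the branch locus. The stabilization region therefore lifts to a single connected branched piece (the genus-$1$ piece on the right of Figure~\ref{fig:add-basepoints}) with two $\alpha$- and two $\beta$-circles that $\tau$ \emph{does} exchange, but whose two local generators $\wt\x_0,\wt\y_0$---each a symmetric pair of intersection points---are $\tau$-\emph{fixed}. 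The correct local factor is the $2$-dimensional $V$ with trivial $\ZZ/2$-action, yielding $\CFKa(\wt{\HD}')\simeq\CFKa(\wt{\HD})\otimes V$ over $\Field[\ZZ/2]$.

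This is not a minor bookkeeping slip. Your $Q$ is exactly the module the paper calls $X$, and it is the correct local model for the \emph{periodic-knot} case (Theorem~\ref{thm:Hen2-small}), where one branches only over the axis $\wt A$ and the basepoints on $\wt K$ sit off the branch locus, so a downstairs stabilization really does lift to two swapped copies. You have transplanted the wrong local model between the two settings. The distinction matters for the spectral sequence: $Q$ and $V$ have the same localized equivariant cohomology, but $Q$ has a nontrivial $d_1=\theta(1+\tau^{\#})$ on the $\theta$-filtration spectral sequence (the free summand $\Field[\ZZ/2]\langle ab\rangle\subset Q$ dies at $E_2$), so ``every page is the tensor product with $Q^{\otimes n}$'' is false; compare the two-case conclusion of Theorem~\ref{thm:Hen2-small}, where $E_1$ tensors with $X$ but $E_{\geq 2}$ tensors only with its homology $Y$. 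With the correct $V$-factor (zero differential, trivial $\tau$) the clean statement of Theorem~\ref{thm:Hen1-small} does hold. Separately, your ``minimal-bridge genus-$0$ Heegaard diagram $\HD_*$ with no ambient $V$-factors'' cannot exist for a nontrivial knot: a genus-$0$ diagram has $|\alphas|=|\ws|-1$, so a single $w$-basepoint forces $|\alphas|=0$. The paper's reduced spectral sequence is instead defined via a higher-genus diagram with one basepoint pair (reached from the genus-$0$ one by ordinary Heegaard moves and $0$-$1$-$2$-$3$ destabilizations), and the theorem is proved by peeling off one $V$-factor per destabilization. Your overall architecture (neck-stretching at stabilization regions, inductive splitting, K\"unneth) matches the paper's; the local model is what breaks.
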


A consequence of Hendricks' spectral sequence (\ref{eq:Sig-to-K}) was a rank inequality
\[
\dim(\HFKa(\Sigma(K),\wt{K}, \spinc_0) \geq \dim(\HFKa(S^3,K)),
\]
where $\spinc_0$ is the unique spin structure on $\Sigma(K)$, and the same rank inequality for the restriction of the theory to the extremal Alexander gradings $\pm g(K)$, to wit
\[
\dim(\HFKa(\Sigma(K), \wt{K}, \spinc_0, \pm g(K)) \geq \dim(\HFKa(S^3,K, \pm g(K)).
\]
Theorem \ref{thm:Hen1-small} allows us to extract this inequality for all Alexander gradings.

\begin{corollary} \label{cor:rank-inequality-refined} For any knot $K$
  in $S^3$, let $\spinc_0$ be the unique spin structure on $\Sigma(K)$
  and let $i$ be any Alexander grading. There is a rank inequality
  \[
  \dim(\HFKa(\Sigma(K), \wt{K}, \spinc_0, i)) \geq \dim(\HFKa(S^3,K,i)).
  \]
\end{corollary}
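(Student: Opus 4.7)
The plan is to apply Theorem~\ref{thm:Hen1-small} directly. It provides a small spectral sequence
\[
\HFKaDual(\Sigma(K),\wt{K})\otimes\Field[\theta,\theta^{-1}] \Rightarrow \HFKaDual(S^3,K)\otimes\Field[\theta,\theta^{-1}]
\]
free of the auxiliary $V^{\otimes n}$ tensor factors that previously obstructed the Alexander-grading refinement. By Corollary~\ref{cor:Hen-from-diag}, applied to a genus-zero Heegaard diagram $\HD$ for $(S^3,K)$ with basepoints $\mathbf{z},\mathbf{w}$ that lift to give a diagram $\wt{\HD}$ for $(\Sigma(K),\wt{K})$, this spectral sequence is computed from an equivariant double complex whose differential has the form $\bdy_{\wt{J}} + \theta(\Id + \tau^\#)$. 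Both $\bdy_{\wt{J}}$ and $\tau^\#$ preserve the Alexander grading on $(\Sigma(K),\wt{K})$ induced by $(\wt{\mathbf{z}},\wt{\mathbf{w}})$, since $\tau$ setwise fixes $\wt{K}$ and permutes the lifts of the basepoints in a grading-preserving way. Hence the spectral sequence splits as a direct sum over Alexander gradings $i \in \ZZ$.

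Next, I would show that in each Alexander grading the $E_2$-page is concentrated in the unique spin structure $\spinc_0$ on $\Sigma(K)$. The involution $\tau$ acts on $\Spinc(\Sigma(K))$ by conjugation $\spinc \mapsto \bar\spinc$, so for any $\spinc$ with $\spinc \neq \bar\spinc$ the summand
\[
\CFKa(\Sigma(K),\wt{K},\spinc) \oplus \CFKa(\Sigma(K),\wt{K},\bar\spinc)
\]
is a free $\Field[\ZZ/2]$-module. Its Tate cohomology vanishes, so it contributes nothing after inverting $\theta$. Since the self-conjugate spin$^c$ structures on the branched double cover of a knot are exactly the spin structures, and there is a unique spin structure on $\Sigma(K)$, the localized $E_2$-page is $\HFKaDual(\Sigma(K),\wt{K},\spinc_0)\otimes\Field[\theta,\theta^{-1}]$ with its splitting by Alexander grading preserved.

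The corollary now follows from the standard rank estimate that the $\Field[\theta,\theta^{-1}]$-rank of $E_\infty^i$ is bounded above by that of $E_2^i$. In each Alexander grading $i$,
\[
\dim_\Field \HFKa(\Sigma(K),\wt{K},\spinc_0,i) \geq \dim_\Field \HFKa(S^3,K,i),
\]
as claimed. The main delicate point throughout is confirming that the Alexander grading on $\HFKaDual(S^3,K)$ is compatible with the identification of $E_\infty$ with $\HFKaDual(S^3,K)\otimes\Field[\theta,\theta^{-1}]$ coming from the Smith-type localization. This reduces to the observation that the basepoints $\mathbf{z},\mathbf{w}$ lie on the branch locus of the cover $\wt{\HD}\to\HD$, so they are $\tau$-fixed and the Alexander filtration on the fixed Floer complex agrees with that on $\CFKa(S^3,K)$; once this naturality is in hand, the rest is standard spectral-sequence bookkeeping.
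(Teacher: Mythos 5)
Your proposal follows essentially the same route as the paper's own proof: reduce via Theorem~\ref{thm:Hen1-small} to a diagram with a single pair of basepoints, split the equivariant complex along Alexander gradings and orbits of $\SpinC$-structures, note that the involution fixes only $\spinc_0$, and compare ranks using the localization theorem. There are two places where your wording overclaims, though neither affects the outcome. First, the ``localized $E_2$-page'' is not $\HFKaDual(\Sigma(K),\wt{K},\spinc_0)\otimes\Field[\theta,\theta^{-1}]$; that is the $E_1$-page restricted to $\spinc_0$, whereas $E_2$ is the Tate cohomology $\hat{H}^*(\ZZ/2,\HFKaDual(\Sigma(K),\wt{K},\spinc_0))$, whose rank over $\Field[\theta,\theta^{-1}]$ equals only the dimension of the $\tau^*$-trivial part and can be strictly smaller. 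The inequality still holds, since $\dim\HFKa(\Sigma(K),\wt{K},\spinc_0,i)\geq\rank E_1 \geq\rank E_\infty$; the paper bounds $E_\infty$ by $E_1$ directly and avoids this detour. Second, the compatibility of the localization isomorphism to $\HFKaDual(S^3,K)\otimes\Field[\theta,\theta^{-1}]$ with absolute Alexander gradings, which you flag as ``the main delicate point,'' is not reducible to the basepoints being $\tau$-fixed; the paper invokes Grigsby's computation (equivariant lifts preserve the absolute Alexander grading) together with Hendricks's result that the localization isomorphism preserves absolute Alexander gradings and $\SpinC$-structures, and you would need to cite or reprove these.
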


To state the analogue of Theorem \ref{thm:Hen1-small} for the spectral
sequences~\eqref{eq:HFL-periodic} and~\eqref{eq:HFK-periodic} we need
a little more notation.  Let $X=\Field\langle xx, xy, yx, yy\rangle$
with differential $d_1(xx)=d_1(yy)=0$ and $d_1(xy)=d_1(yx)=xy+yx$. Let
$Y$ be the homology of $X$.
\begin{theorem}\label{thm:Hen2-small}
  For any $2$-periodic link $\wt{K}$ in $S^3$ with axis $\wt{A}$ and
  quotient link $K$ there is a spectral sequence 
  \begin{equation}\label{eq:Hen2-small-seq}
    \HFLaDual(\wt{K}\cup \wt{A})\otimes\Field[\theta,\theta^{-1}]\otimes V_1\Rightarrow\HFLaDual(K\cup A)\otimes\Field[\theta,\theta^{-1}]
  \end{equation}
  so that the spectral sequence~\eqref{eq:HFL-periodic} is obtained by
  tensoring with $X^{\otimes n-1}$. That is, let $E_i(K)$ be the
  spectral sequence~(\ref{eq:HFL-periodic}) and let $\wt{E}_i(K)$ be
  the spectral sequence~\eqref{eq:Hen2-small-seq}. Then:
  \begin{itemize}
  \item $E_1(K)\cong \wt{E}_1(K)\otimes X^{\otimes n-1}$, where the
    differential on the right hand side is the tensor product
    differential.
  \item For $i>1$, $E_i(K)\cong \wt{E}_i(K)\otimes Y^{\otimes n-1}$,
    where the differential on the right hand side is induced by the
    differential on $\wt{E}_i(K)$.
  \end{itemize}

  Similarly, there is a spectral sequence 
  \begin{equation}\label{eq:Hen2-oth-seq}
    \HFKaDual(\wt{K})\otimes V\otimes W\otimes\Field[\theta,\theta^{-1}]\Rightarrow\HFKaDual(K)\otimes W\otimes\Field[\theta,\theta^{-1}]
  \end{equation}
  so that the spectral sequence~(\ref{eq:HFK-periodic}) is obtained by
  tensoring with $X^{\otimes n-1}$.
\end{theorem}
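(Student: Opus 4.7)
The proof is structurally parallel to that of Theorem~\ref{thm:Hen1-small}. My plan is to choose a family of 2-periodic Heegaard diagrams $\wt{\HD}_n$ for $(S^3, \wt K \cup \wt A)$, indexed by $n$, so that $\wt{\HD}_1$ induces the spectral sequence $\wt E$. To pass from $\wt{\HD}_1$ to $\wt{\HD}_n$, I would add $n-1$ pairs of basepoints on $K$ downstairs, chosen disjointly from the axis $A$; each such pair lifts to a free orbit of basepoint pairs on $\wt K$, so that $\wt{\HD}_n$ has $n-1$ extra free $\ZZ/2$-orbits of basepoint pairs on $\wt K$ beyond those in $\wt{\HD}_1$. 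This contrasts sharply with the setup of Theorem~\ref{thm:Hen1-small}, where extra basepoints on $K$ lie on the branch locus and therefore lift to \emph{fixed} basepoints rather than free orbits.

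By the link Floer analogue of Corollary~\ref{cor:Hen-from-diag}, the spectral sequence~\eqref{eq:HFL-periodic} is computed from the equivariant double complex associated to $\wt{\HD}_n$ with a $\ZZ/2$-invariant almost complex structure. At the chain level, each free orbit of basepoint pairs on $\wt K$ corresponds to tensoring $\CFLa(\wt{\HD}_1)$ with $V_1 \otimes V_1$, on which the involution acts by swapping the two $V_1$-factors. Identifying $V_1 \otimes V_1$ with $X$ as a graded vector space, I obtain
\[
\CFLa(\wt{\HD}_n) \cong \CFLa(\wt{\HD}_1) \otimes X^{\otimes n-1}
\]
as chain complexes with trivial differential on the $X$-factors, and with involution $\tau_1 \otimes \sigma^{\otimes n-1}$, where $\tau_1$ is the involution on $\CFLa(\wt{\HD}_1)$ and $\sigma$ swaps the two $V_1$-copies inside each $X$.

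The main step is to show that the resulting equivariant complex $(\CFLa(\wt{\HD}_n) \otimes \Field[\theta], d + \theta(1 + \tau_1 \otimes \sigma^{\otimes n-1}))$ factors, at the $E_1$ page, as $\wt E_1(K) \otimes X^{\otimes n-1}$ with the tensor-product differential $d_1^{\wt E} \otimes 1 + 1 \otimes d_{X^{\otimes n-1}}$, where $d_X = 1+\sigma$. This is the principal obstacle, because the Borel differential $\theta(1+\tau_1 \otimes \sigma^{\otimes n-1})$ involves a diagonal action, not an independent tensor of actions. The desired factorization follows from the characteristic-2 identity $1 + \tau \otimes \sigma = (1+\tau) \otimes 1 + \tau \otimes (1+\sigma)$, applied iteratively to each of the $n-1$ added $X$-factors, together with the $\ZZ/2$-module decomposition $X \cong \Field \oplus \Field \oplus \Field[\ZZ/2]$ (two trivial summands spanned by $xx$ and $yy$, and one free summand spanned by $xy$ and $yx$), which makes the passage to $E_1$ compatible with the tensor structure.

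Once the $E_1$-level identification is established, passing to $E_i$ for $i \geq 2$ is immediate: the homology of $(X, d_X)$ is $Y$, and the higher differentials inherit the tensor-product structure with the $Y^{\otimes n-1}$ factor carrying no further differential, giving $E_i(K) \cong \wt E_i(K) \otimes Y^{\otimes n-1}$ with differential induced from $\wt E_i(K)$. The argument for the spectral sequence~\eqref{eq:HFK-periodic} proceeds identically, using a family of equivariant Heegaard diagrams for the periodic knot $\wt K$ alone, with the extra $V$ and $W$ factors in $\wt E^K$ arising from basepoints intrinsic to the $n = 1$ construction and invariant under the addition of further free orbits.
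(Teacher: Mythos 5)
Your proposal follows the paper's approach closely at the key chain-level step: using stabilizations at basepoints on $K$ (disjoint from $A$), which lift to free $\ZZ/2$-orbits of stabilization regions in $\wt{\HD}$, stretching the neck around both, and obtaining a $\ZZ/2$-equivariant chain isomorphism $\CFLa(\wt{\HD}')\cong \CFLa(\wt{\HD})\otimes X$ with $\tau$ acting diagonally via $\tau_1\otimes\sigma$. This matches the paper's Equation~\eqref{eq:CFL-per-destab} and the surrounding argument. However, two gaps remain.

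First, your handling of the $E_1$-level factorization is underspecified. The identity $1+\tau\otimes\sigma = (1+\tau)\otimes 1 + \tau\otimes(1+\sigma)$ is correct in characteristic $2$, but after applying it, $d_1 = \theta(1+\tau_1^*)\otimes 1 + \theta\tau_1^*\otimes(1+\sigma)$ — the second summand still carries $\tau_1^*$, so this is \emph{not} the tensor-product differential. Asserting that the decomposition $X\cong\Field\oplus\Field\oplus\Field[\ZZ/2]$ ``makes the passage to $E_1$ compatible'' is plausible but not a proof; what is actually needed is a change of basis that kills the residual $\tau_1^*$. An explicit map of the form $\phi(c\otimes a\otimes\theta^i) = \tau_1^i c\otimes a\otimes\theta^i$, analogous to the untwisting isomorphism in the paper's proof of Proposition~\ref{prop:q-homo}, conjugates $d_1$ to the tensor differential. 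As the paper is itself terse here (``The theorem is a homology-level reinterpretation of this fact, and induction''), this is a place where more care is warranted, not less, and your proposal does not supply it.

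Second, and more seriously, the claim that the $\HFKa$ case ``proceeds identically'' misses a genuine asymmetry that the paper flags explicitly. For the spectral sequence~\eqref{eq:HFK-periodic}, one retains the axis basepoint $\wt{z}$, and as a result the hypothesis~\ref{item:ET:index-double} of equivariant transversality fails: one cannot compute the equivariant complex with a $\ZZ/2$-invariant family of almost complex structures, so the link-Floer analogue of Corollary~\ref{cor:Hen-from-diag} is not available. One must instead work with the freed Floer complex of Section~\ref{sec:equi-complex}, choosing the homotopy-coherent family of almost complex structures to be split over the stabilization regions, and then re-run the fiber-product/neck-stretching argument in that setting. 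This is a genuine additional step, not a notational variant, and omitting it leaves the $\HFKa$ half of the theorem unproved.
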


(There is a version of Theorem~\ref{thm:Hen2-small} for links, but the
notation is more cumbersome, so we omit it.) Using the spectral
sequence (\ref{eq:Hen2-small-seq}), we prove the following corollary.
\begin{corollary} \label{cor:dim-periodic-link} For any $2$-periodic link $\wt{K}$ in $S^3$ with axis $\wt{A}$ and
  quotient link $K$, there is a rank inequality
\[
\dim( \HFLa(S^3, \wt{K} \cup \wt{A})) \geq \dim(\HFLa(S^3,K \cup A)).
\]
\end{corollary}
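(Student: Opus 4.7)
The plan is to extract the inequality from the spectral sequence~\eqref{eq:Hen2-small-seq} of Theorem~\ref{thm:Hen2-small}. Viewed as a spectral sequence of free $\Field[\theta,\theta^{-1}]$-modules, and using that the differentials cannot increase rank, the spectral sequence immediately gives
\[
2\dim_\Field \HFLa(\wt K \cup \wt A) \;=\; \rank_{\Field[\theta,\theta^{-1}]}E_1 \;\geq\; \rank_{\Field[\theta,\theta^{-1}]}E_\infty \;=\; \dim_\Field \HFLa(K \cup A),
\]
so the claim already holds up to a factor of two. The task is to remove this factor.

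To do so I would appeal to the chain-level $\ZZ/2$-equivariant structure underlying the spectral sequence, as set up in Section~\ref{sec:2-groups}. By Theorem~\ref{thm:SS-invt}, the Heegaard Floer complex computing $\HFLa(\wt K \cup \wt A) \otimes V_1$ can be realized (up to quasi-isomorphism over $\Field[\ZZ/2]$) by a $\tau$-equivariant chain complex $C$ whose $\tau$-fixed subcomplex computes $\HFLa(K \cup A)$. The $V_1$ factor appearing in~\eqref{eq:Hen2-small-seq} comes from the free basepoint introduced on the axis $\wt A$ during the equivariant construction, and by choosing the Heegaard diagram so that this free basepoint is preserved by the covering involution, the tensor factor $V_1$ appears identically on both sides of the equivariant comparison. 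Factoring out this shared $V_1$ and applying the classical chain-level Smith inequality
\[
\dim H(C/V_1) \;\geq\; \dim H((C/V_1)^{\tau})
\]
then yields the desired
\[
\dim \HFLa(\wt K \cup \wt A) \;\geq\; \dim \HFLa(K \cup A).
\]

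The main obstacle I anticipate is verifying that the $V_1$ factor on the upstairs side of~\eqref{eq:Hen2-small-seq} is genuinely a shared axis-basepoint artifact that can be cancelled symmetrically between the equivariant complex upstairs and its $\tau$-fixed part downstairs, rather than an intrinsic feature of the localization comparison. Checking this amounts to making the identification of fixed points in the equivariant symmetric product Heegaard diagram explicit, using the nice-diagram and transversality machinery of Section~\ref{sec:nice-diags} and Section~\ref{sec:2-groups}.
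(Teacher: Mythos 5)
Your factor-of-two reduction via the spectral sequence is fine, but the mechanism you propose for removing the extra factor does not work, and the actual argument the paper gives is quite different.

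The first problem is that you misidentify the source of $V_1$. In the proof of Theorem~\ref{thm:Hen2-small} and the proof of this corollary, one starts with a nice diagram $\HD$ for $K \cup A$ with exactly one basepoint pair per component. The branched double cover $\wt{\HD}$ (branched over the basepoints on $A$) then has \emph{two} basepoint pairs on $\wt{K}$, because the basepoint pair on $K$ has two preimages; this doubled basepoint pair on $\wt{K}$ is what produces $\HFLt(\wt{\HD}) \cong \HFLa(\wt{K}\cup\wt{A})\otimes V_1$. It has nothing to do with a free basepoint on $\wt{A}$, and, crucially, it does \emph{not} appear on the downstairs side: $\HD$ has only one basepoint pair per component, so its Floer homology is $\HFLa(K\cup A)$ on the nose. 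The factor $V_1$ therefore sits on only one side of the comparison, and your proposed step of ``cancelling the shared $V_1$'' has nothing to cancel against. Indeed, the two extra basepoints on $\wt{K}$ are exchanged by $\tau$, so there is no equivariant destabilization that removes $V_1$ while preserving the $\ZZ/2$-structure. The ``chain-level Smith inequality for $C/V_1$'' you invoke is precisely the localization theorem that produced the $V_1$ in the first place, so this route is circular.

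The paper instead removes the factor of two by a grading argument. Since $\tau_\#$ preserves the absolute Alexander multigrading, the spectral sequence~\eqref{eq:Hen2-small-seq} splits over the half-integer Alexander grading $A_{\wt{K}}$. The $V_1$ factor has its two generators in $A_{\wt{K}}$-gradings $0$ and $-1$, so $\HFLt(\wt{\HD})$ is evenly distributed between $A_{\wt{K}}$-gradings of the form $\tfrac{2k-1}{2}$ with $k$ even and those with $k$ odd. On the other hand, Hendricks' grading computation gives $A_{\wt{K}}(\wt{\x}) = 2A_K(\x) - \tfrac{1}{2}$ for every equivariant generator, so all equivariant generators lie in $A_{\wt{K}}$-gradings with $k$ even and $\theta^{-1}\HF_{\ZZ/2}(\wt{T}_\alpha,\wt{T}_\beta,\tfrac{2k-1}{2}) = 0$ for $k$ odd. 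This forces the entire odd-$k$ half of $\HFLt(\wt{\HD})\otimes\Field[\theta,\theta^{-1}]$ to die in the spectral sequence, and the surviving even-$k$ half has rank $\dim\HFLa(\wt{K}\cup\wt{A})$, yielding the inequality without the factor of two. Your sketch is missing this input entirely; the grading formula $A_{\wt{K}}(\wt{\x}) = 2A_K(\x) - \tfrac{1}{2}$ is the key lemma, not any form of the chain-level Smith inequality.
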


Notice that there is no factor of two in this rank inequality, despite
the presence of the vector space $V_1$ in
Formula~\eqref{eq:Hen2-small-seq}.

Using Hochschild homology and bordered Floer homology,
Lipshitz-Treumann also constructed spectral sequences similar
to Formulas~\eqref{eq:Sig-to-K}
and~\eqref{eq:HFL-periodic}~\cite{LT:hoch-loc}. In particular, for a
genus $\leq 2$, null-homologous knot $K\subset Y^3$, they give a
spectral sequence $\HFKa(\Sigma(K),K)\otimes\Field[\theta,\theta^{-1}]\Rightarrow
\HFKa(Y,K)\otimes \Field[\theta,\theta^{-1}]$~\cite[Theorem 2]{LT:hoch-loc}. In the case that the
Hendricks and Lipshitz-Treumann spectral sequences are both defined,
we prove they agree:
\begin{theorem}\label{thm:LT-is-Hen}
  Let $K$ be a genus $\leq 2$ knot in $S^3$. Then the spectral
  sequence from Theorem~\ref{thm:Hen1-small} agrees with the spectral
  sequence from~\cite[Theorem 2]{LT:hoch-loc}.
\end{theorem}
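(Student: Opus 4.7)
The plan is to identify both spectral sequences as arising from the same $\ZZ/2$-equivariant chain complex, up to quasi-isomorphism over $\Field[\ZZ/2]$, and then invoke the invariance package established above (in particular Theorem~\ref{thm:SS-invt} and Corollary~\ref{cor:Hen-from-diag}) to conclude that every page agrees. The genus $\leq 2$ hypothesis is used because that is precisely the range in which the Lipshitz--Treumann construction applies: for such a knot one can choose a bordered Heegaard diagram for $S^3\setminus\nu(K)$ of genus $\leq 2$ whose boundary parametrization matches the standard (anti-)diagonal structure used to extract Hochschild homology.

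First, I would recall the Lipshitz--Treumann setup: they start with a bordered Heegaard diagram $\HD$ for $S^3\setminus\nu(K)$ and recover $\widehat{\HF}(\Sigma(K))$ as the homology of a bimodule-like construction on $\CFDD$ (or equivalently on $\CFAA$) of the knot complement, and their spectral sequence is the Hochschild-homology spectral sequence associated to the induced $\ZZ/2$-action that swaps the two boundary components of the doubled diagram. The first step is to interpret their total complex as the Floer complex $\CF(T_{\wt\alpha},T_{\wt\beta})$ of a specific symmetric Heegaard diagram $\wt\HD$ for $(\Sigma(K),\wt K)$, obtained by gluing two copies of $\HD$ along their boundary via the order-two diffeomorphism giving the branched double cover. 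Using the pairing theorem for bordered Floer homology this identification holds on the nose, and the sheet-swapping involution $\tau$ of $\wt\HD$ corresponds, under pairing, to the involution on the doubled module that permutes the two factors.

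Next I would observe that this involution is exactly the one whose Tate-style resolution yields the Hochschild-homology spectral sequence in~\cite{LT:hoch-loc}. Concretely, the Lipshitz--Treumann spectral sequence is the hypercohomology spectral sequence for $\ZZ/2$ acting on the paired complex, i.e., the spectral sequence associated to the double complex
\[
0\longrightarrow \CF(T_{\wt\alpha},T_{\wt\beta})
  \xrightarrow{\ \Id+\tau^\#\ } \CF(T_{\wt\alpha},T_{\wt\beta})
  \xrightarrow{\ \Id+\tau^\#\ } \cdots ,
\]
for a suitably equivariant choice of almost complex structure on $\Sym^{2n}(\Sigma)$. This is, verbatim, the double complex appearing in Corollary~\ref{cor:Hen-from-diag}, which computes the Hendricks spectral sequence~\eqref{eq:Sig-to-K}. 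Thus both spectral sequences are induced by the same $\ZZ/2$-equivariant chain complex.

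Finally, to pass from this to the small spectral sequence of Theorem~\ref{thm:Hen1-small} (rather than its $V^{\otimes n}$-stabilization), I would check that the Lipshitz--Treumann construction is likewise independent of the unnecessary basepoints, either by absorbing them through the bordered invariance theorems or by observing that stabilizing both spectral sequences by a free $V^{\otimes n}$ factor produces isomorphic spectral sequences over $\Field[\theta,\theta^{-1}]$ and then dividing out. The main obstacle I anticipate is the last point: carefully matching the two constructions' handling of basepoints and of the equivariant perturbation data, since Lipshitz--Treumann work with bordered diagrams and modules while Hendricks works with equivariant almost complex structures on a symmetric product. The flexibility of the equivariant construction developed earlier in the paper --- in particular the freedom to compute without perturbing to achieve equivariant transversality --- is exactly what makes this matching possible.
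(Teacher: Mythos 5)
Your overall strategy — identify both spectral sequences as arising from the same $\ZZ/2$-equivariant chain complex and then invoke the invariance package — is the right one, and it is essentially what the paper does. But there is a genuine gap in your middle step, where you claim that "the Lipshitz--Treumann spectral sequence is the hypercohomology spectral sequence for $\ZZ/2$ acting on the paired complex." The LT construction does not act on a naive tensor product $\CFDAa(Y)\otimes\CFDAa(Y)$; Hochschild homology (and hence their spectral sequence) is built from $M\otimes R\otimes M\otimes R/\sim$ for a chosen biprojective resolution $R$ of the diagonal bimodule $\Alg(F)$, and it is only well-defined because of Lemma~\ref{lem:LT-well-defined}, which you never invoke. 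Gluing two copies of $\HD$ directly "via the order-two diffeomorphism" does not produce a Heegaard diagram whose Floer complex is that Hochschild complex; you need to insert a bordered Heegaard diagram for the resolution $R$ between the two copies. This is exactly what the paper does: it uses the identity-cobordism diagram $\HD_{\Id}=\Denis(-F)\cup\MirrorDenis(F)$, whose $\CFDAa$ is a biprojective resolution of the diagonal, and considers the circular gluing $\HD_Y\cup\HD_{\Id}\cup\HD_Y\cup\HD_{\Id}/\sim$. It is the nice-diagram form of the pairing and self-pairing theorems that then gives an honest isomorphism (not merely a quasi-isomorphism) between $\CFKa$ of this glued diagram and the Hochschild complex for the right-hand side, compatibly with the $\ZZ/2$-action. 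Your phrase "on the nose" papers over precisely this point.

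A secondary imprecision: you cite Corollary~\ref{cor:Hen-from-diag}, which is specific to genus-0 diagrams built as branched double covers; the glued diagram appearing here is not of that form, so you should instead be using the general package of Proposition~\ref{prop:equi-is-equi} (via Condition~\ref{item:ET:index-double}) together with the "nice diagram" computation as in Corollary~\ref{cor:Hen-nice}. Your concern in the last paragraph about matching basepoint counts is legitimate, but once the chain-level isomorphism is set up as above the two constructions literally agree, so the destabilization to the small spectral sequence of Theorem~\ref{thm:Hen1-small} is handled uniformly on both sides.
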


\begin{remark}
  To be more precise,~\cite[Theorem 2]{LT:hoch-loc} worked with
  equivariant Floer homology, not cohomology, so the identification is
  between Theorem~\ref{thm:Hen1-small} for $(Y,K)$ and~\cite[Theorem
  2]{LT:hoch-loc} for $(-Y,K)$. In this paper, when we refer to
  complexes from~\cite{LT:hoch-loc}, we mean the \emph{duals} of the
  complexes presented there. The paper~\cite{LT:hoch-loc} typically
  focuses on the Tate version of equivariant cohomology, but here we
  mean the Borel variant.
\end{remark}

It also follows from the techniques in this paper that the spectral
sequences constructed by Lipshitz-Treumann are invariants of the
corresponding topological objects, and that equivariant Floer cohomology
can be computed using bordered Floer homology. Rather than further
trying the reader's patience, we leave most cases as an exercise, and
spell out only one:
\begin{theorem}\label{thm:LT-invt}
  For any $\ZZ/2$-cover $\pi\co \wt{Y}\to Y$ and torsion $\SpinC$-structure $\spinc$ on $Y$, there is an equivariant
  Heegaard Floer homology $\eHFa(\wt{Y},\pi^*(\spinc))$ and a spectral
  sequence $\HFaDual(\wt{Y},\pi^*(\spinc))\otimes V\otimes\Field[\theta]\Rightarrow \eHFa(\wt{Y},\pi^*(\spinc))$
  which are invariants of the pair $(\pi\co \wt{Y}\to Y,\spinc)$. This spectral sequence
  can be computed combinatorially using nice diagrams. If $\wt{Y}\to
  Y$ is induced by a $\ZZ$-cover then the equivariant Floer cohomology
  and the spectral sequence can be computed combinatorially using
  bordered Floer homology.
\end{theorem}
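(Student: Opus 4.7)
The plan is to apply the flexible equivariant Floer construction from Section~\ref{sec:2-groups} to Heegaard Floer homology of $\wt{Y}$ equipped with its deck involution, and then deduce invariance, combinatorial computability, and bordered computability in turn, in close analogy with the branched-double-cover case treated earlier in the paper.

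First I would fix an admissible pointed Heegaard diagram $\HD=(\Sigma,\alphas,\betas,z)$ for $(Y,\spinc)$ and form its $\ZZ/2$-cover $\wt{\HD}=(\wt{\Sigma},\wt{\alphas},\wt{\betas},\{\wt{z}_1,\wt{z}_2\})$ determined by $\pi$. The deck involution $\tau$ of $\wt{\Sigma}$ preserves $\wt{\alphas}$ and $\wt{\betas}$ and swaps $\wt{z}_1,\wt{z}_2$, and the induced involution on $\Sym^{\wt{g}}(\wt{\Sigma})$ preserves $T_{\wt{\alpha}}$, $T_{\wt{\beta}}$, and the $\pi^*(\spinc)$-summand of the generator set. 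Since $\spinc$ is torsion so is $\pi^*(\spinc)$, so equivariant admissibility can be arranged by equivariant winding; this uses only that the torsion condition passes to the pullback. Applying the construction of Section~\ref{sec:2-groups} to $(T_{\wt{\alpha}},T_{\wt{\beta}},\tau)$ in the $\pi^*(\spinc)$-summand defines $\eHFa(\wt{Y},\pi^*(\spinc))$ and the associated spectral sequence; the extra tensor factor $V$ on the $E_1$-page is accounted for by the free basepoint introduced when lifting the single basepoint $z$ upstairs and asking the theory to be equivariant with respect to the $\tau$-swap of the two preimages.

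For invariance, two Heegaard diagrams for $(Y,\spinc)$ are related by a sequence of isotopies, handleslides, and stabilizations, each of which lifts to a $\ZZ/2$-equivariant move on the cover. One checks that the Floer complex associated to each equivariant move changes by a $\Field[\ZZ/2]$-equivariant quasi-isomorphism; this follows for isotopies directly from Theorem~\ref{thm:SS-invt}, for stabilizations from an equivariant version of the standard 1-2 handle cancellation (which is model-computed in the genus-one stabilization region, with the $\tau$-action swapping the two sheets above), and for handleslides from the analogous equivariant triangle-map argument. For combinatorial computability, start with a nice diagram for $Y$ in the sense of Sarkar-Wang; its lift $\wt{\HD}$ is automatically nice, so the differential on $\CFa(\wt{Y},\pi^*(\spinc))$ and the action of $\tau^\#$ on generators are both combinatorial, and Proposition~\ref{prop:equi-is-equi} (as used in Corollary~\ref{cor:Hen-nice}) realizes $\eHFa$ as the homology of the explicit double complex. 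For the $\ZZ$-cover case, choose a non-separating surface $F\subset Y$ Poincar\'e dual to the generator of the kernel $H^1(Y;\ZZ)\onto \ZZ/2$, cut $Y$ open along $F$, and note that the preimage of $Y$ in $\wt{Y}$ is obtained by gluing two copies of $Y|F$ in a $\tau$-equivariant way; the Heegaard Floer complex of $\wt{Y}$ can therefore be written as a pairing of the type~$\DA$ bimodule of $Y|F$ with itself, with $\tau$ exchanging the two factors. The analysis of Lipshitz-Treumann then yields a combinatorial description of the equivariant chain complex and its spectral sequence.

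The main obstacle is equivariant handleslide invariance in the invariance step: triangle moduli spaces generally do not admit equivariant transversality, which is precisely why the usual Seidel-Smith setup requires a large preliminary Hamiltonian isotopy. The flexible construction circumvents this by defining the equivariant triangle map as the symmetrization over the bar resolution of $\Field[\ZZ/2]$ of a non-equivariantly transverse triangle count, exactly paralleling the construction of the equivariant continuation map used to prove Theorem~\ref{thm:SS-invt}. Verifying that these symmetrized triangle maps are $\Field[\ZZ/2]$-quasi-isomorphisms — and that the model genus-one stabilization computation survives after symmetrization so as to give a $\Field[\ZZ/2]$-quasi-isomorphism whose only cost is the tensor factor of $V$ — is the technically substantive part of the argument.
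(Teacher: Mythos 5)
Your high-level outline matches the paper's strategy, and you correctly identify most of the moving parts: form $\wt{\HD}$ as the preimage of a diagram for $(Y,\spinc)$, get the extra $V$-factor from the doubled basepoint, reduce invariance to Heegaard moves, lift the Sarkar--Wang argument for nice diagrams, and (for the $\ZZ$-cover case) cut along a Poincar\'e-dual surface and use self-pairing of a $\DA$-bimodule together with the Hochschild-homology comparison from Theorem~\ref{thm:LT-is-Hen}. Where you diverge is in how you handle transversality and, consequently, handleslide invariance, and here you overlook the simpler route that the paper actually takes.

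You identify equivariant transversality for triangle moduli spaces as the ``main obstacle'' and propose a symmetrized triangle count over the bar resolution. But the paper never needs this. The key observation (stated in Section~\ref{sec:cov-spaces}, citing~\cite{LeeLipshitz08:gradings}) is that for a $\ZZ/2$-cover, any $\tau$-fixed homotopy class $\phi\in\pi_2(\x,\y)$ has \emph{even} Maslov index. This is exactly Condition~\ref{item:et2p}, and it forces Hypothesis~\ref{hyp:equivariant-transversality-K}: there are no $\tau$-fixed index-$\leq 1$ disks to worry about, so a generic $\tau$-equivariant family of almost complex structures achieves transversality for all index-$\leq 1$ moduli spaces. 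Consequently Proposition~\ref{prop:equi-is-equi-K} identifies $\CFa(\wt{\HD},\pi^*\spinc)$ computed with an equivariant $J$ directly with the freed Floer complex $\ECF(T_{\wt\alpha},T_{\wt\beta})$ over $\Field[\ZZ/2]$, and no symmetrization over the bar resolution is needed at all. You should make this observation explicit; it is the reason the construction is as simple as it is for covers, and is essentially the mechanism behind Corollary~\ref{cor:Hen-from-diag}.

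Once equivariant transversality is in hand, handleslide invariance is treated not by an equivariant triangle map but by Lemma~\ref{lemma:handleslides} (Perutz's theorem, adapted to multi-pointed diagrams), which realizes a handleslide as a Hamiltonian isotopy of $T_\alpha$ inside the symmetric product. That Hamiltonian isotopy lifts to a $\tau$-equivariant Hamiltonian isotopy of $T_{\wt\alpha}$ in $\Sym^{2g}(\wt\Sigma)$, at which point Proposition~\ref{prop:invariance-gen-group} gives the $\Field[\ZZ/2]$-quasi-isomorphism. Your ``symmetrized triangle map'' idea is plausible but strictly harder than what is required, and if you did pursue it you would still need the Maslov-parity observation (or some substitute) to control constant triangles lying in $\mathrm{Fix}(\tau)$; without saying something about the fixed set, the step you call ``technically substantive'' would genuinely be a gap. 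Finally, a small point: in the stabilization step, the two new genus-one regions in $\wt\Sigma$ are swapped by $\tau$ (each lies over one of the two lifts of the basepoint), so the argument is that $\tau$ acts freely on the stabilization data and the complex factors off trivially with a split complex structure near the basepoints, as in the proof of Corollary~\ref{cor:Hen-dcov-invt}, rather than a ``$\tau$-action swapping the two sheets above'' a single stabilization region.
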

A $\ZZ/2$ cover is induced by a $\ZZ$-cover if there is a pullback square
\[
\xymatrix{
\wt{Y}\ar[r]\ar[d] & S^1\ar[d]^{z\mapsto z^2}\\
Y\ar[r] & S^1.
}
\]
If $\wt{Y}\to Y$ is a $\ZZ/2$-cover induced by a $\ZZ$-cover and
$\spinc$ is a torsion $\SpinC$-structure on $Y$ then the localized
equivariant Floer cohomology $\theta^{-1}\eHFa(\wt{Y},\pi^*\spinc)$ is
isomorphic to
$\HFaDual(Y,\spinc)\otimes\Field[\theta,\theta^{-1}]$~\cite[Theorem
3]{LT:hoch-loc}.  The restriction to torsion $\SpinC$-structures is to
ensure that the relevant objects are $\ZZ$-graded.

(In~\cite{LT:hoch-loc}, the localized equivariant cohomology is referred to as the \emph{Tate equivariant 
cohomology}. The statement of~\cite[Theorem 3]{LT:hoch-loc} combines the spectral sequence 
$\HFaDual(\wt{Y},\pi^*\spinc)\otimes V\otimes\Field[\theta,\theta^{-1}]\Rightarrow \theta^{-1}\HFaDual_{\ZZ/2}(\wt{Y},\pi^*\spinc)$, which exists simply because $\CFa(\wt{Y},\pi^*\spinc)\otimes V$ is a module over $\Field[\ZZ/2]$, and the more subtle localization isomorphism 
$\theta^{-1}\HFaDual_{\ZZ/2}(\wt{Y},\pi^*\spinc)\cong \HFaDual(Y,\spinc)\otimes\Field[\theta,\theta^{-1}]$,
to state a spectral sequence $\HFaDual(\wt{Y},\pi^*\spinc)\otimes V\otimes\Field[\theta,\theta^{-1}]\Rightarrow\HFaDual(Y,\spinc)\otimes\Field[\theta,\theta^{-1}]$.)

Theorem~\ref{thm:LT-invt} extends to finite covering spaces; see Theorem~\ref{thm:cov-eq-invt}.

Finally, we discuss two spectral sequences which were partial
inspiration for work of
Sarkar-Seed-Szab\'o~\cite{SSS:spectrals}. First, in
Section~\ref{sec:new-HFa} we observe that applying Seidel-Smith's
localization theorem to the involution on the branched double cover
$\Sigma(L)$ of a based link $(L,p)$ in $S^3$ gives a spectral sequence
\begin{equation}\label{eq:sar-dcov}
  \HFaDual(\Sigma(L))\otimes\Field[\theta,\theta^{-1}]\Rightarrow (\Field\oplus\Field)^{\otimes(|L|-1)}\otimes\Field[\theta,\theta^{-1}].
\end{equation}

\begin{theorem}\label{thm:sar-dcov-invt}
  The spectral sequence~\eqref{eq:sar-dcov} is an invariant of the
  based link $(L,p)$.
\end{theorem}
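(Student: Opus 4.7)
The plan is to recognize the spectral sequence~\eqref{eq:sar-dcov} as an instance of the localized Seidel-Smith spectral sequence for a concrete pair of equivariant Lagrangians in a symmetric product, and then invoke Theorem~\ref{thm:SS-invt} together with a standard equivariant-Heegaard-moves argument.

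First, I would choose a pointed Heegaard diagram $\HD=(\Sigma,\alphas,\betas,z)$ for $(S^3,p)$ adapted to the based link $(L,p)$---for instance, one coming from a bridge presentation of $L$ with $p$ placed on a distinguished bridge---and lift to the branched double cover $\wt\HD=(\wt\Sigma,\wt\alphas,\wt\betas,\wt z)$, which is a pointed Heegaard diagram for $\Sigma(L)$ on which the covering involution $\tau$ acts with $\wt z$ fixed. The induced involution on $\Sym^{\wt g}(\wt\Sigma)$ preserves both $T_{\wt\alpha}$ and $T_{\wt\beta}$, so applying the flexible construction of Section~\ref{sec:background} to this equivariant Lagrangian pair produces a complex $\CF(T_{\wt\alpha},T_{\wt\beta})$ over $\Field[\ZZ/2]$, whose Borel equivariant cohomology is $\eHFa(\Sigma(L))$ and whose associated spectral sequence, after inverting $\theta$, is~\eqref{eq:sar-dcov}. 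The identification of the $E_\infty$-page with $(\Field\oplus\Field)^{\otimes(|L|-1)}\otimes\Field[\theta,\theta^{-1}]$ follows from the Seidel-Smith localization theorem, which computes the localized equivariant Floer cohomology as the Floer cohomology of the $\tau$-fixed pair of Lagrangians; the latter lives in a symmetric power of $\wt\Sigma^\tau$ and, by a direct count analogous to that for the unlink, has rank $2^{|L|-1}$.

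Next, I would apply Theorem~\ref{thm:SS-invt} (and its statement for every page of the spectral sequence), which reduces the invariance of~\eqref{eq:sar-dcov} to showing that any two diagrams $\wt\HD_0,\wt\HD_1$ arising in this way from $(L,p)$ yield pairs of Lagrangians that are Hamiltonian isotopic through $\ZZ/2$-equivariant Lagrangians (after equivariantly stabilizing to a common ambient symmetric product when the Heegaard genera disagree).

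The main step, and the principal obstacle, is then the equivariant-Heegaard-moves argument: one must show that any two equivariant pointed diagrams for the covering $\Sigma(L)\to S^3$ branched along $(L,p)$ are connected by a sequence of $\tau$-equivariant isotopies of $\alpha$- and $\beta$-curves avoiding $\wt z$, $\tau$-equivariant handleslides, and equivariant stabilizations, and that each such move induces an equivariant Hamiltonian isotopy (or at worst an equivariant quasi-isomorphism over $\Field[\ZZ/2]$) of the induced Lagrangian pair in the symmetric product. This follows the template used in the proofs of Corollaries~\ref{cor:Hen-dcov-invt} and~\ref{cor:Hen-periodic-invt}: equivariant isotopies and handleslides lift from moves on the downstairs diagram for $(S^3,L,p)$ to the cover, while equivariant stabilization is handled either by a single stabilization at the $\tau$-fixed basepoint $\wt z$ or by a double stabilization at a pair of $\tau$-conjugate points, both of which fit into the flexible framework of Section~\ref{sec:background}. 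The basepoint $p$ plays the role of normalizing the equivariant data by pinning down $\wt z$, just as in Hendricks's original setup. Combining equivariant Heegaard invariance with Theorem~\ref{thm:SS-invt} yields invariance of every page of~\eqref{eq:sar-dcov}.
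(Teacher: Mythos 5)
Your overall framework---pass to the equivariant Floer complex over $\Field[\ZZ/2]$, invoke Theorem~\ref{thm:SS-invt} and the flexible construction, then check that the relevant diagram moves induce equivariant quasi-isomorphisms---is the right shape, and your treatment of isotopies and handleslides does match the paper: the paper handles isotopies via Proposition~\ref{prop:indep-of-Ham-isotopy} and handleslides via Perutz's Hamiltonian-isotopy result combined with Proposition~\ref{prop:non-equi-invar}, using the discreteness of the fixed set to get equivariant transversality.

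However, your proposal has a genuine gap at stabilization, and it is precisely the hard part of the proof. First, the setup is off: the diagrams in play are Manolescu-style bridge diagrams for $(L,p)$, branched over the $2n$ arc endpoints, not branched covers of a pointed Heegaard diagram for $(S^3,p)$. A bridge stabilization replaces one $A$-arc and one $B$-arc by two each and adds two branch points; upstairs this \emph{changes the Heegaard genus and the number of attaching circles} in a way that is not an index-$1$-$2$ Heegaard stabilization, neither a single one at $\wt z$ nor a pair of $\tau$-conjugate ones away from $\wt z$. So the dichotomy you propose does not cover the move you actually need, and there is no general theorem that equivariant pointed Heegaard diagrams for a $\ZZ/2$-cover are related by such ``equivariant Heegaard moves.''

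The paper deals with this by an explicitly different argument: it sets up an unoriented skein triangle. Let $\HD$ be the original branched-cover diagram, $\HD_1$ the stabilized one, $\HD_0$ a connected sum of $\HD$ with a $1$-bridge unknot diagram (so $\HD_0$ represents $\Sigma(L)\#(S^2\times S^1)$, with an obvious equivariant K\"unneth splitting $\CFa(\HD_0)\cong\CFa(\HD)\otimes\Field\langle\ttop,\tbot\rangle$), and $\HD'$ a Reidemeister-I twist of $\HD_1$. The skein sequence splits since $\HD$, $\HD'$, $\HD_1$ all represent $\Sigma(L)$, the triangle-counting map $g$ is automatically $\Field[\ZZ/2]$-equivariant, and one checks the composite
\[
\CFa(\HD)\hookrightarrow \CFa(\HD)\otimes\Field\langle\ttop\rangle\subset\CFa(\HD_0)\xrightarrow{g}\CFa(\HD_1)
\]
is an equivariant quasi-isomorphism. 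This skein-triangle mechanism is the missing ingredient; without it, or some substitute for it, the stabilization step of your proposal does not go through.
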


From this spectral sequence, we extract a numerical invariant
$q_\tau(K)\in\ZZ$, and prove:
\begin{theorem}\label{thm:q-tau-conc-homo}
  The invariant $q_\tau(K)$ is a homomorphism from the smooth concordance group.
\end{theorem}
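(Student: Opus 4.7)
The plan is to prove two things: first, that $q_\tau$ is invariant under smooth concordance of knots, and second, that $q_\tau$ is additive under connected sum. Combined with $q_\tau(\text{unknot})=0$, which is immediate from the definition since the involution on $\Sigma(\text{unknot})=S^3$ is trivial, these give the homomorphism property. (Concordance invariance together with additivity automatically handles inverses, since $K\#(-K)$ is slice and hence concordant to the unknot.)

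For concordance invariance, let $C\subset S^3\times[0,1]$ be a smooth concordance from $K_0$ to $K_1$. Its branched double cover is a cobordism $W$ from $\Sigma(K_0)$ to $\Sigma(K_1)$ carrying a $\ZZ/2$-action $\tau_W$ whose fixed set is $C$; since $C$ has genus zero, $W$ is a $\ZZ/2$-homology cobordism. The usual cobordism map $F_W$ on $\HFaDual$ refines to an equivariant chain map, because triple Heegaard diagrams can be chosen equivariantly and the flexible-transversality approach used to prove Theorem~\ref{thm:SS-invt} removes the need for equivariant complex structures. Hence $F_W$ induces a map of spectral sequences~\eqref{eq:sar-dcov}. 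Applying the composition law to $W$ and its reverse $\overline{W}$, both of which have genus-zero fixed sets, shows that $F_W$ and $F_{\overline{W}}$ induce mutually inverse isomorphisms on the localized equivariant cohomology (the $\theta$-tower), preserving $q_\tau$.

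For additivity, observe that the branched double cover of $K_1\#K_2$ is equivariantly $\Sigma(K_1)\#\Sigma(K_2)$, with the connected sum performed at fixed points and the involution extending those on the summands. Choosing equivariant Heegaard diagrams for each $\Sigma(K_i)$ with basepoints at fixed points and forming their equivariant connected sum gives, via the standard connected-sum quasi-isomorphism of Heegaard Floer homology, an equivariant quasi-isomorphism
\[
\CFa(\Sigma(K_1\#K_2))\simeq \CFa(\Sigma(K_1))\otimes_{\Field}\CFa(\Sigma(K_2))
\]
over $\Field[\ZZ/2]$. By functoriality of the equivariant construction (Theorem~\ref{thm:SS-invt}), the equivariant Floer cohomology of the connected sum is the tensor product; localizing at $\theta$ yields the K\"unneth formula for the $\theta$-tower, from which additivity of $q_\tau$ is immediate.

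The main obstacle is verifying that Heegaard Floer's cobordism maps refine to the equivariant setting with the expected functoriality: specifically, that the composition law holds at the chain level up to equivariant quasi-isomorphism, and that the connected-sum quasi-isomorphism can be made equivariant. Both should follow from applying the flexible-transversality approach of Theorem~\ref{thm:SS-invt} to equivariant triple and quadruple Heegaard diagrams, but the bookkeeping of equivariant transition maps is the main technical hurdle.
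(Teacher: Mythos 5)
Your overall architecture matches the paper's (the theorem is stated as ``a restatement'' of a concordance-invariance proposition and an additivity proposition), and your observation that inverses come for free from concordance invariance plus additivity is a legitimate shortcut past the paper's separate computation that $q_\tau(m(K))=-q_\tau(K)$. However, both of your two main steps contain gaps.

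For concordance invariance, the paper does \emph{not} use a composition law for equivariant cobordism maps. It instead proves (Lemma~\ref{lem:cob-or}) that for a connected based cobordism $T$, the induced map on the localized equivariant cohomology $\theta^{-1}\eHFa$ is an isomorphism, by explicitly tracking what births, deaths, saddles, and isotopies do to the ``orientation basis'' of the $\theta$-tower in quite-twisty diagrams. Combined with the fact that $\Sigma(T)$ is a rational homology cobordism (so the cobordism map preserves absolute grading) and a symmetry argument, this gives $q_\tau(K_0)=q_\tau(K_1)$ without ever needing $F_W\circ F_{\ol{W}}=\Id$. Your appeal to the composition law is not wrong in spirit, but it is a substantially harder technical route: composition laws in the equivariant setting would require chain-level control over the moduli of families of holomorphic polygons over the freed Floer complexes, which the paper is careful to avoid. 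And even granting the composition law, you still need to say why the resulting isomorphism preserves the absolute Maslov grading; this is where the rational homology cobordism hypothesis enters, and it deserves a sentence.

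For additivity, the claim that ``the equivariant Floer cohomology of the connected sum is the tensor product; localizing at $\theta$ yields the K\"unneth formula for the $\theta$-tower'' glosses over a real subtlety. While $\CFa(\Sigma(K_1\#K_2))\cong\CFa(\Sigma(K_1))\otimes_\Field\CFa(\Sigma(K_2))$ as $\Field[\ZZ/2]$-modules with the diagonal action, passing to $\eCFa$ does \emph{not} produce the naive $\Field[\theta]$-tensor product: the differential on $\eCFa(\Sigma(K_1\#K_2))$ has a term $\theta^{i+1}(x\otimes y + \tau x\otimes\tau y)$, while the tensor-product differential has $\theta^{i+1}(\tau x\otimes y + x\otimes\tau y)$. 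The paper constructs an explicit intertwiner $\phi(\theta^i x\otimes y)=\theta^i x\otimes\tau^i y$ between the two, but $\phi$ respects only the $\Field[\theta^2]$-module structure, not the full $\Field[\theta]$-structure. One then has to argue (using the universal coefficient theorem and the fact that detecting non-torsion is a $\Field[\theta^2]$-linear question) that this is still enough to conclude $q_\tau(K_1\#K_2)=q_\tau(K_1)+q_\tau(K_2)$. As stated, your K\"unneth assertion would fail at the level of $\Field[\theta]$-modules, so this is a genuine gap that needs the $\phi$-twist or an equivalent argument.
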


There are analogous sequences for $\HF^-(\Sigma(K))$
(Theorem~\ref{thm:new-dcov-minus}), from which we extract a sequence
of integer-valued concordance invariants $d_\tau(K,i)$,
$i>0\in\ZZ$. The invariant $d_\tau(K,1)$ is the $d$-invariant of the
spin structure on $\Sigma(K)$, denoted $2\delta(K)$ by
Manolescu-Owens~\cite{ManolescuOwens07:delta}. The invariants $d_\tau$
satisfy
\[
d_\tau(K,1)\leq d_\tau(K,2)\leq d_\tau(K,3)\leq\cdots.
\]

Explicit computations (Section~\ref{sec:computations}) show that the
invariants $q_\tau$ and $d_\tau(K,2)$ do not agree with previously
defined concordance invariants.

In the setting of symplectic Khovanov homology, there is an action of
the dihedral group $D_{2^m}$ for any $m$; see
Section~\ref{sec:ssseq}. The techniques of
Section~\ref{sec:equi-complex} give a chain complex $\eKCSymp(L)$,
quasi-isomorphic to the
symplectic Khovanov cochain complex and is a module over the
group ring $\Field[D_{2^m}]$.
\begin{theorem}\label{thm:KC-equi-invt}
  The quasi-isomorphism type of $\eKCSymp(L)$ over $\Field[D_{2^m}]$ is an invariant of the link $L$.
\end{theorem}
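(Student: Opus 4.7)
The plan is to reduce Theorem~\ref{thm:KC-equi-invt} to the finite-group (2-group) extension of Theorem~\ref{thm:SS-invt} given in Section~\ref{sec:2-groups}, applied to the pair of Lagrangians defining symplectic Khovanov homology, together with a verification that the moves between bridge presentations of a link can be realized equivariantly.

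First, I would verify that for a fixed bridge presentation of $L$ the two Lagrangians $L_0, L_1$ in Seidel-Smith's ambient symplectic manifold whose Floer complex computes $\KhSymp(L)$ are each preserved setwise by the $D_{2^m}$-action constructed in Section~\ref{sec:ssseq}. This is a direct check because the $D_{2^m}$-action there is built from iterated branched-cover symmetries of the bridge data, and both Lagrangians are assembled from the bridge arcs in a symmetric way. Once this is established, the construction of Section~\ref{sec:equi-complex}, in the form extended to finite 2-groups in Section~\ref{sec:2-groups}, produces $\eKCSymp(L)$ as a chain complex over $\Field[D_{2^m}]$ whose quasi-isomorphism type is independent of the auxiliary choices (almost complex structures, Hamiltonian perturbations, and the cascade-style data in the equivariant construction).

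Next, I would establish invariance under changes of bridge diagram. Two bridge diagrams for the same link are connected by a sequence of local Reidemeister-type moves, bridge-sphere isotopies, and stabilization/destabilization. Seidel-Smith, Manolescu and Waldron, and Abouzaid-Smith provide non-equivariant Hamiltonian isotopies or continuation-map quasi-isomorphisms for each such move; the task is to choose them $D_{2^m}$-equivariantly. For moves that do not change the bridge number, the $D_{2^m}$-action commutes with the local geometry used in those isotopies and hence equivariant choices exist, at which point the equivariant form of Theorem~\ref{thm:SS-invt} supplies the desired $\Field[D_{2^m}]$-linear quasi-isomorphism of $\eKCSymp$.

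The main obstacle will be the stabilization move, which changes the ambient symplectic manifold and may alter how $D_{2^m}$ acts. Here I expect that stabilization tensors the Floer complex with a simple ``trivial'' equivariant factor coming from the product geometry, so that at the chain level over $\Field[D_{2^m}]$ one obtains an explicit quasi-isomorphism between the pre-stabilization and post-stabilization equivariant complexes. Pinning down this equivariant stabilization argument, and checking that it is compatible with the $D_{2^m}$-action fixed once and for all, is the delicate point; assuming it, combined with equivariant invariance under the other bridge moves and independence of auxiliary choices, gives that the quasi-isomorphism class of $\eKCSymp(L)$ over $\Field[D_{2^m}]$ depends only on $L$.
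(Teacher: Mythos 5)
Your overall framework is correct — reduce to bridge moves (isotopies, handleslides, stabilizations), use the $D_{2^m}$-equivariant construction from Sections~\ref{sec:equi-complex} and~\ref{sec:2-groups}, and handle each move — and matches the paper's at the top level. But two of your three steps have genuine gaps where you substituted a guess for the mechanism, and in both cases the guess is not what actually works.

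For handleslides, you write that ``the $D_{2^m}$-action commutes with the local geometry used in those isotopies and hence equivariant choices exist.'' This is precisely the claim the paper declines to make. Seidel-Smith's handleslide isotopy (their Lemma~48) is not constructed in a way that manifestly respects the $D_{2^m}$-symmetry, and the paper explicitly sidesteps the issue: rather than equivariantizing the isotopy, it invokes Proposition~\ref{prop:non-equi-invar-gen-group}, which gives invariance under \emph{non-equivariant} Hamiltonian isotopies of one Lagrangian provided (a) the collections of homotopy classes are preserved, (b) a $D_{2^m}$-invariant almost complex structure achieving transversality for the auxiliary pair $(L_0,L_0')$ exists, and (c) the top class of $\HF(L_0,L_0')$ is represented by a $D_{2^m}$-fixed cycle. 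Checking (b) and (c) requires real work in the symplectic Khovanov setting: one has to analyze the fixed sets of the reflections $\sigma^i\tau$ (which are symmetric products of a Heegaard surface for the branched double cover, inside which positivity of domains and boundary injectivity rule out bad low-index disks) and of the rotations $\sigma^i$ (whose relevant fixed components are discrete). Your proposal silently skips all of this.

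For stabilizations, your expectation that ``stabilization tensors the Floer complex with a simple trivial equivariant factor coming from the product geometry'' is not how it works here. Stabilization changes the ambient space from $\ssspace{n}$ to $\ssspace{n+1}$, which is not a product, and the Lagrangians in the stabilized diagram are not of the form (old Lagrangian)$\times$(small piece) in a way that splits the Floer complex as a tensor product. The correct mechanism, carried out in Section~\ref{sec:ssseq-stab-invariance}, is a neck-stretching degeneration along a circle $C$ separating the new arc from the rest: in the limit, curves split into a piece $v_\infty^D$ over the new region, which one shows contributes exactly once for each Reeb chord, and a piece $v_\infty^E$ matching the pre-stabilization curve. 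The observation that this degeneration argument never breaks the $D_{2^m}$-symmetry is what lets it pass to the equivariant setting. You correctly flag stabilization as the delicate point, but the proposed ``tensor with a trivial factor'' picture would not survive an attempt to make it precise.
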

Theorem~\ref{thm:KC-equi-invt} gives several spectral sequences from
$\KCSymp(L)$. Restricting attention to the $\ZZ/2$ subgroup of
$D_{2^m}$ corresponding to a (particular) reflection gives
Seidel-Smith's spectral sequence~\eqref{eq:SS-to-nabla}. Restricting
attention to the cyclic group $\ZZ/2\subset D_{2^m}$ generated by rotation by $\pi$ gives a
spectral sequence
\begin{equation}\label{eq:sar-kh-symp-1}
  \KhSymp(L)\otimes\Field[\theta,\theta^{-1}]\Rightarrow (\Field\oplus\Field)^{\otimes|L|}\otimes\Field[\theta,\theta^{-1}]
\end{equation}
while restricting to the cyclic group $\ZZ/2^m$, $m>1$, gives a
spectral sequence
\begin{equation}\label{eq:sar-kh-symp-2}
  \KhSymp(L)\otimes\Field[\eta,\eta^{-1}]\Rightarrow (\Field\oplus\Field)^{\otimes|L|}\otimes\Field[\eta,\eta^{-1}],
\end{equation}
and these spectral sequences are invariants of
$L$. (Formulas~\eqref{eq:sar-kh-symp-1} and~\eqref{eq:sar-kh-symp-2}
are proved in Theorem~\ref{thm:khsymp-loc}, though with slightly
different variable names.)

\textbf{Additional context.} Other versions of equivariant Lagrangian
intersection Floer homology substantially predate Seidel-Smith's
work. In particular, Khovanov-Seidel~\cite[Section
5c]{KhS02:BraidGpAction} consider Floer homology in the presence of an involution, 
under hypotheses which allow one to achieve
equivariant transversality.  Equivariant versions of the Fukaya
category of a symplectic manifold with an involution were studied by
Seidel~\cite[Chapter 14]{SeidelBook}, again under restrictions on the
Lagrangians which allow equivariant transversality. Equivariant Fukaya
categories with respect to finite group actions were further developed
by Cho-Hong~\cite{CH:group-actions} with coefficients in fields of
characteristic $0$, by averaging over multi-valued perturbations to
avoid equivariant transversality issues. In a slightly different
direction, the case of free actions has been exploited by
Seidel~\cite[Section 8b]{Seidel:quartic} and Wu~\cite{Wu:free-action}.

\textbf{Organization.}
This paper is organized as follows. In Section~\ref{sec:background} we
recall a little background about classical equivariant cohomology.  In
Section~\ref{sec:equi-complex} we give a definition of the equivariant
Floer complex which does not require equivariant transversality.
When equivariant transversality can be achieved, 
we identify the new construction with the
more obvious definition of equivariant Floer homology in
Section~\ref{sec:equi-equi}, and prove Theorem~\ref{thm:SS-invt}. In
Section~\ref{sec:HF-applications} we turn to the first Heegaard Floer
applications, to the spectral sequences~(\ref{eq:Sig-to-K})
and~(\ref{eq:HFL-periodic}), and prove
Corollaries~\ref{cor:Hen-from-diag} and \ref{cor:Hen-nice} and
Theorems~\ref{thm:Hen1-small},~\ref{thm:Hen2-small},
and~\ref{thm:LT-is-Hen}, Corollaries~\ref{cor:rank-inequality-refined} and~\ref{cor:dim-periodic-link}, and Proposition~\ref{thm:LT-invt}.  
We conclude
Section~\ref{sec:equi-complex} by proving that equivariant Floer
homology gives an invariant of $2^m$-fold covers.  
In Section~\ref{sec:new-dcov} we
construct the spectral sequence~\eqref{eq:sar-dcov} and the concordance invariants $q_\tau(K)$ and $d_\tau(K,n)$ and study their basic properties. 
In
Section~\ref{sec:ssseq} we turn to the Seidel-Smith spectral sequence,
constructing the spectral sequences~\eqref{eq:sar-kh-symp-1} and~\eqref{eq:sar-kh-symp-2} proving
Theorem~\ref{thm:KC-equi-invt}, as well as
Corollaries~\ref{cor:SSss-invt} and \ref{cor:SS-periodic-ss-invt}. 

Since some of the notions, like homotopy coherence, may be unfamiliar
to some readers, we have added a number of explicit, elementary
examples. To avoid interrupting the narrative flow of the paper, we
have moved these examples to Appendix~\ref{app:examples}, which the
reader familiar with homotopy coherence is discouraged from
reading. Other useful introductions to homotopy coherence include the
books by Kamps-Porter~\cite[Chapter 5]{KampsPorter97:abstract} and
Lurie~\cite{Lurie09:topos}.

Throughout this paper, all chain complexes are over the field $\Field$
with two elements. Involutions will usually be denoted $\tau$.

\textbf{Acknowledgments.} We thank the contributors to nLab, which has
been invaluable in sorting through the (to us unfamiliar) higher
category theory literature. We also thank Mohammed Abouzaid, Aliakbar Daemi, Ailsa
Keating, Tyler Lawson, Tye Lidman, Ciprian Manolescu, Davesh Maulik, Steven Sivek,
Ivan Smith, Stephan Wehrli, and Jingyu Zhao for helpful
conversations. Finally, we thank the referees for many helpful
suggestions and corrections.

%%% Local Variables: 
%%% mode: latex
%%% TeX-master: "HEquivariant.tex"
%%% End: 

\section{Background on equivariant cohomology}\label{sec:background}
We briefly recall some homological algebra behind equivariant
cohomology. Recall that our base ring is always the field $\Field$
with two elements. Let $C_*$ be a chain complex (over $\Field$) with a
$\ZZ/2$-action, or equivalently a chain complex over
$\Field[\ZZ/2]$. Further assume that $C_*$ is bounded below, and is
finite dimensional in each homological grading. The \emph{equivariant
  cohomology} of $C_*$ is
\[
\eH^*(C_*)\coloneqq \ExtO{\Field[\ZZ/2]}(C_*,\Field),
\]
where $\ZZ/2$ acts trivially on $\Field$.  So, to compute
$\eH^*(C_*)$, one starts by taking, say, a projective
resolution $\wt{C}_*$ of $C_*$ over $\Field[\ZZ/2]$ and then computing
the homology of the complex
\[
\RHomO{\Field[\ZZ/2]}(C_*,\Field)\coloneqq \HomO{\Field[\ZZ/2]}(\wt{C}_*,\Field).
\]
Writing $\ZZ/2=\{1,\tau\}$, a particularly efficient projective
resolution $\wt{C}_*$ is the total complex of the bicomplex
\begin{equation}\label{eq:standard-res}
\wt{C}_*\coloneqq \bigl(0 \longleftarrow C_*\otimes\Field[\ZZ/2] \stackrel{1+\tau}{\longleftarrow} C_*\otimes\Field[\ZZ/2]\stackrel{1+\tau}{\longleftarrow}\cdots\bigr).
\end{equation}
for which the complex $\RHomO{\Field[\ZZ/2]}(C_*,\Field)$ takes the form
\begin{equation}\label{eq:standard-res-rhom}
0 \longrightarrow C^* \stackrel{1+\tau}{\longrightarrow} C^*\stackrel{1+\tau}{\longrightarrow}C^* \stackrel{1+\tau}{\longrightarrow}\cdots.
\end{equation}
(Here by an abuse of notation $\tau$ also denotes the automorphism of
$C^*$ induced by the $\ZZ/2$-action.)

In the special case that $C_*$ is the singular chain complex of a
space $X$ with a $\ZZ/2$-action, the complex $C_*(X\times E\ZZ/2)$ is
a projective resolution of $C_*(X)$, so the equivariant cohomology as
defined above agrees with the Borel equivariant cohomology of $X$.

Given a module $M$ over $\Field$, the (mod 2) \emph{group cohomology of
  $\ZZ/2$ with coefficients in $M$} is
\[
H^*(\ZZ/2,M)\coloneqq \ExtO{\Field[\ZZ/2]}(\Field,M).
\]
The special case $M=\Field$ is simply called the group cohomology
of $\ZZ/2$ and written $H^*(\ZZ/2)\coloneqq
H^*(\ZZ/2,\Field)=\eH^*(\Field)$. Composition gives a product on
$H^*(\ZZ/2)$ and an action of $H^*(\ZZ/2)$ on $\eH^*(C_*)$ for any
complex $C_*$. An easy computation shows that $H^*(\ZZ/2)\cong
\Field[\theta]$ where $\theta$ lies in degree $1$; the action of
$\theta$ on the complex~\eqref{eq:standard-res-rhom} is shifting it
one unit to the right.

The horizontal filtration (or equivalently, the $\theta$-action) on
the bicomplex~\eqref{eq:standard-res-rhom} induces a spectral sequence
with $E_1$-page given by
\[
\{0 \qquad H^*(C_*)\qquad H^*(C_*)\qquad \cdots\} = H^*(C_*)\otimes\Field[\theta],
\]
$E_2$-page given by $H^*(\ZZ/2, H^*(C_*))$, and $E_\infty$-page given
by $\eH^*(C_*)$.

One can recover the dual of the original chain complex from the
equivariant version by simply setting $\theta=0$, namely, $C^*\cong
\RHomO{\Field[\ZZ/2]}(C_*,\Field)/(\theta=0)$. To be more precise,
$C^*$ fits into a short exact sequence with the $\RHom$ complex
from~\eqref{eq:standard-res-rhom},
\[
0\to\RHomO{\Field[\ZZ/2]}(C_*,\Field)\stackrel{\cdot\theta}{\longrightarrow}\RHomO{\Field[\ZZ/2]}(C_*,\Field)\stackrel{\pi}{\longrightarrow}
C^*\to 0,
\]
where $\pi$ is the projection to the leftmost $C^*$
in~\eqref{eq:standard-res-rhom}.

If $C_*'$ is a chain complex over $\Field[\ZZ/2]$, and if $C_*\to
C'_*$ is a quasi-isomorphism over $\Field[\ZZ/2]$, we get an induced
map of bicomplexes~\eqref{eq:standard-res-rhom} so that the map of
associated graded complexes is a quasi-isomorphism.  Thus, we get
induced isomorphisms between the entire spectral sequences
$H^*(C'_*)\otimes\Field[\theta]\Rightarrow \eH^*(C'_*)$ and
$H^*(C_*)\otimes\Field[\theta]\Rightarrow \eH^*(C_*)$. Furthermore,
the induced map
$\RHomO{\Field[\ZZ/2]}(C'_*,\Field)\to\RHomO{\Field[\ZZ/2]}(C_*,\Field)$
gives an isomorphism $\eH^*(C'_*)\stackrel{\cong}{\longrightarrow}
\eH^*(C_*)$ on homology, as modules over $\Field[\theta]$. These quasi-isomorphisms further produce quasi-isomorphisms between short exact sequences
\[
\xymatrix{
0\ar[r]&\RHomO{\Field[\ZZ/2]}(C'_*,\Field)\ar[d]\ar[r]^-{\theta}&\RHomO{\Field[\ZZ/2]}(C'_*,\Field)\ar[d]\ar[r]^-{\pi}&(C')^*\ar[r]\ar[d]& 0\\
0\ar[r]&\RHomO{\Field[\ZZ/2]}(C_*,\Field)\ar[r]^-{\theta}&\RHomO{\Field[\ZZ/2]}(C_*,\Field)\ar[r]^-{\pi}&C^*\ar[r]& 0
}
\]
where the rightmost arrow is the dual of the original quasi-isomorphism between $C_*$ and $C'_*$.

Similar constructions apply when $K$ is any finite group: given a
chain complex $C_*$ with an action of $K$, we define
\begin{align*}
\eH[K]^*(C_*)&\coloneqq \ExtO{\Field[K]}(C_*,\Field)\\
H^*(K,M)&\coloneqq \ExtO{\Field[K]}(\Field,M)\\
H^*(K)&\coloneqq H^*(K,\Field)=\eH[K]^*(\Field);
\end{align*}
and $\eH[K]^*(C_*)$ comes with an action of the algebra $H^*(K)$, by
composition.  There is no longer as nice a resolution as
Formula~\eqref{eq:standard-res}, but choosing any particular free
resolution $R_*$ of $\Field$ over $\Field[K]$ (for instance, the bar
resolution) we get a resolution $C_*\otimes_\Field R_*$ of $C_*$, and
the grading on $R_*$ induces a filtration on this resolution. The
$E_1$-page now depends on $R_*$, but the $E_2$-page is $H^*(K,
H^*(C_*))$. 
Also, there is no longer an easy way to recover the
original chain complex from the equivariant version, but there is a
spectral sequence $\Tor_{H^*(K)}(\Field,\eH[K]^*(C_*))\Rightarrow
H^*(C_*).$

Given a chain complex $C'_*$ quasi-isomorphic to $C_*$ over
$\Field[K]$, the invariants $\eH[K]^*(C_*)$ and $\eH[K]^*(C'_*)$ are
isomorphic $H^*(K)$-modules, and each page (from the second on) of the
spectral sequences $H^*(K,H^*(C_*))\Rightarrow \eH[K]^*(C_*)$ and
$H^*(K,H^*(C'_*))\Rightarrow \eH[K]^*(C'_*)$ is isomorphic.

The upshot is that to prove equivariant cohomology, with its
module structure, and the spectral sequence converging to it, is an
invariant it suffices to prove that the original complex over
$\Field[K]$ is well-defined up to quasi-isomorphism over
$\Field[K]$.

%%% Local Variables: 
%%% mode: latex
%%% TeX-master: "HEquivariant.tex"
%%% End: 

\section{The equivariant Floer complex via non-equivariant complex structures}\label{sec:equi-complex} \label{sec:equicomplex}
Fix a symplectic manifold $(M,\omega)$, Lagrangians $L_0,L_1\subset
M$, and a symplectic involution $\tau\co M\to M$ preserving the $L_i$.
In this section we discuss how to define the (Borel) equivariant Floer
cohomology of $(L_0,L_1)$ without the need for equivariant
transversality. Other constructions with similar invariance properties are given
in~\cite[Section 2(a)]{SeidelSmith10:localization} (written in the
context of Morse theory) and~\cite[Section 3(b)]{Seidel:equi-pants}
(in the context of fixed-point Floer homology).

Roughly, the idea is that the space of almost complex structures 
is contractible and admits an action by $\ZZ/2$, and we will construct Floer cohomology parameterized
by points in this space. There are many ways to formulate precisely
what one means by Floer cohomology with this parameter
space. In~\cite{SeidelSmith10:localization,Seidel:equi-pants}, the
authors choose a Morse function on their parameter space
and
couple the $\dbar$ equation to the Morse flow equation on the space.
Here, we take an approach more along the lines of simplicial sets,
building a homotopy coherent functor from a category $\ECat\ZZ/2$ (see
Section~\ref{sec:hoco-diag-cx-strs}) to
the category of chain complexes, using an intermediary category of
almost complex structures. One feature of this construction is that it
generalizes easily from $\ZZ/2$ to other finite groups.

\subsection{Hypotheses and statement of result}\label{sec:hyps-and-statement}
While we do not require equivariant transversality, we do make
assumptions which allow us to construct a $\ZZ$-graded Floer complex
with coefficients in $\Field$ (rather than a Novikov ring).
Because we want Heegaard Floer invariants to fall under this rubric,
we give fairly weak but intricate assumptions. 

\begin{convention}
  For the rest of Section~\ref{sec:equicomplex}, fix a collection
  $\eta$ of homotopy classes of paths from $L_0$ to $L_1$; by the
  Floer complex we mean the summand of the Floer complex spanned by
  intersection points which, viewed as constant paths from $L_0$ to
  $L_1$, lie in one of the homotopy classes in $\eta$; in Heegaard
  Floer terminology, this corresponds to fixing a collection of
  $\SpinC$-structures.
\end{convention}

The hypotheses we require are:%
\begin{hypothesis}\label{hyp:Floer-defined}
  \begin{enumerate}[label=(J-\arabic*)]
  \item\label{item:J-1}
    For any loop of paths
    \[
    v\co \bigl([0,1]\times S^1,\{0\}\times S^1,\{1\}\times S^1\bigr)\to (M,L_0,L_1)
    \]
    such that the homotopy class of the path $[v|_{[0,1]\times
      \{\pt\}}]$ is in $\eta$, the area of $v$ (with respect to
    $\omega$) and the Maslov index of $v$ both vanish.
    (Compare~\cite[Theorem 1.0.1]{WW12:compose-correspond}.)

    (Note that this condition holds for the hat version of the Heegaard
    Floer complex for admissible Heegaard diagrams;
    compare~\cite[Lemma 4.12]{OS04:HolomorphicDisks}.)
  \item\label{item:J-2} The manifold $M$ is either compact or else is convex at
    infinity~\cite[Section 1.7.1]{EliashbergGromov91:convex}. Further,
    the Lagrangians $L_0$ and $L_1$ are either compact or else $M$ has
    finite type and $L_0$ and $L_1$ are conical at infinity and
    disjoint outside a compact set (see, e.g.,~\cite[Section
    5]{KhS02:BraidGpAction} or~\cite[Section 2.1]{Hendricks:symplecto}
    and the references there).
  \item There is an action of a finite group $H$ on $M$ by
    symplectomorphisms which preserve $L_0$, $L_1$, and the collection
    of homotopy classes of paths $\eta$ set-wise.
  \end{enumerate}
\end{hypothesis}

Under these hypotheses, we will define a model $\ECF(L_0,L_1)$ for the
Lagrangian Floer complex, which we will call the \emph{freed Floer
  complex} (Definition~\ref{def:equi-Floer} for $H=\ZZ/2$ and
Section~\ref{sec:2-groups} for arbitrary finite groups). The freed
Floer complex
is a module over $\Field[H]$; regarded as simply a complex over $\Field$ (by
restriction of scalars), $\ECF(L_0,L_1)$ is quasi-isomorphic to the
usual Lagrangian Floer complex $\CF(L_0,L_1)$.  The construction
depends on choices of (many) almost complex structures, but the
resulting quasi-isomorphism type, over $\Field[H]$, is independent
of these choices (Propositions~\ref{prop:indep-of-cx-str} and~\ref{prop:invariance-gen-group}). Further,
the equivariant quasi-isomorphism type of $\ECF(L_0,L_1)$ is invariant
under Hamiltonian isotopies through equivariant Lagrangians
(Proposition~\ref{prop:indep-of-Ham-isotopy} and~\ref{prop:invariance-gen-group}).  Under additional
hypotheses, $\ECF(L_0,L_1)$ is even invariant under non-equivariant
Hamiltonian isotopies (Propositions~\ref{prop:non-equi-invar} and~\ref{prop:non-equi-invar-gen-group}).
Conveniently, $\ECF(L_0,L_1)$ is a free module over $\Field[H]$,
so the equivariant Floer cochain complex $\eCF(L_0,L_1)$ is simply
$\HomO{\Field[H]}(\ECF(L_0,L_1),\Field)$, and the equivariant
Floer cohomology $\eHF(L_0,L_1)$ is
$\ExtO{\Field[H]}(\ECF(L_0,L_1),\Field)$. It follows from
homological algebra that the isomorphism type of $\eHF(L_0,L_1)$ and, in the case $H=\ZZ/2$,
the pages of the spectral sequence
$\HF^*(L_0,L_1)\otimes\Field[\theta]\Rightarrow \eHF(L_0,L_1)$, are
invariants of $(L_0,L_1)$ up to equivariant Hamiltonian isotopy. 

\subsection{The extended space of eventually cylindrical almost complex structures}
By a \emph{cylindrical complex structure} on $(M,\omega)$ we mean a
smooth, one-parameter family $J=J(t)$, $t\in[0,1]$, of smooth almost complex 
structures on $M$ so that each $J(t)$ is compatible with
$\omega$. Let $\JSpace_\cyl$ denote the space of cylindrical almost complex
structures with the $C^\infty$ topology. By an
\emph{eventually cylindrical almost complex structure} on
$(M,\omega)$ we mean a smooth map $\wt{J}\co \RR\to \JSpace_\cyl$
which is constant outside some compact set, modulo the equivalence
relation that for any $s_0\in\RR$, $\wt{J}(\cdot)\sim \wt{J}(\cdot +
s_0)$ (i.e., modulo translation in the source).  Let $\JSpace$ denote
the set of eventually cylindrical almost complex structures.

Define a topology on $\JSpace$ as follows. Given a compact interval
$I\subset\RR$ and an open set $V$ in the space of smooth maps
$\RR\times [0,1]\times TM\to TM$ (with the $C^\infty$ topology), let
$U(I,V)$ be the set of $\wt{J}\in\JSpace$ so that there is a representative for $\wt{J}$ which
is constant on $\RR\setminus I$ and which, when viewed as a map
$\RR\times [0,1]\times TM\to TM$, lies in $V$. Define the topology on
$\JSpace$ to be generated by the sub-basis consisting of all such sets
$U(I,V)$. In particular, a sequence
$\{\wt{J}_i\}$ converges if, for some representatives of the
equivalence classes of the $\wt{J}_i$, there is a compact set
$I\subset \RR$ so that all of the $\wt{J}_i$ are constant on each
component of $\RR\setminus I$ and the $\wt{J}_i|_I$ converge in the
$C^\infty$ topology.

An eventually cylindrical almost complex structure $\wt{J}$ specifies a
cylindrical almost complex structure $J_{-\infty}$ (respectively
$J_{+\infty}$) near $-\infty$ (respectively $+\infty$).  Let
$\JSpace(J_{-\infty},J_{+\infty})$ be the subspace of $\JSpace$
specifying $J_{\pm\infty}$ near $\pm\infty$, respectively. (The
$\JSpace(J_{-\infty},J_{+\infty})$'s are the path components of
$\JSpace$.)  If $\wt{J}\in \JSpace(J_{-\infty},J_{+\infty})$ then we
may write $\wt{J}|_{-\infty}=J_{-\infty}$ and
$\wt{J}|_{+\infty}=J_{+\infty}$. Note that $\JSpace_{\cyl}\subset
\JSpace$ as the constant maps $\RR\to\JSpace_{\cyl}$.

It will be convenient to add some extra points to $\JSpace$,
corresponding to multi-story almost complex structures. Specifically,
let $\ol{\ol{\JSpace}}(J,J')$ denote the set of finite, non-empty sequences 
\[
(\wt{J}^1,\wt{J}^2,\dots, \wt{J}^n)\in \JSpace(J=J_0,J_1)\times \JSpace(J_1,J_2)\times\cdots\times \JSpace(J_{n-1},J_n=J')
\]
of eventually cylindrical complex structures so that
$\wt{J}^i|_{+\infty}=\wt{J}^{i+1}|_{-\infty}$, starting at $J$ and
ending at $J'$. Define an equivalence
relation $\sim$ on $\ol{\ol{\JSpace}}$ be declaring that
\[
(\wt{J}^1,\dots,\wt{J}^{i-1},\wt{J}^i,\wt{J}^{i+1},\dots \wt{J}^n) \sim
(\wt{J}^1,\dots,\wt{J}^{i-1},\wt{J}^{i+1},\dots \wt{J}^n)
\]
if $\wt{J}^i\in \JSpace_{\cyl}$. Let $\ol{\JSpace}$ be the quotient of
$\ol{\ol{\JSpace}}$ by the equivalence relation $\sim$.

There is a composition operation 
$\circ\co \ol\JSpace(J_2,J_3)\times\ol\JSpace(J_1,J_2)\to\ol\JSpace(J_1,J_3)$ 
defined by $\wt{J}^2\circ \wt{J}^1=(\wt{J}^1,\wt{J}^2)$.

Next, we define a topology on $\ol{\JSpace}$. Fix a (one-story)
cylindrical-at-infinity almost complex structure $\wt{J}$ (or rather,
a representative of the equivalence class $\wt{J}$) and real numbers
$a_1<b_1<a_2<b_2<\cdots<a_k<b_k$ so that $\wt{J}$ is locally
constant in the $\RR$ coordinate except on the intervals $[a_i,b_i]$, $1\leq i\leq k$. Say that a cylindrical-at-infinity
almost complex structure $\wt{J}'$ is obtained from $(\wt{J},\{a_i,b_i\})$ by an
\emph{elementary expansion} if there is some $1\leq j<k$ and an
$r\in\RR$ so that for $t\in\RR$,
\[
  \wt{J}'(t)=
  \begin{cases}
    \wt{J}(t) & \text{if }t\leq b_i\\
    \wt{J}(b_i)=\wt{J}(a_{i+1}) & \text{if }b_i\leq t\leq a_{i+1}+r\\
    \wt{J}(t-r) & \text{if }t\geq a_{i+1}.
  \end{cases}
\]
We also say that the two-story almost complex structure $\wt{J}'$
whose first story $\wt{J}^1$ is given by 
\[
  \wt{J}^1(t)=
  \begin{cases}
    \wt{J}(t) & \text{if }t\leq b_i\\
    \wt{J}(b_i) &\text{if }t\geq b_i
  \end{cases}
\]
and whose second story $\wt{J}^2$ is given by
\[
  \wt{J}^2(t)=
  \begin{cases}
    \wt{J}(b_i) &\text{if }t\leq a_{i+1}\\
    \wt{J}(t) & \text{if }t\geq a_{i+1}
  \end{cases}
\]
is obtained from $\wt{J}$ by an elementary expansion. Note that we
have a distinguished collection of points $a'_1,b'_1,\dots,a'_k,b'_k$
so that $\wt{J}'$ is constant on $[b'_i,a'_{i+1}]$. (Here, we abuse
notation slightly in the case that $\wt{J}'$ has two stories.) More
generally, consider a pair $(\wt{J},\{a_i,b_i\})$, where $\wt{J}$ is a
multi-story almost complex structure and the $a_1<b_1<\cdots<a_k<b_k$
are a sequence of points in the source of $\wt{J}$ (a sequence of
copies of $\RR$).  We say that a multi-story almost complex structure
$(\wt{J}',\{a'_i,b'_i\})$ is obtained from $(\wt{J},\{a_i,b_i\})$ by
an \emph{elementary expansion} if $\wt{J}'$ is obtained by replacing
some story in $\wt{J}$ by an elementary expansion of that story and
$\{a'_i,b'_i\}$ are the new distinguished points. Finally, we say that
$\wt{J}'$ is obtained from $(\wt{J},\{a_i,b_i\})$ by an
\emph{expansion} if $\wt{J}'$ is obtained by some sequence of
elementary expansions.  Informally, $\wt{J}'$ is obtained from
$\wt{J}$ by an expansion if $\wt{J}'$ comes from making some of the
distinguished intervals on which $\wt{J}$ is constant longer,
including allowing those intervals to become infinitely long.

Now, fix an open neighborhood $V$ of $\wt{J}$ in $\JSpace$ and $2n$
points $\{a_i,b_i\}$ in $\RR$ (for some $n$) so that each $\wt{J}'\in V$ is
constant on each component of
$\RR\setminus\bigcup_{i=1}^{n}[a_i,b_i]$, and consider the set
$U(\wt{J},V,\{a_i,b_i\})$ of multi-story almost complex structures
which are obtained from $(\wt{J}',\{a_i,b_i\})$ for some $\wt{J}'\in
V$ by an expansion. The sets $U(\wt{J},V,\{a_i,b_i\})$ form a
sub-basis for the topology on $\ol{\JSpace}$.

Note, in particular, that if $\{a_i,b_i\}$ has $2n$ elements (i.e.,
$n$ intervals) then any complex structure in $U(\wt{J},V,\{a_i,b_i\})$
has at most $n$ stories. So, the space $\JSpace$ is an open subset of $\ol{\JSpace}$.

It is perhaps clearer to spell out what it means for a sequence of almost complex structures in $\ol{\JSpace}$ to converge.
% Each of the strata
% $\JSpace(J_0,J_1)\times\cdots\times \JSpace(J_{n-1},J_n)$ inherits a
% topology from the topology on $\JSpace$.
Given a sequence $\wt{J}^i$ in $\JSpace$, we say that
$\wt{J}^i$ converges to a point $(\wt{J}^1,\dots,\wt{J}^n)\in
\ol{\JSpace}$ if for some representatives of the $\wt{J}^i$
there is a sequence of $n$-tuples of open intervals $(I_1^i,\dots,
I_n^i)$ in $\RR$, $i=1,\dots,\infty$, so that:
\begin{itemize}
\item For each sufficiently large $i$, the $I^i_k$ are disjoint.
\item For each $k$, the length of $I_k^i$ is bounded independently of $i$.
\item The distance between $I_k^i$ and $I_{k+1}^i$ goes to infinity as $i\to\infty$.
\item Each $\wt{J}^i$ is constant on each component of
  $\RR\setminus(I_1^i\cup\cdots\cup I_n^i)$.
\item Let $\wt{J}^{i,k}$ be the result of restricting $\wt{J}^i$ to
  $I_k$ and extending by the constant map, to get an
  eventually cylindrical almost complex structure on all of
  $\RR$. Then we require that for each $k$, the $\wt{J}^{i,k}$
  converge to $\wt{J}^k$.
\end{itemize}
% Extend this notion of convergence to all of $\ol{\JSpace}$ as the
% weakest topology so that composition is continuous.

Like $\JSpace$, $\ol{\JSpace}$ is a disjoint union of path components
$\ol{\JSpace}(J_{-\infty},J_{+\infty})$.

As the composition notation suggests, the space $\ol{\JSpace}$ is a
topological category. The objects of $\ol{\JSpace}$ are the
cylindrical almost complex structures, $\JSpace_{\cyl}$,
$\Hom(J_1,J_2)=\ol{\JSpace}(J_1,J_2)$, which is a topological space, and
composition is $\circ$, which is continuous. The identity map of any
cylindrical almost complex structure $J$ is the constant path at
$J$; because of the equivalence relation we imposed, this is a
strict identity.

If there is a symplectic action of a group $G$ on $M$ then there is an
induced action of $G$ on $\ol{\JSpace}$ by functors;
$g_*(\wt{J})=dg\circ \wt{J}\circ (dg)^{-1}$.

It is well known that the space of almost complex structures on $M$
compatible with $\omega$ is contractible. It follows immediately that
$\JSpace_{\cyl}$ is contractible. Further, it is not hard to see that
$\JSpace(J_{-\infty},J_{+\infty})$ and
$\ol{\JSpace}(J_{-\infty},J_{+\infty})$ are at least weakly
contractible (i.e., have trivial homotopy groups). (This is used
in the proof Lemma~\ref{lem:exist-equi-diagram}.)

\subsection{Homotopy coherent diagrams of almost complex structures}
\label{sec:hoco-diag-cx-strs}
To build our equivariant complex we will need to fix a number of
choices.  Let $\ECat\ZZ/2$ denote the category with two objects, $a$
and $b$, with $\Hom(x,y)$ a single element for any
$x,y\in\{a,b\}$. (The category $\ECat\ZZ/2$ is a groupoid.)
Graphically:
\[
\ECat\ZZ/2=
\xymatrix{
a \ar@(dl, ul)^{\Id} \ar@/_/[r]_{\alpha}& b\ar@/_/[l]_\beta \ar@(dr, ur)_{\Id}
}
\]
There is an obvious $\ZZ/2$-action on $\ECat\ZZ/2$, exchanging $a$ and
$b$.

The choices we will need in order to define the equivariant complex
can be summarized as a generic, $\ZZ/2$-equivariant homotopy coherent
diagram $\ECat\ZZ/2\to \ol{\JSpace}$.  Given a category $\Cat$ and a
homotopy coherent diagram $\Cat\to\ol{\JSpace}$, we can apply Floer
theory to obtain a homotopy coherent diagram from $\Cat$ to simplicial
abelian groups (i.e., chain complexes). In particular, the homotopy
colimit of this diagram is a chain complex. In the case of the
$\ZZ/2$-equivariant diagram $\ECat\ZZ/2\to \ol{\JSpace}$, we get a
chain complex over $\Field[\ZZ/2]$ which, because the action of
$\ZZ/2$ on $\ECat\ZZ/2$ is free, is a free module.  Taking $\Hom$ to
the trivial $\Field[\ZZ/2]$-module $\Field$ and taking homology yields
the desired equivariant cohomology. The chain complex whose homology
is the equivariant Floer cohomology is given explicitly in
Observation~\ref{obs:concrete-hocolim}.

We explain this in a little more detail next. In this section, we
focus on diagrams $\Cat\to \ol{\JSpace}$, and restrict to the special
case of equivariant diagrams $\ECat\ZZ/2\to\ol{\JSpace}$ in
Section~\ref{sec:build-equi-cx}. Roughly, a homotopy coherent diagram
is a homotopy commutative diagram in which the homotopies are part of
the data, and commute up to appropriate higher homotopies (also part
of the data). The first precise definition of this notion was given by
Vogt:

\begin{definition}\label{def:ho-coh-diag-strs}\cite[Definition 2.3]{Vogt73:hocolim}
  Given a small category $\Cat$, a homotopy coherent $\Cat$-diagram in
  $\ol{\JSpace}$ consists of:
  \begin{itemize}
  \item For each object $x$ of $\Cat$, an object $F(x)$ of
    $\ol{\JSpace}$, i.e., a cylindrical almost complex structure.
  \item For each integer $n\geq 1$ and each sequence
    $x_0\stackrel{f_1}{\longrightarrow}\cdots\stackrel{f_n}{\longrightarrow}
    x_n$ of composable morphisms a continuous
    map $F(f_n,\dots,f_1)\co [0,1]^{n-1}\to\ol{\JSpace}(F(x_0),F(x_n))$
  \end{itemize}
  such that
  \begin{gather*}
    \begin{split}
      F(f_n,\dots,f_2,\Id)(t_1,\dots,t_{n-1})&=F(f_n,\dots,f_2)(t_{2},\dots,t_{n-1})\\
      F(\Id,f_{n-1},\dots,f_1)(t_1,\dots,t_{n-1}) &= F(f_{n-1},\dots,f_1)(t_{1},\dots,t_{n-2})
    \end{split}\\
    \begin{split}
      F(f_n,\dots,f_{i+1},&\Id,f_{i-1},\dots,f_1)(t_1,\dots,t_{n-1}) \\
      &= F(f_n,\dots,f_{i+1},f_{i-1},\dots,f_1)(t_{1},\dots,t_{i-1}\cdot t_{i},\dots,t_{n-1})\\
      F(f_n,\dots,f_1)&(t_1,\dots,t_{i-1},1,t_{i+1},\dots,t_{n-1}) \\
      &= F(f_n,\dots,f_{i+1}\circ f_i,\dots,f_1)(t_{1},\dots,t_{i-1},t_{i+1},\dots,t_{n-1})\\
      F(f_n,\dots,f_1)&(t_1,\dots,t_{i-1},0,t_{i+1},\dots,t_{n-1})\\
      &=
      [F(f_n,\dots,f_{i+1})(t_{i+1},\dots,t_{n-1})]\circ [F(f_i,\dots,f_1)(t_{1},\dots,t_{i-1})].
    \end{split}
  \end{gather*}
  
  (Here, $(t_1,\dots,t_{n-1})\in [0,1]^{n-1}$.)

  We will often drop the words ``homotopy coherent,'' and simply refer
  to a homotopy coherent $\Cat$-diagram as a \emph{$\Cat$-diagram} or
  as a diagram $F\co\Cat\to\ol{\JSpace}$.
\end{definition}

\begin{remark}
  A strictly commutative diagram is the special case that
  \[
    F(f_n,\dots,f_1)(t_{1},\dots,t_{n-1})=F(f_n\circ\cdots\circ f_1),
  \]
  for all $t_1,\dots,t_{n-1}$. (This example is important in
  Section~\ref{sec:equi-equi}.)
\end{remark}

See Example~\ref{eg:coherent-diag} for an explicit example of the first few terms in a
homotopy coherent $\ECat\ZZ/2$-diagram in $\ol{\JSpace}$.

\begin{definition}
  Given a point $\wt{J}=(\wt{J}^1,\wt{J}^2,\dots, \wt{J}^n)$ in
  $\ol{\JSpace}$, where each $\wt{J}^i\in \JSpace(J_{i-1},J_i)$ is not
  $\RR$-invariant, and points
  $x,y\in L_0\cap L_1$, by a \emph{$\wt{J}$-holomorphic disk from $x$ to $y$}
  we mean a sequence 
  \[
  (v^{0,1},\dots,v^{0,m_0},u^1,v^{1,1},\dots,v^{1,m_1},u^2,\dots,u^n,v^{n,1},\dots,v^{n,m_n})
  \]
  where
  \begin{itemize}
  \item each $v^{i,j}$ is a $J_i$-holomorphic Whitney disk with boundary on
    $L_0$ and $L_1$ connecting some points
    $x^{i,j-1}$ and $x^{i,j}$ in $L_0\cap L_1$,
  \item each $u^i$ is a $\wt{J}_i$-holomorphic Whitney disk with boundary on
    $L_0$ and $L_1$ connecting some points $x^{i-1}$ and $x^{i}$ in
    $L_0\cap L_1$,
  \item for $i=1,\dots, n$, $x^{i,0}=x^i$,
  \item for $i=0,\dots,n-1$, $x^{i,m_i}=x^{i+1}$, 
  \item $x^{0,0}=x$, and $x^{n,m_n}=y$.
  \end{itemize}
  (The numbers $m_i$ are allowed to be zero, i.e., the $v^{i,j}$ are
  not required to appear.)

  Given a map $\wt{J}=\wt{J}(t_1,\dots,t_{k})\co [0,1]^k\to
  \ol{\JSpace}$ and points $x,y\in L_0\cap L_1$ let $\cM(x,y;\wt{J})$
  denote the moduli space of pairs $(\vec{t},u)$ where
  $\vec{t}\in[0,1]^k$ and $u$ is a $\wt{J}(\vec{t})$-holomorphic disk
  from $x$ to $y$.

  We say that a diagram $F\co\Cat\to \ol{\JSpace}$ is
  \emph{sufficiently generic} if for any sequence of composable arrows
  $(f_1,\dots,f_n)$ in $\Cat$ and any $x,y\in L_0\cap L_1$, the moduli
  space $\cM(x,y;F(f_n,\dots,f_1))$ is transversely cut out by the
  $\dbar$-equation.
\end{definition}

Note that the space $\cM(x,y;\wt{J})$ decomposes as a union 
\[
\cM(x,y;\wt{J})=\bigcup_{\phi\in\pi_2(x,y)}\cM(\phi;\wt{J})
\]
according to homotopy classes of Whitney disks. If $\wt{J}$ is a
$k$-parameter family of almost complex structures then the expected
dimension of $\cM(\phi;\wt{J})$ is $\mu(\phi)+k$, where $\mu$ is the Maslov index.

\begin{lemma}\label{lem:exist-suff-generic}
  Let $F\co\Cat\to\ol{\JSpace}$ be a diagram and suppose that for each
  object $x\in\ob(\Cat)$ the moduli spaces of disks with respect to
  the cylindrical complex structure $F(x)$ are transversely cut out.
  Then there is a sufficiently generic $F'\co\Cat\to\ol{\JSpace}$
  arbitrarily close to $F$. Further, if the restriction of
  $F(f_m,\dots,f_1)$ is already sufficiently generic for all sequences
  $(f_1,\dots,f_m)$ of length $m\leq n$ then one can choose $F'$ to
  agree with $F$ for all such sequences.
\end{lemma}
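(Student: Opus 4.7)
The plan is induction on the length $n$ of composable sequences. The key observation is that the coherence relations of Definition~\ref{def:ho-coh-diag-strs} force the restriction of $F'(f_n,\ldots,f_1)\co[0,1]^{n-1}\to\ol{\JSpace}$ to the boundary $\partial[0,1]^{n-1}$ to be completely determined by the values of $F'$ on strictly shorter sequences: the face $\{t_i=1\}$ recovers the value on the shortened sequence where $f_{i+1}$ is composed with $f_i$, while the face $\{t_i=0\}$ recovers the $\circ$-composition in $\ol{\JSpace}$ of values on two subsequences. Thus the inductive step reduces to a relative extension-and-perturbation problem: given sufficiently generic boundary data on $\partial[0,1]^{n-1}$, produce a sufficiently generic extension over $[0,1]^{n-1}$ close to the prescribed $F(f_n,\ldots,f_1)$. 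The ``Further'' clause of the lemma is then automatic, since data that are already generic on sequences of length $\leq n$ are preserved at each stage of the induction.

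For the base case $n=1$, each $F'(f)$ is a single eventually cylindrical almost complex structure whose cylindrical ends $F(x),F(y)$ are already transverse by hypothesis; a standard Floer-theoretic Sard--Smale argument over a Banach space of perturbations supported away from $\pm\infty$ produces an arbitrarily small perturbation achieving transversality of every $\cM(\phi;F'(f))$ for homotopy classes in $\eta$. For the inductive step with $n\geq 2$, I would first extend the boundary data to a continuous map $[0,1]^{n-1}\to\ol{\JSpace}$ close to the given $F(f_n,\ldots,f_1)$, which is possible because each path-component $\ol{\JSpace}(F(x_0),F(x_n))$ is weakly contractible (so any map from $S^{n-2}$ bounds a map from $D^{n-1}$), and then apply a parametric Sard--Smale argument to the projection of the universal moduli space onto the Banach manifold of $(n-1)$-parameter families of eventually cylindrical almost complex structures with the prescribed boundary data. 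This yields a residual set of extensions arbitrarily close to the chosen one making every $\cM(\phi;F'(f_n,\ldots,f_1))$ transversely cut out.

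The main obstacle will be handling transversality along the stratified boundary of the cube, where the map lands in multi-story strata of $\ol{\JSpace}$. At a codimension-$k$ face of $[0,1]^{n-1}$ with several coordinates equal to $0$, the corresponding disks are broken into $k{+}1$ stories whose stories must be matched at intermediate intersection points in $L_0\cap L_1$. Transversality of such broken configurations has to be deduced story-by-story from transversality of the substories (available by the inductive hypothesis together with the coherence relations), combined with the automatic transversality of the joint evaluation map to the finite set $L_0\cap L_1$; a dimension count using the additivity of the Maslov index then shows the expected codimension of the broken stratum matches the codimension of the face, so the interior perturbation can be chosen generically to avoid it. Compactness of the relevant moduli spaces, guaranteed by Hypothesis~\ref{hyp:Floer-defined} (the finite-type/convexity assumptions together with the energy bound from restricting to the homotopy classes $\eta$), ensures that within each Maslov index only finitely many configurations contribute, so the Sard--Smale argument applies inductively on Maslov index and on codimension in the cube.
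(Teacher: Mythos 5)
Your broad strategy matches the paper's: induct on the length $n$, observe that the coherence relations of Definition~\ref{def:ho-coh-diag-strs} force $F'(f_n,\dots,f_1)|_{\partial[0,1]^{n-1}}$ to be determined by shorter sequences, and reduce to a relative perturbation problem over the cube handled via Sard--Smale. However, there is a genuine gap at the point you yourself flag as the ``main obstacle,'' and your proposed fix does not actually close it.

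The difficulty is not, as your final paragraph suggests, that broken configurations on codimension-$k$ faces need to be shown to have the expected codimension in the moduli space. The difficulty is that the Banach manifold of perturbations you want to apply Sard--Smale to is not well-defined as you state it. On the faces $\{t_i=0\}$ the boundary data takes values in the multi-story strata of $\ol{\JSpace}$, which is not a Banach manifold there; so ``the Banach manifold of $(n-1)$-parameter families of eventually cylindrical almost complex structures with the prescribed boundary data'' does not make literal sense (an eventually cylindrical structure is one-story, while the prescribed boundary data is not). Your story-by-story transversality argument and Maslov-index dimension count speak to the stratification of the compactified moduli space, not to the Banach-space geometry of the parameter space, and so they do not repair this.

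The paper closes exactly this gap with a collar trick you are missing: extend $F'$ continuously from $\partial[0,1]^{n-1}$ to a small collar neighborhood $U$ so that on $U\setminus\partial[0,1]^{n-1}$ the values lie in the single-story space $\JSpace$, and then invoke \emph{openness of sufficient genericity} together with the inductive hypothesis to conclude that $F'$ is already sufficiently generic on all of $U$ once $U$ is small enough. This shifts the free boundary of the remaining extension problem from $\partial[0,1]^{n-1}$ (multi-story data) to the interior sphere $S^{n-2}=\partial U\setminus\partial[0,1]^{n-1}$ (single-story data). Over the region $V$ inside $S^{n-2}$ one is now genuinely looking at maps $\overline{V}\to\JSpace$ with fixed single-story boundary values, which is a Banach manifold, and the standard universal-moduli-space / Sard--Smale argument applies cleanly. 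You should also note the separate regularity upgrade from $C^\ell$ to $C^\infty$ perturbations, as in McDuff--Salamon, which your sketch omits; but the central missing ingredient is the collar and the openness of genericity.
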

\begin{proof}[Proof sketch]
  This follows from standard transversality arguments, which are
  explained nicely by McDuff-Salamon in a closely related
  setting~\cite[Chapter 3]{MS04:HolomorphicCurvesSymplecticTopology}. 
  We outline the key steps.

  The proof is by induction on $n$, the length of the sequence, as in
  the second half of the lemma's statement. Suppose that a
  sufficiently generic $F'$ has been chosen already for all sequences
  of length $m<n$. Consider a sequence $(f_1,\dots,f_n)$ of
  non-identity morphisms. Then $F'(f_n,\dots,f_1)$ should be a family
  of almost complex structures parameterized by $[0,1]^{n-1}$, close
  to $F(f_n,\dots,f_1)$, and so that the restriction
  $F'(f_n,\dots,f_1)|_{\bdy([0,1]^{n-1})}$ of $F'$ to the boundary of
  the cube is determined by Definition~\ref{def:ho-coh-diag-strs}. 

  Note that the condition of being sufficiently generic is local, in
  the following sense: given an open set $U\subset [0,1]^{n-1}$, and
  $F'\co U\to \ol{\JSpace}$, it makes sense to ask whether $F'$ is
  sufficiently generic on $U$, i.e., whether the moduli space
  $\cM(x,y;F'|_{U})$ is transversely cut out.

  Let $U$ be a small neighborhood of $\bdy([0,1]^{n-1})$ so that the
  boundary of $U$ consists of the boundary of
  $[0,1]^{n-1}$ and a smooth sphere $S^{n-2}$. Extend $F'$ to $U$
  continuously, and so that on $U\setminus\bdy([0,1]^{n-1})$ the map
  $F'$ takes values $\JSpace$, the space of 1-story
  cylindrical-at-infinity almost complex structures. By shrinking $U$
  we may arrange that $F'$ is arbitrarily close to $F$ on $U$. If $U$
  is small enough, then the $F'$ is sufficiently generic over $U$:
  $F'|_{\bdy([0,1]^{n-1})}$ is sufficiently generic by induction, and
  it is not hard to show that the condition of being sufficiently
  generic is open.

  Now, let $V$ be the inside of $S^{n-2}$, i.e., the component of
  $[0,1]^{n-1}\setminus S^{n-2}$ on which $F'$ has not yet been
  defined.  The next step is to use Sard's theorem to extend $F'$ to
  $V$.  Given intersection points $x,y\in L_0\cap L_1$, let
  $\mathcal{B}(x,y)$ be an appropriate weighted Sobolev space of maps 
  \[
    (\RR\times [0,1],\RR\times\{0\},\RR\times\{1\})\to (M,L_0,L_1)
  \]
  converging exponentially to $x$ at $-\infty$ and $y$ at $+\infty$.
  (See, for example, Seidel~\cite[Section (8h)]{SeidelBook} for
  details on the functional analytic setup.) There is also a space
  $\Map(\overline{V},\JSpace)$ of maps $\overline{V}\to \JSpace$
  extending $F'|_{S^{n-2}}$, which we topologize as a subspace of the
  space of $C^\ell$ maps $\overline{V}\times [0,1]\times TM\to TM$,
  for an appropriate $\ell$. Finally, there is a bundle
  $\mathcal{E}\to\mathcal{B}(x,y)\times\Map(\overline{V},\JSpace)\times\overline{V}$
  whose fiber over $(u,J,p)$ is a weighted Sobolev space of
  $(0,1)$-forms on $[0,1]\times\RR$ (with respect to $J(p)$) valued in
  $u^*TM$. The universal $\dbar$-operator is a section
  \[
    \dbar\co \mathcal{B}(x,y)\times\Map(\overline{V},\JSpace)\times\overline{V}\to \mathcal{E}.
  \]
  It follows from the argument given in McDuff-Salamon~\cite[Proof of Proposition 3.2.1]{MS04:HolomorphicCurvesSymplecticTopology} that
  the operator $\dbar$ is transverse to the $0$-section. Thus, the
  \emph{universal moduli space} $\dbar^{-1}(0)$ is a $C^k$ Banach
  manifold (where $k$ depends on $\ell$ and the Maslov index
  difference between $x$ and $y$). Any regular value of the projection
  $\dbar^{-1}(0)\to \Map(\overline{V},\JSpace)$ is a sufficiently
  generic almost complex structure. Smale's infinite dimensional
  version of Sard's theorem guarantees that the set of regular values
  is dense, so, in particular, we can find a regular value arbitrarily
  close to $F|_{V}$.
  
  Taking $F'|_V$ to be such a regular value gives a $C^k$ sufficiently
  generic almost complex structure as close as desired to $F$.  While
  $C^k$ almost complex structures for finite $k$ are enough for the
  applications in this paper, a short further argument as in
  McDuff-Salamon\cite[Proof of Theorem 3.1.6(II)]{MS04:HolomorphicCurvesSymplecticTopology} 
  guarantees the existence of generic smooth
  complex structures, as well.
\end{proof}

Notice that in order to define a homotopy coherent diagram, the key
ingredients we needed about $\ol{\JSpace}$ (or any other topological
category) were the product and the interval $[0,1]$. We can give an
analogous definition in the category of chain complexes, using
$\otimes$ in place of the product and the chain complex 
\[
I_*\coloneqq C_*^{\mathrm{simp}}([0,1]) = 
\vcenter{\xymatrix{
& \{0,1\} \ar[dl] \ar[dr]& \\
\{0\} & & \{1\} 
}}
\]
in place of $[0,1]$. The inclusions $\{0\}\hookrightarrow [0,1]$ and
$\{1\}\hookrightarrow [0,1]$ induce chain maps $\iota_0,\iota_1\co
\Field\hookrightarrow I_*$ and the projection $[0,1]\to \{\pt\}$
induces a chain map $\pi\co I_*\to\Field$.  The multiplication map
$[0,1]^2\to [0,1]$, $(x,y)\mapsto xy$ induces the ``multiplication''
$m\co I_*\otimes I_*\to I_*$ given by
\begin{align*}
&m(\{0\}\otimes \{0\})=m(\{0\}\otimes\{1\})=m(\{1\}\mathrlap{\otimes\{0\})=\{0\}} & m(\{1\}\otimes \{1\})=\{1\}& \\
&m(\{0,1\}\otimes \{0\})=m(\{0\}\otimes \{0,1\})=0 & 
{m(\{0,1\} \otimes \{1\}) = m(\{1\}\otimes \{0,1\})=\{0,1\}}& \\
&m(\{0,1\}\otimes\{0,1\})=0.
\end{align*}

(For a formal setting which generalizes both
examples, see for instance~\cite[Chapter 5]{KampsPorter97:abstract}).
We spell out the notion of homotopy coherent diagrams:
\begin{definition}\label{def:ho-coh-diag-cxs}
 A homotopy coherent $\Cat$-diagram in chain complexes consists of:
 \begin{itemize}
 \item For each object $x$ of $\Cat$, a chain complex $G(x)$.
 \item For each $n\geq 1$ and each sequence 
   $x_0\stackrel{f_1}{\longrightarrow}\cdots\stackrel{f_n}{\longrightarrow} x_n$
   of composable morphisms a chain map $G(f_n,\dots,f_1)\co
   I_*^{\otimes (n-1)}\otimes G(x_0)\to G(x_n)$
 \end{itemize}
 such that
 \begin{multline*}
   G(f_n,\dots,f_1)(t_1\otimes\dots\otimes t_{n-1})\\
   =
   \begin{cases}
     G(f_n,\dots,f_2)(\pi(t_1)\otimes t_{2}\otimes\dots\otimes t_{n-1}) & f_1=\Id\\
     G(f_n,\dots,f_{i+1},f_{i-1},\dots,f_1)(t_{1} \otimes \dots \otimes m(t_{i-1}\otimes t_{i}) \otimes \dots \otimes t_{n-1}) & f_i=\Id, 1<i<n\\
     G(f_{n-1},\dots,f_1)(t_{1}\otimes\dots\otimes t_{n-2}\otimes \pi(t_{n-1})) & f_n=\Id\\
     G(f_n,\dots,f_{i+1}\circ f_i,\dots,f_1)(t_{1}\otimes\dots \otimes t_{i-1}\otimes t_{i+1}\otimes \dots\otimes t_{n-1}) & t_i=\{1\}\\
     [G(f_n,\dots,f_{i+1})(t_{i+1}\otimes\dots\otimes t_{n-1})]\circ [G(f_i,\dots,f_1)(t_{1}\otimes\dots\otimes t_{i-1})] & t_i=\{0\}.
   \end{cases}
 \end{multline*}
 
 (Here, $(t_1\otimes\dots\otimes t_{n-1})\in I_*^{\otimes(n-1)}$.)

 As in the case of diagrams in $\ol{\JSpace}$, we will usually drop
 the words ``homotopy coherent'', and refer to homotopy coherent diagrams in chain complexes simply as diagrams
 $G\co \Cat\to \Complexes$.
\end{definition}

Observe that a homotopy coherent $\Cat$-diagram of chain complexes is
determined by the complexes $G(x)$ and the maps 
\[
G_{f_n,\dots,f_1} = G(f_n,\dots,f_1)(\{0,1\}\otimes\cdots\otimes\{0,1\})\co G(x)_m\to G(y)_{m+n-1}
\]
where $x$ is the source of $f_1$, $y$ is the target of $f_n$, and the
subscripts on $G(x)$ and $G(y)$ indicate the gradings. These maps satisfy compatibility conditions
\begin{itemize}
\item $G_{\Id_{x_0}} = \Id_{G(x_0)}$
\item $G_{f_n,\dots,f_1} = 0$ if $n>1$ and any $f_i = \Id$

\item $\partial \circ G_{f_n,\dots,f_1} = G_{f_n,\dots,f_1} \circ \partial + \sum_{i=1}^{n-1} G_{f_n,\dots, f_{i+1} \circ f_i, \dots,f_1} + \sum_{i=1}^{n-1} G_{f_n,\dots,f_{i+1}} \circ G_{f_i,\dots,f_1}$.
\end{itemize}

See Example~\ref{eg:coherent-diag-complexes} for the first few terms
in a homotopy coherent $\ECat\ZZ/2$-diagram in $\Complexes$. 

A homotopy coherent diagram $G$ is called \emph{strict} if
$G_{f_n,\dots,f_1}=0$ whenever $n>1$.

Given a sufficiently generic $\Cat$-diagram $F$ in $\ol{\JSpace}$,
Floer theory produces a homotopy coherent diagram of chain
complexes.
\begin{construction}\label{const:F-to-G}
  Fix a sufficiently generic $F\co \Cat\to \ol{\JSpace}$. We build a
  diagram $G\co \Cat\to \Complexes$ as follows:
  \begin{itemize}
  \item For each object $a\in\ob(\Cat)$, let
    $G(a)=(\CF(L_0,L_1),\bdy_{F(a)})$ be the Lagrangian intersection
    Floer chain complex computed with respect to the almost complex
    structure $F(a)$.
  \item Fix a sequence $(f_1,\dots,f_n)$ of composable morphisms
    and a generator $[\sigma]\in I_*^{\otimes (n-1)}$
    corresponding to some $k$-dimensional face
    $\sigma$ of $[0,1]^{n-1}$. There is an associated $k$-parameter
    family of almost complex structures
    $F(f_n,\dots,f_1)|_\sigma$. For each intersection point $x\in
    L_0\cap L_1$ define
    \[
    G(f_n,\dots,f_1)(\sigma\otimes x) = \sum_{y\in L_0\cap L_1}\sum_{\substack{\phi\in\pi_2(x,y)\\ \mu(\phi)=1-n}}\#\cM(\phi;F(f_n,\dots,f_1)|_\sigma)y.
    \]
  \end{itemize}
\end{construction}

Note that, while we will construct the equivariant Floer cohomology
below, the chain complexes $\CF(L_0,L_1)$ compute the Lagrangian
intersection Floer homology: we will dualize later. 

\begin{remark}\label{remark:there-is-relative-z-grading}
  Recall that we have fixed a collection of homotopy classes $\eta$ of
  paths from $L_0$ to $L_1$, and by Floer complex we mean the summand
  generated by intersection points in the classes
  $\eta$. Consequently, by Hypothesis~\ref{hyp:Floer-defined}, the
  homotopy coherent diagram $G$ from Construction~\ref{const:F-to-G}
  is a direct sum of relatively $\ZZ$-graded diagrams in the following
  sense: 
  %New version below.
  There is a decomposition at every vertex,
  $G(a)=\oplus_{\spinc\in\eta}G(a)_{(\spinc)}$, and at every map,
  $G(f_n,\dots,f_1)=\oplus_{\spinc\in\eta}G(f_n,\dots,f_1)_{(\spinc)}$. For
  each $\spinc\in\eta$, each of the groups $G(a)_{(\spinc)}$ is
  $\ZZ$-graded so that each of the maps $G(f_n,\dots,f_1)_{(\spinc)}$
  is grading preserving; and this $\ZZ$-grading is well-defined up to
  an overall translation (depending on $\spinc$).
  %Old version below.
  % there is a $\ZZ$-grading on each vertex $G(a)$ so that
  % $G(f_n,\dots,f_1)\co I_*^{\otimes(n-1)}\otimes G(a)\to G(b)$ is
  % grading-preserving. (These $\ZZ$-gradings are well-defined up to an
  % overall translation by $\ZZ^{|\eta|}$; for instance, if $\eta$
  % consists of a single homotopy class, the grading on the entire
  % diagram is well-defined up to a single overall translation.)
\end{remark}

\begin{lemma}
  Construction~\ref{const:F-to-G} defines a homotopy coherent diagram
  of chain complexes.
\end{lemma}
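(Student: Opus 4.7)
Rewriting $G$ in terms of the structure maps $G_{f_n,\dots,f_1} = G(f_n,\dots,f_1)(\{0,1\}\otimes\cdots\otimes\{0,1\})$, I need to verify three conditions: (i)~each $(G(a),\bdy)$ is a chain complex; (ii)~$G_{\Id_a}=\Id_{G(a)}$ and $G_{f_n,\dots,f_1}=0$ whenever $n>1$ and some $f_i=\Id$; and (iii)~the structure equation
\[
\bdy\circ G_{f_n,\dots,f_1}+G_{f_n,\dots,f_1}\circ\bdy = \sum_{i=1}^{n-1}G_{f_n,\dots,f_{i+1}\circ f_i,\dots,f_1}+\sum_{i=1}^{n-1}G_{f_n,\dots,f_{i+1}}\circ G_{f_i,\dots,f_1}.
\]
Each of these will be obtained by counting, modulo $2$, the ends of an appropriate $1$-dimensional parameterized moduli space. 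The central object is, for each composable sequence $(f_1,\dots,f_n)$ and each $\phi\in\pi_2(x,y)$ with $\mu(\phi)=2-n$, the moduli space $\cM(\phi;F(f_n,\dots,f_1))$ of pairs $(\vec t,u)$ with $\vec t\in[0,1]^{n-1}$ and $u$ a $F(f_n,\dots,f_1)(\vec t)$-holomorphic representative of $\phi$. Sufficient genericity of $F$ makes this a smooth $1$-manifold.

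To control its boundary, I first invoke Hypothesis~\ref{hyp:Floer-defined}: condition~\ref{item:J-1} rules out disk and sphere bubbling (since loops of disks have zero area and Maslov index), and condition~\ref{item:J-2} confines holomorphic strips to a compact region, so Gromov compactness applies. The ends of the $1$-manifold then fall into three types: (a)~strip-breakings in the interior of $[0,1]^{n-1}$, contributing $\bdy\circ G_{f_n,\dots,f_1}+G_{f_n,\dots,f_1}\circ\bdy$; (b)~configurations with $\vec t$ on a face $t_i=0$, where the fifth relation of Definition~\ref{def:ho-coh-diag-strs} identifies the family with the two-story composition $F(f_n,\dots,f_{i+1})(t_{i+1},\dots,t_{n-1})\circ F(f_i,\dots,f_1)(t_1,\dots,t_{i-1})$, so disks for it decompose as pairs with Maslov indices $1-i$ and $1-(n-i)$, contributing $G_{f_n,\dots,f_{i+1}}\circ G_{f_i,\dots,f_1}$; and (c)~configurations with $\vec t$ on a face $t_i=1$, where the fourth relation identifies the family with $F(f_n,\dots,f_{i+1}\circ f_i,\dots,f_1)$ over $[0,1]^{n-2}$, contributing $G_{f_n,\dots,f_{i+1}\circ f_i,\dots,f_1}$. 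Summing to zero yields~(iii); specializing to $n=1$ gives $\bdy^2=0$, which is~(i). For~(ii), the first three relations of Definition~\ref{def:ho-coh-diag-strs} show that $F(f_n,\dots,f_1)$ factors through a projection $[0,1]^{n-1}\to[0,1]^{n-2}$, so $\cM(\phi;F(f_n,\dots,f_1))$ is the product of $[0,1]$ with a lower-dimensional moduli space of expected dimension $-1$, which is empty by genericity. The identity $G_{\Id_a}=\Id$ follows because $F(\Id_a)$ is $\RR$-invariant and the only rigid holomorphic disks of Maslov index zero for an $\RR$-invariant structure are the constants at intersection points.

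The main obstacle is the analytic input: verifying that the enumerated ends are in bijection with the actual ends of the compactified $1$-manifold. This requires the usual Gromov compactness together with a gluing theorem that reconstructs a unique family near each broken configuration. Both are standard in Lagrangian Floer theory, and the formulation of Hypothesis~\ref{hyp:Floer-defined} is tailored to make them apply here; indeed, the argument is the same boundary-counting used to establish the $A_\infty$-relations in a Fukaya category, now relative to a cubical parameter rather than an associahedron. No new analytic phenomena arise from varying the almost complex structure over $[0,1]^{n-1}$, because the image of $F(f_n,\dots,f_1)$ is a compact subset of $\ol{\JSpace}$ and the multi-story degenerations at faces $t_i=0$ are already represented as points of $\ol{\JSpace}$, so they match the output of Gromov compactness.
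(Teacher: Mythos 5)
Your proof is correct and takes the same approach as the paper, which merely cites ``standard arguments'' about the ends of the one-dimensional parameterized moduli spaces $\bigcup_{\mu(\phi)=2-n}\cM(\phi;F(f_n,\dots,f_1)|_\sigma)$ and invokes Hypothesis~\ref{hyp:Floer-defined} exactly as you do. You have filled in the details of the boundary count (strip breaking, the two-story faces $t_i=0$, and the collapsed-composition faces $t_i=1$) and of the degenerate cases in a way consistent with the paper; the only slight misstatement is the parenthetical claim that ``specializing to $n=1$ gives $\bdy^2=0$'' --- for $n=1$ the structure equation only asserts $G_{f_1}$ is a chain map, and $\bdy_{F(a)}^2=0$ is the separate (standard, $\mu=2$ modulo $\RR$) input that makes $G(a)$ a chain complex in the first place --- but this does not affect the argument.
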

\begin{proof}
  This follows by standard arguments, by considering the ends of
  \[
  \bigcup_{\mu(\phi)=2-n}\#\cM(\phi;F(f_n,\dots,f_1)|_\sigma).
  \]
  Note that bubbling of spheres or disks is precluded by
  Hypothesis~\ref{item:J-1}, and curves can not escape to infinity by
  Hypothesis~\ref{item:J-2}.
\end{proof}

Finally, we can turn a homotopy coherent diagram of chain complexes
into a single chain complex by taking the homotopy colimit
(compare~\cite[Paragraph (5.10)]{Vogt73:hocolim}), an analogue of the
direct limit which behaves well with respect to homotopies and
homotopy coherence:
\begin{definition}\label{def:hocolim}
  Given a homotopy coherent diagram $G\co\Cat\to\Complexes$, the
  \emph{homotopy colimit} of $G$ is defined by
  \begin{equation}\label{eq:chain-hocolim-def}
  \hocolim G = \bigoplus_{n\geq 0}
  \bigoplus_{a_0\stackrel{f_1}{\longrightarrow}\cdots\stackrel{f_n}{\longrightarrow}a_n}
  I_*^{\otimes n}\otimes G(a_0)/\sim,
  \end{equation}
  where the coproduct is over $n$-tuples of composable morphisms in
  $\Cat$ and the case $n=0$ corresponds to the objects
  $a_0\in\ob(\Cat)$.  The equivalence relation $\sim$ is given by
  \begin{multline*}
  (f_n,\dots,f_1;t_1\otimes\dots\otimes t_n;x)\\
  \sim
  \begin{cases}
    (f_n,\dots,f_2;\pi(t_1)\otimes t_2\otimes\dots\otimes t_n;x) &  f_1=\Id\\
    (f_n,\dots,f_{i+1},f_{i-1},\dots,f_1;t_1\otimes\dots\otimes m(t_{i-1},t_{i})\otimes\dots \otimes t_n;x) & f_i=\Id,\ i>1\\
    (f_n,\dots,f_{i+1}\circ f_i,\dots,f_1;t_1\otimes\dots\otimes t_{i-1}\otimes t_{i+1} \otimes\dots \otimes t_n;x) & t_i=\{1\},\ i<n\\
    (f_{n-1},\dots,f_1;t_{1}\otimes\dots\otimes t_{n-1};x) & t_n=\{1\}\\
    (f_n,\dots,f_{i+1};t_{i+1}\otimes\dots\otimes t_{n};G(f_i,\dots,f_1)(t_{1}\otimes\dots\otimes t_{i-1} \otimes x)) & t_i=\{0\}.
  \end{cases}
  \end{multline*}
  The differential is induced by the tensor product differential in
  Formula~\eqref{eq:chain-hocolim-def}, i.e.,
  \[
  \bdy((f_n,\dots,f_1;t_1\otimes\dots\otimes
  t_n;x))=(f_n,\dots,f_1;\bdy(t_1\otimes\dots\otimes t_n);x)+(f_n,\dots,f_1;t_1\otimes\dots\otimes t_n;\bdy(x)).
  \]

  The homotopy colimit has a filtration $\Filt$ where $\Filt_n\hocolim
  G$ is the span of the expressions $(f_m,\dots, f_1;
  \{0,1\}\otimes\dots\otimes\{0,1\}; x)$, $m\leq n$, i.e., the length
  $\leq n$ composable sequences with all of the $t_i$ the $1$-simplex
  $\{0,1\}$. (The special case $\Filt_0\hocolim G$ is
  $\bigoplus_{a\in\ob(\Cat)}G(a)$.)
\end{definition}

See Example~\ref{eg:hocolim} for an illustration of homotopy colimits
of complexes.

The homotopy colimit $\hocolim G$ of a diagram from
Construction~\ref{const:F-to-G} is a relatively $\ZZ$-graded (in the
sense of Remark~\ref{remark:there-is-relative-z-grading}),
bounded-below chain complex.

\subsection{Building the equivariant cochain complex}\label{sec:build-equi-cx}
Throughout this section we fix a symplectic manifold $(M,\omega)$, a
symplectic involution $\tau\co M\to M$, Lagrangians $L_0,L_1\subset M$
fixed by $\tau$, and a collection of homotopy classes of paths $\eta$ from $L_0$ to
$L_1$, such that this data satisfies Hypothesis~\ref{hyp:Floer-defined}.

\begin{lemma}\label{lem:exist-equi-diagram}
  There is a sufficiently generic homotopy coherent diagram $F\co
  \ECat\ZZ/2\to \ol{\JSpace}$ which is $\tau$-equivariant, i.e., so
  that $F(b)=\tau_*(F(a))=d\tau\circ F(a)\circ d\tau$ and for any
  sequence $(f_1,\dots,f_n)$ of composable morphisms and any
  $(t_1,\dots,t_{n-1})\in [0,1]^{n-1}$,
  \[
  \tau_*\left(F(f_n,\dots,f_1)(t_1,\dots,t_{n-1})\right) 
  = F(\tau(f_n),\dots,\tau(f_1))(t_1,\dots,t_{n-1}).
  \]
\end{lemma}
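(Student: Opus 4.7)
The plan is to build $F$ inductively on the length of composable sequences of non-identity morphisms in $\ECat\ZZ/2$, exploiting the fact that the $\ZZ/2$-action on such sequences is free (since $\tau$ swaps $a$ and $b$). At each stage we make a choice on one representative of each $\ZZ/2$-orbit and define the value on the other representative by pushing forward under $\tau_*$. Values at sequences involving identity morphisms are forced by the coherence relations in Definition~\ref{def:ho-coh-diag-strs}, and the resulting formulas are manifestly $\tau$-equivariant once the data on sequences of non-identity morphisms is.

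For the base of the induction, first pick a generic cylindrical almost complex structure $J_a \in \JSpace_{\cyl}$ so that the moduli spaces of $J_a$-holomorphic Whitney disks are transversely cut out, and set $F(a) = J_a$, $F(b) = \tau_*(J_a)$. Since $\tau$ is a symplectomorphism, $F(b)$ is automatically generic as well. Next, choose a sufficiently generic one-story path $F(\alpha) \in \JSpace(J_a, \tau_*(J_a))$ and define $F(\beta) = \tau_*(F(\alpha)) \in \JSpace(\tau_*(J_a), J_a)$. This takes care of length-one sequences.

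For the inductive step, assume $F$ has been defined equivariantly and sufficiently generically on all composable sequences of non-identity morphisms of length less than $n$. Up to the $\ZZ/2$-action there is exactly one composable non-identity sequence of length $n$, namely the two alternating words $\sigma=(f_n,\dots,f_1)$ and $\tau(\sigma)=(\tau(f_n),\dots,\tau(f_1))$, related by the free swap. Pick the representative $\sigma$. The relations of Definition~\ref{def:ho-coh-diag-strs} force the values of $F(\sigma)$ on $\bdy[0,1]^{n-1}$: the face $t_i=1$ is the value of $F$ on the composed sequence of length $n-1$, the face $t_i=0$ is the composition of two shorter $F$'s, and these match on the corners of the cube by the inductive hypothesis. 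Since $\bdy[0,1]^{n-1} \simeq S^{n-2}$ and the target path component $\ol{\JSpace}(F(x_0),F(x_n))$ is weakly contractible, the boundary data extends to a continuous map from $[0,1]^{n-1}$. By Lemma~\ref{lem:exist-suff-generic}, applied relative to the boundary data where $F$ is already sufficiently generic, this extension can be chosen to be sufficiently generic on the interior. Set $F(\tau(\sigma)) = \tau_* \circ F(\sigma)$, which is a path in $\ol{\JSpace}(F(b),F(x_n'))$ extending the corresponding pushed-forward boundary data.

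The main thing to verify is that these two choices, while made only on $\sigma$, yield a sufficiently generic value on $\tau(\sigma)$ as well. This is immediate: because $\tau$ acts by symplectomorphisms, the map $u \mapsto \tau \circ u$ is a bijection between $\cM(x,y;F(\sigma)|_{\vec t})$ and $\cM(\tau(x),\tau(y);F(\tau(\sigma))|_{\vec t})$ which intertwines the linearized $\dbar$-operators, so transversality is preserved. The hardest point is really the bookkeeping of the inductive step, and in particular confirming that the boundary data assembled from lower-length pieces is genuinely continuous and consistent (this is where the degeneracy relations of Definition~\ref{def:ho-coh-diag-strs} must be checked carefully); everything else reduces to the two standard ingredients, namely the weak contractibility of $\ol{\JSpace}(J_{-\infty},J_{+\infty})$ for boundary extension and Lemma~\ref{lem:exist-suff-generic} for genericity.
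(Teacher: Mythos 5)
Your proposal is correct and takes essentially the same approach as the paper: the paper's proof is a one-sentence sketch citing weak contractibility of $\ol{\JSpace}$, Lemma~\ref{lem:exist-suff-generic}, and freeness of the $\tau$-action on $\ECat\ZZ/2$, and your write-up fleshes out exactly that induction. (One tiny imprecision: the face $t_i=1$ of the cube gives $f_{i+1}\circ f_i=\Id$, so after applying the degeneracy relations it is governed by data of length $n-2$ with a multiplied parameter, not $n-1$; this doesn't affect the argument.)
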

\begin{proof}
  The construction of the $F(f_n,\dots,f_1)$ is inductive in $n$,
  using the facts that $\ol{\JSpace}$ is weakly contractible;
  Lemma~\ref{lem:exist-suff-generic}; and the fact that the action of
  $\tau$ on $\ECat\ZZ/2$ (and hence on sequences of morphisms in
  $\ECat\ZZ/2$) is free. Details are left to the reader.
\end{proof}

The equivariance condition implies that for any sequence
$(\alpha,\beta,\dots)$, say, of composable morphisms in $\ECat$ and
any $x\in \CF(L_0,L_1)=G(a)$,
\[
G_{\dots,\beta,\alpha}(x) = \tau_\#(G_{\dots,\alpha,\beta}(\tau_\#(x))),
\]
where $\tau_\#\co \CF(L_0,L_1)\to\CF(L_0,L_1)$ is the map induced by
$\tau$ (i.e., takes a point $x\in L_0\cap L_1$ to $\tau(x)$). (The map
$\tau_\#$ is a chain map from $\CF(L_0,L_1;\bdy_{F(a)})$ to $\CF(L_0,L_1;\bdy_{F(b)})$.)

\begin{lemma}\label{lem:ECF-is-CF}
  For any sufficiently generic diagram $F\co \ECat\ZZ/2\to \ol{\JSpace}$ with
  associated diagram $G\co \ECat\ZZ/2\to \Complexes$ as in
  Construction~\ref{const:F-to-G}, the complex $\hocolim G$ is
  quasi-isomorphic to $G(a)=(\CF(L_0,L_1),\bdy_{F(a)})$.
\end{lemma}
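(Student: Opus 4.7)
The goal is to show that the natural inclusion $\iota\co G(a)\hookrightarrow\hocolim G$, realizing $G(a)$ as the $n=0$, $a_0=a$ summand of Formula~\eqref{eq:chain-hocolim-def}, is a quasi-isomorphism.

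First I would unpack Definition~\ref{def:ho-coh-diag-cxs} for $\ECat\ZZ/2$. The non-identity morphisms are $\alpha\co a\to b$ and $\beta\co b\to a$, satisfying $\beta\alpha=\Id_a$ and $\alpha\beta=\Id_b$. Combining $G_{\Id}=\Id$ with the convention $G_{f_n,\dots,f_1}=0$ whenever some $f_i=\Id$ and $n\geq 2$, the length-$2$ compatibility yields
\[
\bdy G_{\beta,\alpha}+G_{\beta,\alpha}\bdy=\Id_{G(a)}+G_\beta\circ G_\alpha
\]
together with its symmetric analogue. Hence $G_\alpha$ and $G_\beta$ are mutually inverse chain homotopy equivalences, with explicit homotopies $G_{\beta,\alpha}$ and $G_{\alpha,\beta}$.

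Next I would analyze $\hocolim G$ via the filtration $\Filt_\bullet$ from Definition~\ref{def:hocolim}. The equivalence relations, together with the vanishing of $G_{f_n,\dots,f_1}$ on identity-containing sequences of length $\geq 2$, force each associated graded piece $\Filt_n/\Filt_{n-1}$ (for $n\geq 1$) to be spanned by classes $(f_n,\dots,f_1;\{0,1\}^{\otimes n};x)$ with each $f_i\in\{\alpha,\beta\}$. Composability forces alternation, so for each $n\geq 1$ there are exactly two such sequences (distinguished by the starting object $a_0$), yielding $\Filt_n/\Filt_{n-1}\cong G(a)[n]\oplus G(b)[n]$ with only the internal differential, and $\Filt_0=G(a)\oplus G(b)$. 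The filtration spectral sequence converges to $H_*(\hocolim G)$, with $E^2$-page $\HF_*(L_0,L_1)\otimes \widetilde{C}_*(N\ECat\ZZ/2)$: at each level the two alternating sequences correspond to the two non-degenerate $n$-simplices of the nerve $N\ECat\ZZ/2$, and the chain homotopy equivalences $G_\alpha,G_\beta$ identify $H_*G(a)\cong H_*G(b)$. The higher differentials assemble the simplicial boundary with the coherence maps $G_{f_k,\dots,f_1}$. Because $\ECat\ZZ/2$ is a contractible groupoid, $N\ECat\ZZ/2$ is weakly contractible (a standard model of the contractible total space of the universal $\ZZ/2$-bundle), so the spectral sequence collapses to a single copy of $\HF_*(L_0,L_1)$ in filtration $0$, matching $H_*(\iota(G(a)))$; hence $\iota$ is a quasi-isomorphism.

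The main subtlety will be matching the spectral sequence differentials precisely with the simplicial boundary on $N\ECat\ZZ/2$ beyond $d_1$, where the higher coherence maps $G_{f_n,\dots,f_1}$ must be used as the chain-level incarnation of the higher-simplex attaching data. A purely chain-level alternative, which I would probably prefer to write out, is to construct a retraction $r\co\hocolim G\to G(a)$ and a chain homotopy $\iota r\simeq \Id_{\hocolim G}$ by induction on filtration degree, feeding the $G_{f_n,\dots,f_1}$ in as the required higher homotopies at each stage; the induction step succeeds precisely because $\ECat\ZZ/2$ is contractible.
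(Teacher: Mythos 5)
Your argument is correct but takes a more hands-on route than the paper. The paper's proof is a one-liner: since $a$ is a terminal object of $\ECat\ZZ/2$ (indeed every object is), the formal fact that a homotopy colimit over a category with a terminal object is weakly equivalent to the value at that object gives the result immediately, with a citation to Bousfield--Kan; the parenthetical remark there about a ``concrete proof using the explicit description of $\hocolim_{\ECat\ZZ/2}G$'' is exactly the chain-level retraction you propose at the end. Two small imprecisions in your spectral-sequence version are worth flagging. First, what you call the $E^2$-page is really the $E^1$-page: the internal differential is the $E^0$-differential, and it is $\HF_*(L_0,L_1)\otimes\widetilde{C}_*(N\ECat\ZZ/2)$ with the normalized simplicial boundary that appears after the first turn. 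Second, you need not worry about higher differentials ``assembling'' the simplicial boundary out of the $G_{f_k,\dots,f_1}$: the middle faces $d_i$ with $0<i<n$ already vanish on nondegenerate simplices because $\alpha\beta=\beta\alpha=\Id$ produces degenerate chains (concretely, one picks up $m(\{0,1\},\{0,1\})=0$ in Definition~\ref{def:hocolim}), so $d_0+d_n$ is the \emph{entire} normalized simplicial boundary and it coincides with the filtration-drop-one piece of Formula~\eqref{eq:d-of-alpha}. After taking $d_1$-homology the page is concentrated in filtration $0$, so $d_r=0$ for $r\geq 2$ by degree considerations alone, and there is nothing further to check. Your approach is more self-contained and visibly generalizes to $\ECat K$ for any finite $K$; the paper's is terser and defers the computation to a standard reference.
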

\begin{proof}
  Since $a$ is a terminal object of $\ECat\ZZ/2$,
  $\hocolim_{\ECat\ZZ/2} G\simeq G(a)$~\cite[Example
  XII.3.1]{BousfieldKan72}. (One can also give a concrete proof using
  the explicit description of $\hocolim_{\ECat\ZZ/2} G$ from
  Observation~\ref{obs:concrete-hocolim}, below.)
\end{proof}

\begin{definition}\label{def:equi-Floer}
  Fix a $\tau$-equivariant, sufficiently generic $F\co \ECat\ZZ/2\to
  \ol{\JSpace}$ and let $G\co \ECat\ZZ/2\to \Complexes$ be as in
  Construction~\ref{const:F-to-G}.  We define the \emph{freed Floer complex} of
  $(L_0,L_1)$ to be
  \[
  \ECF(L_0,L_1)\coloneqq \hocolim G.
  \]
  By Lemma~\ref{lem:ECF-is-CF}, $\ECF(L_0,L_1)$ is quasi-isomorphic, over $\Field$, to $\CF(L_0,L_1)$.
  The action of $\ZZ/2$ on $\ECat\ZZ/2$ makes
  $\ECF(L_0,L_1)$ into a chain complex of free modules over $\Field[\ZZ/2]$. The
  \emph{equivariant Floer cochain complex} of $(L_0,L_1)$ is the
  complex
  \[
  \eCF(L_0,L_1)\coloneqq\HomO{\Field[\ZZ/2]}(\hocolim G, \Field).
  \]
  Let $\eHF(L_0,L_1)$ be the homology of $\eCF(L_0,L_1)$, i.e., the
  \emph{equivariant Floer cohomology} of $(M,L_0,L_1,\tau)$. Since the
  complex $\ECF(L_0,L_1)$ is bounded below and free over
  $\Field[\ZZ/2]$, so $\eHF(L_0,L_1)$ is the same as
  $\ExtO{\Field[\ZZ/2]}(\hocolim G,\Field)$.

  The filtration $\Filt$ on $\hocolim G$ is preserved by the
  $\Field[\ZZ/2]$-action, and hence induces a filtration on
  $\eCF(L_0,L_1)$, which we also denote $\Filt$. 
\end{definition}

\begin{observation}\label{obs:concrete-hocolim}
In Section~\ref{sec:equi-equi} we will need a somewhat more concrete
description of parts of the complexes $\ECF(L_0,L_1)$ and $\eCF(L_0,L_1)$. So, fix
diagrams $F$ and $G$ as in Definition~\ref{def:equi-Floer}. The
homotopy colimit $\hocolim G=\ECF(L_0,L_1)$ has a basis consisting of elements of
the form $(f_n,\dots,f_1;\{0,1\}\otimes\cdots\otimes\{0,1\};x)$ where
$x\in L_0\cap L_1$ and $(f_1,\dots,f_n)$ is a sequence of composable,
non-identity elements. Thus, the elements have the form 
\[
\alpha_n\otimes x\coloneqq (\alpha,\beta,\alpha,\beta,\dots; \{0,1\},\dots,\{0,1\};x)
\text{ or }
\beta_n\otimes x\coloneqq (\beta,\alpha,\beta,\alpha,\dots; \{0,1\},\dots,\{0,1\};x)
\]
where the string of $\alpha$'s and $\beta$'s has length $n\geq 0$. The
differential is given by
\begin{align}
\bdy(\alpha_n\otimes x)&=\alpha_n\otimes (\bdy x)+\beta_{n-1}\otimes x + \sum_{i=1}^n\alpha_{n-i}\otimes
\begin{cases}
  G_{\beta,\alpha,\dots}(x) & i\equiv n+1\pmod{2}\\
  G_{\alpha,\beta,\dots}(x) & i\equiv n\pmod{2}
\end{cases}\label{eq:d-of-alpha}\\
\bdy(\beta_n\otimes x)&=\beta_n\otimes (\bdy x)+\alpha_{n-1}\otimes x + \sum_{i=1}^n\beta_{n-i}\otimes
\begin{cases}
  G_{\alpha,\beta,\dots}(x) & i\equiv n+1\pmod{2}\\
  G_{\beta,\alpha,\dots}(x) & i\equiv n\pmod{2}
\end{cases}\label{eq:d-of-beta}
\end{align}
where the sequence of morphisms appearing in $G_{\alpha,\beta,\dots}$
or $G_{\beta,\alpha,\dots}$ in the $i\th$ term has length $i$.  The
second term in $\bdy(\alpha_n \otimes x)$ come from the fourth case in
the definition of $\sim$. The third term in $\bdy(\alpha_n\otimes x)$
comes from the last case in the definition of $\sim$. The expression
$\bdy x$ means the differential with respect to the complex structure
$F(a)$ or $F(b)$, depending on whether $x\in G(a)$ or $G(b)$. The
$\ZZ/2$-action exchanges $\alpha_n\otimes x$ and $\beta_n\otimes
\tau_\#(x)$.

Passing to $\eCF(L_0,L_1)$ dualizes the complex and identifies
the elements $(\alpha_n\otimes x)^*$ and $(\beta_n\otimes
\tau_\#(x))^*$. We can also write $(\alpha_n\otimes x)^*$ as
$\alpha_n^*\otimes x^*$; note that there is a Floer cochain
differential $d(x^*)$, the dual (over $\ZZ$) to the differential on
the Floer chain complex, and a cochain element $\tau^\#(x^*)=x^*\circ \tau_\#$.
 The differential on the equivariant cochain complex is given by
\begin{equation}\label{eq:diff-on-equicx}
d(\alpha_n^*\otimes x^*)=\alpha_n^*\otimes (d x^*)+\alpha_{n+1}^*\otimes \tau^\#(x^*) + 
\sum_{i=1}^{\infty}\alpha_{n+i}^*\otimes 
\begin{cases}
  x^*\circ G_{\beta,\alpha,\dots} & n\equiv 1\pmod{2}\\
  x^*\circ G_{\alpha,\beta,\dots} & n\equiv 0\pmod{2}.
\end{cases}
\end{equation}
The fact that $\CF(L_0,L_1)$ is finite-dimensional and $\ZZ$-graded
implies that the sum in Formula~\eqref{eq:diff-on-equicx} is finite.

There is an action of $\Field[\theta]$ on $\eCF(L_0,L_1)$ by
$\theta\cdot (\alpha_n^*\otimes x^*)=\alpha_{n+1}^*\otimes
\tau^{\#}(x^*) = \beta_{n+1}^* \otimes x^*$.
\end{observation}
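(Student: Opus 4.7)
The observation is a direct computational unpacking of Definition~\ref{def:hocolim} applied to the particular category $\ECat\ZZ/2$, together with the dualization producing the equivariant cochain complex. The plan is to verify each claim in turn by calculation; the main content is not any one deep step but the careful bookkeeping of the various equivalence relations.

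First, I would identify the canonical generators. Using the relations in Definition~\ref{def:hocolim}, every element of $\hocolim G$ can be rewritten in a preferred form by pushing each $t_i$ to the 1-simplex $\{0,1\}$ and eliminating identity morphisms; the $t_i=\{0\}$ and $t_i=\{1\}$ relations handle the boundary faces of the cubes, and the $f_i=\Id$ relations remove identity morphisms. Since the only non-identity morphisms of $\ECat\ZZ/2$ are $\alpha\co a\to b$ and $\beta\co b\to a$, and they must strictly alternate in any composable sequence, the resulting canonical representatives are precisely the $\alpha_n\otimes x$ and $\beta_n\otimes x$.

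Second, compute the chain differential on the generators. The differential on $I_*^{\otimes n}\otimes G(a_0)$ splits into the internal $\bdy x$ contribution (giving the $\alpha_n\otimes\bdy x$ term) together with the contributions from $\bdy\{0,1\}=\{0\}+\{1\}$ at each position $i$. Each $t_i=\{0\}$ splits the sequence at position $i$, producing a term of the form $\alpha_{n-i}\otimes G_{f_i,\ldots,f_1}(x)$; tracking the parity of $i$ versus $n$ determines whether the sequence inside $G$ begins with $\alpha$ or $\beta$, matching the case distinction in Formulas~\eqref{eq:d-of-alpha} and~\eqref{eq:d-of-beta}. For each $t_i=\{1\}$ with $i<n$, the third case of $\sim$ produces $f_{i+1}\circ f_i=\Id$ at some interior position of a shorter tuple, and the $f_j=\Id$ relation then contracts via $m$ of two adjacent $\{0,1\}$'s (or via $\pi$ of one); since $m(\{0,1\},\{0,1\})=0$ and $\pi(\{0,1\})=0$, every such contribution vanishes. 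The only surviving $t_i=\{1\}$ term is $t_n=\{1\}$, covered by the fourth case of $\sim$, which drops the outermost morphism and $t$-factor and thereby yields the $\beta_{n-1}\otimes x$ term.

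Third, handle the $\ZZ/2$-action and dualize. The equivariance clause of Lemma~\ref{lem:exist-equi-diagram} implies that the generator of $\ZZ/2$ acts on $\hocolim G$ by swapping $\alpha\leftrightarrow\beta$ and applying $\tau_\#$ to the element of $G(a_0)$, sending $\alpha_n\otimes x$ to $\beta_n\otimes \tau_\#(x)$; this exhibits $\hocolim G$ as a free $\Field[\ZZ/2]$-module on the set $\{\alpha_n\otimes x\}$. Dualizing by $\Hom_{\Field[\ZZ/2]}(-,\Field)$ identifies $(\alpha_n\otimes x)^*$ with $(\beta_n\otimes \tau_\#(x))^*$ and transposes each chain map into a cochain map; the chain term $\beta_{n-1}\otimes x$ dualizes to $\alpha_{n+1}^*\otimes \tau^\#(x^*)$ after using the $\Field[\ZZ/2]$-identification to rewrite $(\beta_{n+1}\otimes x)^*$ as $(\alpha_{n+1}\otimes \tau_\#(x))^*$, and the $G$-terms similarly produce the sum in Formula~\eqref{eq:diff-on-equicx}. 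Finiteness of that sum follows from the $\ZZ$-grading of $\CF(L_0,L_1)$ (Remark~\ref{remark:there-is-relative-z-grading}) together with finite-dimensionality in each degree. Finally, the $\Field[\theta]$-action, inherited from the standard $H^*(\ZZ/2)=\Field[\theta]$-action on $\Ext$, is implemented on the standard resolution by a length shift; unwinding the identification yields $\theta\cdot(\alpha_n^*\otimes x^*)=\alpha_{n+1}^*\otimes \tau^\#(x^*)=\beta_{n+1}^*\otimes x^*$. The only place where real care is required is the cascade in the second step: verifying that every $t_i=\{1\}$ term with $i<n$ is killed by the combination of the identity relation and the vanishing of $m$ on $\{0,1\}\otimes\{0,1\}$ is precisely what produces the clean differential formulas as stated.
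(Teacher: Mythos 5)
Your proof is correct, and since Observation~\ref{obs:concrete-hocolim} is stated in the paper without a separate proof (it is a definitional unwinding), your approach of directly tracing through Definition~\ref{def:hocolim} is exactly the intended one. The only step requiring real care — that every $t_i=\{1\}$ with $i<n$ contributes zero because $\alpha\circ\beta=\Id$ in $\ECat\ZZ/2$, after which $m(\{0,1\}\otimes\{0,1\})=0$ or $\pi(\{0,1\})=0$ kills the term — is correctly identified and verified. Your derivation of the $\Field[\theta]$-action by comparison with the standard resolution is somewhat heuristic, but it can be checked directly that the stated $\theta$ is a chain map using the equivariance relation $\tau_\#\circ G_{\beta,\alpha,\dots}=G_{\alpha,\beta,\dots}\circ\tau_\#$, and the paper's subsequent lemma does the comparison with Formula~\eqref{eq:standard-res-rhom} carefully in any case.
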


\begin{lemma}
  The homology of the associated graded complex to
  $(\eCF(L_0,L_1),\Filt)$ is given by
  $\HF^*(L_0,L_1)\otimes_\Field \Field[\theta]$. In particular, there
  is a spectral sequence $\HF^*(L_0,L_1)\otimes_\Field
  \Field[\theta]\Rightarrow\eHF(L_0,L_1)$ from the Floer
  cohomology of $(L_0,L_1)$ tensored with $\Field[\theta]$ to the
  equivariant Floer cohomology. 
  Further, both this spectral sequence and the module structure over
  $\Field[\theta]$ agree with the spectral sequence and module structure
  described in Section~\ref{sec:background} for the chain complex
  $\ECF(L_0,L_1)$ over $\Field[\ZZ/2]$.
\end{lemma}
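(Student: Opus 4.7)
The plan is to analyze the filtration $\Filt$ on $\eCF$ directly, using the explicit chain-level model of Observation~\ref{obs:concrete-hocolim}, and then match the output to the abstract construction of Section~\ref{sec:background} applied to $\ECF$ as a $\Field[\ZZ/2]$-complex.

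First I would identify the $E_1$ page. The filtration $\Filt$ on $\ECF$ dualizes to a decreasing filtration $F^p\eCF = \mathrm{span}\{\alpha_n^*\otimes x^* : n\geq p\}$. Every term appearing in $d(\alpha_n^*\otimes x^*)$ in formula~\eqref{eq:diff-on-equicx} has $\alpha$-index at least $n$, so $d$ preserves $F^p$, and in the associated graded only the Floer-codifferential piece $\alpha_n^*\otimes d(x^*)$ survives. Each filtration level $n\geq 0$ thus contributes a copy of $(\CF^*(L_0,L_1),d)$, giving $E_0 \cong \CF^*(L_0,L_1)\otimes\Field[\theta]$ and $E_1\cong\HF^*(L_0,L_1)\otimes\Field[\theta]$.

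Next I would verify convergence. The filtration is exhaustive and bounded below at $n=0$; by Remark~\ref{remark:there-is-relative-z-grading}, the relative $\ZZ$-grading on $\CF(L_0,L_1)$ (being bounded within a fixed homotopy class in $\eta$) ensures that only finitely many filtration levels contribute in each cohomological degree of $\eCF(L_0,L_1)$. The classical convergence theorem for filtered cochain complexes then yields a convergent spectral sequence $\HF^*(L_0,L_1)\otimes\Field[\theta]\Rightarrow H^*(\eCF) = \eHF(L_0,L_1)$, with the induced $\Field[\theta]$-module structure on $\eHF(L_0,L_1)$ coming from the chain-level action $\theta\cdot(\alpha_n^*\otimes x^*) = \alpha_{n+1}^*\otimes\tau^\#(x^*) = \beta_{n+1}^*\otimes x^*$.

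The final step is to compare with Section~\ref{sec:background} applied to $\ECF$. Because $\ECF$ is already free over $\Field[\ZZ/2]$ in each degree, both our $\eCF$ with the filtration above and the bicomplex~\eqref{eq:standard-res-rhom} for $C=\ECF$ with its column filtration compute $\Ext_{\Field[\ZZ/2]}(\ECF,\Field) = \eHF(L_0,L_1)$, and both have $E_1 = \HF^*(L_0,L_1)\otimes\Field[\theta]$ with $\theta$ acting as a filtration-degree shift. I would identify the two by exhibiting a filtered chain equivalence between them: the horizontal spectral sequence for~\eqref{eq:standard-res-rhom} for $\ECF$ collapses at $E_2$ onto $\eCF$, and one checks directly that, under the ensuing zig-zag, the column-shift on~\eqref{eq:standard-res-rhom} corresponds to our $\theta\cdot(\alpha_n^*\otimes x^*) = \beta_{n+1}^*\otimes x^*$. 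This identifies the full spectral sequences (from $E_1$ onwards) together with their $\Field[\theta]$-module structures.

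The hardest part will be the bookkeeping in the last step, in particular tracking the basis identification $\alpha_n^*\otimes x^* = \beta_n^*\otimes\tau^\#(x^*)$ and the induced action of $\tau$ on the non-equivariant dual $(\ECF)^*$ used in~\eqref{eq:standard-res-rhom}, so that the filtered chain equivalence truly intertwines the two $\Field[\theta]$-actions. The $E_1$ computation and the convergence statement are essentially immediate from the explicit differential in Observation~\ref{obs:concrete-hocolim}.
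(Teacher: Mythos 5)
Your $E_1$ computation and the convergence argument are both correct; the convergence point (via the relative $\ZZ$-grading of Remark~\ref{remark:there-is-relative-z-grading}) is worth making explicit since the paper leaves it implicit.

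The gap is in the comparison with Section~\ref{sec:background}. You correctly identify that a filtered chain equivalence between $(\eCF(L_0,L_1),\Filt)$ and the bicomplex~\eqref{eq:standard-res-rhom} for $\ECF(L_0,L_1)$ (with its column filtration) is what is needed, but the mechanism you propose for producing it does not work. The ``collapse'' of the horizontal spectral sequence produces the inclusion of $\eCF = ((\ECF)^*)^{\ZZ/2}$ into column $0$ of the bicomplex; this is a quasi-isomorphism of total complexes (by Tate vanishing for free $\Field[\ZZ/2]$-modules) but it is \emph{not} filtered in the relevant sense: it sends every $\Filt$-level of $\eCF$ into column $0$, so it collapses the filtration and cannot intertwine the two $\theta$-actions (column shift on one side, $\alpha$-index shift on the other). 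Consequently the spectral-sequence comparison theorem does not apply to it. The map one actually needs is a \emph{diagonal} one, sending the $n$-th filtration level of $\eCF$ into the $n$-th column of the bicomplex, and constructing this in the homotopy-coherent setting requires controlling all the higher maps $G_{\alpha,\beta,\ldots}$ from Formula~\eqref{eq:diff-on-equicx}. The paper sidesteps this by first rectifying $G$ to a strict $\ZZ/2$-equivariant diagram $\wt{G}$ (via Cordier--Porter), after which $\eCF$ becomes the explicit complex~\eqref{eq:single-complex} and the diagonal map into~\eqref{eq:double-cx} can be written down by hand and checked to be a filtered quasi-isomorphism by comparison of $E^1$-pages. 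To complete your argument you would need either to incorporate such a rectification step, or to construct the diagonal map directly in the homotopy-coherent case; neither is supplied by the collapse argument as stated.
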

\begin{proof}
  The explicit description in Observation~\ref{obs:concrete-hocolim}
  implies the spectral sequence has the specified form: we have
  $\Filt(\alpha_n^* \otimes x^*)=n$, and the only term of
  $d(\alpha_n^*\otimes x^*)$ in filtration $n$ is $\alpha_n^*\otimes(d
  x^*)$. 
  
  To see that this agrees with the spectral sequence in
  Section~\ref{sec:background}, first note that the homotopy coherent
  diagram $G$ is equivalent to a strict diagram
  $\wt{G}\co\ECat\ZZ/2\to\Complexes$. That is, there is a strict
  diagram $\wt{G}\co\ECat\ZZ/2\to\Complexes$ and a homotopy coherent
  diagram $H\co \ICat\times\ECat\ZZ/2\to \Complexes$, where
  $\ICat=\left(\xymatrix{ 0\ar[r] & 1}\right)$, so that
  $H|_{\{1\}\times \ECat\ZZ/2}=\wt{G}$ and
  $H|_{\{0\}\times\ECat\ZZ/2}=G$~\cite[Corollary
  4.5]{CordierPorter86:Vogt}.  Further, it follows from the
  construction that we can arrange for $\wt{G}$ and $H$ to be
  $\ZZ/2$-equivariant. Associated to $H$,
  then, is a quasi-isomorphism $\ol{H}\co \hocolim{G}\to\hocolim \wt{G}$
  of complexes over $\Field[\ZZ/2]$. It follows from the construction
  of $\ol{H}$ that it induces a filtered, $\Field[\theta]$-equivariant
  map $\ol{H}^*\co \HomO{\Field[\ZZ/2]}(\hocolim \wt{G},\Field)\to
  \HomO{\Field[\ZZ/2]}(\hocolim G,\Field)$.  (The
  differential on $\hocolim\wt{G}$ is given by a formula analogous
  to Equation~\eqref{eq:diff-on-equicx}, with the simplification that 
  the sum over $i$ only involves the single term $i=1$.)

  Thus, it suffices to prove the result when $G$ is a strict
  diagram. In this case, the complex $\hocolim G$ is given by
  \[
  \begin{tikzpicture}[x=1.5cm, y=1.5cm]
  \node at (0,0) (Ca1) {$C_a$};
  \node at (0,-.5) (oplus1) {$\oplus$};
  \node at (0,-1) (Cb1) {$C_b$};
  \node at (2,0) (Ca2) {$C_a$};
  \node at (2,-.5) (oplus2) {$\oplus$};
  \node at (2,-1) (Cb2) {$C_b$};
  \node at (4,0) (Ca3) {$\cdots$};
  \node at (4,-.5) (oplus3) {$\cdots$};
  \node at (4,-1) (Cb3) {$\cdots$};
  \draw[->] (Ca2) to (Ca1);
  \draw[->, dashed, bend right=10] (Ca2) to node[above]{\lab{G_\alpha}} (Cb1);
  \draw[->] (Cb2) to (Cb1);
  \draw[->, dashed, bend left=10] (Cb2) to node[below]{\lab{G_\beta}} (Ca1);
  \draw[->] (Ca3) to (Ca2);
  \draw[->, dashed, bend right=10] (Ca3) to node[above]{\lab{G_\alpha}} (Cb2);
  \draw[->] (Cb3) to (Cb2);
 \draw[->, dashed, bend left=10] (Cb3) to node[below]{\lab{G_\beta}} (Ca2);
  \end{tikzpicture}
  \]
  where $C_a=G(a)$, $C_b=G(b)$, and all solid arrows are the identity map. (We have not drawn the internal differentials of $C_a$ and $C_b$.)
  The complex $\eCF(L_0,L_1)$ is, thus, 
  \begin{equation}\label{eq:single-complex}
  \mathcenter{\begin{tikzpicture}[x=1.25cm, y=1cm]
  \node at (1,0) (zero) {$0$};
  \node at (2,0) (Ca1) {$C_a^*$};
  \node at (4,0) (Ca2) {$C_a^*$};
  \node at (6,0) (Ca3) {$\cdots$};
  \draw[->] (zero) to (Ca1);
  \draw[->] (Ca1) to node[above]{\lab{\Id+\tau^{\#}\circ G_{\beta}^*}} (Ca2);
  \draw[->] (Ca2) to node[above]{\lab{\Id+\tau^{\#}\circ G_{\beta}^*}} (Ca3);
  \end{tikzpicture}}
  \end{equation}
  This complex is filtered by the columns, and the action of $\theta$
  moves elements one step to the right. (In the notation of Observation~\ref{obs:concrete-hocolim}, the copy of $C_a^*$
  in the $n$th filtration level is generated by the elements
  $\{\beta_{n}^* \otimes x^*\}$, for $x \in L_0 \cap L_1$, if $n$ is
  even, and $\{\alpha_n^* \otimes x^*\}$ if $n$ is odd.) By contrast,
  tensoring $\hocolim G$ with the standard resolution of the
  $\Field[\ZZ/2]$-module $\Field$ (Section~\ref{sec:background}) gives
  \[
  \begin{tikzpicture}[x=1.5cm, y=1.5cm]
  \node at (0,3) (Ca01) {$1C_a$};
  \node at (.5,3) (oplus01a) {$\oplus$};
  \node at (1,3) (tCa01) {$\tau C_a$};
  \node at (0,2.5) (oplus01b) {$\oplus$};
  \node at (1, 2.5) (oplus01c) {$\oplus$};
  \node at (0,2) (Cb01) {$1C_b$};
  \node at (0.5, 2) (oplus01d) {$\oplus$};
  \node at (1, 2) (tCb01) {$\tau C_b$};
  \node at (0,0) (Ca00) {$1C_a$};
  \node at (.5,0) (oplus00a) {$\oplus$};
  \node at (1,0) (tCa00) {$\tau C_a$};
  \node at (0,-.5) (oplus00b) {$\oplus$};
  \node at (1, -.5) (oplus00c) {$\oplus$};
  \node at (0,-1) (Cb00) {$1C_b$};
  \node at (0.5, -1) (oplus00d) {$\oplus$};
  \node at (1, -1) (tCb00) {$\tau C_b$};
  \node at (3,3) (Ca11) {$1C_a$};
  \node at (3.5,3) (oplus11a) {$\oplus$};
  \node at (4,3) (tCa11) {$\tau C_a$};
  \node at (3,2.5) (oplus11b) {$\oplus$};
  \node at (4, 2.5) (oplus11c) {$\oplus$};
  \node at (3,2) (Cb11) {$1C_b$};
  \node at (3.5, 2) (oplus01d) {$\oplus$};
  \node at (4, 2) (tCb11) {$\tau C_b$};
  \node at (3,0) (Ca10) {$1C_a$};
  \node at (3.5,0) (oplus10a) {$\oplus$};
  \node at (4,0) (tCa10) {$\tau C_a$};
  \node at (3,-.5) (oplus10b) {$\oplus$};
  \node at (4, -.5) (oplus10c) {$\oplus$};
  \node at (3,-1) (Cb10) {$1C_b$};
  \node at (3.5, -1) (oplus10d) {$\oplus$};
  \node at (4, -1) (tCb10) {$\tau C_b$};
  \node at (5,-.5) (dots1) {$\cdots$};
  \node at (5,2.5) (dots2) {$\cdots$};
  \node at (.5, 4) (dots3) {$\vdots$};
  \node at (3.5, 4) (dots4) {$\vdots$};
  \draw[->, bend right=20] (Ca01) to (Ca00);
  \draw[->, bend left=20] (tCa01) to (tCa00);
  \draw[->, bend right=20] (Cb01) to (Cb00);
  \draw[->, bend left=20] (tCb01) to (tCb00);
  \draw[->] (Ca01) to (tCa00);
  \draw[->] (Cb01) to (tCb00);
  \draw[->] (tCa01) to (Ca00);
  \draw[->] (tCb01) to (Cb00);
  \draw[->, bend right=20] (Ca11) to (Ca10);
  \draw[->, bend left=20] (tCa11) to (tCa10);
  \draw[->, bend right=20] (Cb11) to (Cb10);
  \draw[->, bend left=20] (tCb11) to (tCb10);
  \draw[->] (Ca11) to (tCa10);
  \draw[->] (Cb11) to (tCb10);
  \draw[->] (tCa11) to (Ca10);
  \draw[->] (tCb11) to (Cb10);
  \draw[->, bend right=20] (Ca10) to (Ca00);
  \draw[->, bend right=20] (tCa10) to (tCa00);
  \draw[->, bend left=20] (Cb10) to (Cb00);
  \draw[->, bend left=20] (tCb10) to (tCb00);
  \draw[->, dashed] (Ca10) to (Cb00);
  \draw[->, dashed] (Cb10) to (Ca00);
  \draw[->, dashed] (tCa10) to (tCb00);
  \draw[->, dashed] (tCb10) to (tCa00);
  \draw[->, bend right=20] (Ca11) to (Ca01);
  \draw[->, bend right=20] (tCa11) to (tCa01);
  \draw[->, bend left=20] (Cb11) to (Cb01);
  \draw[->, bend left=20] (tCb11) to (tCb01);
  \draw[->, dashed] (Ca11) to (Cb01);
  \draw[->, dashed] (Cb11) to (Ca01);
  \draw[->, dashed] (tCa11) to (tCb01);
  \draw[->, dashed] (tCb11) to (tCa01);
  \end{tikzpicture}
  \]
  Here, we have identified $\Field[\ZZ/2]=\Field[\tau]/(\tau^2=1)$ and
  written, e.g., $\tau C_a$ for $\tau\otimes C_a$. The action of
  $\tau$ exchanges $1C_a$ and $\tau C_b$ and exchanges $1C_b$ and
  $\tau C_a$ in each block. Solid arrows are, again, identity maps,
  while dashed arrows are $G_\alpha$ or $G_\beta$, depending on
  whether the source is $C_a$ or $C_b$.
  
  Taking $\Hom$ over $\Field[\ZZ/2]$ to $\Field$ gives
  \begin{equation}\label{eq:double-cx}
  \mathcenter{
  \begin{tikzpicture}[x=1.5cm, y=1cm]
  \node at (0,0) (Ca00) {$C_a^*$};
  \node at (0,-.5) (oplus00) {$\oplus$};
  \node at (0,-1) (Cb00) {$C_b^*$};
  \node at (2,0) (Ca10) {$C_a^*$};
  \node at (2,-.5) (oplus10) {$\oplus$};
  \node at (2,-1) (Cb10) {$C_b^*$};
  \node at (0,3) (Ca01) {$C_a^*$};
  \node at (0,2.5) (oplus01) {$\oplus$};
  \node at (0,2) (Cb01) {$C_b^*$};
  \node at (2,3) (Ca11) {$C_a^*$};
  \node at (2,2.5) (oplus11) {$\oplus$};
  \node at (2,2) (Cb11) {$C_b^*$};
  \node at (3,2.5) (dots1) {$\cdots$};
  \node at (3,-.5) (dots2) {$\cdots$};
  \node at (0,4) (dots3) {$\vdots$};
  \node at (2,4) (dots4) {$\vdots$};
  \draw[->] (Ca00) to (Ca10);
  \draw[->] (Cb00) to (Cb10);
  \draw[->, dashed] (Ca00) to (Cb10);
  \draw[->, dashed] (Cb00) to (Ca10);
  \draw[->] (Ca01) to (Ca11);
  \draw[->] (Cb01) to (Cb11);
  \draw[->, dashed] (Ca01) to (Cb11);
  \draw[->, dashed] (Cb01) to (Ca11);
  \draw[->, dotted, bend right=20] (Ca00) to (Cb01);
  \draw[->, dotted, bend right=20] (Cb00) to (Ca01);   
  \draw[->, bend left=20] (Ca00) to (Ca01);
  \draw[->, bend left=20] (Cb00) to (Cb01);
  \draw[->, dotted, bend right=20] (Ca10) to (Cb11);
  \draw[->, dotted, bend right=20] (Cb10) to (Ca11);   
  \draw[->, bend left=20] (Ca10) to (Ca11);
  \draw[->, bend left=20] (Cb10) to (Cb11);
  \end{tikzpicture}}
  \end{equation}
  where the solid arrows are again identity maps and the dashed arrows are $G_\alpha^*$ or $G_\beta^*$, depending on whether the source is $C_b^*$ or $C_a^*$. Dotted arrows indicate the map $\tau^\#$.
  There is a quasi-isomorphism from the complex~\eqref{eq:single-complex} to the complex~\eqref{eq:double-cx}, given by
\[
  \begin{tikzpicture}[x=1.5cm, y=1cm]
  \node at (0,0) (Ca00) {$C_a^*$};
  \node at (0,-.5) (oplus00) {$\oplus$};
  \node at (0,-1) (Cb00) {$C_b^*$};
  \node at (2,0) (Ca10) {$C_a^*$};
  \node at (2,-.5) (oplus10) {$\oplus$};
  \node at (2,-1) (Cb10) {$C_b^*$};
  \node at (0,3) (Ca01) {$C_a^*$};
  \node at (0,2.5) (oplus01) {$\oplus$};
  \node at (0,2) (Cb01) {$C_b^*$};
  \node at (2,3) (Ca11) {$C_a^*$};
  \node at (2,2.5) (oplus11) {$\oplus$};
  \node at (2,2) (Cb11) {$C_b^*$};
  \node at (3,2.5) (dots1) {$\cdots$};
  \node at (3,-.5) (dots2) {$\cdots$};
  \node at (0,4) (dots3) {$\vdots$};
  \node at (2,4) (dots4) {$\vdots$};
  \node at (-2,4) (dots5) {$\vdots$};
  \node at (-2, -.5) (Ca0) {$C_a^*$};
  \node at (-2,2.5) (Ca1) {$C_a^*$};
  \draw[->] (Ca00) to (Ca10);
  \draw[->] (Cb00) to (Cb10);
  \draw[->, dashed] (Ca00) to (Cb10);
  \draw[->, dashed] (Cb00) to (Ca10);
  \draw[->] (Ca01) to (Ca11);
  \draw[->] (Cb01) to (Cb11);
  \draw[->, dashed] (Ca01) to (Cb11);
  \draw[->, dashed] (Cb01) to (Ca11);
  \draw[->, dotted, bend right=20] (Ca00) to (Cb01);
  \draw[->, dotted, bend right=20] (Cb00) to (Ca01);   
  \draw[->, bend left=20] (Ca00) to (Ca01);
  \draw[->, bend left=20] (Cb00) to (Cb01);
  \draw[->, dotted, bend right=20] (Ca10) to (Cb11);
  \draw[->, dotted, bend right=20] (Cb10) to (Ca11);   
  \draw[->, bend left=20] (Ca10) to (Ca11);
  \draw[->, bend left=20] (Cb10) to (Cb11);
  \draw[->] (Ca0) to node[left]{\lab{\Id+\tau^{\#}\circ G_{\beta}^*}} (Ca1);
  \draw[->] (Ca0) to node[above]{\lab{\Id}} (Ca00);
  \draw[->, dashed] (Ca0) to node[below]{\lab{G_\beta^{*}}} (Cb00);
  \draw[->] (Ca1) to node[above]{\lab{\Id}} (Ca01);
  \draw[->, dashed] (Ca1) to node[below]{\lab{G_\beta^{*}}} (Cb01);
  \end{tikzpicture}
\]
To see that this map is an isomorphism, filter by the $y$-coordinate and note that the map induces an isomorphism on the $E^1$-page of the associated spectral sequence.
The quasi-isomorphism intertwines the actions of $\Field[\theta]$ and the filtrations, proving the result.
\end{proof}

\subsection{Invariance}\label{subsec:invariance}
\begin{proposition}\label{prop:indep-of-cx-str}
  Up to quasi-isomorphism of complexes over $\Field[\ZZ/2]$, the freed Floer complex $\ECF(L_0,L_1)$ is 
  independent of the choice of $\tau$-equivariant sufficiently
  generic homotopy coherent diagram in $\ol{\JSpace}$ used to define
  it. 
\end{proposition}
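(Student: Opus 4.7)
The plan is to interpolate between two choices of diagram by constructing a homotopy coherent diagram on the product category $\ICat \times \ECat\ZZ/2$, where $\ICat = (0 \to 1)$ is the interval category, and then to show that this interpolation induces a $\Field[\ZZ/2]$-equivariant quasi-isomorphism between the two freed Floer complexes.

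Concretely, suppose $F_0, F_1\co \ECat\ZZ/2 \to \ol{\JSpace}$ are two $\tau$-equivariant sufficiently generic diagrams. The $\ZZ/2$-action on $\ECat\ZZ/2$ extends trivially on the $\ICat$-factor to an action on $\ICat \times \ECat\ZZ/2$, which remains free on non-identity morphism strings. First I would construct a $\ZZ/2$-equivariant homotopy coherent diagram $\widetilde{F}\co \ICat \times \ECat\ZZ/2 \to \ol{\JSpace}$ such that $\widetilde{F}|_{\{i\}\times \ECat\ZZ/2} = F_i$ for $i=0,1$; this uses weak contractibility of the spaces $\ol{\JSpace}(J,J')$ and an inductive extension argument on the length of composable sequences of morphisms, exactly as in Lemma~\ref{lem:exist-equi-diagram}. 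After building $\widetilde{F}$ continuously, I would perturb it to be sufficiently generic using Lemma~\ref{lem:exist-suff-generic}, keeping $F_0$ and $F_1$ fixed at the endpoints.

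Applying Construction~\ref{const:F-to-G} yields a $\ZZ/2$-equivariant homotopy coherent diagram $\widetilde{G}\co \ICat \times \ECat\ZZ/2 \to \Complexes$ with $\widetilde{G}|_{\{i\}\times \ECat\ZZ/2} = G_i$. Taking the homotopy colimit over $\ECat\ZZ/2$ (with $\ICat$ and the $\ZZ/2$-action as parameters) gives a homotopy coherent $\ICat$-diagram in $\Field[\ZZ/2]$-complexes; unfolding this yields an equivariant chain map
\[
\Phi\co \hocolim_{\ECat\ZZ/2} G_0 \longrightarrow \hocolim_{\ECat\ZZ/2} G_1
\]
over $\Field[\ZZ/2]$. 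Explicitly, $\Phi$ is built out of the higher maps $\widetilde{G}(f_n,\dots,f_1,0\to 1,g_m,\dots,g_1)$ associated to sequences of composable morphisms passing once through the unique non-identity arrow of $\ICat$, together with the tautological identifications appearing in Definition~\ref{def:hocolim}.

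It remains to verify that $\Phi$ is a quasi-isomorphism, which is the technical heart of the argument. For this I would use the filtration $\Filt$ on each $\hocolim G_i$ by word length in $\{\alpha,\beta\}$ (as in Observation~\ref{obs:concrete-hocolim}); the map $\Phi$ respects this filtration. On the associated graded, $\Phi$ reduces on each filtration level to a direct sum of standard continuation maps between $(\CF(L_0,L_1),\bdy_{F_0(\bullet)})$ and $(\CF(L_0,L_1),\bdy_{F_1(\bullet)})$ associated to the $1$-parameter families $\widetilde{F}(\bullet, 0\to 1)$. These continuation maps are quasi-isomorphisms by standard Floer theory (using Hypothesis~\ref{hyp:Floer-defined} to rule out bubbling and escape to infinity, exactly as in the proof that $\hocolim G$ is well-defined). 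Thus $\Phi$ induces an isomorphism on the $E^1$-page of the induced spectral sequences; since both filtrations are bounded below and exhaustive, $\Phi$ is a quasi-isomorphism of complexes over $\Field[\ZZ/2]$. The main obstacle is the equivariant transversality aspect of building $\widetilde{F}$, but this is handled uniformly by Lemma~\ref{lem:exist-suff-generic} combined with the freeness of the $\ZZ/2$-action on non-identity strings in $\ICat\times\ECat\ZZ/2$.
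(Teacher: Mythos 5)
Your proposal is correct and follows essentially the same route as the paper: interpolate via a $\tau$-equivariant homotopy coherent diagram on $\ICat\times\ECat\ZZ/2$ (built by the same inductive argument as Lemma~\ref{lem:exist-equi-diagram}, using weak contractibility of $\ol{\JSpace}$ and Lemma~\ref{lem:exist-suff-generic}), feed it through Construction~\ref{const:F-to-G}, and produce an equivariant map between the homotopy colimits. The one place you diverge is in establishing that the induced map is a quasi-isomorphism: the paper invokes the general principle that a morphism of homotopy coherent diagrams whose edge maps are quasi-isomorphisms induces a quasi-isomorphism of homotopy colimits, citing Vogt and Cordier--Porter, whereas you instead give a direct spectral-sequence argument using the word-length filtration from Observation~\ref{obs:concrete-hocolim}. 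Your filtration argument is essentially the proof of the cited fact unwound in this special case --- the leading-order term of $\Phi$ on the associated graded is indeed a direct sum of Floer continuation maps, which are quasi-isomorphisms, and the filtration is bounded below and exhaustive, so the comparison theorem closes the argument. This buys you self-containedness at the cost of verifying by hand that $\Phi$ is filtered with the claimed associated graded; the paper's citation packages that bookkeeping.
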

As discussed in Section~\ref{sec:background}, it follows that
$\eCF(L_0,L_1)$ is well-defined up to filtered quasi-isomorphism, and
that $\eHF(L_0,L_1)$ is well-defined up to isomorphism over
$\Field[\theta]$.
\begin{proof}
  Let $\ICat$ be the category with two objects, $0$ and $1$, and
  a single morphism $f_{0,1}\co 0\to 1$:
  \[
  \ICat = \xymatrix{ 0 \ar@(dl, ul)^{\Id} \ar[r]^{f_{0,1}}& 1\ar@(dr, ur)_{\Id}}.
  \]
  Given homotopy coherent $\Cat$-diagrams $G_0$ and $G_1$ in
  $\ol{\JSpace}$ (respectively $\Complexes$), a \emph{morphism} from
  $G_0$ to $G_1$ is a homotopy coherent
  $\ICat\times\Cat$-diagram $H$ in $\ol{\JSpace}$ (respectively
  $\Complexes$) such that $H|_{\{i\}\times \Cat}=G_i$ for $i=0,1$. For
  a morphism $H$ of diagrams in $\Complexes$, we will call $H$ a
  \emph{quasi-isomorphism} if
  \[
  H(f_{0,1}\times \Id_x)\co G_0(x)\to G_1(x)
  \]
  is a quasi-isomorphism. It is not hard to show that morphisms of
  diagrams induce morphisms of homotopy colimits and that if $H$ is a
  quasi-isomorphism then the induced map $\hocolim H\co \hocolim
  G_0\to \hocolim G_1$ is a quasi-isomorphism. (See
  Vogt~\cite[Proposition (4.6) and Theorem (5.12)]{Vogt73:hocolim} for
  the analogous result in the topological case and
  Cordier-Porter~\cite{CordierPorter86:Vogt} for a setting which
  includes chain complexes.)

  Now, fix sufficiently generic, equivariant $\ECat\ZZ/2$-diagrams
  $F_0$ and $F_1$ in $\ol{\JSpace}$, and let $G_0,G_1\co
  \ECat\ZZ/2\to\Complexes$ be the maps given by
  Construction~\ref{const:F-to-G}.  By the same argument as in the
  proof of Lemma~\ref{lem:exist-equi-diagram}, we can extend $F_0$ and
  $F_1$ to a diagram $H\co \ICat\times \ECat\ZZ/2\to
  \ol{\JSpace}$ which is $\tau$-equivariant (where $\tau$ acts
  trivially on $\ICat$). Applying Construction~\ref{const:F-to-G}
  to $H$ gives a diagram $K\co
  \ICat\times\ECat\ZZ/2\to\Complexes$.

  In any diagram $G\co \Cat\to\Complexes$ obtained from
  Construction~\ref{const:F-to-G}, all edges map to
  quasi-isomorphisms: an edge is sent to a Floer continuation map
  associated to a (sufficiently generic) path of almost complex
  structures. In particular, the diagram $K$ is a quasi-isomorphism
  from $G_0$ to $G_1$. So, as above, $K$ induces a quasi-isomorphism
  \[
  \hocolim K\co \hocolim G_0\to \hocolim G_1
  \]
  and, since $H$ is $\ZZ/2$-equivariant, $\hocolim K$
  is a map of $\Field[\ZZ/2]$-modules. 
\end{proof}

We turn next to isotopy invariance. First, recall that we fixed a
$\ZZ/2$-invariant collection $\eta$ of homotopy classes of paths from
$L_0$ to $L_1$, and $\ECF(L_0,L_1)$ is defined using only intersection
points that belong to these homotopy classes. Given an isotopy from
$L_0$ (respectively $L_1$) to another Lagrangian $L'_0$ (respectively
$L'_1$), the homotopy classes $\eta$ induces a collection of homotopy
classes of paths $\eta'$ from $L_0'$ to $L_1'$. If $\eta$ is preserved
by the $\ZZ/2$-action and the isotopies are $\ZZ/2$-equivariant then
$\eta'$ is preserved by the $\ZZ/2$-action, as well.

\begin{proposition}\label{prop:indep-of-Ham-isotopy}
  If $L_0'$ and $L_1'$ are isotopic to $L_0$ and $L_1$ via
  compactly-supported, Hamiltonian isotopies through $\tau$-invariant
  Lagrangians then there is a quasi-isomorphism of complexes
  $\ECF(L_0,L_1)\simeq \ECF(L'_0,L'_1)$ over $\Field[\ZZ/2]$. (Here, the $\ECF(L_0,L_1)$ and $\ECF(L'_0,L'_1)$ are defined using corresponding collections of homotopy classes of paths $\eta$ and $\eta'$, respectively.) 
\end{proposition}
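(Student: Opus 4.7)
The plan is to mimic the proof of Proposition~\ref{prop:indep-of-cx-str}, enlarging the space of auxiliary data to include moving Lagrangian boundary conditions. Pick equivariant, compactly supported Hamiltonian isotopies $\phi^i_t \co M \to M$ with $\phi^i_0 = \Id$ and $\phi^i_1(L_i) = L_i'$ for $i=0,1$, and use a cutoff $\rho \co \RR \to [0,1]$ to produce $s$-dependent Lagrangian boundary conditions $L_i^s = \phi^i_{\rho(s)}(L_i)$ that equal $L_i$ near $s = -\infty$ and $L_i'$ near $s = +\infty$. I would enlarge $\ol{\JSpace}$ to a topological category $\ol{\JSpace}^{\mathrm{cont}}$ whose morphisms consist, in addition, of ``continuation data'' given by an eventually cylindrical almost complex structure paired with such a moving Lagrangian boundary condition. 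This enlarged category still admits a $\tau$-action, and the relevant morphism spaces remain weakly contractible; the transversality statement of Lemma~\ref{lem:exist-suff-generic} has an immediate analog for the continuation-type moduli spaces.

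First I would pick sufficiently generic, $\tau$-equivariant diagrams $F, F' \co \ECat\ZZ/2 \to \ol{\JSpace}$ defining $\ECF(L_0, L_1)$ and $\ECF(L_0', L_1')$, respectively. Then, in parallel with the proof of Proposition~\ref{prop:indep-of-cx-str}, I would build a $\tau$-equivariant, sufficiently generic homotopy coherent diagram $H \co \ICat \times \ECat\ZZ/2 \to \ol{\JSpace}^{\mathrm{cont}}$ whose restrictions to $\{0\} \times \ECat\ZZ/2$ and $\{1\} \times \ECat\ZZ/2$ are $F$ and $F'$, and whose nontrivial $\ICat$-edges land in the continuation stratum. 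The free $\ZZ/2$-action on $\ECat\ZZ/2$ and the equivariance of the isotopies allow the inductive extension argument of Lemma~\ref{lem:exist-equi-diagram} to go through in this enlarged setting: build over a fundamental domain of the $\ZZ/2$-orbit structure and extend by $\tau$.

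Applying the Floer-theoretic Construction~\ref{const:F-to-G}, extended to count continuation disks on the moving-Lagrangian edges, yields a homotopy coherent diagram $K \co \ICat \times \ECat\ZZ/2 \to \Complexes$. Every $\ICat$-edge of $K$ is a standard Floer continuation map, hence a quasi-isomorphism. Therefore $K$ is a quasi-isomorphism of diagrams between the associated $\ECat\ZZ/2$-diagrams $G$ and $G'$, and the induced map $\hocolim K \co \ECF(L_0, L_1) \to \ECF(L_0', L_1')$ is a quasi-isomorphism. The $\tau$-equivariance of $H$ makes $\hocolim K$ a map of $\Field[\ZZ/2]$-modules. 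Under the isotopies, the collection $\eta$ maps to $\eta'$, so the generators on the two sides match up, as required.

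The main obstacle is essentially bookkeeping: setting up the enlarged category $\ol{\JSpace}^{\mathrm{cont}}$ precisely enough that ``homotopy coherent diagram,'' ``sufficiently generic,'' and ``$\tau$-equivariant'' all make sense simultaneously, and verifying that compactness is preserved. The latter follows from Hypothesis~\ref{hyp:Floer-defined} because the Hamiltonian isotopies are compactly supported (so behavior at infinity is unchanged) and because the Maslov index and energy of continuation disks are controlled by the difference in action functionals between the two ends. No analytic input beyond standard Floer continuation theory is needed.
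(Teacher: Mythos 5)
Your plan closely mirrors the paper's proof: both set up an auxiliary topological category whose morphisms encode moving Lagrangian boundary data alongside almost complex structures, build a $\tau$-equivariant homotopy coherent diagram over $\ICat\times\ECat\ZZ/2$ interpolating between the two input diagrams, apply the Floer-theoretic construction, and invoke the fact that edges in the $\ICat$-direction go to continuation quasi-isomorphisms. (The paper uses a category $\Dat$ whose morphisms $(0,J)\to(1,J')$ have a distinguished $\RR$-level where the isotopy occurs, rather than a cutoff on a single-story complex structure, but this is a cosmetic difference.)

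However, there is a genuine gap: you never address the case in which the Lagrangians $L_0, L_0', L_1, L_1'$ are not triple-wise transverse. If, say, $L_0\cap L_0'\cap L_1\neq\emptyset$, there may be constant continuation disks sitting at a triple intersection point which are \emph{not} transversally cut out by the $\dbar$-operator for any choice of almost complex structure in your category $\ol{\JSpace}^{\mathrm{cont}}$. Your claim that ``the transversality statement of Lemma~\ref{lem:exist-suff-generic} has an immediate analog for the continuation-type moduli spaces'' is false at exactly these configurations, and ``no analytic input beyond standard Floer continuation theory'' is not quite right. This is not a vacuous worry: the paper explicitly notes (in the paragraph concluding its proof) that such constant bigons do occur for the diagrams used in Section~\ref{sec:new-HFa}, so the application the paper cares about runs straight into the issue. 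The paper's fix is to further enlarge the morphism spaces from $(0,J)$ to $(1,J')$ to include a contractible space of small Hamiltonian perturbations of $L_1$ along the appropriate boundary edge, along the lines of Section~\ref{sec:non-transverse}; with that larger parameter space, transversality for all homotopy classes can be achieved and the rest of the argument runs as you describe. You should either impose the triple-wise transversality assumption at the outset and then remove it via this extra perturbation step, or incorporate the perturbations into $\ol{\JSpace}^{\mathrm{cont}}$ from the start.
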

\begin{proof}
  The proof is essentially the same as the proof of
  Proposition~\ref{prop:indep-of-cx-str}, but using the Hamiltonian
  isotopy instead of a path of almost complex structures to define the
  map between diagrams. 

  If one of the triple intersections of $L_0$,
  $L_1$, $L'_0$, $L'_1$ is non-empty (i.e., the Lagrangians are not
  triple-wise transverse) there is an additional step, so assume first
  that all triple intersections are empty.
  Consider the topological category $\Dat$ where:
  \begin{itemize}
  \item The objects of $\Dat$ are $\ob(\ol{\JSpace})\amalg
    \ob(\ol{\JSpace})=\{0,1\}\times\ob(\ol{\JSpace})$.
  \item For $(i,J),(i,J')\in\{i\}\times \ob(\ol{\JSpace})$,
    $\Hom_{\Dat}((i,J),(i,J'))=\Hom_{\ol{\JSpace}}(J,J')$ is the space of
    sequences of eventually cylindrical almost complex structures
    for $J$ to $J'$.
  \item $\Hom_\Dat((1,J),(0,J'))=\emptyset$.
  \item $\Hom_\Dat((0,J),(1,J'))$ is given by sequences
    $(\wt{J}_{-i},\dots,\wt{J}_{-1},\wt{J}_0,\wt{J}_1,\dots,\wt{J}_j)$
    of eventually cylindrical almost complex structures with one
    distinguished level, $\wt{J}_0$ where we do \emph{not} quotient by
    $\RR$-translation. (The level $\wt{J}_0$ is where the Hamiltonian
    isotopy will occur.) There is a quotient map
    $\Hom_{\Dat}((0,J),(1,J'))\to \Hom_{\ol{\JSpace}}(J,J')$, by
    modding out by $\RR$ on the distinguished level.
  \end{itemize}
  Given a homotopy coherent diagram $\Cat\to\Dat$, we can apply Floer
  theory to obtain a diagram $G\co\Cat\to\Complexes$ as follows. Given
  an object $x$ of $\Cat$, with $F(x)=(i,J)$, let
  \[
  G(x)=
  \begin{cases}
    \CF(L_0,L_1;J) & i=0\\
    \CF(L'_0,L'_1;J) & i=1.
  \end{cases}
  \]
  Given a morphism $f$ in $\Cat$, if $F(f)=\wt{J}$ is contained in
  $\{0\}\times \ol{\JSpace}$ (respectively $\{1\}\times \ol{\JSpace}$)
  then define $G(f)$ to be the continuation map on $\CF(L_0,L_1)$
  (respectively $\CF(L'_0,L'_1)$) associated to $\wt{J}$. If
  $F(f)=(\wt{J}_{-i},\dots,\wt{J}_{-1},\wt{J}_0,\wt{J}_1,\dots,\wt{J}_j)$
  maps from $(0,J)$ to $(1,J')$ then define $G(f)$ to be the
  composition of the continuation maps associated to the $J_{k}$,
  $k=-i,\dots,-1$, on $\CF(L_0,L_1)$, followed by the continuation map
  associated to the Hamiltonian isotopy from $(L_0,L_1)$ to
  $(L'_0,L'_1)$ computed with respect to the almost complex structure
  $\wt{J}_0$, and then followed by the continuation maps associated to the
  $\wt{J}_{k}$, $k=1,\dots,j$, on $\CF(L'_0,L'_1)$. More generally, given a
  sequence of morphisms $f_1,\dots,f_n$, define $G(f_n,\dots,f_1)$ to
  be as in Construction~\ref{const:F-to-G} if all of the $f_i$ are in
  $\{0\}\times\ol{\JSpace}$ or $\{1\}\times\ol{\JSpace}$, and using
  the Hamiltonian isotopy at the $0$-level together with the family of
  almost complex structures $F(f_n,\dots,f_1)$ to obtain a map
  $\CF(L_0,L_1)\to \CF(L'_0,L'_1)$ if $f_n\circ\dots\circ f_1$ maps
  from $\{0\}\times\ol{\JSpace}$ to $\{1\}\times\ol{\JSpace}$. It is
  clear that for generically chosen functors $F\co \Cat\to\Dat$, $G$
  defines a homotopy coherent diagram of complexes.

  Observe that the $\ZZ/2$-action on $(M,\omega)$ induces a
  $\ZZ/2$-action on $\Dat$. Given $\tau$-equivariant homotopy coherent
  functors $F,F'\co \ECat\ZZ/2\to\ol{\JSpace}$ used to define
  $\ECF(L_0,L_1)$ and $\ECF(L'_0,L'_1)$, respectively, there is no
  obstruction to extending $\{0\}\times F\amalg \{1\}\times F'$ to a
  $\tau$-equivariant homotopy coherent functor $F''\co
  \ICat\times\ECat\ZZ/2\to \Dat$. The construction above then gives a
  homotopy coherent diagram $G''\co \ICat\times\ECat\ZZ/2\to
  \Complexes$. On each edge $f_{0,1}\times\Id_x$, $G''$ is induced by
  a continuation map, and hence is a quasi-isomorphism. So, as in the
  proof of Proposition~\ref{prop:indep-of-cx-str}, $G''$ induces a
  quasi-isomorphism $\ECF(L_0,L_1)=\hocolim G\to \hocolim
  G'=\ECF(L'_0,L'_1)$ over $\Field[\ZZ/2]$.

  This proves the result, under the assumption of no triple
  intersections.  In the case of triple intersections there may be
  constant bigons contained in $L_0\cap L'_0\cap L_1$, say, which are
  not transversely cut out by the $\overline{\partial}$-operator for
  any almost complex structure. (By considering different twistings of
  the diagrams in Section~\ref{sec:new-HFa}, it is easy to construct
  Lagrangians in which such disks exist.)  
  We discuss how to handle this case in more detail in
  Section~\ref{sec:non-transverse}. Briefly,
  extend the
  category $\Dat$ above to include small Hamiltonian perturbations of
  $L_1$ along $\{1\}\times\RR\subset\bdy\bD^2$ in
  the morphisms from $(0,J)$ to $(1,J')$.  With this larger space of
  perturbations we are able to achieve transversality for all homotopy
  classes of triangles, and the proof proceeds as before.
\end{proof}

In fact, under certain circumstances we do not need the isotopy to be
through equivariant Lagrangians. Recall that objects $L$ and $L'$ in
an $\Ainf$-category (such as the Fukaya category) are \emph{homotopy
  equivalent} if there are degree-zero cycles $f\in\Hom(L,L')$ and
$g\in\Hom(L',L)$ such that the compositions $\circ_2(g,f)\in\Hom(L,L)$
and $\circ_2(f,g)\in\Hom(L',L')$ are homotopic to the identity maps of
$L$ and $L'$, respectively.
\begin{proposition}\label{prop:non-equi-invar}
  Suppose that $L_0$, $L'_0$ and $L_1$ are $\tau$-invariant
  Lagrangians, which are pairwise transverse, and that $L_0$ is
  Hamiltonian isotopic to $L'_0$ or, more generally, so that $L_0$ is
  homotopy equivalent to $L_1$ in the Fukaya category. Suppose
  further that:
  \begin{enumerate}
  \item The collection of homotopy classes $\eta'$ induced by $\eta$
    and the Hamiltonian isotopies is preserved by the
    $\ZZ/2$-action. (Of course, $\eta$ is also required to be
    preserved by the $\ZZ/2$-action, per
    Hypothesis~\ref{hyp:Floer-defined}.)
  \item There is a $\tau$-invariant, $\omega$-compatible almost
    complex structure $J$ on $M$ which achieves transversality for all
    moduli spaces of holomorphic bigons with boundary on $(L_0,L'_0)$
    of Maslov index $\leq 1$.
  \item The top class in $\HF(L_0,L'_0)$ (i.e., the isomorphism
    between $L_0$ and $L'_0$ in the Fukaya category) is represented by
    a cycle $1\in\CF(L_0,L'_0)$ which is $\tau$-invariant, $\tau_{\#}(1)=1$. 
  \end{enumerate}
  Then there is a quasi-isomorphism
  $\ECF(L_0,L_1)\simeq \ECF(L'_0,L_1)$ of complexes over
  $\Field[\ZZ/2]$.
\end{proposition}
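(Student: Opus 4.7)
The plan is to adapt the proof of Proposition~\ref{prop:indep-of-Ham-isotopy}, replacing the Hamiltonian continuation maps by the triangle product with the $\tau$-invariant cycle $1\in \CF(L_0,L'_0)$. Since $L_0$ and $L'_0$ are not required to be equivariantly Hamiltonian isotopic, we cannot intertwine the two Floer complexes by a continuation map; the Fukaya-categorical isomorphism provided by $1$ plays that role.

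First I would enlarge $\ol{\JSpace}$ to a category $\Dat$ exactly as in the proof of Proposition~\ref{prop:indep-of-Ham-isotopy}, with objects $\{0,1\}\times \ob(\ol{\JSpace})$, with $\Hom_\Dat((i,J),(i,J'))=\Hom_{\ol{\JSpace}}(J,J')$, with $\Hom_\Dat((1,J),(0,J'))=\emptyset$, and with $\Hom_\Dat((0,J),(1,J'))$ now given by sequences of eventually cylindrical almost complex structures carrying one distinguished \emph{triangle level}. The triangle level specifies an almost complex structure on the three-punctured disk whose boundary arcs are labeled by $L_0$, $L'_0$, $L_1$, matching $J$ and $J'$ at the $(L_0,L_1)$ and $(L'_0,L_1)$ punctures and the fixed $\tau$-invariant $J$ of hypothesis~(2) at the $(L_0,L'_0)$ puncture. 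The induced chain map at the triangle level is $x\mapsto \mu_2(1,x)$, i.e., counts triangles with $1$ inserted at the $(L_0,L'_0)$ corner. The $\tau$-invariance of $1$ together with the $\tau$-invariance of the $(L_0,L'_0)$ complex structure ensures that a $\tau$-invariant triangle-level almost complex structure yields a chain map that commutes strictly with $\tau_\#$; at higher coherence levels we allow non-equivariant perturbations, which is permissible since the $\ZZ/2$-action on sequences of non-identity morphisms in $\ECat\ZZ/2$ is free.

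Next I would inductively build a $\tau$-equivariant, sufficiently generic homotopy coherent diagram $F''\co \ICat\times \ECat\ZZ/2\to \Dat$ that restricts to the given $\ECat\ZZ/2$-diagrams used to define $\ECF(L_0,L_1)$ and $\ECF(L'_0,L_1)$ on $\{0,1\}\times\ECat\ZZ/2$. This uses the weak contractibility of the relevant mapping spaces (for bigons, triangles, and the higher polygons coming from Definition~\ref{def:ho-coh-diag-strs}), the freeness of the action on the indexing category, and the analogue of Lemma~\ref{lem:exist-suff-generic} for triangle moduli spaces, all just as in Lemma~\ref{lem:exist-equi-diagram}. Applying the extended Floer construction to $F''$ yields a morphism $K$ of homotopy coherent $\ECat\ZZ/2$-diagrams in $\Complexes$ from the diagram defining $\ECF(L_0,L_1)$ to the one defining $\ECF(L'_0,L_1)$. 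Each edge $K(f_{0,1}\times \Id_x)$ is the triangle product by $1$, which on homology is multiplication by the class of $1$; by hypothesis~(3) this is the Fukaya-categorical isomorphism between $L_0$ and $L'_0$, hence a quasi-isomorphism. As in the proof of Proposition~\ref{prop:indep-of-cx-str}, $\hocolim K$ is then a quasi-isomorphism $\ECF(L_0,L_1)\to \ECF(L'_0,L_1)$ of complexes over $\Field[\ZZ/2]$, proving the proposition.

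The main obstacle will be transversality for the triangle and cube-parameterized higher polygon moduli spaces while simultaneously respecting $\tau$-equivariance along the $\ECat\ZZ/2$ factor. Hypothesis~(2) handles bigon transversality equivariantly, which is critical because the $\tau$-invariant representative $1$ must be a cycle with respect to some equivariant $J$; for triangles and beyond, equivariant transversality is not available, but the homotopy coherent framework absorbs this exactly as in Lemma~\ref{lem:exist-equi-diagram}. A secondary subtlety, parallel to the last paragraph of the proof of Proposition~\ref{prop:indep-of-Ham-isotopy}, is that if $L_0\cap L'_0\cap L_1\neq\emptyset$ then constant triangles are not transversely cut out; this is remedied by allowing small Hamiltonian perturbations of $L_1$ along the $L_1$-labeled boundary arc of the triangle domain, as discussed in Section~\ref{sec:non-transverse}.
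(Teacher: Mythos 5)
Your proposal is correct and follows essentially the same route as the paper's proof: replace the distinguished Hamiltonian-isotopy level of $\Dat$ from Proposition~\ref{prop:indep-of-Ham-isotopy} by a triangle level parameterized by a three-punctured disk with a fixed $\tau$-invariant almost complex structure at the $(L_0,L'_0)$ puncture, apply Floer theory with the $\tau$-invariant cycle $1$ inserted there to get a homotopy coherent quasi-isomorphism of $\ECat\ZZ/2$-diagrams, and pass to $\hocolim$. The identification of the triple-intersection subtlety and its resolution via equivariant perturbations of $L_1$ along the corresponding boundary arc also matches Section~\ref{sec:non-transverse} and the paper's argument.

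One small clarification of emphasis: you observe that a $\tau$-invariant triangle-level structure would make the edge map strictly $\tau_\#$-equivariant, then retreat to non-equivariant perturbations at higher coherence levels. In fact nothing about the triangle level itself needs to be made strictly $\tau$-invariant; one only needs the whole diagram $F''\co\ICat\times\ECat\ZZ/2\to\Dat$ to be $\tau$-equivariant, and since $\tau$ acts freely on the nonidentity morphism sequences this is achieved orbitwise at every level (triangle and higher alike), exactly as in Lemma~\ref{lem:exist-equi-diagram}. The invariance of the cylindrical $J$ at the $(L_0,L'_0)$ puncture is still needed, but for a different reason: so that the $\tau$-image of a triangle level is again a legal triangle level with the same boundary condition at $p_1$.
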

\begin{proof}
  Assume first that the triple intersection $L_0\cap L'_0\cap L_1$ is
  empty. In this case,
  the proof is similar to the proof of
  Proposition~\ref{prop:indep-of-Ham-isotopy}, but with the category
  $\Dat$ replaced by a different target category. Specifically, let
  $\Delta$ be a disk with three boundary punctures, labeled $p_1$,
  $p_2$, $p_3$ in clockwise order (i.e., a triangle with
  corners $p_1$, $p_2$ and $p_3$).  Fix an identification of a small,
  closed neighborhood of $p_i$ with $[0,\infty)\times[0,1]$. Fix also
  a $\tau$-invariant cylindrical almost complex structure $J$ on $M$
  achieving transversality for $(L_0,L'_0)$. Let $\JSpace_\Delta$ denote
  the space of almost complex structures on $M$ parameterized by
  $\Delta$ which are translation-invariant near the punctures of
  $\Delta$, and which agree with $J$ near $p_1$. That is,
  $\JSpace_\Delta$ is the space of maps $\wt{J}_\Delta$ from $\Delta$ to
  the space of almost complex structures on $M$ compatible with
  $\omega$ so that for each of the punctures of $\Delta$ there is an
  $N$ with $\wt{J}_\Delta|_{[N,\infty)\times[0,1]}$
  translation-invariant in the first factor, and so that on the
  specified neighborhood $[0,\infty)\times[0,1]$ of $p_1$,
  $\wt{J}_\Delta=J$. Define a category $\Dat$ with:
  \begin{itemize}
  \item $\ob(\Dat)=\ob(\ol{\JSpace})\amalg
    \ob(\ol{\JSpace})=\{0,1\}\times\ob(\ol{\JSpace})$.
  \item For $(i,J),(i,J')\in\{i\}\times \ob(\ol{\JSpace})$,
    $\Hom_{\Dat}((i,J),(i,J'))=\Hom_{\ol{\JSpace}}(J,J')$ is the space
    of sequences of eventually cylindrical almost complex
    structures for $J$ to $J'$.
  \item $\Hom((0,J),(1,J'))$ is given by sequences
    $(\wt{J}_{-i},\dots,\wt{J}_{-1},\wt{J}_0,\wt{J}_1,\dots,\wt{J}_j)$
    where:
    \begin{itemize}
    \item For $k\neq 0$, each $\wt{J}_k\in\JSpace(J_k,J_{k+1})$ is a
      eventually cylindrical almost complex structure. Here,
      $J_{-i},\dots,J_{j+1}$ is some sequence of cylindrical almost complex structures.
    \item $J_{-i}=J$ and $J_{j+1}=J'$.
    \item $\wt{J}_0\in\JSpace_\Delta$ is a family of almost complex
      structures parameterized by $\Delta$ which agree with $J_0$ on
      some cylindrical neighborhood $[n,\infty)\times[0,1]$ of $p_2$,
      $J_1$ on some cylindrical neighborhood $[n,\infty)\times[0,1]$
      of $p_3$ and, of course, $J$ on some cylindrical neighborhood
      $[n,\infty)\times[0,1]$ of $p_1$.
    \end{itemize}
  \end{itemize}
  Like $\ol{\JSpace}$, $\Dat$ is a topological category. 

  Let
  $1\in\CF(L_0,L'_0;J)$ be the $\tau$-invariant homotopy equivalence
  guaranteed by the proposition's hypotheses, and write $1$ as a
  sum of intersection points between $L_0$ and $L'_0$, $1=\sum_i 1_i$
  with $1_i\in L_0\cap L'_0$. Fix a complex structure $j_\Delta$ on
  $\Delta$ so that the identifications of the neighborhoods of the
  punctures with $[0,\infty)\times[0,1]$ are
  $j_\Delta$-holomorphic. Given a complex structure $\wt{J}\in
  \JSpace_\Delta$ and points $x\in L_0\cap L_1$ and $y\in L_0'\cap
  L_1$ we can consider the moduli space $\cM(x,y,1_i;\wt{J})$ of
  holomorphic sections of the bundle $(M\times \Delta, \wt{J}\times
  j_\Delta)\to (\Delta, j_\Delta)$ which send the three edges of the
  triangle to $L'_0$, $L_0$, and $L_1$ and are asymptotic to $1_i$ at
  $p_1$, $x$ at $p_2$, and $y$ at $p_3$.  The space
  $\cM(x,y,1_i;\wt{J})$ decomposes according to homotopy classes of triangles,
  $\cM(x,y,1_i;\wt{J})=\amalg_{\phi\in\pi_2(x,y,1_i)}\cM(\phi;\wt{J})$.
  More generally, given a family of complex structures $\wt{J}\co
  [0,1]^\ell\to \JSpace_\Delta$ and a homotopy class of triangles $\phi$
  we can consider the moduli space
  $\cM(\phi;\wt{J})=\cup_{\vec{t}\in[0,1]^\ell}\cM(\phi;\wt{J}(\vec{t}))$. For
  generic families $\wt{J}$, these moduli spaces are transversely cut
  out. Given a generic family $\wt{J}$ there is a corresponding map
  $G(\wt{J})\co \CF(L_0,L_1;J_0)\to\CF(L_0',L_1;J_1)$ defined by
  \begin{equation}\label{eq:triangle-map}
  G(x)=\sum_{y\in L_0'\cap L_1}\sum_{1_i\in 1}\sum_{\substack{\phi\in\pi_2(x,y,1_i)\\\mu(\phi)=-\ell}}\#\cM(\phi;\wt{J}) y.
  \end{equation}
  If $\ell=0$ then $G(\wt{J})$ is a chain map and, in fact, a chain
  homotopy equivalence.
  
  Given a sufficiently generic homotopy coherent diagram $F\co
  \Cat\to\Dat$, combining Formula~\eqref{eq:triangle-map} and
  Construction~\ref{const:F-to-G} gives a homotopy coherent diagram
  $G\co \Cat\to \Complexes$.

  The $\ZZ/2$-action on $M$ induces a $\ZZ/2$-action on $\Dat$.  Given
  sufficiently generic, $\tau$-equivariant homotopy coherent diagrams
  $F,F'\co\ECat\ZZ/2\to\ol{\JSpace}$ used to define $\ECF(L_0,L_1)$
  and $\ECF(L'_0,L_1)$, by the same argument as in the proof of
  Lemma~\ref{lem:exist-equi-diagram} we can extend $\{0\}\times F\cup
  \{1\}\times F'$ to a sufficiently generic, $\tau$-equivariant
  homotopy coherent diagram $F''\co\ICat\times \ECat\ZZ/2\to
  \Dat$. Applying Floer theory as above gives a homotopy coherent
  diagram $G''\co \ICat\times\ECat\ZZ/2\to \Complexes$ extending
  $G\amalg G'$. As noted after Formula~\eqref{eq:triangle-map}, for
  each $x\in\ob(\ECat)$, $G''(f_{0,1}\times \Id_x)$ is a homotopy
  equivalence. Hence, $\hocolim(G'')\co \hocolim G\to\hocolim G'$ is a
  quasi-isomorphism over $\Field[\ZZ/2]$, as desired.

  Finally, in the case that $L_0\cap L'_0\cap L_1$ is nonempty, there
  may be constant triangles contained in $L_0\cap L'_0\cap L_1$ which
  are not transversely cut out by the $\overline{\partial}$-operator
  for any almost complex structure. (For example, constant triangles
  may have negative Maslov index; see the proof of
  Lemma~\ref{lem:hat-cob-maps} for an example in which this occurs.)
  These are treated similarly to the discussion in Section~\ref{sec:non-transverse}.
  That is, extend the category $\Dat$ above to include small
  Hamiltonian perturbations of $L_1$ along the corresponding edge of
  the triangle, in the morphisms from $(0,J)$ to $(1,J')$. With this
  larger space of perturbations we are able to achieve transversality
  for all homotopy classes of triangles, and the proof proceeds as
  before.
\end{proof}

\subsection{Non-transverse intersections}\label{sec:non-transverse}
Suppose next that $L_0$ and $L_1$ are $\ZZ/2$-equivariant Lagrangians
which do not intersect transversely. By incorporating the space of
Hamiltonian perturbations of $L_0$, say, into the parameter space,
along with the space of almost complex structures, we can still define
an equivariant Floer complex. Fix an open neighborhood $U\ni L_0$
which is preserved by the $\ZZ/2$-action, let $C^\infty_0(U)$ denote
the space of compactly-supported functions $H\co U\to\RR$, and let
$\HSpace$ denote the space of smooth maps $\RR\to C^\infty_0(U)$ which
are translation-invariant near $\pm\infty$. Let $\ol{\HSpace}$ denote
the category with objects elements of $C^\infty_0(U)$ and
$\Hom(K_0,K_1)$ the space of sequences $(\wt{H}^1,\dots,\wt{H}^n)$ of
elements of $\HSpace$ so that $\wt{H}^1$ agrees with $K_0$ at
$-\infty$, $\wt{H}^n$ agrees with $K_1$ at $+\infty$, and for large
enough $T$, the restriction of $\wt{H}_{i+1}$ to $(-\infty,-T)$ agrees
with the restriction of $\wt{H}_{i}$ to $(T,\infty)$. Like
$\ol{\JSpace}$, the space $\HomO{\ol{\HSpace}}(K_0,K_1)$ has a natural
topology, making $\ol{\HSpace}$ into a topological category. The
action of $\ZZ/2$ on $M$ induces an action of $\ZZ/2$ on
$\ol{\HSpace}$.

Given a generic $K\in C^\infty_0(U)$, the time-1 Hamiltonian flow
$\phi_K$ of $K$ takes $L_0$ to a Lagrangian which is transverse to
$L_1$ so, if we fix a generic almost complex structure $J$, we get a
Floer complex $\CF(L_0,L_1;K;J)\coloneqq
(\CF(\phi_K(L_0),L_1),\bdy_J)$. Similarly, given a generic path of
Hamiltonians $H\in\HomO{\ol{\HSpace}}(K_0,K_1)$ and a path
$J\in\HomO{\ol{\JSpace}}(J_0,J_1)$ there is a corresponding Floer
continuation map
$\CF(L_0,L_1;K_0;J_0)\to\CF(L_0,L_1;K_1;J_1)$. More generally, given a map
$[0,1]^m\to \HomO{\ol{\HSpace}}(K_0,K_1)\times
\HomO{\ol{\JSpace}}(J_0,J_1)$ there is a corresponding map
\[
\CF(L_0,L_1;K_0;J_0)\to\CF(L_0,L_1;K_1;J_1)
\]
of degree $m$, by counting holomorphic bigons for any point in the
$[0,1]^m$-parameter family of (time-dependent) Hamiltonians and
(time-dependent) almost complex structures. So, given a sufficiently
generic homotopy coherent diagram $F\co
\Cat\to\ol{\HSpace}\times\ol{\JSpace}$ there is a corresponding
homotopy coherent diagram $G\co \Cat\to\Complexes$.

Choose a sufficiently generic, $\ZZ/2$-equivariant homotopy
coherent diagram $F\co \ECat\ZZ/2\to
\ol{\HSpace}\times\ol{\JSpace}$. (Since the space $C^\infty_0(U)$ is
contractible, it is easy to see that such a diagram exists.)  Let
$G\co\ECat\ZZ/2\to\Complexes$ be the corresponding $\ZZ/2$-equivariant
homotopy coherent diagram of complexes. We defined the freed Floer
complex of $L_0$ and $L_1$ to be $\ECF(L_0,L_1)\coloneqq \hocolim G$,
which is a module over $\Field[\ZZ/2]$, and the equivariant Floer
cohomology to be $\eHF(L_0,L_1)\coloneqq\ExtO{\Field[\ZZ/2]}(\hocolim
G,\Field)$. By the same argument as in
Section~\ref{subsec:invariance}, the freed Floer complex is
well-defined up to quasi-isomorphism over $\Field[\ZZ/2]$, and is
invariant, up to quasi-isomorphism over $\Field[\ZZ/2]$, under
equivariant Hamiltonian isotopies of $L_0$ and $L_1$.

\begin{lemma}
  If $L_0$ and $L_1$ intersect transversely then the new definition of
  the freed Floer complex agrees with the old one, up to
  quasi-isomorphism over $\Field[\ZZ/2]$.
\end{lemma}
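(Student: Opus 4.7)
The plan is to realize the old definition as the special case of the new one in which the Hamiltonian parameter is identically zero, and then invoke the same type of invariance argument used in Proposition~\ref{prop:indep-of-cx-str} in the enlarged category $\ol{\HSpace} \times \ol{\JSpace}$.

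First I would define the continuous $\ZZ/2$-equivariant functor $\iota \co \ol{\JSpace} \to \ol{\HSpace} \times \ol{\JSpace}$ given by $\wt{J} \mapsto (0, \wt{J})$, where $0$ denotes the identically zero Hamiltonian (which is tautologically $\ZZ/2$-invariant). Because $L_0$ is transverse to $L_1$, the time-$1$ flow of the zero Hamiltonian is the identity, so $\phi_0(L_0)=L_0$ remains transverse to $L_1$; for any $J$ the complex $\CF(L_0,L_1;0;J)$ appearing in the extended construction coincides on the nose with the complex $\CF(L_0,L_1;J)$ used in the original construction, and the moduli spaces associated to any sequence $F(f_n,\dots,f_1)$ of morphisms in the extended setting agree with those in the original. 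Consequently, if $F_0 \co \ECat\ZZ/2 \to \ol{\JSpace}$ is a sufficiently generic equivariant diagram used to define the original freed Floer complex, then the composite $\iota \circ F_0$ is sufficiently generic in the extended sense, and the two applications of Construction~\ref{const:F-to-G} produce identical homotopy coherent diagrams of chain complexes; in particular, $\hocolim G_{\iota\circ F_0}$ equals $\hocolim G_{F_0}$ on the nose as a complex over $\Field[\ZZ/2]$.

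Next I would verify that the extended definition is itself independent, up to quasi-isomorphism over $\Field[\ZZ/2]$, of the sufficiently generic equivariant diagram in $\ol{\HSpace}\times\ol{\JSpace}$ used to define it. This is a verbatim repetition of the argument for Proposition~\ref{prop:indep-of-cx-str}: the inductive extension used in Lemma~\ref{lem:exist-equi-diagram} goes through because $C^\infty_0(U)$ is contractible and the morphism spaces of $\ol{\JSpace}$ are weakly contractible, so the same holds for the morphism spaces of the product category; each edge of the resulting interpolating diagram of chain complexes is a continuation-type map and therefore a quasi-isomorphism, so its homotopy colimit is a $\Field[\ZZ/2]$-equivariant quasi-isomorphism between any two choices of extended freed Floer complex.

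Combining these two steps gives the lemma: for any sufficiently generic equivariant $F_1 \co \ECat\ZZ/2 \to \ol{\HSpace}\times\ol{\JSpace}$ defining the new freed Floer complex, one obtains a chain of $\Field[\ZZ/2]$-quasi-isomorphisms $\hocolim G_{F_1} \simeq \hocolim G_{\iota\circ F_0} = \hocolim G_{F_0}$. The only point requiring real care is verifying that $\iota\circ F_0$ is genuinely sufficiently generic in the extended framework despite the Hamiltonian component being constant; this is precisely where the transversality hypothesis on $L_0$ and $L_1$ is used, as it removes any need for Hamiltonian perturbation to achieve transversality and lets the almost complex structure carry the full burden.
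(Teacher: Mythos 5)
Your proof is correct and follows exactly the same line as the paper's (one-sentence) argument: the paper's proof simply says ``We can simply take all of the Hamiltonians to be the constant function $0$,'' which is precisely your observation that the embedding $\iota(\wt{J})=(0,\wt{J})$ realizes the old construction as a special case of the new one. Your more careful verification that $\iota\circ F_0$ remains sufficiently generic (because the moduli spaces are unchanged when the Hamiltonian vanishes) and that the extended construction is independent of the choice of equivariant diagram matches what the paper establishes implicitly in the paragraph preceding the lemma.
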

\begin{proof}
  We can simply take all of the Hamiltonians to be the constant function $0$.
\end{proof}

\subsection{Other groups}\label{sec:2-groups}
One can imitate the construction above with $\ZZ/2$ replaced by any
finite group $K$ to obtain a $K$-equivariant Floer homology, though we
will continue to work with coefficients in $\Field$ to avoid
discussing orientations of moduli spaces.

So, fix a finite group $K$ and an action of $K$ on a symplectic manifold
$M$ by symplectomorphisms preserving a pair of Lagrangians $L_0$ and
$L_1$. Assume that $(M,L_0,L_1)$ satisfy
Hypothesis~\ref{hyp:Floer-defined}. 
% We will sketch, briefly, how the
% constructions above generalize to give a $K$-equivariant Floer
% cohomology.
%
Let $\ECat$ be any small category so that there is a unique morphism
$a\to b$ for any pair of objects $a,b$ in $\ECat$. Further assume that
we have a free right action by $K$ on the objects of $\ECat$ (which
therefore extends uniquely to an $K$-action on $\ECat$).  For
concreteness, we can choose $\ECat$ to be the category $\ECat K$ which
has one object $\ul{g}$ for each element $g\in K$ and a single
morphism $\ul{g}\to\ul{h}$ for each pair of objects $\ul{g}$ and
$\ul{h}$. (So, every object in $\ECat K$ is initial and terminal and,
in fact, $\ECat K$ depends only on the order of $K$.) There is a free
right action of $K$ on $\ECat K$, by $\ul{g}\cdot h=\ul{gh}$.

By the same inductive argument as in
Lemma~\ref{lem:exist-equi-diagram}, we can find a sufficiently
generic, $K$-equivariant homotopy coherent diagram $F\co \ECat \to
\ol{\JSpace}$.  Construction~\ref{const:F-to-G} then gives a homotopy
coherent functor $G\co \ECat \to \Complexes$. Define the \emph{freed
  Floer complex} to be the complex $\ECF(L_0,L_1)\coloneqq \hocolim
G$, which is a chain complex over $\Field[K]$. (If more than one group
is in play, we may denote $\ECF(L_0,L_1)$ by $\ECF[K](L_0,L_1)$ to
emphasize which group we are considering.)  The \emph{$K$-equivariant
  Floer complex} is $\eCF[K](L_0,L_1)=\HomO{\Field[K]}(\hocolim G,
\Field)$ and the \emph{$K$-equivariant Floer homology}
$\eHF[K](L_0,L_1)$ is the homology of $\eCF[K](L_0,L_1)$. Since
$\ECF(L_0,L_1)$ is a bounded-below complex of free
$\Field[K]$-modules, we have
$\eHF[K](L_0,L_1)\cong\ExtO{\Field[K]}(\ECF(L_0,L_1),\Field)$, so in
particular there is an action (and, by homological perturbation
theory, even an $\Ainf$-action) of $H^*(K)$
on $\eHF[K](L_0,L_1)$, and a spectral sequence
$H^*(K,\HF^*(L_0,L_1))\Rightarrow \eHF(L_0,L_1)$, both of which are
invariants of the quasi-isomorphism type of $\ECF(L_0,L_1)$ over
$\Field[K]$.

\begin{proposition}\label{prop:invariance-gen-group}
  Up to quasi-isomorphism over $\Field[K]$, the complex $\ECF(L_0,L_1)$
  is independent of the choice of $K$-equivariant sufficiently
  generic homotopy coherent diagram in $\JSpace$ used to define it.
  If $L_0'$ and $L_1'$ are isotopic to $L_0$ and $L_1$ via
  compactly-supported, Hamiltonian isotopies through $K$-invariant
  Lagrangians then there is a quasi-isomorphism of $\Field[K]$-modules
  $\ECF(L_0,L_1)\simeq \ECF(L'_0,L'_1)$.
\end{proposition}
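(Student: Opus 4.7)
The plan is to adapt the proofs of Propositions~\ref{prop:indep-of-cx-str} and~\ref{prop:indep-of-Ham-isotopy} from the $\ZZ/2$ case to a general finite group $K$. The key structural observation is that the right $K$-action on $\ob(\ECat)$ is free, so every inductive construction of a $K$-equivariant homotopy coherent diagram splits into independent, obstruction-free extension problems on $K$-orbit representatives, which are solvable by weak contractibility of the relevant morphism spaces together with Lemma~\ref{lem:exist-suff-generic}.

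First, for independence of choices: given two sufficiently generic $K$-equivariant diagrams $F_0, F_1 \co \ECat \to \ol{\JSpace}$ with associated diagrams $G_0, G_1 \co \ECat \to \Complexes$ from Construction~\ref{const:F-to-G}, I would extend $\{0\}\times F_0 \cup \{1\}\times F_1$ to a sufficiently generic, $K$-equivariant diagram $H \co \ICat \times \ECat \to \ol{\JSpace}$ (with $K$ acting trivially on $\ICat$). The inductive construction is identical to that of Lemma~\ref{lem:exist-equi-diagram}; the $K$-equivariance is achieved by choosing values on orbit representatives of composable sequences in $\ICat \times \ECat$ and propagating by the $K$-action. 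Applying Floer theory to $H$ produces a $K$-equivariant diagram $K' \co \ICat \times \ECat \to \Complexes$ that restricts to $G_0$ and $G_1$ on the two ends. Every edge $f_{0,1}\times \Id_x$ is sent to a Floer continuation map, which is a quasi-isomorphism, so by the naturality of homotopy colimits (Vogt~\cite{Vogt73:hocolim}, Cordier--Porter~\cite{CordierPorter86:Vogt}), $\hocolim K'$ provides a quasi-isomorphism $\hocolim G_0 \to \hocolim G_1$. Since everything was carried out $K$-equivariantly, this is a quasi-isomorphism over $\Field[K]$.

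For Hamiltonian isotopy invariance, I would imitate the construction of the category $\Dat$ in the proof of Proposition~\ref{prop:indep-of-Ham-isotopy}: objects are two copies of $\ob(\ol{\JSpace})$, morphisms within a copy are the usual ones, and morphisms from the $0$-copy to the $1$-copy are sequences of eventually cylindrical almost complex structures with one distinguished level (where the Hamiltonian isotopy takes place). The $K$-action on $M$ induces a $K$-action on $\Dat$ preserving the two copies. Given $K$-equivariant sufficiently generic diagrams $F, F' \co \ECat \to \ol{\JSpace}$ defining $\ECF(L_0,L_1)$ and $\ECF(L'_0,L'_1)$, I would extend $\{0\}\times F \cup \{1\}\times F'$ to a sufficiently generic, $K$-equivariant diagram $F'' \co \ICat \times \ECat \to \Dat$ by the same inductive procedure. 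The associated Floer-theoretic diagram $G'' \co \ICat \times \ECat \to \Complexes$ sends every edge $f_{0,1}\times \Id_x$ to a continuation map (now involving the Hamiltonian isotopy at the distinguished level), which is again a quasi-isomorphism, so $\hocolim G''$ yields the desired $\Field[K]$-linear quasi-isomorphism $\ECF(L_0,L_1) \simeq \ECF(L'_0,L'_1)$. Non-transverse triple intersections are handled exactly as in the proof of Proposition~\ref{prop:indep-of-Ham-isotopy} by further enlarging the morphism spaces of $\Dat$ to incorporate small Hamiltonian perturbations of $L_1$ in the appropriate level, and the analysis of Section~\ref{sec:non-transverse} carries over $K$-equivariantly because the parameter space $\ol{\HSpace}$ (and its product with $\ol{\JSpace}$) has the same weak contractibility properties.

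The main, and in fact only substantial, obstacle is the verification that the inductive extensions described above can always be carried out $K$-equivariantly. Since the $K$-action on $\ob(\ECat)$ is free, an equivariant extension on a $K$-orbit of simplices reduces to a non-equivariant extension on one representative, and the latter exists by weak contractibility of the relevant morphism spaces together with Lemma~\ref{lem:exist-suff-generic}; this is the same mechanism already used to produce equivariant diagrams in Lemma~\ref{lem:exist-equi-diagram}. Once equivariant extendibility is in hand, the quasi-isomorphism statements follow from the general fact that a pointwise quasi-isomorphism of homotopy coherent diagrams induces a quasi-isomorphism on homotopy colimits, applied in a $K$-equivariant setting.
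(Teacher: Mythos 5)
Your proposal is correct and takes essentially the same route as the paper, which says only that the proof ``follows along the same lines as the proofs of Propositions~\ref{prop:indep-of-cx-str} and~\ref{prop:indep-of-Ham-isotopy}'' and leaves the details to the reader. You have correctly identified freeness of the $K$-action on objects (hence on composable sequences) of $\ECat K$ as the one point requiring verification for equivariant extendibility, and your handling of non-transverse triple intersections matches the paper's Section~\ref{sec:non-transverse}.
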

The proof follows along the same lines as the proofs of
Propositions~\ref{prop:indep-of-cx-str}
and~\ref{prop:indep-of-Ham-isotopy}, and is left to the reader. (As in
Proposition~\ref{prop:indep-of-Ham-isotopy} the complexes
$\ECF(L_0,L_1)$ and $\ECF(L'_0,L'_1)$ are defined with respect to
corresponding $K$-invariant collections of homotopy classes of paths,
$\eta$ and $\eta'$.)

We also have an analogue of Proposition~\ref{prop:non-equi-invar}:
\begin{proposition}\label{prop:non-equi-invar-gen-group}
  Suppose that $L_0$, $L'_0$ and $L_1$ are $K$-invariant Lagrangians,
  which are pairwise transverse, and that $L_0$ is Hamiltonian
  isotopic to $L'_0$. Suppose further that:
  \begin{enumerate}
  \item The collection of homotopy classes $\eta'$ induced by $\eta$
    and the Hamiltonian isotopies is preserved by the $K$-action. (Of
    course, $\eta$ is also required to be preserved by the $K$-action,
    per Hypothesis~\ref{hyp:Floer-defined}.)
  \item There is a $K$-invariant, $\omega$-compatible almost complex
    structure $J$ on $M$ which achieves transversality for all moduli
    spaces of holomorphic bigons with boundary on $(L_0,L'_0)$ of
    Maslov index $\leq 1$.
  \item The top class in $\HF(L_0,L'_0)$ (i.e., the isomorphism
    between $L_0$ and $L'_0$ in the Fukaya category) is represented by
    a cycle $1\in\CF(L_0,L'_0)$ which is fixed by the $K$-action.
  \end{enumerate}
  Then there is a quasi-isomorphism
  $\eCF[K](L_0,L_1)\simeq \eCF[K](L'_0,L_1)$ of chain complexes over
  $\Field[K]$.
\end{proposition}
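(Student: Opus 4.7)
The plan is to follow the blueprint of the proof of Proposition~\ref{prop:non-equi-invar}, replacing $\ZZ/2$ by $K$ throughout. The essential ingredients transfer without change: a topological category $\Dat$ whose morphisms from level $0$ to level $1$ incorporate triangle parameter spaces, an inductive extension argument analogous to Lemma~\ref{lem:exist-equi-diagram}, and the fact that the triangle map induced by the canonical cycle $1 \in \CF(L_0,L_0')$ is a chain homotopy equivalence.

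First, I would set up the target category $\Dat$ exactly as in the proof of Proposition~\ref{prop:non-equi-invar}. Its objects are $\{0,1\} \times \ob(\ol{\JSpace})$; the morphisms within a level are morphisms in $\ol{\JSpace}$; and a morphism from $(0,J)$ to $(1,J')$ is a sequence of eventually cylindrical almost complex structures with one distinguished level parameterized by a disk $\Delta$ with three boundary punctures whose edges are labeled by $L_0'$, $L_0$, $L_1$, with the complex structure pinned down to be the $K$-invariant $J$ on the cylindrical neighborhood of the puncture $p_1$ labeled by $(L_0,L_0')$. The $K$-action on $M$ induces a $K$-action on $\Dat$ by functors. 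Since the cycle $1 = \sum_i 1_i$ is $K$-fixed and the almost complex structure $J$ at the relevant puncture is $K$-invariant, the triangle map
\[
G(\wt{J})(x) = \sum_{y \in L_0' \cap L_1} \sum_{1_i} \sum_{\substack{\phi \in \pi_2(x,y,1_i)\\ \mu(\phi) = -\ell}} \#\cM(\phi;\wt{J})\, y
\]
associated to a sufficiently generic $K$-equivariant family $\wt{J}$ is $K$-equivariant and, when $\ell=0$, a chain homotopy equivalence by the assumption that $1$ represents the isomorphism class in the Fukaya category.

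Next, let $F, F'\co \ECat \to \ol{\JSpace}$ be the sufficiently generic $K$-equivariant homotopy coherent diagrams used to define $\ECF(L_0,L_1)$ and $\ECF(L_0',L_1)$. Combining them gives a $K$-equivariant diagram $\{0\} \times F \amalg \{1\} \times F' \co \{0,1\} \times \ECat \to \Dat$. By the same inductive extension as in Lemma~\ref{lem:exist-equi-diagram}, using that $\Hom_\Dat((0,J),(1,J'))$ is weakly contractible (the space of compatible almost complex structures on $M \times \Delta$ and the spaces of cylindrical structures are all contractible), that the $K$-action on the morphisms of $\ICat \times \ECat$ not lying over identity arrows of $\ICat$ is free, and Lemma~\ref{lem:exist-suff-generic}, this extends to a sufficiently generic $K$-equivariant homotopy coherent diagram $F''\co \ICat \times \ECat \to \Dat$. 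Applying Floer theory (combining Construction~\ref{const:F-to-G} with the triangle map above) gives a homotopy coherent diagram $G''\co \ICat \times \ECat \to \Complexes$ that restricts to the diagrams $G$ and $G'$ used to define $\ECF(L_0,L_1)$ and $\ECF(L_0',L_1)$.

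On every edge of the form $f_{0,1} \times \Id_x$, $G''$ is the $\ell=0$ triangle map, which is a chain homotopy equivalence by hypothesis (3). Therefore $G''$ is a quasi-isomorphism of diagrams in the sense of Proposition~\ref{prop:indep-of-cx-str}, so $\hocolim G''$ gives a quasi-isomorphism $\ECF(L_0,L_1) \to \ECF(L_0',L_1)$ of complexes over $\Field[K]$. Dualizing via $\HomO{\Field[K]}(-,\Field)$ yields the desired quasi-isomorphism of equivariant cochain complexes. If the triple intersection $L_0 \cap L_0' \cap L_1$ is nonempty, one enlarges $\Dat$ to allow small $K$-equivariant Hamiltonian perturbations of $L_1$ along the appropriate edge of the triangle, as in Section~\ref{sec:non-transverse} and the end of the proof of Proposition~\ref{prop:non-equi-invar}; the main obstacle is the standard one of achieving transversality for constant triangles, which is handled by this Hamiltonian perturbation device, and otherwise the argument is formally identical to the $\ZZ/2$ case.
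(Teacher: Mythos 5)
The paper leaves this proposition to the reader as a direct generalization of the proof of Proposition~\ref{prop:non-equi-invar}, and your proposal carries out exactly that generalization, replacing $\ZZ/2$ by $K$ in the construction of the topological category $\Dat$, the inductive extension of a $K$-equivariant sufficiently generic diagram, and the verification that the triangle map along each $f_{0,1}\times\Id_x$ edge is a chain homotopy equivalence. The argument is correct and matches the intended approach; the treatment of triple intersections via enlarging $\Dat$ with equivariant Hamiltonian perturbations is also the right move, as in the $\ZZ/2$ case.
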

Again, we leave the generalization of the proof to the reader.

%%% Local Variables: 
%%% mode: latex
%%% TeX-master: "HEquivariant.tex"
%%% End: 

\section{Equivariant Floer homology via equivariant complex structures}\label{sec:equi-equi}
In this section, we show that the equivariant Floer homology from
Section~\ref{sec:equi-complex} agrees with the equivariant Floer
homology computed from an equivariant almost complex structure (as
in~\cite[Sections 2b and 3]{SeidelSmith10:localization}) essentially
whenever the latter is defined.

\subsection{The case of involutions}
We start by briefly recalling the equivariant construction of
equivariant Floer homology, in the case $H=\ZZ/2$. Fix $M$, $L_0$, $L_1$, $\eta$, and an action of $\ZZ/2=\{1,\tau\}$ as in
Section~\ref{sec:equi-complex}, and in particular satisfying
Hypothesis~\ref{hyp:Floer-defined}.
Consider almost complex structures $J$
which are \emph{$\tau$-invariant}, i.e., such that $d\tau\circ J\circ
d\tau=J$. This is an infinite-codimension condition, and it may not be
possible to find a $\tau$-invariant almost complex structure which achieves
transversality for the $\dbar$ operator. We hypothesize away this difficulty:
\begin{hypothesis}\label{hyp:equivariant-transversality}
  Assume that there is a one-parameter family of $\tau$-invariant 
  almost complex structures $J=J(t)$, $t\in[0,1]$ on $M$ compatible with
  $\omega$ and so that for any homotopy class of Whitney disks $\phi$
  between $L_0$ and $L_1$ with Maslov index $\mu(\phi)\leq 1$, the
  moduli space of $J$-holomorphic disks $\cM(\phi)$ in the homotopy
  class $\phi$ is transversely cut out.
\end{hypothesis}

Under this hypothesis, we can define the equivariant Floer complex as
follows. The Floer complex $(\CF(L_0,L_1),\bdy_J)$, computed with
respect to $J$, satisfies $\bdy_J^2=0$: even though the index $2$
moduli spaces for $J$ may not be cut out transversally, there will be
a (non-invariant) $J'$ arbitrarily close to $J$ for which the index
$2$ moduli spaces are transversally cut out, so $\bdy_{J'}^2=0$, and
for $J'$ close enough to $J$, $\bdy_J=\bdy_{J'}$. Further, since $J$
is $\tau$-invariant, the map $\tau_\#\co \CF(L_0,L_1)\to\CF(L_0,L_1)$
induced by $\tau\co L_0\cap L_1\to L_0\cap L_1$ is a chain map, so
$\CF(L_0,L_1)$ becomes a chain complex over $\Field[\ZZ/2]$. The
equivariant Floer cohomology is the $\ZZ/2$-equivariant cohomology of
$\CF(L_0,L_1)$, i.e., 
$\ewtHF(L_0,L_1)=\ExtO{\Field[\ZZ/2]}(\CF(L_0,L_1),\Field)$. Explicitly,
this is the cohomology of the bicomplex
\begin{equation}\label{eq:equiequi}
\ewtCF(L_0,L_1)=\left(0\rightarrow (\CF^*(L_0,L_1),d_J) 
\stackrel{\Id+\tau^\#}{\longrightarrow} (\CF^*(L_0,L_1),d_J)
%\stackrel{\Id+\tau_\#}{\longrightarrow} (\CF^*(L_0,L_1),d_J)
\stackrel{\Id+\tau^\#}{\longrightarrow} \cdots\right)
\end{equation}
where $\CF^*(L_0,L_1)$ is the Floer cochain complex, the dual (over
$\Field$) of $\CF(L_0,L_1)$.

\begin{proposition}\label{prop:equi-is-equi}
  Under Hypothesis~\ref{hyp:equivariant-transversality}, there is a
  quasi-isomorphism of $\Field[\ZZ/2]$-modules 
  \[
  \CF(L_0,L_1)\simeq \ECF(L_0,L_1)
  \]
  between the Floer complex of $L_0$ and $L_1$ and the freed Floer
  complex (Definition~\ref{def:equi-Floer}).
\end{proposition}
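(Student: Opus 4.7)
The plan is to compute $\ECF(L_0,L_1)$ for one particularly convenient choice of sufficiently generic, $\tau$-equivariant homotopy coherent diagram $F\co\ECat\ZZ/2\to\ol{\JSpace}$; by Proposition~\ref{prop:indep-of-cx-str}, this determines the quasi-isomorphism type of $\ECF(L_0,L_1)$ over $\Field[\ZZ/2]$.

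Specifically, I take $F(a)=F(b)=J$, the $\tau$-invariant almost complex structure provided by Hypothesis~\ref{hyp:equivariant-transversality}, and $F(\alpha)$, $F(\beta)$ to be the constant families at $J$. For sequences of length $n\geq 2$, I take small $\tau$-equivariant perturbations of the constant family at $J$, chosen inductively as in Lemma~\ref{lem:exist-equi-diagram} to secure transversality of the remaining moduli spaces. The goal is to show that the associated $G\co\ECat\ZZ/2\to\Complexes$ is strict, in the sense that $G(a)=G(b)=(\CF(L_0,L_1),\bdy_J)$, $G_\alpha=G_\beta=\Id$, and $G_{f_n,\dots,f_1}=0$ for $n\geq 2$. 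The identifications at $n=1$ hold because the continuation map for the constant family at $J$ counts Maslov index $0$ disks $u\co\RR\times[0,1]\to M$ with appropriate asymptotics, and any non-constant such disks would lie in positive-dimensional $\RR$-orbits of reduced moduli spaces of dimension $-1$; these are empty by the transversality of $J$ at Maslov index $\leq 1$, so only the constant disks $u\equiv x$ contribute, yielding the identity. The vanishing for $n\geq 2$ is a dimension count: $G_{f_n,\dots,f_1}$ counts $0$-dimensional moduli of disks of Maslov index $\mu=1-n\leq -1$ in an $(n-1)$-parameter family, and for the unperturbed constant family this is $[0,1]^{n-1}\times\cM(\phi;J)$; since $\cM(\phi;J)$ is empty by transversality of $J$, the same persists after sufficiently small perturbations.

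With this strict $G$, I read off $\eCF(L_0,L_1)$ from Observation~\ref{obs:concrete-hocolim}: the differential in~\eqref{eq:diff-on-equicx} collapses to
\[
d(\alpha_n^*\otimes x^*)=\alpha_n^*\otimes(dx^*)+\alpha_{n+1}^*\otimes(\Id+\tau^\#)(x^*),
\]
which coincides precisely with the bicomplex $\ewtCF(L_0,L_1)$ of~\eqref{eq:equiequi}, with matching $\Field[\theta]$-action and filtration. Dually, one can exhibit the equivariant quasi-isomorphism at the chain level: the map $\pi\co\hocolim G\to\CF(L_0,L_1)$ defined by $\pi(\alpha_0\otimes x)=x$, $\pi(\beta_0\otimes y)=y$, and $\pi(\alpha_n\otimes x)=\pi(\beta_n\otimes y)=0$ for $n\geq 1$ is $\Field[\ZZ/2]$-equivariant (since $\tau\cdot(\alpha_0\otimes x)=\beta_0\otimes\tau_\#(x)$ and both sides map to $\tau_\#(x)$) and a chain map (using $G_\alpha=G_\beta=\Id$ and the vanishing of higher $G$'s). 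A short calculation with the filtration spectral sequence shows that its $E_2$-page is concentrated in filtration zero and is identified with $\HF(L_0,L_1)$ via $\pi_*$, so $\pi$ is a quasi-isomorphism over $\Field$, hence over $\Field[\ZZ/2]$.

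The main technical obstacle is sufficient genericity at the vertex: $J$ achieves transversality only for Maslov index $\leq 1$, so $F(a)=J$ does not literally satisfy the full transversality condition in Section~\ref{sec:equicomplex}. To address this, I would replace $J$ at the vertex by a small, generic (not necessarily $\tau$-invariant) perturbation $J_0$ of $J$, and set $F(a)=J_0$, $F(b)=\tau_*J_0$; this preserves $\tau$-equivariance of $F$ as a whole and recovers full transversality at the vertices. The complexes $G(a)=(\CF,\bdy_{J_0})$ and $G(b)=(\CF,\bdy_{\tau_*J_0})$ agree with $(\CF,\bdy_J)$ up to continuation chain-homotopy equivalences; the dimension-counting vanishing of the higher $G$'s is stable under sufficiently small perturbations; and $G_\alpha$, $G_\beta$ become chain-homotopic (rather than equal) to the identity. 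The comparison with $\ewtCF$ then holds up to filtered quasi-isomorphism over $\Field[\ZZ/2]$, yielding Proposition~\ref{prop:equi-is-equi}.
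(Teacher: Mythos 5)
Your proposal is correct and follows essentially the same strategy as the paper's proof: use Hypothesis~\ref{hyp:equivariant-transversality} to choose the homotopy coherent diagram $F$ with all complex structures near $J$ so that the induced $G$ has $G_\alpha=G_\beta=\Id$ and all higher $G$'s vanish, then identify $\hocolim G$ with $\CF(L_0,L_1)$. The only difference is at the final step: where you exhibit the projection $\pi\co\hocolim G\to\CF(L_0,L_1)$ as a $\ZZ/2$-equivariant chain map and check that it is a quasi-isomorphism via the filtration spectral sequence, the paper instead performs an explicit change of basis ($\xi_n(x)=\beta_n\otimes x+\alpha_n\otimes x+\alpha_{n+1}\otimes\bdy x$) to produce a $\ZZ/2$-invariant acyclic subcomplex with quotient exactly $\CF(L_0,L_1)$; both routes are valid and essentially equivalent.
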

In particular, there is a filtered homotopy equivalence
$\ewtCF(L_0,L_1)\simeq \eCF(L_0,L_1)$, and the induced isomorphism
$\ewtHF(L_0,L_1)\cong \eHF(L_0,L_1)$ respects the
$\Field[\theta]$-module structure.
\begin{proof}
  Since $J$ achieves transversality for the $\dbar$-operator, there is
  a small neighborhood $U$ of $J$ in the space of almost complex
  structures $\JSpace$ so that for any cylindrical at infinity almost complex structure $\wt{J}\co\RR\to U$ contained in $U$ and any
  homotopy class $\phi$ of Whitney disks there is a homeomorphism
  $\cM(\phi;\wt{J})\cong \cM(\phi;J)$. In particular, if $\mu(\phi)<0$
  then $\cM(\phi;\wt{J})=\emptyset$ and if $\mu(\phi)=0$ then
  $\cM(\phi;\wt{J})=\emptyset$ unless $\phi$ is the constant disk from
  an intersection point $x$ to itself, in which case
  $\cM(\phi;\wt{J})$ consists of a single point.

  In Lemma~\ref{lem:exist-equi-diagram}, we can choose the diagram $F$
  so that each eventually cylindrical almost complex structure is
  contained in $U$. It follows that:
  \begin{itemize}
  \item $\CF(L_0,L_1;F(a))\cong\CF(L_0,L_1;F(b))\cong \CF(L_0,L_1;J)$,
    as chain complexes, via the identity map $L_0\cap L_1\to L_0\cap
    L_1$.
  \item $G_{f_1,\dots,f_n}=0$ for any sequence of composable morphisms
    $\{f_i\}\subset\{\alpha,\beta\}$ of length $n>1$.
  \item $G_\alpha$ and $G_\beta$ are induced by the identity maps $L_0\cap L_1\to L_0\cap L_1$. 
  \end{itemize}
  It follows from Equations~\eqref{eq:d-of-alpha}
  and~\eqref{eq:d-of-beta} that 
  \begin{align*}\label{eq:equis-agree}
     \bdy(\alpha_n\otimes x)=\alpha_n \otimes(\bdy x )+\alpha_{n-1}\otimes x+\beta_{n-1}\otimes x \\
     \bdy (\beta_n \otimes x) = \beta_n \otimes (\bdy x) + \beta_{n-1} \otimes x + \alpha_{n-1} \otimes x
  \end{align*}
  
  We will take the quotient of $\ECF(L_0,L_1)$ by an acyclic complex preserved by $\tau$, and show that the result  is exactly $\CF(L_0,L_1)$ as an $\Field[\ZZ/2]$-module. Consider the following change of basis: replace $\beta_n \otimes x$ with 
$$  \xi_n(x) = \beta_n \otimes x + \alpha_{n} \otimes x + \alpha_{n+1} \otimes \bdy(x).$$
Then our differential becomes
\begin{align*}
&\bdy(\xi_n(x))=0 \\
&\bdy(\alpha_n \otimes x) = \xi_{n-1}(x) \ \ \ \text{ for } n \geq 1.\end{align*} 
Let $C$ be the subcomplex of $\ECF(L_0,L_1)$ spanned by $\{a_n \otimes x, \xi_m(x): x \in L_0 \cap L_1, \ n \geq 1, \ m \geq 0\}$, or equivalently spanned by $\{\alpha_n \otimes x, \beta_n \otimes x, \alpha_0\otimes x + \beta_0 \otimes x: x \in L_0 \cap L_1, \ n \geq 1\}$. From the second description it is clear $C$ is preserved by the $\ZZ/2$-action. Furthermore, the quotient of $\CF(L_0,L_1)$ by $C$ is spanned by elements $\{\alpha_0 \otimes x: x \in L_0 \cap L_1\}$ with $\bdy(\alpha_0 \otimes x) = \alpha_0 \otimes (\bdy x)$ and $\ZZ/2$ action 
\[
\tau_{\#}(\alpha_0 \otimes x) = \beta_0 \otimes \tau_{\#}x \equiv \alpha_0 \otimes \tau_{\#} x.
\]  
This is exactly $\CF(L_0,L_1)$.
\end{proof}

It is not entirely clear how restrictive
Hypothesis~\ref{hyp:equivariant-transversality} is, but it is
satisfied in at least two important cases:
\begin{enumerate}[label=(ET-\arabic*)]
\item\label{item:ET:Seidel-Smith} Let $NM^\fix$, $NL_0^\fix$ and $NL_1^\fix$ denote the normal
  bundles to the $\tau$-fixed parts $M^\fix$, $L_0^\fix$, and
  $L_1^\fix$ in $M$, $L_0$, and $L_1$. Suppose that there is an
  equivariant trivialization $NM^\fix\cong M^\fix\times \CC^k$, where
  $\tau$ acts on $\CC^k$ by $\tau(\vec{z})=-\vec{z}$, which sends
  $NL_0^\fix$ and $NL_1^\fix$ to $L_0^\fix\times \RR^k$ and
  $L_1^\fix\times i\RR^k$, respectively. Then Seidel-Smith observe
  that there is a one-parameter family of $\tau$-invariant almost complex structures $J$ on $M$ achieving transversality for all
  moduli spaces of $J$-holomorphic disks~\cite[Lemma
  19]{SeidelSmith10:localization}.
\item\label{item:ET:index-double} Let $i\co (M^\fix,L_0^\fix,L_1^\fix)\to
  (M,L_0,L_1)$ denote inclusion. Suppose that for any homotopy class
  of Whitney disks $\phi\co (\bD^2,\bdy\bD^2)\to (M^\fix,L_0\cup
  L_1)$, $\mu(i_*(\phi))=2\mu(\phi)$ (where $i_*(\phi)$ denotes the
  homotopy class of disks in $(M,L_0\cup L_1)$ corresponding to
  $\phi$). Then Hypothesis~\ref{hyp:equivariant-transversality} is
  satisfied. To see this, choose a one-parameter family of almost complex structures $J_{\fix}$ on $M^\fix$ which achieves
  transversality for holomorphic disks in $M^\fix$. 
  (Note that the Hypothesis~\ref{hyp:Floer-defined} for $M$ and the
  condition $\mu(i_*(\phi))=2\mu(\phi)$ imply
  Hypothesis~\ref{hyp:Floer-defined} for $M^\fix$.) As discussed
  in~\cite[Section 5c]{KhS02:BraidGpAction}, a generic choice of
  $\ZZ/2$-invariant extension of $J_{\fix}$ to an almost complex
  structure $J$ on $M$ will achieve transversality for all holomorphic
  disks not contained entirely in $M^\fix$. But since
  $\mu(i_*(\phi))=2\mu(\phi)$ and $J_{\fix}$ achieves transversality
  in $M^\fix$, there are no nontrivial holomorphic curves in $M^\fix$
  with index $\leq 1$.
\end{enumerate}
Seidel-Smith show that if $(M,L_0,L_1,\tau)$ satisfy a weaker (but
still restrictive) condition, \emph{stable normal triviality}, then
after an isotopy one can arrange for
Condition~\ref{item:ET:Seidel-Smith} to be satisfied.  As was already
observed in~\cite{LeeLipshitz08:gradings} and further exploited
in~\cite{Hendricks12:dcov-localization,Hendricks:periodic-localization},
Condition~\ref{item:ET:index-double} arises in $\ZZ/2$-covers of
Heegaard diagrams. Both points will be used in
Section~\ref{sec:HF-applications}.

\begin{proof}[Proof of Theorem~\ref{thm:SS-invt}]
  Seidel-Smith's equivariant Floer complex is
  $\ewtCF(L_0,L_1)$, which is well-defined in their setting
  because of Point~\ref{item:ET:Seidel-Smith}. It follows from
  Proposition~\ref{prop:equi-is-equi} that their filtered complex
  agrees with $\eCF(L_0,L_1)$, and by
  Propositions~\ref{prop:indep-of-cx-str}
  and~\ref{prop:indep-of-Ham-isotopy} that the filtered homotopy type
  of $\eCF(L_0,L_1)$ is independent of auxiliary choices and
  invariant under equivariant Hamiltonian isotopies.
\end{proof}

\subsection{General finite groups}
Finally, we turn briefly to general finite groups $K$. With notation as in
Section~\ref{sec:2-groups}, we have the following generalization of
Hypothesis~\ref{hyp:equivariant-transversality}:
\begin{hypothesis}\label{hyp:equivariant-transversality-K}
  Assume that there is a one-parameter family of $K$-invariant 
  almost complex structures $J=J(t)$, $t\in[0,1]$ on $M$ compatible with
  $\omega$ and so that for any homotopy class of Whitney disks $\phi$
  between $L_0$ and $L_1$ with Maslov index $\mu(\phi)\leq 1$, the
  moduli space of $J$-holomorphic disks $\cM(\phi)$ in the homotopy
  class $\phi$ is transversely cut out.
\end{hypothesis}
In this case, $\CF(L_0,L_1)$ becomes a module over $\Field[K]$. To
define the equivariant Floer complex, choose a projective resolution
$R_*$ of $\CF(L_0,L_1)$ over $\Field[K]$, and define
$\ewtCF(L_0,L_1)=\HomO{\Field[K]}(R_*,\Field)$ and
$\ewtHF(L_0,L_1)=H_*(\ewtCF(L_0,L_1))=\ExtO{\Field[\ZZ/2]}(\CF(L_0,L_1),\Field)$. (To
give an explicit definition of $\ewtCF(L_0,L_1)$ we can take $R_*$ to
be the bar resolution.)  The analogue of
Proposition~\ref{prop:equi-is-equi} is:
\begin{proposition}\label{prop:equi-is-equi-K}
  Under Hypothesis~\ref{hyp:equivariant-transversality-K}, there is a
  quasi-isomorphism of $\Field[K]$-modules 
  \[
  \CF(L_0,L_1)\simeq \ECF(L_0,L_1)
  \]
  between the Floer complex of $L_0$ and $L_1$ and the freed Floer
  complex (Definition~\ref{def:equi-Floer}).
\end{proposition}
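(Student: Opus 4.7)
The plan is to imitate the proof of Proposition~\ref{prop:equi-is-equi}, but replace the explicit basis manipulation there by an appeal to a general property of homotopy colimits of constant diagrams.

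First I would fix a small neighborhood $U$ of $J$ in $\JSpace_\cyl$ with the property that for any eventually cylindrical almost complex structure $\wt{J}$ taking values in $U$ and any homotopy class $\phi$ of Whitney disks with $\mu(\phi)\leq 1$, the moduli space $\cM(\phi;\wt{J})$ is canonically homeomorphic to $\cM(\phi;J)$; in particular, index-zero $\wt{J}$-holomorphic disks are constant and there are no $\wt{J}$-holomorphic disks of negative Maslov index. By the same inductive construction as in Lemma~\ref{lem:exist-equi-diagram} (applied to the $K$-action as in Section~\ref{sec:2-groups}), I can then choose a sufficiently generic, $K$-equivariant homotopy coherent diagram $F\co \ECat \to \ol{\JSpace}$ all of whose values consist of eventually cylindrical structures with every level in $U$; the induction goes through because $U$ is contractible.

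For such an $F$, the associated diagram $G\co \ECat \to \Complexes$ produced by Construction~\ref{const:F-to-G} is strict and constant: every complex $G(\ul g)=\CF(L_0,L_1;F(\ul g))$ is canonically identified, via the identity on intersection points, with $C\coloneqq \CF(L_0,L_1;J)$; every edge map $G_f$ is the identity of $C$; and all higher maps $G_{f_n,\dots,f_1}$ with $n>1$ vanish. Unpacking Definition~\ref{def:hocolim} for this constant diagram identifies $\hocolim G$ with $C\otimes_\Field N_*(\ECat)$, where $N_*(\ECat)$ is the normalized chain complex of the nerve of $\ECat$. The $K$-action on $\hocolim G$ coming from the $K$-equivariance of $F$ is the diagonal of the $K$-actions on $C$ (via the automorphisms $k_\#$, $k\in K$) and on $N_*(\ECat)$ (permuting objects of $\ECat$).

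Since $\ECat$ is an indiscrete groupoid with a free $K$-action on its objects, its nerve is a model for $EK$: a free contractible $K$-simplicial set. Hence $N_*(\ECat)$ is a free $\Field[K]$-resolution of the trivial module $\Field$, and the standard twisting isomorphism converts the diagonal action on $C\otimes_\Field \Field[K]$ into ordinary right-translation action, exhibiting $C\otimes_\Field N_*(\ECat)$ with the diagonal action as a free $\Field[K]$-resolution of $C$ itself. The augmentation $N_*(\ECat)\to \Field$ then supplies the desired $\Field[K]$-equivariant quasi-isomorphism
\[
\ECF(L_0,L_1)=\hocolim G\;\simeq\; C=\CF(L_0,L_1).
\]

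The main obstacle is justifying the identification $\hocolim G \cong C\otimes_\Field N_*(\ECat)$; this is the homotopy-coherent analogue of the classical fact that the homotopy colimit of a constant topological diagram is the classifying space of the indexing category times the value, but in the absence of a convenient citation it can be checked directly from Definition~\ref{def:hocolim} by observing that for strict constant $G$ with identity edge maps, all the equivalence relations (collapsing identities, contracting compositions, and evaluating $G$ at $t_i=\{0\}$) reduce precisely to the simplicial identifications defining $N_*(\ECat)$. When $K=\ZZ/2$ and $\ECat=\ECat\ZZ/2$, this recovers the explicit basis change in the proof of Proposition~\ref{prop:equi-is-equi}, and $N_*(\ECat\ZZ/2)$ becomes the standard bar resolution of $\Field$ over $\Field[\ZZ/2]$ reviewed in Section~\ref{sec:background}.
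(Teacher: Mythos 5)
Your proof is correct and is essentially the same as the paper's: both choose $F$ with all levels in the small neighborhood $U$ so that the resulting diagram $G$ is strict and constant, and both then use the co-diagonal/augmentation map $\hocolim G\to \CF(L_0,L_1)$ (identity on the length-zero summands, zero on the rest), which is a $K$-equivariant quasi-isomorphism. The paper phrases the quasi-isomorphism step via Lemma~\ref{lem:ECF-is-CF} (terminal objects in $\ECat K$), whereas you make the underlying mechanism explicit by identifying $\hocolim G\cong C\otimes_{\Field}N_*(\ECat K)$ with $N_*(\ECat K)$ a model for $EK$; this is a transparent and correct repackaging of the same argument rather than a genuinely different route.
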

\begin{proof}
  Fix a one-parameter family of almost complex structures as in
  Hypothesis~\ref{hyp:equivariant-transversality-K} and induce a
  homotopy coherent diagram $F\co \ECat K\to \ol{\JSpace}$ as in the
  proof of Proposition~\ref{prop:equi-is-equi}. If
  $G\co \ECat K\to\Complexes$ is the homotopy coherent diagram induced
  by $F$ and Floer theory then recall that
  \[
    \hocolim G = \bigoplus_{n\geq 0}\bigoplus_{g_0,\dots,g_n\in K}I_*^{\otimes n}\otimes G(g_0)/\sim.
  \]
  There is a map $\hocolim G\to \CF(L_0,L_1)$ which is:
  \begin{itemize}
  \item The identity map $I_*^{\otimes 0}\otimes G(g_0)=\CF(L_0,L_1)\to\CF(L_0,L_1)$ for $n=0$.
  \item Vanishes on $\alpha\otimes G(g_0)$ if $\alpha\in I_*^{\otimes n}$ has grading $>0$.
  \end{itemize}
  It is straightforward to verify that this map is $K$-equivariant,
  and it follows from Lemma~\ref{lem:ECF-is-CF} (or rather, its
  extension to arbitrary groups $K$) that this map is a
  quasi-isomorphism. The result follows.
\end{proof}

We will not formulate the analogue of Condition~\ref{item:ET:Seidel-Smith}, but the analogue of Condition~\ref{item:ET:index-double} is:
\begin{enumerate}[label=(ET-2${}^\prime$)]
\item\label{item:et2p}Let
  $i\co (M^\fix,L_0^\fix,L_1^\fix)\to (M,L_0,L_1)$ denote
  inclusion. Suppose that for any homotopy class of Whitney disks
  $\phi\co (\bD^2,\bdy\bD^2)\to (M^\fix,L_0\cup L_1)$,
  $\mu(i_*(\phi))=c\mu(\phi)$ for some $c\neq 1$ a divisor of $|K|$.
\end{enumerate}
Condition~\ref{item:et2p} implies
Hypothesis~\ref{hyp:equivariant-transversality-K}, since it rules out
index $1$ holomorphic curves fixed by nontrivial elements of $K$.

%%% Local Variables: 
%%% mode: latex
%%% TeX-master: "HEquivariant.tex"
%%% End: 

\section{Spectral sequences for periodic knots and branched double 
covers}\label{sec:HF-applications}
\subsection{First invariance statements} \label{sec:first-invariance}

As a warm-up, we prove Corollaries~\ref{cor:Hen-dcov-invt}
and~\ref{cor:Hen-periodic-invt}, that Hendricks's spectral sequences
are knot invariants. We start by recalling how the spectral sequences
are constructed. For the spectral sequence~\eqref{eq:Sig-to-K}, start
with a multi-pointed, genus-0 Heegaard diagram
$\HD=(S^2,\alphas,\betas,\zs,\ws)$ for $(S^3,K)$. Let
$\wt{\HD}=(\wt{\Sigma},\wt{\alphas},\wt{\betas},\wt{\zs},\wt{\ws})$ be
the double cover of $\HD$ branched along $\zs\cup\ws$ so that
$\wt{\HD}$ represents $(\Sigma(K),K)$. Let $n=|\alphas|$ denote the
number of $\alpha$-circles, so $|\betas|=n$ and
$|\zs|=|\ws|=|\wt{\zs}|=|\wt{\ws}|=n+1$, and
$|\wt{\alphas}|=|\wt{\betas}|=2n$. Consider the symmetric products
$\Sym^n(S^2\setminus (\zs\cup\ws))$ and
$\Sym^{2n}(\wt{\Sigma}\setminus (\wt{\zs}\cup\wt{\ws}))$, and the
submanifolds $T_\alpha=\prod\alphas,\
T_\beta=\prod\betas\subset\Sym^n(S^2\setminus(\zs\cup\ws))$ and
$\wt{T}_\alpha=\prod\wt{\alphas},\
\wt{T}_\beta=\prod\wt{\betas}\subset\Sym^{2n}(\wt{\Sigma}\setminus(\wt{\zs}\cup\wt{\ws}))$.
There is an involution $\tau\co \wt{\HD}\to\wt{\HD}$, which induces a
symplectic involution $\tau\co \Sym^{2n}(\wt{\Sigma})\to
\Sym^{2n}(\wt{\Sigma})$ (with respect to an appropriate symplectic
form), which preserves $\wt{T}_\alpha$ and $\wt{T}_\beta$. The fixed
set of $\tau$ is identified with
$(\Sym^n(S^2),T_\alpha,T_\beta)$. Using the fact that we are
considering a genus-0 Heegaard diagram for $K$, one can show that
Seidel-Smith's stable normal triviality conditions are satisfied on
the symmetric products with the basepoints deleted. The spectral
sequence~\eqref{eq:Sig-to-K} is then obtained by performing an
equivariant Hamiltonian isotopy to get a new pair of Lagrangians
$(\wt{T}_\alpha',\wt{T}_\beta')$ and then considering the equivariant
Floer complex $\ewtCF(\wt{T}_\alpha',\wt{T}_\beta')$ from
Section~\ref{sec:equi-equi}.

The spectral sequence~\eqref{eq:HFL-periodic} is obtained similarly,
except that one starts with a Heegaard diagram for the link $K\cup A$,
so that exactly one $z$ and one $w$ basepoint correspond to $A$, and
takes the branched double cover only over the two basepoints
corresponding to $A$. The construction of the spectral
sequence~\eqref{eq:HFK-periodic} is similar to the construction
of~\eqref{eq:HFL-periodic} except that one does not delete the
basepoint $\wt{z}$ corresponding to the axis $\wt{A}$.

Any two multi-pointed Heegaard diagrams for $K$ with a fixed number of
$w$-basepoints, say $n-1$, can be connected by a sequence of isotopies
of the $\alpha$- and $\beta$-circles, handleslides among the circles,
and index 1-2 stabilizations and destabilizations (connected sums with
the standard genus $1$ Heegaard diagram for $S^3$). Almost all of
these moves are easily seen to induce Hamiltonian isotopies of the
Lagrangians. There is one case that is of particular note, which we
explain here.

  \begin{lemma}\label{lemma:handleslides} 
    Let $\HD=(\Sigma,\alphas,\betas,\zs,\ws)$ be a multi-pointed
    Heegaard diagram for $(S^3, K)$ where $\Sigma$ has genus $g$ and $|\ws|=n+1$. If
    $\HD'=(\Sigma,\alphas',\betas,\zs,\ws)$ is obtained from $\HD$ via
    a handleslide, then for an appropriate choice of symplectic form
    on $\Sym^{g+n}(\Sigma\setminus \ws)$, the tori $T_{\alpha}$ and
    $T_{\alpha'}$ are Hamiltonian isotopic in $\Sym^{g+n}(\Sigma\setminus
    \ws)$. Furthermore, the isotopy can be taken to occur in the complement of 
    the divisor $\zs \times \Sym^{g+n-1}(\Sigma \setminus \ws)$. An 
    analogous result is true for a
    multi-pointed Heegaard diagram for a three-manifold.
\end{lemma}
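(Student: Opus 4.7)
The plan is to realize the handleslide as a concrete Lagrangian isotopy of tori in the symmetric product, and then to show that, for a suitable choice of symplectic form, this isotopy has vanishing flux and is therefore Hamiltonian.

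For the first step, observe that $\alpha_1$, $\alpha_1'$, and $\alpha_2$ cobound an embedded pair of pants $P \subset \Sigma$, which can be taken to be disjoint from $\alpha_3, \dots, \alpha_{g+n}$ and from $\zs \cup \ws$. Inside $P$, I would construct a smooth one-parameter family $\{(\alpha_1^t,\alpha_2^t)\}_{t\in[0,1]}$ of pairs of disjoint simple closed curves, with $(\alpha_1^0, \alpha_2^0) = (\alpha_1, \alpha_2)$ and $(\alpha_1^1, \alpha_2^1) = (\alpha_1', \alpha_2)$. Concretely, one performs a finger move of $\alpha_1$ across $\alpha_2$ while simultaneously pushing $\alpha_2$ out of the way, so that the two curves remain disjoint at all intermediate times. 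Setting $\alpha_i^t = \alpha_i$ for $i \geq 3$ then defines a Lagrangian isotopy $\{T_{\alpha^t}\}$ from $T_\alpha$ to $T_{\alpha'}$ in $\Sym^{g+n}(\Sigma \setminus \ws)$. Because $P$ is disjoint from $\zs$, each $T_{\alpha^t}$ avoids the divisor $\zs \times \Sym^{g+n-1}(\Sigma \setminus \ws)$, handling the final clause of the lemma.

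For the second step, give $\Sym^{g+n}(\Sigma \setminus \ws)$ the symplectic form $\omega$ induced (via Perutz's construction) by a Kähler form $\omega_\Sigma$ on $\Sigma \setminus \ws$. The flux of the Lagrangian isotopy $\{T_{\alpha^t}\}$ lies in $H^1(T^{g+n}; \RR)$; since only the factors $\alpha_1$ and $\alpha_2$ move, the flux vanishes automatically in the directions dual to $[\alpha_3], \dots, [\alpha_{g+n}]$, and the remaining two components are the signed $\omega_\Sigma$-areas of the regions in $\Sigma \setminus \ws$ swept out by $\alpha_1^t$ and $\alpha_2^t$, respectively. Since $\Sigma\setminus\ws$ is noncompact with $n+1 \geq 1$ ends, one has ample freedom in prescribing the $\omega_\Sigma$-areas of regions meeting neighborhoods of the punctures in $\ws$, and this freedom can be used to make both flux components vanish, rendering the isotopy Hamiltonian.

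The main obstacle will be verifying that the regions swept out by the isotopy actually abut a puncture in $\ws$, so that tuning $\omega_\Sigma$ near $\ws$ can in fact kill the flux. This should be arranged either by choosing $P$ to contain a $w$-basepoint in an adjacent region, or by first isotoping $(\alpha_1^t,\alpha_2^t)$ through a family supported in a strip running from $P$ out to a puncture, both without affecting the endpoints of the family. None of these constructions uses any special feature of $S^3$, so the same argument establishes the analogous statement for multi-pointed Heegaard diagrams of an arbitrary three-manifold.
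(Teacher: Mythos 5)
Your first step does not go through, and the gap is fatal. Inside the pair of pants $P$ there is no one-parameter family $\{(\alpha_1^t,\alpha_2^t)\}_{t\in[0,1]}$ of pairs of disjoint embedded circles with $(\alpha_1^0,\alpha_2^0)=(\alpha_1,\alpha_2)$ and $(\alpha_1^1,\alpha_2^1)=(\alpha_1',\alpha_2)$. As long as $\alpha_1^t$ remains an embedded circle (never degenerating to a point), its class in $H_1(P;\ZZ/2)$ is locally constant in $t$, hence constant; but the three boundary curves of a pair of pants satisfy $[\alpha_1']=[\alpha_1]+[\alpha_2]$ in $H_1(P;\ZZ/2)$ with $[\alpha_2]\neq 0$, so $[\alpha_1]\neq[\alpha_1']$. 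Allowing the curves to leave $P$ does not help: to keep $T_{\alpha^t}$ an embedded Lagrangian torus in the symmetric product you must keep $\alpha_1^t,\alpha_2^t$ disjoint from each other and from $\alpha_3,\dots,\alpha_{g+n}$ and the basepoints, and the same mod-2 homology obstruction persists. In other words, a handleslide cannot be realized by an isotopy of $\alpha$-tuples in the surface, so the intermediate Lagrangians of any Hamiltonian isotopy cannot all be product tori $T_{\alpha^t}$. This impossibility is precisely why Perutz's result is a theorem rather than a flux computation: his isotopy necessarily passes through Lagrangians in $\Sym^{g+n}(\Sigma)$ that are not of product type, and producing it uses the global symplectic geometry of $\Sym^2$ of an annulus/pair of pants, not an isotopy of curve-tuples. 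Your second step --- tuning $\omega_\Sigma$ near the punctures to kill flux --- is in the right spirit and echoes part of what Perutz actually does, but there is no Lagrangian isotopy to compute the flux of.

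For contrast, the paper's proof treats Perutz~\cite{Perutz07:HamHand} as a black box for the singly-pointed case and reduces the multi-pointed case to it: attach $n$ one-handles to $\Sigma$ with feet near consecutive $w$-basepoints to get a genus-$(g+n)$ surface $\Sigma'$ with a single basepoint, apply Perutz's theorem there, and then note that the resulting Hamiltonian isotopy is supported in $\alpha_3\times\cdots\times\alpha_{g+n}\times\Sym^2(U)$, where $U$ is a pair-of-pants neighborhood of the handleslide region. Choosing $U\subset\Sigma$ disjoint from all basepoints, the isotopy lies in $\Sym^{g+n}(\Sigma\setminus\ws)$ and misses the divisor $\zs\times\Sym^{g+n-1}(\Sigma\setminus\ws)$. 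The localization of the support to $\Sym^2(U)$ --- exactly where the non-product Lagrangians live --- is the observation that makes the reduction work, and it is a fact about Perutz's construction that one cannot rederive from the naive product picture.
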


\begin{proof} For a Heegaard diagram with a single $w$ basepoint, this was 
shown by Perutz~\cite[Theorem 1.2]{Perutz07:HamHand}. In order to deduce the 
result for a Heegaard diagram with $|\ws|=n$, we attach $n$ one-handles to 
$\Sigma$ with feet near the basepoints $w_i$ and $w_{i+1}$, creating 
a genus $g+n$ surface $\Sigma'$ with one basepoint. By Perutz's 
result~\cite[Theorem 1.2]{Perutz07:HamHand}, there is a symplectic form 
$\omega$ on $\Sym^{g+n}(\Sigma')$ with respect to which $T_{\alpha}$ and 
$T_{\alpha'}$ are Hamiltonian isotopic. Further, if we let $\alphas = 
(\alpha_1, \dots, \alpha_{g+n})$ and $\alphas' = (\alpha_1', \alpha_2, \dots, 
\alpha_{g+n})$, where $\alpha_1'$ is obtained from $\alpha_1$ by 
handlesliding over $\alpha_2$, then this isotopy takes place in $\alpha_3 
\times \cdots \times \alpha_{g+n} \times \Sym^2(U)$, where $U$ is a small 
pair-of-pants neighborhood of the handleslide region. In particular, we can 
choose $U$ to lie in $\Sigma$ and contain no basepoints (either $z$ or $w$), so 
this product is contained in $\Sym^{g+n}(\Sigma \setminus {\ws})$. Therefore, 
restricting $\omega$ to the submanifold $\Sym^{g+n}(\Sigma \setminus {\ws})$ 
gives the desired result. The argument for three-manifolds is identical, except 
that there is no need to avoid $z$ basepoints. 
\end{proof}

\begin{proof}[Proof of Corollary~\ref{cor:Hen-dcov-invt}]
  By Proposition~\ref{prop:equi-is-equi}, the spectral sequence is the
  same as the spectral sequence induced by
  $\CF_{\ZZ/2}(\wt{T}_\alpha,\wt{T}_\beta)$, the equivariant complex
  from non-invariant complex structures
  (Section~\ref{sec:equi-complex}). By
  Propositions~\ref{prop:indep-of-cx-str}
  and~\ref{prop:indep-of-Ham-isotopy}, this spectral sequence is
  independent of the choices in its construction and is invariant
  under equivariant Hamiltonian isotopies of the Lagrangians.

  As observed by Ozsv\'ath-Szab\'o~\cite[Section
  7.3]{OS04:HolomorphicDisks}, isotopies of the $\alpha$- and
  $\beta$-circles can be realized through deformations of the almost
  complex structures and Hamiltonian isotopies of the $\alpha$- and
  $\beta$-tori.  By Lemma~\ref{lemma:handleslides}, handleslides can
  also be realized as Hamiltonian deformations of the $\alpha$- and
  $\beta$-tori. The Hamiltonian
  isotopies of the Lagrangians $T_{\alpha}$ and $T_{\beta}$ in
  $\Sym^n(\Sigma)$ lift to equivariant Hamiltonian isotopies of the
  Lagrangians $\wt{T}_{\alpha}$ and $\wt{T}_{\beta}$ in
  $\Sym^{2n}(\wt{\Sigma})$.

  Finally, we may assume that index 1-2 stabilizations occur near the
  basepoints $z\in\Sigma$ and so, if we choose our complex structures to
  be split near the basepoints, these stabilizations have no effect at
  all on the equivariant Floer complex (cf.~\cite[Proof of Theorem
  10.1]{OS04:HolomorphicDisks}).
\end{proof}

Of course, we have actually proved a little more: the
quasi-isomorphism type of the complex $\CFKa(\Sigma,\alphas,\betas,\zs,\ws)$, over
$\Field[\ZZ/2]$, is an invariant of $(K,n)$.

\begin{proof}[Proof of Corollary~\ref{cor:Hen-periodic-invt}]
  This is similar to the proof of Corollary~\ref{cor:Hen-dcov-invt},
  and is left to the reader.
\end{proof}

\subsection{Computing spectral sequences from equivariant 
diagrams}\label{sec:nice-diags}
Now that we know the spectral sequences are knot invariants, it would
be nice to be able to compute them. Corollary~\ref{cor:Hen-from-diag}
says that in many cases we can compute the spectral
sequences~\eqref{eq:Sig-to-K} and~\eqref{eq:HFL-periodic} directly
from a diagram; Corollary~\ref{cor:Hen-nice} says that we can always find 
diagrams for which the spectral
sequences~\eqref{eq:Sig-to-K},~\eqref{eq:HFL-periodic}, 
and~\eqref{eq:HFK-periodic} are algorithmically computable.

While proving Corollary~\ref{cor:Hen-nice} for the spectral 
sequences~\eqref{eq:Sig-to-K} and~\eqref{eq:HFL-periodic}, we will be able to use any 
genus-0 nice Heegaard diagram for $K$ or $K\cup A$, as appropriate. However, 
to show the same result for the spectral sequence~\eqref{eq:HFK-periodic}, we 
will need to restrict to a class of specially adapted nice diagrams for $K \cup 
A$, here called \emph{desirable diagrams}. Beginning with a
two-periodic knot diagram 
$\wt{D}$ for $\wt{K}$, we construct a planar grid diagram similar to Beliakova's diagrams~\cite{Beliakova10:grid} as follows. We 
begin by deforming $\wt{D}$ equivariantly to a diagram $\wt{D}'$ made up of 
horizontal and vertical line segments, such that all overcrossing strands in $\wt{D}'$ are 
vertical and all undercrossing strands are horizontal. (This differs from 
the standard algorithm of placing a knot on a grid diagram only in the 
equivariance requirement.) Because of the requirement that we do this 
equivariantly, there will be even numbers of vertical and horizontal strands, 
say $2n$ each. We circle each horizontal strand with an $\alpha$ curve and each 
vertical strand with a $\beta$ curve such that each $\alpha$ curve intersects 
each $\beta$ curve in four points, and we place a $w_i$ at the beginning of each 
vertical strand and a $z_i$ at the end. We place a single $w_i$ outside the grid 
and a single $z_i$ in the center of the grid; these are the basepoints on the 
axis $\wt{A}$. The quotient of this diagram by the $\ZZ/2$ action is a 
Heegaard diagram for $K \cup A$ with $n$ $\alpha$-circles, such that each 
$\alpha$ curve intersects each $\beta$ curve eight times. The $z$ basepoint  
belonging to the axis, in the center of the diagram, is contained in a bigon. A 
diagram for the trefoil as a two-periodic knot with quotient 
a desirable diagram for the unknot are shown in Figure~\ref{fig:gridy-diagram}.

\begin{figure}
  \centering
  \includegraphics[width=\textwidth]{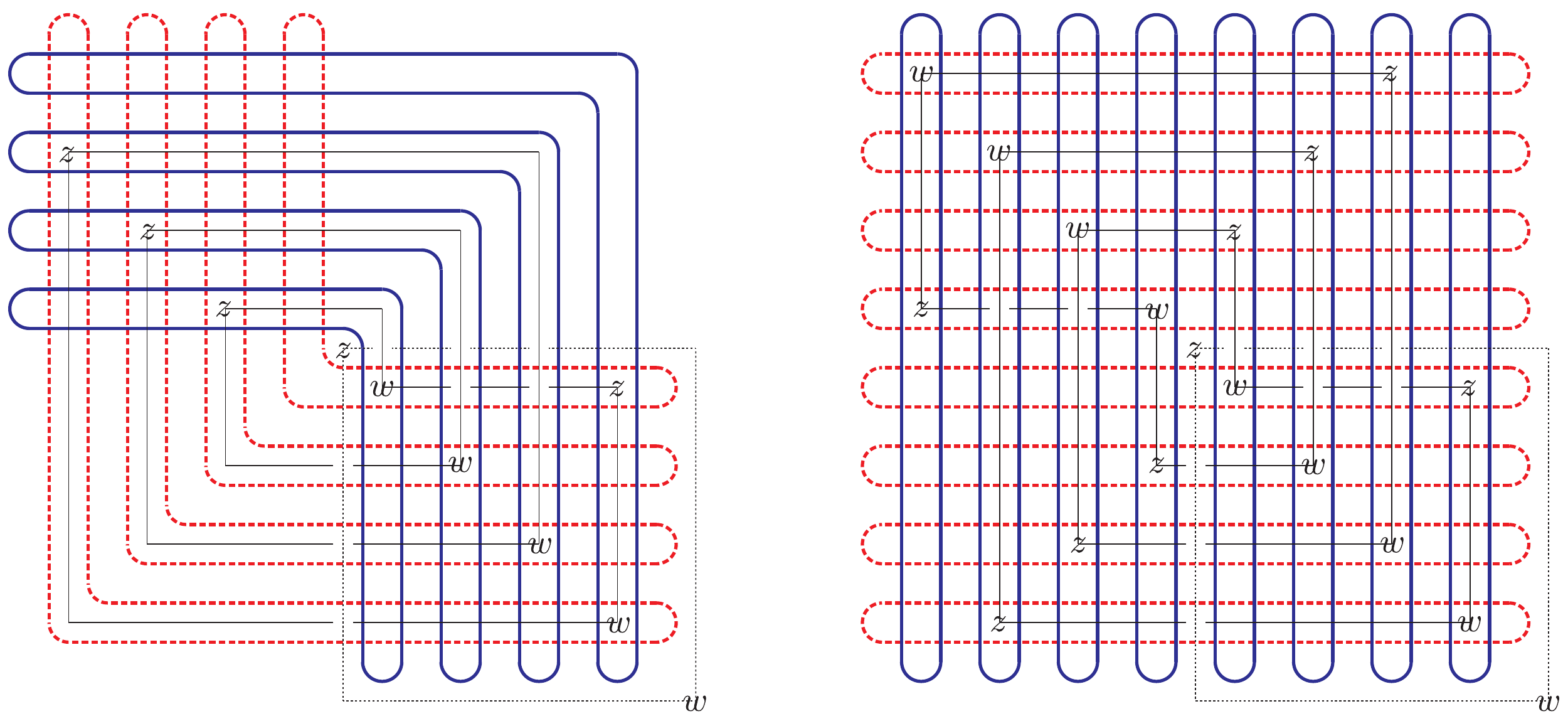}
  \caption{\textbf{Desirable diagrams for links.} Left: a desirable diagram for
    $K\cup A$, where $K$ is the unknot. Right: the corresponding
    2-periodic diagram for $\wt{K}\cup\wt{A}$, where $\wt{K}$ is the
    trefoil. $\textcolor{red}{\alpha}$'s are red and dashed, $\textcolor{blue}{\beta}$'s 
    are blue and solid, axes thin, black and
    dashed and the knots thin-black, too.}
  \label{fig:gridy-diagram}
\end{figure}

\begin{proof}[Proof of Corollary~\ref{cor:Hen-from-diag}]
  We explain the argument in the case of branched double covers; the
  case of periodic knots is similar.  Given a 1-parameter family of
  complex structures $j$ on $S^2$ there is an induced 1-parameter
  family of almost complex structures $\Sym^n(j)$ on
  $\Sym^n(S^2)$. Given a generic choice of $j$ the complex structures
  $\Sym^n(j)$ achieve transversality for all moduli spaces of
  holomorphic disks in $(\Sym^n(S^2\setminus(\zs\cup\ws)),T_\alpha\cup
  T_\beta)$~\cite[Proposition
  13.5]{Lipshitz06:CylindricalHF}. Further, the pullback $\wt{\jmath}$ of
  $j$ to $\wt{\Sigma}$ induces a one-parameter family of almost complex structures $\Sym^{2n}(\wt{\jmath})$ on
  $\Sym^{2n}(\wt{\Sigma}\setminus(\wt{\zs}\cup\wt{\ws}))$. The same
  argument which yields~\cite[Proposition
  13.5]{Lipshitz06:CylindricalHF} implies that, for a generic choice
  of $j$, $\Sym^{2n}(\wt{\jmath})$ achieves transversality for all
  homotopy classes of Whitney disks $\phi$ in
  $\Sym^{2n}(\wt{\Sigma}\setminus(\wt{\zs}\cup\wt{\ws}))$ which are not
  $\tau$-invariant, i.e., for which $\phi\neq \tau_*(\phi)$.

  Any $\tau$-invariant homotopy class $\phi$ has Maslov index
  $\mu(\phi)$ even. Thus, as discussed in
  Point~\ref{item:ET:index-double} of Section~\ref{sec:equi-equi},
  Hypothesis~\ref{hyp:equivariant-transversality} is satisfied. So, by
  Proposition~\ref{prop:equi-is-equi}, the equivariant complex
  $\ewtCF(\wt{T}_\alpha,\wt{T}_\beta)$ computed with respect
  to $\Sym^{2n}(\wt{\jmath})$ agrees with the equivariant complex
  $\CF_{\ZZ/2}(\wt{T}_\alpha,\wt{T}_\beta)$ computed using
  non-equivariant almost complex structures. As in the proof of
  Corollary~\ref{cor:Hen-dcov-invt},
  $\CF_{\ZZ/2}(\wt{T}_\alpha,\wt{T}_\beta)$ is isomorphic, in turn, to
  the equivariant Floer homology as defined by Seidel-Smith.
\end{proof}

\begin{proof}[Proof of Corollary~\ref{cor:Hen-nice}]
  For the spectral sequences~\eqref{eq:Sig-to-K}
  and\eqref{eq:HFL-periodic}, this is immediate from
  Corollary~\ref{cor:Hen-from-diag}, the fact that the double cover of
  a nice Heegaard diagram branched over the basepoints is nice
  (obvious), and the fact that curve counts in nice diagrams are
  combinatorial~\cite{SarkarWang07:ComputingHFhat}. For Corollary
  \eqref{eq:HFK-periodic}, curves are allowed to pass over the $z$
  basepoint on the axis, and therefore we need the additional fact
  that in our desirable diagram for $K\cup A$, the $z$ basepoint for
  $A$ is contained in a bigon, which ensures that in the diagram for
  $\wt{K}\cup\wt{A}$, the $z$ basepoint for $\wt{A}$ is contained in a
  rectangle.
\end{proof}

\subsection{Equivariantly destabilizing basepoints and invariance of the 
spectral sequences}
In this section, we prove that one can remove the extra copies of $V$
from Hendricks's spectral
sequences~\eqref{eq:Sig-to-K},~\eqref{eq:HFL-periodic},
and~\eqref{eq:HFK-periodic}. We start by reviewing why extra
basepoints give copies of $V$ in the first
place~\cite{OS05:HFL,MOS06:CombinatorialDescrip}.

Any two diagrams for a link $L$ with the same number of basepoints per
component of $L$ are related by a sequence of isotopies, handleslides,
and index 1-2 stabilizations/destabilizations, none of which affect
the Floer homology $\HFLa(Y,L)$. Thus, it suffices to consider the
effect of replacing a single basepoint $z$ with the $z$-$w$-$z$ triple
on the left of Figure~\ref{fig:add-basepoints}. Let $\HD$ denote the
original diagram and $\HD'$ the new diagram, so
$\HD'=\HD\#(S^2,\alpha_0,\beta_0,\{z_0,z_0'\},\{w_0\})$, and the
connected sum occurs near one of the basepoints $z$ of $\HD$ and a
point $p$ in the new $S^2$. Let $\{x_0,y_0\}=\alpha_0\cap \beta_0$,
labeled so that there is a bigon in $S^2\setminus\{z_0,z'_0,w_0\}$
from $x_0$ to $y_0$.

\begin{figure}
  \centering
  \begin{overpic}[tics=10]{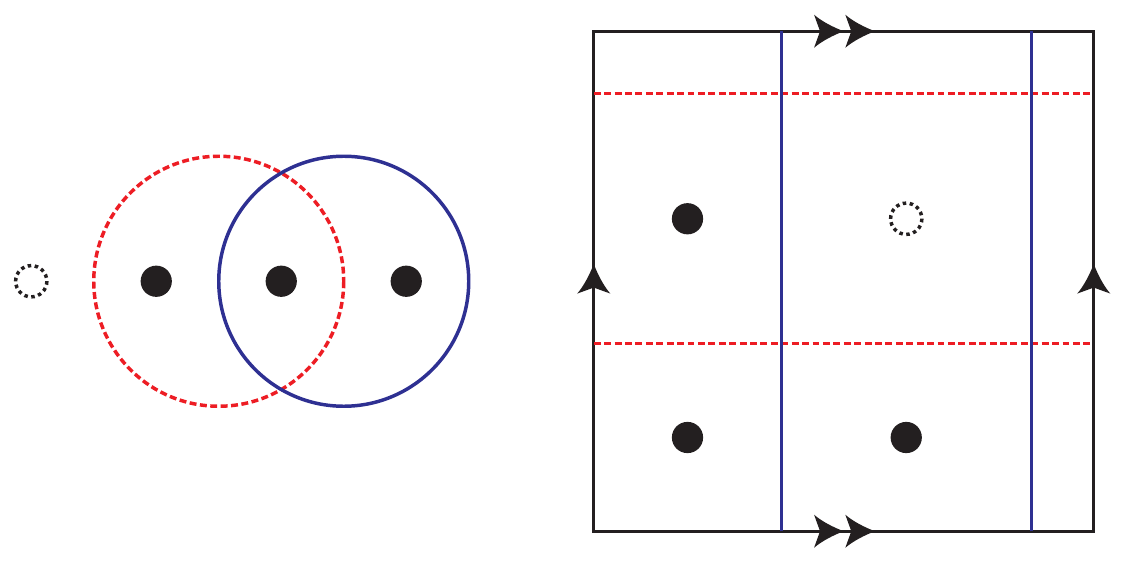}
    \put(1.75,21){$p$}
    \put(12,21){$z_0$}
    \put(23.25,21){$w_0$}
    \put(35,21){$z_0'$}
    \put(23.25,12.25){$x_0$}
    \put(23.25,36.25){$y_0$}
    \put(17,10.75){\textcolor{red}{$\alpha_0$}}
    \put(29,10){\textcolor{blue}{$\beta_0$}}
    \put(48.5,18){\textcolor{red}{$\wt{\alpha}_0$}}
    \put(48.5,40.5){\textcolor{red}{$\wt{\alpha}'_0$}}
    \put(65.5,9.5){\textcolor{blue}{$\wt{\beta}_0$}}
    \put(87.75,9.5){\textcolor{blue}{$\wt{\beta}'_0$}}
    \put(56.5,10){$\wt{w}$}
    \put(77,10){$\wt{z}$}
    \put(57.5,29.75){$\wt{z}$}
    \put(77,30){$\wt{p}$}
    \put(70.5,20.5){$\wt{\x}_0$}
    \put(70.5,38) {$\wt{\y}_0$}
    \put(87.75,20.5){$\wt{\y}_0$}
    \put(87.75,38){$\wt{\x}_0$}
  \end{overpic}
  \caption{\textbf{Stabilizations of type 0-1-2-3.} Left: a stabilization 
  replacing a $z$ basepoint by a $(z,w,z)$ triple. Right: the branched double 
  cover of this stabilization.}
  \label{fig:add-basepoints}
\end{figure}

There is an obvious correspondence between generators of $\CFKa(\HD')$
and $\CFKa(\HD)\otimes\Field\langle x_0,y_0\rangle$. We claim that for
appropriate choices of complex structure, this correspondence is a
chain map. Note first that for any homotopy class
$\phi\in\pi_2(\x,\y)$ of Whitney disks in $\HD$ with multiplicity $0$
at the basepoints there are corresponding
homotopy classes of Whitney disks
$\phi_x\in\pi_2(\x\cup\{x_0\},\y\cup\{x_0\})$ and
$\phi_y\in\pi_2(\x\cup\{y_0\},\y\cup\{y_0\})$ in $\HD'$. Assuming we
work with complex structures of the form $\Sym^n(j)$ and
$\Sym^{n'}(j')$ so that $j'$ agrees with $j$ on $\Sigma$ away from the
connected sum region, there are obvious bijections $\cM(\phi)\cong
\cM(\phi_x)\cong \cM(\phi_y)$.

It remains to show there are no other holomorphic disks for
$\HD'$. The only other homotopy classes of Whitney disks have
multiplicity $1$ in the connected sum region, and lie in
$\pi_2(\x\cup\{x_0\},\y\cup\{y_0\})$ for some
$\x,\y\in\CFKa(\HD)$. Fix such a homotopy class $\phi'$, and consider
the result of stretching the neck in the connected sum region. The
moduli space  $\cM(\phi')$ in $\HD'$ decomposes as a fibered product 
\[
\cM(\phi')\cong \cM(\phi)\times_{\bD^2}\cM(\phi_0)
\]
where $\phi$ (respectively $\phi_0$) is the domain in $\HD$
(respectively $S^2$) induced by $\phi'$. The domain $\phi$
(respectively $\phi_0$) has multiplicity $1$ at $z$ (respectively
$p$). The map $\cM(\phi)\to\bD^2$ (respectively $\cM(\phi_0)\to\bD^2$)
involved in the fiber product sends $u$ to $u^{-1}(\{z\}\times 
\Sym^{n-1}(\Sigma))\in\bD^2$
(respectively $u_0$ to $u_0^{-1}(p)$).

Now, leave $z$ alone and make the connected sum point $p$ approach
$\alpha_0$. As $p$ approaches $\alpha_0$, the point
$u_0^{-1}(p)\in\bD^2$ approaches the boundary of $\bD^2$. Hence, if
the fibered product is non-empty, $u^{-1}(z)$ also approaches $\bdy
\bD^2$.  But $u(\bdy \bD^2)\subset T_\alpha\cup T_\beta$ and $z\not\in
T_\alpha\cup T_\beta$, a contradiction. It follows that the fibered
product is empty, and so, for large neck length and $p$ close to
$\alpha_0$, $\cM(\phi')$ is empty, as well.

With this understanding, we are ready to prove
Theorems~\ref{thm:Hen1-small} and~\ref{thm:Hen2-small}, and Corollaries 
\ref{cor:rank-inequality-refined} and \ref{cor:dim-periodic-link}.

\begin{proof}[Proof of Theorem~\ref{thm:Hen1-small}]
  By induction, it suffices to consider a single 0-1-2-3
  destabilization (i.e., removing a single pair of basepoints).
  By Corollary~\ref{cor:Hen-from-diag}, we can use a generic
  one-parameter family of almost complex structures $j$ on $S^2$ and
  the induced $\ZZ/2$-invariant almost complex structure $\wt{\jmath}$ on
  $\Sigma$ to compute the spectral sequence. 

  The double cover of
  $(S^2,\alpha_0,\beta_0,\{z_0,z_0'\},\{w_0\})$, branched along
  $\{z_0,z_0',w_0,p\}$, is shown on the right of
  Figure~\ref{fig:add-basepoints}. There is an obvious correspondence
  between generators of $\CFKa(\wt{\HD}')$ and generators of
  $\CFKa(\wt{\HD})\otimes\Field\langle \wt{\x}_0,\wt{\y}_0\rangle$, and
  both $\wt{\x}_0$ and $\wt{\y}_0$ are fixed by the
  $\ZZ/2$-action. Moduli spaces in $\wt{\HD}'$ with multiplicity $0$
  in the connected sum region correspond to moduli spaces in $\wt{\HD}$, as
  before; it remains to show that we can choose an equivariant complex
  structure so that the moduli spaces in $\wt{\HD}'$ with multiplicity $1$
  in the connected sum region are empty. To this end, stretch the neck
  in the connected sum region. For large neck length, there is a
  fibered product description
  \[
  \cM(\phi')\cong \cM(\phi)\times_{\bD^2}\cM(\phi_0)
  \]
  just as before. Letting $p$ approach one of the $\alpha$-circles is not
  $\ZZ/2$-equivariant. However, we can pinch the rectangle so that
  points on both $\wt{\alpha}_0$ and $\wt{\alpha}_0'$ approach $p$ and
  so that the involution still exchanges $\wt{\alpha}_0$ and
  $\wt{\alpha}_0'$. (This is the preimage of letting $p$ approach a point on
  $\alpha_0\subset S^2$.) This still has the effect that
  $\wt{u}_0^{-1}(p)\to \bdy\bD^2$, and so still implies that, after
  sufficient pinching, $\cM(\phi')$ is empty.

  Thus, we have shown that for appropriate choice of
  $\ZZ/2$-equivariant almost complex structures we have an isomorphism
  of chain complexes over $\Field[\ZZ/2]$
  \begin{equation} \label{eq:cfk-splits}
  \CFKa(\wt{\HD}')\simeq \CFKa(\wt{\HD})\otimes V,
  \end{equation}
  where $\ZZ/2$ acts trivially on $V$. 

Now we turn our attention to the $E_{\infty}$ page of the spectral sequence coming from $\CFKa(\wt{\HD'})$. For notational simplicity, let $\HFKtDual(\wt{\HD'}) = H^*(\CFKa(\wt{\HD'}))$, and similarly for $\HFKtDual(\wt{\HD})$. The existence of the $\Field[\ZZ/2]$ quasi-isomorphism (\ref{eq:cfk-splits}) implies that the spectral sequence
\[
\HFKtDual(\wt{\HD'}) \otimes \Field[\theta, \theta^{-1}] \Rightarrow \theta^{-1} \eHF(\wt{T}_{\alpha}', \wt{T}_{\beta}')
\]
splits as a direct sum of two identical spectral sequences, 
\[
\HFKtDual(\wt{\HD}) \otimes \Field[\theta, \theta^{-1}] \Rightarrow \theta^{-1}\eHF(\wt{T}_{\alpha}, \wt{T}_{\beta})
\]

In particular,
$\theta^{-1}\eHF(\wt{T}_{\alpha}',
\wt{T}_{\beta}')\cong \theta^{-1}\eHF(\wt{T}_{\alpha},
\wt{T}_{\beta})\otimes V$ via a
map of $\Field[\theta,\theta^{-1}]$-modules that preserves Alexander
gradings. 

However, any Heegaard diagram $\HD$ for $(S^3,K)$ is equivalent, after sufficiently many ordinary Heegaard moves and stabilizations of type 0-1-2-3, to a genus-0 Heegaard diagram $\HD''$ admitting a localization isomorphism
$\theta^{-1}\eHF(\wt{T}_{\alpha}'', \wt{T}_{\beta}'') \cong
\HFKtDual(T_{\alpha}'', T_{\beta}'')\otimes
\Field[\theta,\theta^{-1}]$. This localization
isomorphism also preserves Alexander 
gradings~\cite[page~2144]{Hendricks12:dcov-localization}. Using the previous argument to inductively split off copies of $V$, we conclude that the spectral sequence coming from $\CFKa(\HD)$ is
\[
\HFKt(\wt{\HD}) \otimes \Field[\theta,\theta^{-1}] \Rightarrow \eHF(T_{\alpha},T_{\beta}) \cong \HFKt(\HD)\otimes \Field[\theta,\theta^{-1}]
\]
and in particular, if $\HD$ has a single pair of basepoints, is exactly
\[
\HFKaDual(\Sigma(K),\wt{K})\otimes \Field[\theta,\theta^{-1}] \Rightarrow \theta^{-1}\eHF(\wt{T}_{\alpha}, \wt{T}_{\beta})\cong \HFKaDual(S^3,K) \otimes \Field[\theta,\theta^{-1}],
\]
where the last isomorphism preserves the absolute Alexander grading. \end{proof}

\begin{proof}[Proof of Corollary~\ref{cor:rank-inequality-refined}]  
  Let $\HD$ be a Heegaard diagram for $(S^3,K)$ with a single pair of
  basepoints and let $\wt{\HD}$ be its double branched cover over the
  basepoints so that it is a Heegaard diagram for $(\Sigma(K),
  \wt{K})$. The involution $\tau$ on $\Sigma(K)$ induces an action on
  $\Spinc(\Sigma(K))$ which takes each $\SpinC$-structure to its
  conjugate and fixes only the unique spin structure $\spinc_0$
  \cite[page 1378]{Grigsby06:cyclic-covers}, \cite[Remark
  3.4]{Levine08:cycliccovers}. Moreover, $\tau$ preserves the absolute
  Alexander grading \cite[Proposition
  3.4]{Levine08:cycliccovers}. Thus, the equivariant cochain complex
  $\eCF(\wt{T}_{\alpha}, \wt{T}_{\beta})$ splits along Alexander
  gradings and orbits of $\SpinC$-structures. Ergo the spectral
  sequence (\ref{eq:Sig-to-K}) restricts to spectral sequences
\[
\HFKa(\Sigma(K), \widetilde{K}, \spinc_0, i) \otimes \Field[\theta,\theta^{-1}] 
\Rightarrow \theta^{-1}\eHF(\wt{T}_{\alpha}, \wt{T}_{\beta}, \spinc_0, i)
\]
so
\[
\dim(\HFKa(\Sigma(K),\wt{K}, \spinc_0, i)) \geq 
\rank(\theta^{-1}\eHF(\wt{T}_{\alpha}, \wt{T}_{\beta}, \spinc_0, i)),
\] 
where the latter rank is as an $\Field[\theta, \theta^{-1}]$ module. It remains 
to check that 
\[\rank(\theta^{-1}\eHF(\wt{T}_{\alpha}, \wt{T}_{\beta}, \spinc_0, i))= 
\dim(\HFKa(S^3,K,i)).
\] 
Grigsby showed that the absolute Alexander grading of an equivariant
lift $\wt{\x} \in \wt{T}_{\alpha} \cap \wt{T}_{\beta}$ of a generator
$\x \in T_{\alpha} \cap T_{\beta}$ is the same as the absolute
Alexander grading of $\x$ \cite[Lemma 4.7]{Grigsby06:cyclic-covers}
(see also \cite[Proposition 3.4]{Levine08:cycliccovers}). Furthermore,
localization isomorphisms for $(\Sigma(K),\wt{K})$ preserve absolute
Alexander gradings and $\SpinC$ structures
~\cite[page~2144]{Hendricks12:dcov-localization}, which in light of
the proof of Proposition Theorem~\ref{thm:Hen1-small} implies that the
isomorphism $\theta^{-1}\eHF(\wt{T}_{\alpha}, \wt{T}_{\beta}) \simeq
\HFKa(S^3,K)\otimes \Field[\theta,\theta^{-1}]$ does as well. The
conclusion follows. \end{proof}

\begin{proof}[Proof of Theorem~\ref{thm:Hen2-small}]
  The chain-level version of Theorem~\ref{thm:Hen2-small} is
  considerably simpler. Let $\HD$ be a multi-pointed Heegaard diagram
  for $(S^3,K\cup A)$ so that exactly two basepoints $z_A$ and $w_A$
  correspond to the axis $A$.  Let $\wt{\HD}$ be the double cover of
  $\HD$ branched over $\{z_A,w_A\}$, $\HD'$ the result of doing an
  index 0-1-2-3 stabilization as on the left of
  Figure~\ref{fig:add-basepoints} to $\HD$ at a basepoint $z$ on $K$
  (\emph{not} $z_A$), and $\wt{\HD}'$ the double cover of $\HD'$
  branched along $\{z_A,w_A\}$. We will show that for appropriate
  choice of almost complex structures there is a $\ZZ/2$-equivariant
  chain isomorphism
  \begin{equation}\label{eq:CFL-per-destab}
  \CFLa(\wt{\HD}')\simeq \CFLa(\wt{\HD})\otimes X
  \end{equation}
  where $X=\{xx,xy,yx,yy\}$ with action
  \[
  \tau(xx)=xx\qquad \tau(xy)=yx\qquad \tau(yx)=xy\qquad \tau(yy)=yy.
  \]
  The theorem is a homology-level reinterpretation of this fact (and induction).

  Since we are branching over $A$, not $K$, the diagram $\wt{\HD}'$
  has two regions which look like the left of
  Figure~\ref{fig:add-basepoints}, which are exchanged by
  $\tau$. Stretching the neck around both such regions decomposes $\wt{\HD}'$ 
  as 
  $\wt{\HD}\cup (S^2,\alpha_0,\beta_0,\{z_0,z'_0\},\{w_0\})
  \cup (S^2,\alpha_0,\beta_0,\{z_0,z'_0\},\{w_0\})$.  This induces the 
  desired isomorphism of abelian groups~\eqref{eq:CFL-per-destab}
  where, say, the $xy$ summand corresponds to choosing the generator
  $x$ in the first copy of $S^2$ and $y$ in the second copy of
  $S^2$. The same proof as in the non-equivariant case implies that
  this is a chain isomorphism; to ensure it is $\ZZ/2$-equivariant we
  must move both basepoints $p$ close to the $\alpha$-circles, but
  this does not interfere with the argument. This proves the $\HFLa$ case.

  For the case of $\HFKa$, there is an additional complication that
  Condition~\ref{item:ET:index-double} does not apply, so we do not
  know that we can compute $\HFKa$ using complex structures which are
  $\ZZ/2$-equivariant. Thus, we are forced to work with the more
  complicated definition of the equivariant complex from
  Section~\ref{sec:equi-complex}. We may, however, assume that all of
  the complex structures and paths of complex structures are split
  over the stabilization regions (the two $S^2$'s in the discussion
  above). Under this assumption, we still have a fiber product
  description of the moduli spaces, and taking the same $p\to \alpha$
  limit implies that the differential on the equivariant complex for
  $\wt{\HD}'$ is again of the form $\CFLa_{\ZZ/2}(\wt{\HD})\otimes
  X$. Further details are left to the reader.
\end{proof}

\begin{proof}[Proof of Corollary \ref{cor:dim-periodic-link}]The existence of 
the spectral sequence (\ref{eq:Hen2-small-seq}) immediately implies that
\[
2\dim(\HFLa(S^3, \wt{K}\cup\wt{A})) \geq \dim(\HFLa(S^3, K \cup A)).
\]
Our goal is to remove the factor of two in this inequality by proving that at 
least half the rank of $\HFLa(S^3, \wt{K} \cup \wt{A})\otimes V_1$ dies in the 
spectral sequence.

Recall that if $\HD$ is a Heegaard diagram for a link $L=L_1 \cup \cdots \cup 
L_n$ then the complex $\CFLa(\HD)$ has an Alexander multi-grading, $(A_{L_1}, 
\cdots, A_{L_n})$, valued in integers or half-integers according to the parity 
of the linking numbers of certain combinations of components. The differential on $\CFLa(\HD)$ preserves this multigrading.

With this in mind, let $\HD$ be a nice diagram for $K \cup A$ with one
pair of basepoints on each component of the link. Let $\wt{\HD}$ be
the branched cover of $\HD$ branched over the basepoints on $A$, so
that $\wt{\HD}$ is a nice Heegaard diagram for $\wt{K} \cup \wt{A}$
with two pairs of basepoints on $\wt{K}$ and one on $\wt{A}$.  Let
$\HFLt(\wt{\HD})$ denote the Heegaard Floer homology of the
multi-pointed Heegaard diagram $\wt{\HD}$, so
$\HFLt(\wt{\HD})\cong \HFLa(S^3,\wt{K}\cup\wt{A})\otimes V_1$.

We will concern ourselves only with the Alexander grading $A_{\wt{K}}$
associated to the component $\wt{K}$ of $\wt{K}\cup\wt{A}$, not the
Alexander grading coming from the axis.  The grading $A_{\wt{K}}$ is
always a half-integer, say $\frac{2k-1}{2}$. Write
$\HFLt(\wt{\HD},\frac{2k-1}{2})$ for the summand of $\HFLt(\HD)$
spanned by generators with $A_{\wt{K}}$-grading $\frac{2k-1}{2}$.
The $A_{\wt{K}}$-gradings of the two generators
of $V_1$ are $0$ and $-1$, so
\[
\bigoplus_{k \text{ even}} \HFLt\Bigl(\wt{\HD}, 
\frac{2k-1}{2}\Bigr) \simeq \bigoplus_{k \text{ odd}} 
\HFLt\Bigl(\wt{\HD}, \frac{2k-1}{2}\Bigr).
\]

The Heegaard diagram $\wt{\HD}$ admits an equivariant complex
structure achieving transversality, which by Proposition
\ref{prop:equi-is-equi} may be used to construct
$\eHF(\wt{T}_{\alpha}, \wt{T}_{\beta})$. Moreover, since the double
branched cover involution $\tau_{\#}$ preserves the absolute Alexander
multi-grading on $\CFLa(\wt{\HD})$ \cite[Lemma
3.1]{Hendricks:periodic-localization}, we see the spectral sequence
(\ref{eq:Hen2-small-seq}) splits along the Alexander grading
$A_{\wt{K}}$:
\[
\HFLtDual\Bigl(\wt{\HD}, \frac{2k-1}{2}\Bigr) \otimes \Field[\theta, \theta^{-1}] \Rightarrow 
\theta^{-1}HF_{\ZZ/2}\Bigl(\wt{T}_{\alpha}, \wt{T}_{\beta}, \frac{2k-1}{2}\Bigr)
\]

However, if we let $\x \in T_{\alpha} \cap T_{\beta}$ be a generator in $\CF(T_{\alpha},T_{\beta})$ and $\wt{\x}$ be the corresponding equivariant 
generator in $\wt{T}_{\alpha}\cap \wt{T}_{\beta}$, then $A_{\wt{K}}(\wt{\x}) = 
2A_K(\x) - \frac{1}{2}= \frac{4A_k(\x) -1}{2}$~\cite[Lemma~3.7]{Hendricks:periodic-localization}. In particular, all equivariant 
generators lie in $A_{\wt{K}}$ gradings $i=\frac{2k+1}{2}$ for even $k$, implying that $\theta^{-1}HF_{\ZZ/2}(\wt{T}_{\alpha}, \wt{T}_{\beta}, \frac{2k-1}{2}) = 0$ when $k$ is odd. We conclude that half the rank of $\HFKa(S^3, 
\wt{K} \cup \wt{A})\otimes V \otimes \Field[\theta, \theta^{-1}]$ is lost in 
the spectral sequence (\ref{eq:Hen2-small-seq}), as desired.
\end{proof}

\subsection{Equivalence of the Hendricks and Lipshitz-Treumann spectral 
sequences}
Given a surface $F$, together with a handle decomposition of $F$ with
a single $0$-handle and a single $2$-handle, and $2g$ $1$-handles,
bordered Floer homology~\cite{LOT1, LOT2} associates a differential
algebra $\Alg(F)=\bigoplus_{i=-g}^{g}\Alg(F,i)$. Given a 3-dimensional
cobordism $Y$ from $F_1$ to $F_2$, together with a framed arc $\gamma$
in $Y$ connecting the $0$-handle in $F_1$ and the $0$-handle in $F_2$,
bordered Floer homology associates a \dg
$(\Alg(F_1),\Alg(F_2))$-bimodule $\CFDAa(Y)$, well-defined up to
quasi-isomorphism. (The bimodule $\CFDAa(Y)$ as defined in~\cite{LOT2}
may, more generally, be an $\Ainf$-bimodule, but any $\Ainf$-bimodule
is quasi-isomorphic to a \dg bimodule; alternatively, if one computes
$\CFDAa(Y)$ from a nice Heegaard diagram then the higher operations on
$\CFDAa(Y)$ vanish.)  We will call $(Y,\gamma)$ an \emph{arced
  cobordism}.  If $(Y',\gamma')$ is an arced cobordism from $F_2$ to
$F_3$ then $\CFDAa(Y\cup_{F_2}Y')\simeq
\CFDAa(Y_1)\DTP_{\Alg(F_2)}\CFDAa(Y_2)$, where $\DTP$ is the derived
tensor product~\cite[Theorem 12]{LOT2}. If $F_1=F_2$ then we can
self-glue $(Y,\gamma)$ to get a closed $3$-manifold with a framed
knot. Doing framed surgery on the knot gives another 3-manifold
$Y^\circ$, and the core of the surgery solid torus is a knot $K^\circ$
in $Y$. Then $\HFKa(Y^\circ, K^\circ)\cong
\HH(\CFDAa(Y))$~\cite[Theorem 14]{LOT2}.

Given a \dg $(A,A)$-bimodule $M$, one can define a $\ZZ/2$-equivariant
Hochschild homology of $M$ as follows. Fix a biprojective resolution
$R$ of $A$.  Then $\HH(M)$ is the homology of $\HC(M) = M\otimes
R/\sim$ where for any $m\in M$, $a\in A$, and $r\in R$,
\[
ma\otimes r\sim m\otimes ar \qquad\text{and}\qquad  am\otimes r\sim m\otimes ra.
\]
Similarly, $\HH(M\DTP M)$ is the homology of 
$\HC(M\DTP M)=M\otimes R\otimes M\otimes R/\sim$ where 
\[
a_1 ma_2\otimes r\otimes a_3m'a_4\otimes r'\sim 
m\otimes a_2ra_3\otimes m'\otimes a_4r'a_1
\]
There is an action of $\ZZ/2$ on $\HC(M\DTP M)$ by 
\[
\tau(m\otimes r\otimes m'\otimes r')=m'\otimes r'\otimes m\otimes r.
\]
Define the $\ZZ/2$-equivariant Hochschild complex and homology
by
\begin{align*}
\HC_{\ZZ/2}(M)&=\RHomO{\Field[\ZZ/2]}(\HC(M\DTP M),\Field)\\
\HH_{\ZZ/2}(M)&=\ExtO{\Field[\ZZ/2]}(\HC(M\DTP M),\Field)=H(\HC_{\ZZ/2}(M)).
\end{align*}
From its definition as an $\Ext$ group, $\HH_{\ZZ/2}(M)$ inherits an
action of
$\ExtO{\Field[\ZZ/2]}(\Field,\Field)\cong\Field[\theta]$. 
There is also a filtration on $\HC_{\ZZ/2}(M)$: explicitly, we can write 
\[
\HC_{\ZZ/2}(M) = \{0\to \HC(M\DTP M)^*\stackrel{1+\tau}{\longrightarrow}
\HC(M\DTP M)^* \stackrel{1+\tau}{\longrightarrow}\cdots\},
\]
and then the horizontal filtration on this bicomplex gives a
filtration on $\HC_{\ZZ/2}(M)$. The induced spectral sequence has
$E_1$-page given by $\HH(M\DTP M)^*\otimes\Field[\theta]$ (In the
language of~\cite{LT:hoch-loc}, this is the $\lsup{vh}E$ spectral
sequence.)

\begin{lemma}\label{lem:LT-well-defined}
  The filtered quasi-isomorphism type of $\HC_{\ZZ/2}(M)$ depends only
  on the quasi-isomorphism type of the \dg bimodule $M$ and is
  independent of the choice of resolution $R$.
\end{lemma}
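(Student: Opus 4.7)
The plan is to produce, for any two biprojective resolutions $R, R'$ of $A$ and any quasi-isomorphism $\phi\co M\to M'$ of \dg $(A,A)$-bimodules, a $\ZZ/2$-equivariant quasi-isomorphism of chain complexes between the corresponding candidates for $\HC(M\DTP M)$, and then invoke the general principle recalled in Section~\ref{sec:background}: a $\ZZ/2$-equivariant quasi-isomorphism of bounded-below complexes of $\Field[\ZZ/2]$-modules induces a filtered quasi-isomorphism on their $\RHomO{\Field[\ZZ/2]}(-,\Field)$ complexes, compatible with the $\Field[\theta]$-action. This reduces the lemma to two independent claims: independence of the resolution $R$, and independence of $M$ up to quasi-isomorphism.

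For the independence of $R$, I would apply the standard lifting lemma to produce an $(A,A)$-bimodule quasi-isomorphism $f\co R\to R'$ covering $\Id_A$, and set
\[
F=\Id_M\otimes f\otimes\Id_M\otimes f\co M\otimes R\otimes M\otimes R\longrightarrow M\otimes R'\otimes M\otimes R'.
\]
Because $f$ is a bimodule map, $F$ respects the identifications $\sim$ used to define $\HC$ and so descends to the quotients; because $F$ applies the same $f$ to both halves of the tensor expression, it commutes with the swap and is automatically $\ZZ/2$-equivariant. That $F$ is a quasi-isomorphism follows from biprojectivity of $R$ and $R'$ via a standard spectral sequence argument — filter by the internal grading of the first $R$-factor, and use that tensor product with a biprojective bimodule preserves quasi-isomorphisms — followed by a convergence check using the bounded-below hypothesis.

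For the independence of $M$ up to quasi-isomorphism, I would fix a single biprojective resolution $R$ and consider the chain map $\phi\otimes\Id_R\otimes\phi\otimes\Id_R$, which by the same reasoning is $\ZZ/2$-equivariant and descends to a chain map $\HC(M\DTP M)\to\HC(M'\DTP M')$. The analogous filtration argument — now with the tensor powers of $R$ providing the needed biprojectivity — shows this is a quasi-isomorphism. Combining the two steps with the principle from Section~\ref{sec:background} yields a filtered quasi-isomorphism between the two presentations of $\HC_{\ZZ/2}(M)$, compatible with the $\Field[\theta]$-action.

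The main technical point — rather than a genuine obstacle — is verifying that the tensor products appearing in the definition of $\HC(M\DTP M)$ preserve quasi-isomorphisms; this is standard but requires the biprojectivity of $R$ and the bounded-below hypothesis to ensure convergence of the relevant spectral sequences. Once this is in place, the $\ZZ/2$-equivariance of every chain map constructed is automatic from its symmetric definition, and the filtered quasi-isomorphism statement follows formally.
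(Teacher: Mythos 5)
Your proposal is correct and takes essentially the same approach as the paper: produce $\ZZ/2$-equivariant quasi-isomorphisms $\phi\otimes\Id\otimes\phi\otimes\Id$ and $\Id\otimes g\otimes\Id\otimes g$ on the Hochschild-type complexes and then invoke the homological algebra from Section~\ref{sec:background}. You flesh out the "these are quasi-isomorphisms" step with an explicit filtration argument that the paper leaves implicit, but the structure of the argument is identical.
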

\begin{proof}
  Given a quasi-isomorphism $f\co M\to N$ there is an induced
  quasi-isomorphism
  \[
  f\otimes\Id\otimes f\otimes\Id\co (M\otimes R\otimes M\otimes R)/\sim \to 
  (N\otimes R\otimes N\otimes R)/\sim,
  \]
  and this induced map is a map of
  $\Field[\ZZ/2]$-modules. Further, given two resolutions $R$ and $R'$
  of $A$ and a quasi-isomorphism $g\co R\to R'$ there is an induced
  quasi-isomorphism
  \[
  \Id\otimes g\otimes\Id\otimes g\co (M\otimes R\otimes M\otimes R)/\sim \to 
  (M\otimes R'\otimes M\otimes R')/\sim,
  \]
  which again is a map of $\Field[\ZZ/2]$-modules.  The result
  follows.
\end{proof}

\begin{proof}[Proof of Theorem~\ref{thm:LT-is-Hen}]
  Fix a nice Heegaard diagram $\HD_Y$ for $Y$. Recall the bordered Heegaard 
  diagrams $\Denis(-F)$ and $\MirrorDenis(F)$ associated to $F$~\cite[Section 
  4]{LOTHomPair} which have the properties that:
  \begin{itemize}
  \item the result $\HD_{\Id}=\Denis(-F)\cup\MirrorDenis(F)$ of gluing
    $\Denis(-F)$ to $\MirrorDenis(F)$ along one boundary component is a
    diagram for the identity cobordism of $F$~\cite[Corollary 4.5]{LOTHomPair}, 
    and
  \item the bimodule $\CFDAa(\HD_{\Id})$ is a biprojective resolution of the
    diagonal $(\Alg(F),\Alg(F))$-bimodule $\Alg(F)$~\cite[Proof of Proposition 
    4.1]{LT:hoch-loc}.
  \end{itemize}
  Further, the proofs of the pairing theorem and self-pairing theorem
  via nice diagrams~\cite[Section 7]{LOT2} give an isomorphism
  \[
  \CFKa(\HD_{Y}\cup\HD_{\Id}\cup\HD_Y\cup\HD_{\Id}/\sim)\cong 
  \CFDAa(\HD_Y)\otimes\CFDAa(\HD_\Id)\otimes\CFDAa(\HD_Y)\otimes\CFDAa(\HD_\Id)/\sim
  \]
  where the equivalence relation $\sim$ on the left hand side
  identifies corresponding boundary components, circularly; and this
  isomorphism respects the $\ZZ/2$-actions. So, the result follows
  from Lemma~\ref{lem:LT-well-defined}, which says that we can use the
  right hand side to compute the equivariant Hochschild homology, and
  Corollary~\ref{cor:Hen-nice}, which says that we can use the left
  hand side to compute the equivariant Floer cohomology.
\end{proof}

\subsection{Invariants of covering spaces}\label{sec:cov-spaces}
Consider a normal covering space $\pi\co \wt{Y}\to Y$ of degree
$N<\infty$, with deck group $H$. Given a singly-pointed,
genus $g$ Heegaard diagram $\HD$ for $Y$, there is a corresponding
$N$-pointed Heegaard diagram
$\wt{\HD}=(\wt{\Sigma},\wt{\alphas},\wt{\betas},\wt{z})$ for $\wt{Y}$
and a projection map $\wt{\HD}\to \HD$~\cite[Section 
2.2]{LeeLipshitz08:gradings}: 
$\wt{\HD}$ is simply the total preimage,
under $\pi$, of $\HD$. The group $H$ acts on $\wt{\HD}$ and hence on
$\Sym^{Ng}(\wt{\Sigma}\setminus \wt{z})$. Given any nontrivial $h\in
H$, generators $\x,\y\in \wt{T}_\alpha\cap\wt{T}_\beta\subset
\Sym^{Ng}(\Sigma)$, and homotopy class $\phi\in\pi_2(\x,\y)$, if
$\phi=h_*(\phi)$ then $\mu(\phi)$ is divisible by the order of $h$. So, 
Condition~\ref{item:et2p} is satisfied, and hence 
a generic $1$-parameter family of $H$-equivariant almost complex structures satisfies
Hypothesis~\ref{hyp:equivariant-transversality-K}. So, by Proposition~\ref{prop:equi-is-equi-K}
we can define the Floer complex 
$\CFa(\wt{\HD})=\CF(T_\alpha,T_\beta)$ as a module over
$\Field[H]$. The group $H$ also acts on $\Spinc(Y)$, and the complex
$\CFa(\wt{\HD})=\CF(T_\alpha,T_\beta)$, over $\Field[H]$, decomposes
according to $H$-orbits of $\SpinC$-structures,
\[
\CFa(\wt{\HD})=\bigoplus_{[\spinc_0]\in 
\Spinc(\wt{Y})/H}\CFa(\wt{\HD},[\spinc_0])
\qquad\qquad\text{where}\qquad\qquad
\CFa(\wt{\HD},[\spinc_0])=\bigoplus_{\spinc\in H\spinc_0}\CFa(\wt{\HD},\spinc).
\]
Given a torsion $\SpinC$-structure $\spinc_0$ on $\wt{Y}$, 
$\CFa(\wt{\HD},[\spinc_0])$ is a relatively $\ZZ$-graded chain complex over 
$\Field[H]$, so we define
\[
\eHFa[H](\wt{\HD},[\spinc_0])=\ExtO{\Field[H]}(\CFa(\wt{\HD},[\spinc_0]),\Field).
\]
The equivariant homology $\eHFa[H](\wt{\HD},[\spinc_0])$ inherits an action of 
the
group cohomology $H^*(H)=\ExtO{\Field[H]}(\Field,\Field)$ of $H$.

\begin{theorem}\label{thm:cov-eq-invt}
  The quasi-isomorphism type of $\CFa(\wt{\HD},[\spinc_0])$ over $\Field[H]$ 
  and,
  in particular, the equivariant Heegaard Floer cohomology 
  $\eHFa[H](\wt{\HD},[\spinc_0])$, as a
  module over $H^*(H)$, is an invariant of the covering space $\pi$ and orbit 
  of $\SpinC$-structures $[\spinc_0]$.
\end{theorem}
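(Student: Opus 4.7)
The strategy is to reduce invariance of $\CFa(\wt{\HD},[\spinc_0])$ over $\Field[H]$ to the invariance results for freed Floer complexes established in Section~\ref{sec:equi-complex}. First, since Condition~\ref{item:et2p} holds for the covering $\wt{\HD}\to \HD$ (the Maslov index of any $h$-invariant disk class is a multiple of the order of $h$, by the discussion preceding the theorem), Hypothesis~\ref{hyp:equivariant-transversality-K} is satisfied for a generic one-parameter family of $H$-equivariant almost complex structures on $\Sym^{Ng}(\wt{\Sigma}\setminus\wt{z})$. Thus Proposition~\ref{prop:equi-is-equi-K} identifies $\CFa(\wt{\HD},[\spinc_0])$, over $\Field[H]$, with the freed Floer complex $\ECF[H](\wt{T}_\alpha,\wt{T}_\beta)$ for the corresponding $H$-invariant collection of homotopy classes of paths. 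By Proposition~\ref{prop:invariance-gen-group}, it then suffices to show that any two singly-pointed Heegaard diagrams for $(Y,\spinc_0)$ can be connected by moves which, after lifting, induce $H$-equivariant Hamiltonian isotopies between $(\wt{T}_\alpha,\wt{T}_\beta)$ and $(\wt{T}'_\alpha,\wt{T}'_\beta)$ in the symmetric product (or which do not change the equivariant quasi-isomorphism type for a trivial reason).

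I would then walk through the standard Heegaard moves. Isotopies of the $\alpha$- and $\beta$-curves in $\HD$ lift to $H$-equivariant isotopies upstairs, which by the Ozsv\'ath--Szab\'o argument (as in the proof of Corollary~\ref{cor:Hen-dcov-invt}) can be realized through $H$-equivariant Hamiltonian isotopies of $\wt{T}_\alpha$ and $\wt{T}_\beta$ combined with deformations of the almost complex structure; the latter are handled by Proposition~\ref{prop:invariance-gen-group}. Handleslides downstairs lift to equivariantly performed handleslides upstairs, and applying Lemma~\ref{lemma:handleslides} to the genus-$Ng$ diagram $\wt{\HD}$ yields a symplectic form and a Hamiltonian isotopy implementing the handleslide in $\Sym^{Ng}(\wt{\Sigma}\setminus\wt{z})$; by averaging the symplectic form and the Hamiltonian over the finite group $H$ (which preserves the handleslide region), one obtains an $H$-equivariant Hamiltonian isotopy. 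Index 1-2 stabilizations of $\HD$ near the basepoint $z$ lift to $H$ simultaneous stabilizations of $\wt{\HD}$ at each of the $N$ preimages of $z$; as in the proof of Corollary~\ref{cor:Hen-dcov-invt}, with almost complex structures that are split near each stabilization region, these stabilizations do not affect the Floer complex at all, and a minor extension of the argument shows they do not affect the $\Field[H]$-module structure either.

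The main obstacle I anticipate is the handleslide step: Perutz's construction~\cite{Perutz07:HamHand} yields a symplectic form and a Hamiltonian which are a priori not $H$-equivariant, and I must ensure that the averaging procedure still produces a symplectic form on $\Sym^{Ng}(\wt{\Sigma}\setminus\wt{z})$ for which $\wt{T}_\alpha$ and $\wt{T}'_\alpha$ are Lagrangian and Hamiltonian isotopic. Since the relevant isotopy is supported in a product $\wt{\alpha}_3\times\cdots\times\wt{\alpha}_{Ng}\times\Sym^2(\wt{U})$ for a small pair-of-pants neighborhood $\wt{U}$ that may be chosen disjoint from its $H$-translates (because the deck group acts freely on $\wt{\Sigma}$ outside a finite set of possible branch points, and $\wt{U}$ can be shrunk to avoid them), the averaging of the Hamiltonian reduces to a disjoint union of isotopies and remains supported in the complement of $\wt{z}$; equivariance of the result follows. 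Finally, combining all of these steps, invariance of $\CFa(\wt{\HD},[\spinc_0])$ over $\Field[H]$ up to quasi-isomorphism follows, and the statement for $\eHFa[H](\wt{\HD},[\spinc_0])$ as an $H^*(H)$-module is then a formal consequence of the homological algebra reviewed in Section~\ref{sec:background}.
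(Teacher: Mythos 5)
Your proposal is correct and follows essentially the same route as the paper: reduce to the freed Floer complex via Proposition~\ref{prop:equi-is-equi-K} and Proposition~\ref{prop:invariance-gen-group}, then verify that isotopies, handleslides (via Lemma~\ref{lemma:handleslides}), and index 1-2 stabilizations near the basepoint preserve the $\Field[H]$-equivariant quasi-isomorphism type, exactly as in the proof of Corollary~\ref{cor:Hen-dcov-invt}. The only cosmetic difference is that for handleslides you apply Perutz's isotopy directly upstairs in $\Sym^{Ng}(\wt{\Sigma})$ and then average over $H$, whereas the paper applies it downstairs in $\Sym^g(\Sigma)$ and lifts; since $H$ acts freely on $\wt{\Sigma}$ the supports of the $H$-translates of the pair-of-pants region are already disjoint, so the lifted isotopy is automatically $H$-equivariant and no genuine averaging is needed — your phrasing slightly obscures this, but the substance is the same.
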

\begin{proof}
  By 
  Proposition~\ref{prop:equi-is-equi-K}, $\CFa(\wt{\HD},[\spinc_0])$ is
  quasi-isomorphic to the freed Floer complex $\ECF(T_\alpha,T_\beta)$
  (where the collection of homotopy classes of paths $\eta$ from $T_\alpha$ to
  $T_\beta$ is given by $[\spinc_0]$).
  So, it follows from
  Proposition~\ref{prop:invariance-gen-group} that 
  $\CFa(\wt{\HD},[\spinc_0])$ is independent of the choices of almost complex
  structures used to define it and is invariant under equivariant Hamiltonian
  isotopies of $T_\alpha$ and $T_\beta$, up to quasi-isomorphism over 
  $\Field[H]$. It follows that the
  quasi-isomorphism type of
  $\CF(\wt{\HD},[\spinc_0])$ over $\Field[H]$ is invariant under isotopies and 
  handleslides
  of the $\alpha$- and $\beta$-curves in $\HD$ (see Lemma~\ref{lemma:handleslides} 
  and the proof of
  Corollary~\ref{cor:Hen-dcov-invt}). Finally, it is trivially true
  that $\CF(\wt{\HD},[\spinc_0])$ is invariant under stabilizations near the
  basepoint (again, see the proof of
  Corollary~\ref{cor:Hen-dcov-invt}). Since any two Heegaard diagrams
  for $Y$ are related by a sequence of isotopies, handleslides, and
  stabilizations near the basepoint, the result follows.
\end{proof}

The invariant $\CFa(\wt{\HD},[\spinc_0])$ can, of course, be computed
combinatorially starting from any nice diagram $\HD$ for $Y$.

\begin{proof}[Proof of Theorem~\ref{thm:LT-invt}]
  The fact that the equivariant Floer homology is well-defined is
  Theorem~\ref{thm:cov-eq-invt}.  It follows from Corollary~\ref{cor:Hen-nice}
  that the equivariant Floer complex can be computed from a nice
  diagram. (We should note that the whole equivariant complex
  decomposes along the action of $\ZZ/2$ on $\Spinc(\wt{Y})$.) If
  $\wt{Y}\to Y$ is induced by a $\ZZ$-cover,~\cite[Section~4.5]{LT:hoch-loc} also gives an equivariant Floer homology and a
  spectral sequence $\HFa(\wt{Y},\pi^*(\spinc))\otimes
  V\otimes\Field[\theta]\Rightarrow
  \eHFa(\wt{Y},\pi^*(\spinc))$, via Hochschild homology and
  bordered Floer homology. Essentially the same argument as used to
  prove Theorem~\ref{thm:LT-is-Hen} implies that the spectral sequence
  from~\cite[Section~4.5]{LT:hoch-loc} agrees with the spectral
  sequence from Theorem~\ref{thm:cov-eq-invt}.
\end{proof}

\begin{remark}
  For $n$-fold cyclic covers induced by $\ZZ$ covers with $n$ arbitrary, one can
  also use bordered Floer homology to compute the spectral
  sequence~\eqref{thm:cov-eq-invt}, by essentially the same technique
  as in~\cite[Section 4.5]{LT:hoch-loc}.
\end{remark}

\section{New spectral sequences from the branched double 
cover}\label{sec:new-dcov}

\subsection{A concordance homomorphism from the \texorpdfstring{$\ZZ/2$}{Z/2}-action on
  \texorpdfstring{$\CFa(\Sigma(K))$}{CF-hat of the branched double cover}}\label{sec:new-HFa}
Fix a link $L\subset S^3$. Following
Manolescu,\cite{Manolescu06:nilpotent}, we can define a Heegaard
diagram for the branched double cover $\Sigma(L)$ in terms of a bridge
diagram for $L$. Specifically, choose a link diagram $D$ for $L$ and a
decomposition of $D$ as a union of embedded arcs $A_i, B_i\subset
S^3$, $i=1,\dots, n$, so that the $A_i$ (respectively $B_i$) arcs are
pairwise disjoint and the $A_i$ arcs pass under the $B_i$ arcs. Let
$\{p_1,\dots,p_{2n}\}=\bdy \left(\bigcup_{i=1}^n A_i\right)=\bdy
\left(\bigcup_{i=1}^nB_i\right)$. Order the $A_i$ arcs so that $A_n$ and $B_n$ 
share the
endpoint $p_{2n}$.  Let $\pi\co \Sigma_0\to S^2$ be the double cover
of $S^2$ branched along $\{p_1,\dots,p_{2n}\}$, and let
$\alpha_i=\pi^{-1}(A_i)$ and $\beta_i=\pi^{-1}(B_i)$. Then
\[
\HD=(\Sigma,\alpha_1,\dots,\alpha_{n-1},\beta_1,\dots,\beta_{n-1},z=p_{2n})
\]
 is a pointed Heegaard diagram for $\Sigma(L)$, so $\HFa(\HD)=\HFa(\Sigma(L))$.

 There is a $\ZZ/2$-action on $\Sigma$, exchanging the two sheets of
 the branched cover, and this action preserves each $\alpha_i$, each
 $\beta_i$, and $z$. There is an induced $\ZZ/2$-action $\tau\co
 \Sym^{n-1}(\Sigma\setminus \{z\})\to \Sym^{n-1}(\Sigma\setminus \{z\})$
 preserving the Heegaard tori $T_\alpha$ and $T_\beta$. The fixed set
 of $\tau$ has several non-homeomorphic connected components.
 For example, if $n$ is odd, there is a component of
 $\Fix(\tau)$ containing $\pi^{-1}\{q_1,\dots,q_{(n-1)/2}\}$ for any
 points $q_1,\dots,q_{(n-1)/2}\in (S^2\setminus
 \{p_1,\dots,p_{2n}\})$; this component also contains, for instance,
 $p_1,p_1,q_3,\dots,q_{(n-1)/2}$. Given a subset $S\subset
 \{p_1,\dots,p_{2n}\}$ consisting of $n-1$ (distinct) points,
 $\pi^{-1}(S)$ also has cardinality $n-1$, and is an isolated point in
 $\Fix(\tau)$. Let
\[
D = \{\pi^{-1}(S)\mid S\subset \{p_1,\dots,p_{2n}\},\ |S|=n-1\}\subset \Fix(\tau)
\]
be the set of points of this form; we will call $D$ the \emph{discrete
  part} of $\Fix(\tau)$.

\begin{lemma}\label{lem:fixed-discrete}
  The $\tau$-fixed parts of $T_\alpha$ and $T_\beta$ satisfy
  $T_\alpha^\fix,T_\beta^\fix\subset D$ so, in particular,
  $T_\alpha^\fix$ and $T_\beta^\fix$ are finite sets of
  points. Further, $|T_\alpha^\fix\cap T_\beta^\fix|=2^{|L|-1}$, where
  $|L|$ denotes the number of components of $L$.
\end{lemma}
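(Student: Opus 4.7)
The plan is to reduce the analysis of $T_\alpha^\fix \cap T_\beta^\fix$ to a combinatorial count on the bridge diagram. First I would observe that $\tau$ preserves each circle $\alpha_i = \pi^{-1}(A_i)$ setwise, and that $\alpha_i \cap \Fix(\tau)$ consists of exactly two points: the branch-point lifts of the two endpoints of $A_i$. For a point $\x = \{x_1, \dots, x_{n-1}\} \in T_\alpha$, the condition $\tau(\x) = \x$ then forces $\tau(x_i) = x_i$ for every $i$, since the $\alpha_i$ are pairwise disjoint and individually $\tau$-invariant. Hence each $x_i$ is a branch point, so $T_\alpha^\fix \subset D$, and by the symmetric argument $T_\beta^\fix \subset D$; in particular both sets are finite.

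To count $|T_\alpha^\fix \cap T_\beta^\fix|$, I would identify such a configuration with an $(n-1)$-element subset $S \subset \{p_1, \dots, p_{2n}\}$ of branch points containing exactly one endpoint of each $A_i$ and exactly one endpoint of each $B_j$ for $1 \leq i, j \leq n-1$. Viewing the underlying knot diagram as a disjoint union of $|L|$ cycles in $S^2$ whose edges alternate between $A$-arcs and $B$-arcs, with the $\ell$-th cycle having $2c_\ell$ vertices and $\sum_\ell c_\ell = n$, the condition on $S$ translates into: ``each remaining arc has exactly one endpoint in $S$.'' For any component that contains neither $A_n$ nor $B_n$, this is precisely the condition that the restriction of $S$ to the cycle is one of the two alternating vertex subsets, each of size $c_m$, contributing a factor of $2$.

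The main subtlety is the special component $L_\ell$ containing both $A_n$ and $B_n$; these arcs share the vertex $p_{2n}$. Removing them leaves a path on $2c_\ell - 1$ vertices, with $p_{2n}$ isolated and automatically excluded from $S$ (as it lies on no $\alpha_i$ with $i \leq n-1$). The ``each edge covered exactly once'' condition on a path with an odd number of vertices admits exactly two alternating solutions, of sizes $c_\ell$ and $c_\ell - 1$. The global size constraint $|S| = n - 1 = \sum_m c_m - 1$, combined with the fact that each non-special component contributes exactly $c_m$ vertices to $S$ regardless of the alternating choice, forces the special component to contribute $c_\ell - 1$, which selects a unique alternating pattern there. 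Multiplying the choices over the $|L|-1$ ordinary components and the one special component yields $|T_\alpha^\fix \cap T_\beta^\fix| = 2^{|L|-1}$. The trickiest point in the argument is this parity-based uniqueness in the special component; the rest is bookkeeping.
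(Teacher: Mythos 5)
Your proof is correct and follows essentially the same approach as the paper: both arguments reduce to a combinatorial count of subsets of $\{p_1,\dots,p_{2n}\}$ on the graph whose edges are the bridge arcs, decompose the link diagram into its $|L|$ cycles, and multiply alternating choices component by component. The only point where you and the paper diverge is in how uniqueness on the special component is justified. You invoke the global cardinality constraint $|S| = n-1$ together with the parity of the path, which is valid. The paper's terser ``one way for the interval'' has a slightly more geometric justification you could have used instead: the two free endpoints of the path (the non-$p_{2n}$ endpoints of $A_n$ and of $B_n$) each lie on only one of the retained families of circles---one is on some $\beta_j$ but on no $\alpha_i$ with $i \le n-1$, and the other vice versa---so neither can appear in a fixed generator, which immediately excludes the larger alternating subset without any size bookkeeping. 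Either route closes the gap; yours is a bit more indirect but equally sound.
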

\begin{proof}
  Since $\tau$ preserves each $\alpha$-circle set-wise, if
  $\x=\{x_1,\dots,x_{n-1}\}\in T_\alpha$ is fixed by $\tau$ then each
  $x_i$ must be fixed by $\tau$, so $\x$ is in $D$. The same argument
  applies to $T_\beta$. For the second statement, consider the graph
  $G$ with one vertex for each $p_i$ and an edge for each $A_i$ and
  $B_i$, connecting the arc's endpoints. Let $G'$ be the result of
  deleting the edges $A_n$ and $B_n$ from $G$. Then $G$ is
  homeomorphic to $L$, and so consists of $|L|$ cycles, and $G'$
  consists of an interval, a point, and $|L|-1$ cycles. An element of
  $T_\alpha^\fix\cap T_\beta^\fix$ is a subset of the vertices of $G'$
  of size $n-1$ containing exactly one endpoint of each $A_i$ and each
  $B_i$, $1\leq i\leq n-1$. There is one way of choosing such a subset
  for the interval, and two ways for each cycle, giving $2^{|L|-1}$
  choices in all.
\end{proof}

\begin{convention}\label{conv:torsion}
  For the rest of this section, let $\eta$ denote the set of torsion
  $\SpinC$-structures on $\Sigma(L)$, viewed as a collection of
  homotopy classes of paths from $T_\alpha$ to $T_\beta$ (see
  Section~\ref{sec:hyps-and-statement}). By $\CFa(\Sigma(L))$ and
  $\CF^-(\Sigma(L))$ we mean
  $\bigoplus_{\spinc\in\eta}\CFa(\Sigma(L),\spinc)$ and
  $\bigoplus_{\spinc\in\eta}\CF^-(\Sigma(L),\spinc)$, respectively.
\end{convention}

 The tuple $(\Sym^{n-1}(\Sigma \setminus
\{z\}),T_\alpha,T_\beta,\tau)$ satisfies Hypothesis~\ref{hyp:Floer-defined}, 
so we can define the freed Floer
complex $\wt{\CF}(\Sigma(K))\coloneqq\wt{\CF}(T_\alpha,T_\beta)$ and
the equivariant Floer homology $\eHFa(\Sigma(K))=
\ExtO{\Field[\ZZ/2]}(\wt{\CF}(\Sigma(K)),\Field)$. Further, by
Lemma~\ref{lem:fixed-discrete}, there are no non-constant holomorphic
disks contained in the fixed set (since such disks would have to be
contained in the discrete set $D$), so
Hypothesis~\ref{hyp:equivariant-transversality} is satisfied and
hence, by Proposition~\ref{prop:equi-is-equi}, we can use a generic
1-parameter family of $\ZZ/2$-equivariant almost complex structures to
define $\CFa(\Sigma(K))$ over $\Field[\ZZ/2]$, and compute
$\eHFa(\Sigma(K))$ as
$\ExtO{\Field[\ZZ/2]}(\CFa(\Sigma(K)),\Field)$. Of course,
$\wt{\CF}(\Sigma(K))$ and $\CFa(\Sigma(K))$ are quasi-isomorphic over
$\Field[\ZZ/2]$; so for computations, it is more convenient to use
$\CFa(\Sigma(K))$, although, in the some of the proofs, it is
necessary to use $\wt{\CF}(\Sigma(K))$. Let $\eCFa(\Sigma(K))$ denote
$\RHomO{\Field[\ZZ/2]}(\CFa(\Sigma(K)),\Field)$; its homology is
$\eHFa(\Sigma(K))$.

Next we identify the localized equivariant cohomology:
\begin{proposition}\label{prop:localized-equivariant}
  The localized equivariant Floer cohomology
  $\theta^{-1}\eHFa(\Sigma(L))$ is isomorphic to
  $(\Field\oplus\Field)^{|L|-1}\otimes\Field[\theta,\theta^{-1}]$. In
  particular, there is a spectral sequence
  \begin{equation} \label{eq:new-dbc-sequence}
    \HFaDual(\Sigma(L))\otimes\Field[\theta,\theta^{-1}]\Rightarrow
    (\Field\oplus\Field)^{|L|-1}\otimes\Field[\theta,\theta^{-1}].
  \end{equation}
\end{proposition}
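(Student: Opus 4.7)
The plan is to apply a Seidel-Smith type localization theorem to identify $\theta^{-1}\eHFa(\Sigma(L))$ with the Floer cohomology of the fixed Lagrangians tensored with $\Field[\theta,\theta^{-1}]$, and then to compute the fixed Floer cohomology via Lemma~\ref{lem:fixed-discrete}. The fixed-cohomology computation is essentially immediate: by Lemma~\ref{lem:fixed-discrete}, $T_\alpha^\fix$ and $T_\beta^\fix$ both lie in the $0$-dimensional component $D$ of $\Fix(\tau)$ and intersect transversely in $2^{|L|-1}$ points, so the Floer cohomology of $(T_\alpha^\fix, T_\beta^\fix)$, computed in the fixed component, is simply the $\Field$-vector space on these intersection points, of total dimension $2^{|L|-1}=\dim((\Field\oplus\Field)^{|L|-1})$.

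For the localization isomorphism itself, the strategy is to reduce to the situation already handled by Seidel-Smith. As noted in Section~\ref{sec:first-invariance}, for a genus-$0$ multi-pointed Heegaard diagram, the symmetric product with basepoints deleted satisfies Seidel-Smith's stable normal triviality condition; this is how Hendricks~\cite{Hendricks12:dcov-localization} constructs the spectral sequences discussed in Section~\ref{sec:HF-applications}. Accordingly I would first pass to a multi-pointed refinement of the bridge diagram by equivariantly adding pairs of basepoints (each in the base together with its $\tau$-image), apply Seidel-Smith's localization in the refined setting after an appropriate equivariant Hamiltonian isotopy of the Lagrangian tori, and then invoke an equivariant destabilization argument along the lines of the proof of Theorem~\ref{thm:Hen1-small}---a neck-stretching / fibered-product argument carried out with a carefully chosen equivariant almost complex structure---to split off matching copies of $V$ from both sides of the resulting identification, yielding the desired identification on the original singly-pointed bridge diagram.

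The spectral sequence (\ref{eq:new-dbc-sequence}) then follows from the standard filtration on $\eCFa(\Sigma(L))$ from Section~\ref{sec:background}, tensored throughout with $\Field[\theta,\theta^{-1}]$: the $E_1$-page is $\HFaDual(\Sigma(L)) \otimes \Field[\theta,\theta^{-1}]$ and the $E_\infty$-page is $\theta^{-1}\eHFa(\Sigma(L))$, which the localization identifies with $(\Field\oplus\Field)^{|L|-1}\otimes\Field[\theta,\theta^{-1}]$. The main technical obstacle will be the equivariant destabilization step: because each added basepoint pair in the base lifts to two $\ZZ/2$-exchanged pairs in the branched cover, the argument from Theorem~\ref{thm:Hen1-small} must be run on both pairs simultaneously. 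This is handled by placing all the auxiliary basepoints so that they and their $\tau$-images lie in small disjoint disks, and by letting the two corresponding connected-sum necks stretch simultaneously in a $\tau$-equivariant way, so that the fibered-product argument yielding the $V$ factor applies equivariantly to both pairs at once.
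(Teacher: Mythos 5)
Your fixed-set computation in the first paragraph is fine: by Lemma~\ref{lem:fixed-discrete} the fixed parts of the Lagrangians lie in the discrete component $D$ and meet in $2^{|L|-1}$ points, so the Floer cohomology of the fixed Lagrangians is $\Field^{2^{|L|-1}}$, matching the right-hand side of the statement.

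The localization step, however, has a genuine gap. Your strategy rests on an analogy to the Hendricks multi-pointed setup from Section~\ref{sec:first-invariance}, but the involution there is a different one and the analogy does not transfer. In Hendricks' setting the involution on $\Sym^{2n}(\wt{\Sigma}\setminus(\wt{\zs}\cup\wt{\ws}))$ is the deck transformation of the branched double cover $\wt{\Sigma}\to\Sigma$ of the Heegaard \emph{surface}, and the component of the fixed set that the Lagrangians meet is a smooth symmetric product of the base surface; that is exactly the setting where Seidel-Smith's stable normal triviality must be checked and can be. Here $\tau$ is the hyperelliptic-type involution on $\Sigma$ coming from the branched double cover $\Sigma\to S^2$, and its induced action on $\Sym^{n-1}(\Sigma\setminus\{z\})$ has a fixed set with several non-homeomorphic components of varying dimensions (see the discussion before Lemma~\ref{lem:fixed-discrete}), of which the Lagrangians meet only the discrete part $D$. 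Adding $\tau$-swapped basepoint pairs does not change this: after stabilization the Lagrangian fixed parts still land in the discrete piece of the fixed set, so the refinement never produces the hypothesis you cite (a fixed-set symmetric product \`a la Hendricks) and the whole stabilize-then-destabilize detour accomplishes nothing that the original diagram did not.

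The missing idea, which the paper supplies, is simpler: the proof of Seidel-Smith's localization theorem~\cite[Theorem 20]{SeidelSmith10:localization} uses the stable normal triviality hypothesis only along the components of the fixed set that the Lagrangians actually intersect. By Lemma~\ref{lem:fixed-discrete} those components are isolated points, for which stable normal triviality is automatic, so the theorem applies directly to the unmodified bridge diagram and the multi-pointed reduction is unnecessary.
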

\begin{proof}
  Roughly, the result follows from Seidel-Smith's localization
  theorem~\cite{SeidelSmith10:localization}. More precisely, it
  follows from the fact that the proof of their localization 
  theorem~\cite[Theorem
  20]{SeidelSmith10:localization} only uses stable normal
  triviality on the components of the fixed set which intersect the
  Lagrangian submanifolds. By Lemma~\ref{lem:fixed-discrete}, these
  components are points, for which stable normal triviality
  is obvious.
\end{proof}

The spectral sequence~\eqref{eq:new-dbc-sequence} is the spectral
sequence~\eqref{eq:sar-dcov} mentioned in the introduction. As
promised in the introduction, it is an invariant of based
links:
\begin{proof}[Proof of Theorem~\ref{thm:sar-dcov-invt}]
  As usual, we will actually show that the quasi-isomorphism type of
  $\wt{\CF}(\Sigma(L))$ over $\Field[\ZZ/2]$, or equivalently the
  quasi-isomorphism type of $\CFa(\Sigma(L))$ over $\Field[\ZZ/2]$, is
  an invariant of based links. Note that if two
  based links $L$ and $L'$ containing the same basepoint $p$ are
  isotopic, then we may choose the entire isotopy to fix the basepoint
  $p$. Therefore, we do not need to prove invariance under changing
  which point $p_{2n}$ is distinguished (or, equivalently, which pair
  of arcs $A_n$, $B_n$ is deleted). Thus, we must prove invariance
  under:
  \begin{itemize}
  \item Changes of almost complex structure; and
  \item Bridge moves, i.e., isotopies of the $A_i$ and $B_i$,
    handleslides among the $A_i$ and $B_i$, and stabilizations
    (Figure~\ref{fig:bridge-moves}).
  \end{itemize}
  For stabilizations, we will work with $\CFa(\Sigma(L))$; for the
  other moves, we will work with $\wt{\CF}(\Sigma(L))$.

  Changes of almost complex structure induce quasi-isomorphisms over
  $\Field[\ZZ/2]$ by Proposition~\ref{prop:indep-of-cx-str}. Isotopies
  of the $A_i$ and $B_i$ give equivariant isotopies of the
  Lagrangians, and hence induce quasi-isomorphisms over
  $\Field[\ZZ/2]$ by
  Proposition~\ref{prop:indep-of-Ham-isotopy}. Handleslides among the
  $A_1,\dots,A_{n-1}$ (respectively $B_1,\dots,B_{n-1}$) give
  Hamiltonian isotopic Heegaard tori, by Perutz's
  work~\cite{Perutz07:HamHand}. Further, since the fixed set of the
  $\ZZ/2$ action is discrete, no nontrivial holomorphic disks are
  contained in the fixed set, so there are equivariant almost complex
  structures achieving equivariant transversality; see
  Point~\ref{item:ET:Seidel-Smith} in
  Section~\ref{sec:equi-equi}. Note that the relevant top class is
  represented by fixed generators. Thus, by
  Proposition~\ref{prop:non-equi-invar}, handleslides induce
  equivariant quasi-isomorphisms. Handlesliding $A_{n}$ (respectively
  $B_n$) over one of the other $A_i$ (respectively $B_i$) has no
  effect on the complex at all. Since $S^2\setminus A_n$ is connected,
  we may trade handleslides of $A_i$ over $A_n$ for handlesliding
  $A_i$ over all the other $A$-arcs, and similarly for handlesliding
  $B_i$ over $B_n$.

  To prove stabilization invariance, consider the diagrams shown in
  Figure~\ref{fig:skein-stab}. That is, let $\HD$ be the branched
  double cover of a bridge diagram and let $\HD_1$ be the branched
  double cover of the result of stabilizing the bridge diagram. The
  stabilization occurs near one of the points $p_i$. Since we have
  already verified handleslide invariance, we may assume that $p_i$ is
  in the same region as $p_{2n}$, i.e., there is an arc from $p_i$ to
  $p_{2n}$ which is disjoint from the $A_i$ and $B_i$. Let $\HD_0$ be
  the connected sum of $\HD$ and a 1-bridge diagram for the unknot,
  where the connected sum occurs in the region adjacent to $p_i$ and
  $p_{2n}$. (So, $\HD_0$ is a diagram for a link with a small unknot
  component.) Let $\HD'$ be the result of applying a Reidemeister I
  move to the new pair of arcs in $\HD_1$. The motivation for this
  notation is that the underlying links for $\HD_0$ and $\HD_1$ are
  the zero- and one-resolutions of the crossing introduced
  by the Reidemeister I move in $\HD'$. (Here, we are using the
  convention of zero- and one-resolutions from~\cite{BrDCov}, which is
  \emph{opposite} to Khovanov's~\cite{Khovanov00:CatJones}.)
  There is a skein triangle

  \begin{equation}\label{eq:dcov-skein-seq}
  \dots\to \HFa(\HD')\stackrel{f}{\longrightarrow} 
  \HFa(\HD_0)\stackrel{g}{\longrightarrow} \HFa(\HD_1)\to \HFa(\HD')\to\dots
\end{equation}

  \begin{figure}
    \centering
    \begin{overpic}[tics=10]{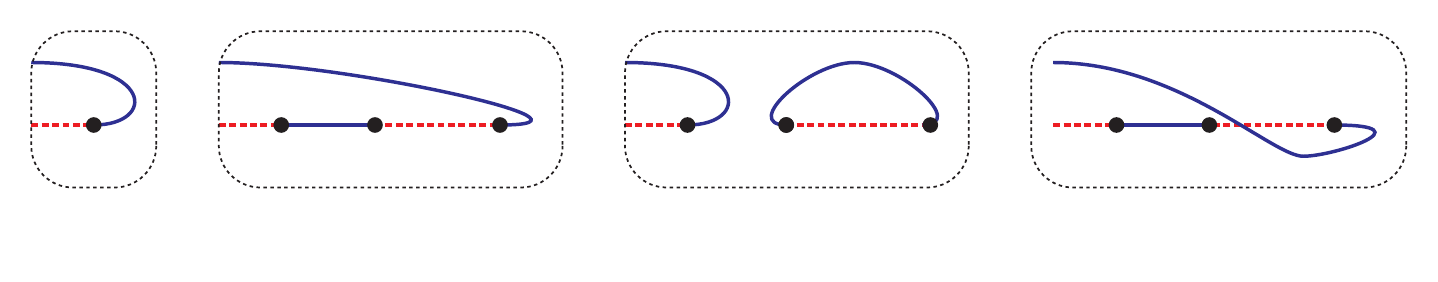}
      \put(5,2){(a)}
      \put(25,2){(b)}
      \put(54,2){(c)}
      \put(83,2){(d)}
    \end{overpic}
    \caption{\textbf{Stabilization and a skein sequence for the
        branched double cover.} (a) A piece of a bridge diagram. $A_i$
      is red and dashed and $B_i$ is blue and solid. The double cover
      of this diagram is $\HD$. (b) A stabilization of the bridge
      diagram. The double cover of this diagram is $\HD_1$. (c) The
      disjoint union of the bridge diagram with an unknot. The double
      cover of this diagram is $\HD_0$. (d) A Reidemeister I move
      applied to the stabilization. The double cover of this diagram
      is $\HD'$. Diagrams (b) and (c) are obtained from (d) by
      unoriented skein moves.}
    \label{fig:skein-stab}
  \end{figure}

  Note that:
  \begin{enumerate}
  \item The diagram $\HD_0$ represents $\Sigma(L)\#(S^2\times S^1)$; the 
  diagrams $\HD$, $\HD'$, and $\HD_1$ all represent $\Sigma(L)$.
  \item\label{item:inv-con-sum} There are obvious, $\ZZ/2$-equivariant 
  isomorphisms
    \[
    \CFa(\HD_0)\cong \CFa(\HD)\otimes\CFa(S^2\times S^1)
	 \cong \CFa(\HD)\otimes \Field\langle \ttop,\tbot\rangle
    \]
    where the $\ttop$ (respectively $\tbot$) is the top-graded
    (respectively bottom-graded)
    generator of $\HFa(S^2\times S^1)$. (In particular, the
    $H_1/\tors$-action on $\HFa(S^2\times S^1)$ sends $\ttop$ to
    $\tbot$ and sends $\tbot$ to $0$.)
  \item We already know that the quasi-isomorphism type of $\CFa$ over
    $\Field$ is an invariant of $\Sigma(L)$; we are trying to prove
    that these maps respect the $\Field[\ZZ/2]$-module
    structure. Non-equivariant invariance implies that the long exact
    sequence~\eqref{eq:dcov-skein-seq} splits as
    \[
    0\to \HFa(\HD')\to \HFa(\HD_0)\to \HFa(\HD_1)\to 0.
    \]
  \item The map $g$ sends the kernel of the $H_1(S^2\times S^1)$-action to $0$, 
  and $\HFa(\HD)\otimes\Field\langle\ttop\rangle$ isomorphically to 
  $\HFa(\HD_1)$.
  \end{enumerate}
  The map $g$ is given by counting holomorphic triangles, and hence
  respects the $\Field[\ZZ/2]$-module structure. Thus, the composition 
  \[
  \CFa(\HD)\to \CFa(\HD_0)\to \CFa(\HD_1),
  \]
  where the first map sends $\CFa(\HD)$ to $\CFa(\HD)\otimes \Field\langle 
  \ttop\rangle$, is an $\Field[\ZZ/2]$-equivariant
  quasi-isomorphism.
\end{proof}

In the special case that $K$ is a knot, we extract a new numerical
invariant. Before localizing we have the spectral sequence
\begin{align}
\HFaDual(\Sigma(K))\otimes\Field[\theta]\Rightarrow \eHFa(\Sigma(K)).
\end{align}
Since $\theta^{-1}\eHFa(\Sigma(K)) \cong \Field[\theta,
\theta^{-1}]$, there is a (non-canonical) submodule of
$\eHFa(\Sigma(K))$ isomorphic to $\Field[\theta]$. Also,
$\eHFa(\Sigma(K))$ inherits an absolute $\QQ$-grading from
$\CFa(\HD)$. We use the convention that multiplication by $\theta$
raises the Maslov grading by one (so that the equivariant differential
on $\eCFa(\Sigma(K))$ raises the Maslov grading by one), and for $x\in
\CFaDual(\Sigma(K))$, $\gr(x\otimes \theta^0)=\gr(x)$. Under this
convention, we let $q_{\tau}(K)$ be twice the minimum
grading of an element of the tower $\Field[\theta]$. More precisely we
set:
\begin{align*}
q_{\tau}(K) = 2\min\{q\mid \exists \ [x] \in \eHFa(\Sigma(K)), \ 
\gr([x]) = q,\ \theta^n[x]\neq 0 \ \forall \ n\}
\end{align*}

 As a trivial example, if $K$ is the unknot then $q_{\tau}(K)=0$. Similarly, 
if $K$ is alternating then $\Sigma(K)$ is an $L$-space, implying that $\HFa(\Sigma(K),\s_0)\simeq \Field$ and therefore $\HFaDual(\Sigma(K),\s_0)\otimes \Field[\theta]\simeq \Field[\theta]$. The induced map $\tau^*$ on $\HFaDual(\Sigma(K),\s_0)$ must be the identity, so in the spectral sequence $\HFaDual(\Sigma(K),\s_0)\Rightarrow \eHFa(\Sigma(K))$, we see that the differential $d_1=\theta(1+\tau^*)$ is zero. All further induced differentials are trivial for grading reasons, so $\HFaDual(\Sigma(K),\s_0)\otimes \Field[\theta]\simeq \eHFa(\Sigma(K))$, implying that $q_{\tau}(K) = 2d(\Sigma(K),
\spinc_0)=\delta(K)$. 

Indeed, because $\delta(K)$ is always an integer,  \cite[Section 2]{ManolescuOwens07:delta}, we see that $q_{\tau}(K)$ is as well.

We have the following graded refinement of Theorem~\ref{thm:sar-dcov-invt}
\begin{proposition}\label{prop:dbc-hat-grading-invariance}
  The action of $\tau$ and the quasi-isomorphisms from
  Theorem~\ref{thm:sar-dcov-invt} respect the absolute Maslov grading
  on $\CFa(\Sigma(K))$. In particular, the number $q_{\tau}(K)$ is a
  knot invariant.
\end{proposition}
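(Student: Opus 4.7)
The plan is to show grading-preservation of the $\tau$-action and of each equivariant quasi-isomorphism used in the proof of Theorem~\ref{thm:sar-dcov-invt}, from which well-definedness of $q_\tau(K)$ is immediate.

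First I would address the grading-preservation of $\tau_\#$. The involution $\tau$ on the branched double cover $\Sigma(K)$ is an orientation-preserving diffeomorphism which acts on $\Spinc(\Sigma(K))$ by conjugation $\spinc\mapsto\bar\spinc$. For each $\SpinC$-structure $\spinc$, $\tau_\#\co \CFa(\Sigma(K),\spinc)\to \CFa(\Sigma(K),\bar\spinc)$ intertwines with the conjugation isomorphism $J\co \CFa(\Sigma(K),\spinc)\to \CFa(\Sigma(K),\bar\spinc)$, which is known to be absolutely grading-preserving (\cite[Theorem 2.4]{OS04:HolomorphicDisks}, with corrections). Alternatively, one can compute the Maslov grading directly from the Heegaard diagram $\HD$ using Lee-Lipshitz~\cite{LeeLipshitz08:gradings}, observing that if $\x\in T_\alpha\cap T_\beta$ lies in the class $\spinc$ and $\phi\in\pi_2(\x,\tau(\x))$ lifts the conjugation isotopy, then $\mu(\phi)=0$ since the corresponding Maslov index computation is symmetric under $\tau$. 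Either route establishes that the map induced by $\tau_\#$ on $\HFaDual(\Sigma(K),\spinc)\oplus \HFaDual(\Sigma(K),\bar\spinc)$ is absolutely grading-preserving.

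Next I would walk through the equivariant quasi-isomorphisms used in the proof of Theorem~\ref{thm:sar-dcov-invt} and verify that each preserves the absolute Maslov grading. Change of complex structure and Hamiltonian isotopy induce continuation maps on each Floer complex, which are standard grading-preserving maps; the freed Floer version produced by the proof of Proposition~\ref{prop:indep-of-cx-str} assembles a homotopy coherent diagram of these continuation maps, and the resulting homotopy colimit map $\hocolim K$ respects the relative grading of Remark~\ref{remark:there-is-relative-z-grading} and hence respects the absolute grading on each $\SpinC$-orbit. The handleslide invariance goes through Perutz's Hamiltonian-isotopy result~\cite{Perutz07:HamHand}, again a grading-preserving operation. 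The only delicate case is the stabilization step, where I would check that in the skein triangle~\eqref{eq:dcov-skein-seq} the composition $\CFa(\HD)\hookrightarrow \CFa(\HD_0)\xrightarrow{g}\CFa(\HD_1)$ preserves absolute gradings: the first map is inclusion into the top generator of $\CFa(S^2\times S^1)$, whose absolute grading is zero with our conventions, and the triangle map $g$ counts index-zero holomorphic triangles and so is grading-preserving. All of these maps are $\Field[\ZZ/2]$-equivariant by the proof of Theorem~\ref{thm:sar-dcov-invt}, completing (a) and (b).

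To extract the invariance of $q_\tau(K)$, I would observe that the equivariant cochain complex $\eCFa(\Sigma(K))$ (or equivalently its model built from the freed complex as in Observation~\ref{obs:concrete-hocolim}) inherits an absolute $\QQ$-grading with the convention $\gr(\alpha_n^*\otimes x^*)=\gr(x^*)+n$ under which the equivariant differential raises grading by one, and in which multiplication by $\theta$ raises grading by one. A grading-preserving equivariant quasi-isomorphism of chain complexes over $\Field[\ZZ/2]$ induces a grading-preserving isomorphism of $\eHFa(\Sigma(K))$ as a graded $\Field[\theta]$-module, and in particular sends the $\Field[\theta]$-tower surviving localization to an isomorphic tower. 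Thus the minimum grading of an element in the tower is an invariant of $K$, proving that $q_\tau(K)$ is well-defined.

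The main obstacle is purely bookkeeping: verifying that the grading conventions used in defining $\ECF$ and $\eCF$ in Section~\ref{sec:equi-complex} are consistent with the standard Maslov grading on $\CFa(\Sigma(K))$, so that when $\tau_\#$ is grading-preserving on the Floer complex, the induced maps $G_\alpha,G_\beta$ and all their higher homotopies in Construction~\ref{const:F-to-G} preserve grading. This follows directly from the fact that each $G_{f_n,\dots,f_1}$ counts curves in homotopy classes of the specified index, so its grading shift is dictated by the simplicial degree in $I_*^{\otimes(n-1)}$, which is exactly what is absorbed into the grading convention above.
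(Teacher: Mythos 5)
Your overall plan — check grading preservation move by move, then conclude that $q_\tau$ is well defined — matches the paper's structure, and your treatment of bridge moves (isotopies, handleslides, change of complex structure) and the final deduction about $q_\tau$ are fine. However, there are two genuine gaps.

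First, your preferred argument for $\tau_\#$ does not work as stated. You assert that $\tau_\#$ ``intertwines with the conjugation isomorphism $J$,'' and conclude from the grading-preservation of $J$ that $\tau_\#$ also preserves grading. But $\tau_\#$ and $J$ are different maps $\CFa(\Sigma(K),\spinc)\to\CFa(\Sigma(K),\bar\spinc)$, and knowing one of them is degree-zero says nothing about the other: two maps between the same relatively $\ZZ$-graded groups can differ by any integer shift. Your alternative Lee--Lipshitz argument is closer to sound: since $\tau$ is a symplectomorphism it preserves relative Maslov gradings, and if one can exhibit a $\tau$-fixed generator then relative grading preservation upgrades to absolute. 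That is essentially the argument in the proof of Proposition~\ref{prop:D2m-grading}. The paper's primary argument is cleaner still: the mapping cylinder of $\tau$ is a rational homology cobordism from $\Sigma(K)$ to itself whose induced cobordism map is $\tau_\#$, and such maps preserve the absolute $\QQ$-grading by~\cite[Theorem 7.1]{OS06:HolDiskFour}.

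Second, and more seriously, your analysis of the stabilization map is incorrect. You claim the inclusion $x\mapsto x\otimes\ttop$ preserves grading because ``the absolute grading [of $\ttop$] is zero with our conventions'' and that the triangle map $g$ preserves grading because it counts index-zero triangles. Neither is true. The top generator $\ttop\in\HFa(S^2\times S^1,\spinc_0)$ sits in grading $\tfrac{1}{2}$, so the inclusion \emph{raises} the absolute grading by $\tfrac{1}{2}$; and counting index-zero triangles gives a map whose grading shift is governed by the cobordism grading formula, which is not zero in general. The paper handles this correctly: the cobordism map $g=F_W$ is broken into $\SpinC$-pieces, and by the grading formula of~\cite[Theorem 7.1]{OS06:HolDiskFour} the relevant piece $F_{W,\spinct}$ \emph{lowers} the grading by $\tfrac{1}{2}$, so the composition preserves grading. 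You also omit a necessary reduction used in the paper: since only $\SpinC$-structures with $c_1(\spinc)=0$ persist past the first page of the spectral sequence, it suffices to verify grading preservation on those; this restriction is what makes the cobordism grading formula give a clean $\mp\tfrac{1}{2}$.
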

\begin{proof}
  First, to see that
  $\tau_{\#}$ preserves the absolute Maslov grading, note that the
  mapping cylinder of $\tau$ is a rational homology cobordism from
  $\Sigma(K)$ to itself with the property that the cobordism map
  $\CFa(\Sigma(K))\rightarrow \CFa(\Sigma(K))$ is $\tau_{\#}$. Such
  maps preserve the absolute Maslov grading~\cite[Theorem
  7.1]{OS06:HolDiskFour}.  (Alternatively, see the proof of
  Proposition~\ref{prop:D2m-grading}.)

  The quasi-isomorphisms
  induced by isotopies of the $A_i$ and $B_i$ and by handleslides can
  be thought of as maps induced by Heegaard moves on the branched
  double cover of the Heegaard diagram, which preserve the absolute
  Maslov grading; similarly, changes of complex structure induce
  grading-preserving isomorphisms.  For stabilization invariance,
  recall that since only $\SpinC$-structures $\spinc$ with
  $c_1(\spinc)=0$ persist past the first page of the spectral 
  sequence~\eqref{eq:new-dbc-sequence}, it suffices to prove that stabilization
  induces grading-preserving maps for those
  $\SpinC$-structures. Recall that the stabilization map is a
  composition
  \begin{equation}\label{eq:stab-on-hmlgy}
    \HFa(\Sigma(L))\to \HFa(\Sigma(L) \# (S^1\times S^2))\xrightarrow{g} \HFa(\Sigma(L)),
  \end{equation}
  where the first map is inclusion of the summand
  $\HFa(\Sigma(L))\otimes \Field\langle \ttop \rangle \subset
  \HFa(\Sigma(L))\otimes \Field\langle \ttop,\tbot\rangle$ and the
  second map is from the surgery exact sequence, and in particular is
  induced by a two-handle cobordism. By construction the first map of
  Formula~\eqref{eq:stab-on-hmlgy} raises the absolute Maslov grading
  by $\frac{1}{2}$. For the second map, let $W$ be the
  two-handle surgery cobordism from $\Sigma(L) \# (S^1 \times S^2)$
  to $\Sigma(L)$. The cobordism map $g=F_W$ breaks up as a
  sum of maps $\Sigma_{\spinct \in \Spinc(W)} F_{W,\spinct}$. Let
  $\spinc$ be a $\SpinC$-structure on $\Sigma(L)$, and let $\spinc_0$
  be the torsion $\SpinC$-structure on $S^1 \times S^2$. Let $\spinct$
  be the unique $\SpinC$-structure on $W$ restricting to $\spinc \#
  \spinc_0$. Then
  \[
  F_{W,\spinct} \co \HFa(\Sigma(L) \# (S^1 \times S^2), \spinc \#
  \spinc_0) \to \HFa(\Sigma(L), \spinc)
  \]
  is the only component of $F_W$ landing in $\HFa(\Sigma(L),
  \spinc)$. Since $c_1(\spinc)=0$, using the absolute Maslov grading
  formula for cobordism maps we see that $F_{W,\spinct}$ lowers the
  absolute Maslov grading by $\frac{1}{2}$~\cite[Theorem
  7.1]{OS06:HolDiskFour}.
  Thus overall our stabilization maps preserve the absolute Maslov
  grading.
\end{proof}

More is true:
\begin{proposition}\label{prop:dbc-hat-grading-concordance}
  The number $q_{\tau}(K)$ is a smooth concordance invariant.
\end{proposition}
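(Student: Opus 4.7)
The plan is to show that a smooth concordance $C$ from $K_0$ to $K_1$ induces an equivariant cobordism map on the branched double covers that preserves the absolute Maslov grading and restricts to an isomorphism on the $\theta$-localization; the invariance of $q_\tau$ then follows from its definition as the minimum grading in the $\Field[\theta]$-tower. Let $W = \Sigma(S^3\times[0,1], C)$ be the branched double cover of $S^3\times[0,1]$ over $C$; then $W$ is a cobordism from $\Sigma(K_0)$ to $\Sigma(K_1)$ carrying an involution $\widetilde\tau$ extending the deck transformations on both ends, and it is a $\QQ$-homology cobordism since $C$ is a concordance.

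I would construct an equivariant cobordism map $F_W\co \eCFa(\Sigma(K_0),\s_0)\to \eCFa(\Sigma(K_1),\s_0)$ from a $\widetilde\tau$-equivariant handle decomposition of $W$. The $1$- and $3$-handle maps are formally $\widetilde\tau$-equivariant. For each $2$-handle attached along a $\widetilde\tau$-invariant framed circle, one represents the attachment by a $\widetilde\tau$-equivariant Heegaard triple and builds an equivariant triangle map using a homotopy coherent $\ECat\ZZ/2$-family of almost complex structures on the corresponding triple-symmetric product, extending the bigon framework of Section~\ref{sec:equi-complex} to triangles in the natural way. This assembles into a chain map of $\Field[\ZZ/2]$-modules and, upon applying $\HomO{\Field[\ZZ/2]}(-,\Field)$, yields a chain map of $\Field[\theta]$-modules on the cochain level. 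Reversing the concordance gives a map $F_{\overline W}$ in the opposite direction.

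Next, I would verify that $F_W$ preserves the absolute Maslov grading. Since $W$ is a $\QQ$-HC, $b_2^\pm(W)=0$, and the extension $\spinct$ of $\spinc_0$ to $W$ satisfies $c_1(\spinct)=0$, so the usual grading-shift formula~\cite[Theorem~7.1]{OS06:HolDiskFour} vanishes; the equivariant pieces of $F_W$ are built from equally-graded continuation-type terms (as evidenced by the shape of the equivariant differential~\eqref{eq:diff-on-equicx}), so the total map is grading-preserving. After localizing at $\theta$, Proposition~\ref{prop:localized-equivariant} identifies $\theta^{-1}\eHFa(\Sigma(K_i),\s_0)$ with $\Field[\theta,\theta^{-1}]$; any $\Field[\theta,\theta^{-1}]$-linear, grading-preserving map between two free rank-one such modules is either zero or an isomorphism. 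To rule out the zero case I would invoke Seidel-Smith localization: the localized equivariant cohomology is computed by the Floer complex of the $\widetilde\tau$-fixed set, which for a knot consists of a single generator (Lemma~\ref{lem:fixed-discrete}), and the fixed-set restriction of the cobordism $W$ joins the unique fixed generator of $\Sigma(K_0)$ to that of $\Sigma(K_1)$. Hence the localized $F_W$ is the identity up to a unit, and in particular it is a grading-preserving isomorphism, forcing $q_\tau(K_0)=q_\tau(K_1)$.

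The main obstacle is the construction and functoriality of the equivariant $2$-handle map: one must extend the homotopy coherent framework from bigons on a pair of Lagrangians to triangles on a triple of Lagrangians, check that it behaves correctly under handle slides and stabilizations of the Heegaard triple data, and verify that the result assembles into a well-defined chain map of $\Field[\ZZ/2]$-modules independent of choices. The equivariant analogue of Seidel-Smith's restriction-to-fixed-set argument, used to show the localized map is nonzero, is likely the most delicate step beyond this construction; a fallback would be to argue directly via the composition $F_{\overline W}\circ F_W$, which corresponds to the cobordism $\overline W\cup_{\Sigma(K_1)} W$ and must be a $\theta$-localized isomorphism by essentially the same fixed-set argument applied to the self-cobordism of $\Sigma(K_0)$.
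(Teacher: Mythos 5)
Your high-level plan is the same as the paper's: construct equivariant cobordism maps on $\eHFa(\Sigma(-))$, show they preserve grading (using the $\QQ$-homology cobordism property of $\Sigma(T)$), show the localized map is nonzero, and conclude by symmetry. However, there is a genuine gap in the step where you assert that "the fixed-set restriction of the cobordism $W$ joins the unique fixed generator of $\Sigma(K_0)$ to that of $\Sigma(K_1)$," and that this makes the localized map an isomorphism. This is precisely the hard content of the paper's Lemma~\ref{lem:cob-or}, and it does not follow from Seidel-Smith localization alone. First, Seidel-Smith's theorem identifies the localized equivariant \emph{cohomology} of a closed object with the Floer cohomology of the fixed set; there is no off-the-shelf statement that a general equivariant cobordism map commutes with this identification. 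Second, even granting such a compatibility, one still must verify that the resulting map on fixed-set Floer complexes is nonzero, and this genuinely requires tracking what happens through each elementary cobordism (bridge move, birth, death, saddle). The paper does this via a Lee-homology-style bookkeeping of orientation-labelled fixed generators and quite twisty diagrams; nothing of that kind is present in your sketch. Your proposed fallback via $F_{\overline W}\circ F_W$ does not help, because that composite corresponds to a \emph{nontrivial} self-cobordism of $\Sigma(K_0)$ and showing it is a localized isomorphism requires exactly the same argument.

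There is also a subtler issue with your framing in terms of a $\widetilde\tau$-equivariant handle decomposition of $W$. A generic handle decomposition of the branched double cover $W$ is not $\widetilde\tau$-equivariant, and $2$-handles will generally not be attached along $\widetilde\tau$-invariant circles. The paper sidesteps this by decomposing the \emph{link cobordism} $C$ into bridge moves, births, deaths, and saddles; these lift to $\widetilde\tau$-equivariant $1$-, $2$-, and $3$-handle attachments upstairs essentially for free, with births/deaths giving formally equivariant maps and saddles handled by an equivariant triple bridge diagram. If you want to carry out your approach you need to build the equivariance into the handle decomposition in this way rather than assuming a generic equivariant decomposition exists. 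Finally, the paper's equivariant triangle map for saddles is not quite a routine extension of Section~\ref{sec:equi-complex}: it requires the neck-stretching/limit argument of Lemma~\ref{lem:hat-cob-maps} to see that the map on freed Floer complexes is induced by the ordinary Ozsv\'ath-Szab\'o triangle count, and this relies on the discreteness of the Lagrangian fixed sets (Lemma~\ref{lem:fixed-discrete}), a feature specific to this setup.
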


Before proving Proposition~\ref{prop:dbc-hat-grading-concordance} we must
develop the cobordism maps somewhat.

\begin{lemma}\label{lem:hat-cob-maps}
  Given an oriented cobordism $T$ from a link $L_0$ to a link $L_1$
  there is a map of equivariant Floer complexes
  $\eCFa(\Sigma(L_1))\to\eCFa(\Sigma(L_0))$ so that:
  \begin{itemize}
  \item The induced map on the $E^1$-page of the spectral sequence
    associated to the $\theta$-power filtration is
    \[
    (\hat{F}_{\Sigma(T)}^*\otimes\Id)\co \HFaDual(\Sigma(L_1))\otimes\Field[\theta]\to\HFaDual(\Sigma(L_0))\otimes\Field[\theta]
    \]
    where $\hat{F}_{\Sigma(T)}$ is the Ozsv\'ath-Szab\'o cobordism map associated to $\Sigma(T)$, and
  \item The map $\eCFa(\Sigma(L_1))\to\eCFa(\Sigma(L_0))$ has the same effect on gradings as the dual of the Ozsv\'ath-Szab\'o cobordism map $\hat{F}_{\Sigma(T)}$.
  \end{itemize}
\end{lemma}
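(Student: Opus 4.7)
The plan is to construct the cobordism map by decomposing $T$ into elementary pieces, realizing each one via a $\ZZ/2$-equivariant chain map on freed Floer complexes, then dualizing over $\Field[\ZZ/2]$ to obtain the required map on equivariant cochain complexes. First I would choose a generic Morse function on $T$ and decompose it as a composition of elementary cobordisms: births (index $0$), saddles (index $1$), and deaths (index $2$). Each elementary piece $T_i$ lifts to an elementary cobordism $\Sigma(T_i)$ between branched double covers: births and deaths correspond to adding or removing an unknotted component, so they realize $\Sigma(L)\#(S^2\times S^1)\leftrightarrow\Sigma(L)$; a saddle lifts to a $2$-handle cobordism whose attaching circle is preserved (setwise) by the covering involution.

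For births and deaths the work is essentially already done in the proof of stabilization invariance for Theorem~\ref{thm:sar-dcov-invt}: the $\ZZ/2$-equivariant splitting $\CFa(\HD_0)\cong\CFa(\HD)\otimes\Field\langle\ttop,\tbot\rangle$ together with the equivariant triangle map arising from the skein exact sequence gives the desired maps at the level of freed complexes. For a saddle, I would choose a bridge presentation adapted to the attaching arc and a Heegaard triple $(S^2,\alphas,\betas,\gammas,z)$ whose branched double cover $(\wt\Sigma,\wt\alphas,\wt\betas,\wt\gammas,\wt z)$ is $\ZZ/2$-equivariant with $(\wt\alphas,\wt\gammas)$ representing $\#^k(S^1\times S^2)$. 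Counting holomorphic triangles with the top-dimensional generator as the third input yields a chain map $\CFa(\wt{T}_\alpha,\wt{T}_\beta)\to \CFa(\wt{T}_\alpha,\wt{T}_\gamma)$ realizing the $2$-handle cobordism.

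The main obstacle is promoting these strict triangle maps to $\Field[\ZZ/2]$-equivariant chain maps between the freed Floer complexes $\ECF$, since equivariant transversality for the triple Lagrangians is not automatic. I would handle this exactly as in the proof of Proposition~\ref{prop:non-equi-invar}: introduce an enlarged target category $\Dat$ whose objects are pairs $(i,J)$ with $i\in\{0,1\}$ and $J$ a cylindrical almost complex structure on the relevant symmetric product, and whose morphisms from $(0,J)$ to $(1,J')$ include a distinguished level consisting of a family of almost complex structures parameterized by a three-punctured disk with the $\gammas$-torus as the extra boundary condition. A $\tau$-equivariant sufficiently generic homotopy coherent diagram $\ICat\times\ECat\ZZ/2\to\Dat$ extending generic diagrams on the two ends exists by the usual inductive argument, and its associated homotopy colimit gives a $\Field[\ZZ/2]$-equivariant chain map $\ECF(\wt T_\alpha,\wt T_\beta)\to\ECF(\wt T_\alpha,\wt T_\gamma)$. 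Composing the maps for all elementary pieces and dualizing via $\HomO{\Field[\ZZ/2]}(-,\Field)$ yields the desired map $\eCFa(\Sigma(L_1))\to\eCFa(\Sigma(L_0))$.

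The $E^1$-page identification is then automatic: the $\theta$-filtration on $\eCFa$ has associated graded $\CFaDual(\Sigma(L))\otimes\Field[\theta]$, and by construction the induced map on associated graded is the dual of the triangle-count chain map tensored with $\Id_{\Field[\theta]}$, which by the standard identification of $2$-handle maps with triangle counts agrees with $\widehat F_{\Sigma(T)}^*\otimes\Id$. The grading statement follows because the triangle-counting maps on the associated graded have the standard grading shifts governed by~\cite[Theorem 7.1]{OS06:HolDiskFour} applied to $\Sigma(T)$, and the identification $E^1\cong\HFaDual(\Sigma(L))\otimes\Field[\theta]$ preserves the absolute $\QQ$-grading by Proposition~\ref{prop:dbc-hat-grading-invariance}.
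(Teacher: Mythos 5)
Your overall plan—decompose $T$ into elementary pieces, realize each via a $\ZZ/2$-equivariant map between freed Floer complexes using the $\Dat$-type construction, then dualize over $\Field[\ZZ/2]$—is the right skeleton and matches the paper's structure. However, the claim that ``the $E^1$-page identification is then automatic'' is where the argument genuinely comes apart. In the $\Dat$ construction from Proposition~\ref{prop:non-equi-invar}, the filtration-preserving part of the resulting map on $\ECF$ is a triangle count taken with respect to a family of \emph{non-equivariant} almost complex structures and a \emph{perturbed} copy of $T_\alpha$; it is not on its face the Ozsv\'ath-Szab\'o triangle count $\hat{F}_{\Sigma(T)}$. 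The paper closes this gap with a degeneration argument: it fixes a $\ZZ/2$-equivariant almost complex structure for the two ends (possible since the fixed set is discrete, so Hypothesis~\ref{hyp:equivariant-transversality} holds), then lets the perturbations of the almost complex structures and of $T_\alpha$ tend to zero, uses Gromov compactness to match limits, shows the non-constant triangles are cut out transversally by the equivariant complex structure, and—crucially—shows the constant triangles sitting in the fixed set cause no trouble, which depends on the very specific perturbation of the $\gamma$-arcs shown in Figure~\ref{fig:saddle} (chosen so there are no negative-index triangles in the fixed locus and the index-$0$ constant triangles are transverse). Your proposal does not mention any of this, and without it the identification of the induced map on the associated graded with $\hat{F}^*_{\Sigma(T)}\otimes\Id$ is unjustified.

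Two secondary issues. First, the Heegaard triple arising from the bridge-diagram saddle is not subordinate to the surgery knot $C$; the paper must observe (Figure~\ref{fig:subordinate}) that it is strongly equivalent to a subordinate triple and invoke~\cite[Proof of Proposition~4.6]{OS06:HolDiskFour} before the triangle count can be read as the two-handle map at all. Your phrase ``the standard identification of $2$-handle maps with triangle counts'' silently assumes the subordinate form. Second, your description of the birth and death maps as ``equivariant triangle map[s] arising from the skein exact sequence'' conflates them with the stabilization-invariance argument; in the paper the birth and death cobordism maps are the simpler K\"unneth-type maps $x\mapsto x\otimes\ttop$ and $x\otimes\ttop\mapsto 0$, $x\otimes\tbot\mapsto x$, which are manifestly $\ZZ/2$-equivariant once the new unknot is placed adjacent to the basepoint. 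That part is easy, but it should be stated correctly.
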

\begin{proof}
  \begin{figure}
    \centering
    \includegraphics{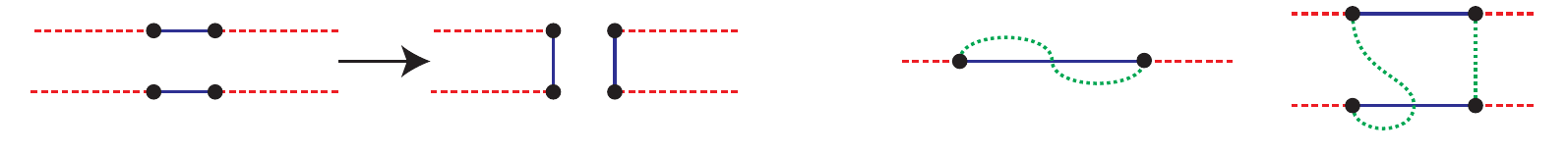}
    \caption{\textbf{A saddle cobordism.} Left: the cobordism
      itself. Right: the perturbations used in the triple diagram for
      the triangle map.}
    \label{fig:saddle}
  \end{figure}
  Any cobordism of links $T\subset [0,1]\times S^3$ from $L_0$ to $L_1$ can be viewed as a
  sequence of:
  \begin{itemize}
  \item Bridge moves,
  \item Births / deaths, i.e., creation or annihilation of a 1-bridge
    unknot disjoint from the rest of the bridge diagram, and
  \item Saddles, as shown in Figure~\ref{fig:saddle}.
  \end{itemize}
  (See also~\cite{Waldron:KhSympMaps}.) 
  We can also assume that the births and deaths occur in a region
  adjacent to the basepoint $p_{2n}$.

  As we saw in Theorem~\ref{thm:sar-dcov-invt}, bridge moves induce
  quasi-isomorphisms of $\wt{\CF}(\Sigma(L))$, or equivalently
  $\CFa(\Sigma(L))$, over $\Field[\ZZ/2]$; consequently, they induce maps
  of $\eCFa(\Sigma(L))$ over $\Field[\theta]$, and it is
  straightforward to check that these maps satisfy the conditions of
  the lemma.
  
  In the
  branched double cover, births correspond to $1$-handles, deaths to
  $3$-handles, and saddles to $2$-handles. The Ozsv\'ath-Szab\'o
  cobordism map associated to a 1-handle attachment is $x\mapsto
  x\otimes\ttop$, where $\ttop$ is the top-graded generator of
  $\CFa(S^2\times S^1)$ and the tensor product refers to the K\"unneth
  isomorphism $\CFa(\Sigma(K\amalg U))\cong
  \CFa(\Sigma(K))\otimes_{\Field}\CFa(S^2\times S^1)$~\cite[Section
  4.3]{OS06:HolDiskFour}. (Here, for the chain-level statement, it is
  important that the new unknot component is disjoint from the other
  components of the bridge diagram, and it is convenient that it is adjacent to $p_{2n}$.) Similarly, the Ozsv\'ath-Szab\'o
  cobordism map associated to a death is $x\otimes \ttop\mapsto 0$,
  $x\otimes \tbot\mapsto x$, where $\tbot$ is the bottom-graded
  generator of $\CFa(S^2\times S^1)$~\cite[Section
  4.3]{OS06:HolDiskFour}. Both of these maps are obviously
  $\ZZ/2$-equivariant, and so give maps to equivariant complexes.

  For saddles, there is a corresponding knot $C\subset \Sigma(L_0)$
  and the Ozsv\'ath-Szab\'o cobordism map is defined by considering a
  Heegaard triple diagram $(\Sigma,\alphas,\betas,\gammas)$ subordinate
  to $C$ and counting holomorphic triangles with one input the
  top-graded generator of $\HFa(\Sigma,\betas,\gammas)\cong
  (\Field\oplus\Field)^{\otimes (n-2)}$~\cite[Section
  4.1]{OS06:HolDiskFour}.

  Consider the bridge triple diagram for the saddle shown on the right
  of Figure~\ref{fig:saddle}. This diagram consists of three sets of
  arcs $\{A_i\}$, $\{B_i\}$, and $\{C_i\}$, so that $(\{A_i\},\{B_i\})$ is
  a diagram for $L_0$, $(\{A_i\},\{C_i\})$ is a diagram for $L_1$, the
  result of the saddle move, and $(\{B_i\},\{C_i\})$ is a diagram for
  an $(n-1)$-component unlink. If $B_i$ is not involved in the saddle
  move then we choose $C_i$ to be a small perturbation of $B_i$
  intersecting $B_i$ at the two endpoints and one interior point, so
  that there is a bigon from each of the two endpoints to the middle
  point. For the two $B_i$ involved in the saddle move, we do a
  perturbation as shown, so that, in particular, one of the
  corresponding $C_i$ intersects one of the $B_i$ at an interior
  point. Let $(\Sigma,\alphas,\betas,\gammas,z)$ denote the Heegaard
  triple diagram which is the branched double cover of this bridge
  triple diagram. The perturbations are chosen so that in the branched
  double cover, the top-graded generator for
  $\HFa(\Sigma,\betas,\gammas)$ is represented by the sum of all
  $2^{n-2}$ fixed generators. In particular, all of the fixed
  generators for $(\Sigma,\beta,\gamma)$ have the same grading.  By
  stabilization invariance, we may also assume that none of the arcs
  involved in the saddle is distinguished.

  Now, as in the proof of
  Proposition~\ref{prop:non-equi-invar}, 
  we can define a
  $\ZZ/2\ZZ$-equivariant triangle map betweeen the freed Floer complexes
  $\wt{\CF}(\Sigma(L_0))\to\wt{\CF}(\Sigma(L_1))$.  We claim that, for suitable
  choices of almost complex structures, this map is induced by a map
  of ordinary (not freed) Floer complexes
  $\CFa(\Sigma(L_0))\to\CFa(\Sigma(L_1))$. The argument is similar to
  the proof of Proposition~\ref{prop:equi-is-equi}. As in
  Proposition~\ref{prop:equi-is-equi}, to define the freed Floer
  complexes for $(\Sigma,\alphas,\betas)$ and
  $(\Sigma,\alphas,\gammas)$ we may choose all of the complex
  structures to agree with a given $\ZZ/2$-equivariant one, in which
  case the freed Floer complexes are just the ordinary Floer complexes
  tensored with the standard resolution for $\Field$ over
  $\Field[\ZZ/2]$.  Recall from Proposition~\ref{prop:non-equi-invar}
  that to define the map of freed Floer complexes we choose a
  collection of families of perturbed almost complex structures over
  the triangle, as well as a collection of perturbations of
  $T_\alpha$. Consider the limit as these families of complex
  structures approach the constant family, and the perturbations
  become constant. By Gromov compactness, the holomorphic triangles
  converge to a holomorphic triangle with sequences of bigons at the
  three corners.  Since the Lagrangians intersect only the discrete
  components of the fixed set, a generic $\ZZ/2$-equivariant almost
  complex structure achieves transversality for all moduli spaces of
  non-constant triangles and bigons. Hence, the only limits with more
  than one component and Maslov index $0$ must have a triangle with
  negative Maslov index contained in the fixed set. However, our
  choice of perturbations ensured that there are no negative-index
  holomorphic triangles contained in the fixed set. The index $0$,
  constant holomorphic triangles are transversely cut out. (To see
  this, note that the constant triangles correspond to small,
  non-constant holomorphic triangles in a nearby, non-equivariant
  diagram; this uses the particular form of the perturbation in
  Figure~\ref{fig:saddle}.) It follows that counting index $0$
  holomorphic triangles with respect to a generic, $\ZZ/2$-equivariant
  almost complex structure and unperturbed $T_\alpha$ gives the same
  map between freed Floer complexes as using a generic, nearby
  collection of perturbed almost complex structures and perturbations
  of $T_\alpha$. This map is induced by the count of index $0$
  holomorphic triangles between ordinary Floer complexes which agrees with the Ozsv\'ath-Szab\'o cobordism map.

  \begin{figure}
    \centering
    \includegraphics{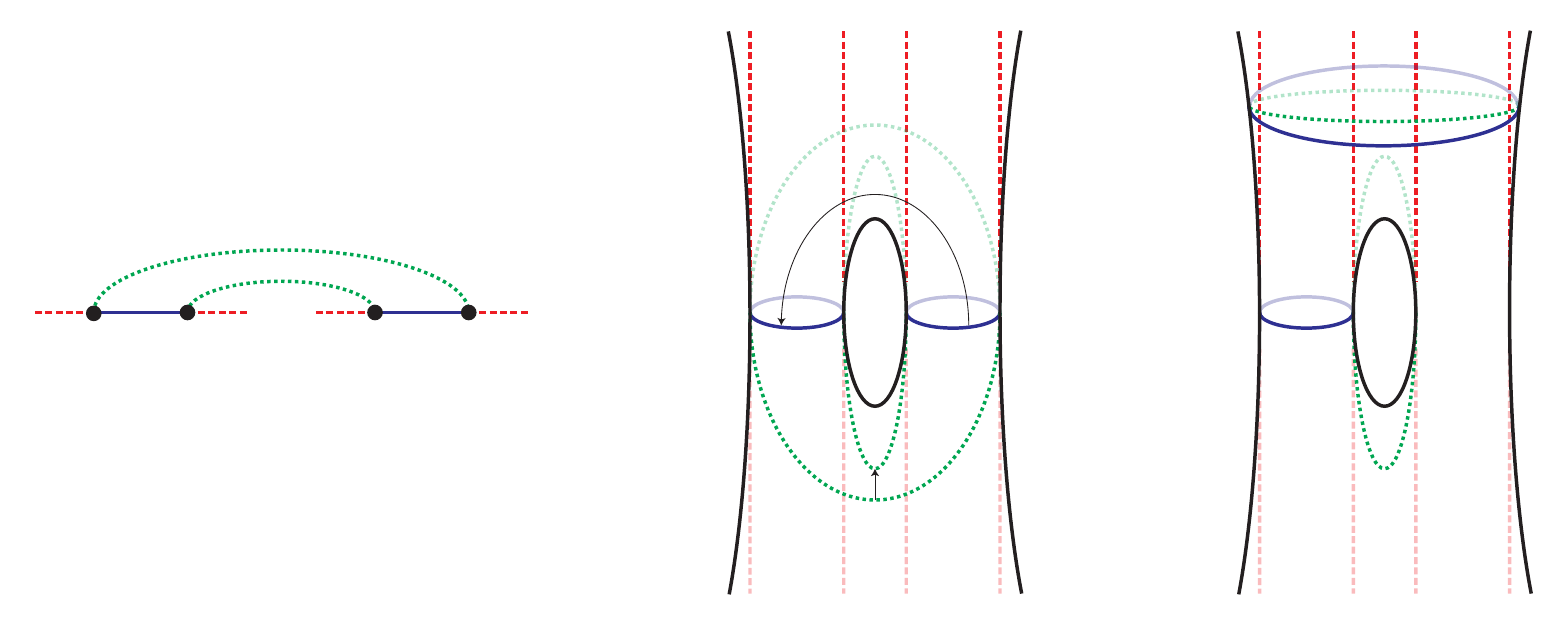}
    \caption{\textbf{Strong equivalence to a subordinate diagram.}
      Left: part of the bridge triple diagram where a saddle
      occurs. Center: the branched double cover of this
      diagram. Right: after performing handleslides according to the
      two small arrows in the center diagram, the resulting diagram is
      subordinate to a link.}
    \label{fig:subordinate}
  \end{figure}

  There is one last technical point for the saddle case.  The
  cobordism map on $\HFa$, which in this case corresponds to a
  2-handle attachment along a framed knot $C$, is defined using a
  Heegaard triple diagram subordinate to $C$.  The Heegaard triple
  diagram $(\Sigma,\alphas,\betas,\gammas)$ is not subordinate to $C$,
  but it is strongly equivalent to a Heegaard triple subordinate to
  $C$ (see Figure~\ref{fig:subordinate}), and so counting triangles in
  $(\Sigma,\alphas,\betas,\gammas)$ gives the Ozsv\'ath-Szab\'o cobordism
  map on $\HFa$~\cite[Proof of Proposition 4.6]{OS06:HolDiskFour}.
  This completes the argument for saddle cobordisms, and the proof.
\end{proof}

\begin{figure}
  \centering
  \includegraphics[scale=1.5]{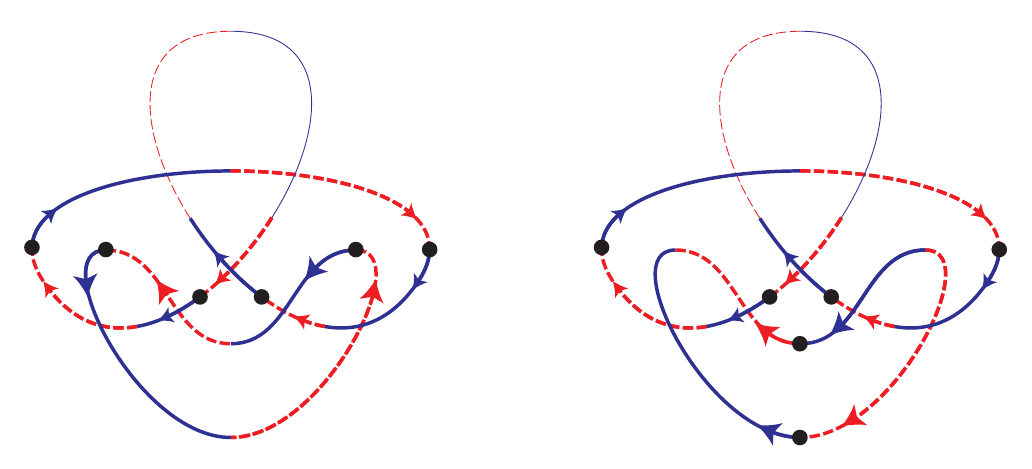}
  \caption{\textbf{Fixed generators and orientations.} The
    distinguished arcs on the based component are thin. Both choices
    of generators, corresponding to the two orientations of the
    unmarked component, are shown.}
  \label{fig:fixed-or}
\end{figure}

Next we study the interaction of the cobordism maps with
$\theta^{-1}\eHFa(\Sigma(L))$.  Recall that
$\theta^{-1}\eHFa(\Sigma(L))$ is identified with the Floer homology of
the $0$-dimensional Lagrangians $T_\alpha^\fix=T_\alpha\cap \Fix(\tau)$ and
$T_\beta^\fix=T_\beta\cap \Fix(\tau)$, tensored with $\Field[\theta,\theta^{-1}]$
(Proposition~\ref{prop:localized-equivariant}). To specify this
identification, it helps to work with a particular class of diagrams.
Call a diagram
\emph{quite twisty} if
\begin{itemize}
\item the fixed generators all lie in the same grading, and
\item any non-fixed generator has grading strictly lower than any
  fixed generator.
\end{itemize}
Any bridge diagram can be made quite twisty by performing
Reidemeister I moves at the endpoints of the arcs, which we will call
\emph{twisting} the diagram; see
Figure~\ref{fig:winding}. (A similar result in the context of symplectic Khovanov homology is Lemma~\ref{lem:ss-fixed-pts-subcx}; see that argument for some more details.) Figure~\ref{fig:or-cob} shows a quite
twisty diagram for the unknot.

Given a quite twisty diagram $(\{A_i\},\{B_i\})$ for $L$, the
fixed generators span a quotient complex of $\CFa(\Sigma(L))$, and
hence there is a chain inclusion
$\CFaDual(T_\alpha^\fix,T_\beta^\fix)\to \CFaDual(\Sigma(L))$ (over
$\Field[\ZZ/2]$). The induced map
\[
\theta^{-1}\eHFa(T_\alpha^\fix,T_\beta^\fix)=\HFaDual(T_\alpha^\fix,T_\beta^\fix)\otimes\Field[\theta,\theta^{-1}]
\to \theta^{-1}\eHFa(\Sigma(L))
\]
is an isomorphism; see Lemma~\ref{lem:trivial-localization}. The Floer
cohomology in the (discrete) fixed set is, of course, trivial, so each
fixed generator gives a homology class in the localized equivariant
Floer cohomology. The standard basis (of intersection points) for the
Floer homology of the fixed set corresponds to the set of orientations
of all of the unmarked components of $L$; see
Figure~\ref{fig:fixed-or}. That is, if $L_\bullet$ is the marked
component of $L$ then
\[
\theta^{-1}\eHFa(\Sigma(L))\cong \Field[\theta,\theta^{-1}]\langle \{\text{orientations of }L\setminus L_\bullet\}\rangle.
\]
(Note that this is the same as the Lee
homology of $L\setminus L_\bullet$~\cite[Theorem
4.2]{Lee05:Khovanov}.) Since the two arcs on the marked
component $L_\bullet$ are adjacent, $L_\bullet$ also
inherits an orientation.

By a \emph{based cobordism} between links $L_0$ and $L_1$ we mean a
cobordism $T\subset [0,1]\times S^3$ so that the projection $\pi\co T\to
[0,1]$ is a Morse function, together with a section $\gamma\co
[0,1]\to T$ of $\pi$ so that
$\Image(\gamma)$ is disjoint from $\Crit(\pi)$. In particular, every
regular slice of a based cobordism is a based link.

\begin{lemma}\label{lem:cob-or}
  Given any connected, oriented, based cobordism $T$ between knots
  $K_0,K_1\subset S^3$, for suitable choices of auxiliary data the map
  \[
  \hat{f}_{\Sigma(T)}^*\co \theta^{-1}\eHFa(\Sigma(K_1))\to\theta^{-1}\eHFa(\Sigma(K_0))
  \]
  induced by the map from Lemma~\ref{lem:hat-cob-maps} is an isomorphism.
\end{lemma}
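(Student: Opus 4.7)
The plan is to show that $\hat{f}^*_{\Sigma(T)}$ is nonzero by decomposing $T$ into elementary pieces and analyzing each. Since $K_0$ and $K_1$ are knots, Lemma~\ref{lem:fixed-discrete} gives $|T_\alpha^\fix\cap T_\beta^\fix|=2^0=1$ at each end, so by the identification preceding the lemma, $\theta^{-1}\eHFa(\Sigma(K_i))\cong\Field[\theta,\theta^{-1}]$ is free of rank one over $\Field[\theta,\theta^{-1}]$. Thus any grading-preserving map between them is an isomorphism iff it is nonzero.

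Following the decomposition in the proof of Lemma~\ref{lem:hat-cob-maps}, I would express $T$ as a composition of bridge moves, births, deaths, and saddles, with the basepoint section $\gamma$ kept disjoint from the critical points so that births and deaths occur adjacent to the distinguished arcs. Bridge moves induce $\Field[\ZZ/2]$-equivariant quasi-isomorphisms on $\CFa(\Sigma(L))$ by Theorem~\ref{thm:sar-dcov-invt}, hence induce isomorphisms on $\theta^{-1}\eHFa$. For births and deaths, the maps on $\CFa$ are, respectively, $x\mapsto x\otimes \ttop$ and the projection $x\otimes\tbot\mapsto x$, $x\otimes\ttop\mapsto 0$, and in a quite twisty diagram for the new unknot component both $\ttop$ and $\tbot$ are fixed intersection points corresponding to the two orientations of that component. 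One checks that under the identification of $\theta^{-1}\eHFa$ with the free module on orientations of $L\setminus L_\bullet$, these maps are nonzero on the appropriate generator (adding or deleting a compatible orientation of the new unknot).

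For saddles, I would use the triple diagram $(\Sigma,\alphas,\betas,\gammas)$ of Lemma~\ref{lem:hat-cob-maps}, twisted if necessary to be quite twisty, and count Maslov-index-zero holomorphic triangles whose three corners lie in $T_\alpha^\fix$, $T_\beta^\fix$, $T_\gamma^\fix$. A $\ZZ/2$-equivariant almost complex structure on $\Sym^{n-1}(\Sigma\setminus\{z\})$ achieving transversality away from the fixed set, combined with the perturbation from Figure~\ref{fig:saddle}, guarantees that the only contributing triangles have all three corners in the discrete fixed set and are local small triangles near the saddle region. The combinatorics of these local counts reproduces the Khovanov--Lee saddle formula: a merge saddle sends pairs of orientations that agree at the saddle to the merged orientation and kills the rest, while a split saddle sends an orientation to the sum over the two compatible orientations of the resolved components.

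Since $T$ is connected and oriented, the orientation of $T$ induces a coherent choice of orientations on every regular slice, and this coherent family of orientations gives a nonzero element of the fixed-generator subcomplex at each stage that is preserved by every elementary cobordism map computed above. Composing, $\hat{f}^*_{\Sigma(T)}$ sends the generator of $\theta^{-1}\eHFa(\Sigma(K_1))$ to a nonzero multiple of the generator of $\theta^{-1}\eHFa(\Sigma(K_0))$, hence is an isomorphism. The main obstacle will be the saddle triangle count: verifying that, for a suitable choice of equivariant almost complex structure and a suitably perturbed quite twisty triple diagram, the triangle count on the fixed-generator subcomplex localizes to small triangles and matches the combinatorial Lee-style formula, so that the coherent orientation is preserved under composition rather than killed.
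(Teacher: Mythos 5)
Your proposal follows the same strategy as the paper's proof: decompose $T$ into bridge moves, births, deaths, and saddles; identify $\theta^{-1}\eHFa(\Sigma(L))$ for an intermediate slice $L$ with $\Field[\theta,\theta^{-1}]$ tensored with the orientations of $L\setminus L_\bullet$, working in quite twisty diagrams so this localization is transparent; and conclude via the Rasmussen--Lee coherent-orientation argument that the composite is nonzero, hence an isomorphism between rank-one free $\Field[\theta,\theta^{-1}]$-modules.

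One factual slip would cause trouble if carried through, namely the assertion that in a quite twisty diagram for the new unknot component ``both $\ttop$ and $\tbot$ are fixed intersection points corresponding to the two orientations.'' In fact both fixed intersection points $o_1,o_2$ lie in the same (top) grading, and what they represent is $\ttop = [o_1+o_2]$; the class $\tbot$ lives one grading down and is represented by a \emph{non}-fixed generator. This is precisely what makes the induced maps symmetric in $o_1\leftrightarrow o_2$: on localized equivariant cohomology the birth sends both $x^*\otimes o_1^*$ and $x^*\otimes o_2^*$ to $\theta^{-m}x^*$, and the death sends $x^*$ to $\theta^{-m}x^*\otimes(o_1^*+o_2^*)$ (the latter requires an explicit homology in the localized equivariant complex transporting $\tbot^*$ up to the fixed-generator grading, since $\tbot$ is not itself fixed). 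If $\ttop$ and $\tbot$ were a single fixed orientation generator each, as you claim, the birth would kill one of the two orientations and the death would hit only one, and the coherent orientation from an arbitrary orientation of $T$ could then be annihilated depending on an arbitrary labeling choice. The symmetry is what makes the argument run for every oriented connected $T$, so the ``one checks'' in your sketch hides the real content of the birth/death verification.
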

(We will not show that $\hat{f}_{\Sigma(T)}$ or the induced map on
equivariant cohomology is independent of the choices in its
construction.)
\begin{proof}
  \begin{figure}
    \centering
    \includegraphics{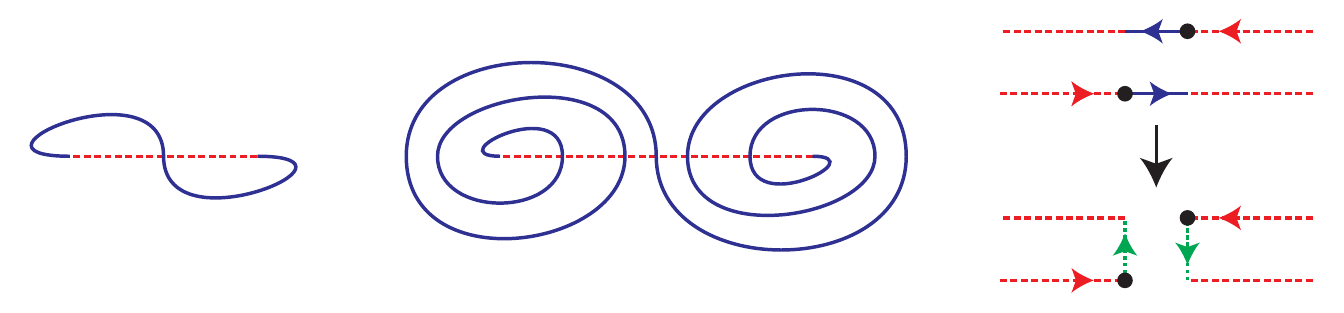}
    \caption{\textbf{Orientations and cobordisms.} Left: a
      quite twisty diagram for an unknot component. Center: an even more twisty unknot. Right:
      orientations and generators for an oriented saddle.}
    \label{fig:or-cob}
  \end{figure}
  
  We will prove the following:
  \begin{enumerate}
  \item\label{item:twisty-1} Given a diagram $L_0$ which has been
    twisted sufficiently and an isotopic link $L_1$, for some diagram
    of $L_1$ the Heegaard Floer continuation map $\CFa(\Sigma(L_0))\to
    \CFa(\Sigma(L_1))$ associated to the isotopy is such that the
    induced map
    \begin{multline*}
    \Field[\theta,\theta^{-1}]\langle \{\text{orientations of }L_1\setminus L_{1,\bullet}\}\rangle \cong \theta^{-1}\eHFa(\Sigma(L_1))\\
    \to
    \theta^{-1}\eHFa(\Sigma(L_0))\cong \Field[\theta,\theta^{-1}]\langle \{\text{orientations of }L_0\setminus L_{0,\bullet}\}\rangle 
    \end{multline*}
    agrees with the identification of sets of orientations induced by
    the isotopy. Further, by twisting $L_0$ the corresponding diagram
    $L_1$ can be made arbitrarily twisted.
  \item If $L_1$ is obtained from $L_0$ by a birth, $L_0$ is quite
    twisty, the new unknot component in $L_1$ is as on the left of
    Figure~\ref{fig:or-cob}, and $o_1,o_2$ are the two orientations of
    the new unknot component of $L_1$, then the cobordism map
    \[
    \theta^{-1}\eHFa(\Sigma(L_1))\cong \theta^{-1}\eHFa(\Sigma(L_0))\otimes\Field\langle o_1,o_2\rangle \to \theta^{-1}\eHFa(\Sigma(L_0))
    \]
    from Lemma~\ref{lem:hat-cob-maps}
    is $x\otimes o_1\mapsto x$, $x\otimes o_2\mapsto x$. More
    generally, for $U$ as in the middle of Figure~\ref{fig:or-cob} the
    cobordism map is $x\otimes o_1\mapsto \theta^{-m} x$, $x\otimes
    o_2\mapsto \theta^{-m} x$ for some $m$.
  \item If $L_1$ is obtained from $L_0$ by a death, and $L_0$ and
    $L_1$ are quite twisty, and $o_1,o_2$ are the two
    orientations of the dying unknot component of $L_0$, then the cobordism map 
    \[
    \theta^{-1}\eHFa(\Sigma(L_1))\to
    \theta^{-1}\eHFa(\Sigma(L_1))\otimes\Field\langle
    o_1,o_2\rangle\cong \theta^{-1}\eHFa(\Sigma(L_0))
    \]
    from Lemma~\ref{lem:hat-cob-maps}
    is $x\mapsto \theta^{-m}x\otimes(o_1+o_2)$ for some $m$.
  \item\label{item:twisty-4} If $S$ is a saddle cobordism from $L_0$
    to $L_1$ compatible with an orientation $o_0$ for $L_0$ and $o_1$
    for $L_1$ then for quite twisty diagrams for $L_0$ and
    $L_1$ as in Figure~\ref{fig:saddle}, the induced map $\theta^{-1}\eHFa(\Sigma(L_1))\to
    \theta^{-1}\eHFa(\Sigma(L_0))$ from Lemma~\ref{lem:hat-cob-maps} sends $o_1$ to $o_0$.
  \end{enumerate}
  If we start with a diagram which has been twisted enough, and the
  births we use are also twisted enough, then we can arrange that all intermediate
  diagrams during the cobordism 
  satisfy the
  conditions in
  Points~(\ref{item:twisty-1})--(\ref{item:twisty-4})
  during the corresponding moves.  The result then follows as in
  the case of Lee homology~\cite[Corollary 4.2]{Rasmussen10:s}.

  We start with Point~(\ref{item:twisty-1}), about bridge moves.  For
  stabilizations near $p_{2n}$, Point~(\ref{item:twisty-1}) is
  obvious. Next, consider an isotopy or handleslide; for concreteness,
  in the case of a handleslide assume it is of a $B$-arc over another
  $B$-arc.  Choose a triple diagram $(\{A_i\},\{B_i\},\{C_i\})$ for
  the isotopy or handleslide so that $(\{A_i\},\{C_i\})$ and
  $(\{B_i\},\{C_i\})$ are also quite twisty, the top-graded generator
  of $\HFa(T_\beta,T_\gamma)$ is in the same grading as the fixed
  generators $T^\fix_\beta\cap T^\fix_\gamma$, and there is a Maslov
  index zero triangle connecting some fixed generators
  $\x_{\alpha,\beta}\in T^\fix_\alpha\cap T^\fix_\beta$,
  $\x_{\beta,\gamma}\in T^\fix_\beta\cap T^\fix_\gamma$, and
  $\x_{\alpha,\gamma}\in T^\fix_\alpha\cap T^\fix_\gamma$.  This can
  be arranged by twisting the diagram $(\{A_i\},\{B_i\})$ sufficiently
  and then choosing $C_i$ to intersect the corresponding $B_i$ in one
  interior point (and the two endpoints). (In visualizing the
  handleslide, it may be helpful to visualize the $B$-arcs as straight
  and the $A$-arcs as twisty.)

 Note that each of $T_\alpha^\fix$, $T_\beta^\fix$ and
  $T_\gamma^\fix$ consists of $2^{n-1}$ points, and by the hypothesis,
  the top-graded generator of $\CFa(T_\beta,T_\gamma)$ is
  $\x_{\mathit{top}}\coloneqq \sum_{\x\in T_\beta^\fix\cap
    T_\gamma^\fix}\x$.

  Under the correspondences $\theta^{-1}\eHFa(\Sigma(L_1))\cong
  \HFaDual(T_\alpha^\fix,T_\gamma^\fix)$ and
  $\theta^{-1}\eHFa(\Sigma(L_0))\cong
  \HFaDual(T_\alpha^\fix,T_\beta^\fix)$ between the localized equivariant
  Floer cohomology and the Floer cohomology in the fixed set, we claim
  that the map $\hat{f}_{\Sigma(T)}^*$ corresponds to the map which
  counts triangles in the fixed set. To see this, note that the
  triangle map is graded and, via small triangles, sends some fixed
  generators to fixed generators. Thus, the triangle map sends the
  grading containing the fixed generators to the grading containing
  the fixed generators, and hence fixed generators of
  $\CF(T_\alpha,T_\beta)$ map only to fixed generators of
  $\CF(T_\alpha,T_\gamma)$. Any triangles not entirely contained in
  the fixed set connecting fixed generators to fixed generators occur
  in pairs (via the $\ZZ/2$-action), and hence do not contribute to
  the count with $\Field$ coefficients. Thus, the map on fixed
  generators counts only triangles in the fixed set. It follows from
  the form of $\x_{\mathit{top}}$ that this map is the
  identity on $T_\alpha^\fix\cap T_\beta^\fix=T_\alpha^\fix\cap
  T_\gamma^\fix$. Tracing through the proof of
  Lemma~\ref{lem:trivial-localization} then proves the first statement.

  The proof of the fourth statement, about saddle moves, is similar.
  We use the triple diagram from the proof of
  Lemma~\ref{lem:hat-cob-maps} (Figure~\ref{fig:saddle}), with the
  further requirement that both $(\{A_i\},\{B_i\})$ and
  $(\{A_i\},\{C_i\})$ are quite twisty; $\{B_i,C_i\}$ is
  quite twisty by construction.  Again, the map
  $\CF(T_\alpha^\fix,T_\beta^\fix)\to
  \CF(T_\alpha^\fix,T_\gamma^\fix)$ on the fixed generators
  corresponds to counting triangles in the fixed set where the
  $\beta$-$\gamma$ corner maps to any point in $T_\beta^\fix\cap
  T_\gamma^\fix$. As illustrated in Figure~\ref{fig:or-cob}, it
  follows that the map respects the identification with orientations.

  Finally, we turn to births and deaths. First, for births, suppose
  that $L_1=L_0\amalg U$ where $U$ is the quite twisty diagram for the
  unknot shown on the left in Figure~\ref{fig:or-cob}, and the
  disjoint union takes place in a region adjacent to the basepoint
  $p_{2n}$. The map $\CFa(\Sigma(L_0))\to \CFa(\Sigma(L_1))$ sends $x$
  to $x\otimes\ttop$. Note that $\ttop$ is the sum of the two fixed
  generators for $U$. Dualizing, the map $\CFaDual(\Sigma(L_1))\to
  \CFaDual(\Sigma(L_0))$ sends $x\otimes o_i$ to $x$.

  More generally, in order to ensure that later diagrams are quite
  twisty, we may need to create the unknot $U'$ shown in the center of
  Figure~\ref{fig:or-cob}, rather than the left.  For this diagram,
  the complex $\CFa(\Sigma(U'))$ has the form:
  \[
  \xymatrix{
    & a\ar[dl]\ar[dr] & & & b\ar[dl]\ar[dr] & \\
    u_{m-1}\ar[d]\ar[drr] & & v_{m-1}\ar[d]\ar[dll] & x_{m-1}\ar[d]\ar[drr] & & y_{m-1}\ar[d]\ar[dll]\\
    u_{m-2}\ar[d]\ar[drr] & & v_{m-2}\ar[d]\ar[dll] & x_{m-2}\ar[d]\ar[drr] & & y_{m-2}\ar[d]\ar[dll]\\
    \vdots\ar[d]\ar[drr] & & \vdots\ar[d]\ar[dll] & \vdots\ar[d]\ar[drr] & & \vdots\ar[d]\ar[dll]\\
    u_1\ar[dr]\ar[drrrr] & & v_1\ar[dl]\ar[drr] & x_1\ar[dr]\ar[dll] & & y_1\ar[dl]\ar[dllll]\\
    & s & & & t & 
  }
  \]  
  The map $\tau$ exchanges $u_i\leftrightarrow v_i$,
  $x_i\leftrightarrow y_i$, and $s\leftrightarrow t$.  The fixed
  generators are the top two, $a$ and $b$, but the homology is
  supported in the bottom two gradings.  
  
  We are interested in the composition of the birth map with the
  twisting isotopy from the slightly twisty unknot $U$ to the
  arbitrarily twisty unknot $U'$. Call this composition $\phi\co
  \eCFa(\Sigma(L_1))=\eCFa(\Sigma(L_0))\otimes_{\Field[\theta]}\eCFa(\Sigma(U'))\to
  \eCFa(\Sigma(L_0))$. (Here, we are using the standard equivariant
  cochain complex, Formula~(\ref{eq:standard-res-rhom}).) Since $U'$
  is adjacent to the basepoint, $\phi$ must be induced by a projection
  map $\pi\co \eCFa(\Sigma(U'))\to \Field[\theta,\theta^{-1}]$.  Since
  on the $E^1$-page of the spectral sequence associated to the
  $\theta$-power filtration, $\pi$ agrees with the dual of the
  Heegaard Floer cobordism map (see Lemma~\ref{lem:hat-cob-maps}),
  $\pi$ must send $x\otimes(u_1^*+v_1^*)$ and $x\otimes (x_1^*+y_1^*)$
  to $x$.  In the localized equivariant complex, there are homologies
  \begin{align*}
    u_i^*+v_i^*&\sim \theta^{-1}(u_{i+1}^*+v_{i+1}^*) & 
    x_i^*+y_i^*&\sim \theta^{-1}(x_{i+1}^*+y_{i+1}^*) \\
    u_{m-1}^*+v_{m-1}^*&\sim \theta^{-1}a^* & x_{m-1}^*+y_{m-1}^*&\sim \theta^{-1}b^*.
  \end{align*}
  The generators $a^*$ and $b^*$ represent the two orientations, so the
  birth map must send $x\otimes o_i$ to $\theta^{1-m}x$, as desired.

  Deaths are similar. Suppose $L_0=L_1\amalg U$ where $U$ is a quite
  twisty diagram for the unknot. Again, we may assume that $U$ has the
  form shown in the middle of Figure~\ref{fig:or-cob}, and is adjacent
  to the basepoint $p_{2n}$.  The Ozsv\'ath-Szab\'o map
  $\CFa(\Sigma(L_0))\to \CFa(\Sigma(L_1))$ sends $x\otimes \tbot$ to
  $x$ and other generators to $0$.  Thus, the cobordism map
  $\eCFa(\Sigma(L_1))\to \eCFa(\Sigma(L_0))\cong
  \eCFa(\Sigma(L_1))\otimes_{\Field[\theta]}\eCFa(\Sigma(U'))$ must
  send $x$ to $x\otimes(s^*+t^*)$. In the localized equivariant
  complex, $s^*+t^*$ is homologous to
  $\theta^{-1}(u_1^*+v_1^*+x_1^*+y_1^*)$ and hence to
  $\theta^{-m}(a^*+b^*)$, so the map sends $x$ to $x\otimes
  \theta^{-m}(a^*+b^*)=\theta^{-m}x\otimes(o_1+o_2)$, as desired.
\end{proof}

\begin{proof}[Proof of Proposition~\ref{prop:dbc-hat-grading-concordance}]
  With Lemmas~\ref{lem:hat-cob-maps} and~\ref{lem:cob-or}, the proof
  follows the same basic structure as the proof that $s$ is a
  concordance invariant~\cite[Theorem 1]{Rasmussen10:s}. Recall
  that
  \[
  q_\tau(K)=2\min\{\gr(x)\mid x\in \eHFa(\Sigma(K))/\tors\},
  \]
  where $\tors$ denotes the subspace annihilated by $\theta^m$ for
  some $m$. (Note that $\tors$ is the sum of its homogeneous pieces,
  since the equation $\theta^mx=d(y)$ implies that each graded piece
  of $x$ is also a boundary, so $\eHFa(\Sigma(K))/\tors$ inherits a
  grading from $\eHFa(\Sigma(K))$.)

  Suppose that $T$ is a concordance between knots $K_0$ and $K_1$. By
  Lemma~\ref{lem:hat-cob-maps}, there is a corresponding map $g_{\Sigma(T)}^{\ZZ/2}\co
  \eHFa(\Sigma(K_1))\to \eHFa(\Sigma(K_0))$ which by Lemma~\ref{lem:cob-or} induces an injective map 
  \[
  \Field[\theta]\cong \eHFa(\Sigma(K_1))/\tors\to
  \eHFa(\Sigma(K_0))/\tors\cong \Field[\theta].
  \]
  Since $\Sigma(T)$ is a rational homology cobordism between rational homology spheres,
  the map $g_{\Sigma(T)}$ preserves the absolute Maslov grading. Now, if $x$ is a
  non-zero element of the module $\eHFa(\Sigma(K_1))/\tors$ in minimal grading
  then its image $g_{\Sigma(T)}^{\ZZ/2}(x)$ is a non-zero element of
  $\eHFa(\Sigma(K_0))/\tors$, and so:
  \[
  q_\tau(K_1)=2\gr(x)=2\gr(g_{\Sigma(T)}(x))\geq q_\tau(K_0).
  \]
  Applying the same argument with the roles of $K_0$ and $K_1$
  exchanged gives the opposite inequality, so
  $q_\tau(K_0)=q_\tau(K_1)$, as desired.
\end{proof}

\begin{proposition}\label{prop:q-homo}
  The number $q_\tau$ satisfies
  $q_\tau(K_1\#K_2)=q_\tau(K_1)+q_\tau(K_2)$
  and $q_\tau(m(K))=-q_\tau(K)$.
\end{proposition}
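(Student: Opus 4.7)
The strategy is to first establish the additivity statement $q_\tau(K_1 \# K_2) = q_\tau(K_1) + q_\tau(K_2)$; the mirror statement will then follow formally from the fact that $K \# m(K)$ is smoothly slice. The key geometric input for additivity is the equivariant diffeomorphism $\Sigma(K_1 \# K_2) \cong \Sigma(K_1) \# \Sigma(K_2)$, under which the branched covering involution on the connect sum restricts to the covering involutions on the two summands. I would realize this concretely by choosing bridge diagrams for $K_1$ and $K_2$ sharing the distinguished endpoint $p_{2n}$ and joining them in a neighborhood of this point; the resulting branched double cover diagram is the connect sum of the branched double cover diagrams for the individual summands, with a compatible $\ZZ/2$-action.

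From this one obtains a $\ZZ/2$-equivariant K\"unneth quasi-isomorphism
\[
\CFa(\Sigma(K_1 \# K_2)) \simeq \CFa(\Sigma(K_1)) \otimes_\Field \CFa(\Sigma(K_2))
\]
over $\Field[\ZZ/2]$, with diagonal action on the right-hand side, by stretching the neck at the connect-sum region; the standard Ozsv\'ath-Szab\'o connect-sum argument applies, with the advantage that almost complex structures on the stretched symmetric product can be chosen $\ZZ/2$-equivariantly so that all degenerations respect the action. I would then apply a localized K\"unneth formula for equivariant cohomology with diagonal action,
\[
\theta^{-1}\eH^*(A \otimes_\Field B) \cong \theta^{-1}\eH^*(A) \otimes_{\Field[\theta,\theta^{-1}]} \theta^{-1}\eH^*(B),
\]
which follows from exactness of localization together with the fact that $\theta^{-1}\eH^*$ may be computed by a suitable tensor product of resolutions. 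By Proposition~\ref{prop:localized-equivariant}, each factor $\theta^{-1}\eHFa(\Sigma(K_i))$ is isomorphic to $\Field[\theta,\theta^{-1}]$ as a graded $\Field[\theta,\theta^{-1}]$-module, and tracking absolute gradings via Proposition~\ref{prop:dbc-hat-grading-invariance} identifies the generator as lying in grading $q_\tau(K_i)/2$. The tensor product generator then sits in grading $q_\tau(K_1)/2 + q_\tau(K_2)/2$, which equals the minimum grading of a non-torsion class in $\eHFa(\Sigma(K_1 \# K_2))$, proving additivity.

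For the mirror statement, $K \# m(K)$ bounds the obvious ribbon disk in $B^4$ and in particular is concordant to the unknot; Proposition~\ref{prop:dbc-hat-grading-concordance} then yields $q_\tau(K \# m(K)) = q_\tau(\unknot) = 0$, and the additivity just established forces $q_\tau(m(K)) = -q_\tau(K)$. The main obstacle will be verifying the two K\"unneth identifications: the geometric one requires ensuring the neck-stretching argument is genuinely $\ZZ/2$-equivariant (which should be tractable because the connect-sum region is disjoint from the fixed set), while the algebraic one requires some care in tracking absolute gradings through the tensor product. Working throughout in localized cohomology, where each $\theta^{-1}\eHFa(\Sigma(K_i))$ is a rank-one free module over $\Field[\theta,\theta^{-1}]$, appears to be the cleanest route, as it avoids having to choose compatible tower-plus-torsion splittings of $\eHFa(\Sigma(K_i))$ at the unlocalized level.
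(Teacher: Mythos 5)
Your geometric setup is the same as the paper's (connected sum of bridge diagrams at the marked endpoints, yielding a $\ZZ/2$-equivariant K\"unneth isomorphism $\CFa(\Sigma(K_1))\otimes_\Field\CFa(\Sigma(K_2))\cong\CFa(\Sigma(K_1\#K_2))$ with diagonal action), and your concordance-theoretic argument for the mirror claim is a valid alternative to the paper's direct dualization. But there is a real gap in the additivity step, and it is exactly the step you flag as the ``cleanest route.''

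The gap is this: $q_\tau(K)$ is defined from the \emph{unlocalized} equivariant cohomology $\eHFa(\Sigma(K))$, as (twice) the minimum grading in $\eHFa(\Sigma(K))/\mathrm{tors}$, i.e.\ the bottom of the $\theta$-tower. But the \emph{localized} module $\theta^{-1}\eHFa(\Sigma(K))\cong\Field[\theta,\theta^{-1}]$ is, as a graded $\Field[\theta,\theta^{-1}]$-module, the same for every knot: it has a generator in every grading and no distinguished one, so it carries no information about $q_\tau$. The data that determines $q_\tau$ is the \emph{image} of the map $\eHFa\to\theta^{-1}\eHFa$, which your localized K\"unneth statement simply does not see. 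Concretely, your sentence ``identifies the generator as lying in grading $q_\tau(K_i)/2$'' has no content in $\Field[\theta,\theta^{-1}]$, and the conclusion ``which equals the minimum grading of a non-torsion class in $\eHFa(\Sigma(K_1\#K_2))$'' does not follow from anything localized.

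The deeper subtlety, which the paper confronts head-on, is that the equivariant cochain complex $\eCFa(\Sigma(K_1\#K_2))$ (with the diagonal $\ZZ/2$-action) is \emph{not} the tensor product $\eCFa(\Sigma(K_1))\otimes_{\Field[\theta]}\eCFa(\Sigma(K_2))$ with its tensor-product differential. The two differentials differ by the term $(1+\tau)x\otimes(1+\tau)y$ --- this is the familiar $\theta_1+\theta_2$ versus $\theta_1\theta_2$ issue for the diagonal $\ZZ/2\subset\ZZ/2\times\ZZ/2$. The paper resolves this with an explicit chain isomorphism $\phi(\theta^i x\otimes y)=\theta^i x\otimes\tau^i(y)$ that intertwines the two differentials, and then observes that $\phi$ is $\Field[\theta^2]$-equivariant but not $\Field[\theta]$-equivariant; $\Field[\theta^2]$-equivariance is enough to preserve $\theta$-torsion (hence the tower), and the universal coefficient theorem over $\Field[\theta]$ then identifies the towers up to grading. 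That is the argument you would need to supply; the localized K\"unneth formula, even if true, cannot substitute for it. If you prefer to argue at the level of $E_\infty$-pages rather than through $\phi$, you would still have to keep track of the unlocalized $\theta$-power filtration, not just the Tate object.

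Your treatment of the mirror statement, granting additivity, is fine: $K\#m(K)$ is smoothly slice, so Proposition~\ref{prop:dbc-hat-grading-concordance} gives $q_\tau(K\#m(K))=0$, and additivity does the rest. The paper instead dualizes the complex directly (exchanging $A$- and $B$-arcs dualizes $\CFa(\Sigma(K))$ over $\Field$, hence dualizes $\eCFa$ over $\Field[\theta]$), which is logically independent of additivity. Your route is shorter but relies on the concordance invariance machinery; either is acceptable.
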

\begin{proof}
  We start with connected sums.  Fix bridge diagrams
  $(\{A_i\},\{B_i\})$ and $(\{A'_i\},\{B'_i\})$ for $K_1$ and $K_2$,
  respectively. Let $A_n$ and $B_n$ (respectively $A'_1$ and $B'_1$)
  be the arcs incident to a marked point $p_{2n}$ (respectively
  $p'_{1}$). Taking the connected sum of the bridge diagrams at
  $p_{2n}$ and $p'_1$ gives a bridge diagram for $K_1\# K_2$, with
  arcs $A_1,\dots,A_{n-1},A_n\#_bA'_1,A'_2,\dots,A'_m$ and
  $B_1,\dots,B_{n-1},B_n\#_bB'_1,B'_2,\dots,B'_m$. (See
  Figure~\ref{fig:con-sum-bridge}; $A_n\#_bA'_1$ and $B_n\#_bB'_1$ are
  boundary connected sums with respect to boundary points $p_{2n}$ and
  $p'_1$.) If we choose $A_n$ and $B_n$
  as the deleted arcs for $K_1$, $A'_1$ and $B'_1$ as the deleted arcs
  for $K_2$, and $A_n\#_bA'_1$ and $B_n\#_bB'_1$ as the deleted arcs for
  $K_1\#K_2$ then for appropriate choices of almost complex structures
  there is an isomorphism of complexes over $\Field[\ZZ/2]$
  \[
  \CFa(\Sigma(K_1))\otimes_\Field\CFa(\Sigma(K_2))\cong
  \CFa(\Sigma(K_1\#K_2)),
  \]
  with the diagonal action of $\ZZ/2$ on the left hand side. Further,
  this isomorphism respects the absolute $\QQ$-grading (an immediate
  consequence of~\cite[Theorem 4.3]{AbsGraded}).
	 
  \begin{figure}
    \begin{overpic}[tics=10, scale=1.0]{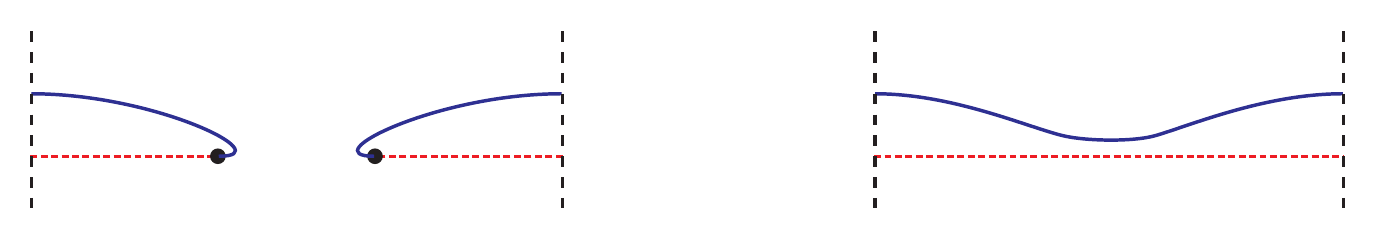}
      \put(7,4){$A_n$}
      \put(7,12){$B_n$}
      \put(33,4){$A'_1$}
      \put(33,12){$B'_n$}
      \put(15,4){$p_{2n}$}
      \put(26,4){$p'_1$}
      \put(76,4){$A_n\#_bA'_1$}
      \put(76,11){$B_n\#_bB'_1$}
    \end{overpic}
    \caption{\textbf{A bridge diagram for the connected 
        sum.\label{fig:con-sum-bridge}}}
  \end{figure}

  Thus, $\eCFa(\Sigma(K_1\#K_2))$ is isomorphic to
  $\eCFa(\Sigma(K_1))\otimes_{\Field[\theta]}\eCFa(\Sigma(K_2))$, but
  the differentials are superficially different: the differential on
  $\eCFa(\Sigma(K_1\#K_2))$ is $d(\theta^ix\otimes
  y)=\theta^i\left[d_{\CFa}(x)\otimes y+x\otimes d_{\CFa}(y)\right] +
  \theta^{i+1}\left[x\otimes y + \tau(x)\otimes\tau(y)\right]$, while
  the tensor product differential is $d(\theta^ix\otimes
  y)=\theta^i\left[d_{\CFa}(x)\otimes y+x\otimes d_{\CFa}(y)\right] +
  \theta^{i+1}\left[\tau(x)\otimes y + x\otimes \tau(y)\right]$. The
  map $\phi\co
  \eCFa(\Sigma(K_1))\otimes_{\Field[\theta]}\eCFa(\Sigma(K_2))\to
  \eCFa(\Sigma(K_1\#K_2))$, $\phi(\theta^ix\otimes y)=\theta^i
  x\otimes \tau^i(y)$ intertwines the two differentials. This map does
  not respect the $\Field[\theta]$-module structures, but does respect
  the action of $\Field[\theta^2]$.

  By the universal coefficient theorem,
  $H_*(\eCFa(\Sigma(K_1))\otimes_{\Field[\theta]}\eCFa(\Sigma(K_2)))/\tors$
  is isomorphic to $\eHFa(\Sigma(K_1))/\tors \otimes
  \eHFa(\Sigma(K_2))/\tors$.  (Here, $\tors$ stands for the submodule
  annihilated by $\theta^m$ for some $m$.)  Since the map $\phi$
  respects the action of $\Field[\theta^2]$, it also takes
  (non-)torsion classes to (non-)torsion classes. Thus, the
  composition
  \begin{align*}
    \eHFa(\Sigma(K_1))/\tors \otimes \eHFa(\Sigma(K_2))/\tors &\longrightarrow 
    H_*(\eCFa(\Sigma(K_1))\otimes_{\Field[\theta]}\eCFa(\Sigma(K_2)))/\tors\\
    &\stackrel{\phi}{\longrightarrow} \eHFa(\Sigma(K_1\#K_2))/\tors
  \end{align*}
  is an isomorphism.
  It follows that $q_\tau(K_1\#K_2)=q_\tau(K_1)+q_\tau(K_2)$.
  
  The argument for mirrors is easier. Mirroring a knot corresponds to
  exchanging the $A_i$ and $B_i$ arcs, which has the effect of
  replacing $\CFa(\Sigma(K))$ with
  $\HomO{\Field}(\CFa(\Sigma(K)),\Field)$. The behavior on gradings is
  that if $\{x_i\}$ is a homogeneous basis for $\CFa(\Sigma(K))$ with
  dual basis $\{x_i^*\}$ then the grading of $x_i^*$, viewed as an
  element in $\CFa(\Sigma(m(K)))$, is minus the grading of
  $x_i$~\cite[Proposition 4.2]{AbsGraded}. This isomorphism holds
  $\ZZ/2$-equivariantly. It follows that
  $\eCFa(\Sigma(m(K)))\cong\HomO{\Field[\theta]}(\eCFa(\Sigma(K)),\Field[\theta])$.
  Applying the universal coefficient theorem,
  \[
  \eHFa(\Sigma(m(K)))/\tors\cong
  \HomO{\Field[\theta]}(\eHFa(\Sigma(K))/\tors,\Field[\theta]),
  \]
  and it follows 
  that $q_\tau(m(K))=-q_\tau(K)$.
\end{proof}

\begin{proof}[Proof of Theorem~\ref{thm:q-tau-conc-homo}]
  This is simply a restatement of
  Propositions~\ref{prop:dbc-hat-grading-concordance}
  and~\ref{prop:q-homo}.
\end{proof}

\subsection{Many concordance invariants from the \texorpdfstring{$\ZZ/2$}{Z/2}-action on \texorpdfstring{$\CF^-(\Sigma(K))$}{CF-minus of the branched double cover}}
For this section, for convenience, we will restrict our attention to
the case that $K$ is a knot, not a link.

Consider the freed complex $\ECFm(\Sigma(K))$ for the
Heegaard Floer invariant $\HF^-(\Sigma(K))$. Unlike $\CFa(\Sigma(K))$,
the complex $\CF^-(\Sigma(K))$ counts curves in the closed symplectic
manifold $\Sym^g(\Sigma)$, which does not satisfy Item~\ref{item:J-1}
of Hypothesis~\ref{hyp:Floer-defined}. The construction of the freed
Floer complex used this part of the hypothesis for two reasons: for
energy bounds allowing us to work with $\Field$ (or $\Field[U]$) coefficients 
instead
of Novikov coefficients, and for a relative $\ZZ$-grading. To work with
$\Field$ coefficients, the following weaker condition suffices:
\begin{enumerate}[label=(J$'$-\arabic*)]
\item There is a non-negative $\tau\in\RR$ so that for any loop of
  paths
  \[
  v\co \bigl([0,1]\times S^1,\{0\}\times S^1,\{1\}\times S^1\bigr)\to 
  (M,L_0,L_1)
  \]
  such that each $[v|_{[0,1]\times \{\pt\}}]$ is homotopic to a
  constant path from $L_0$ to $L_1$, the area of $v$ (with respect to
  $\omega$) is $\tau$ times the Maslov index of $v$. (Again, compare,
  for example,~\cite[Theorem 1.0.1]{WW12:compose-correspond}.)
\end{enumerate}
For $\CF^-(\Sigma(K))$, one actually works with coefficients in
$\Field[U]$, where the power of $U$ keeps track of the intersection
number with the divisor $V_z$ in the symmetric product corresponding
to some basepoint $z$ on the Heegaard surface.
The resulting complex is a relatively $\ZZ$-graded complex
over $\Field[U]$, where $U$ has degree $-2$~\cite[Section
4.2]{OS04:HolomorphicDisks}. With these observations, the freed Floer
complex is defined as in
Section~\ref{sec:build-equi-cx}, with $U$-powers always tracking
intersections numbers with $V_z$. The equivariant Floer complex is 
\[
\eCF^-(\Sigma(K))=\HomO{\Field[U][\ZZ/2]}(\ECFm(\Sigma(K)),\Field[U])
\]
and the equivariant Floer cohomology $\eHF^-(\Sigma(K))$ is the
homology of $\eCF^-$. Both the equivariant complex $\eCF^-(\Sigma(K))$
and its homology $\eHF^-(\Sigma(K))$
are modules over $\Field[U,\theta]$. As discussed earlier in the case of $\CFa$, there are no
non-constant holomorphic disks contained in the fixed set, so by
Proposition~\ref{prop:equi-is-equi} we can compute $\eHF^-(\Sigma(K))$
using a generic 1-parameter family of $\ZZ/2$-equivariant almost complex structures.

In what follows, we will be particularly interested in the summand
$\HF^-(\Sigma(K),\spinc_0)$, where $\spinc_0$ is the unique spin
structure on $\Sigma(K)$, also called the \emph{central
  $\SpinC$-structure}. The involution
$\tau \co \Sigma(K) \rightarrow \Sigma(K)$ acts on $\Spinc (Y)$ by
conjugation and fixes only ${\spinc}_0$ \cite[page
1378]{Grigsby06:cyclic-covers}, \cite[Remark
3.4]{Levine08:cycliccovers}. Therefore, $\eCF^-(\Sigma(K))$ splits
along orbits of $\SpinC$-structures. 

\begin{theorem}\label{thm:new-dcov-minus}
  There are isomorphisms between the completed, localized equivariant Floer 
  homologies
  \begin{align}
     U^{-1}\eHF^-(\Sigma(K))\otimes_{\Field[\theta]}\Field[[\theta]]&\cong 
     \Field[U,U^{-1}][[\theta]]\label{eq:minus-U-loc}\\
    \theta^{-1}\eHF^-(\Sigma(K))\otimes_{\Field[U]}\Field[[U]]&\cong 
    \Field[\theta,\theta^{-1}][[U]]\label{eq:minus-theta-loc} \\
   (U\theta)^{-1}\eHF^-(\Sigma(K))\otimes_{\Field[U,\theta]}\Field[[U,\theta]]&\cong 
   \Field[[U,\theta]][U^{-1},\theta^{-1}]\label{eq:minus-both-loc}.
 \end{align}
\end{theorem}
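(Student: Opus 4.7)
The plan is to follow the pattern of Proposition~\ref{prop:localized-equivariant}, adapted to the $\HF^-$ setting with its extra $\Field[U]$-module structure. I would treat the three isomorphisms in the order \eqref{eq:minus-theta-loc}, \eqref{eq:minus-both-loc}, \eqref{eq:minus-U-loc}: the first is directly geometric, the second is algebraic, and the third requires a separate spectral-sequence argument that is the main obstacle.

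For \eqref{eq:minus-theta-loc}, fix a bridge diagram for $K$ and work in $\Sym^{n-1}(\Sigma)$ with Lagrangians $T_\alpha$, $T_\beta$ and the branched-cover involution $\tau$. Since each $\alpha_i$ and $\beta_i$ is $\tau$-invariant as a set, the argument of Lemma~\ref{lem:fixed-discrete} shows that $T_\alpha^\fix \cap T_\beta^\fix$ lies in the discrete part of $\Fix(\tau)$, and for a knot consists of a single point. Stable normal triviality is automatic at isolated fixed points, so the proof of Seidel-Smith's localization theorem applies verbatim in the closed symmetric product just as in Proposition~\ref{prop:localized-equivariant}, giving
\[
\theta^{-1}\eHF^-(\Sigma(K)) \cong HF^-(T_\alpha^\fix, T_\beta^\fix) \otimes_\Field \Field[\theta,\theta^{-1}].
\]
Since the unique fixed intersection point is disjoint from the basepoint divisor $V_z$ and there are no differentials on a rank-one generator, the fixed Floer complex is just $\Field[U]$, so $\theta^{-1}\eHF^-(\Sigma(K)) \cong \Field[U][\theta,\theta^{-1}]$. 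The flat base change along $\Field[U] \hookrightarrow \Field[[U]]$ then yields \eqref{eq:minus-theta-loc}.

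For \eqref{eq:minus-both-loc}, the uncompleted calculation above also gives $(U\theta)^{-1}\eHF^-(\Sigma(K)) \cong \Field[U,U^{-1},\theta,\theta^{-1}]$. Base-changing along $\Field[U,\theta] \hookrightarrow \Field[[U,\theta]]$ produces $\Field[[U,\theta]][U^{-1},\theta^{-1}]$, proving \eqref{eq:minus-both-loc}.

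For \eqref{eq:minus-U-loc}, I would use the $\theta$-power filtration on $U^{-1}\eCF^-(\Sigma(K))$ (completed at $\theta$ to ensure strong convergence). This yields a spectral sequence whose $E_2$-page is $\ExtO{\Field[U,U^{-1}][\ZZ/2]}(HF^\infty(\Sigma(K)),\Field[U,U^{-1}])$ converging to the $\theta$-completion of $U^{-1}\eHF^-(\Sigma(K))$. Since $\tau$ acts on $\Spinc(\Sigma(K))$ by conjugation with unique fixed point $\spinc_0$, the module $HF^\infty(\Sigma(K))$ splits as a trivial summand $\Field[U,U^{-1}]$ from $\spinc_0$ plus induced $\Field[\ZZ/2]$-modules from paired $\SpinC$-structures. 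The $\spinc_0$ summand contributes $\Field[U,U^{-1}][\theta]$, which completes to the desired $\Field[U,U^{-1}][[\theta]]$. The main obstacle is to show that the degree-zero contributions from the induced summands do not survive the $\theta$-completion: the cleanest path is to compare with the $\theta^{-1}$-localized result \eqref{eq:minus-theta-loc}, noting that on the one hand both localizations of $U^{-1}\eHF^-(\Sigma(K))$ can be identified with $(U\theta)^{-1}\eHF^-(\Sigma(K))$, and on the other hand chain-level nontrivialities of $\tau$ induce higher differentials in the Borel spectral sequence that push the induced-summand contributions into the $\spinc_0$ tower, consistent with the compatibility demanded by~\eqref{eq:minus-both-loc}.
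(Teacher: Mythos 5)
Your strategy diverges genuinely from the paper's, and unfortunately the divergence opens a gap right at the first step. You want to prove~\eqref{eq:minus-theta-loc} by applying the Seidel--Smith localization theorem directly to $\HF^-$, concluding the \emph{uncompleted} isomorphism $\theta^{-1}\eHF^-(\Sigma(K))\cong\Field[U][\theta,\theta^{-1}]$ and then base-changing. There are two problems with this.

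First, Seidel--Smith's localization theorem (and the argument of Proposition~\ref{prop:localized-equivariant}) is stated for Floer complexes that are finite-dimensional over $\Field$. The complex $\CF^-(\Sigma(K))$ is finitely generated over $\Field[U]$ but \emph{infinite}-dimensional over $\Field$, so the theorem does not apply ``verbatim''; the paper explicitly avoids this by never asserting an uncompleted localization isomorphism for $\HF^-$. Second, even granting your uncompleted claim, base change would give
\[
\Field[U][\theta,\theta^{-1}]\otimes_{\Field[U]}\Field[[U]]\cong\Field[[U]][\theta,\theta^{-1}],
\]
which is a \emph{proper} graded $\Field[U,\theta]$-submodule of the theorem's asserted $\Field[\theta,\theta^{-1}][[U]]$. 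In a fixed $\gr$-degree $d$ (with $\deg U=2$, $\deg\theta=1$), $\Field[[U]][\theta,\theta^{-1}]$ contains only \emph{finite} sums $\sum c_n U^n\theta^{d-2n}$ (the $\theta$-degrees appearing are bounded), while $\Field[\theta,\theta^{-1}][[U]]$ contains arbitrary such sums with infinitely many nonzero $c_n$. So even if your localization step were correct, your argument would prove a strictly weaker statement. The same mismatch afflicts~\eqref{eq:minus-U-loc}: flatness of $\Field[[\theta]]$ over $\Field[\theta]$ would give at best $\Field[[\theta]][U,U^{-1}]\subsetneq\Field[U,U^{-1}][[\theta]]$.

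The paper instead proves~\eqref{eq:minus-U-loc} and~\eqref{eq:minus-theta-loc} by essentially the same spectral-sequence argument, run in each case from scratch. For~\eqref{eq:minus-U-loc}: filter $\eCF^-$ by $\theta$-power, so $E_1\cong\HF^{-,*}(\Sigma(K))\otimes\Field[\theta]\cong\Field[U,\theta]\oplus(\HF^{\red,*}\otimes\Field[\theta])$ where $\HF^{\red,*}$ is finite-dimensional and $U^N$-torsion. The $\Field[U,\theta]$-linearity of the $d_r$ and $d_r^2=0$ force $d_r(U^n x)$ to be $U$-torsion for $n\gg0$, and finite-dimensionality of $\HF^{\red,*}$ kills the spectral sequence modulo $U$-torsion after finitely many pages. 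One then chooses a cycle $x_n$ representing $U^n x$, constructs a filtered $\Field[U,\theta]$-module map $\Field[U,\theta]\to\eCF^-$, inverts $U$, and — crucially — completes at $\theta$ so the spectral-sequence-comparison theorem applies; the completion is precisely what supplies the ``infinite diagonal sums'' $\sum_n c_n U^n\theta^{j(n)}$ that your argument cannot reach. Equation~\eqref{eq:minus-theta-loc} is then the same argument with the roles of $U$ and $\theta$ swapped, using Proposition~\ref{prop:localized-equivariant} in place of the computation of $\HF^\infty$ to analyze the $E_1$-page, and~\eqref{eq:minus-both-loc} follows from either. Your sketch for~\eqref{eq:minus-U-loc} is in the right spirit (it is a spectral-sequence argument), but the closing step, where you invoke ``compatibility demanded by~\eqref{eq:minus-both-loc}'' to resolve the $\SpinC$-induced summands, is both circular in your ordering and too vague to be checked; the paper's torsion-and-finiteness argument is what does that work.
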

\begin{proof}
	We start with the proof of Equation~\eqref{eq:minus-U-loc}. Consider the 
	filtration of $\eCF^-(T_\alpha,T_\beta)$ by the $\theta$-power, i.e., so 
	that, using the notation of Observation \ref{obs:concrete-hocolim}, $\alpha_n^*\otimes x^*$ lies in filtration $n$. The $E_1$-page of the 
	associated spectral sequence is isomorphic to 
	$\HF^{-,*}(\Sigma(K))\otimes\Field[\theta]$.
        Each 
	$\HF^{-,*}(\Sigma(K))\otimes\{\theta^i\}$ decomposes (non-canonically) as 
	$\Field[U]\oplus \HF^{\red,*}(\Sigma(K))$ where
        $\HF^{\red,*}(\Sigma(K))$ is 
	$U^N$-torsion for some $N$~\cite[Theorem 10.1]{OS04:HolDiskProperties}, so 
	the $E_1$-page itself is $\Field[U,\theta]\oplus 
	\HF^{\red,*}(\Sigma(K))\otimes\Field[\theta]$. Let $x$ be a generator of the 
	$\Field[U,\theta]$ submodule. It follows from the fact that the induced 
	differentials $d_r$ on the $r$th page of the spectral sequence commute with the 
	$\Field[U,\theta]$-module structure and $(d_r)^2=0$ that $d_r(U^nx)$ must be 
	$U$-torsion. It then follows from finite-dimensionality of 
	$\HF^{\red,*}(\Sigma(K))$ and the $\Field[U,\theta]$-module structure that 
	$d_r=0$ for $r$ sufficiently large. Thus, the $E_\infty$-page of the 
	spectral sequence is isomorphic to $\Field[U,\theta]\langle 
	U^nx\rangle\oplus T$, where $T$ is $U^N$-torsion (for some $N$).
	
	Now, choose a cycle $x_n\in \eCF^-(T_\alpha,T_\beta)$ whose image in the 
	associated graded complex $E_0$ represents $U^nx$. There is a corresponding 
	map $\Field[U,\theta]\to \eCF^-(T_\alpha,T_\beta)$ given by 
	$(U^i\theta^j)\mapsto U^i\theta^jx_n$. The induced map 
	$\Field[U,U^{-1},\theta]\to U^{-1}\eCF^-(T_\alpha,T_\beta)$ is by 
	definition a map of $\Field[U,U^{-1},\theta]$-modules. If we endow 
	$\Field[U,U^{-1},\theta]$ with the filtration $\Filt(U^i\theta^j)=j$ then 
	the map respects the filtrations and induces an 
	isomorphism on the $E_1$-page of the spectral sequence, and hence after 
	completing with respect to $\theta$ the map itself is a 
	filtered quasi-isomorphism~\cite[Corollary 3.15]{McCleary01:sseq}.

	The proof of the Equation~\eqref{eq:minus-theta-loc} is the same as the 
	proof of Equation~\eqref{eq:minus-U-loc}, with the roles of $U$ and $\theta$ 
	exchanged, and using Proposition~\ref{prop:localized-equivariant} in place of 
	Ozsv\'ath-Szab\'o's computation of $\HF^\infty$~\cite[Theorem 
	10.1]{OS04:HolDiskProperties}
	to analyze the $E_1$-page of the spectral sequence. 
	Equation~\eqref{eq:minus-both-loc} is immediate from either of 
	Equation~\eqref{eq:minus-U-loc} or~\eqref{eq:minus-theta-loc}.
\end{proof}

Before we discuss how to extract numerical invariants from $\eCF^-(\Sigma(K),\spinc_0)$, we pause for a remark about dualizing the chain complex $\CF^-(\Sigma(K), \spinc_0)$.

\begin{remark} 
  Below, we will often use the Heegaard Floer cohomology $\HF^{-,*}(\Sigma(K), \spinc_0)$, which we take to be the homology of the dual complex 
  $
    \Hom_{\Field[U]}(\CFm(\Sigma(K),\spinc_0),\Field[U]).
  $ 
  This is a different Floer cohomology from
  % In the description that follows, we will often use the Heegaard Floer cohomology $\HF^{-,*}(\Sigma(K), \spinc_0)$, by which we mean the homology of the dual complex \[\Hom_{\Field[U]}(\CFm(\Sigma(K),\spinc_0),\Field[U]).\] This is a different Floer cohomology from
  Ozsv\'ath-Szab\'o~\cite[Section 2.2]{OS04:HolDiskProperties}: we have
  dualized over $\Field[U]$ rather than over $\Field$. Consequently,
  $\HF^{-,*}(\Sigma(K),\spinc_0)$ is isomorphic to
  $\HF^-(-\Sigma(K),\spinc_0)$. If this isomorphism sends $x$ to $x'$,
  then $\gr(x)=-\gr(x')-4$. 
\end{remark}

We can now discuss how to extract numerical invariants from $\eCF^-(\Sigma(K),
\spinc_0)$. Consider the filtration of $\eCF^-(\Sigma(K),
\spinc_0)$ by the $\theta$-power. The $E_0$-page of the associated
spectral sequence is, in each filtration level, a copy of the dual
complex $\Hom_{\Field[U]}(\CFm(\Sigma(K),\spinc_0),\Field[U])$. The
homology of this complex is the Heegaard Floer cohomology
$\HF^{-,*}(\Sigma(K), \spinc_0)$, so the $E_1$-page of the spectral
sequence is $\HF^{-,*}(\Sigma(K), \spinc_0)\otimes \Field[\theta]$.
Let $\gr(x)$ denote the cohomological grading on the spectral
sequence, so the differential raises $\gr$ by $1$. With respect to the
grading $\gr$, the variables $U$ and $\theta$ have degrees $2$ and $1$
respectively.

As an $\Field[U]$ module, $\HF^{-,*}(\Sigma(K), \spinc_0)$ decomposes into the direct sum of
two submodules. One is a canonical finite dimensional $\Field$-vector
space $R_1=\HF^{\red,*}(\Sigma(K),\spinc_0)$ consisting of elements which are $U^n$ torsion for some
$n$. The other is a (non-canonical) copy of $\Field[U]$. The minimal grading of a non-$U$-torsion homogeneous element of
$\HF^{-,*}(\Sigma(K),\spinc_0)$ is an invariant of the pair
$(\Sigma(K),\spinc_0)$ which is equal to the
Ozsv{\'a}th-Szab{\'o} correction term $d(\Sigma(K), \spinc_0)$ minus
two~\cite[Section 4]{AbsGraded}. Let $x$ be a homogeneous generator of
the $\Field[U]$ submodule.  Since the first page of the spectral sequence 
associated to the $\theta$-filtration of $\eCF^{-}(\Sigma(K), \spinc_0)$ is equal to
$\HF^{-,*}(\Sigma(K),\spinc_0)\otimes \Field[\theta]$, it must decompose as 
$\Field[U,\theta] \oplus (R_1\otimes
\Field[\theta])$, where $\Field[U, \theta]$ is generated by $x$.

Now consider the differential $d_1$ on this complex, which raises the
$\theta$-power by one. This differential is equivariant with respect
to the actions of both $U$ and $\theta$. Indeed, because $(d_1)^2 =
0$, we see that $d_1$ is identically zero on $U^n E_1\cong \Field[U,
\theta]$ for sufficiently high $n$. Let
$d_1(x) = \theta y$; $y$ is forced to be $U$-torsion, so let $m$ be
the smallest integer so that $U^my=0$.  The $E_2$-page of the spectral
sequence is given by $\Field[U, \theta] \oplus R_2$, where $R_2$ is a
vector space consisting of elements that are $U^n$ torsion for some
$n$ (and may or may not be $\theta$-torsion) and $\Field[U,\theta]$ is
generated by $[U^mx]$. A model of such a spectral sequence is drawn in
Figure~\ref{fig:minusspectralsequence}.

\begin{figure}
  \centering
  \includegraphics{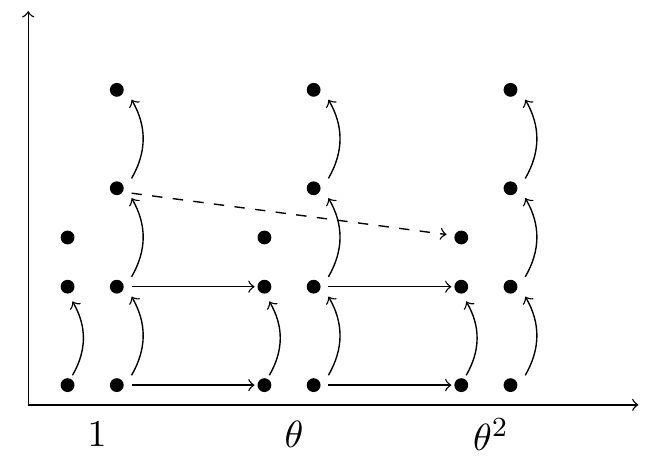}
  \caption{\textbf{A model for $HF^-$ spectral sequence.} This is a sample 
  $E_1$-page for the spectral sequence associated to the filtration of 
  $\eHF(\Sigma(K),\spinc_0)$ by the $\theta$-power. In each filtration level we 
  have a copy of $\HF^{-,*}(\Sigma(K), \spinc_0)$; the vertical axis is the homological grading in this complex. Curved vertical arrows denote 
  the action of $U$; horizontal arrows show a possible nontrivial $d_1$ 
  differential. The dashed arrow is a possible $d_2$ differential on the next 
  page of the spectral sequence. }
  \label{fig:minusspectralsequence}
\end{figure}

We see a similar story on subsequent pages: on page $E_s$, the
differential $d_s$ is trivial on $U^nE_s$ for $n \gg 0$, but there may
be nontrivial differentials from elements which are not $U^n$-torsion
to elements that lie in low $U$-gradings. So, define
\[
d_{\tau}(K,s) = \min\{i \mid \exists x \in E_s, \ \gr(x)=i, \ U^n x \neq 0\ \forall 
n\} + 2.
\]
Notice that $d_{\tau}(K,1) = d(\Sigma(K), \spinc_0)$, which is, up to
scaling, the concordance homomorphism $\delta$ studied by
Manolescu-Owens~\cite{ManolescuOwens07:delta}:
$d_\tau(K,1)=\delta(K)/2$.  Also, $d_{\tau}(K,s) \leq d_{\tau}(K,t)$
whenever $s < t$, and finiteness of the vector space $R_1$ implies
that $\{d_{\tau}(K,s)\}$ assumes at most finitely many values over all
natural numbers $s$.

\begin{theorem} \label{thm:minus-invariance} The quasi-isomorphism
  class of $\eCF^-(\Sigma(K))$, as a $\gr$-graded chain complex over
  $\Field[U, \theta]$, is an invariant of the knot $K$.
\end{theorem}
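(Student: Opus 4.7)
The plan is to mimic the proof of Theorem~\ref{thm:sar-dcov-invt}, showing that $\ECFm(\Sigma(K))$, viewed as a bounded-below chain complex of free modules over $\Field[U][\ZZ/2]$ equipped with its absolute $\QQ$-grading, is a knot invariant up to quasi-isomorphism; the desired statement about $\eCF^-(\Sigma(K))$ as a graded complex over $\Field[U,\theta]$ then follows formally from the homological algebra of Section~\ref{sec:background}, applied with ground ring $\Field[U]$ in place of $\Field$. Because $T_\alpha^\fix\cup T_\beta^\fix$ is discrete (Lemma~\ref{lem:fixed-discrete}), no nonconstant holomorphic disk can be contained in the fixed set, so Hypothesis~\ref{hyp:equivariant-transversality} is satisfied and Proposition~\ref{prop:equi-is-equi} identifies $\ECFm(\Sigma(K))$ with $\CFm(\Sigma(K))$ computed with respect to a generic one-parameter family of $\tau$-equivariant almost complex structures. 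Throughout, Hypothesis~\ref{item:J-1} is replaced by the weaker condition (J$'$-1) from Section~\ref{sec:new-dcov}, with $U$-powers tracking intersections with $V_z$; as noted there, the constructions and invariance results of Sections~\ref{sec:equi-complex} and~\ref{sec:equi-equi} go through with only cosmetic changes.

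Invariance under the Heegaard moves other than stabilization then proceeds exactly as in Theorem~\ref{thm:sar-dcov-invt}. Specifically, I would invoke the $\CFm$-analogue of Proposition~\ref{prop:indep-of-cx-str} to handle changes of almost complex structure; lift isotopies of the $A_i$ and $B_i$ to equivariant Hamiltonian isotopies of $T_\alpha,T_\beta$ and apply Proposition~\ref{prop:indep-of-Ham-isotopy}; use Perutz's theorem~\cite{Perutz07:HamHand} to realize handleslides among the $A$-arcs (or among the $B$-arcs) as Hamiltonian isotopies of Heegaard tori, and then apply Proposition~\ref{prop:non-equi-invar}, noting that equivariant transversality is automatic since the fixed set is discrete (point~\ref{item:ET:Seidel-Smith} of Section~\ref{sec:equi-equi}) and the relevant top class in $\HF(T_\alpha,T_{\alpha'})$ is represented by a sum of $\tau$-fixed intersection points. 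Handleslides involving the deleted arcs $A_n,B_n$ reduce to handleslides among the remaining arcs since $S^2\setminus A_n$ is connected.

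For stabilization invariance I will reuse the skein triangle of Figure~\ref{fig:skein-stab}. With $\HD,\HD_0,\HD_1,\HD'$ as there, a K\"unneth-type argument at the Heegaard-diagram level should produce a $\ZZ/2$-equivariant isomorphism
\[
\CFm(\HD_0)\cong \CFm(\HD)\otimes_{\Field[U]}\CFm(S^2\times S^1,\spinc_0),
\]
with $\ZZ/2$ acting trivially on the second tensor factor, and the surgery exact triangle $\cdots\to \HFm(\HD')\to \HFm(\HD_0)\to \HFm(\HD_1)\to\cdots$ collapses non-equivariantly to a short exact sequence by the (already-known, non-equivariant) invariance of the underlying Heegaard Floer groups. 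The triangle map $g\co \CFm(\HD_0)\to \CFm(\HD_1)$ counts holomorphic triangles in the branched double cover of a $\tau$-equivariant bridge triple diagram, so is $\ZZ/2$-equivariant; composing it with the chain-level inclusion $x\mapsto x\otimes\ttop$ gives the required equivariant quasi-isomorphism $\CFm(\HD)\to \CFm(\HD_1)$.

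Finally, to upgrade to a \emph{graded} quasi-isomorphism, I would check that all the maps above preserve the absolute $\QQ$-grading, following Proposition~\ref{prop:dbc-hat-grading-invariance}: $\tau_\#$ preserves grading because the mapping cylinder of $\tau$ is a rational homology cobordism; the complex-structure, isotopy, and handleslide maps preserve grading by standard arguments; and for stabilization, the only $\SpinC$-structures that survive past the first page of the $\theta$-filtration spectral sequence satisfy $c_1=0$, so the cobordism grading formula~\cite[Theorem~7.1]{OS06:HolDiskFour} shows the composite $\CFm(\HD)\to \CFm(\HD_0)\to \CFm(\HD_1)$ is grading-preserving. I expect the main obstacle to be the stabilization step: verifying chain-level $\ZZ/2$-equivariance of both the K\"unneth isomorphism and the triangle map $g$ requires choosing almost complex structures that are simultaneously split near the connect-sum region and $\tau$-equivariant, and then, as in the proof of Lemma~\ref{lem:hat-cob-maps}, ruling out unwanted contributions from constant triangles in the fixed set, which may fail to be transversely cut out by the standard $\overline{\partial}$-operator and so require a careful perturbation argument in the spirit of Section~\ref{sec:non-transverse}.
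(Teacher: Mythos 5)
Your proposal is essentially the paper's argument, and the structure (reduce to quasi-isomorphism over $\Field[U][\ZZ/2]$, use the discrete fixed set to get equivariant transversality, follow Theorem~\ref{thm:sar-dcov-invt} for the bridge moves and Proposition~\ref{prop:dbc-hat-grading-invariance} for gradings, and restrict to the torsion $\SpinC$-structure on the new $S^2\times S^1$ summand in the skein step) matches what the paper does. The one place where your diagnosis of the difficulty drifts a bit from the paper's is the stabilization step. The paper flags exactly one hard point there: the isomorphism $\CF^-(\HD_0)\cong \CF^-(\HD)\oplus\CF^-(\HD)$ is no longer automatic because curves in the minus theory are allowed to cross the basepoint, so the $\CFa$-style argument of ``choose split complex structures and basepoints block the curves'' doesn't apply; instead one needs the degeneration argument for connected sums~\cite[Proposition~6.4]{OS04:HolDiskProperties}, and then observes that for a suitably pinched (neck-stretched) almost complex structure the resulting isomorphism is still the obvious identification on generators, hence still $\ZZ/2$-equivariant. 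You gesture at this with ``almost complex structures that are simultaneously split near the connect-sum region and $\tau$-equivariant,'' which is in the right direction, but your proposal then pivots to worrying about constant triangles in the fixed set failing transversality, as in Lemma~\ref{lem:hat-cob-maps}. That is a concern for the saddle-cobordism triangle counts, not for the K\"unneth splitting that underlies stabilization invariance; for the latter the issue is purely the degeneration/pinching argument, which the paper handles by citing~\cite{OS04:HolDiskProperties} and invoking the pinched-complex-structure observation from the proof of Theorem~\ref{thm:Hen1-small}. Otherwise the proposal is complete and correct.
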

\begin{proof}
  The proof is the same as the proofs of
  Theorem~\ref{thm:sar-dcov-invt} and
  Proposition~\ref{prop:dbc-hat-grading-invariance}, with two caveats. First,
  for the surgery exact triangle to hold for $\HF^-$, we must restrict
  to the torsion $\SpinC$-structure on the new $S^2\times S^1$-summand
  of $\HD_0$; compare Convention~\ref{conv:torsion}. Second, the proof
  that $\CF^-(\HD_0)\cong \CF^-(\HD)\oplus\CF^-(\HD)$
  (point~\eqref{item:inv-con-sum} from the proof of
  Theorem~\ref{thm:minus-invariance}) is somewhat harder, requiring a
  degeneration argument~\cite[Proposition
  6.4]{OS04:HolDiskProperties}, but for a suitably pinched almost complex structure the isomorphism is still the obvious one and, in
  particular, is still $\ZZ/2$-equivariant (cf.\ proof of Theorem~\ref{thm:Hen1-small}).
\end{proof}

\begin{corollary} \label{cor:minus-grading-invariance} The numbers 
$d_{\tau}(K,m)$ are knot invariants.
\end{corollary}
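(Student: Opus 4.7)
The plan is to deduce Corollary~\ref{cor:minus-grading-invariance} formally from Theorem~\ref{thm:minus-invariance}, by showing that each $d_\tau(K,m)$ depends only on the quasi-isomorphism class of $\eCF^-(\Sigma(K),\spinc_0)$ as a $\gr$-graded chain complex over $\Field[U,\theta]$.

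The main step I would carry out first is to verify that the $\theta$-power filtration $\Filt$ on $\eCF^-(\Sigma(K),\spinc_0)$ is intrinsic to the $\Field[U,\theta]$-module structure, in the sense that $\Filt^n = \theta^n \cdot \eCF^-(\Sigma(K),\spinc_0)$. Using the explicit description of $\eCF^-$ in Observation~\ref{obs:concrete-hocolim}, any basis element $\alpha_m^*\otimes x^*$ with $m\geq n$ can be rewritten as $\theta^n(\alpha_{m-n}^*\otimes \tau^{\# n}(x^*))$, while conversely multiplication by $\theta^n$ takes arbitrary elements into filtration level $\geq n$. Thus $\Filt$ is recoverable from the $\theta$-action alone.

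Next I would observe that any $\Field[U,\theta]$-equivariant, $\gr$-grading-preserving quasi-isomorphism $\phi\co \eCF^-(\Sigma(K_1),\spinc_0)\to \eCF^-(\Sigma(K_2),\spinc_0)$ automatically respects the filtration, since $\phi(\theta^n x)=\theta^n\phi(x)\in \Filt^n$. To upgrade this to a filtered quasi-isomorphism, I would argue that $\phi$ restricts to quasi-isomorphisms of the subcomplexes $\theta^n\cdot \eCF^-$ (indeed, the restrictions are multiplication by $\theta^n$ composed with $\phi$, up to chain homotopy), hence induces quasi-isomorphisms on the associated graded pieces $\Filt^n/\Filt^{n+1}$. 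It follows that $\phi$ induces an isomorphism of the associated spectral sequences from the $E_1$-page onward, respecting both the $U$-action and the $\gr$-grading.

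To conclude, I would note that the number $d_\tau(K,m)$ is defined purely in terms of the $E_m$-page of the $\theta$-filtration spectral sequence, the induced $U$-action on this page, and the absolute $\gr$-grading, all of which are preserved by the induced isomorphism of spectral sequences. Together with Theorem~\ref{thm:minus-invariance}, which produces the required $\Field[U,\theta]$-equivariant quasi-isomorphisms between the equivariant cochain complexes of isotopic knots, this proves invariance. The one step that requires genuine care — and is the main technical obstacle — is the filtered quasi-isomorphism claim, since being a quasi-isomorphism of filtered complexes does not in general imply filtered quasi-isomorphism; here it holds because the filtration is literally the $\theta$-adic filtration by a submodule action, so the induced map on each subquotient is again induced by $\phi$ and is therefore a quasi-isomorphism.
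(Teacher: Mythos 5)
Your argument is correct, and it unpacks what the paper treats as a one-sentence ``immediate'' consequence of Theorem~\ref{thm:minus-invariance}. The paper's intended route is the homological algebra of Section~\ref{sec:background}: that theorem is proved by showing the underlying chain complex $\CFm(\Sigma(K),\spinc_0)$ is well-defined up to quasi-isomorphism over $\Field[U][\ZZ/2]$, and Section~\ref{sec:background} already records that such a quasi-isomorphism induces a filtered map of the $\RHom$ bicomplexes which is an isomorphism on associated graded, hence an isomorphism of the $\theta$-filtration spectral sequences from $E_1$ onward. You instead take the $\Field[U,\theta]$-quasi-isomorphism class of $\eCF^-$ itself as the starting point and recover the filtration intrinsically from the $\theta$-module structure; this is correct, and you rightly flag that upgrading a quasi-isomorphism of filtered complexes to a filtered quasi-isomorphism is the genuine point requiring care. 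The parenthetical ``up to chain homotopy'' in your argument is a small red herring: since $\phi$ is $\theta$-equivariant and $\theta$ acts freely on $\eCF^-$ (so that $\theta^n\co\eCF^-\to\theta^n\eCF^-$ is an isomorphism of complexes), the restriction $\phi|_{\theta^n\eCF^-}$ is literally conjugate to $\phi$ by $\theta^n$, not merely chain-homotopic to something convenient. One respect in which the paper's route is slightly more robust is that the $\RHom$ construction produces filtered maps automatically for any zigzag of equivariant quasi-isomorphisms, whereas your approach would in principle need $\theta$-freeness at every node of such a zigzag—though this is not actually an issue for the complexes arising here.
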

\begin{proof}
  This is immediate from Theorem~\ref{thm:minus-invariance}.
\end{proof}

Next we show that the invariants $d_\tau(K,i)$ are concordance invariants.
\begin{lemma}\label{lem:minus-cob-maps}
  Given an oriented cobordism $T$ from $L_0$ to $L_1$ there is a map of equivariant Floer complexes $\eCF^-(\Sigma(L_1))\to \eCF^-(\Sigma(L_0))$ so that:
  \begin{itemize}
  \item The induced map on the $E^1$-page of the
    spectral sequence induced by the $\theta$-power filtration is
    \[
    ({F}_{\Sigma(T)}^*\otimes\Id)\co \HF^-(\Sigma(L_1))\otimes\Field[\theta]\to\HF^-(\Sigma(L_0))\otimes\Field[\theta]
    \]
    where ${F}_{\Sigma(T)}$ is the Ozsv\'ath-Szab\'o cobordism map associated to $\Sigma(T)$, and
  \item The map $\eCF^-(\Sigma(L_1))\to\eCF^-(\Sigma(L_0))$ has the same effect on gradings as the dual of the Ozsv\'ath-Szab\'o cobordism map ${F}_{\Sigma(T)}$.
  \end{itemize}
\end{lemma}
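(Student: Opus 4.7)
My plan is to follow the same strategy as in Lemma~\ref{lem:hat-cob-maps}, adapting each step to the minus setting. Decompose the oriented cobordism $T$ from $L_0$ to $L_1$ into a sequence of bridge moves, births and deaths, and saddles, where the births and deaths take place in a region adjacent to the basepoint $p_{2n}$. Bridge moves induce equivariant quasi-isomorphisms of $\ECFm(\Sigma(L))$ by Theorem~\ref{thm:minus-invariance} (or rather by the $\CF^-$ analogue of Theorem~\ref{thm:sar-dcov-invt} proved there), so they immediately give maps of equivariant complexes with the correct behavior on the $E^1$-page and on gradings. What remains is to construct equivariant maps for the three elementary cobordisms (1-handle, 2-handle, and 3-handle attachments in the branched double cover) and verify the two listed properties.

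For births and deaths the branched double cover produces a $1$-handle or $3$-handle attachment connecting $\Sigma(L_0)$ and $\Sigma(L_1) = \Sigma(L_0)\#(S^2 \times S^1)$. Per Convention~\ref{conv:torsion} we work only with the torsion $\SpinC$-structure on the new $S^2 \times S^1$ summand; this is consistent with the surgery triangle for $\CF^-$. With respect to an almost complex structure that is pinched along the connecting neck, there is a K\"unneth-type isomorphism $\CF^-(\HD_0,\spinc \# \spinc_0) \cong \CF^-(\HD, \spinc) \otimes_{\Field[U]} \CF^-(S^2 \times S^1, \spinc_0)$, by a degeneration argument of the type used in \cite[Proposition~6.4]{OS04:HolDiskProperties}. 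Because the new bridge arcs sit in a small region adjacent to $p_{2n}$ and are exchanged by $\tau$, the isomorphism is manifestly $\ZZ/2$-equivariant. The Ozsv\'ath-Szab\'o $1$-handle and $3$-handle maps are then $x \mapsto x \otimes \ttop$ and $x \otimes \ttop \mapsto 0, x \otimes \tbot \mapsto x$; both are $\ZZ/2$-equivariant and $U$-linear, so they lift to maps of freed complexes, dualize to give maps of equivariant cochain complexes, and yield the claimed effects on the $E^1$-page and on the absolute grading.

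For saddles I would imitate the triangle-map construction in the proof of Lemma~\ref{lem:hat-cob-maps}. Fix a bridge triple diagram as in Figure~\ref{fig:saddle}, so that its branched double cover is a Heegaard triple $(\Sigma,\alphas,\betas,\gammas,z)$ strongly equivalent to one subordinate to the surgery knot, with the top-graded cocycle for $\HF^-(\Sigma,\betas,\gammas)$ equal to the sum of all $2^{n-2}$ fixed generators and all such generators in a common grading. As in Proposition~\ref{prop:non-equi-invar}, counts of Maslov-index-zero holomorphic triangles parametrized over the relevant cube define a $\ZZ/2$-equivariant triangle map between the freed complexes $\wt{\CF}^-(\Sigma(L_0)) \to \wt{\CF}^-(\Sigma(L_1))$, and this is defined for $\CF^-$ just as for $\CFa$, using energy bounds from the weakened hypothesis (J$'$-1) together with the $U$-variable tracking the intersection with $V_z$. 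The key simplification, exactly as in Lemma~\ref{lem:hat-cob-maps}, is that Lemma~\ref{lem:fixed-discrete} forces the Lagrangians to meet the fixed set only at discrete points, so a generic $\ZZ/2$-equivariant almost complex structure attains transversality for all moduli spaces of non-constant triangles (the negative-index triangles inside the fixed set having been eliminated by the chosen perturbation in Figure~\ref{fig:saddle}). Consequently the freed-complex triangle map is induced from an honest, $\ZZ/2$-equivariant chain map $\CF^-(\Sigma(L_0)) \to \CF^-(\Sigma(L_1))$ counting the same triangles, and on homology this is the Ozsv\'ath-Szab\'o cobordism map $F_{\Sigma(T)}$ by the usual strong-equivalence argument \cite[Proposition~4.6]{OS06:HolDiskFour}. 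Dualizing and tensoring with the standard resolution of $\Field$ over $\Field[\ZZ/2]$ then produces the required map of equivariant cochain complexes, whose induced map on the $E^1$-page of the $\theta$-power filtration is $F_{\Sigma(T)}^* \otimes \Id$ and whose grading shift agrees with that of $F_{\Sigma(T)}^*$.

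The main obstacle is the saddle step, since for $\CF^-$ we must simultaneously keep track of $U$-weights, ensure the weaker Hypothesis~(J$'$-1) is enough to bound bubbling and escape to infinity in the one-parameter family of equivariant complex structures, and confirm that the equivariant perturbation of Figure~\ref{fig:saddle} still rules out negative-index contributions in the fixed set. A subsidiary but purely bookkeeping concern is the K\"unneth step for births and deaths, where one must verify that the degeneration isomorphism is genuinely $\ZZ/2$-equivariant at the chain level and respects $U$; this should follow once the pinched complex structure is chosen invariantly with respect to $\tau$.
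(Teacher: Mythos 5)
Your proposal is correct and follows essentially the same route the paper takes: the paper's proof of Lemma~\ref{lem:minus-cob-maps} is the one-line instruction to replace hats by minuses in the proof of Lemma~\ref{lem:hat-cob-maps} and, for births and deaths, to use a suitably pinched almost complex structure as in the proof of Theorem~\ref{thm:minus-invariance}. You have simply unpacked that instruction---the decomposition into bridge moves, births/deaths, and saddles, the equivariant K\"unneth isomorphism via pinching, and the triangle-map argument relying on Lemma~\ref{lem:fixed-discrete} to achieve equivariant transversality---and the worries you raise at the end (sufficiency of Hypothesis~(J$'$-1), $U$-weight bookkeeping, equivariance of the degeneration isomorphism) are precisely the points the surrounding text of Section~5.2 has already addressed.
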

\begin{proof}
  Replace all hats by minuses in the proof of
  Lemma~\ref{lem:hat-cob-maps}, and when dealing with births and
  deaths, work with a suitably pinched almost complex structure (as in
  the proof of Theorem~\ref{thm:minus-invariance}).
\end{proof}

\begin{theorem} \label{thm:d-tau-concordance} 
  The numbers $d_{\tau}(K,m)$ are concordance invariants.
\end{theorem}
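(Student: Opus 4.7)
The plan is to adapt the strategy of Proposition~\ref{prop:dbc-hat-grading-concordance} to the page structure of the spectral sequence that defines $d_\tau$. Given a concordance $T$ from $K_0$ to $K_1$, Lemma~\ref{lem:minus-cob-maps} provides a chain map $\phi_T \co \eCF^-(\Sigma(K_1)) \to \eCF^-(\Sigma(K_0))$ which preserves the $\theta$-power filtration and is $\Field[U,\theta]$-equivariant. Since $\spinc_0$ is the unique $\tau$-invariant $\SpinC$-structure on $\Sigma(K_i)$, this map restricts to the $\spinc_0$ summands. The cobordism $\Sigma(T)$ is a $\QQ$-homology cobordism between $\QQ$-homology spheres carrying a distinguished $\SpinC$-structure $\spinct_0$ with $c_1(\spinct_0)=0$ restricting to $\spinc_0$ on each boundary, so the absolute grading shift formula~\cite[Theorem 7.1]{OS06:HolDiskFour} vanishes and $\phi_T$ preserves the absolute $\QQ$-grading.

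The key claim I would prove is that the induced map $\phi_T^{E_s}\co E_s(K_1) \to E_s(K_0)$ on each page of the spectral sequence sends non-$U$-torsion classes to non-$U$-torsion classes. Granting this, since $\phi_T^{E_s}$ preserves the absolute grading, a non-$U$-torsion class in $E_s(K_1)$ of minimum grading $d_\tau(K_1,s)-2$ maps to a non-$U$-torsion class in $E_s(K_0)$ of the same grading, proving $d_\tau(K_0,s) \leq d_\tau(K_1,s)$. Applying the same argument to the reverse concordance $-T$, which yields a map $\phi_{-T}\co \eCF^-(\Sigma(K_0)) \to \eCF^-(\Sigma(K_1))$, gives the opposite inequality, and hence $d_\tau(K_0,s) = d_\tau(K_1,s)$.

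To establish the key claim I would localize at $U$ and induct on $s$. Since the differentials $d_s$ are $U$-linear, they descend to differentials on $U^{-1}E_s$, and $\phi_T^{E_s}$ descends to a chain map $\widetilde\phi_T^{E_s}\co U^{-1}E_s(K_1) \to U^{-1}E_s(K_0)$. The base case follows because $\widetilde\phi_T^{E_1}$ is the $U$-localization of $F^*_{\Sigma(T),\spinct_0}\otimes \Id$ acting on $\HF^{\infty,*}(\Sigma(K_i),\spinc_0)\otimes \Field[\theta]$, which is an isomorphism by the standard fact that the cobordism map on $\HF^\infty$ is an isomorphism for a $\QQ$-homology cobordism between $\QQ$-homology spheres with $c_1$-trivial $\SpinC$-structure. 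For the inductive step, a chain-level isomorphism induces a homology isomorphism, so $\widetilde\phi_T^{E_{s+1}}$ is an isomorphism whenever $\widetilde\phi_T^{E_s}$ is. Since non-$U$-torsion classes in $E_s(K_1)$ have nonzero image in $U^{-1}E_s(K_1)$, their images under $\phi_T^{E_s}$ remain nonzero in $U^{-1}E_s(K_0)$, and so are non-$U$-torsion, as desired.

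The main subtlety is verifying that $\phi_T$ truly restricts to and preserves the $\spinc_0$-summand through the various Reidemeister, handleslide, and stabilization moves underlying Lemma~\ref{lem:minus-cob-maps}, and that the pages of the $U$-localized spectral sequence fit together compatibly so the induction goes through; both issues are standard but must be tracked carefully, and the latter is genuinely the place where the argument differs meaningfully from the hat-version proof of Proposition~\ref{prop:dbc-hat-grading-concordance}.
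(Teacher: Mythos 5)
Your proposal is correct and takes essentially the same approach as the paper: both arguments track the non-$U$-torsion part of the spectral sequence pages using the fact that $f_{\Sigma(T)}$ induces an isomorphism on $\HF^\infty$, then conclude $d_\tau(K_1,s)\geq d_\tau(K_0,s)$ and appeal to symmetry of concordance. The only cosmetic difference is that you work with the $U$-localized spectral sequence $U^{-1}E_s$ and show the induced map is an isomorphism page by page, while the paper works with the quotient $E_s/\text{$U$-torsion}$ and shows injectivity there directly; these encode the same information since the kernel of the localization map $E_s\to U^{-1}E_s$ is precisely the $U$-torsion.
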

\begin{proof}
  Fix a concordance $T$ from $K_0$ to $K_1$. Because $\Sigma(T)$ is a
  rational homology cobordism, by the grading shift
  formula~\cite[Theorem 7.1]{OS06:HolDiskFour} the map
  $f_{\Sigma(T)}\co \CF^-(\Sigma(K_0), \spinc_0)\to\CF^-(\Sigma(K_1), \spinc_0)$ induced
  by the unique spin structure on $\Sigma(T)$ preserves the absolute Maslov
  grading. Further, by
  Lemma~\ref{lem:minus-cob-maps}, the map $f_{\Sigma(T)}$ induces a
  map $f_{\Sigma(T)}^*\co \eCF^{-}(\Sigma(K_1),\spinc_0)\to
  \eCF^{-}(\Sigma(K_0),\spinc_0)$ on equivariant Floer cochain
  complexes. If $E_s(\Sigma(K_i))$ denotes the $s\th$ page in the
  spectral sequence associated to the filtration on
  $\eCF^-(\Sigma(K_i),\spinc_0)$ by $\theta$-powers, then the fact
  that $f_{\Sigma(T)}$ induces an isomorphism on $\HF^\infty$ implies
  that
  \[
  f_{\Sigma(T)}^*\co E_1(\Sigma(K_1))/\text{$U$-torsion}\cong \Field[U,\theta]\to \Field[U,\theta]\cong E_1(\Sigma(K_0))/\text{$U$-torsion}
  \]
  is injective (or, equivalently, non-zero). (Here, $U$-torsion means
  the submodule spanned by elements annihilated by $U^n$ for some $n$.) From the
  $\Field[U,\theta]$-module structure and $d_1^2=0$, it follows that
  the image of the differential $d_1$ is contained in the $U$-torsion
  submodule of $E_1$, so in particular 
  \[
  E_2(\Sigma(K_i))/\text{$U$-torsion}=U^n\Field[U,\theta]\subseteq \Field[U,\theta]\cong E_1(\Sigma(K_i))
  \]
  and so the induced map $f_{\Sigma(T)}^*$ on $E_2/\text{$U$-torsion}$
  is injective. Repeating this argument, the induced map on
  $E_s/\text{$U$-torsion}$ is injective for all $s$.

  Note that
  \[
  d_\tau(K,s)=\min\{\gr(x)\mid x\in E_s(\Sigma(K))/\text{$U$-torsion}\}+2.
  \]

  Since $f_{\Sigma(T)}^*$ is injective on $E_s/\text{$U$-torsion}$, we have
  \[
  \min\{\gr(x)\mid x\in E_s(\Sigma(K_1))/\text{$U$-torsion}\}
  \geq \min\{\gr(x)\mid x\in E_s(\Sigma(K_0))/\text{$U$-torsion}\},
  \]
  so $d_\tau(K_1,s)\geq d_\tau(K_0,s)$. But since the property of
  being concordant is symmetric, we also have $d_\tau(K_0,s)\geq
  d_\tau(K_1,s)$, implying the result.
\end{proof}

\begin{remark} Recall that the spectral sequence (\ref{eq:Sig-to-S3}) mentioned 
in the introduction has the form
\[
\HFaDual(\Sigma(K))\otimes \Field[\theta, \theta^{-1}]\otimes V^{\otimes n} 
\Rightarrow V^{\otimes n} \otimes \Field[\theta,\theta^{-1}]
\]
for knots (the version for links is slightly more complicated). We do not know 
if the spectral sequences (\ref{eq:new-dbc-sequence}) and (\ref{eq:Sig-to-S3}) 
are identified up to tensoring with $V^{\otimes n}$. 
\end{remark}

\begin{remark}
  One can also define a family of integer invariants $q_\tau(K,m)$ analogous 
  to $d_\tau(K,m)$ but using the spectral sequence associated to the $U$-power 
  filtration instead of the 
  $\theta$-power filtration on $\CF^-_{\ZZ/2}(\Sigma(K),\spinc_0)$. These invariants satisfy 
  $q_\tau(K)=q_\tau(K,1)\leq q_\tau(K,2)\leq\cdots$.
\end{remark}

\subsection{Computations}\label{sec:computations}

We will concentrate on the homology sphere $\Sigma(2,3,7)$ which is:
\begin{enumerate}[leftmargin=*]
\item The double branched cover of the torus knot $T(3,7)$ (a positive
  knot).
\item The double branched cover of the Montesinos knot
  $M(-1;(-2,1),(-3,1),(-7,1))$, which is also the pretzel knot
  $P(2,-3,-7)$ (a negative knot).
\item The $(-1)$-surgery of the positive trefoil $T(2,3)$.
\end{enumerate}
$\HFa(\Sigma(2,3,7))$ is three-dimensional: two-dimensional in grading
$0$ and one-dimensional in grading $-1$; and $\HFm(\Sigma(2,3,7))$ is
of the form $\Field[U]\langle \alpha\rangle\oplus \Field\langle \beta\rangle$
with $\gr(\alpha)=\gr(\beta)=-2$~\cite[Equation~(25)]{AbsGraded}.

\begin{lemma}\label{lem:q-tau-in-simple-cases}
  Assume $K\subset S^3$ is a knot so that $\HFa(\Sigma(K))$ is
  three-dimensional and is supported in two adjacent gradings:
  two-dimensional in grading $Q$ and one-dimensional in grading
  $Q-1$. If the involution on the double branched cover $\Sigma(K)$
  induces the identity map on $\HFa(\Sigma(K))$ then $q_{\tau}(K)=2Q$,
  $d_{\tau}(K,2)=Q$, and $d_{\tau}(m(K),2)=-Q$. Otherwise,
  $q_{\tau}(K)=2(Q-1)$, $d_{\tau}(K,2)=Q$, and
  $d_{\tau}(m(K),2)=-Q+2$. 
\end{lemma}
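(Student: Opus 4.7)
The plan is to analyze the $\theta$-power filtration spectral sequences on the equivariant complexes using explicit chain-level models derived from the hypothesis. The three-dimensional structure of $\HFa(\Sigma(K))$ forces $\HFm(\Sigma(K))\cong\Field[U]_{(Q)}\oplus\Field_{(Q)}$ via the mapping cone of $U$, which is realized by $\CFm(\Sigma(K))=\Field[U]\langle a,b,c\rangle$ with $\gr(a)=\gr(b)=Q$, $\gr(c)=Q-1$, and $d(c)=Ub$; here $a$ represents the tower top and $b$ the reduced generator. Dualizing (i.e., passing to $-\Sigma(K)$) gives a model $\CFm(\Sigma(m(K)))=\Field[U]\langle a',b',c'\rangle$ with $\gr(a')=\gr(c')=-Q$, $\gr(b')=-Q+1$, and $d(c')=Ub'$. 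A $\Field[U]$-linear, grading-preserving chain involution is determined by a parameter $\beta\in\{0,1\}$ via $\tau_\#(a)=a+\beta b$, $\tau_\#(b)=b$, $\tau_\#(c)=c$; the induced action on $\HFa(\Sigma(K))$ is the identity exactly when $\beta=0$ (Case 1). Dualizing forces the corresponding involution on $\CFm(\Sigma(m(K)))$ to be $\tau_\#(c')=\beta a'+c'$, $\tau_\#(a')=a'$, $\tau_\#(b')=b'$, the asymmetry arising from the chain-level identifications $\gr(a)=\gr(b)$ versus $\gr(a')=\gr(c')$.

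For $q_\tau(K)$, the spectral sequence $E_1=\HFaDual(\Sigma(K))\otimes\Field[\theta]\Rightarrow\eHFa(\Sigma(K))$ has $d_1=\theta(1+\tau^*)$. In Case 2 the dualized action satisfies $\tau^*(a^*)=a^*$, $\tau^*(b^*)=a^*+b^*$, $\tau^*(c^*)=c^*$, so $d_1$ has rank one with $d_1(b^*\otimes\theta^n)=a^*\otimes\theta^{n+1}$; a direct computation of $\eHFa$ from $\CFa=\CFm/U\CFm=\Field\langle a,b,c\rangle$ (with trivial differential and the descended action $\tau_\#(a)=a+b$) yields a $\theta$-tower generated by $c^*$ in grade $Q-1$ together with a $\theta$-torsion class $a^*$ in grade $Q$, giving $q_\tau(K)=2(Q-1)$. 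In Case 1 one has $d_1=0$; since the only possible higher differential is $d_2$ (shifting internal grade by $-1$, from the 2-dimensional grade-$Q$ piece to the 1-dimensional grade-$(Q-1)$ piece), and since $\theta^{-1}\eHFa\cong\Field[\theta,\theta^{-1}]$ by Proposition~\ref{prop:localized-equivariant}, $d_2$ must have rank one, and the surviving tower generator lies in internal grade $Q$, giving $q_\tau(K)=2Q$.

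For $d_\tau(K,2)$, the dualized chain action $\tau^\#(a^*)=a^*$, $\tau^\#(b^*)=\beta a^*+b^*$, $\tau^\#(c^*)=c^*$ restricts to the identity on $\HF^{-,*}(\Sigma(K))$, because $b^*$ is not a cycle in the dual complex (satisfying $d^*(b^*)=Uc^*$) and so contributes nothing to cohomology. Consequently $d_1=0$ on the spectral sequence $\HF^{-,*}(\Sigma(K))\otimes\Field[\theta]\Rightarrow\eHF^-(\Sigma(K))$ in both cases, the tower bottom at grade $d(\Sigma(K))-2=Q-2$ survives to $E_2$, and $d_\tau(K,2)=Q$.

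For $d_\tau(m(K),2)$, the tower and reduced generators $(a')^*$ and $(c')^*$ of $\HF^{-,*}(\Sigma(m(K)))$ both lie in grade $-Q-2$ (since $\gr(a')=\gr(c')$), so $\tau^*$ can mix them; dualizing the Case 2 action gives $\tau^\#((a')^*)=(a')^*+(c')^*$, so $d_1$ sends the tower bottom to the $U$-torsion reduced class in the next column, killing it, and the new bottom of the non-$U$-torsion part of $E_2$ is $U\cdot(a')^*$ at grade $-Q$, yielding $d_\tau(m(K),2)=-Q+2$. In Case 1, $\tau^\#((a')^*)=(a')^*$ is the identity, $d_1=0$, and the tower bottom at grade $-Q-2$ persists, giving $d_\tau(m(K),2)=-Q$. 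The main subtlety will be organizing the grading conventions on $\HF^{-,*}$ — specifically $\gr((x)^*)=\gr(x)-2$ with $U$ raising cohomological grade by $+2$, which places the tower bottom of $\HF^{-,*}(Y)$ at $d(Y)-2$ in agreement with the identity $d_\tau(K,1)=d(\Sigma(K))$ — together with verifying that the rank-one $d_2$ forced in Case 1 of the $q_\tau$ computation leaves a tower generator in internal grade $Q$ rather than $Q-1$, which follows from the fact that $d_2$ lands in (not emanates from) the grade-$(Q-1)$ part.
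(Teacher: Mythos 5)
Your proof is correct, and it matches the paper in overall strategy: analyze the $\theta$-power spectral sequences, invoke Proposition~\ref{prop:localized-equivariant} to force the rank-one $d_2$ in the $q_\tau$ computation in the identity case, and reduce the $d_\tau$ computations to determining the $\tau$-action on $\HF^{-,*}(\Sigma(K))$ and $\HF^{-,*}(\Sigma(m(K)))$. The presentational choice is genuinely different, though: you build an explicit $\Field[U]$-minimal model $\CFm(\Sigma(K))\cong\Field[U]\langle a,b,c\rangle$ with $d(c)=Ub$, package the involution as a single parameter $\beta\in\{0,1\}$, and obtain every induced action by formally dualizing this one model; the paper never introduces a chain model, instead working at the homology level and using the long exact sequence relating $\HFa$ and $\HFm$ to transfer the involution between $\HFa(\Sigma(K))$, $\HFm(\Sigma(K))$, and $\HFm(\Sigma(m(K)))$. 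Your route is more economical in that one parameter drives everything, but it silently assumes the actual $\tau_\#$ transfers to an honest $\Field[U]$-linear grading-preserving \emph{involution} (not just a map with $\tau_\#^2\simeq\Id$) on the minimal model. This does hold here --- on a staircase concentrated in only two adjacent gradings, any chain homotopy $h$ satisfies $dh+hd=0$, so homotopic endomorphisms of the minimal model coincide, and a transferred map squaring to something homotopic to the identity squares to the identity on the nose --- but that observation needs to be made explicit. The paper's argument for $d_\tau(K,2)$ is also a bit sturdier: it derives $\tau^*=\Id$ on $\HF^{-,*}(\Sigma(K))$ from the model-independent fact that $\HFm(\Sigma(m(K)))$ has its two generators in distinct gradings ($-Q-2$ and $-Q-1$), rather than from the model-dependent remark that $b^*$ is not a cycle. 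Finally, the grading bookkeeping $\gr(x^*)=\gr(x)-2$ you use to pass from the internal grading on $\CFm$ to the spectral-sequence grading on $\HF^{-,*}$ is nonstandard but self-consistent and reproduces $d_\tau(K,1)=d(\Sigma(K),\spinc_0)=Q$, so no harm done.
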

(As before, $m(K)$ denotes the mirror of $K$.)
\begin{proof} 
We begin with the computation of $q_{\tau}$. Let $\HFa(\Sigma(K))$ be generated over $\Field$ by two elements $\eta$ and $\nu$ in degree $Q$ and an element $\zeta$ in degree $Q-1$. Even though the isomorphism $\HFa(K)\simeq \HFaDual(K)$ is not canonical, by looking at a particular Heegaard diagram and dualizing the chain complex $\CFa(\Sigma(K))$ obtained from it, we see that the involution $\tau^*$ on $\HFaDual(\Sigma(K))$ is the identity if and only if the involution $\tau_*$ on $\HFa(K)$ is the identity.

First we suppose that $\tau_*$ is not the identity on
$\HFa(\Sigma(K))$, so that $\tau^*$ is not the identity on
$\HFaDual(K)$. Up to change of basis, we may assume that
$\tau^*(\nu) = \nu+\eta$ and $\tau^*(\zeta)=\zeta$. Then in the
spectral sequence
$\HFaDual(\Sigma(K)) \otimes \Field[\theta] \Rightarrow
\eHFa(\Sigma(K))$,
the differential $d_1 = \theta(1+ \tau^*)$ on the first page maps
$\nu\otimes \theta^k$ to $\eta \otimes \theta^{k+1}$ for all $k$. This
implies that the $E_2$ page of the spectral sequence decomposes as
$\Field[\theta]\langle [\zeta] \rangle \oplus \langle [\nu] \rangle$.
It follows from grading considerations that the spectral sequence
collapses at the $E_2$ page, implying that
$q_{\tau}(K) = 2 \deg([\zeta]\otimes \theta^0)=2(Q-1)$.

Now suppose that $\tau_*$ is the identity on $\HFa(K)$. Then $\tau^*$ is also the identity on $\HFaDual(K)$, implying that in the spectral sequence $\HFaDual(\Sigma(K)) \otimes \Field[\theta] \Rightarrow \eHFa(\Sigma(K))$, the differential $d_1 = \theta(1+ \tau^*)$ on the first page is identically zero. Therefore the $E_2$ page of this spectral sequence is isomorphic to $\HFaDual(\Sigma(K)) \otimes \Field[\theta]$. Furthermore, we see that for grading reasons, the induced differentials $d_r$ on the $E_r$ page of the spectral sequence for $r\geq 3$ must be zero. Since we know that $\theta^{-1}\eHFa(\Sigma(K))\simeq \Field[\theta,\theta^{-1}]$, this implies that there must be a nonzero differential on the $E_2$ page of the spectral sequence. Possibly after a change of basis, we may assume that $d_2([\eta]\otimes \theta^k) = [\zeta]\otimes \theta^{k+2}$. This implies that the $E_3$ page of the spectral sequence decomposes as the direct sum $\Field[\theta]\langle [\nu]\rangle \oplus (\Field[\theta]/(\theta^2))\langle [\zeta] \rangle$. The spectral sequence must collapse at this point, so $q_{\tau}(K) = 2\deg([\nu]\otimes \theta^0) = 2Q$.

Now we turn our attention to $d_{\tau}(K,2)$ and $d_{\tau}(m(K),2)$. First,
recall that there is a long exact sequence
\begin{equation} \label{eq:minus-exact}
\cdots \rightarrow \HF^-(\Sigma(K)) \xrightarrow{\cdot U} \HF^-(\Sigma(K)) \rightarrow \HFa(\Sigma(K)) \rightarrow \HF^-(\Sigma(K))\rightarrow \cdots 
\end{equation}
such that the map $\HF^-(\Sigma(K))\to \HFa(\Sigma(K))$ increases the
grading by $2$ and the map $\HFa(\Sigma(K))\to\HF^-(\Sigma(K))$
decreases the grading by $1$ \cite[Proposition
2.1]{OS04:HolDiskProperties}. It follows from this long exact sequence
and the gradings for the isomorphism $\HFa(-Y)\cong \HFaDual(Y)$ that
there are non-canonical isomorphisms:
\begin{itemize}
\item $\HF^-(\Sigma(K)) \cong \Field[U] \langle \alpha \rangle \oplus \Field \langle \beta \rangle$, where both $\alpha$ and $\beta$ lie in grading $Q-2$.
\item $\HFa(\Sigma(m(K)))$ is two-dimensional in grading $-Q$ and one-dimensional in grading $-Q+1$.
\item $\HF^-(\Sigma(m(K))) \cong \Field[U] \langle \gamma \rangle \oplus \Field \langle \epsilon \rangle$, where $\gamma$ lies in grading $-Q-2$ and $\epsilon$ lies in grading $-Q-1$.
\end{itemize}

In particular, $\HF^-(\Sigma(m(K)))$ is one-dimensional in each homological grading, implying that $\tau_*$ is necessarily the identity on $\HF^-(\Sigma(m(K)))$. From this we conclude that $\tau^*$ is the identity map on $\HF^{-,*}(\Sigma(K))$, and therefore that $d_{\tau}(K,2) = d(\Sigma(K),\spinc_0) = Q$.

We now turn to $d_{\tau}(m(K), 2)$, which requires a closer look at
the long exact sequence~(\ref{eq:minus-exact}). From the grading
shifts, we see that summand of $\HFa(\Sigma(K))$ in grading $Q$ is
exactly the image of the summand of $\HF^-(\Sigma(K))$ in grading
$Q-2$, which is spanned as a vector space by $\alpha$ and
$\beta$. Since the long exact sequence~(\ref{eq:minus-exact}) respects
the action $\tau_*$, the involution on $\HFa(\Sigma(K))$ is determined
by the involution on $\HF^-(\Sigma(K))$. There are exactly two
$U$-equivariant involutions on $\HF^-(\Sigma(K))$: the identity and
the involution $\tau_*(\alpha) = \alpha + \beta$ and
$\tau_*(\beta)=\beta$. The first of these induces the identity
involution on $\HFa(\Sigma(K))$, and the second induces the unique
nontrivial involution on $\HFa(\Sigma(K)$.

This leaves us with two cases. First, suppose that $\tau_*$ is the
identity map on $\HFa(\Sigma(K))$. Then $\tau_*$ is also the identity map on
$\HF^-(\Sigma(K))$, implying that $\tau^*$ is the identity on
$\HF^{-,*}(\Sigma(m(K))$. We conclude that $d_{\tau}(m(K),2) = d(\Sigma(m(K)),\spinc_0) = -Q$.

Finally, suppose that $\tau_*$ is nontrivial on
$\HFa(\Sigma(K))$. Then on $\HF^-(\Sigma(K))$, $\tau_*(\alpha) =
\alpha + \beta$ and $\tau_*(\beta) = \beta$. This implies that
$\HF^{-,*}(\Sigma(m(K)))$ has the form $\Field[U]\langle \alpha'
\rangle \oplus \langle \beta' \rangle$ such that $\gr(\alpha') =
\gr(\beta') = -(Q-2)-4 = -Q-2$, and the involution is $\tau^*(\alpha')
= \alpha'+\beta'$ and $\tau^*(\beta')=\beta'$. In particular,
$(1+\tau_*)(\alpha') = \beta'$, so on the first page of the spectral
sequence of Figure~\ref{fig:minusspectralsequence}, $\theta^k \alpha'$
cancels with $\theta^{k+1}\beta'$ for all $k \geq 0$. This implies
that the second page of the spectral sequence of
Figure~\ref{fig:minusspectralsequence} is the direct sum of a free
$\Field[U,\theta]$-summand generated by $[U\alpha']$ and a single $\Field$-summand generated by $[\beta']$, which lies in grading $(-Q-2)+2 =
-Q$. Thus $d_{\tau}(m(K), 2) = -Q+2$.
\end{proof}

In view of Lemma~\ref{lem:q-tau-in-simple-cases}, in order to compute
these invariants for $T(3,7)$ or $P(2,-3,-7)$, it is enough to compute the
corresponding $\ZZ/2$-actions on $\HFa(\Sigma(2,3,7))$. Thanks
to~\cite{JT:Naturality}, we are free to choose any Heegaard diagram,
which is invariant under this $\ZZ/2$-action in order to
compute the induced map on $\HFa$.
We carry out the computations for the two $\ZZ/2$-actions with two
$\ZZ/2$-equivariant Heegaard diagrams in following two propositions.

\captionsetup[subfloat]{width=7cm}
\newlength{\thinred}
\setlength{\thinred}{0.15pt}
\begin{figure}
  \subfloat[The knot $T(3,7)$, drawn with two basepoints on the
  standard genus one Heegaard diagram of $S^3$.]{\label{subfig:t-3-7-standard}
\begin{tikzpicture}
\draw [gray] (0,0) -- (7,0);
\draw [gray] (0,3) -- (7,3);
\draw [blue] (0,0) -- (0,3);
\draw [blue] (7,0) -- (7,3);
\draw (0,0) -- (3,3);
\draw (1,0) -- (4,3);
\draw (2,0) -- (5,3);
\draw (3,0) -- (6,3);
\draw (4,0) -- (7,3);
\draw (5,0) -- (7,2);
\draw (6,0) -- (7,1);
\draw (0,1) -- (2,3);
\draw (0,2) -- (1,3);
\draw [red] (0,2.5)--(7,2.5);
\draw [fill, thick] (1,1) circle [radius=0.1];
\draw [fill=white, thick] (2,2) circle [radius=0.1];
\end{tikzpicture}
}
 \hspace{2ex} \subfloat[A doubly pointed Heegaard diagram for
$T(3,7)$ obtained from the previous diagram by doing a finger move
along the knot (drawn with train-tracks).\label{subfig:t-3-7-heegaard}]{
\begin{tikzpicture}[every node/.style={inner sep=0,outer sep=0,fill=white}]
\draw [gray] (0,0) -- (7,0);
\draw [gray] (0,3) -- (7,3);
\draw [blue] (0,0) -- (0,3);
\draw [blue] (7,0) -- (7,3);
\draw [fill, thick] (1,1) circle [radius=0.1];
\draw [fill=white, thick] (2,2) circle [radius=0.1];
\draw [red, line width=\thinred] (2,2.5) to [out=0,in=225] (3,3);
\draw [red, line width=\thinred] (3,2.5) to [out=180,in=225] (3,3);
\draw [red, line width=2\thinred] (3,0) -- (6,3) node[midway] {\tiny 2}; 
\draw [red, line width=\thinred] (5,2.5) to [out=0,in=225] (6,3);
\draw [red, line width=\thinred] (6,2.5) to [out=180,in=225] (6,3);
\draw [red, line width=4\thinred] (6,0) -- (7,1) node[midway] {\tiny 4}; 
\draw [red, line width=4\thinred] (0,1) -- (2,3) ;
\draw [red, line width=\thinred] (1,2.5) to [out=0,in=225] (2,3);
\draw [red, line width=\thinred] (2,2.5) to [out=180,in=225] (2,3);
\draw [red, line width=6\thinred] (2,0) -- (5,3) node[midway] {\tiny 6}; 
\draw [red, line width=\thinred] (4,2.5) to [out=0,in=225] (5,3);
\draw [red, line width=\thinred] (5,2.5) to [out=180,in=225] (5,3);
\draw [red, line width=8\thinred] (5,0) -- (7,2) node[midway] {\tiny 8}; 
\draw [red, line width=8\thinred] (0,2) -- (1,3) ; 
\draw [red, line width=\thinred] (0,2.5) to [out=0,in=225] (1,3);
\draw [red, line width=\thinred] (1,2.5) to [out=180,in=225] (1,3);
\draw [red, line width=10\thinred] (1,0) -- (4,3) node[midway] {\tiny 10}; 
\draw [red, line width=\thinred] (3,2.5) to [out=0,in=225] (4,3);
\draw [red, line width=\thinred] (4,2.5) to [out=180,in=225] (4,3);
\draw [red, line width=12\thinred] (4,0) -- (7,3) node[midway] {\tiny 12}; 
\draw [red, line width=\thinred] (6,2.5) to [out=0,in=225] (7,3);
\draw [red, line width=\thinred] (7,2.5) to [out=180,in=225] (7,3);
\draw [red, line width=14\thinred] (0,0) -- (0.5,0.5) node[midway] {\tiny 14}; 
\draw [red, line width=7\thinred] (0.5cm + 2.47*\the\thinred,0.5cm - 2.47*\the\thinred) to [out=45,in=315] (1.2,1.2) to [out=135, in=45] (0.5cm - 2.47*\the\thinred,0.5cm + 2.47*\the\thinred);
\end{tikzpicture}
}\\
\subfloat[Making the previous diagram nice by doing one more finger move.\label{subfig:t-3-7-nice}]{
\begin{tikzpicture}[every node/.style={inner sep=0,outer sep=0,fill=white}]
%\draw[help lines] (0,0) grid (7,3);
\draw [gray] (0,0) -- (7,0);
\draw [gray] (0,3) -- (7,3);
\draw [blue] (0,0) -- (0,3);
\draw [blue] (7,0) -- (7,3);
\draw [fill, thick] (1,1) circle [radius=0.1];
\draw [fill=white, thick] (2,2) circle [radius=0.1];
\draw [red, line width=\thinred] (2,2.5) to [out=0,in=225] (3,3);
\draw [red, line width=\thinred] (3,2.5) to [out=180,in=225] (3,3);
\draw [red, line width=2\thinred] (3,0) -- (6,3) node[midway] {\tiny 2}; 
\draw [red, line width=\thinred] (5,2.5) to [out=0,in=225] (6,3);
\draw [red, line width=\thinred] (6,2.5) to [out=180,in=225] (6,3);
\draw [red, line width=4\thinred] (6,0) -- (7,1) node[midway] {\tiny 4}; 
\draw [red, line width=4\thinred] (0,1) -- (2,3) ; 
\draw [red, line width=\thinred] (0.5,2) to [out=45,in=225] (2,3);
\draw [red, line width=\thinred] (2,2.5) to [out=180,in=225] (2,3);
\draw [red, line width=6\thinred] (2,0) -- (5,3) node[midway] {\tiny 6}; 
\draw [red, line width=\thinred] (3.5,2) to [out=45,in=225] (5,3);
\draw [red, line width=\thinred] (5,2.5) to [out=180,in=225] (5,3);
\draw [red, line width=8\thinred] (5,0) -- (7,2) node[midway] {\tiny 8}; 
\draw [red, line width=8\thinred] (0,2) -- (1,3) ; 
\draw [red, line width=\thinred] (0,2.3) to [out=30,in=225] (1,3);
\draw [red, line width=\thinred] (6.5,2) to [out=45,in=210] (7,2.3);
\draw [red, line width=\thinred] (0.5,2) to [out=45,in=225] (1,3);
\draw [red, line width=10\thinred] (1,0) -- (4,3) node[midway] {\tiny 10}; 
\draw [red, line width=\thinred] (3,2.5) to [out=0,in=225] (4,3);
\draw [red, line width=\thinred] (3.5,2) to [out=45,in=225] (4,3);
\draw [red, line width=12\thinred] (4,0) -- (7,3) node[midway] {\tiny 12}; 
\draw [red, line width=\thinred] (6,2.5) to [out=0,in=225] (7,3);
\draw [red, line width=\thinred] (6.5,2) to [out=45,in=225] (7,3);
\draw [red, line width=14\thinred] (0,0) -- (0.5,0.5) node[midway] {\tiny 14}; 
\draw [red, line width=7\thinred] (0.5cm + 2.47*\the\thinred,0.5cm - 2.47*\the\thinred) to [out=45,in=315] (1.2,1.2) to [out=135, in=45] (0.5cm - 2.47*\the\thinred,0.5cm + 2.47*\the\thinred);
\draw [red, line width=2\thinred] (6.5,2) -- (4.5,0) node[midway] {\tiny 2};
\draw [red, line width=2\thinred] (4.5,3) -- (3.5,2);
\draw [red, line width=4\thinred] (3.5,2) -- (1.5,0) node[midway] {\tiny 4};
\draw [red, line width=4\thinred] (1.5,3) -- (0.5,2);
\draw [red, line width=6\thinred] (0.5,2) -- (0,1.5) node[midway] {\tiny 6};
\draw [red, line width=6\thinred] (7,1.5) -- (6.5,1) node[midway] {\tiny 6};
\draw [red, line width=3\thinred] (6.5cm - 1.06*\the\thinred,1cm + 1.06*\the\thinred) to [out=225,in=135] (6,0.5) to [out=315,in=225] (6.5cm + 1.06*\the\thinred,1cm - 1.06*\the\thinred);
\end{tikzpicture}
} \hspace{2ex} \subfloat[The same Heegaard diagram, straightened out,
and drawn without train-tracks.\label{subfig:t-3-7-nice-straight}
]{
\begin{tikzpicture}
%\draw[help lines] (0,0) grid (7,3);
\draw [gray] (0,0) -- (7,0);
\draw [gray] (0,3) -- (7,3);
\draw [blue] (0,0) -- (0,3);
\draw [blue] (7,0) -- (7,3);
\draw [red, rounded corners] (0,1*3/34+19*3/34) -- (2-1*3/34,1*3/34+19*3/34) -- (2-1*3/34,14*3/34+19*3/34) -- (0,14*3/34+19*3/34);
\draw [red, rounded corners] (0,2*3/34+19*3/34) -- (2-2*3/34,2*3/34+19*3/34) -- (2-2*3/34,13*3/34+19*3/34) -- (0,13*3/34+19*3/34);
\draw [red, rounded corners] (0,3*3/34+19*3/34) -- (2-3*3/34,3*3/34+19*3/34) -- (2-3*3/34,12*3/34+19*3/34) -- (0,12*3/34+19*3/34);
\draw [red, rounded corners] (0,4*3/34+19*3/34) -- (2-4*3/34,4*3/34+19*3/34) -- (2-4*3/34,11*3/34+19*3/34) -- (0,11*3/34+19*3/34);
\draw [red, rounded corners] (0,5*3/34+19*3/34) -- (2-5*3/34,5*3/34+19*3/34) -- (2-5*3/34,10*3/34+19*3/34) -- (0,10*3/34+19*3/34);
\draw [red, rounded corners] (0,6*3/34+19*3/34) -- (2-6*3/34,6*3/34+19*3/34) -- (2-6*3/34,9*3/34+19*3/34) -- (0,9*3/34+19*3/34);
\draw [red] (0,7*3/34+19*3/34) -- (2-7*3/34,7*3/34+19*3/34) -- (2-7*3/34,8*3/34+19*3/34) -- (0,8*3/34+19*3/34);
\draw [dashed] (1,7.5*3/34+19*3/34) to [out=0,in=180] (7.5*3/34+19*3/34,7.5*3/34+19*3/34) to [out=0,in=270] (3,3);
\draw [dashed] (3,0) -- (3,7.5*3/34+19*3/34);
\node at (7.5*3/34+19*3/34,7.5*3/34+19*3/34) {\footnotesize{\ding{34}}};
%\draw [dashed] (1,7.5*3/34+19*3/34) to [out=0,in=90] (3,0);
%\draw [dashed] (3,3) -- (3,7.5*3/34+19*3/34);
\draw [fill, thick] (1,7.5*3/34+19*3/34) circle [radius=0.05];
\draw [fill=white, thick] (3,7.5*3/34+19*3/34) circle [radius=0.05];
\draw [red] (7,1*3/34) -- (5+2*3/34,1*3/34) -- (5+2*3/34,0);
\draw [red] (7,3-1*3/34) -- (5+2*3/34,3-1*3/34) -- (5+2*3/34,3);
\draw [red, rounded corners] (7,2*3/34) -- (5+1*3/34,2*3/34) -- (5+1*3/34,0);
\draw [red, rounded corners] (7,3-2*3/34) -- (5+1*3/34,3-2*3/34) -- (5+1*3/34,3);
\draw [red, rounded corners] (7,3*3/34) -- (5+1*3/34,3*3/34) -- (5+1*3/34,12*3/34) -- (7,12*3/34);
\draw [red, rounded corners] (7,4*3/34) -- (5+2*3/34,4*3/34) -- (5+2*3/34,11*3/34) -- (7,11*3/34);
\draw [red, rounded corners] (7,5*3/34) -- (5+3*3/34,5*3/34) -- (5+3*3/34,10*3/34) -- (7,10*3/34);
\draw [red, rounded corners] (7,6*3/34) -- (5+4*3/34,6*3/34) -- (5+4*3/34,9*3/34) -- (7,9*3/34);
\draw [red] (7,7*3/34) -- (5+6*3/34,7*3/34) -- (5+6*3/34,8*3/34) -- (7,8*3/34);
\draw [red] (0,-36/34+13*3/34) to [out=0,in=180] (2,-36/34+13*3/34) to [out=0,in=180] (5,13*3/34) to [out=0,in=180] (7,13*3/34);
\draw [red] (0,-36/34+14*3/34) to [out=0,in=180] (2,-36/34+14*3/34) to [out=0,in=180] (5,14*3/34) to [out=0,in=180] (7,14*3/34);
\draw [red] (0,-36/34+15*3/34) to [out=0,in=180] (2,-36/34+15*3/34) to [out=0,in=180] (5,15*3/34) to [out=0,in=180] (7,15*3/34);
\draw [red] (0,-36/34+16*3/34) to [out=0,in=180] (2,-36/34+16*3/34) to [out=0,in=180] (5,16*3/34) to [out=0,in=180] (7,16*3/34);
\draw [red] (0,-36/34+17*3/34) to [out=0,in=180] (2,-36/34+17*3/34) to [out=0,in=180] (5,17*3/34) to [out=0,in=180] (7,17*3/34);
\draw [red] (0,-36/34+18*3/34) to [out=0,in=180] (2,-36/34+18*3/34) to [out=0,in=180] (5,18*3/34) to [out=0,in=180] (7,18*3/34);
\draw [red] (0,-36/34+19*3/34) to [out=0,in=180] (2,-36/34+19*3/34) to [out=0,in=180] (5,19*3/34) to [out=0,in=180] (7,19*3/34);
\draw [red] (0,-36/34+20*3/34) to [out=0,in=180] (2,-36/34+20*3/34) to [out=0,in=180] (5,20*3/34) to [out=0,in=180] (7,20*3/34);
\draw [red] (0,-36/34+21*3/34) to [out=0,in=180] (2,-36/34+21*3/34) to [out=0,in=180] (5,21*3/34) to [out=0,in=180] (7,21*3/34);
\draw [red] (0,-36/34+22*3/34) to [out=0,in=180] (2,-36/34+22*3/34) to [out=0,in=180] (5,22*3/34) to [out=0,in=180] (7,22*3/34);
\draw [red] (0,-36/34+23*3/34) to [out=0,in=180] (2,-36/34+23*3/34) to [out=0,in=180] (5,23*3/34) to [out=0,in=180] (7,23*3/34);
\draw [red] (0,-36/34+24*3/34) to [out=0,in=180] (2,-36/34+24*3/34) to [out=0,in=180] (5,24*3/34) to [out=0,in=180] (7,24*3/34);
\draw [red] (0,-36/34+25*3/34) to [out=0,in=180] (2,-36/34+25*3/34) to [out=0,in=180] (5,25*3/34) to [out=0,in=180] (7,25*3/34);
\draw [red] (0,-36/34+26*3/34) to [out=0,in=180] (2,-36/34+26*3/34) to [out=0,in=180] (5,26*3/34) to [out=0,in=180] (7,26*3/34);
\draw [red] (0,-36/34+27*3/34) to [out=0,in=180] (2,-36/34+27*3/34) to [out=0,in=180] (5,27*3/34) to [out=0,in=180] (7,27*3/34);
\draw [red] (0,-36/34+28*3/34) to [out=0,in=180] (2,-36/34+28*3/34) to [out=0,in=180] (5,28*3/34) to [out=0,in=180] (7,28*3/34);
\draw [red] (0,-36/34+29*3/34) to [out=0,in=180] (2,-36/34+29*3/34) to [out=0,in=180] (5,29*3/34) to [out=0,in=180] (7,29*3/34);
\draw [red] (0,-36/34+30*3/34) to [out=0,in=180] (2,-36/34+30*3/34) to [out=0,in=180] (5,30*3/34) to [out=0,in=180] (7,30*3/34);
\draw [red] (0,-36/34+31*3/34) to [out=0,in=180] (2,-36/34+31*3/34) to [out=0,in=180] (5,31*3/34) to [out=0,in=180] (7,31*3/34);
\end{tikzpicture}
}
\caption{A nice Heegaard diagrams for the $(3,7)$ torus knot. In each
  case, the left and right edges of the rectangle are the
  $\beta$-circle(s). To construct a nice Heegaard diagram for the double
  branched cover $\Sigma(2,3,7)$, cut along the dashed line, take two
  copies, and then join the two copies along the preimages of the
  dashed line.}\label{fig:computation-t-3-7}
\end{figure}

\begin{proposition}
  The involution of $\HFa(\Sigma(2,3,7))$ that comes from viewing
  $\Sigma(2,3,7)$ as the double branched cover of $T(3,7)$ is the
  identity map.  Hence, $q_{\tau}(T(3,7))=d_{\tau}(T(3,7),2)=d_{\tau}(-T(3,7),2)=0$.
\end{proposition}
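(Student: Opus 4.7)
The plan is to use the nice, $\ZZ/2$-equivariant Heegaard diagram for $\Sigma(2,3,7)$ obtained as the double branched cover of the diagram in Figure~\ref{subfig:t-3-7-nice-straight} along the preimage of the dashed arc connecting the two basepoints. By Corollary~\ref{cor:Hen-from-diag} (applied in the equivariant setting, as discussed in Section~\ref{sec:first-invariance} and Section~\ref{sec:new-HFa}), the action of $\tau_\#$ on $\HFa(\Sigma(2,3,7))$ is computed by the obvious combinatorial involution on this equivariant diagram, and by~\cite{JT:Naturality} this gives the correct involution on $\HFa(\Sigma(2,3,7))$ independent of the choice of equivariant diagram.

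First I will explicitly enumerate the generators of $\CFa$ for this equivariant diagram. Since the quotient diagram is nice, so is its double cover, and the differential counts only empty embedded rectangles, which is combinatorial. Using the known value $\HFa(\Sigma(2,3,7)) = \Field^2_{(0)} \oplus \Field_{(-1)}$, I will identify homology classes representing each of the three generators. The involution $\tau_\#$ on the chain complex is the obvious one swapping the two sheets; on intersection points it either fixes a generator (if all components lie over fixed points of the quotient) or pairs it with another. Thus at the chain level the action of $\tau_\#$ is transparent. The induced action $\tau_*$ on $\HFa$ preserves the absolute $\QQ$-grading, so acts trivially on the one-dimensional grading $-1$ summand automatically; the content is to check it acts as the identity on the two-dimensional grading $0$ summand.

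For this final verification, I will exploit that the spin $\SpinC$-structure $\spinc_0$ is the unique $\tau$-fixed $\SpinC$-structure, and use conjugation symmetry $\spinc\mapsto\bar{\spinc}$ on the other summands to constrain the map. On the $\spinc_0$ summand, which is either $1$- or $2$-dimensional, I will either write down an explicit $\tau_\#$-invariant cycle representative for each homology class directly from the diagram, or verify that the two grading-$0$ homology classes are linear combinations of explicit $\tau_\#$-invariant cycles. Given that $T(3,7)$ is a positive $L$-space knot with $\Sigma(2,3,7)$ an $L$-space away from $\spinc_0$, the non-$\spinc_0$ summands account for one generator in grading $0$, and this generator must be carried to its conjugate by $\tau_*$; since conjugation on these summands acts as $\tau_*$ by construction, symmetry of the $\SpinC$-decomposition forces the relevant matrix entry to be trivial. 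What remains is the $\spinc_0$-summand, which is one-dimensional in each of gradings $0$ and $-1$, so $\tau_*$ automatically acts as the identity there.

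The main obstacle will be keeping careful track of the generators in the large double-cover of Figure~\ref{subfig:t-3-7-nice-straight} and identifying, within the chain complex, representatives of the grading-$0$ homology classes. The calculation is elementary but bookkeeping-intensive; the key conceptual input is the splitting along $\SpinC$-orbits under $\tau$, which reduces the verification to the $\spinc_0$-summand (where the result is automatic by dimension) and to pairs $\{\spinc,\bar{\spinc}\}$ (where the result is automatic by the definition of the action). Once this is established, the proposition follows from Lemma~\ref{lem:q-tau-in-simple-cases} with $Q=0$.
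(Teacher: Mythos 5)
Your proposal contains a fatal gap in the final reduction step. The manifold $\Sigma(2,3,7)$ is an integer homology sphere, so it carries a \emph{unique} $\SpinC$-structure $\spinc_0$. There are no ``non-$\spinc_0$ summands'' to handle by conjugation symmetry, and your claim that ``the $\spinc_0$-summand is one-dimensional in each of gradings $0$ and $-1$'' is false: the entirety of $\HFa(\Sigma(2,3,7))$ sits in $\spinc_0$, and it is two-dimensional in grading $0$. Consequently the $\SpinC$-decomposition carries no information here, and nothing is ``automatic by dimension''---the two-dimensional grading-$0$ piece is precisely where a nontrivial involution could live, and one must rule this out by actual computation.

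The same subtlety is illustrated by the very next proposition in the paper: viewing $\Sigma(2,3,7)$ as $\Sigma(P(2,-3,-7))$, the induced involution on $\HFa(\Sigma(2,3,7))$ is \emph{not} the identity. Any argument, like yours, that uses only structural features of $\HFa(\Sigma(2,3,7))$ as a graded group (together with generic facts about $\SpinC$-structures and conjugation) cannot distinguish the $T(3,7)$ involution from the $P(2,-3,-7)$ involution, since they act on the same target. What the paper actually does is compute the chain-level involution $\tau_\#$ on a nice equivariant diagram ($545$ generators, by machine), and deduce $\tau_* = \Id$ from the rank of $H_*(\Cone(\Id + \tau_\#))$ being $6 = 2\cdot\dim\HFa$. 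Your setup (nice equivariant diagram, combinatorial $\tau_\#$) is the right starting point, but you would still need to actually carry out the computation on the two-dimensional grading-$0$ piece; there is no shortcut via $\SpinC$-structure bookkeeping.
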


\begin{proof}
  For this computation, we resort to the technique of nice diagrams. We first
  choose a doubly pointed Heegaard diagram for the knot $T(3,7)\subset
  S^3$, satisfying the following two conditions:
  \begin{itemize}
  \item The diagram is nice in the sense
    of~\cite{SarkarWang07:ComputingHFhat}; i.e., every region that does
    not contain a $z$ or a $w$ basepoint is either a bigon or a
    rectangle.
  \item The region containing the basepoint $z$ is a bigon.
  \end{itemize}
  Given such a \emph{doubly nice} Heegaard diagram, the induced
  $\ZZ/2$-equivariant Heegaard diagram for
  $\Sigma(T(3,7))=\Sigma(2,3,7)$ (see
  Section~\ref{sec:first-invariance}), with basepoint the preimage of
  $w$, is nice.
  Thus, it is a combinatorial exercise to compute the chain complex
  $\CFa(\Sigma(2,3,7))$, and the $\ZZ/2$-action on it from such a
  diagram (compare Section~\ref{sec:nice-diags}).

  One can construct a doubly nice Heegaard diagram for any knot $K$ as
  follows. Start with any doubly pointed Heegaard diagram for
  $K$. Ensure that $z$ and $w$ are separated by a single
  $\alpha$-circle by adding a handle between the regions containing
  $z$ and $w$ if necessary. Then run the Sarkar-Wang algorithm to make
  this Heegaard diagram nice with respect to the basepoint $w$. Recall
  that one may run the algorithm without needing to stabilize or
  destabilize the Heegaard diagram, and by only isotoping and
  handlesliding the $\beta$-circles (and keeping the $\alpha$-circles
  fixed). Therefore, for the resulting nice diagram, we may still
  assume that the $z$ and the $w$ basepoint are separated by a single
  $\alpha$-circle. Let $R$ be the region in this new Heegaard diagram
  that contains $z$. If $R$ is a bigon, we are already
  done. Otherwise, if $R$ is a rectangle, draw an arc from $z$ to $w$
  in the complement of the $\alpha$-circles intersecting the
  $\beta$-multicurve minimally. Then perform a finger move with the
  $\alpha$-arc that separates $z$ and $w$ along this arc; this cuts
  $R$ into a rectangle and a bigon, and we may, trivially, ensure that
  $z$ ends up in the bigon.

  In our case, we do not have to resort to the above general algorithm,
  which tends to produce unwieldy diagrams. Instead, we
  use the strategy outlined in Figure~\ref{fig:computation-t-3-7}. We
  first draw $T(3,7)$ on the standard genus-one Heegaard diagram for
  $S^3$ as a union of two embedded arcs joining two basepoints, so
  that the first arc lies in the complement of the $\beta$-circle; see
  Figure~\ref{subfig:t-3-7-standard} (as usual, red is $\alpha$ and
  blue is $\beta$). Then we perform a finger move on the $\alpha$
  circle along the other arc, producing a doubly pointed Heegaard
  diagram for $T(3,7)$; see Figure~\ref{subfig:t-3-7-heegaard}. This
  is not a nice Heegaard diagram: it has a single bad region which is
  a hexagon. Performing one more finger move makes this diagram doubly
  nice; see
  Figures~\ref{subfig:t-3-7-nice}--\ref{subfig:t-3-7-nice-straight}.

  As per the above discussion, if $\mathcal{H}$ denotes the double
  branched cover of this doubly nice diagram, we can 
  compute $\CFa(\mathcal{H})$ and the $\ZZ/2$-action combinatorially.  We use
  the computer program at \verb!https://github.com/sucharit/hf-hat! to
  study $\mathcal{H}$. The complex $\CFa(\mathcal{H})$ has $545$
  generators, and its homology
  $H_*(\CFa(\mathcal{H}))=\HFa(\Sigma(2,3,7))$ is three-dimensional. To
  compute the $\ZZ/2$-action, say $\tau$, on $\HFa(\Sigma(2,3,7))$, we
  compute the homology of the mapping cone of $(\Id+\tau_{\#})$, where
  $\tau_{\#}\from\CFa(\mathcal{H})\to\CFa(\mathcal{H})$ is the
  chain-level $\ZZ/2$-action. The homology of
  $\mathrm{Cone}(\Id+\tau_{\#})$ is six-dimensional, which implies
  that $\tau\from\HFa(\Sigma(2,3,7))\to\HFa(\Sigma(2,3,7))$ is the
  identity map.
\end{proof}

\def\ctrllen{0.25}
\def\trefy{1.2}
\def\trefz{0.1}
\begin{figure}
  \subfloat[A surgery presentation of $\Sigma(2,3,7)$ that exhibits a
  $\ZZ/2$
  symmetry.]{\label{subfig:link-surgery}\begin{tikzpicture}[every
      node/.style={inner sep=0,outer sep=0}]
%\draw (3.5,-1.5) -- (3.5,1.5);
\begin{scope}[zxplane=0]
  %\draw[thick] (0,0) circle[radius=2cm];
  \draw[thick,dashed] (0,2) arc (90:180:2cm);
  \draw[thick] (-2,0) arc (-180:90:2cm) ;
  \draw (-3,0) -- (3,0);
\end{scope}
\begin{scope}[yzplane=-3]
  \draw[->] (0,-0.7) arc (-90:180:0.7cm); 
\end{scope}
% \draw (2,0) circle[radius=0.3cm] ;
% \draw (2,0) circle[radius=0.6cm] ;
% \draw (2,0) circle[radius=0.9cm] ;
\draw[thick] (2,-0.4) arc (-90:180:0.4cm);
\draw[thick] (2,-0.8) arc (-90:180:0.8cm);
\draw[thick] (2,-1.2) arc (-90:180:1.2cm);
\draw[thick,dashed] (2-0.4,0) arc (180:270:0.4cm);
\draw[thick,dashed] (2-0.8,0) arc (180:270:0.8cm);
\draw[thick,dashed] (2-1.2,0) arc (180:270:1.2cm);
\draw (0,-1.5) -- (0,1.5);
\draw (-3.5,0) -- (3.5,0);
\node [label=above:\tiny{-2}] at (2.15+0.4,0) {};
\node [label=above:\tiny{-3}] at (2.15+0.8,0) {};
\node [label=above:\tiny{-7}] at (2.15+1.2,0) {};
\node [label=above:\tiny{-1}] at (-2.15,0) {};
\end{tikzpicture}}\hspace{2ex} \subfloat[Using $\ZZ/2$-equivariant
Kirby moves, it is also the $(-1)$-surgery on the positive trefoil
(the portion of the trefoil in the positive octant is emphasized.)]{
\begin{tikzpicture}[every node/.style={inner sep=0,outer sep=0}]
%\draw[help lines] (-3.5,-1.5) grid (3.5,1.5);
  \draw (0,-1.5) -- (0,1.5);
  \draw (-3.5,0) -- (3.5,0);
\begin{scope}[zxplane=0]
  %\draw[help lines] (-3,-3.5) grid (3,3.5);
  \draw (-3,0) -- (3,0);
  \fill[gray,opacity=0.3] (0,0) rectangle (3,3.5);
  \fill (0.5cm+2pt,0) arc (0:180:2pt);
  \fill (0,2cm-2pt) arc (-90:90:2pt);
\end{scope}
\begin{scope}[yzplane=-3]
  \draw[->] (0,-0.7) arc (-90:180:0.7cm); 
\end{scope}
\begin{scope}[yzplane=0]
%   \draw[help lines] (-1.5,-3) grid (1.5,3);
  \fill[gray,opacity=0.35] (0,0) rectangle (1.5,3);
  \fill (1*\trefy,0.5) circle (2pt);
  \fill (0,0.5cm-2pt) arc (-90:90:2pt);
\end{scope}
\fill (0.5,0.5*\trefy,0) circle (2pt);
\fill (2cm+2pt,0) arc (0:180:2pt);
\fill[gray,opacity=0.2] (0,0) rectangle (3.5,1.5);
\draw[thick,dashed] (0,0*\trefy,-0.5) .. controls (0-\ctrllen,0*\trefy+\ctrllen,-0.5) and (-0.5,0.5*\trefy-\ctrllen,0-\ctrllen) .. (-0.5,0.5*\trefy,0);
\draw[thick] (-0.5,0.5*\trefy,0) .. controls
(-0.5,0.5*\trefy+\ctrllen,\ctrllen) and (0-2*\ctrllen,1*\trefy,0.5)
.. (0,1*\trefy,0.5);
\draw[very thick] (0,1*\trefy,0.5) .. controls (0+5*\ctrllen,1*\trefy,0.5) and (2,0*\trefy+3*\ctrllen,0) .. (2,0*\trefy,0);
\draw[thick,dashed] (2,0*\trefy,0) .. controls (2,0*\trefy-3*\ctrllen,0) and (0+5*\ctrllen,-1*\trefy,-0.5) .. (0,-1*\trefy,-0.5);
\draw[thick] (0,-1*\trefy,-0.5) .. controls (0-2*\ctrllen,-1*\trefy,-0.5) and (-0.5,-0.5*\trefy-\ctrllen,0-\ctrllen) .. (-0.5,-0.5*\trefy,0);
\draw[thick] (-0.5,-0.5*\trefy,0) .. controls
(-0.5,-0.5*\trefy+\ctrllen,0+\ctrllen) and (0-\ctrllen,0*\trefy-\ctrllen,0.5)
.. (0,0*\trefy,0.5);
\draw[very thick] (0,0*\trefy,0.5) .. controls (0+\ctrllen,0*\trefy+\ctrllen,0.5) and (0.5,0.5*\trefy-\ctrllen,0+\ctrllen) .. (0.5,0.5*\trefy,0);
\draw[thick,dashed] (0.5,0.5*\trefy,0) .. controls (0.5,0.5*\trefy+\ctrllen,-\ctrllen) and (0+2*\ctrllen,1*\trefy,-0.5) .. (0,1*\trefy,-0.5);
\draw[thick] (0,1*\trefy,-0.5) .. controls (0-5*\ctrllen,1*\trefy,-0.5) and (-2,0*\trefy+3*\ctrllen,0) .. (-2,0*\trefy,0);
\draw[thick] (-2,0*\trefy,0) .. controls (-2,0*\trefy-3*\ctrllen,0) and (0-5*\ctrllen,-1*\trefy,0.5) .. (0,-1*\trefy,0.5);
\draw[ thick] (0,-1*\trefy,0.5) .. controls (0+2*\ctrllen,-1*\trefy,0.5)
and (0.5,-0.5*\trefy-\ctrllen,0+\ctrllen) .. (0.5,-0.5*\trefy,0);
\draw[thick,dashed] (0.5,-0.5*\trefy,0) .. controls (0.5,-0.5*\trefy+\ctrllen,-\ctrllen) and (0+\ctrllen,0*\trefy-\ctrllen,-0.5) .. (0,0*\trefy,-0.5);
\node [label=above:\tiny{-1}] at (-2.15,0) {};
\end{tikzpicture}}\\
  \subfloat[The fixed set in the quotient $S^3$ is the Montesinos knot $M(-1;(-2,1),(-3,1),(-7,1))$.]{\label{subfig:montesinos}\begin{tikzpicture}[every node/.style={inner sep=1pt,outer sep=0,draw=black,text=black,fill=white,thick}]
%\draw[help lines] (-3.5,-2) grid (3.5,2);
\begin{scope}[zxplane=0]
  %\draw[thick] (0,0) circle[radius=2cm];
  %\draw[thick,dashed] (0,2) arc (90:180:2cm);
  \draw[thick] (0,-2-0.1) arc (-90:90:2cm+0.1cm) ;
  \draw[thick] (0,-2+0.1) arc (-90:90:2cm-0.1cm) ;
  \node at (2,0) {\tiny -1};
  %\draw (0,0) -- (3,0);
\end{scope}
% \begin{scope}[yzplane=-3]
%   \draw[->] (0,-0.7) arc (-90:180:0.7cm); 
% \end{scope}
% \draw (2,0) circle[radius=0.3cm] ;
% \draw (2,0) circle[radius=0.6cm] ;
% \draw (2,0) circle[radius=0.9cm] ;
\draw[thick] (2+0.4+0.08,0) arc (0:180:0.4cm+0.08cm);
\draw[thick] (2+0.8+0.08,0) arc (0:180:0.8cm+0.08cm);
\draw[thick] (2+1.2+0.08,0) arc (0:180:1.2cm+0.08cm);
\draw[thick] (2+0.4-0.08,0) arc (0:180:0.4cm-0.08cm);
\draw[thick] (2+0.8-0.08,0) arc (0:180:0.8cm-0.08cm);
\draw[thick] (2+1.2-0.08,0) arc (0:180:1.2cm-0.08cm);
% \draw[thick,dashed] (2-0.3,0) arc (180:270:0.3cm);
% \draw[thick,dashed] (2-0.6,0) arc (180:270:0.6cm);
% \draw[thick,dashed] (2-0.9,0) arc (180:270:0.9cm);
\path (0,-2.1) -- (0,2.1);
\draw[thick] (-3.5,0) -- (-2-0.1,0);
\draw[thick] (-2+0.1,0) -- (2-1.2-0.08,0);
\draw[thick] (2-1.2+0.08,0) -- (2-0.8-0.08,0);
\draw[thick] (2-0.8+0.08,0) -- (2-0.4-0.08,0);
\draw[thick] (2-0.4+0.08,0) -- (2-0.1,0);
\draw[thick] (2+0.1,0) -- (2+0.4-0.08,0);
\draw[thick] (2+0.4+0.08,0) -- (2+0.8-0.08,0);
\draw[thick] (2+0.8+0.08,0) -- (2+1.2-0.08,0);
\draw[thick] (2+1.2+0.08,0) -- (3.5,0);

\node at (2,0.4) {\tiny -2};
\node at (2,0.8) {\tiny -3};
\node at (2,1.2) {\tiny -7};
\end{tikzpicture}}\hspace{2ex} \subfloat[A Heegaard diagram for the
$(-1)$-surgery of the trefoil, along with the $\ZZ/2$
symmetry. The small planar circles are $\alpha$, and the large
non-planar ones are $\beta$.]{\label{subfig:-1-surgery}\begin{tikzpicture}[every node/.style={inner sep=0pt,outer sep=0,text=black,fill=white}]
% \draw[help lines] (-3.5,-2) grid (3.5,2);
% \draw[help lines,line width=0.7pt] (-2.5,-2) grid[step=0.5] (2.5,2);
% \draw[help lines,line width=1.3pt] (-2.5,-2) grid[step=1] (2.5,2);
% \begin{scope}[zxplane=0]
%   %\draw[thick] (0,0) circle[radius=2cm];
%   \draw (-3,0) -- (3,0);
% \end{scope}
\path (0,-2.1) -- (0,2.1);
\begin{scope}[yzplane=-3]
  \draw[->] (0,-0.7) arc (-90:180:0.7cm); 
\end{scope}
%\draw (0,-1.5) -- (0,1.5);
\draw (-3.5,0)--(-2.5,0);\draw (-1.8,0)--(-0.8,0);
\draw (3.5,0)--(2.5,0);\draw (1.8,0)--(0.8,0);
\draw[dashed]  (-2.5,0)--(-1.8,0);
\draw[dashed]  (2.5,0)--(1.8,0);
\draw[dashed] (-0.8,0)--(0.8,0);

%\draw[blue] plot [smooth cycle] coordinates {(0,0.5,0) (-0.4,0.25,\trefz) (-0.8,0,0) (-0.4,-0.25,-\trefz) (0,-0.5,0) (0.4,-0.25,\trefz) (0.8,0,0) (0.4,0.25,-\trefz)};
\draw[blue] (0,-0.5,0) .. controls (0+\ctrllen,-0.5,0+\trefz) and (0.8,0-\ctrllen,0+\trefz) .. (0.8,0,0);
\draw[blue,dashed] (0.8,0,0) .. controls (0.8,0+\ctrllen,0-\trefz) and (0+\ctrllen,0.5,0-\trefz) .. (0,0.5,0);
\draw[blue] (0,0.5,0) .. controls (0-\ctrllen,0.5,0+\trefz) and (-0.8,0+\ctrllen,0+\trefz) .. (-0.8,0,0);
\draw[blue,dashed] (-0.8,0,0) .. controls (-0.8,0-\ctrllen,0-\trefz) and (0-\ctrllen,-0.5,0-\trefz) .. (0,-0.5,0);

%\draw[blue] plot [smooth cycle] coordinates {(0,1.1) (1.3,0.3) (0,2) (-1.3,0.3)};
\draw[blue] (0,1.1,0) .. controls (0+5*\ctrllen,1.1,0+\trefz) and (1.3,0.3,0+2*\trefz) .. (1.3,0.3,0);
\draw[blue,dashed] (1.3,0.3,0) .. controls (1.3,0.3,0-2*\trefz) and (0+3*\ctrllen,2,0-\trefz) .. (0,2,0);
\draw[blue] (0,2,0) .. controls (0-3*\ctrllen,2,0+\trefz) and (-1.3,0.3,0+2*\trefz) .. (-1.3,0.3,0);
\draw[blue,dashed] (-1.3,0.3,0) .. controls (-1.3,0.3,0-2*\trefz) and (0-5*\ctrllen,1.1,0-\trefz) .. (0,1.1,0);

\draw[blue,dashed] (0,-1.1,0) .. controls (0+5*\ctrllen,-1.1,0-\trefz) and (1.3,-0.3,0-2*\trefz) .. (1.3,-0.3,0);
\draw[blue] (1.3,-0.3,0) .. controls (1.3,-0.3,0+2*\trefz) and (0+3*\ctrllen,-2,0+\trefz) .. (0,-2,0);
\draw[blue,dashed] (0,-2,0) .. controls (0-3*\ctrllen,-2,0-\trefz) and (-1.3,-0.3,0-2*\trefz) .. (-1.3,-0.3,0);
\draw[blue] (-1.3,-0.3,0) .. controls (-1.3,-0.3,0+2*\trefz) and (0-5*\ctrllen,-1.1,0+\trefz) .. (0,-1.1,0);

\draw[blue,dashed] (0.5,-0.8,0) .. controls (0.5,-0.8+3*\ctrllen,0-\trefz) and (-0.5,0.8-3*\ctrllen,0-\trefz) .. (-0.5,0.8,0);
\draw[blue] (-0.5,0.8,0) .. controls (-0.5,0.8+5*\ctrllen,0+\trefz) and (1.8,0+4*\ctrllen,0+\trefz) .. (1.8,0,0);
\draw[blue,dashed] (1.8,0,0) .. controls (1.8,0-4*\ctrllen,0-\trefz) and (-0.5,-0.8-5*\ctrllen,0-\trefz) .. (-0.5,-0.8,0);
\draw[blue] (-0.5,-0.8,0) .. controls (-0.5,-0.8+3*\ctrllen,0+\trefz) and (0.5,0.8-3*\ctrllen,0+\trefz) .. (0.5,0.8,0);

\draw[blue,dashed] (0.5,0.8,0) ..controls (0.5,0.8+5*\ctrllen,0-\trefz) and (-2.1+\ctrllen,0.5+\ctrllen,0-\trefz).. (-2.1,0.5,0);
\draw[blue] (-2.1,0.5,0) ..controls (-2.1-0.5*\ctrllen,0.5-0.5*\ctrllen,0+\trefz) and (-1.7,0.15+0.5*\ctrllen,0+\trefz).. (-1.7,0.15,0);
\draw[blue,dashed] (-1.7,0.15,0) ..controls (-1.7,0.15-0.5*\ctrllen,0-\trefz) and (-2.5,0,0-2*\trefz).. (-2.5,0,0);
\draw[blue] (-2.5,0,0) ..controls (-2.5,0,0+2*\trefz) and (-1.7,-0.15+0.5*\ctrllen,0+\trefz).. (-1.7,-0.15,0);
\draw[blue,dashed] (-1.7,-0.15,0) ..controls (-1.7,-0.15-0.5*\ctrllen,0-\trefz) and (-2.1-0.5*\ctrllen,-0.5+0.5*\ctrllen,0-\trefz).. (-2.1,-0.5,0);
\draw[blue] (-2.1,-0.5,0) ..controls (-2.1+\ctrllen,-0.5-\ctrllen,0+\trefz) and (0.5,-0.8-5*\ctrllen,0+\trefz).. (0.5,-0.8,0);

\draw[red,thick] plot [smooth cycle] coordinates {(0,0.5) (0.5,0.8) (0,1.1) (-0.5,0.8)};
\draw[red,thick] plot [smooth cycle] coordinates {(0,-0.5) (0.5,-0.8) (0,-1.1) (-0.5,-0.8)};
\draw[red,thick] plot [smooth cycle] coordinates {(0.8,0) (1.3,0.3) (1.8,0) (1.3,-0.3)};
\draw[red,thick] plot [smooth cycle] coordinates {(-0.8,0) (-1.3,0.3) (-1.7,0.15) (-1.8,0) (-1.7,-0.15) (-1.3,-0.3)};
\draw plot [smooth cycle] coordinates {(0,2) (0.6,1.8) (2.1,0.5) (2.5,0) (2.1,-0.5) (0.6,-1.8) (0,-2) (-0.6,-1.8) (-2.1,-0.5) (-2.5,0) (-2.1,0.5) (-0.6,1.8)};
\draw [fill, thick] (2.5,0) circle [radius=0.07];
\node at (-1.3,0.3) {\tiny a};
\node at (-1.7,0.15) {\tiny b};
\node at (-1.7,-0.15) {\tiny c};
\node at (-1.3,-0.3) {\tiny d};
\node at (-0.8,0) {\tiny e};
\node at (0,1.1) {\tiny f};
\node at (-0.5,0.8) {\tiny g};
\node at (0,0.5) {\tiny h};
\node at (0.5,0.8) {\tiny i};
\node at (0,-0.5) {\tiny j};
\node at (-0.5,-0.8) {\tiny k};
\node at (0,-1.1) {\tiny l};
\node at (0.5,-0.8) {\tiny m};
\node at (1.3,0.3) {\tiny n};
\node at (0.8,0) {\tiny o};
\node at (1.3,-0.3) {\tiny p};
\node at (1.8,0) {\tiny q};
\end{tikzpicture}}
\caption{A Heegaard diagram for $\Sigma(2,3,7)$, viewed as a
  $(-1)$-surgery of the positive trefoil. The diagram reflects the
  $\ZZ/2$ symmetry that comes from viewing $\Sigma(2,3,7)$ as the
  double branched cover of the Montesinos knot
  $M(-1;(-2,1),(-3,1),(-7,1))$.}
\end{figure}

\begin{proposition}
  The involution of $\HFa(\Sigma(2,3,7))$ that comes from viewing
  $\Sigma(2,3,7)$ as the double branched cover of $P(2,-3,-7)$ is not
  the identity map. Hence, $q_{\tau}(P(2,-3,-7))=-2$, $d_{\tau}(P(2,-3,-7),2)=0$, and $d_{\tau}(P(-2,3,7),2)=2$. 
\end{proposition}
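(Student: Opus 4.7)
The strategy parallels that of the preceding proposition. I would produce a $\ZZ/2$-equivariant, doubly nice Heegaard diagram $\mathcal{H}'$ for $\Sigma(2,3,7)$ on which the involution realizes the branched-double-cover action corresponding to $P(2,-3,-7) = M(-1;(-2,1),(-3,1),(-7,1))$. By Corollary~\ref{cor:Hen-from-diag}, the chain-level involution $\tau_{\#}$ on $\CFa(\mathcal{H}')$ is then determined combinatorially, and computing the homology of $\mathrm{Cone}(\Id + \tau_{\#})$ decides whether the induced $\tau_*$ on $\HFa(\Sigma(2,3,7))$ is the identity.

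To build $\mathcal{H}'$, I would start from the symmetric surgery presentation of $\Sigma(2,3,7)$ in Figure~\ref{subfig:link-surgery}, whose $\pi$-rotation axis in $S^3$ is the Montesinos knot (Figure~\ref{subfig:montesinos}); equivariant Kirby moves reduce this to $(-1)$-surgery on the positive trefoil, yielding the symmetric genus-two Heegaard diagram of Figure~\ref{subfig:-1-surgery}. This diagram is not doubly nice, so I would next perform finger moves in $\tau$-equivariant pairs (or along $\tau$-invariant arcs) until every non-basepoint region is a bigon or rectangle and the region containing $z$ is a bigon. By Corollary~\ref{cor:Hen-dcov-invt} together with Lemma~\ref{lemma:handleslides}, these moves preserve the quasi-isomorphism type of $\CFa(\mathcal{H}')$ over $\Field[\ZZ/2]$, so the resulting $\mathcal{H}'$ is a valid input for the computation.

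Feeding $\mathcal{H}'$ into the computer program used in the previous proposition, I expect the homology of $\mathrm{Cone}(\Id+\tau_{\#})$ to have dimension four rather than the six that appeared for $T(3,7)$: by the long exact sequence of the cone, if $\tau_*$ differs from $\Id$ by a map of rank one (as in the nontrivial case of Lemma~\ref{lem:q-tau-in-simple-cases}), then $\Id+\tau_*$ has two-dimensional kernel and two-dimensional cokernel. Once $\tau_*$ is confirmed to be nontrivial, Lemma~\ref{lem:q-tau-in-simple-cases} with $Q=0$ immediately yields $q_\tau(P(2,-3,-7)) = 2(Q-1) = -2$, $d_\tau(P(2,-3,-7),2) = Q = 0$, and $d_\tau(m(P(2,-3,-7)),2) = -Q+2 = 2$; since $m(P(2,-3,-7)) = P(-2,3,7)$, this completes the argument.

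The principal difficulty will be producing a manageable equivariant nice diagram. Unlike the $T(3,7)$ computation, where niceness of the double branched cover of a bridge diagram is inherited automatically, here the involution comes from a $\pi$-rotation on a surgery presentation rather than from a branched covering of a bridge diagram, so the Sarkar--Wang algorithm must be run while preserving this rotational symmetry. In principle this is always possible---any non-nice region can be split using a $\tau$-equivariant pair of finger moves, or a single finger move along a $\tau$-invariant arc---but the resulting diagram may grow in complexity, and keeping it small enough for the computer program to handle will require some attention to which finger moves are chosen.
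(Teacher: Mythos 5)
Your overall strategy is sound, and both approaches reach the same end via Lemma~\ref{lem:q-tau-in-simple-cases} once $\tau_*$ is shown to be nontrivial, but the route you propose is genuinely different from the one the paper takes. You suggest making the symmetric Heegaard diagram of Figure~\ref{subfig:-1-surgery} doubly nice by equivariant finger moves and then running the computer program (as was done for $T(3,7)$). The paper instead works directly by hand with the small genus-two diagram of Figure~\ref{subfig:-1-surgery} (four $\alpha$-circles, four $\beta$-circles, seventeen intersection points): it first observes that the hypotheses of Proposition~\ref{prop:equi-is-equi} hold via Condition~\ref{item:ET:index-double}, so one may use a generic $\ZZ/2$-equivariant path of almost complex structures, then identifies the differentials that are forced (bigons and $2n$-gons), uses $\partial^2=0$ to rule out the remaining ambiguous terms, splits off an acyclic $\ZZ/2$-equivariant summand, and finally pins down the rest of the differential by comparing against the known $\HFa(\Sigma(2,3,7))$. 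That argument never requires niceness and is carried out in a few lines. Your approach would in principle work, and your prediction that $\dim H_*(\mathrm{Cone}(\Id+\tau_\#))=4$ is correct, but the concern you flag in your last paragraph is exactly why the paper avoids it: producing an equivariant nice diagram here (where the symmetry is a $\pi$-rotation on a surgery presentation rather than the branched cover of a bridge diagram) is awkward, likely blows up the generator count, and adds a layer of computational work that the hand argument sidesteps entirely. One small citation issue: Corollary~\ref{cor:Hen-from-diag} is stated for branched double covers over the basepoints of a genus-zero bridge/Heegaard diagram, which is not quite the setting here; the relevant justification for computing $\tau_\#$ with an equivariant complex structure is Proposition~\ref{prop:equi-is-equi} together with Condition~\ref{item:ET:index-double}, as the paper notes.
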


\begin{proof}
  We carry out this computation directly by hand. It is
  well-known that the following $\ZZ/2$-actions
  on $\Sigma(2,3,7)$ are the same:
  \begin{itemize}
  \item The $\ZZ/2$-action coming from viewing $\Sigma(2,3,7)$ as $\Sigma(P(2,-3,-7))$.
  \item The $\ZZ/2$-action coming from the description of
    $\Sigma(2,3,7)$ as $(-1)$-surgery on the trefoil $T(2,3)$ and applying the unique
    strong inversion (orientation-reversing involution) of $T(2,3)$.
  \end{itemize}
  See, for instance, Watson~\cite{Watson10:remark}; see also
  Figures~\ref{subfig:link-surgery}--\ref{subfig:montesinos}.

  Therefore, we start with a $\ZZ/2$-equivariant Heegaard diagram
  $\HD$ for
  the $(-1)$-surgery on $T(2,3)$ in Figure~\ref{subfig:-1-surgery},
  where as usual, red denotes $\alpha$ and blue denotes
  $\beta$. (Note that the Heegaard surface is oriented as the boundary of
  the $\alpha$-handlebody, so it is oriented as the boundary of the
  `outside'.) We have labeled the seventeen intersection points
  between the $\alpha$-circles and the $\beta$-circles
  a, b, \dots, p, q. As in Point~\ref{item:ET:index-double} of
  Section~\ref{sec:equi-equi}, the Lagrangians $T_\alpha,T_\beta\subset\Sym^4(\Sigma)$
  satisfy Hypothesis~\ref{hyp:equivariant-transversality}, so we may
  compute the equivariant Floer complex as $\RHomO{\Field[\ZZ/2]}(\CFa(\HD),\Field)$,
  where $\CFa(\HD)$ is computed with respect to a generic
  $\ZZ/2$-equivariant path of almost complex structures on
  $\Sym^g(\Sigma)$. With respect to such a path, the chain complex
  $\CFa(\HD)$ has the following form:
  \begin{center}
    \begin{tikzpicture}[xscale=2,every node/.style={inner sep=1pt,outer sep=0,text=black,fill=white,thick}]
%      \draw[ultra thin] (-3,1) -- (3,1);
%      \draw[ultra thin] (-3,0) -- (3,0);
%      \draw[ultra thin] (-3,-1) -- (3,-1);
%      \draw[ultra thin] (-3,-2) -- (3,-2);

      \coordinate (5) at (-3,1);
      \coordinate (8) at (-1.5,1);
      \coordinate (7) at (1.5,1);
      \coordinate (16) at (3,1);
      \coordinate (1) at (-3,0);
      \coordinate (9) at (-2,0);
      \coordinate (19) at (-1,0);
      \coordinate (22) at (0,0);
      \coordinate (2) at (1,0);
      \coordinate (12) at (2,0);
      \coordinate (13) at (3,0);
      \coordinate (18) at (-3,-1);
      \coordinate (0) at (-2.1,-1);
      \coordinate (6) at (-1.3,-1);
      \coordinate (24) at (-0.5,-1);
      \coordinate (20) at (0.5,-1);
      \coordinate (15) at (1.3,-1);
      \coordinate (14) at (2.1,-1);
      \coordinate (3) at (3,-1);
      \coordinate (23) at (-3,-2);
      \coordinate (11) at (-2.1,-2);
      \coordinate (4) at (-1.3,-2);
      \coordinate (17) at (1.3,-2);
      \coordinate (10) at (2.1,-2);
      \coordinate (21) at (3,-2);

      \draw (8) -- (9);
      \draw (8) -- (19);
      \draw[ultra thick] (5) -- (1); 
      \draw[ultra thick] (16) -- (13);
      \draw (7) -- (2);
      \draw (7) -- (12);
      \draw (9) -- (24);
      \draw (19) -- (24);
      \draw (1) -- (24);
      \draw[ultra thick] (22) -- (24);
      \draw[ultra thick] (22) -- (20);
      \draw (13) -- (20);
      \draw (2) -- (20);
      \draw (12) -- (20);
      \draw[ultra thick] (18) -- (23);
      \draw (0) -- (23);
      \draw[ultra thick] (0) -- (11);
      \draw (0) -- (4);
      \draw[ultra thick] (6) -- (4);
      \draw[ultra thick] (15) -- (17);
      \draw (14) -- (17);
      \draw[ultra thick] (14) -- (10);
      \draw (14) -- (21);
      \draw[ultra thick] (3) -- (21);

      \node at (4,1) {$1$};
      \node at (4,0) {$0$};
      \node at (4,-1) {$(-1)$};
      \node at (4,-2) {$(-2)$};

\node at (0) {\small ailo};
\node at (1) {\small aglo};
\node at (2) {\small aijp};
\node at (3) {\small agjp};
\node at (4) {\small ahkp};
\node at (5) {\small ahmp};
\node at (6) {\small ahlq};
\node at (7) {\small bflo};
\node at (8) {\small cflo};
\node at (9) {\small bfjp};
\node at (10) {\small cfjp};
\node at (11) {\small bhln};
\node at (12) {\small chln};
\node at (13) {\small dfko};
\node at (14) {\small dfmo};
\node at (15) {\small dfjq};
\node at (16) {\small dijn};
\node at (17) {\small dgjn};
\node at (18) {\small dhkn};
\node at (19) {\small dhmn};
\node at (20) {\small efkp};
\node at (21) {\small efmp};
\node at (22) {\small eflq};
\node at (23) {\small eiln};
\node at (24) {\small egln};

    \end{tikzpicture}
  \end{center}
  Here, a line (thick or thin) indicates the presence of a positive
  Maslov index-one domain, the only domains which may contribute to
  the differential. A thick line indicates that the domain is a
  $2n$-gon, and therefore, necessarily contributes $1$ to the $\CFa$
  differential. The $\ZZ/2$-action is reflection along the central
  vertical line.
  
  The fact that $\bdy^2=0$ implies that egln does not occur in
  $\bdy(\text{aglo})$ and efkp does not appear in $\bdy(\text{dfko})$. So,
  this complex decomposes into two $\ZZ/2$-equivariant summands, so
  that one of them is the following, and other one is acyclic.
  \begin{center}
    \begin{tikzpicture}[xscale=2,every node/.style={inner sep=1pt,outer sep=0,text=black,fill=white,thick}]
%      \draw[ultra thin] (-3,1) -- (3,1);
%      \draw[ultra thin] (-3,0) -- (3,0);
%      \draw[ultra thin] (-3,-1) -- (3,-1);

      \coordinate (8) at (-1.5,1);
      \coordinate (7) at (1.5,1);
      \coordinate (9) at (-2,0);
      \coordinate (19) at (-1,0);
      \coordinate (22) at (0,0);
      \coordinate (2) at (1,0);
      \coordinate (12) at (2,0);
      \coordinate (24) at (-0.5,-1);
      \coordinate (20) at (0.5,-1);

      \draw (8) -- (9);
      \draw (8) -- (19);
      \draw (7) -- (2);
      \draw (7) -- (12);
      \draw (9) -- (24);
      \draw (19) -- (24);
      \draw[ultra thick] (22) -- (24);
      \draw[ultra thick] (22) -- (20);
      \draw (2) -- (20);
      \draw (12) -- (20);

      \node at (3,1) {$1$};
      \node at (3,0) {$0$};
      \node at (3,-1) {$(-1)$};

\node at (2) {\small aijp};
\node at (7) {\small bflo};
\node at (8) {\small cflo};
\node at (9) {\small bfjp};
\node at (12) {\small chln};
\node at (19) {\small dhmn};
\node at (20) {\small efkp};
\node at (22) {\small eflq};
\node at (24) {\small egln};

    \end{tikzpicture}
  \end{center}
  The gradings on the right are \textit{a priori} just relative
  gradings, well-defined up to some additive constant. However, from
  the form of the homology $\HFa(\Sigma(2,3,7))$, we see that these
  gradings are actually the correct gradings.

  Since $\HFa(\Sigma(2,3,7))$ is zero-dimensional in grading $1$, we
  must have both $\bdy(\text{cflo})$ and $\bdy(\text{bflo})$ non-zero. And
  since $\HFa(\Sigma(2,3,7))$ is one-dimensional in grading $(-1)$, we
  must have
  $\bdy(\text{bfjp})=\bdy(\text{dhmn})=\bdy(\text{aijp})=\bdy(\text{chln})=0$. Therefore,
  the two-dimensional homology in grading $0$ is generated by
  $\langle\text{bfjp},\text{dhmn}\rangle/\bdy(\text{cflo})$ and
  $\langle\text{aijp},\text{chln}\rangle/\bdy(\text{bflo})$. The
  $\ZZ/2$-action interchanges these two generators, and is therefore,
  non-trivial.
\end{proof}

Our results summarize as follows: 
\begin{align*}
&q_{\tau}(T(3,7))=0 && d_{\tau}(T(3,7),2)=0 \\
&q_{\tau}(T(-3,7))=0 && d_{\tau}(T(-3,7),2)=0 \\
&q_{\tau}(P(2,-3,-7))=-2 && d_{\tau}(P(2,-3,-7),2)=0\\
&q_{\tau}(P(-2,3,7))=2&&d_{\tau}(P(-2,3,7),2)=2
\end{align*}

We deduce a number of corollaries.

\begin{corollary}
  The actions $\tau_*$ on $\HFa(\Sigma(K))$, $\HF^{-}(\Sigma(K))$, and
  $\HF^+(\Sigma(K))$ coming from the double-branched cover involution
  on $\Sigma(K)$ are not determined by the 3-manifold $\Sigma(K)$.
\end{corollary}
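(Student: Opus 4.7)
The plan is to exhibit a single 3-manifold equipped with two different involutions coming from two different knots, and show that the induced maps on $\HFa$ differ. The two preceding propositions do exactly this computation, so the argument is essentially assembly: take $K_1 = T(3,7)$ and $K_2 = P(2,-3,-7)$. Both have branched double cover $\Sigma(K_i) \cong \Sigma(2,3,7)$, since $\Sigma(2,3,7)$ is realized as $\Sigma(T(3,7))$ (Brieskorn sphere description) and as $\Sigma(M(-1;(-2,1),(-3,1),(-7,1))) = \Sigma(P(2,-3,-7))$ (Montesinos description). The first proposition shows that the associated $\tau_{\ast}$ on $\HFa(\Sigma(2,3,7))$ is the identity, while the second proposition shows the other $\tau_{\ast}$ on the same group $\HFa(\Sigma(2,3,7))$ is not the identity. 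Since the identity map and a non-identity automorphism are not conjugate in $\Aut(\HFa(\Sigma(2,3,7)))$, this proves the statement for $\HFa$.

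To deduce the statement for $\HF^-$ and $\HF^+$, I would argue by contradiction using the naturality of the long exact sequence
\[
\cdots \to \HF^-(\Sigma(K)) \xrightarrow{\cdot U} \HF^-(\Sigma(K)) \to \HFa(\Sigma(K)) \to \HF^-(\Sigma(K)) \to \cdots
\]
(and its analogue for $\HF^+$) under the involution $\tau$. If the induced action $\tau_{\ast}$ on $\HF^-(\Sigma(K))$ depended only on the isomorphism type of $\Sigma(K)$ as a 3-manifold, then, by naturality of the connecting map $\HF^-(\Sigma(K)) \to \HFa(\Sigma(K))$ and the fact that this map is surjective onto the appropriate grading range (up to the $U$-action), the induced action on $\HFa(\Sigma(K))$ would also be determined by $\Sigma(K)$, contradicting the first paragraph. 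The same argument with the $\HF^+$ long exact sequence handles the plus version. Alternatively, and perhaps more directly, one can observe that the numerical invariants $d_{\tau}(K,2)$ extracted in Section~\ref{sec:computations} already distinguish the $\tau_{\ast}$-actions on $\HF^-$: the computations give $d_\tau(T(3,7),2) = 0$ while $d_\tau(P(-2,3,7),2) = 2$, and the mirrors $m(T(3,7)) = T(3,-7)$ and $P(-2,3,7)$ both have $\Sigma(\cdot) \cong -\Sigma(2,3,7)$, providing a direct pair of knots with the same branched double cover but different $\tau_{\ast}$-actions on $\HF^-$.

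There is no real obstacle; the work was done in the propositions and computations above. The only care required is in the framing: we must be explicit that ``not determined by $\Sigma(K)$'' means there exist knots $K_1, K_2$ with $\Sigma(K_1) \cong \Sigma(K_2)$ as 3-manifolds but such that no such isomorphism intertwines the two $\tau_{\ast}$-actions, which is exactly what conjugacy-inequivalence of the identity and a nontrivial involution on $\HFa(\Sigma(2,3,7))$ provides.
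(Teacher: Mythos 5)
Your proposal is correct and matches what the paper implicitly does: the corollary is stated without a separate proof because it follows at once from the two preceding propositions (trivial $\tau_*$ on $\HFa(\Sigma(2,3,7))$ for $T(3,7)$, nontrivial for $P(2,-3,-7)$). One small refinement: your appeal to naturality of the long exact sequence for the $\HF^-$ and $\HF^+$ cases is slightly loose as phrased; the cleaner route, which the paper already supplies in the proof of Lemma~\ref{lem:q-tau-in-simple-cases}, is the explicit determination of the $\tau_*$-action on $\HF^-(\Sigma(K))$ in both cases (the identity when $\tau_*$ is trivial on $\HFa$, and $\alpha\mapsto\alpha+\beta$, $\beta\mapsto\beta$ when it is not), from which the $\HF^+$ case follows by the long exact sequence or the identification $\HF^+\cong (U^{-1}\HF^-)/\HF^-$. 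Your alternative via $d_\tau(K,2)$ is sound, since the $E_2$-page of the relevant spectral sequence is determined by the graded $\Field[U]$-module $\HF^{-,*}(\Sigma(K))$ together with $\tau^*$, so a difference in $d_\tau(\cdot,2)$ does certify a difference in the $\tau_*$-actions.
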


\begin{corollary}
  The knot invariant $q_\tau(K)$ is not determined by 3-manifold $\Sigma(K)$.
\end{corollary}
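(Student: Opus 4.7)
The plan is to exhibit two knots $K_1$ and $K_2$ with homeomorphic branched double covers but distinct values of $q_\tau$; the computations already carried out in Section~\ref{sec:computations} give such a pair essentially for free. Specifically, I would take $K_1 = T(3,7)$ and $K_2 = P(2,-3,-7)$. As noted at the beginning of Section~\ref{sec:computations}, both knots have branched double cover $\Sigma(K_i) \cong \Sigma(2,3,7)$: the first by the classical description of $\Sigma(2,3,7)$ as the double branched cover of the $(3,7)$-torus knot, and the second from the Montesinos description $M(-1;(-2,1),(-3,1),(-7,1))$.

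Next, I would invoke the two computations of this subsection: $q_\tau(T(3,7)) = 0$ was established using the nice equivariant Heegaard diagram of Figure~\ref{fig:computation-t-3-7} (where the involution on $\HFa(\Sigma(2,3,7))$ turned out to be the identity), while $q_\tau(P(2,-3,-7)) = -2$ was established using the equivariant Heegaard diagram of Figure~\ref{subfig:-1-surgery} for the strong inversion on the $(-1)$-surgery of the trefoil (where the involution on $\HFa(\Sigma(2,3,7))$ turned out to be nontrivial). Since $0 \neq -2$, the invariant $q_\tau$ distinguishes $K_1$ and $K_2$ despite the homeomorphism $\Sigma(K_1) \cong \Sigma(K_2)$, proving the corollary. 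If one prefers an example with the same value at positive and at negative $q_\tau$, the pair $\{T(-3,7), P(-2,3,7)\}$ works identically via Proposition~\ref{prop:q-homo}.

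There is no real obstacle beyond assembling these ingredients: the essential content is packaged in the two propositions preceding the corollary, together with the homeomorphism $\Sigma(T(3,7)) \cong \Sigma(P(2,-3,-7))$, which is classical. The mildly subtle point worth double-checking in the write-up is that the value of $q_\tau$ we use depends only on the knot $K$ (a consequence of Proposition~\ref{prop:dbc-hat-grading-invariance}) and not on any auxiliary identification of the branched double cover, so the comparison $q_\tau(K_1) \neq q_\tau(K_2)$ is genuinely a statement about two knots whose branched double covers happen to be the same 3-manifold.
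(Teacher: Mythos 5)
Your proposal is correct and is precisely the argument the paper has in mind: the two propositions of Section~\ref{sec:computations} give $q_\tau(T(3,7))=0$ and $q_\tau(P(2,-3,-7))=-2$, while both knots have branched double cover $\Sigma(2,3,7)$, so $q_\tau$ cannot be determined by the 3-manifold alone. Your remark that well-definedness of $q_\tau$ as a knot invariant (Proposition~\ref{prop:dbc-hat-grading-invariance}) is what makes the comparison meaningful is a fair point to flag, though it is already established earlier in the section.
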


Furthermore, our examples show that $q_{\tau}(K)$ is not the same as three other major concordance invariants arising from knot homology theories.

\begin{corollary} The knot invariant $q_{\tau}(K)$ is not identified,
  even after scaling, with any of the Ozsv{\'a}th-Szab{\'o}
  concordance invariant $\tau(K)$
  \cite{OS03:4BallGenus,Rasmussen03:Knots}, the Rasmussen invariant
  $s(K)$ \cite{Rasmussen10:s}, or the Manolescu-Owens invariant
  $\delta(K)$ \cite{ManolescuOwens07:delta}.
\end{corollary}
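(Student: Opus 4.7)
The plan is to leverage the explicit computations already performed for $T(3,7)$ and the pretzel knots $P(\pm 2, \mp 3, \mp 7)$. All three of these knots have branched double cover $\pm\Sigma(2,3,7)$, and the tabulated values
\[
q_{\tau}(T(3,7))=0,\quad q_{\tau}(T(-3,7))=0,\quad q_{\tau}(P(2,-3,-7))=-2,\quad q_{\tau}(P(-2,3,7))=2
\]
give more than enough contrast against the corresponding values of $\tau$, $s$, and $\delta$ to rule out any proportionality. In particular, it suffices to exhibit, for each of the three candidate invariants $I\in\{\tau,s,\delta\}$, a pair of knots $(K_1,K_2)$ in our list on which the ratio $q_{\tau}/I$ is forced to take two different (and in particular inconsistent) values.

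For the Ozsv\'ath–Szab\'o invariant $\tau$ and the Rasmussen invariant $s$, I would first cite the standard values $\tau(T(p,q))=\frac{(p-1)(q-1)}{2}$ and $s(T(p,q))=(p-1)(q-1)$, which give $\tau(T(3,7))=6$ and $s(T(3,7))=12$. Since $q_{\tau}(T(3,7))=0$, any identification $q_{\tau}=c\tau$ or $q_{\tau}=cs$ would force $c=0$, but then $q_{\tau}$ would be identically zero, contradicting $q_{\tau}(P(2,-3,-7))=-2$. This disposes of $\tau$ and $s$.

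For the Manolescu–Owens $\delta$ invariant, I would use the fact (from the Ozsv\'ath–Szab\'o computation of $\HF$ for small Brieskorn spheres) that $d(\Sigma(2,3,7),\spinc_0)=0$; indeed this is consistent with the presentation $\HFm(\Sigma(2,3,7),\spinc_0)\cong \Field[U]\langle\alpha\rangle\oplus\Field\langle\beta\rangle$ with $\gr(\alpha)=\gr(\beta)=-2$ recalled above. Consequently $\delta(K)=0$ for every knot $K$ in our list, since each such $K$ has branched double cover $\pm\Sigma(2,3,7)$ and $\delta$ is proportional to the $d$-invariant of the spin structure. On the other hand, $q_{\tau}(P(2,-3,-7))=-2\neq 0$, so no constant multiple of $\delta$ can equal $q_{\tau}$.

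The main obstacle is purely bookkeeping: verifying the classical values $\tau(T(3,7))$ and $s(T(3,7))$ (immediate from any of the standard references) and confirming $d(\Sigma(2,3,7))=0$ (which can be read off, for instance, from the explicit $\HF^-$ computation quoted in the proof of Lemma~\ref{lem:q-tau-in-simple-cases}). No new analysis is required beyond the computations of Section~\ref{sec:computations}; the theorem is essentially a corollary packaging those computations together with known facts about $\tau$, $s$, and the $d$-invariant of $\Sigma(2,3,7)$.
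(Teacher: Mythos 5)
Your proof is correct and follows essentially the same strategy as the paper: compare the computed $q_{\tau}$-values on the knots with branched double cover $\pm\Sigma(2,3,7)$ against the known values of $\tau$, $s$, and $\delta$. The one bookkeeping difference is how the putative scaling constant is eliminated: the paper normalizes it via the right-handed trefoil, where $q_{\tau}(T(2,3))=\tau(T(2,3))=s(T(2,3))/2=\delta(T(2,3))=1$ (using the identity $q_{\tau}=\delta$ for alternating knots noted just before the corollary), and then finds conflicting values at $T(3,7)$ and $P(2,-3,-7)$; you instead force the constant to vanish using $q_{\tau}(T(3,7))=0$ while $\tau(T(3,7))=6$ and $s(T(3,7))=12$, and then contradict via $q_{\tau}(P(2,-3,-7))=-2$. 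Your variant is marginally more economical in that it never needs the trefoil computation, and your handling of $\delta$ (that $\delta$ vanishes on the whole list while $q_{\tau}(P(2,-3,-7))\neq 0$) is a clean restatement of the same contradiction the paper reaches. All the classical inputs you cite are correct.
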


\begin{proof} We have $s(T(3,7))=2\tau(T(3,7))=12$ and
  $s(P(2,-3,-7))=2\tau(P(2,-3,-7)=-10$ \cite[Theorem
  4]{Rasmussen10:s}. Furthermore, $\delta(T(3,7)) =
  \delta(P(2,-3,-7))=2d(\Sigma(2,3,7))=0$ \cite[Subsection 8.1]{AbsGraded}. On the other hand, for the
  right-handed trefoil,
  $q_{\tau}(T(2,3))=\tau(T(2,3))=s(T(2,3))/2=\delta(T(2,3))=1$.
\end{proof}

The Heegaard Floer homology of a three-manifold also carries a conjugation symmetry
\[
\iota_* \co \HFa(Y,\spinc) \rightarrow \HFa(Y, \overline{\spinc})
\]
induced by a sequence of Heegaard moves to a Heegaard diagram $\HD = (\Sigma, \alphas, \betas, z)$ for $Y$ from its conjugate $\overline{\HD}=(-\Sigma,\alphas,\betas,z)$ \cite[Theorem 2.4]{OS04:HolDiskProperties}. A similar action exists for the other variants of Heegaard Floer homology.

\begin{corollary} There are knots $K$ for which the actions $\tau_*$
  and $\iota_*$ on $\HFa(\Sigma(K))$ are different. The same holds
  with $\HFa(\Sigma(K))$ replaced by $\HF^+(\Sigma(K))$ or
  $\HF^-(\Sigma(K))$.
\end{corollary}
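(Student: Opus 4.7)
The plan is to leverage the two different presentations of $\Sigma(2,3,7)$ computed immediately above to produce such an example. Specifically, both $K_1 = T(3,7)$ and $K_2 = P(2,-3,-7)$ have branched double cover diffeomorphic to $Y = \Sigma(2,3,7)$, yet the preceding computations show that the associated double-branched cover involutions $\tau_*^{K_1}$ and $\tau_*^{K_2}$ act differently on $\HFa(Y)$: the former is the identity, while the latter is not. Meanwhile, the Ozsv\'ath-Szab\'o conjugation involution $\iota_*$ on $\HFa(Y,\spinc_0)$ is a single, intrinsic involution that depends only on the three-manifold $Y$ together with the spin structure $\spinc_0$, which is the unique self-conjugate $\SpinC$-structure since $Y$ is an integer homology sphere.

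Therefore $\iota_*$ is a single involution on the three-dimensional vector space $\HFa(Y)$, while $\tau_*^{K_1}$ and $\tau_*^{K_2}$ are distinct involutions on the same vector space. It follows that $\iota_*$ cannot agree simultaneously with both $\tau_*^{K_1}$ and $\tau_*^{K_2}$; hence for at least one of the two knots $K \in \{T(3,7), P(2,-3,-7)\}$, the involutions $\tau_*$ and $\iota_*$ on $\HFa(\Sigma(K))$ differ. (In fact, whichever of $\tau_*^{K_1}$ or $\tau_*^{K_2}$ is not equal to $\iota_*$ gives the required example, though we need not identify $\iota_*$ explicitly.)

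For the $\HF^+$ and $\HF^-$ cases, the same strategy applies. Both $\tau_*$ and $\iota_*$ are chain-level involutions commuting with the long exact sequence
\[
\cdots \to \HF^-(Y) \xrightarrow{\cdot U} \HF^-(Y) \to \HFa(Y) \to \HF^-(Y) \to \cdots
\]
from~\eqref{eq:minus-exact}, and an analogous sequence relates $\HF^+$ and $\HFa$. Consequently, if $\tau_*^{K_1}$ and $\tau_*^{K_2}$ induce different maps on $\HFa(Y)$, they must also induce different maps on $\HF^{\pm}(Y)$. Since the single involution $\iota_*$ on $\HF^{\pm}(Y)$ again depends only on $Y$, the same pigeonhole argument shows that for at least one of the two knots, $\tau_*$ and $\iota_*$ disagree on $\HF^{\pm}(\Sigma(K))$ as well.

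The main content of the proof is genuinely the computations already carried out in Section~\ref{sec:computations}; the corollary itself is essentially a bookkeeping consequence of the fact that $\iota_*$ is an invariant of the three-manifold whereas $\tau_*$ is not (as noted in the preceding corollary). No further work is required.
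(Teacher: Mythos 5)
Your proof is essentially the argument the paper intends; the corollary is stated without a written proof, and the preceding corollary (``the actions $\tau_*$ \dots are not determined by the 3-manifold $\Sigma(K)$'') is precisely the pigeonhole input you invoke. The $\HFa$ case is airtight: two different involutions on $\HFa(\Sigma(2,3,7))$ arising from $T(3,7)$ and $P(2,-3,-7)$ cannot both equal the single intrinsic involution $\iota_*$.

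One step is stated more confidently than it deserves. You assert that because $\tau_*$ and $\iota_*$ commute with the long exact sequence~\eqref{eq:minus-exact}, two involutions that differ on $\HFa(Y)$ must differ on $\HF^{\pm}(Y)$. Commuting with a long exact sequence does \emph{not} in general force this: if $\sigma_1,\sigma_2$ agree on $\HF^-(Y)$ and $x\in\HFa(Y)$, naturality only gives $\sigma_1(x)-\sigma_2(x)\in\ker\delta=\operatorname{im}p$, which need not vanish. What saves you here is the specific structure computed in Lemma~\ref{lem:q-tau-in-simple-cases}: $\HF^-(\Sigma(2,3,7))\cong\Field[U]\langle\alpha\rangle\oplus\Field\langle\beta\rangle$ with $\gr(\alpha)=\gr(\beta)=-2$, there are exactly two $U$-equivariant graded involutions on it (the identity and $\alpha\mapsto\alpha+\beta$), and these induce \emph{distinct} involutions on $\HFa$ via the map $p\co\HF^-_{-2}\xrightarrow{\ \cong\ }\HFa_0$ (with $\HFa_{-1}$ one-dimensional, so irrelevant). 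That bijection is what lets you transport the discrepancy from $\HFa$ to $\HF^-$; the analogous graded analysis handles $\HF^+$. I would recommend citing Lemma~\ref{lem:q-tau-in-simple-cases} explicitly at this point rather than appealing only to compatibility with the exact sequence.
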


In \cite{HM:involutive}, Hendricks and Manolescu use the conjugation symmetry to associate to a three manifold $Y$ with spin structure $\spinc$ two homology cobordism invariants $\underline{d}(Y,s)$ and $\bar{d}(Y,s)$. The second of these is defined extremely formally similarly to $d_{\tau}(K,2)$.

\begin{corollary} The invariants $\bar{d}(\Sigma(K), \spinc_0)$ and $d_{\tau}(K,2)$ are not identified.
\end{corollary}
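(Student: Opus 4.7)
The key observation is that $\bar{d}(Y,\spinc)$ depends only on the pair $(Y,\spinc)$, whereas $d_\tau(K,2)$ is genuinely a knot invariant which can, in principle, distinguish knots with the same branched double cover. So my plan is to point to knots with the same $\Sigma(K)$ but different $d_\tau(K,2)$ among the explicit computations already carried out in Section~\ref{sec:computations}, and deduce immediately that $d_\tau(K,2)$ cannot equal $\bar{d}(\Sigma(K),\spinc_0)$ for at least one of them.

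Concretely, I would take $K_1=T(-3,7)$ and $K_2=P(-2,3,7)$. Both knots have branched double cover $\Sigma(K_i)=-\Sigma(2,3,7)$, equipped with the same unique spin structure $\spinc_0$. Hence
\[
\bar{d}(\Sigma(K_1),\spinc_0)=\bar{d}(-\Sigma(2,3,7),\spinc_0)=\bar{d}(\Sigma(K_2),\spinc_0).
\]
On the other hand, the computations summarized at the end of Section~\ref{sec:computations} give $d_\tau(K_1,2)=0$ and $d_\tau(K_2,2)=2$. A single value of $\bar{d}(-\Sigma(2,3,7),\spinc_0)$ cannot equal both $0$ and $2$, so $d_\tau(K,2)\neq\bar{d}(\Sigma(K),\spinc_0)$ for at least one of $K_1$ or $K_2$.

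There is essentially no obstacle here: the entire corollary is extracted by combining the chain-level computations from Section~\ref{sec:computations} (which are already performed in the paper, and which already exhibit two knots with the same branched double cover and differing $\tau_*$-action on the relevant Floer complex) with the naturality of $\bar{d}$ as a $\SpinC$ three-manifold invariant. The same pair of knots also underlies the immediately preceding corollary, which was used there to distinguish the actions $\tau_*$ and $\iota_*$ themselves; here we simply extract a numerical consequence.
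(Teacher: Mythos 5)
Your proof is correct, and it takes a genuinely different route from the paper. The paper's argument is a one-line citation: it quotes the known value $\bar{d}(-\Sigma(2,3,7),\spinc_0)=2$ from Hendricks--Manolescu and compares with the computed value $d_\tau(T(-3,7),2)=0$, which refers to a knot whose branched double cover is $-\Sigma(2,3,7)$. Your approach avoids the external citation entirely: you observe that the section's own computations already produce two knots, $T(-3,7)$ and $P(-2,3,7)$, with the same branched double cover $-\Sigma(2,3,7)$ but different values $d_\tau(\cdot,2)=0$ and $d_\tau(\cdot,2)=2$, so $d_\tau(\cdot,2)$ cannot agree with any invariant that factors through $(\Sigma(K),\spinc_0)$. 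What the paper's version buys is a pointwise mismatch at a specific knot; what your version buys is independence from the particular value of $\bar{d}$, and in fact it establishes the slightly stronger statement that $d_\tau(K,2)$ is not determined by the pair $(\Sigma(K),\spinc_0)$ at all, in parallel with the corollary the paper records for $q_\tau$. Both are valid; yours is more self-contained and conceptually sharper.
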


\begin{proof} We have $\bar{d}(-\Sigma(2,3,7))=2$ \cite[Section 6.8]{HM:involutive}.\end{proof}

%%% Local Variables: 
%%% mode: latex
%%% TeX-master: "HEquivariant.tex"
%%% End: 

\section{Spectral sequences from symplectic Khovanov homology}\label{sec:ssseq}
Khovanov homology admits a number of
deformations~\cite{Lee05:Khovanov,BarNatan05:Kh-tangle-cob,BrDCov,Szabo:geometric-ss,SSS:spectrals};
some of these are believed to correspond, at least philosophically, to
actions of $U(1)$ or $\ZZ/2$. On the Floer-theoretic side, it is easy
to see (and well known by experts) that the manifolds used to
construct symplectic Khovanov homology admits an $O(2)$-symmetry; see
Section~\ref{sec:ssss-review}. The action of a reflection
$\ZZ/2\subset O(2)$ on symplectic Khovanov homology was studied by
Seidel-Smith~\cite{SeidelSmith10:localization}; see also
Section~\ref{sec:ssss-conj}. The main goal of this section is to
develop some properties of the action of more general finite
subgroups of $O(2)$ in the symplectic setting. We start by reviewing the
definition of symplectic Khovanov homology and the group actions on it
in Section~\ref{sec:ssss-review} and some notations for the cohomology
of cyclic and dihedral groups in Section~\ref{sec:dihedral-groups}. 
We then prove a localization result for
some of these equivariant cohomologies in
Section~\ref{sec:ssseq-limits}, and prove that all of these
equivariant cohomologies are link invariants in
Section~\ref{sec:ssseq-invariance}. Section~\ref{sec:rKh} is a brief
digression, to prove that Manolescu's reduced symplectic Khovanov homology
is also an invariant of based links.  We conclude with some
speculations about the relationship between these constructions and
various combinatorial deformations in Section~\ref{sec:ssss-conj}.

As in the rest of this paper, all cohomology groups have coefficients
in $\Field$, unless otherwise noted.
\subsection{A brief review of symplectic Khovanov homology}
\label{sec:ssss-review}
We start by reviewing the construction of Seidel-Smith's symplectic Khovanov
homology~\cite{SeidelSmith6:Kh-symp}, largely from the perspective of
Manolescu's reformulation~\cite{Manolescu06:nilpotent}.

As in Section~\ref{sec:new-dcov}, fix a bridge diagram for a link $L$,
consisting of arcs $A_i$ and $B_i$ with endpoints
$\{p_1,\dots,p_{2n}\}$. Let $p(z)=(z-p_1)\cdot\cdots\cdot(z-p_{2n})$
and consider the affine algebraic surface
\begin{equation}\label{eq:surface-S}
S=\{(z,u,v)\in\CC^3\mid u^2+v^2+p(z)=0\}.
\end{equation}
The ring of regular functions on $S$ is, of course, 
\[
R=\CC[z,u,v]/(u^2+v^2+p(z)).
\]
Let $\Hilb^n(S)$ denote the Hilbert scheme of $n$-tuples of points in
$S$ or, more precisely, length $n$ closed subschemes of $S$. In particular,
a point in $\Hilb^n(S)$ is an ideal $I\subset R$ so that
$\dim_\CC(R/I)=n$. For example, given $n$ distinct points
$q_1=(z_1,u_1,v_1)$, \dots, $q_n=(z_n,u_n,v_n)$ of $S$, let
$I_j=(z-z_j,u-u_j,v-v_j)\subset R$ and let $I=I_1\cdot\cdots\cdot I_n$
be the product of these ideals. Then $I\in\Hilb^n(S)$. Thus, we have
an embedding $\Sym^n(S)\setminus\Delta\hookrightarrow \Hilb^n(S)$; it
turns out that the image is open. 
We call the complement of this
embedding the \emph{diagonal} in $\Hilb^n(S)$, and denote it by
$\Delta$.

Consider the map $i\co S\to \CC$, $(z,u,v)\mapsto z$, which
corresponds to the obvious map $\CC[z]\hookrightarrow \CC[z,u,v]\onto
R$.  Let $R_1$ denote the image of $\CC[z]$ in $R$. The
intersection of the ideal $(u^2+v^2+p(z))$ with $\CC[z]\subset
\CC[z,u,v]$ is trivial, so $R_1\cong\CC[z]$. Given an ideal
$I\in\Hilb^n(\CC)$, let $i(I)=I\cap R_1\subset R_1$. Let
\[
\ssspace{n}=\{I\in\Hilb^n(S)\mid i(I)\text{ has length $n$}\}.
\]
Manolescu shows that the space $\ssspace{n}$ is biholomorphic to the
space $\mathcal{Y}_{n,\tau}$ considered by
Seidel-Smith~\cite[Proposition 2.7]{Manolescu06:nilpotent}.

Each of the arcs $A_i$ and $B_i$ gives a totally real $S^2$ in $S$, by:
\begin{align*}
  \Sigma_{A_i}&=\{(z,u,v)\in S\mid z\in A_i,\ u,v\in \sqrt{-p(z)}\RR\}\\
  \Sigma_{B_i}&=\{(z,u,v)\in S\mid z\in B_i,\ u,v\in \sqrt{-p(z)}\RR\}.
\end{align*}
\cite[p.~2]{Manolescu06:nilpotent}.  These spheres give submanifolds
\begin{align*}
\CipLag_A &= \Sigma_{A_1}\times\cdots\times \Sigma_{A_n}\subset \left(\Sym^n(S)\setminus\Delta\right)\subset \Hilb^n(S)\\
\CipLag_B &= \Sigma_{B_1}\times\cdots\times \Sigma_{B_n}\subset \left(\Sym^n(S)\setminus\Delta\right)\subset \Hilb^n(S).
\end{align*}
There is a natural K\"ahler form on $\ssspace{n}$ for which
$\CipLag_A$ and $\CipLag_B$ are Lagrangian~\cite[Theorem
1.2]{Manolescu06:nilpotent}.

The Lagrangians $\CipLag_A$ and $\CipLag_B$ do not intersect
transversely. To define their Floer homology one needs to either
perturb one of them slightly, or work in the more general setting of
Lagrangians with clean intersections~\cite{Pozniak99:clean-Floer} (as
Manolescu does~\cite[Section 6.1]{Manolescu06:nilpotent}); the
approaches give isomorphic Floer homology groups. Manolescu shows that
the Floer cohomology $\HF(\CipLag_A,\CipLag_B)$ inside $\ssspace{n}$ is
the symplectic Khovanov homology $\KhSymp(L)$~\cite[Theorem
1.2]{Manolescu06:nilpotent}.

There is also a reduced version of the construction, giving a reduced
symplectic Khovanov homology $\rKhSymp$, where we consider the variety
\[
\rssspace{n}=\{I\in\Hilb^{n-1}(S)\mid i(I)\text{ has length $n-1$}\}.
\]
and consider the Floer homology of
$\rCipLag_A=\Sigma_{A_1}\times\cdots\times\Sigma_{A_{n-1}}$ and
$\rCipLag_B=\Sigma_{B_1}\times\cdots\times\Sigma_{B_{n-1}}$~\cite[Section
7.5]{Manolescu06:nilpotent}.  We will return to the reduced theory in
Section~\ref{sec:rKh}.

\begin{convention}
  The combinatorial Khovanov complex is of cohomological type, i.e.,
  the differential raises the homological grading by
  $1$. Correspondingly, Seidel-Smith constructed symplectic Khovanov
  homology as a Floer cohomology group. We will continue to work with
  the Floer chain (rather than cochain) complex of $\CipLag_A$ and
  $\CipLag_B$, so the cohomology of our complexes gives Seidel-Smith's
  symplectic Khovanov homology.
\end{convention}

\subsubsection{The group action}
There is an action of the orthogonal group $O(2,\CC)\subset
\GL(2,\CC)$ on $S$ induced from the action on $\CC\langle
u,v\rangle=\CC^2$, and a corresponding action of $O(2,\CC)$ on
$\Hilb^n(S)$.  The subgroup $O(2)\coloneqq O(2,\RR)\subset O(2,\CC)$
on $S$ preserves the spheres $\Sigma_{A_i}$ and $\Sigma_{B_i}$, and
hence the action of $O(2)$ on $\Hilb^n(S)$ preserves the Lagrangians
$\CipLag_A$ and $\CipLag_B$.

\begin{lemma}
  The $O(2)$-action on $\Hilb^n(S)$ preserves the open subset $\ssspace{n}$.
\end{lemma}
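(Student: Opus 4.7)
The plan is to exploit the fact that the $O(2)$-action leaves the $z$-coordinate untouched, so it acts trivially on the subring $R_1=\CC[z]\subset R$, and hence preserves the definition of $\ssspace{n}$ on the nose.

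More precisely, I would first unpack the action: elements of $O(2)\subset O(2,\CC)$ act on $S\subset\CC^3$ by $(z,u,v)\mapsto (z, g\cdot(u,v))$, and correspondingly on $R=\CC[z,u,v]/(u^2+v^2+p(z))$ by $\CC$-algebra automorphisms fixing $z$. Since the subring $R_1=\CC[z]$ is pointwise fixed by every $g\in O(2)$, for any ideal $I\subset R$ and any $g\in O(2)$ we have
\[
i(g\cdot I) \;=\; (g\cdot I)\cap R_1 \;=\; g\cdot(I\cap R_1) \;=\; I\cap R_1 \;=\; i(I),
\]
where the second equality uses that $g$ is a ring automorphism and the third that $g$ acts as the identity on $R_1$. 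In particular the length of $i(g\cdot I)$ as a $\CC[z]$-module coincides with the length of $i(I)$.

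Therefore $I\in\ssspace{n}$ if and only if $g\cdot I\in\ssspace{n}$, which is exactly the statement that the $O(2)$-action preserves $\ssspace{n}$. There is essentially no obstacle here: the whole content is the observation that the relevant subring $R_1$ on which one imposes the length condition is $O(2)$-invariant pointwise, which is immediate from the definition of the action.
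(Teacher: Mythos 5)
Your proof is correct and takes essentially the same approach as the paper's one-line argument: since $O(2)$ fixes $R_1 = \CC[z]$ pointwise, the ideal $i(I) = I\cap R_1$ and hence its length are unchanged by the action. You have simply spelled out the chain of equalities that the paper leaves implicit.
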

\begin{proof}
  Elements of $O(2)$ fix $R_1$, and hence do not change $i(I)$ or, in particular, the length of $i(I)$.
\end{proof}

Instead of focusing on the action of $O(2)$, we will restrict our
attention to the dihedral group of order $2^{m+1}$, $D_{2^m}=\langle
\sigma,\tau\mid \sigma^{2^m},\tau^2,\sigma\tau\sigma\tau\rangle$, acting on
$S$ as follows:
\begin{equation}\label{eq:dm-action-on-uv}
\sigma(z,u,v)=(z,u\cos\theta_m-v\sin\theta_m,u\sin\theta_m+v\cos\theta_m) \qquad \tau(z,u,v)=(z,u,-v),
\end{equation}
where $\theta_m=2\pi/{2^m}.$

\begin{lemma}\label{lem:perturb-Ciprian}
  The action of $D_{2^m}$ on $\ssspace{n}$ preserves $\CipLag_A$ and
  $\CipLag_B$. Moreover, one can choose a small Hamiltonian
  perturbation $\CipLag'_A$ of $\CipLag_A$ so that $\CipLag'_A$ is
  transverse to $\CipLag_B$ and is still preserved by the
  $D_{2^m}$-action.
\end{lemma}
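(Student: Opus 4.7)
The first assertion is a direct computation. Parametrize a point of $\Sigma_{A_i}$ as $(z,\sqrt{-p(z)}\,a,\sqrt{-p(z)}\,b)$ with $z\in A_i$ and $(a,b)\in\RR^2$; the constraint $u^2+v^2+p(z)=0$ forces $a^2+b^2=1$. Reading off the formulas~\eqref{eq:dm-action-on-uv}, the rotation $\sigma$ sends $(u,v)=(\sqrt{-p(z)}\,a,\sqrt{-p(z)}\,b)$ to $(\sqrt{-p(z)}(a\cos\theta_m-b\sin\theta_m),\sqrt{-p(z)}(a\sin\theta_m+b\cos\theta_m))$, and the reflection $\tau$ sends it to $(\sqrt{-p(z)}\,a,-\sqrt{-p(z)}\,b)$; both manifestly remain in $\Sigma_{A_i}$. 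The same argument applies to each $\Sigma_{B_i}$, so $\CipLag_A$ and $\CipLag_B$ are preserved, and the induced $D_{2^m}$-action descends from $\Hilb^n(S)$ to the open subset $\ssspace{n}$.

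For the second assertion, the plan is to carry out the standard Po\'zniak--Manolescu technique (see~\cite{Pozniak99:clean-Floer} and~\cite[Section 6.1]{Manolescu06:nilpotent}) equivariantly. Recall that $\CipLag_A$ and $\CipLag_B$ intersect cleanly along a smooth submanifold $C\subset \ssspace{n}$, which is $D_{2^m}$-invariant by the first part. The non-equivariant recipe is to pick a Morse function $f\co C\to\RR$, extend it via a Weinstein neighborhood of $\CipLag_A$ to a small Hamiltonian $H$ on $\ssspace{n}$ whose restriction to $\CipLag_A$ (via the zero-section) agrees with $f$ on $C$, and take $\CipLag'_A=\phi^1_H(\CipLag_A)$; for $H$ sufficiently $C^2$-small, $\CipLag'_A\pitchfork\CipLag_B$, with intersection points in bijection with $\Crit(f)$.

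To carry this out equivariantly, I would exploit that $D_{2^m}$ is finite, so all auxiliary choices can be averaged. A $D_{2^m}$-equivariant Weinstein tubular neighborhood of $\CipLag_A$ exists (average an arbitrary Weinstein embedding over the group and rescale), and the extension of $f$ to a Hamiltonian $H$ can likewise be taken $D_{2^m}$-equivariant. The content is therefore the production of a $D_{2^m}$-equivariant Morse function $f\co C\to\RR$. This is a standard application of equivariant transversality for smooth actions of finite groups in the style of Wasserman: stratifying $C$ by orbit type and building $f$ inductively up the strata, one uses that the Morse condition is open and dense even inside the space of $D_{2^m}$-equivariant smooth functions. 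The main obstacle — and the only place the finite-group hypothesis is used nontrivially — is this equivariant Morse-theoretic step, which is routine in principle but requires keeping track of the various isotropy strata of $C$ under the subgroups of $D_{2^m}$, so that the Morse condition can be arranged on all fixed-point sets simultaneously.
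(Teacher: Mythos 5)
Your proof is correct, but it takes a deliberately different and more abstract route than the paper's. You invoke the Po\'zniak--Manolescu clean-intersection framework, build an equivariant Weinstein neighborhood by averaging (finite-group trick), and produce the required $D_{2^m}$-invariant Morse function on the clean intersection via Wasserman-style equivariant genericity (correct for finite groups, since critical orbits are then isolated points). The paper instead perturbs each sphere $\Sigma_{A_j}$ individually and explicitly: near each type~\ref{item:second-int} intersection circle it chooses local coordinates $[-1,1]^2\times T^*S^1$ in which the $D_{2^m}$-action is the standard one on the $\theta$-coordinate, and uses the hand-written Hamiltonian $H=\phi(x)\phi(y)\phi(t)\cos(2^m\theta)$, then defines $\CipLag'_A$ as the product of the perturbed spheres $\Sigma'_{A_j}$. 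Both routes prove the lemma as stated, but the explicit construction buys something not visible in the statement: the perturbation remains a product over the spheres, and the intersection $\Sigma'_{A_j}\cap\Sigma_{B_\ell}$ at a type~\ref{item:second-int} circle consists of exactly $2^{m+1}$ points (critical points of $\cos(2^m\theta)$). This product structure and the specific count are then used in Section~\ref{sec:ss-cyl} (the cylindrical formulation) and Lemma~\ref{lem:ss-fixed-pts-subcx} (counting and grading the $\ZZ/2^m$-fixed generators). Your argument establishes this lemma but would need to be redone sphere-by-sphere to feed those downstream arguments. Two small imprecisions, neither fatal: one cannot literally ``average an arbitrary Weinstein embedding'' (averaging diffeomorphisms is not well-defined); the correct statement is that one averages a compatible almost complex structure and then runs an equivariant Moser argument. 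And the claim that Morse functions are generic among $D_{2^m}$-equivariant functions genuinely requires the group to be finite (for positive-dimensional $G$ one only gets Morse--Bott), so it is worth flagging where that hypothesis enters.
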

\begin{proof}
  The first statement is immediate from the definitions of $\CipLag_A$
  and $\CipLag_B$. For the second statement, recall the map $i\co S\to
  \CC$. There are two kinds of intersections between the spheres $\Sigma_{A_j}$ and $\Sigma_{B_k}$:
  \begin{enumerate}[label=(i-\arabic*)]
  \item\label{item:first-int} The first kind consists of intersections which lie over a point
    $p_\ell\in\{1,\dots,p_{2n}\}$, i.e., intersections corresponding
    to endpoints of the arcs, i.e., points of $\Sigma_{A_j}\cap
    \Sigma_{B_k}\cap i^{-1}(p_\ell)$. These intersections are already
    transverse.
  \item\label{item:second-int} The second kind consists of intersections which
    lie over a point $q\not\in\{1,\dots,p_{2n}\}$, i.e., points of
    $(\Sigma_{A_j}\cap \Sigma_{B_k}\cap i^{-1}(q))\cong S^1$. The
    action of $D_{2^m}$ on each such circle of intersections is the
    usual action of $D_{2^m}\subset O(2)$ on $S^1\subset \CC$. Choose
    a symplectic identification of a neighborhood of this $S^1$ with
    $[-1,1]\times[-1,1]\times T^*S^1=\{(x,y,\theta,t)\}$ (with
    symplectic form $dx\wedge dy + d\theta\wedge dt$), so that
    $i(x,y,\theta,t)=x+iy$, $q$ is identified with the origin, the
    $O(2)$-action is on the $\theta$ coordinate, $\Sigma_{A_j}$ is
    $\{(x,0,\theta,0)\}$, $\Sigma_{B_k}$ is $\{(0,y,\theta,0)\}$, and
    in terms of $(z,u,v)$-coordinates the subspace
    $\{(x,y,\theta,0)\}$ is embedded as
    \[
    z=x+iy\qquad u=\gamma\cos\theta\qquad v=\gamma\sin\theta
    \]
    where $\gamma$ is chosen to be one of the two (distinct) square
    roots of $-p(z)$. Choose a bump function $\phi\co[-1,1]\to \RR$
    which is $1$ on $(-1/2,1/2)$ and $0$ outside
    $(-3/4,3/4)$. Consider the Hamiltonian
    \[
    H(x,y,\theta,t)=\phi(x)\phi(y)\phi(t)\cos(2^m\theta).
    \]
    Let $\Sigma'_{A_j}$ be the image of $\Sigma_{A_j}$ under a
    short-time flow of the Hamiltonian vector field associated to
    $H$. Then $\Sigma'_{A_j}$ is still $D_{2^m}$-invariant, intersects
    $\Sigma_{B_k}$ transversely (in $2^{m+1}$ points), and the projection $i(\Sigma'_{A_j})$ is still $A_j$.
  \end{enumerate}
  Choosing a perturbation as in Point~\ref{item:second-int} for each
  of the second kind of intersections and taking the product of the
  corresponding Lagrangians $\Sigma'_{A_j}$ gives the desired
  $\CipLag'_A$.
\end{proof}

We can define a freed Floer complex over $D_{2^m}$,
$\eKCSymp(L)\coloneqq \ECF[D_{2^m}](\CipLag'_A,\CipLag_B)$, by applying the construction of
Section~\ref{sec:2-groups} to the perturbation from
Lemma~\ref{lem:perturb-Ciprian}. We can also restrict to
$\ZZ/2^m\subset D_{2^m}$, giving a freed Floer complex
$\ECF[\ZZ/2^m](\CipLag'_A,\CipLag_B)$. In the special case $m=1$,
there are two nonequivalent $\ZZ/2$-actions: the action by rotation by
$\pi$, denoted $\sigma$, and the reflection, $\tau$; when we talk
about $\ECF[\ZZ/2](\CipLag'_A,\CipLag_B)$ we will make it clear in the
text which action is under consideration. We also obtain equivariant
Floer complexes and homologies
\begin{align*}
  \eCF[G](\CipLag'_A,\CipLag_B)&\coloneqq
  \RHomO{\Field[G]}(\ECF[G](\CipLag'_A,\CipLag_B),\Field)\\
  \eHF[G](\CipLag'_A,\CipLag_B)&\coloneqq H_*(\eCF[G](\CipLag'_A,\CipLag_B)),
\end{align*}
for $G=D_{2^m}$ or $\ZZ/2^m$, which are modules over $H^*(G)$. There
are spectral sequences from
$H^*(G;\HF(\CipLag'_A,\CipLag_B))\Rightarrow
\eHF[G](\CipLag'_A,\CipLag_B)$.  There is a quasi-isomorphism
$\ECF[\ZZ/2^m](\CipLag'_A,\CipLag_B)\cong\ECF[D_{2^m}](\CipLag'_A,\CipLag_B)$
over $\Field[\ZZ/2^m]$; furthermore, for any $m'\geq m$, there is a
quasi-isomorphism $\ECF[D_{2^m}](\CipLag'_A,\CipLag_B)\cong
\ECF[D_{2^{m'}}](\CipLag'_A,\CipLag_B)$ over $\Field[D_{2^m}]$.  It
follows from Theorem~\ref{thm:KC-equi-invt}, proved in
Section~\ref{sec:ssseq-invariance}, that all of this data is an
invariant of the link $L$.

\subsubsection{Homotopy classes and gradings}
Next, we see that the action of $D_{2^m}$ preserves the grading on the
symplectic Khovanov complex. Although there is a more direct argument
(see Lemma~\ref{lem:ss-fixed-pts-subcx}), we will use a lemma about
the path space which seems of independent interest.  Recall that the
Heegaard Floer homology groups decompose according to
$\SpinC$-structures, which correspond to the homotopy classes of paths
between the Lagrangians $T_\alpha$ and $T_\beta$.  There is no
corresponding decomposition of symplectic Khovanov homology:
\begin{lemma}\label{lem:Yn-1-conn}
  The space $\ssspace{n}$ is simply connected. In particular, the
  space of paths from $\CipLag_A$ to $\CipLag_B$ is path connected.
\end{lemma}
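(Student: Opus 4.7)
The plan is to prove the first assertion---that $\ssspace{n}$ is simply connected---and deduce the second from it. For the deduction, consider the fibration
\[
P(\CipLag_A, \CipLag_B) \longrightarrow \CipLag_A \times \CipLag_B
\]
given by evaluation at the two endpoints; its fiber over $(a,b)$ is the space of paths in $\ssspace{n}$ from $a$ to $b$, which after choosing a reference path is homotopy equivalent to the based loop space $\Omega\ssspace{n}$, so its set of components is $\pi_1(\ssspace{n})$. Since both $\CipLag_A$ and $\CipLag_B$ are products of $2$-spheres and therefore connected, the long exact sequence of homotopy groups of this fibration identifies $\pi_0 P(\CipLag_A, \CipLag_B)$ with a quotient of $\pi_1(\ssspace{n})$. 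In particular, the vanishing of $\pi_1(\ssspace{n})$ suffices for path-connectedness of the path space.

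For simply-connectedness of $\ssspace{n}$, I would invoke Manolescu's biholomorphism $\ssspace{n} \cong \mathcal{Y}_{n,\tau}$ (\cite[Proposition~2.7]{Manolescu06:nilpotent}) and appeal to the structure of Seidel-Smith's $\mathcal{Y}_{n,\tau}$ as a smooth affine variety cut out of a transverse slice to a subregular nilpotent orbit in $\mathfrak{sl}_{2n}(\CC)$. Such Slodowy slice constructions in type $A$ are well understood to be simply connected, since they admit descriptions in terms of resolutions of Du Val surface singularities crossed with affine spaces. A more elementary alternative would be a direct deformation argument: letting all of the branch points $p_1,\dots,p_{2n}$ coalesce to a single point yields a one-parameter family of spaces and, by continuity of the construction, a deformation retraction of $\ssspace{n}$ onto a simply-connected core corresponding to a configuration with one branch point of multiplicity $2n$, for which one can verify contractibility directly.

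The main obstacle to a bare-hands proof starting from Manolescu's definition is that the most natural projection $\pi\co\ssspace{n} \to \Sym^n(\CC) \cong \CC^n$, sending an ideal $I$ to the monic generator of $i(I) \subset R_1$, has contractible base but non-simply-connected generic fiber: near a generic point of the base the fiber looks like $(\CC^{\ast})^n$, reflecting the two-sheeted structure of $i\co S \to \CC$ away from the points $p_j$. A naive Leray argument therefore does not directly give $\pi_1(\ssspace{n}) = 0$; one would have to track carefully how these generic-fiber loops bound relative discs in the degenerations of $\pi$ over the branch and coincidence loci. Passing through the Slodowy slice description---or through the proposed degeneration---circumvents this detailed monodromy analysis, which is the primary technical hurdle.
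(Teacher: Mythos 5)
Your deduction of path-connectedness of the path space from $\pi_1(\ssspace{n})=0$ via the path-loop fibration is exactly the argument the paper uses, so that part is fine.

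For simple-connectedness, however, both of your proposed routes contain errors, and neither matches the paper's proof. First, the nilpotent in Seidel--Smith's construction has Jordan type $(n,n)$ (two equal blocks), \emph{not} the subregular type $(2n-1,1)$. The reduction to resolutions of Du Val surface singularities (times affine space) is special to the subregular slice; the transverse slice to the $(n,n)$ orbit has a much more elaborate fiberwise topology. Indeed $\ssspace{n}$ is diffeomorphic to the compact $(n,n)$ Springer fiber $\mathfrak{B}_{n,n}$, whose cohomology carries the Khovanov homology of an unlink, far from that of a smoothed Kleinian singularity. Second, your degeneration argument runs in the wrong direction. Slodowy's theory gives a retraction of the \emph{entire} transverse slice (an affine space, hence contractible) onto the central fiber over $0$, which is therefore contractible. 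If the smooth fiber $\ssspace{n}$ deformation-retracted onto that central fiber, $\ssspace{n}$ would itself be contractible, contradicting the fact that it has nontrivial higher cohomology. So no such retraction of nearby fibers onto the singular central one exists, and the ``simply connected core'' you hope to contract onto cannot be produced this way.

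The paper's actual proof circumvents these issues by citing the Russell--Tymoczko and Wehrli identification $\mathfrak{B}_{n,n}\cong\bigcup_a S_a\subset(\CC P^1)^{2n}$, where the union is over isotopy classes $a$ of upper half-plane crossingless matchings, viewed as involutions of $\{1,\dots,2n\}$, and $S_a=\{(l_1,\dots,l_{2n})\mid l_j=l_{a(j)}\ \forall j\}$. Each $S_a$ is a product of $n$ copies of $\CC P^1$ and so is simply connected, and each pairwise intersection $S_a\cap S_{a'}$ is a smaller such diagonal, hence connected; a Seifert--van Kampen argument then gives $\pi_1=0$. If you want to salvage your first approach, the clean replacement for the Du Val claim is the general fact that Springer fibers in type $A$ are simply connected (e.g.\ via Spaltenstein/Fresse iterated $\CC P^1$-bundle structure), but you would need to cite that specifically rather than the subregular/ADE picture, which does not apply here.
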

This lemma and its proof was communicated to us by Mohammed Abouzaid
and Ivan Smith.
\begin{proof}
  The proof of simple-connectivity is essentially the same as Seidel-Smith's proof that
  $H^1(\ssspace{n})=0$~\cite[Lemma 44]{SeidelSmith6:Kh-symp}: as
  explained there (see also~\cite[Section 2.3]{SeidelSmith6:Kh-symp}),
  $\ssspace{n}$ is diffeomorphic to the $(n,n)$ Springer variety $\mathfrak{B}_{n,n}$.
  Russell-Tymoczko~\cite[Theorem 5.5]{RussellTymoczko11:Springer} and
  Wehrli~\cite[Theorem 1.2]{Wehrli:Springer} showed that
  $\mathfrak{B}_{n,n}$ is homeomorphic to the space
  $\widetilde{S}\coloneqq \cup_{a}S_a\subset (\CC P^1)^{2n}$ where the
  union is over all isotopy classes $a$ of upper half-plane
  crossingless matchings $a$, viewed as involutions
  $a\co\{1,\dots,2n\}\to\{1,\dots,2n\}$ and
  \[
  S_a\coloneqq \{(l_1,\dots,l_{2n})\in (\CC P^1)^{2n}\mid \forall j\ l_j=l_{a(j)}\}.
  \]
  This is a union of simply connected sets, all of the pairwise
  intersections of which are connected, so by the Seifert-van Kampen
  theorem, $\widetilde{S}\cong \mathfrak{B}_{n,n}\cong\ssspace{n}$ is
  simply connected.  Now, the fact that the path space is connected
  follows from the path-loop fibration
  \[
  \Omega\ssspace{n}\to \mathcal{P}(\CipLag_A,\CipLag_B)\to \CipLag_A\times\CipLag_B.\qedhere
  \]
\end{proof}

\begin{proposition}\label{prop:D2m-grading}
  The action of $D_{2^m}$ on $(\ssspace{n},\CipLag'_A,\CipLag_B)$ preserves the absolute (homological) grading.
\end{proposition}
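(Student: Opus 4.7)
The plan is to reduce the statement to two inputs: a single relative $\ZZ$-grading exists on $\CF(\CipLag'_A,\CipLag_B)$, and there is at least one intersection generator fixed pointwise by the full group $D_{2^m}$. By Lemma~\ref{lem:Yn-1-conn} the path space from $\CipLag'_A$ to $\CipLag_B$ is connected, and Hypothesis~\ref{item:J-1} is in force on $\ssspace{n}$ (it is needed to define $\KhSymp$ with coefficients in $\Field$ in the first place); together these give a single relative $\ZZ$-grading on all generators of $\CF(\CipLag'_A,\CipLag_B)$. Each $g\in D_{2^m}$ acts as a symplectomorphism preserving both Lagrangians, hence sends Whitney disks to Whitney disks of the same Maslov index, and the induced chain automorphism $g_\#$ therefore satisfies $\gr(g\cdot x)-\gr(g\cdot y)=\gr(x)-\gr(y)$ for any generators $x,y$. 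The discrepancy $C_g:=\gr(g\cdot x)-\gr(x)$ is thus independent of $x$, and exhibiting a generator $x_0$ with $g\cdot x_0=x_0$ (pointwise, not merely as a set) will immediately pin down $C_g=0$ for that $g$. Because the absolute grading differs from the relative one by a single overall constant, preservation of the relative grading plus literal fixedness of $x_0$ will promote to preservation of the absolute grading.

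The fixed generator will be assembled out of the first-kind intersections from the proof of Lemma~\ref{lem:perturb-Ciprian}. The formulas~\eqref{eq:dm-action-on-uv} show that $D_{2^m}$ fixes every point of $S$ of the form $(z,0,0)$, and by~\eqref{eq:surface-S} these are exactly the fiber points above the branch values $p_1,\ldots,p_{2n}$. The Hamiltonian used to build $\CipLag'_A$ in Lemma~\ref{lem:perturb-Ciprian} is supported in small coordinate neighborhoods of the second-kind intersection circles and is therefore disjoint from the first-kind intersections; these consequently survive unchanged in $\CipLag'_A\cap\CipLag_B$ and are pointwise fixed by all of $D_{2^m}$.

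An $n$-tuple of such first-kind points assembling into a generator corresponds to a perfect matching in the bipartite graph $G$ with parts $\{A_1,\ldots,A_n\}$ and $\{B_1,\ldots,B_n\}$ and an edge between $A_i$ and $B_j$ for every shared endpoint in $\{p_1,\ldots,p_{2n}\}$. In a bridge diagram each of the $2n$ endpoints belongs to exactly one $A$-arc and one $B$-arc, so $G$ is $2$-regular bipartite, decomposes into disjoint even cycles, and therefore admits a perfect matching. The resulting generator $x_0$ is fixed pointwise by $D_{2^m}$, which gives $C_g=0$ for every $g$ and completes the proof. The only verification requiring any care will be the localization of the perturbation in Lemma~\ref{lem:perturb-Ciprian} away from the first-kind intersections, which is clear from the explicit bump-function support but should be spelled out.
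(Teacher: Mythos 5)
Your argument is correct and follows the same two-step strategy as the paper's proof: Lemma~\ref{lem:Yn-1-conn} gives a single relative $\ZZ$-grading, the group acts by symplectomorphisms so relative Maslov indices are preserved, and the existence of a pointwise-fixed generator pins down the constant. The only difference is that you actually construct the fixed generator (first-kind intersection points lie at $(p_i,0,0)\in S$, untouched by the perturbation of Lemma~\ref{lem:perturb-Ciprian}, and a perfect matching in the $2$-regular bipartite arc-incidence graph always exists since bipartite cycles have even length), whereas the paper simply cites Section~\ref{sec:ssseq-limits} for this fact; your cycle count also recovers the $2^{|L|}$ count of fixed generators used there.
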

\begin{proof}
  By Lemma~\ref{lem:Yn-1-conn}, any two generators
  $\x,\y\in\CipLag'_A\cap\CipLag_B$ can be connected by a homotopy
  class of Whitney disks $\phi\in\pi_2(\x,\y)$. Since $D_{2^m}$ acts
  by symplectomorphisms, the Maslov index satisfies
  $\mu(\phi)=\mu(g\phi)$ for any $g\in D_{2^m}$, so the
  $D_{2^m}$-action preserves the relative Maslov gradings. Since there
  is a $D_{2^m}$-fixed generator (in fact, at least two fixed
  generators; see Section~\ref{sec:ssseq-limits} for more discussion),
  it follows that the $D_{2^m}$-action preserves the absolute grading,
  as well.
\end{proof}

\subsubsection{The cylindrical formulation}\label{sec:ss-cyl}
To visualize holomorphic disks in $(\ssspace{n},\CipLag'_A,\CipLag_B)$
it will be convenient to use the following six-dimensional
formulation; compare~\cite[Section 5.8]{AbouzaidSmith:arc-alg}.  First,
an intersection point between $\CipLag_A$
and $\CipLag_B$ corresponds to an $n$-tuple of points $\x=\{x_j\in
\Sigma_{A_j}\cap \Sigma_{B_{\sigma(j)}}\}_{j=1}^n$, where $\sigma\in
S_n$ is a permutation. Further, if $\Sigma'_{A_j}$ is a small
Hamiltonian perturbation of $\Sigma_{A_j}$ then each point in
$i(\Sigma'_{A_j}\cap \Sigma_{B_\ell})$ is close to an intersection
point of ${A_j}$ and ${B_\ell}$. Given $x\in \Sigma'_{A_j}\cap
\Sigma_{B_\ell}$ let $i(x)$ denote the corresponding intersection
point in $A_j\cap B_\ell$, and given a generator $\x=\{x_j\}$ let
$i_*(\x)=\sum_j i(x_j)$, viewed as a $0$-chain in $\CC$.  As in the
proof of Lemma~\ref{lem:perturb-Ciprian}, we can further arrange that
each intersection point of $A_j$ and $B_\ell$ of
type~\ref{item:first-int} (above) corresponds to a single intersection
point of $\Sigma'_{A_j}$ and $\Sigma_{B_\ell}$, while each
intersection point of type~\ref{item:second-int} corresponds to
$2^{m+1}$ intersection points of $\Sigma'_{A_j}$ and
$\Sigma_{B_\ell}$.

For the differential, note that a complex structure $j$ on $S$ induces
a complex structure $\Hilb^{n}(j)$ on $\Hilb^{n}(S)$. It is possible
to achieve transversality for holomorphic curves in $\Hilb^{n}(S)$
using $[0,1]$-families of complex structures of the form $\Hilb^{n}(j)$, and
we will suppose we are working with such complex structures. Then,
there is a tautological correspondence
between holomorphic disks
\begin{equation}\label{eq:holo-disk}
\phi\co (\RR\times [0,1],\RR\times\{0\},\RR\times\{1\})\to
(\Hilb^{n}(S),\CipLag'_A,\CipLag_B)
\end{equation}
connecting $\x$ to $\y$
and holomorphic maps
\begin{equation}\label{eq:cyl-ss}
\psi\co(X,\bdy X)\to \bigl(\RR\times[0,1]\times S,(\RR\times\{0\}\times (\Sigma'_{A_1}\cup\dots\cup \Sigma'_{A_n}))\cup(\RR\times\{1\}\times (\Sigma_{B_1}\cup\dots\cup \Sigma_{B_n}))\bigr),
\end{equation}
where $X$ is a Riemann surface with boundary and $2n$ boundary
punctures, $\psi$ is asymptotic to $\{-\infty\}\times[0,1]\times\x$
and $\{+\infty\}\times[0,1]\times \y$, and
$\pi_{\RR\times[0,1]}\circ\psi$ is an $n$-fold branched covering.  The
branch points of $\pi_{\RR\times[0,1]}\circ\psi$ correspond to
$\phi^{-1}(\Delta)$. Given a holomorphic disk in $\Hilb^{n}(S)$ there
is an induced homology class $D(\phi)=(\pi_S\circ\psi)_*([X,\bdy X])$
in
$H_2(S,\Sigma'_{A_1}\cup\cdots\cup\Sigma'_{A_n}\cup\Sigma_{B_1}\cup\cdots\cup\Sigma_{B_n})$
which, inspired by terminology in Heegaard Floer homology, we will
call the \emph{domain of $\phi$}. There is also a \emph{projected
  domain} $i_*D(\phi)\in H_2(\CC,A_1\cup \cdots\cup A_{n}\cup
B_1\cup\cdots \cup B_n)$.

If the map $i\co S\to \CC$ is holomorphic with respect to an almost complex structure $j_0$ on $S$ then all of the coefficients in the
projected domain of a $\Hilb^n(j_0)$-holomorphic curve are
non-negative. It follows that the same holds for almost complex
structures sufficiently close to $\Hilb^n(j_0)$, and we will assume
that we are always working with such almost complex structures.

Note that holomorphic disks which contribute to the symplectic
Khovanov differential must lie in $\ssspace{n}\subset \Hilb^n(S)$, so
not every curve of the form~\eqref{eq:cyl-ss} contributes to the
differential. 
Let $\nabla=\Hilb^n(S)\setminus \ssspace{n}$. Given a
map $\psi$ as in Formula~\eqref{eq:cyl-ss}, let $B(\psi)$ be the
preimage in $X$ of the branch points of
$\pi_{\RR\times[0,1]}\circ\psi$. (The points $B(\psi)$ correspond to
the diagonal in $\Hilb^n(S)$.)  
\begin{lemma}\label{lem:cyl-Yn}
  For a generic choice of one parameter families of almost complex
  structures $j$ on $S$, a rigid holomorphic curve $\psi$ as in
  Formula~\eqref{eq:cyl-ss} corresponds to a disk in
  $\ssspace{n}\subset\Hilb^n(S)$ if and only if
  \begin{enumerate}[label=(YC)]
  \item\label{item:YC} \ \ \  The map $
  (\Id\times i)\circ \psi \co (X\setminus B(\psi))\to \RR\times[0,1]\times\CC
  $
  is an embedding.
  \end{enumerate}
  Specifically, this cylindrical formulation~\ref{item:YC} of maps to $\ssspace{n}$
  is valid whenever all rigid holomorphic disks in
  $(\Hilb^n(S),\CipLag_A,\CipLag_B)$ lie in the complement of
  $\Delta\cap \nabla$. 
\end{lemma}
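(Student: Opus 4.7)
The lemma is naturally split into two parts: first, assuming the genericity hypothesis that rigid disks avoid $\Delta\cap\nabla$, prove the biconditional characterization of when $\psi$ corresponds to a disk in $\ssspace{n}$; second, justify that the hypothesis itself holds for generic one-parameter families of almost complex structures on $S$. The key observation to keep in mind throughout is the intrinsic description: for an ideal $I$ of length $n$ in $R$, we have $I\in\ssspace{n}$ if and only if $\CC[z]\to R/I$ is surjective, equivalently if and only if the ``points of $I$'' (counted with appropriate multiplicity) have pairwise distinct $z$-coordinates. So $\nabla$ is precisely the locus where two distinct points of the underlying subscheme of $I$ share a $z$-value, or where one point has $z$-multiplicity less than its $S$-multiplicity.

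\textbf{Forward direction.} Suppose $\psi$ is the cylindrical avatar of a holomorphic disk $\phi\co\bD^2\to\Hilb^n(S)$ lying entirely in $\ssspace{n}$. Away from $B(\psi)$ the map $\pi_{\RR\times[0,1]}\circ\psi$ is a local biholomorphism, so $(\Id\times i)\circ\psi$ is automatically a holomorphic immersion on $X\setminus B(\psi)$ (its first factor alone already is). For injectivity, suppose $x_1\neq x_2$ in $X\setminus B(\psi)$ map to the same point of $\RR\times[0,1]\times\CC$; then $\pi\circ\psi(x_1)=\pi\circ\psi(x_2)=:t$, the values $\psi(x_1),\psi(x_2)\in S$ are two distinct reduced points of the fiber $\phi(t)\subset\Hilb^n(S)$, and they share a $z$-coordinate. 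This forces $i(\phi(t))$ to have length strictly less than $n$, contradicting $\phi(t)\in\ssspace{n}$.

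\textbf{Reverse direction.} Suppose $(\Id\times i)\circ\psi$ is an embedding on $X\setminus B(\psi)$. For $t$ not a branch value of $\pi\circ\psi$, the fiber consists of $n$ distinct reduced points with pairwise distinct $z$-coordinates, and $\phi(t)\in\ssspace{n}$ follows directly. For $t\in\pi(B(\psi))$ one has $\phi(t)\in\Delta$, and here the genericity hypothesis does the work: since the rigid disk $\phi$ avoids $\Delta\cap\nabla$, we conclude $\phi(t)\notin\nabla$, i.e., $\phi(t)\in\ssspace{n}$. This neatly sidesteps the delicate scheme-theoretic analysis at branch points, which would otherwise require checking that the length-$n$ condition on $i(\phi(t))$ is preserved under collisions of points lying over distinct $z$-values.

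\textbf{Why the hypothesis holds generically.} Both $\Delta$ and $\nabla$ are complex analytic hypersurfaces in $\Hilb^n(S)$, and their intersection has complex codimension at least $2$ (that they are distinct is witnessed by generic tangential collisions, which lie in $\Delta\setminus\nabla$, and by pairs of distinct points sharing a $z$-coordinate, which lie in $\nabla\setminus\Delta$). Rigid holomorphic disks with respect to a generic one-parameter family $\Hilb^n(j)$ avoid real codimension-$4$ subvarieties away from the Lagrangian boundary, by a standard transversality argument in the universal moduli space; a further argument controls the boundary behaviour, using the explicit description of $\CipLag_A\cap\Delta\cap\nabla$ coming from the bridge diagram. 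The main obstacle in this final step is precisely this boundary analysis, since $\Delta\cap\nabla$ can meet $\CipLag_A\cup\CipLag_B$; the clean geometric form of the Lagrangians (products of totally real spheres lying over the arcs) lets one verify that the required codimension holds even along the boundary.
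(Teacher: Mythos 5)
Your biconditional argument (the forward and reverse directions) is essentially the paper's: away from $\Delta$, a point of $\Hilb^n(S)$ lies in $\ssspace{n}$ iff its $z$-coordinates are pairwise distinct, so once rigid disks avoid $\Delta\cap\nabla$, missing $\nabla$ is equivalent to the embedding condition on regular fibers. That part is fine.

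The final paragraph, however, contains a genuine error that creates a gap. You assert that ``$\Delta\cap\nabla$ can meet $\CipLag_A\cup\CipLag_B$'' and that a ``further argument controls the boundary behaviour''---and you never actually supply that argument. But the premise is false. A point of $\CipLag_A$ is a tuple $(q_1,\dots,q_n)$ with $q_i\in\Sigma_{A_i}$, and since the arcs $A_i$ are pairwise disjoint by hypothesis on the bridge diagram, the $z$-coordinates $z(q_i)\in A_i$ are automatically pairwise distinct. Hence $\CipLag_A$ (and likewise $\CipLag_B$) is contained in $\ssspace{n}$, i.e.\ is disjoint from $\nabla$, and it is disjoint from $\Delta$ by construction (it sits inside the image of $\Sym^n(S)\setminus\Delta$). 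So $\Delta\cap\nabla$ is disjoint from both Lagrangian boundary conditions, and there is no boundary transversality to analyze: the codimension-$4$ subvariety $\Delta\cap\nabla$ sits in the interior, and the standard Sard--Smale argument for interior marked points applies directly. You should delete the claim that the boundary analysis is the main obstacle (it would send a reader chasing a nonexistent difficulty) and replace it with the simple observation that the Lagrangians avoid $\Delta\cap\nabla$.
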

\begin{proof}
  If $a\in\Hilb^n(S)\setminus\Delta$, so $a$ corresponds to $n$ distinct
  points in $S$, then $a\in\ssspace{n}$ if and only if $i(a)$ consists
  of $n$ distinct points in $\CC$. Thus, if all rigid holomorphic
  disks miss $\Delta\cap\nabla$ then the condition of a disk $\phi$ missing
  $\nabla$ is equivalent to the condition that $i\circ \pi_S\circ
  \psi$ is injective on each regular fiber of
  $\pi_{\RR\times[0,1]}\circ \psi$. This is exactly the extra
  condition on $\psi$ in the statement of the lemma. 
  As $\Delta\cap \nabla$ is a real codimension $4$ subvariety of
  $\Hilb^n(S)$ disjoint from $\CipLag_A$ and $\CipLag_B$, for a
  generic choice of almost complex structure all rigid holomorphic
  disks miss this subspace.
\end{proof}

Suppose that a holomorphic curve $\psi$ as in
Formula~\eqref{eq:cyl-ss} satisfies:
\begin{itemize}
\item The map $\pi_{\RR\times[0,1]}\circ \psi$ has only order $2$ branch
  points and
\item If $p,q\in B(\psi)$ are distinct preimages of a branch
  point then $i(\pi_S(\psi(p)))\neq i(\pi_S(\psi(q)))$.
\end{itemize}
In this case, the map $\phi\co\RR\times[0,1]\to\Hilb^n(S)$
corresponding to $\psi$ lies the complement of
$\Delta\cap\nabla$. Thus, we can verify that the cylindrical formulation~\ref{item:YC}
of maps to $\ssspace{n}$ is valid without referencing the Hilbert
scheme: we need only check the two conditions above for all rigid
holomorphic curves $\psi$.

\subsection{A brief review of cyclic and dihedral groups}\label{sec:dihedral-groups}

Some good references for this section are
\cite[Chapter~VI,~Section~3]{FiedorowiczPriddy} and
\cite[Chapter~IV,~Section~2]{AdemMilgram}.

We start by recalling the cohomology of $\ZZ/2^m=\langle \sigma\mid
\sigma^{2^m}\rangle$, with coefficients in $\Field$. The group cohomology is:
\[
H^*(\ZZ/2^m)=
\begin{cases}
\Field[\alpha,w_2]/(\alpha^2+w_2)=\Field[\alpha]&\text{if $m=1$,}\\
\Field[\alpha,w_2]/(\alpha^2)&\text{if $m>1$;}
\end{cases}
\] 
see, for example,~\cite[Example 3.41]{Hatcher02:book}. The reason for the
notation $w_2$ is that, under the inclusion $\ZZ/2^m\hookrightarrow
\SO(2)$, $w_2$ is the pullback of the universal second Stiefel-Whitney
class in $H^2(\BSO(2);\Field)$. In the case $m=1$, the class $\alpha$
was denoted $\theta$ earlier in the paper; we have changed notation
because soon there will be two different $\ZZ/2$-actions. 

A particularly nice free resolution of $\Field$ over $\Field[\ZZ/2^m]$
is given by
\[
\xymatrix{
  \Field[\ZZ/2^m]&\Field[\ZZ/2^m]\ar[l]_-{1+\sigma}&&\Field[\ZZ/2^m]\ar[ll]_-{(1+\sigma)^{2^m-1}}&\Field[\ZZ/2^m]\ar[l]_-{1+\sigma}&&\cdots\ar[ll]_-{(1+\sigma)^{2^m-1}}.
}
\]
So, given a chain complex $C_*$ over $\Field[\ZZ/2^m]$ we can compute
the equivariant cohomology of $C_*$ via the complex
\[
\xymatrix{
  C^*\ar[r]^-{1+\sigma}&C^*\ar[rr]^-{(1+\sigma)^{2^m-1}}&& C^*\ar[r]^-{1+\sigma} & C^*\ar[rr]^-{(1+\sigma)^{2^m-1}}&&\cdots.
}
\]
The group cohomology $H^*(\ZZ/2^m)$ acts on the above complex as
follows: $w_2$ shifts the complex two units to the right; and $\alpha$
is the following map
\[
\xymatrix{
  C^*\ar[r]^-{1}&C^*\ar[rr]^-{(1+\sigma)^{2^m-2}}&& C^*\ar[r]^-{1} & C^*\ar[rr]^-{(1+\sigma)^{2^m-2}}&&\cdots.
}
\]
(Compare Lemma~\ref{lem:ring-str}.)

Next, we turn to the dihedral group of order $2^{m+1}$,
$D_{2^m}=\langle \sigma,\tau\mid
\sigma^{2^m},\tau^2,\sigma\tau\sigma\tau\rangle$, which
acts on $\RR^2$ and $\CC^2$ as
\[
  \sigma(u,v)=(u\cos\theta_m-v\sin\theta_m,u\sin\theta_m+v\cos\theta_m) \qquad
  \tau(u,v)=(u,-v),
\]
where $\theta_m=2\pi/{2^m}$, giving a subgroup inclusion
$D_{2^m}\into O(2)$.  Passing to group cohomologies (with coefficients
in $\Field$), we get a map $H^*(D_{2^m})\leftarrow\Field[w_1,w_2]$,
where $w_1\in H^1(BO(2))$ and $w_2\in H^2(BO(2))$ are the universal
first and second Stiefel-Whitney classes.  By an abuse of notation,
let $w_1$ and $w_2$ denote their images in $H^*(D_{2^m})$.

For any
finite group $G$ with abelianization $\Ab(G)$, $H^1(G)$ is just
$\Hom(\Ab(G),\Field)$. Each dihedral group $D_{2^m}$ has
abelianization isomorphic to $(\ZZ/2)^2$, generated by $\sigma$ and
$\tau$, and therefore, $H^1(D_{2^m})\cong\Field\oplus\Field$. One of
the non-zero elements is $w_1$, which with respect to the above
identification satisfies
\[
w_1(\sigma)=0\qquad w_1(\tau)=1\qquad w_1(\sigma+\tau)=1.
\]
Let $\alpha,\alpha+w_1$ be the other two non-zero elements of
$H^1(D_{2^m})$, labeled so that
\begin{align*}
\alpha(\sigma)&=1 & \alpha(\tau)&=0 &\alpha(\sigma+\tau)&=1\\
(\alpha+w_1)(\sigma)&=1 & (\alpha+w_1)(\tau)&=1& (\alpha+w_1)(\sigma+\tau)&=0.
\end{align*}
Then the group cohomology is given by
\begin{equation}\label{eq:dm-cohomology-ring}
H^*(D_{2^m})=
\begin{cases}
\Field[\alpha,w_1,w_2]/(\alpha(\alpha+w_1)+w_2)=\Field[\alpha,w_1]&\text{if $m=1$,}\\
\Field[\alpha,w_1,w_2]/(\alpha(\alpha+w_1))&\text{if $m>1$;}
\end{cases}
\end{equation}
compare~\cite[Proposition VI.3.1]{FiedorowiczPriddy},~\cite[Theorem
IV.2.7]{AdemMilgram}.

Fix a bounded-below chain complex $C_*$ with a left 
$D_{2^m}$-action. As in
Section~\ref{sec:background}, given any free resolution $R_*$ of
$\Field$ over $\Field[D_{2^m}]$, we obtain a free resolution
$C_*\otimes R_*$ of $C_*$ over $\Field[D_{2^m}]$, and let
$\RHomO{\Field[D_{2^m}]}(C_*,\Field)=\Hom_{\Field[D_{2^m}]}(C_*\otimes
R_*,\Field)$ and
$\eH[D_{2^m}]^*(C_*)\coloneqq \ExtO{\Field[D_{2^m}]}(C_*,\Field)=H_*(\RHomO{\Field[D_{2^m}]}(C_*,\Field))$. The
following is a reasonably simple
free resolution of $\Field$ over $\Field[D_{2^m}]$:
\begin{equation}\label{eq:Dm-free-res}
\vcenter{\xymatrix{
  \vdots\ar[d]^-{\tau+1}&\vdots\ar[d]^-{\sigma\tau+1}&&\vdots\ar[d]^-{\tau+1}&\vdots\ar[d]^-{\sigma\tau+1}\\
  \Field[D_{2^m}]\ar[d]^-{\tau+1}&\Field[D_{2^m}]\ar[l]_-{1+\sigma}\ar[d]^-{\sigma\tau+1}&&\Field[D_{2^m}]\ar[ll]_-{(1+\sigma)^{2^m-1}}\ar[d]^-{\tau+1}&\Field[D_{2^m}]\ar[l]_-{1+\sigma}\ar[d]^-{\sigma\tau+1}&&\cdots\ar[ll]_-{(1+\sigma)^{2^m-1}}\\
  \Field[D_{2^m}]&\Field[D_{2^m}]\ar[l]_-{1+\sigma}&&\Field[D_{2^m}]\ar[ll]_-{(1+\sigma)^{2^m-1}}&\Field[D_{2^m}]\ar[l]_-{1+\sigma}&&\cdots\ar[ll]_-{(1+\sigma)^{2^m-1}}
}};
\end{equation}
compare~\cite[Proposition IV.2.5]{AdemMilgram}.  (This is a diagram of
left modules, and the map
$\Field[D_{2^m}]\stackrel{x}{\longrightarrow}\Field[D_{2^m}]$ is
shorthand for right-multiplication by $x$.) The dual complex $C^*$ can
be equipped with a left $D_{2^m}$-action by defining the action of
$g\in D_{2^m}$ on $f\from C_*\to\Field$ to be $f\circ g^{-1}$.  With
this left-action, the complex $\RHomO{\Field[D_{2^m}]}(C_*,\Field)$
coming from the resolution~\eqref{eq:Dm-free-res} is:
\begin{equation}\label{eq:Dn-cx-res}
\vcenter{\xymatrix{
  \vdots\ar@{<-}[d]^-{\tau+1}&\vdots\ar@{<-}[d]^-{\sigma\tau+1}&&\vdots\ar@{<-}[d]^-{\tau+1}&\vdots\ar@{<-}[d]^-{\sigma\tau+1}\\
  C^*\ar@{<-}[d]^-{\tau+1}&C^*\ar@{<-}[l]_-{1+\sigma}\ar@{<-}[d]^-{\sigma\tau+1}&&C^*\ar@{<-}[ll]_-{(1+\sigma)^{2^m-1}}\ar@{<-}[d]^-{\tau+1}&C^*\ar@{<-}[l]_-{1+\sigma}\ar@{<-}[d]^-{\sigma\tau+1}&&\cdots\ar@{<-}[ll]_-{(1+\sigma)^{2^m-1}}\\
  C^*&C^*\ar@{<-}[l]_-{1+\sigma}&&C^*\ar@{<-}[ll]_-{(1+\sigma)^{2^m-1}}&C^*\ar@{<-}[l]_-{1+\sigma}&&\cdots\ar@{<-}[ll]_-{(1+\sigma)^{2^m-1}}
}}.
\end{equation}

This complex is $\ZZ\times\ZZ$-filtered.  The associated graded
complex with respect to the $\ZZ\times\ZZ$-filtration is isomorphic to $C^*\otimes
H^*(D_{2^m})$.
From such a doubly filtered chain complex one can construct many
different spectral sequences, each with $E_1$-page given by
$H^*(C_*)\otimes H^*(D_{2^m})$ and $E_\infty$-page given by
$\ExtO{\Field[D_{2^m}]}(C_*,\Field)$. For instance, consider the set of functions
\begin{equation}\label{eq:fun-with-double-complexes}
\big\{f\co\NN\times\NN\to \RR\mid f(a,b)>f(a',b')\text{ if }\big(a\geq
a',b\geq b',(a,b)\neq(a',b')\big)\big\}.
\end{equation}
Such a function produces an $\RR$-filtered complex by declaring the
$C^*$-summand that lives over the lattice point $(a,b)$ to be in
filtration $f(a,b)$. The associated graded object for this
singly filtered complex is still $C^*\otimes
H^*(D_{2^m})$. Restricting to functions $f$ with $\Image(f)=\NN$, we
get spectral sequences $H^*(C_*)\otimes
H^*(D_{2^m})\Rightarrow\ExtO{\Field[D_{2^m}]}(C_*,\Field)$.

\begin{lemma}\label{lem:ring-str}
  The action of $H^*(D_{2^m})$ on $\eH[D_{2^m}]^*(C_*)$ is induced from
  the following actions on the $\RHomO{\Field[D_{2^m}]}(C_*,\Field)$
  complex from~\eqref{eq:Dn-cx-res}:
  \begin{itemize}[leftmargin=*]
    \item $w_1$ shifts the complex one unit up;
    \item $w_2$ shifts the complex two units to the right; and
    \item $\alpha$ acts as follows (we have suppressed the
      differentials on the chain complex)
      \[
      \vcenter{\xymatrix{
        \vdots&\vdots\ar@{<-}[d]^-{1}&&\vdots&\vdots\ar@{<-}[d]^-{1}\\
        C^*&C^*\ar@{<-}[l]_-{1}\ar@{<-}[d]^-{1}&&C^*\ar@{<-}[ll]_-{(1+\sigma)^{2^m-2}}&C^*\ar@{<-}[l]_-{1}\ar@{<-}[d]^-{1}&&\cdots\ar@{<-}[ll]_-{(1+\sigma)^{2^m-2}}\\
        C^*&C^*\ar@{<-}[l]_-{\sigma}\ar@{<-}[d]^-{1}&&C^*\ar@{<-}[ll]_-{\sigma(1+\sigma)^{2^m-2}}&C^*\ar@{<-}[l]_-{\sigma}\ar@{<-}[d]^-{1}&&\cdots\ar@{<-}[ll]_-{\sigma(1+\sigma)^{2^m-2}}\\
        C^*&C^*\ar@{<-}[l]_-{1}&&C^*\ar@{<-}[ll]_-{(1+\sigma)^{2^m-2}}&C^*\ar@{<-}[l]_-{1}&&\cdots\ar@{<-}[ll]_-{(1+\sigma)^{2^m-2}}}}.
      \]
  \end{itemize}
\end{lemma}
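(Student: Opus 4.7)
The plan is to compute the $H^*(D_{2^m})$-module structure on $\eH[D_{2^m}]^*(C_*)$ by exhibiting explicit chain-level lifts of cocycles in the resolution~\eqref{eq:Dm-free-res}. Write $R_*$ for that resolution; then $\HomO{\Field[D_{2^m}]}(R_*,C^*)$ is exactly~\eqref{eq:Dn-cx-res}, since $\HomO{\Field[D_{2^m}]}(\Field[D_{2^m}],C^*)\cong C^*$ and a map of free left $\Field[D_{2^m}]$-modules given by right multiplication by $x$ dualizes to left multiplication by $x$ on $C^*$. The Yoneda-product description of the action says that $[\phi]\in H^n(D_{2^m})$ acts on $\Ext^*_{\Field[D_{2^m}]}(C_*,\Field)$ by precomposition with any chain endomorphism $\tilde\phi\from R_{*+n}\to R_*$ lifting $\phi$; such lifts exist and are unique up to chain homotopy by standard comparison of resolutions, so it suffices to produce one explicit lift for each of $w_1$, $w_2$, $\alpha$ and identify the induced action with the maps in the statement.

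To pin down cocycles representing $w_1$, $w_2$, and $\alpha$ I would observe that the complex $\HomO{\Field[D_{2^m}]}(R_*,\Field)$ has zero differentials, since $1+\sigma$, $(1+\sigma)^{2^m-1}$, $\tau+1$, and $\sigma\tau+1$ all lie in the augmentation ideal. Thus its cohomology is the underlying doubly-graded $\Field$-vector space, whose bigraded dimensions agree with~\eqref{eq:dm-cohomology-ring}. Restricting to the subgroups $\langle\tau\rangle$ and $\langle\sigma\rangle$ then identifies the cocycles dual to the generators at the lattice points $(0,1)$, $(1,0)$, and $(2,0)$ with $w_1$, $\alpha$, and $w_2$ respectively.

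For $\tilde{w_1}$ the lift is the ``shift one row down'' map, sending the generator $r_{(a,b+1)}$ to $r_{(a,b)}$; because the vertical differentials $\tau+1$ and $\sigma\tau+1$ depend only on the column $a$ and not on $b$, this commutes with all differentials, and dualizing yields the identity shift upward by one unit in~\eqref{eq:Dn-cx-res}. For $\tilde{w_2}$ the lift is ``shift two columns left'', which commutes with all differentials since a shift by two preserves the parity of the column and hence the alternation between $1+\sigma$ and $(1+\sigma)^{2^m-1}$; dualizing gives the shift by two to the right.

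The case of $\alpha$ is the real work. Shifting one column to the left flips column parity and so must mediate between $1+\sigma$ and $(1+\sigma)^{2^m-1}$, which is done by multiplying by $(1+\sigma)^{2^m-2}$ on every other column using $(1+\sigma)\cdot(1+\sigma)^{2^m-2}=(1+\sigma)^{2^m-1}$; this produces the horizontal labels $1$ and $(1+\sigma)^{2^m-2}$ in the lemma's diagram. However, this purely horizontal lift fails to commute with the alternation $\tau+1$ versus $\sigma\tau+1$ of the vertical differentials, so I would add a vertical component to $\tilde\alpha$ supported at the odd-parity columns; under dualization this produces the identity vertical arrows displayed on odd columns. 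The main obstacle is verifying the bidegree $(-1,-1)$ chain-map equation for this combined horizontal-plus-vertical lift, which amounts to a direct calculation in $\Field[D_{2^m}]$ using the dihedral relation $\sigma\tau=\tau\sigma^{-1}$ together with the characteristic-two identity $(1+\sigma)^{2^m}=0$; the extra factors of $\sigma$ appearing in the horizontal labels of alternating rows in the lemma's diagram arise from propagating, row by row, the discrepancy between $\tau+1$ and $\sigma\tau+1$ through the vertical differentials. Once this lift is written down and checked, precomposition with $\tilde\alpha$ realizes the cup action of $[\alpha]$, and the displayed formulas read off directly.
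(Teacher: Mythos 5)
Your approach is a valid alternative to the paper's, and at heart it makes explicit what the paper leaves implicit. The paper's proof runs: check that the three displayed maps are chain maps, check that they satisfy the relations of $H^*(D_{2^m})$ up to homotopy, observe they are natural in $C_*$, verify by direct computation that they give the ring structure when $C_*=\Field$ is one-dimensional, and then invoke naturality to conclude for general $C_*$. You instead adopt the Yoneda viewpoint from the outset: identify the complex~\eqref{eq:Dn-cx-res} with $\HomO{\Field[D_{2^m}]}(R_*,C^*)$, construct chain-level lifts $\tilde w_1,\tilde w_2,\tilde\alpha\co R_{*+n}\to R_*$ of representing cocycles, and read off the action by precomposition. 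What your approach buys: the relations of $H^*(D_{2^m})$ hold automatically up to homotopy (lifts of cocycles are unique up to chain homotopy), and there is no need to separately invoke naturality or reduce to the one-dimensional case, since the Yoneda definition of the module structure is exactly the precomposition you compute. What it costs: you must explicitly pin down which cocycle in $\HomO{\Field[D_{2^m}]}(R_*,\Field)$ represents each of $w_1,\alpha,w_2$, a step you only sketch via restriction to $\langle\tau\rangle$ and $\langle\sigma\rangle$; the paper's route sidesteps this by matching the ring structure on $C_*=\Field$ directly. You should also note, as you essentially do, that the identification must separate $\alpha$ from $\alpha+w_1$ — restriction to the two cyclic subgroups does this, but one has to actually describe the induced maps on the restricted resolutions, which is not completely routine.

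Your lifts for $w_1$ and $w_2$ are correct for the reason you give, granted the observation (important, and worth stating) that the differentials in the resolution~\eqref{eq:Dm-free-res} act by \emph{right} multiplication; with that convention one checks directly that $(\sigma\tau+1)(1+\sigma) = (1+\sigma)(\tau+1)$, so the bicomplex relations hold, and a row shift or even column shift commutes with both $d_h$ and $d_v$ because they depend only on the column parity. For $\tilde\alpha$ your description of the shape of the required lift (horizontal shift with alternating $1$/$(1+\sigma)^{2^m-2}$ factors, plus a vertical correction on odd columns, with an extra row-dependent $\sigma$) matches the lemma's displayed formula, but you stop short of actually verifying the chain-map equation. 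To be fair, the paper's own proof also declares this verification ``straightforward'' without exhibiting it; this is the genuinely nontrivial calculation in the lemma, and if you want your argument to be complete you should carry out the dihedral-relation bookkeeping you allude to, checking for both column parities and both row parities that the horizontal, vertical, and mixed commutator terms cancel over $\Field$.
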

\begin{proof}
  It is straightforward to check that these actions are chain
  maps. The actions by $w_1$ and $\alpha$ commute up to homotopy, and
  the action by $\alpha(\alpha+w_1)$ is null-homotopic (respectively,
  homotopic to the action by $w_2$) if $m>1$ (respectively,
  $m=1$). Furthermore, these actions are natural with respect to maps
  induced by $D_{2^m}$-equivariant chain maps $C_*\to C'_*$. One can
  check by direct computation that these actions induce the ring
  structure on $H^*(D_{2^m})$ when $C_*$ is one-dimensional, and
  therefore by naturality, these induce the module structure of
  $\eH[D_{2^m}]^*(C_*)$ over $H^*(D_{2^m})$ for general $C_*$.
\end{proof}
In particular, by ignoring the $\alpha$-action, the
$\RHomO{\Field[D_{2^m}]}(C_*,\Field)$ complex
from~\eqref{eq:Dn-cx-res} can be viewed as a strict dg-module over
$\Field[w_1,w_2]$. This also produces a double filtration on the
$\RHom$ complex by the $w_1$ and $w_2$ powers. However, this double
filtration is `twice' as coarse as as the previous one (and consequently,
carries `half' the information).

\subsection{A localization result}\label{sec:ssseq-limits}
Let $L$ be a link with $|L|$ components. The $D_{2^m}$-action on $(\ssspace{n},\CipLag_A',\CipLag_B)$ restricts to a $\ZZ/2^m$-action, for which we have the following localization result:
\begin{theorem}\label{thm:khsymp-loc}
  For each positive integer $m$, the localized equivariant Floer
  cohomology $w_2^{-1}\eHF[\ZZ/2^m](\CipLag_A',\CipLag_B)$ is
  isomorphic to a direct sum of $2^{|L|}$ copies of
  $w_2^{-1}H^*(\ZZ/2^m)$. In particular, for
  $m=1$, we have that $\alpha^{-1}\eHF(\CipLag_A',\CipLag_B)$ is
  isomorphic to a direct sum of $2^{|L|}$ copies of
  $\Field[\alpha,\alpha^{-1}]$.
\end{theorem}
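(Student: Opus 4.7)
The plan is to reduce the localized equivariant cohomology to a computation on the discrete set of $\sigma$-fixed generators, and then observe that inverting $w_2$ kills the contribution of free $\Field[\ZZ/2^m]$-summands.

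First I would identify the $\sigma$-fixed set in $S=\{u^2+v^2+p(z)=0\}$. Any nontrivial power of $\sigma$ acts on $(u,v)\in\CC^2$ as a nontrivial rotation, whose only fixed vector is the origin, so the fixed set in $S$ is the discrete collection $\{(p_\ell,0,0):1\leq\ell\leq 2n\}$. A generator $\{x_j\}\in\CipLag_A'\cap\CipLag_B$ is $\sigma$-fixed iff each $x_j$ is individually fixed (the pairwise disjointness of the projections $A_j\subset\CC$ of the $\Sigma'_{A_j}$ prevents any nontrivial permutation of indices), so the same combinatorial count as in Lemma~\ref{lem:fixed-discrete}---alternating vertex selections in the cycle graph formed by the $A$- and $B$-arcs---gives exactly $2^{|L|}$ fixed generators, two per link component. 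The same argument applied to arbitrary nontrivial powers of $\sigma$ shows every non-fixed generator has trivial $\ZZ/2^m$-stabilizer and hence lies in a free $\ZZ/2^m$-orbit of size $2^m$.

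Next I would invoke Proposition~\ref{prop:equi-is-equi-K} to replace the freed Floer complex with $C\coloneqq\CF(\CipLag_A',\CipLag_B)$ viewed as a $\Field[\ZZ/2^m]$-chain complex. Hypothesis~\ref{hyp:equivariant-transversality-K} is satisfied because the fixed set in $\ssspace{n}$ intersects the Lagrangians only in a discrete set, so a generic $\ZZ/2^m$-equivariant one-parameter family of almost complex structures achieves transversality for all Maslov-index-$\leq 1$ moduli spaces (in the spirit of Condition~\ref{item:et2p}). The orbit analysis above gives, in each homological grading, a $\Field[\ZZ/2^m]$-module splitting $C=C_{\mathrm{fix}}\oplus C_{\mathrm{free}}$ with $C_{\mathrm{fix}}$ a sum of trivial modules and $C_{\mathrm{free}}$ free. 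To promote this to a splitting of chain complexes, I would twist the diagram as in Lemma~\ref{lem:ss-fixed-pts-subcx}: Reidemeister I moves at the arc endpoints take place in the $z$-plane (on which the $\ZZ/2^m$-action is trivial), hence lift to equivariant Hamiltonian isotopies that preserve the equivariant quasi-isomorphism type by Proposition~\ref{prop:invariance-gen-group}, and after sufficient twisting all fixed generators lie in a single top homological grading. In the twisted picture $C_{\mathrm{free}}$ becomes a subcomplex (the chain differential decreases grading into the range where only non-fixed generators live) with quotient $C_{\mathrm{fix}}$ carrying the zero differential (no disks between distinct points of the discrete fixed set), yielding a short exact sequence $0\to C_{\mathrm{free}}\to C\to C_{\mathrm{fix}}\to 0$ of $\Field[\ZZ/2^m]$-chain complexes.

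The localization is then immediate. Applying $\RHom_{\Field[\ZZ/2^m]}(-,\Field)$, the group $\Ext^*_{\Field[\ZZ/2^m]}(C_{\mathrm{free}},\Field)$ is finite-dimensional over $\Field$ because $C_{\mathrm{free}}$ is a bounded complex of projective $\Field[\ZZ/2^m]$-modules; hence multiplication by the degree-$2$ class $w_2$ is eventually nilpotent on it, so $w_2^{-1}\Ext^*(C_{\mathrm{free}},\Field)=0$. The associated long exact sequence gives $w_2^{-1}\eHF[\ZZ/2^m](\CipLag_A',\CipLag_B)\cong w_2^{-1}\Ext^*_{\Field[\ZZ/2^m]}(C_{\mathrm{fix}},\Field)$, which is a direct sum of $2^{|L|}$ copies of $w_2^{-1}H^*(\ZZ/2^m)$, as required. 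The main obstacle is the chain-level splitting above: one must verify that Reidemeister I twists do indeed push all fixed generators uniformly into the top homological grading, which is the content of the grading computation in Lemma~\ref{lem:ss-fixed-pts-subcx}.
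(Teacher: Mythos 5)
Your proof is correct and follows essentially the same route as the paper: discrete $\sigma$-fixed set $\Rightarrow$ equivariant transversality via Proposition~\ref{prop:equi-is-equi-K}, then twisting (Lemma~\ref{lem:ss-fixed-pts-subcx}) so the $2^{|L|}$ fixed generators span a quotient complex in a single grading (needing Proposition~\ref{prop:D2m-grading}), then localize to kill the free part. The only difference is a repackaging of the final homological-algebra step: you run a short exact sequence $0\to C_{\mathrm{free}}\to C\to C_{\mathrm{fix}}\to 0$ and use that $\Ext^*_{\Field[\ZZ/2^m]}(C_{\mathrm{free}},\Field)$ is finite-dimensional (hence $w_2$-nilpotent), whereas the paper abstracts this into Lemma~\ref{lem:trivial-localization} via a spectral-sequence comparison on the Tate complex; these are interchangeable.
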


The case $m=1$ follows easily from the Seidel-Smith localization
theorem~\cite[Theorem 20]{SeidelSmith10:localization}; see also
Proposition~\ref{prop:localized-equivariant}. The general case seems to need
an additional argument.

We start by discussing the fixed points of the action. The action of
$\ZZ/2^m$ on $\CipLag_A'\cap\CipLag_B$ is semi-free: each
$\ZZ/2^m$-orbit is either free or trivial. Further, there are exactly
$2^{|L|}$ fixed points of the $\ZZ/2^m$ action: in the notation of
Lemma~\ref{lem:perturb-Ciprian} the fixed points are $n$-tuples of
intersections of type~\ref{item:first-int}. (See also
Lemma~\ref{lem:fixed-discrete}.) We will refer to these fixed points
as \emph{$\ZZ/2^m$-fixed generators}.

\begin{lemma}\label{lem:ss-fixed-pts-subcx}
  For any link $L$, there is a bridge diagram
  $(\{p_i\},\{A_i\},\{B_i\})$ so that the $\ZZ/2^m$-fixed generators
  are the unique generators of maximal homological grading. In
  particular, the $\ZZ/2^m$-fixed generators span a subcomplex of the
  symplectic Khovanov cochain complex $\CF^*(\CipLag_A',\CipLag_B)$.
\end{lemma}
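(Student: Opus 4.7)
The plan parallels the ``quite twisty'' construction used earlier in the paper on the branched-double-cover side (compare Lemma~\ref{lem:cob-or} and the discussion preceding it): after sufficiently many Reidemeister~I winding moves performed at the endpoints of an arbitrary bridge diagram for $L$, every non-fixed intersection point will be driven strictly below the fixed ones in homological grading. Since every non-fixed generator contains at least one non-fixed (type~(i-2)) intersection, this will force the fixed generators---which by Proposition~\ref{prop:D2m-grading} all lie in a single homological grading---to be the unique generators of maximal grading. Once that is established, the ``in particular'' clause is immediate: the Floer cochain differential raises the homological grading by one, so it annihilates every fixed generator for grading reasons and the span of the fixed generators is a subcomplex.

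The first step will be to check that the winding can be done $\ZZ/2^m$-equivariantly essentially for free. The $D_{2^m}$-action on $S$ in Equation~(\ref{eq:dm-action-on-uv}) is trivial on the $z$-coordinate, so any modification of the bridge diagram performed in $\CC$ automatically lifts to a $D_{2^m}$-equivariant modification of the pair $(\CipLag'_A,\CipLag_B)$ inside a new $\ssspace{n'}$. The second, and main, step will be a local Maslov-index computation for a single Reidemeister~I twist at an endpoint $p_i$: the twist replaces a neighborhood of $p_i$ by a small loop carrying two new type~(i-1) endpoints and one new type~(i-2) self-crossing, which after the Hamiltonian perturbation of Lemma~\ref{lem:perturb-Ciprian} gives rise to a single $D_{2^m}$-orbit of $2^{m+1}$ non-fixed intersections in the relevant $\Sigma'_{A_j}\cap \Sigma_{B_k}$. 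Using the six-dimensional cylindrical description of Section~\ref{sec:ss-cyl}, I would exhibit an explicit Whitney disk supported in this local model which connects a local type-(i-1) intersection to a local type-(i-2) intersection, and verify that for an appropriate sign of the twist its Maslov index is strictly positive.

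Granted the local computation, the argument finishes as follows. Performing one such twist at each endpoint $p_i$ produces a bridge diagram in which every non-fixed generator $\mathbf{y}$ involves at least one intersection coming from a twisted region. By Lemma~\ref{lem:Yn-1-conn} the path space from $\CipLag'_A$ to $\CipLag_B$ is connected, so $\mathbf{y}$ can be connected to a suitable fixed generator $\mathbf{x}$ by a Whitney disk obtained by concatenating the local positive-index disks from the twisted regions with small product disks elsewhere; additivity of the Maslov index then gives $\gr(\mathbf{x})>\gr(\mathbf{y})$, completing the proof. The main obstacle is exactly the local Maslov-index computation in the twisted region: one must identify the connecting Whitney disk in $\ssspace{n'}$ explicitly enough to read off its index, which requires cooperating the cylindrical picture of Section~\ref{sec:ss-cyl} with the perturbation scheme from Lemma~\ref{lem:perturb-Ciprian}. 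Everything else is bookkeeping once this is in hand.
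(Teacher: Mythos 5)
Your high-level plan — wind the bridge diagram at the branch points so that non-fixed generators are driven strictly below the fixed ones — is the same strategy the paper uses, and the closing ``in particular'' deduction is exactly right. But there is a genuine gap at the center, one you half-acknowledge yourself: the entire argument hinges on ``a local Maslov-index computation for a single Reidemeister~I twist,'' which you do not carry out and describe as the main obstacle. The paper does not compute Maslov indices of Whitney disks in $\ssspace{n'}$ at all; it invokes Manolescu's combinatorial grading formula for $\CF(\CipLag_A',\CipLag_B)$, in which each $B_i$ is replaced by a figure-$8$ curve $\gamma_i$ in $\CC$, generators correspond to $n$-tuples of points of $A_j\cap\gamma_k$, and the grading difference between two generators $\x$ and $\y$ is the sum of the Maslov indices of the loops $T\eta_i\subset\RR^2$ of tangent lines along paths $\eta_i\subset\gamma_i$ joining them. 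That reduces the question to an elementary winding count in the plane, for which the effect of Dehn twisting $B$ around a small loop $\zeta_i$ about $p_i$ is transparent: the new type-(i-2) intersections descend in grading by $2$ per twist relative to the type-(i-1) endpoints. Your proposal replaces this one-line combinatorial count with a construction of explicit holomorphic-looking disks in a high-dimensional Hilbert-scheme target, which is both substantially harder and not actually needed.

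There is also a factual error in your description of the move that would derail the Maslov computation even if you attempted it. You write that the twist ``replaces a neighborhood of $p_i$ by a small loop carrying two new type-(i-1) endpoints and one new type-(i-2) self-crossing.'' That describes a \emph{stabilization} of the bridge diagram. The winding move of Figure~\ref{fig:winding} is an \emph{isotopy} of the $B$-arcs (a Dehn twist around $\zeta_i$): it changes no endpoints at all — the set of type-(i-1) intersections is unchanged — and only introduces new type-(i-2) interior intersections of $A_j$ with $B_k$. Since the type-(i-1) points are precisely the $\ZZ/2^m$-fixed intersections, mis-modeling the move as one that creates new fixed points would make the intended grading comparison impossible to set up. The concatenation step at the end also needs more care than ``small product disks elsewhere'': what one actually needs is a \emph{topological} Whitney disk class between the generators (its existence is guaranteed by Lemma~\ref{lem:Yn-1-conn}), and then an index count, but this is again exactly what Manolescu's formula packages for you.
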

\begin{proof}
  Manolescu explains how to compute the homological grading on
  $\CF(\CipLag_A',\CipLag_B)$ in the non-equivariant setting, where
  each type~\ref{item:second-int} intersection corresponds to two
  intersections of $\Sigma'_A$ and 
  $\Sigma_B$~\cite[Section 6.2]{Manolescu06:nilpotent}. To compute the
  relative homological grading, first isotope the bridge diagram so
  that all of the $A_i$-arcs are horizontal, and then replace each
  $B_i$ in the bridge diagram by a figure $8$ (immersed circle with
  one transverse double point) $\gamma_i$ near $B_i$, so that
  $\gamma_i$ intersects each $A_j$ at right angles. The generators of
  $\CF(\CipLag_A',\CipLag_B)$ correspond to the intersections of
  $A_i$ with $\gamma_i$. Given two generators $\x=\{x_i\}$ and
  $\y=\{y_i\}$, choose a smooth path $\eta_i$ in $\gamma_i$ from
  $\x\cap \gamma_i$ to $\y\cap \gamma_i$, and let $\mu(\eta_i)$ denote
  the Maslov index of the loop of subspaces
  $T\eta_i\subset\RR^2$. Then the grading difference between $\x$ and
  $\y$ is $\sum_i\mu(\eta_i)$.

  This description carries over to a description of the gradings in
  the equivariant case, as follows. In the equivariant case, each
  type~\ref{item:second-int} intersection corresponds to $2^m$
  intersections of $\Sigma'_A$ and $\Sigma_B$, corresponding to the
  critical points of a $D_{2^m}$-equivariant Morse function on
  $S^1$. Like the indices of the critical points of a Morse function
  on $S^1$, these $2^m$ intersection points lie in two adjacent
  gradings---the same two gradings as the two intersections for the
  non-equivariant perturbation.

  Now, to achieve the conditions of the lemma, wind the $B$-arcs as in
  Figure~\ref{fig:winding}. That is, fix a small loop $\zeta_i$ around
  each $p_i$ and perform $N_i\gg 0$ Dehn twists on the $B$-arcs around
  each $\zeta_i$. From Manolescu's description of the gradings, it is
  clearly possible to choose the $N_i$ so that all of the
  type~\ref{item:first-int} intersection points have the same grading,
  and, if the $N_i$ are large, this grading is (strictly) higher than
  the grading of any type~\ref{item:second-int} intersection
  point. The result follows.
\end{proof}

\begin{figure}
  \centering
  \includegraphics{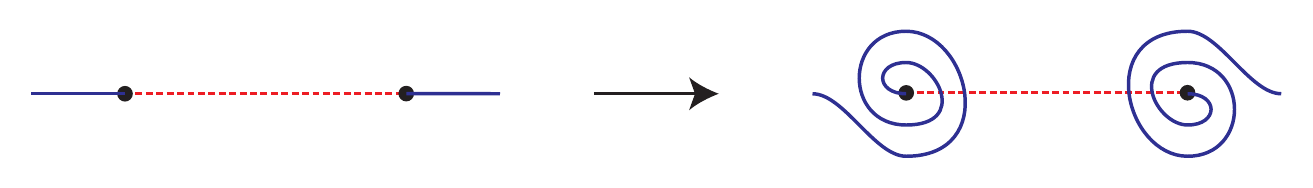}
  \caption{\textbf{Winding a bridge diagram.} Perform $N$ Reidemeister
    I moves near each endpoint, as shown, by winding the $B$-curve
    clockwise $N$ full turns. The case $N=2$ is shown.}
  \label{fig:winding}
\end{figure}

From Lemma~\ref{lem:ss-fixed-pts-subcx}, the localization result
follows from homological algebra, which we abstract as its own lemma:

\begin{lemma}\label{lem:trivial-localization}
  Suppose that $C_*$ is a chain complex over $\Field$, with
  $C_n=\Field\langle X_n\rangle$ freely generated by a finite set
  $X_n$, with $X_n=\emptyset$ for $|n|$ sufficiently large. Suppose
  further that $\ZZ/2^m$ acts on each $X_n$, and that this action
  induces an action by chain maps on $C_*$; that the action on
  each $X_n$ is semi-free, so $X_n=X_n^\sigma\amalg X_n^{\free}$ where
  $\sigma$ acts freely on $X_n^{\free}$ and fixes $X_n^\sigma$; and that the $X_n^{\free}$ generate a
  subcomplex $C_*^{\free}$ of $C_*$. In particular, the dual basis to
  $X_n^\sigma$ generates a subcomplex $C^*_\sigma$ of the dual complex
  $C^*=\HomO{\Field}(C_*,\Field)$. Let $H(C^*_\sigma)$ denote the
  homology of $C^*_\sigma$. Then the localized equivariant cohomology
  \[
  w_2^{-1}\eH[\ZZ/2^m](C_*)
  =w_2^{-1}\ExtO{\Field[\ZZ/2^m]}(C_*,\Field)
  \]
  is isomorphic to $w_2^{-1}H^*(\ZZ/2^m)\otimes_\Field
  H(C^*_\sigma)$. In particular, if $m=1$ then
  \[
  \alpha^{-1}\eH[\ZZ/2](C_*)
  =\alpha^{-1}\ExtO{\Field[\ZZ/2]}(C_*,\Field)
  \]
  is isomorphic to $\alpha^{-1}H^*(\ZZ/2)\otimes_\Field
  H(C^*_\sigma)=\Field[\alpha,\alpha^{-1}]\otimes_\Field
  H(C^*_\sigma)$.
\end{lemma}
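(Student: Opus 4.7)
The plan is to use the short exact sequence of $\Field[\ZZ/2^m]$-module chain complexes
\[
0\to C_*^{\free}\to C_*\to C_*^\sigma\to 0,
\]
where $C_*^\sigma$ is the quotient complex, whose underlying graded $\Field$-vector space is spanned by $\bigcup_n X_n^\sigma$. Applying $\ExtO{\Field[\ZZ/2^m]}(-,\Field)$ produces a long exact sequence
\[
\cdots\to\ExtO{\Field[\ZZ/2^m]}^*(C_*^\sigma,\Field)\to\ExtO{\Field[\ZZ/2^m]}^*(C_*,\Field)\to\ExtO{\Field[\ZZ/2^m]}^*(C_*^{\free},\Field)\to\cdots,
\]
which is a long exact sequence of modules over $H^*(\ZZ/2^m)$. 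Since localization at $w_2$ is exact, the plan is to show that the first term localizes to $w_2^{-1}H^*(\ZZ/2^m)\otimes H(C^*_\sigma)$ while the third term vanishes after localization.

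For the first term, the key observation is that $\ZZ/2^m$ acts trivially on $C_*^\sigma$ and hence trivially on $C^*_\sigma$. Choosing the standard free resolution $R_*\to\Field$ of $\Field$ over $\Field[\ZZ/2^m]$ from Section~\ref{sec:dihedral-groups}, the complex $C_*^\sigma\otimes_\Field R_*$ is a free resolution of $C_*^\sigma$ over $\Field[\ZZ/2^m]$. The tensor-hom adjunction together with the triviality of the $\ZZ/2^m$-action on $C^*_\sigma$ gives
\[
\Hom_{\Field[\ZZ/2^m]}(C_*^\sigma\otimes_\Field R_*,\Field)\cong \Hom_{\Field[\ZZ/2^m]}(R_*,C^*_\sigma)\cong C^*_\sigma\otimes_\Field\Hom_{\Field[\ZZ/2^m]}(R_*,\Field).
\]
Since $\Field$ is a field, the Künneth theorem then identifies the cohomology with $H(C^*_\sigma)\otimes_\Field H^*(\ZZ/2^m)$, so the first term becomes $w_2^{-1}H^*(\ZZ/2^m)\otimes H(C^*_\sigma)$ after localization.

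For the third term, $C_*^{\free}$ is already a bounded complex of free $\Field[\ZZ/2^m]$-modules, since by hypothesis each $X_n^{\free}$ is a free $\ZZ/2^m$-set and $X_n=\emptyset$ for $|n|$ large. Hence $\RHom_{\Field[\ZZ/2^m]}(C_*^{\free},\Field)$ is computed by the ordinary $\Hom$ complex, which in each homological degree is a finite direct sum of copies of $\Hom_{\Field[\ZZ/2^m]}(\Field[\ZZ/2^m],\Field)\cong\Field$. In particular, $\ExtO{\Field[\ZZ/2^m]}^*(C_*^{\free},\Field)$ is a finite-dimensional $\Field$-vector space concentrated in finitely many degrees. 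Since $w_2$ acts by raising degree by $2$, it acts nilpotently, so $w_2^{-1}\ExtO{\Field[\ZZ/2^m]}^*(C_*^{\free},\Field)=0$.

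Combining, the localized long exact sequence degenerates to give the desired isomorphism $w_2^{-1}\eH[\ZZ/2^m]^*(C_*)\cong w_2^{-1}H^*(\ZZ/2^m)\otimes H(C^*_\sigma)$. The $m=1$ specialization is immediate from the identity $w_2=\alpha^2$ in $H^*(\ZZ/2)=\Field[\alpha]$, which shows $w_2^{-1}H^*(\ZZ/2)=\Field[\alpha,\alpha^{-1}]$. No real obstacle is anticipated; the only point requiring care is verifying that the identification in the first term is compatible with the $H^*(\ZZ/2^m)$-module structure, which is a routine naturality check via Lemma~\ref{lem:ring-str}.
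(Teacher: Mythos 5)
Your proof is correct, but takes a genuinely different route from the paper's. The paper works directly with the Tate complex
\[
\cdots \longrightarrow C^*\xrightarrow{\ 1+\sigma\ }C^*\xrightarrow{\ (1+\sigma)^{2^m-1}\ }C^*\longrightarrow\cdots
\]
and observes that including the ``constant'' bicomplex built out of $C^*_\sigma$ (with all zero horizontal differentials) is a map of bicomplexes; filtering both by the internal degree $*$, the map induces an isomorphism on $E^1$ (since the Tate cohomology of a free $\Field[\ZZ/2^m]$-module vanishes), whence a quasi-isomorphism by spectral sequence comparison. You instead split off the free part via the short exact sequence $0\to C_*^{\free}\to C_*\to C_*^\sigma\to 0$, take the long exact sequence in hyper-$\Ext$, and use exactness of localization together with the boundedness of $\ExtO{\Field[\ZZ/2^m]}(C_*^{\free},\Field)$ to kill the free contribution, then identify $\ExtO{\Field[\ZZ/2^m]}(C_*^\sigma,\Field)\cong H^*(\ZZ/2^m)\otimes H(C^*_\sigma)$ by tensor-hom adjunction plus K\"unneth since the action on $C_*^\sigma$ is trivial. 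Both arguments are valid; yours makes the ``Smith theory'' dichotomy (fixed generators survive localization, free generators die) structurally transparent and invokes nothing beyond standard derived-category functoriality, while the paper's furnishes a concrete map of filtered complexes that is also used elsewhere (e.g.\ in the proof of Lemma~\ref{lem:cob-or}, where one needs to trace through where generators go under the localization isomorphism), so the explicit quasi-isomorphism has independent value. As you note, the only remaining verification in your version is that the K\"unneth identification respects the $H^*(\ZZ/2^m)$-module structure, which is routine naturality.
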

\begin{proof}
  We can compute the localized equivariant cohomology using the
  complex
  \begin{equation}\label{eq:Tate}
    \xymatrix{
    \cdots \ar[rr]^-{(1+\sigma)^{2^m-1}}&& C^*\ar[r]^-{1+\sigma}&C^*\ar[rr]^-{(1+\sigma)^{2^m-1}}&& C^*\ar[r]^-{1+\sigma} & C^*\ar[rr]^-{(1+\sigma)^{2^m-1}}&&\cdots.
    }
  \end{equation}
  where $C^*$ denotes the dual complex, over $\Field$, to $C_*$.

  Inclusion of $C^*_\sigma$ gives a map $\iota$ of bicomplexes
    from 
  \begin{equation}\label{eq:Tate-sigma}
    \xymatrix{
    \cdots \ar[r]^-{0} & C^*_\sigma\ar[r]^-{0} & C^*_\sigma\ar[r]^-{0} & C^*_\sigma\ar[r]^-{0} & C^*_\sigma\ar[r]^-{0}&\cdots.
    }
  \end{equation}
  to~\eqref{eq:Tate}.
  If we filter both bicomplexes~\eqref{eq:Tate} and~\eqref{eq:Tate-sigma} by the
  degree $*$ at each vertex then $\iota$ induces an isomorphism on the
  $E^1$-page of the resulting spectral sequences.  Hence, by spectral
  sequence comparison~\cite[Corollary 3.15]{McCleary01:sseq},
  the map $\iota$ is a quasi-isomorphism between the
  complexes~\eqref{eq:Tate} and~\eqref{eq:Tate-sigma}. The result
  follows.
\end{proof}

\begin{proof}[Proof of Theorem~\ref{thm:khsymp-loc}]
  The special case for $m=1$ is immediate from Seidel-Smith's
  localization theorem~\cite[Theorem 20]{SeidelSmith10:localization}.
  For the general case, since there are no nontrivial disks contained
  in the fixed set of $\sigma$, we can achieve transversality using a
  $\ZZ/2^m$-equivariant almost complex structure $J$ (compare
  Section~\ref{sec:equi-equi}). Hence, by % the $\ZZ/2^m$ analogue of
  Proposition~\ref{prop:equi-is-equi-K}, we may compute the equivariant
  symplectic Khovanov cohomology using $\CF(\CipLag'_A,\CipLag_B;J)$
  as the chain complex over $\Field[\ZZ/2^m]$.  By
  Lemma~\ref{lem:ss-fixed-pts-subcx}, we can arrange that the
  $\ZZ/2^m$-fixed generators span a subcomplex of
  $\CF^*(\CipLag'_A,\CipLag_B;J)$.  Then, via
  Proposition~\ref{prop:D2m-grading}, the hypotheses of
  Lemma~\ref{lem:trivial-localization} are satisfied. There are
  exactly $2^{|L|}$ $\ZZ/2^m$-fixed generators, all in the same
  homological grading, so by Lemma~\ref{lem:trivial-localization} the
  localized equivariant Floer cohomology is a direct sum of $2^{|L|}$
  copies of $w_2^{-1}H^*(\ZZ/2^m)$, as desired.
\end{proof}

\subsection{Equivariant invariance}\label{sec:ssseq-invariance}
We start by outlining the non-equivariant proof of invariance of
symplectic Khovanov homology. Invariance was originally proved by
Seidel-Smith in the case of braid closures~\cite[Theorem
1]{SeidelSmith6:Kh-symp}; invariance for general bridge diagrams was
proved by Waldron~\cite[Theorem 4.12]{Waldron:KhSympMaps}.  We will
mostly follow their arguments.

The first, easy step is independence of the choice of perturbation
$\CipLag'_A$ and of the almost complex structure used to define the
Floer homology groups. Both of these statements follow from the usual
continuation map arguments; in the equivariant case, they follow from
Proposition~\ref{prop:invariance-gen-group}. To keep notation simple,
we will suppress the perturbations $\CipLag'_A$ from the notation in
the rest of this section; instances of $\CipLag_A$ really mean
$\CipLag'_A$ for some choice of perturbation as in
Lemma~\ref{lem:perturb-Ciprian}.

The meat of the argument, then, is independence of the bridge diagrams. 
Any two bridge diagrams for the same link can be connected by a sequence of:
\begin{enumerate}[label=(B\arabic*)]
\item\label{item:reg-htpy} Isotopies of the arcs $A_i$ (or rather, $\cup_i 
A_i$) and the arcs $B_i$ (or rather, $\cup_i B_i$), rel endpoints.
\item\label{item:hand} Handleslides (``passing moves'' in Waldron's terminology).
\item Stabilizations.
\end{enumerate}
Note that move~\ref{item:reg-htpy} induces both Reidemeister I and II
moves of the knot diagram, and move~\ref{item:hand} induces Reidemeister III moves.  
The bridge moves are illustrated in Figure~\ref{fig:bridge-moves}.

\begin{figure}
  \centering
  \begin{overpic}[tics=10]{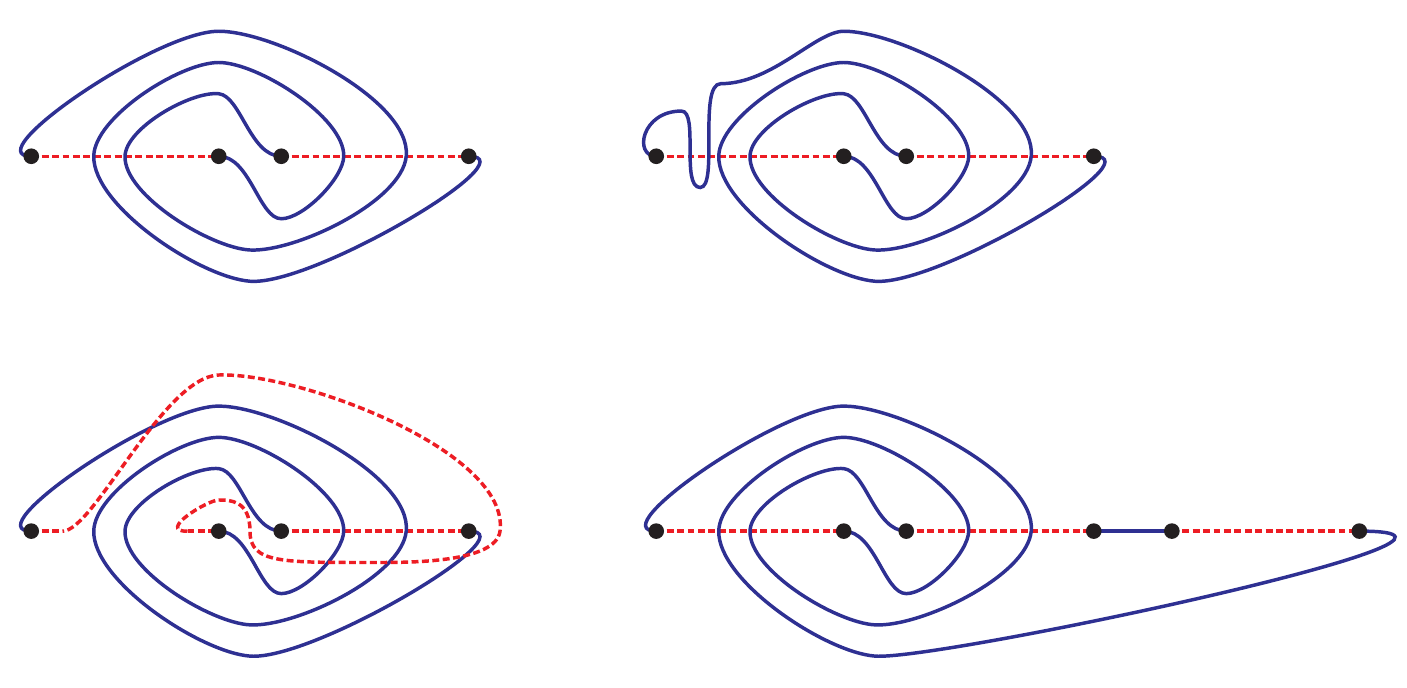}
  \put(16,25){(a)}
  \put(60,25){(b)}
  \put(16,-1){(c)}
  \put(60,-1){(d)}
  \end{overpic}
  \caption{\textbf{Moves for bridge diagrams.} (a) A bridge
    diagram. The $A_i$ are red and dashed and the $B_i$ are blue and
    solid. (b) The result of an isotopy of the $B_i$. (c) The result of a
    handleslide (or passing move). (d) The result of a stabilization.}
  \label{fig:bridge-moves}
\end{figure}

The proof of invariance has two steps:
\begin{enumerate}
\item Isotopies and handleslides induce Hamiltonian isotopies
  of $\CipLag_A$ and $\CipLag_B$. This is obvious for isotopies,
  from either Seidel-Smith's or Manolescu's
  formulation. For handleslides, see Seidel-Smith~\cite[Lemma
  48]{SeidelSmith6:Kh-symp}.
\item Stabilizations. This is proved by a degeneration
  argument~\cite[Section 5.4]{SeidelSmith6:Kh-symp}; the rest of this
  section gives a slightly different degeneration proof of
  stabilization invariance.
\end{enumerate}

\subsubsection{Non-equivariant stabilization invariance}\label{sec:ssseq-stab-invariance}
Because of handleslide invariance, it suffices to prove stabilization
invariance for a stabilization at a point $p\in B_n$ adjacent to
$\infty$, i.e., introducing a new arc $A'_{n+1}$ adjacent to the
unbounded region of the knot complement, and replacing $B_n$ by
$B'_{n}$ and $B'_{n+1}$. Let $S$ and $S'$ denote the
surface~\eqref{eq:surface-S} before and after the stabilization,
respectively. Let $R$ denote the bounded region in the stabilized
diagram $(\CC,A_1\cup\cdots\cup B'_{n+1})$ adjacent to $A'_{n+1}$, and
let $R_0$ denote the bounded region in unstabilized diagram
$(\CC,A_1\cup\cdots\cup B_n)$ adjacent to $p$. Let
$\{p_{2n+1},p_{2n+2}\}=A'_{n+1}\cap(B'_n\cup B'_{n+1})$, so that the
arc $A'_{n+1}$, oriented as part of the boundary of $R$, runs from
$p_{2n+1}$ to $p_{2n+2}$.

The correspondence between generators of the Floer complexes before
and after the stabilization is fairly clear. Each of
$\Sigma_{A'_{n+1}}\cap \Sigma_{B'_{n+1}}$ and
$\Sigma_{A'_{n+1}}\cap\Sigma_{B'_n}$ consists of a single point, and
any generator $\x$ for the stabilized diagram contains exactly one of
these two points, as well as one point from
$(\Sigma_{B'_n}\cup\Sigma_{B'_{n+1}})\cap(\Sigma_{A_1}\cup\cdots\cup\Sigma_{A_n})$
and $(n-1)$ other points in
$(\Sigma_{B_1}\cup\cdots\cup\Sigma_{B_{n-1}})\cap(\Sigma_{A_1}\cup\cdots\cup\Sigma_{A_n})$. The
points of
$(\Sigma_{B'_n}\cup\Sigma_{B'_{n+1}})\cap(\Sigma_{A_1}\cup\cdots\cup\Sigma_{A_n})$
for the stabilized diagram correspond to the points of
$\Sigma_{B_n}\cap(\Sigma_{A_1}\cup\cdots\cup\Sigma_{A_n})$; in
Figure~\ref{fig:kh-stab-invt}, this corresponds to the fact that
$B'_{n}\cup B'_{n+1}$ agrees with $B_n$ on one side of the line
$C$. Forgetting the point on
$(\Sigma_{B'_n}\cup\Sigma_{B'_{n+1}})\cap\Sigma_{A'_{n+1}}$, using the correspondence 
\[
\left[(\Sigma_{B'_{n+1}}\cup\Sigma_{B'_n})\cap(\Sigma_{A_1}\cup\cdots\cup\Sigma_{A_n})\right]\cong\left[\Sigma_{B_n}\cap(\Sigma_{A_1}\cup\cdots\cup\Sigma_{A_n})\right],
\]
and leaving the other $(n-1)$ points alone gives the corresponding generator for the un-stabilized diagram.

Before studying the holomorphic disks, we make an observation about their domains:
\begin{lemma}\label{lem:domain-mult-1}
  The projected domain of any holomorphic disk in
  $(\ssspace{n},\CipLag_{A},\CipLag_{B})$, respectively
  $(\ssspace{n+1},\CipLag_{A'},\CipLag_{B'})$, has multiplicity $0$ or $1$ at
  $R_0$, respectively $R$.
\end{lemma}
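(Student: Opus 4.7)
The plan is to employ the cylindrical formulation of holomorphic disks from Section~\ref{sec:ss-cyl}, exploiting the branching behavior of $i\co S'\to \CC$ at the endpoints of $A'_{n+1}$. Recall that a holomorphic disk $\phi\co\RR\times[0,1]\to \Hilb^{n+1}(S')$ with image in $\ssspace{n+1}$ corresponds to a holomorphic map $\psi\co X\to \RR\times[0,1]\times S'$ such that $\pi_{\RR\times[0,1]}\circ\psi$ is an $(n+1)$-fold branched cover; the multiplicity of $i_*D(\phi)$ at $R$ equals the degree of $(i\circ\pi_{S'})\circ\psi$ over a generic point of $R$.

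First I would carry out a local calculation at the branch points $p_{2n+1},p_{2n+2}\in\partial R$. Since these are branch points of $i\co S'\to\CC$ and the interior of $R$ contains no branch points, the preimage $i^{-1}(R)$ decomposes as two disjoint open regions $\tilde R^+,\tilde R^-\subset S'$, each mapping homeomorphically onto $R$, whose closures in $S'$ meet only at the preimages $\tilde p_{2n+1},\tilde p_{2n+2}$ of the branch points. Consequently $\mathrm{mult}_R(i_*D(\phi))=\mathrm{mult}_{\tilde R^+}D(\phi)+\mathrm{mult}_{\tilde R^-}D(\phi)$, so it suffices to bound this sum by one.

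Next I would exploit the boundary behavior on the Lagrangian sphere $\Sigma_{A'_{n+1}}$. In the cylindrical picture the $n+1$ boundary arcs of $X$ on the $A$-side distribute one-to-one among the factor spheres $\Sigma_{A'_i}$, so there is a unique boundary arc $\gamma\subset\partial X$ mapping into $\RR\times\{0\}\times\Sigma_{A'_{n+1}}$, and $\gamma$ projects to $\RR$ with degree one. The preimage $i^{-1}(A'_{n+1})\cap\Sigma_{A'_{n+1}}$ is an equatorial circle $C$ partitioning $\Sigma_{A'_{n+1}}$ into the two hemispheres $\Sigma_{A'_{n+1}}\cap\overline{\tilde R^\pm}$. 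Applying positivity of intersections with a generic vertical divisor $V_{q^+}\cup V_{q^-}\subset \Hilb^{n+1}(S')$ for $q\in R$ with preimages $q^\pm\in i^{-1}(q)$, combined with the observation that the jump in the multiplicity function $\mathrm{mult}_\bullet D(\phi)$ across $C$ is controlled by the image of $\gamma$ in $\Sigma_{A'_{n+1}}$, bounds the sum $\mathrm{mult}_{\tilde R^+}D(\phi)+\mathrm{mult}_{\tilde R^-}D(\phi)$ by one.

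The main obstacle is ruling out oscillation of $\gamma$ across the equator $C$, which would allow $\mathrm{mult}_{\tilde R^\pm}D(\phi)$ to exceed $1$; this should follow from the embedding property of Lemma~\ref{lem:cyl-Yn} (which precludes $(\Id\times i)\circ\psi$ from developing self-intersections on $X\setminus B(\psi)$) together with a careful local analysis at the branch points $\tilde p_{2n+1},\tilde p_{2n+2}$, which must lie on the closure of $\gamma$ and organize its crossings of $C$. For the unstabilized case at $R_0$, I would run the parallel argument with the arc $B_n$ (whose two endpoints are branch points of $i\co S\to\CC$ on $\partial R_0$) taking the role of $A'_{n+1}$: the decomposition $i^{-1}(R_0)=\tilde R_0^+\sqcup\tilde R_0^-$ and the boundary-counting on $\Sigma_{B_n}$ together with positivity yield the analogous multiplicity bound.
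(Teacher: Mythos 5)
Your approach diverges entirely from the paper's, and unfortunately the extra machinery you introduce is neither necessary nor brought to a complete conclusion.

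The paper's proof is a short combinatorial argument about the projected domain $i_*D(\phi)$ that never touches the cylindrical formulation, the branched cover $i\co S'\to\CC$, or the Hilbert scheme geometry. It observes: (a) the projected domain has non-negative coefficients; (b) the boundary relation $\partial\bigl((\partial i_*D(\phi))\cap A\bigr)=i_*(\y)-i_*(\x)$ forces the coefficients at two regions separated by a single $A$- or $B$-arc to differ by at most one; and (c) the unbounded region has coefficient zero. Since the stabilization was placed so that $R$ (resp.\ $R_0$) is directly adjacent to the unbounded region across $A'_{n+1}$ (resp.\ $B_n$), the multiplicity at $R$ is $0$ or $1$. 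That is the whole argument.

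Your proposal has two substantive problems. First, and most seriously, you yourself flag the key step as unresolved: you write that ruling out oscillation of $\gamma$ across $C$ ``should follow from'' Lemma~\ref{lem:cyl-Yn} together with ``careful local analysis at the branch points,'' but you never carry out that analysis. Without it, the bound $\mathrm{mult}_{\tilde R^+}D(\phi)+\mathrm{mult}_{\tilde R^-}D(\phi)\leq 1$ is unproved, and that bound is the entire content of the lemma. Second, the geometric identification driving your argument is wrong: $i^{-1}(A'_{n+1})\cap\Sigma_{A'_{n+1}}$ is not an equatorial circle---it is all of $\Sigma_{A'_{n+1}}$, since by definition $\Sigma_{A'_{n+1}}$ consists of points $(z,u,v)$ with $z\in A'_{n+1}$. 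The actual equator that separates the preimage sheets of $R$ within $\Sigma_{A'_{n+1}}$ would have to be described differently (e.g.\ by a sign condition on $u,v$), and the claim that the two sheets' closures meet only at $\tilde p_{2n+1},\tilde p_{2n+2}$ is also false in general, since they also share the preimages of the $B$-arcs on $\partial R$.

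Even setting aside the gap and the errors, the attempt misses the simplest available fact: the unbounded region has multiplicity zero and $R$ is one arc away from it. I would recommend stepping back from the Hilbert scheme geometry and arguing directly about the projected domain in $\CC$.
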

\begin{proof}
  Our conditions on the complex structures imply that the projected
  domain of any holomorphic disk has only non-negative coefficients
  (compare~\cite[Lemma 3.2]{OS04:HolomorphicDisks}). Further, given
  generators $\x$ and $\y$ of the Floer complex, the boundary of any
  projected domain $i_*D(\phi)$ of a curve from $\x$ to $\y$ satisfies
  $\bdy((\bdy i_*D(\phi))\cap A)=i_*(\y)-i_*(\x)$, as $0$-chains in
  $\CC$. Thus, the coefficients of the projected domain at two
  adjacent regions (separated by $A_i$ or $B_i$) can differ by at most
  one. Since the coefficient of the projected domain at the unbounded
  region is zero, the lemma follows. (For instance, in the stabilized
  diagram, $i_*D(\phi)$ has multiplicity $0$ at $R$ if and only if
  $\x\cap \{p_{2n+1},p_{2n+2}\}=\y\cap\{p_{2n+1},p_{2n+2}\}$,
  $i_*D(\phi)$ has multiplicity $1$ at $R$ if and only if
  $p_{2n+1}\in\x$ and $p_{2n+2}\in \y$, and $i_*D(\phi)$ cannot have
  multiplicity $2$ or higher at $R$.)
\end{proof}

We turn now to the identification of the differentials on the
stabilized and un-stabilized diagrams, using a degeneration
argument. This argument will be given in the cylindrical formulation
of Section~\ref{sec:ss-cyl}, and is similar to (a simple case of) the
neck stretching argument underlying bordered Heegaard Floer
homology~\cite[Section 9.1]{LOT1}.

Consider a vertical line $C$ which separates $A'_{n+1}$ from
$A_1\cup\cdots\cup A_n$, intersects $B'_n$ and $B'_{n+1}$ in one point
each, and is disjoint from $B_1\cup\cdots\cup B_{n-1}\cup
A_1\cup\cdots\cup A_n\cup A'_{n+1}$. (See
Figure~\ref{fig:kh-stab-invt}.)  Assume that $B'_n$ and $B'_{n+1}$ are
horizontal near $C$. Let $E$ denote the component of $\CC\setminus C$
containing $A_1,\dots,A_n$ and let $D$ denote the component of
$\CC\setminus C$ containing $A'_{n+1}$.  Choose a family of complex
structures $j_T^\CC$ on $\CC$ for $T$ in $\RR$ large with long necks
along $C$, so that the length of the neck tends to infinity as
$T\to\infty$, and almost complex structures $j'_T$ on $S'$ so that the projection $i\co
S'\to \CC$ is $(j'_T,j_T^\CC)$-holomorphic and $j'_T$ is induced from
$j_T^\CC$ via
Formula~(\ref{eq:surface-S}) over $D$. (As usual, by almost complex
structure we mean a $[0,1]$-parameter family of almost complex
structures, though we assume the family is constant over $D$. For
readability, we will continue to suppress the $[0,1]$ parameter from
the discussion.)  There are induced almost complex structures
$\Hilb^{n+1}(j'_T)$ on $\Hilb^{n+1}(S')$.

Let $\sigma_{n}'=\Sigma_{B'_{n}}\cap i^{-1}(C)$ and
$\sigma_{n+1}'=\Sigma_{B'_{n+1}}\cap i^{-1}(C)$, so $\sigma_n'$ and
$\sigma_{n+1}'$ are topologically circles. There is a vector field
$\vec{R}$ on $i^{-1}(C)$ corresponding to the stretching, which is a
lift of the vector field $\frac{\partial}{\partial y}$ on $C$ to
$S'$. The data $(i^{-1}(C),\vec{R})$ is adjusted to the K\"ahler form on $S'$
in the sense of~\cite[Section 3.4]{BEHWZ03:CompactnessInSFT}.  For
each point $q\in\sigma_n'$ there is a unique $\vec{R}$-chord
$\gamma_q$ starting at $q$ and ending on $\sigma_{n+1}'$. (In
particular, the flow of $\vec{R}$ is Morse-Bott with respect to
$\sigma_n'$ and $\sigma_{n+1}'$.) In fact, these chords lie in the
surfaces $i^{-1}(C)\cap \{v/u=\text{constant}\}$ (which are Heegaard
surfaces for the branched double cover of $L$~\cite[Section
7.3]{Manolescu06:nilpotent}; see Figure~\ref{fig:ssss-HD}).

\begin{lemma}
  For generic $j_T^\CC$ and $j'_T$ as above, the rigid moduli spaces of
  $\Hilb^{n+1}(j'_T)$-holomorphic disks, or equivalently the moduli
  spaces of maps as in Equation~\eqref{eq:cyl-ss}, are transversely
  cut out for all sufficiently large $T$.
\end{lemma}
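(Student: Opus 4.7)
The plan is to establish transversality by a standard Sard--Smale argument applied to the cylindrical formulation of Section~\ref{sec:ss-cyl}, exploiting the fact that although we have fixed the almost complex structure $j'_T$ on the part of $S'$ lying over $D$, we have the full space of perturbations available over the part lying over $E$. Concretely, I would set up a Banach manifold $\mathcal{J}_T$ of $[0,1]$-parameter families of almost complex structures on $S'$ that (i) agree on $i^{-1}(D)$ with the structure induced from $j_T^\CC$ via Formula~\eqref{eq:surface-S}, (ii) make $i\co S'\to\CC$ holomorphic, (iii) are cylindrical along the neck for the chosen stretching length $T$, and (iv) are otherwise arbitrary, subject to compatibility with the K\"ahler form. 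One then forms the universal moduli space $\widetilde{\cM}$ of pairs $(\psi,j'_T)$ where $\psi$ is a map as in Equation~\eqref{eq:cyl-ss} (in the stretched manifold) with fixed asymptotics and homology class.

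The key analytic step is to show that the linearization of the $\dbar$-operator on $\widetilde{\cM}$ is surjective. This follows from the standard argument of McDuff--Salamon~\cite[Chapter~3]{MS04:HolomorphicCurvesSymplecticTopology}, once one locates, on each simple component of $\psi$, an interior injective point whose image lies over $E$: at such a point the full range of perturbations of $j'_T$ is available, and the usual unique continuation and perturbation construction produces the required surjectivity. Generic regular values of the projection $\widetilde{\cM}\to\mathcal{J}_T$ then give almost complex structures for which the rigid moduli spaces are transversely cut out, by Smale's infinite-dimensional Sard theorem. For multiply covered components one reduces to the underlying simple curve in the standard way.

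The main obstacle is handling simple components of $\psi$ whose image lies entirely in $i^{-1}(D)$, since over $D$ the almost complex structure is determined by $j_T^\CC$ and cannot be perturbed. However, by Lemma~\ref{lem:domain-mult-1} the projected domain has multiplicity $0$ or $1$ at the region $R$ adjacent to $A'_{n+1}$, so any such component projects to a domain in $D$ with multiplicities only $0$ and $1$ and boundary on $A'_{n+1}\cup B'_n\cup B'_{n+1}$; the only such domain of non-negative Maslov index is the obvious small bigon (or a trivial strip). These curves are standard and can be verified to be transversely cut out by direct computation: the relevant linearized operator reduces to a $\dbar$-problem on a disk with totally real boundary whose index matches the kernel. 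The remaining components, having boundary or interior over $E$, are handled by the universal moduli space argument above. Varying $j_T^\CC$ generically over $E$ accomplishes the same transversality for the $\Hilb^{n+1}(j'_T)$-holomorphic disks, via the correspondence in Section~\ref{sec:ss-cyl}. Since the constraint is open in $T$, it remains satisfied for all sufficiently large $T$.
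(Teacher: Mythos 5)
Your approach is genuinely different from the paper's, and the comparison is instructive. The paper argues directly in the Hilbert scheme: for a non-constant disk $u$, it finds a point $p\in\bD^2$ at which $u(p)=\{q_1,\dots,q_{n+1}\}$ has $n+1$ points with distinct projections $\{z_1,\dots,z_{n+1}\}$, observes that the tangent space to $\Hilb^{n+1}(S')$ at $u(p)$ then splits as $\bigoplus_i\bigl(T_{q_i}i^{-1}(z_i)\oplus T_{z_i}\CC\bigr)$, and notes that the admissible perturbations of the almost complex structure are exactly independent choices on each summand. That decomposition is the key idea, and it makes the appeal to McDuff--Salamon immediate. You instead stay entirely in the cylindrical formulation and build a universal moduli space of maps $\psi$, then hunt for injective points over $E$. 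This is reasonable in spirit (and matches the flavor of Lipshitz's cylindrical Heegaard Floer arguments), but it bypasses the observation that carries the paper's proof, and so the reduction to ``the standard argument'' is less direct: $\psi$ is a branched multisection, not an ordinary somewhere-injective curve, and the fiberwise splitting of the almost complex structure is exactly what lets one treat it as such.

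Two more specific points. First, the paragraph worrying about ``simple components of $\psi$ whose image lies entirely in $i^{-1}(D)$'' is addressing something that does not arise for rigid curves at finite $T$. By Lemma~\ref{lem:domain-mult-1} the projected domain has multiplicity at most $1$ over $D\cap R$, so the portion of $\psi$ over $D$ is a single sheet that crosses the neck and continues into $E$; a compact component of $\psi$ sitting entirely over $D$ (with boundary only on $\Sigma_{A'_{n+1}}$ and the $D$-halves of $\Sigma_{B'_n},\Sigma_{B'_{n+1}}$) does not occur for a rigid, positively-projected disk. The separation into $v^D_\infty$ and $v^E_\infty$ only happens in the $T\to\infty$ limit, and that is analyzed separately later in the section — importing it into the transversality lemma for fixed large $T$ adds work without a corresponding need. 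Second, you invoke passing to the underlying simple curve for multiply covered components, but condition~\ref{item:YC} already ensures $(\Id\times i)\circ\psi$, and hence $\psi$, is embedded away from branch points, so this step is vacuous; noting this would simplify your account. With those adjustments your route would be a workable alternative, but the paper's tangent-space decomposition is both shorter and the essential point you are missing.
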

\begin{proof}
  Given a non-constant holomorphic disk $u$ in
  $\ssspace{n+1}$ we can
  find a point $p\in\bD^2$ so that $i(u(p))\in\Hilb^{n+1}(\CC)$
  consists of $n+1$ distinct points. Writing
  $u(p)=\{q_1,\dots,q_{n+1}\}$ and $i(u(p))=\{z_1,\dots,z_{n+1}\}$,
  the tangent space at $u(p)$ to $\Hilb^{n+1}(S')$ decomposes as
  \[
  T_{u(p)}(\Hilb^{n+1}(S'))=
  (T_{q_1} i^{-1}(z_1))\oplus(T_{z_1}\CC)\oplus\cdots\oplus(T_{q_{n+1}} i^{-1}(z_{n+1}))\oplus(T_{z_{n+1}}\CC).
  \]
  Our conditions on the almost complex structures amount to choosing
  an almost complex structure on each summand separately. This is
  enough flexibility for standard transversality arguments
  (see, e.g.,~\cite[Section 3.2]{MS04:HolomorphicCurvesSymplecticTopology})
  to go through.
\end{proof}

\begin{figure}
  \centering
  \begin{overpic}[scale=1.25, tics=10]{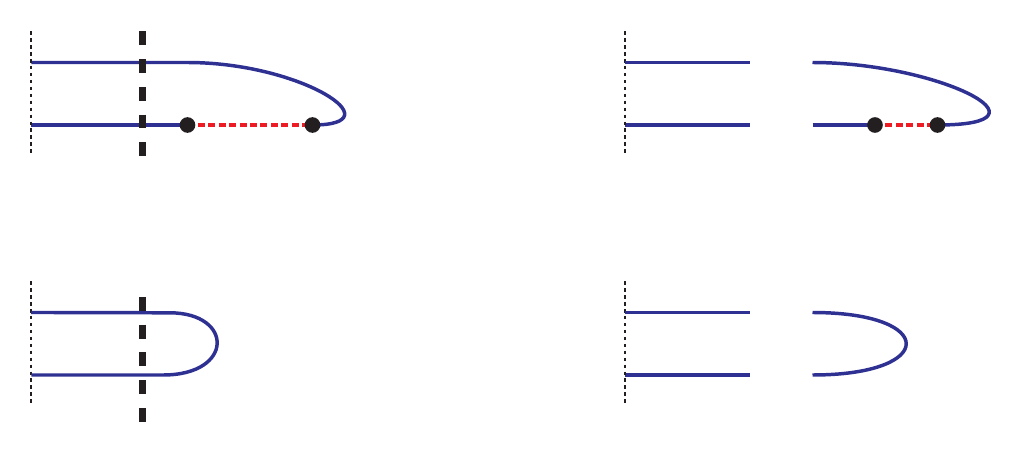}
    \put(23,30){\textcolor{red}{$A'_{n+1}$}}
    \put(23,40){\textcolor{blue}{$B'_{n+1}$}}
    \put(13,27){$C$}
    \put(6,35){$E$}
    \put(20,35){$D$}
    \put(7,16){\textcolor{blue}{$B_n$}}
    \put(80,11){$D_0$}
    \put(13,1){$C_0$}
  \end{overpic}
  \caption{\textbf{Stabilization invariance for symplectic
      Khovanov homology.} In all cases, only the stabilization region is
    shown; the rest of the knot is to the left.}
  \label{fig:kh-stab-invt}
\end{figure}

By Lemma~\ref{lem:domain-mult-1}, there are two cases: projected
domains with multiplicity $0$ at $D\cap R$ and projected domains with
multiplicity $1$ at $D\cap R$. The first case is simple: it is
analogous to the stabilization invariance proof for the Heegaard Floer
homology group $\HFa$~\cite[Section 10]{OS04:HolomorphicDisks}. So, we
will focus on the case of projected domains with multiplicity $1$ at
$D\cap R$.

As the neck stretching parameter $T$ goes to infinity,
$(j_{[0,1]\times\RR}\times j'_T)$-holomorphic maps of the
form~\eqref{eq:cyl-ss} converge to pairs of holomorphic maps
\begin{align}
v_\infty^D&\co (X_D,\bdy X_D)\to (\RR\times[0,1]\times i^{-1}(D),(\RR\times\{0\}\times \Sigma_{A'_{n+1}})\cup(\RR\times\{1\}\times\Sigma_{B_n^D}))\label{eq:vD-infty}\\
v_\infty^E&\co (X_E,\bdy X_E)\to \bigl(\RR\times[0,1]\times i^{-1}(E),((\RR\times\{0\}\times(\Sigma_{A_1}\cup\dots\cup\Sigma_{A_n}))\label{eq:vE-infty}\\
\nonumber&\hspace{2.25in}\cup(\RR\times\{1\}\times(\Sigma_{B_1}\cup\dots\cup\Sigma_{B_{n-1}}\cup \Sigma_{B_n^E})))\bigr)
\end{align}
so that 
\begin{itemize}
\item $X_D$ has three boundary punctures.
\item $X_E$ has $2n+1$ boundary punctures.
\item There is a boundary puncture $p_D$ of $X_D$ and $p_E$ of $X_E$
  so that at $p_D$ the map $\pi_{S'}\circ v_\infty^D\co X_D\to i^{-1}(D)$
  is asymptotic to an $\vec{R}$-chord $\gamma$ in $i^{-1}(C)$ between $\sigma'_n$
  and $\sigma'_{n+1}$ and at $p_E$ the map $\pi_{S'}\circ v_\infty^E\co
  X_E\to i^{-1}(E)$ is asymptotic to the same chord
  $\gamma$.
\item Filling in the puncture $p_D\circ v_\infty^D$, the map
  $\pi_{\RR\times[0,1]}\circ v_\infty^D\co X_D\to\RR\times[0,1]$
  extends to a degree $1$ branched cover (i.e., a diffeomorphism).
\item Filling in the puncture $p_E$, the map
  $\pi_{\RR\times[0,1]}\circ v_\infty^E\co X_E\to\RR\times[0,1]$
  extends to a degree $n$ branched cover.
\end{itemize}

(To deduce the compactness theorem we are using, fix a sequence $v_T$
of $j'_T$-holomorphic curves and consider the two
projections $\pi_{S'}\circ v_T$ and $\pi_{\RR\times[0,1]}\circ v_T$
separately, along the lines of~\cite[Section 5.4]{LOT1}. The gluing
result follows from standard arguments, along the lines
of~\cite[Proposition A.2]{Lipshitz06:CylindricalHF}, say, which in
turn follows arguments
from~\cite{MS04:HolomorphicCurvesSymplecticTopology}.)

We claim that for any curve $v^E_\infty$ as in
Formula~\eqref{eq:vE-infty} asymptotic to some chord $\gamma$ in
$i^{-1}(C)$ there is a unique curve $v^D_\infty$ as in
Formula~\eqref{eq:vD-infty} asymptotic to the same chord
$\gamma$. Thus, for large $T$, $(j_{[0,1]\times\RR}\times
j'_T)$-holomorphic curves as in Equation~\eqref{eq:cyl-ss} correspond
to $(j_{[0,1]\times\RR}\times j'_\infty)$-holomorphic curves
$v_\infty^E$ as in Formula~\eqref{eq:vE-infty}.

To see this, consider the Riemann surface $\mathcal{H}\coloneqq
i^{-1}(D)\cap \{u=0\}$ and its intersections with $\Sigma_{A'_{n+1}}$,
$\Sigma_{B'_n}^D$, and $\Sigma_{B'_{n+1}}^D$, as shown in
Figure~\ref{fig:ssss-HD}.  There are two chords between
$\Sigma_{B'_n}^D\cap \mathcal{H}$ and $\Sigma_{B'_{n+1}}^D\cap
\mathcal{H}$, labeled $\gamma$ and $\gamma'$ in the figure. With
respect to the standard complex structure on $S'$ induced from
Formula~(\ref{eq:surface-S}), there is a unique holomorphic disk as in
Formula~\eqref{eq:vD-infty} asymptotic to $\gamma$ (respectively
$\gamma'$). It follows from boundary injectivity~\cite[Proposition
3.9]{OS04:HolomorphicDisks} that these disks are transversally cut out
in $\mathcal{H}$. Further, the normal Maslov index to these disks is
$1$, so it follows from automatic transversality~\cite[Theorem
2]{HLS97:GenericityHoloCurves} that these holomorphic disks are
transversally cut out in $i^{-1}(D)$. Now, the $S^1$ action on $S'$
(or rather, $i^{-1}(D)$) implies that, with respect to the standard
complex structure, there are similar disks asymptotic to each chord
from $\Sigma_{B'_n}^D$ to $\Sigma_{B'_{n+1}}^D$. 

Next, we claim these are, algebraically, the only non-constant
holomorphic curves of the form $v_\infty^D$. To see that, note that
the count of holomorphic curves $v_\infty^E$ asymptotic to a generic
chord $\gamma$ is invariant under Lagrangian isotopies of $B'_n\cup
B'_{n+1}$.  So, consider instead the diagram in which $D\cap R$
consists of the upper half plane, $D\cap(B_{n}'\cup A'_{n+1}\cup
B'_{n+1})$ is the real axis, $\bdy A'_{n+1}=\{0,1\}$, and the
polynomial $p(z)$ is simply $p(z)=z(z-1)$. Consider the projection
$\pi_u$ to the $u$-coordinate. The boundary conditions $\Sigma_{A_{n+1}}$,
$\Sigma_{B'_n}\cap D$, and $\Sigma_{B'_{n+1}}\cap D$ project to $\RR$,
$i\RR$, and $i\RR$ respectively. Because of the $S^1$-symmetry, it
suffices to consider the case that $\gamma$ lies in the surface
$\mathcal{H}$. Fix a map $v_\infty^D\co X_D\to \RR\times[0,1]\times
i^{-1}(D)$ asymptotic to $\gamma$. The map $i\circ v_\infty^D$
identifies $X_D$ with the upper half plane. With respect to this
identification, $\pi_u\circ v_\infty^D$ is a
map from the upper
half plane to $\CC$ sending the boundary to a bounded subset of
$\RR\cup i\RR$ (because $\pi_u(\gamma)=0$). It follows from the open
mapping principle that $\pi_u\circ v_\infty^D$ is constant, so
$v_\infty^D$ is contained in $\mathcal{H}$ and so $v_\infty^D$ must be
one of the curves in the $S^1$-family previously constructed. In
particular, the count of curves $v_\infty^D$ asymptotic to $\gamma$ is
$1$.

Thus, the count of $j'_T$-holomorphic disks as in
Formula~\eqref{eq:cyl-ss}, for $T$ large, is the same as the count of
curves $u_\infty^E$ as in Formula~\eqref{eq:vD-infty}, with no
restriction on the chord $\gamma_q$. By Lemma~\ref{lem:domain-mult-1},
the condition that the curve with large $T$ lies in
$\ssspace{n+1}\subset \Hilb^{n+1}(S')$ corresponds to the condition
that the limiting curve lies in $\ssspace{n}\subset\Hilb^{n}(E)$; see
also Lemma~\ref{lem:cyl-Yn}.

\begin{figure}
  \begin{overpic}[scale=1.25, tics=10]{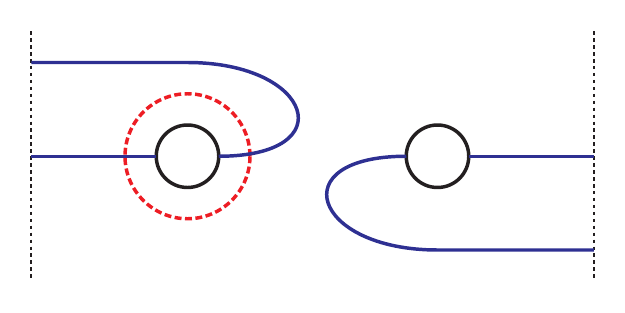}
    \put(28,23){$A$}
    \put(68,23){\reflectbox{$A$}}
    \put(0,30){$\gamma$}
    \put(97,15){$\gamma'$}
  \end{overpic}
  \caption{\textbf{The surface $i^{-1}(D)\cap \{u=0\}$.} This is the branched double cover of $D$ along the points $p_{2n+1}$ and $p_{2n+2}$. The circles labeled $A$ are connected by a handle; the branch points are at the intersections of the $\Sigma_{A'_{n+1}}\cap \mathcal{H}$ (dashed) and $(\Sigma_{B'_n}^E\cup\Sigma_{B'_{n+1}}^E)\cap \mathcal{H}$ (solid).}\label{fig:ssss-HD}
\end{figure}

Next, consider instead stretching the neck along a vertical line $C_0$
in the un-stabilized diagram $(\CC,A_1,\dots,A_n,B_1,\dots,B_n)$ which
intersects $B_n$ perpendicularly in two points and is disjoint from
the other arcs. (Again, see Figure~\ref{fig:kh-stab-invt}.)
This time, holomorphic curves of the
form~\eqref{eq:cyl-ss} converge to pairs $(v^D_\infty,v^E_\infty)$
where $v^E_\infty$ is of the form~\eqref{eq:vE-infty} and $v^D_\infty$
is a holomorphic disk with one puncture mapping to $\RR\times
[0,1]\times i^{-1}(D_0)$ which is constant in the $\RR\times[0,1]$-direction,
and 
the map $i\circ\pi_{S}\circ v_\infty^D$ is a diffeomorphism to
$D_0\cap R_0$. 
(This is similar to the
boundary degenerations in Heegaard Floer theory~\cite[Section
5]{OS05:HFL}.) Again, there is a unique disk $v^D_\infty$ asymptotic
to each chord $\gamma$, and in spite of the somewhat degenerate
situation, these curves are transversely cut out by the
$\dbar$-equations. So, for large $T$, holomorphic curves in the
un-stabilized diagram $S$ of the form~\eqref{eq:cyl-ss} correspond to
maps $v_\infty^E$ of the form~\eqref{eq:vE-infty}, and hence to curves
in the stabilized diagram. This completes the cylindrical proof of
stabilization invariance.

\subsubsection{Equivariant invariance}
\begin{proof}[Proof of Theorem~\ref{thm:KC-equi-invt}]
  As in the non-equivariant case, it suffices to check invariance under 
  isotopies and handleslides of the $A_j$ and $B_j$ and stabilizations of the 
  diagram. 
  
  It is clear that an isotopy of the $A_j$ is covered by an $O(2)$-equivariant 
  Lagrangian (and hence Hamiltonian---$H^1(\ssspace{n})$ vanishes) isotopy of 
  the Lagrangians $\CipLag_A$ and $\CipLag_B$; and this isotopy extends to an 
  isotopy between the $D_{2^m}$-equivariant perturbations of $\CipLag_A'$ from 
  Lemma~\ref{lem:perturb-Ciprian}. Thus, 
  Proposition~\ref{prop:invariance-gen-group} implies that the 
  quasi-isomorphism type of $\eCF[D_{2^m}](\CipLag_A,\CipLag_B)$, 
  over $\Field[D_{2^m}]$, is unchanged by isotopies.
  
  As noted earlier handleslides give Lagrangian isotopies in the
  Hilbert scheme~\cite[Lemma 48]{SeidelSmith6:Kh-symp}. Rather than
  checking that these isotopies can be made equivariant, we appeal to
  Proposition~\ref{prop:non-equi-invar-gen-group}. For notational
  simplicity, assume we are sliding $A_1$ over $A_2$. Let $\CipLag_A$
  and $\CipLag_{\wt{A}}$ be the Lagrangians before and after the
  handleslide.  By Proposition~\ref{prop:non-equi-invar-gen-group}, to
  prove handleslide invariance it suffices to verify that we can
  choose equivariant perturbations $\CipLag'_A$ and
  $\CipLag'_{\wt{A}}$ of $\CipLag_A$ and $\CipLag_{\wt{A}}$, and a
  $D_{2^m}$-equivariant almost complex structure $J$ on $\ssspace{n}$, so that
  all moduli spaces of holomorphic disks in
  $(\ssspace{n},\CipLag'_A,\CipLag'_{\wt{A}})$ with Maslov index $\leq 1$ are transversally cut
  out, and the fundamental class is fixed by the $D_{2^m}$-action. Let $\CipLag'_A=\CipLag_A$, i.e., do not do any perturbation at
  all. For the perturbation $\CipLag'_{\wt{A}}$ perturb the handleslid
  arcs $\wt{A}$ so that they intersect the original arcs $A$ only at
  endpoints and let $\CipLag'_{\wt{A}}$ be the associated
  Lagrangian. (So, both $\CipLag'_A$ and $\CipLag'_{\wt{A}}$ are
  $O(2)$-invariant.) See Figure~\ref{fig:sss-handleslides}. It follows
  that the Floer complex $\CF(\CipLag'_A,\CipLag'_{\wt{A}})$ has rank
  $2^{n}$; since the diagram $(A,\wt{A})$ represents an $n$-component
  unlink, $\KhSymp(A,\wt{A})$ also has rank $2^{n}$, and the
  differential on
  $\CF(\CipLag'_A,\CipLag'_{\wt{A}})$ must vanish. (The differential
  also vanishes for grading reasons: all generators lie in the same
  grading modulo $2$.) Note that all
  intersection points are fixed by
  the $D_{2^m}$-action, so in particular the top class is fixed.
  
  \begin{figure}
    \begin{overpic}[tics=10, scale=1]{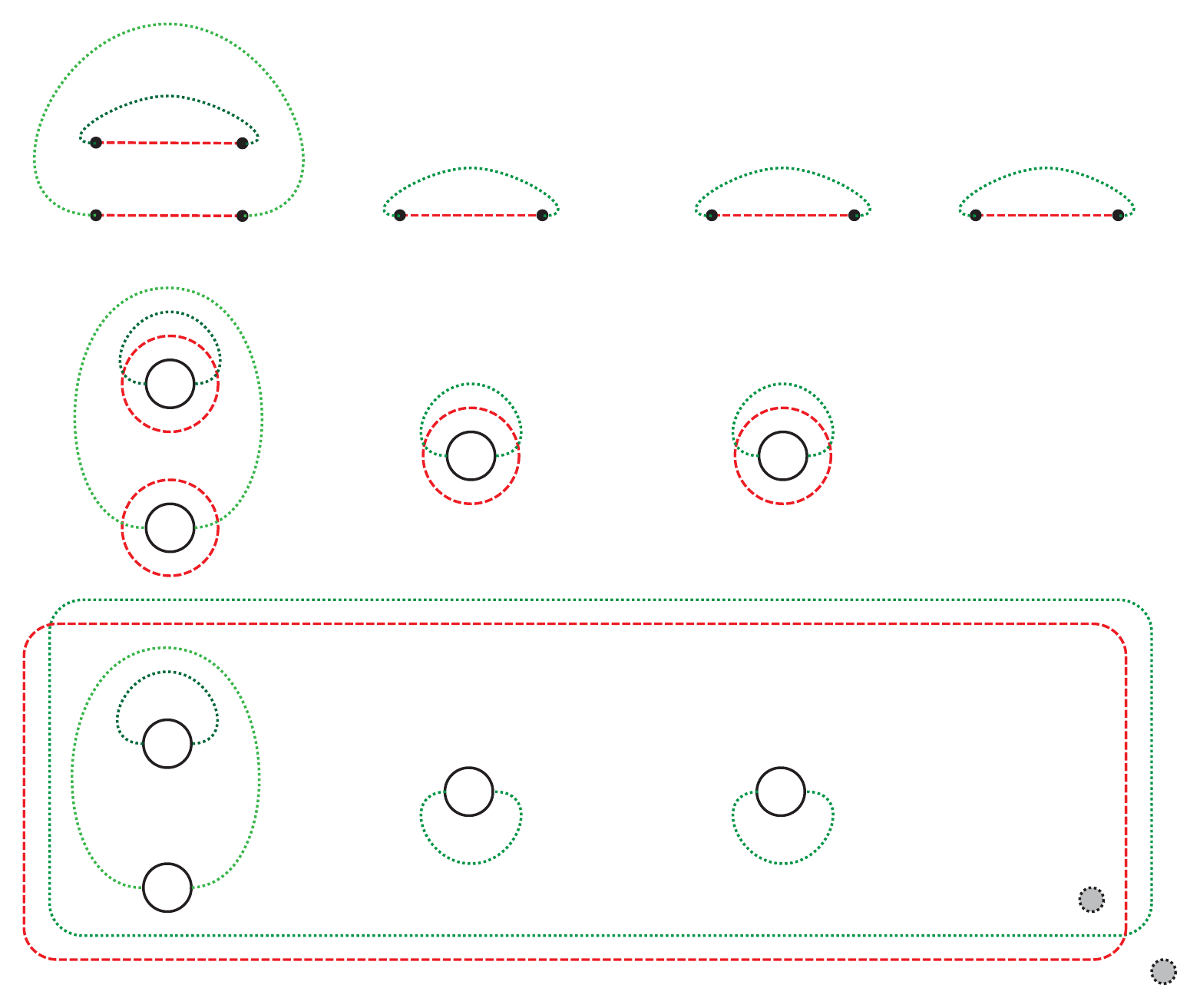}
      \put(13,51){$A$}
      \put(13,22.5){\rotatebox{180}{\reflectbox{$A$}}}
      \put(13,39){$B$}
      \put(13,10.5){\rotatebox{180}{\reflectbox{$B$}}}
      \put(38,45){$C$}
      \put(38,19){\rotatebox{180}{\reflectbox{$C$}}}
      \put(64,45){$Z$}
      \put(64,19){\rotatebox{180}{\reflectbox{$Z$}}}
      \put(50,18){$\cdots$}
      \put(50,45){$\cdots$}
      \put(50,66){$\cdots$}
      \put(13,63.5){\textcolor{red}{$A_1$}}
      \put(13,69.5){\textcolor{red}{$A_2$}}
      \put(38,63.5){\textcolor{red}{$A_3$}}
      \put(64,63.5){\textcolor{red}{$A_{n-1}$}}
      \put(86,63.5){\textcolor{red}{$A_n$}}
      \put(13,83){\textcolor{green}{$\wt{A}_1$}}
      \put(13,76.5){\textcolor{green}{$\wt{A}_2$}}
      \put(38,70.5){\textcolor{green}{$\wt{A}_3$}}
      \put(64,70.5){\textcolor{green}{$\wt{A}_{n-1}$}}
      \put(86,70.5){\textcolor{green}{$\wt{A}_n$}}
      \put(98,1){$\infty$}
      \put(87.25,10.25){$\infty$}
    \end{overpic}	
    \caption{\textbf{Equivariant handleslide invariance.} Top: Up to 
      diffeomorphism of $\CC$, the arcs $A$ and $\wt{A}$ in the proof of 
      handleslide invariance are as shown. Bottom: the branched double covers of 
      the pieces at the top, branched along the $p_i$. The small gray circles 
      indicate the two non-compact regions.\label{fig:sss-handleslides}}
  \end{figure}  
  
  For a generic $D_{2^m}$-equivariant almost complex structure, the
  moduli spaces of holomorphic disks not entirely contained in the
  fixed set of some $g\neq 1\in D_{2^m}$ are transversally cut out;
  compare~\cite[Section 5c]{KhS02:BraidGpAction}. The components of
  the fixed sets of $\sigma^i$ intersecting $\CipLag'_A$ and
  $\CipLag'_{\wt{A}}$ are discrete, and hence contain no non-constant
  holomorphic disks. The fixed set of $\sigma^i\tau$ is the symmetric
  product of a Heegaard diagram for $\Sigma(K)$ with an extra $\alpha$
  and $\beta$ circle, as shown in the bottom of
  Figure~\ref{fig:sss-handleslides}. (More precisely, the fixed set is
  the complement of the anti-diagonal inside this symmetric product,
  since we are considering $\ssspace{n}$, not all of $\Hilb^n(S)$.)
  Positivity of domains implies that there are no holomorphic curves
  in this fixed set with negative Maslov index in $\ssspace{n}$ (or in
  $\Sym^n(\Sigma)$). For a generic perturbation of the $A_i$ and
  $\wt{A}_i$, the index $0$ domains in the Heegaard diagram have no
  holomorphic representatives (by boundary
  injectivity~\cite[Proposition 3.9]{OS04:HolomorphicDisks}), so in
  particular those moduli spaces are transversally cut out at as
  well. There are no homotopy classes of disks with index $1$. So, the
  hypotheses of Proposition~\ref{prop:non-equi-invar-gen-group} are
  satisfied, and handleslide invariance follows.
  
  Finally, for stabilization invariance, the proof described in
  Section~\ref{sec:ssseq-stab-invariance} never breaks the $D_{2^m}$
  symmetry, so the isomorphisms described there (for large $T$) give
  isomorphisms of freed Floer complexes. (Note that, in choosing the
  families of almost complex structures in the definition of the freed
  Floer complex, we may assume the families are constant over the
  region $D$ from Section~\ref{sec:ssseq-stab-invariance} and still
  achieve transversality.)
\end{proof}

\begin{proof}[Proof of Corollary~\ref{cor:SSss-invt}]
  This follows from Theorem~\ref{thm:KC-equi-invt}; see also the proof
  of Corollary~\ref{cor:Hen-dcov-invt}.
\end{proof}

\begin{proof}[Proof of Corollary~\ref{cor:SS-periodic-ss-invt}]
  The proof is similar to the proof of Theorem~\ref{thm:KC-equi-invt},
  but now working equivariantly with respect to a symmetry on the
  bridge diagram, rather than the $O(2)$ symmetry on $S$. Any two
  periodic bridge diagrams for the same knot and period can be related
  by a sequence of equivariant isotopies, pairs of handleslides, and
  pairs of stabilizations: find a sequence of bridge moves relating
  the two quotient knots and pull back the diagrams. Invariance under
  equivariant isotopies is immediate from
  Proposition~\ref{prop:invariance-gen-group}. Invariance under
  handleslides follows from
  Proposition~\ref{prop:non-equi-invar-gen-group} just as in the proof
  of Theorem~\ref{thm:KC-equi-invt}; the fact that we are doing pairs
  of handleslides at once leads to only notational
  changes. Equivariant stabilization invariance follows from the
  argument outlined in Section~\ref{sec:ssseq-stab-invariance}. Again, the fact
  that we are performing pairs of stabilizations simultaneously does
  not lead to additional complications: the neck stretching argument
  happens in two separate regions of the diagram.
\end{proof}

\subsection{Invariance of reduced symplectic Khovanov  homology}\label{sec:rKh}
As a last theorem of the paper, we fill a small gap in the literature
not related to equivariant Floer homology. As discussed in
Section~\ref{sec:ssss-review}, Manolescu defined a reduced symplectic
Khovanov homology, by deleting one $A_i$-arc and one $B_i$-arc and
working in the space $\rssspace{n}$. It seems that invariance of the
reduced symplectic Khovanov homology has not been verified previously.

In this section, we work with coefficients in $\ZZ$, rather than just $\ZZ/2$.

\begin{theorem}\label{thm:rKh-invt}
 The reduced symplectic Khovanov homology $\rKhSymp(K,p)$ is an invariant of the based link $(K,p)$.
\end{theorem}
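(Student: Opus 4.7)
The plan is to adapt the proof of non-equivariant invariance of symplectic Khovanov homology outlined in Section~\ref{sec:ssseq-invariance} to the reduced setting, treating the basepoint exactly as the distinguished point $p_{2n}$ was treated in the proof of Theorem~\ref{thm:sar-dcov-invt}. Given a based link $(K,p)$ and a bridge diagram for $K$, one may arrange (by an isotopy on $S^2$) that $p$ is the point $p_{2n}$ at which the deleted arcs $A_n$ and $B_n$ meet, and then $\rssspace{n}$, $\rCipLag_A$, and $\rCipLag_B$ are defined as in Section~\ref{sec:ssss-review} using the remaining arcs $A_1,\dots,A_{n-1}$ and $B_1,\dots,B_{n-1}$.

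First I would verify that any two such ``based'' bridge diagrams for $(K,p)$ are related by a sequence of: (a) isotopies of the arcs $A_i$ and $B_i$ rel endpoints and fixing $p$; (b) handleslides among the retained arcs $A_1,\dots,A_{n-1}$ (respectively $B_1,\dots,B_{n-1}$); and (c) stabilizations occurring away from the distinguished point $p=p_{2n}$. The one subtlety, exactly as in the proof of Theorem~\ref{thm:sar-dcov-invt}, is handleslides of the retained arcs over the deleted arc $A_n$ (resp.\ $B_n$): since $S^2\setminus A_n$ is connected, a handleslide of $A_i$ over $A_n$ can be replaced by handleslides of $A_i$ over all the other $A$-arcs, reducing everything to case (b). Handleslides of $A_n$ itself over another arc have no effect at all on $\rssspace{n}$, $\rCipLag_A$, or $\rCipLag_B$, since $A_n$ is simply deleted.

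For move (a) it is immediate that isotopies of the retained arcs give Hamiltonian isotopies of $\rCipLag_A$ and $\rCipLag_B$ in $\rssspace{n}$, so that the Floer cohomology is unchanged. For move (b), the analogue of~\cite[Lemma~48]{SeidelSmith6:Kh-symp} (or equivalently Perutz's theorem, cf.\ Lemma~\ref{lemma:handleslides}) applies unchanged in $\rssspace{n}$, since the Lagrangian $\rCipLag_A$ is still a product of the totally real spheres $\Sigma_{A_i}$ for the retained arcs, and a handleslide among these is realized by a Hamiltonian isotopy in the Hilbert scheme.

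For move (c), stabilization invariance, I would repeat the degeneration argument of Section~\ref{sec:ssseq-stab-invariance} verbatim, simply replacing $\ssspace{n+1}$ by $\rssspace{n+1}$ and $\ssspace{n}$ by $\rssspace{n}$. The argument is entirely local: one stretches the neck along a vertical line $C$ separating the new arc $A'_{n+1}$ from the retained arcs of the unstabilized diagram. The compactness and gluing analysis, the identification of the ``new'' region $D$-curves as counted by the $S^1$-family of disks in $i^{-1}(D)\cap\{u=0\}$, and the boundary degeneration on the unstabilized side all take place strictly away from $p=p_{2n}$ and the deleted arcs, so they carry over without modification. The main point to check is that Lemma~\ref{lem:cyl-Yn} and Lemma~\ref{lem:domain-mult-1} still hold in the reduced setting, which they do: the cylindrical condition~\ref{item:YC} and the mod-1 coefficient bound on the projected domain are purely local statements about the Hilbert scheme and the bridge diagram, independent of whether one is working with $n$ or $n-1$ arcs. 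The hardest step is the stabilization argument, but since it is identical to the unreduced case there is no new analytic input required. Combining these three invariances proves that $\rKhSymp(K,p)$ depends only on the based link $(K,p)$.
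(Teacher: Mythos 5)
There are two genuine gaps in your proposal, both precisely at the points where the paper's own proof departs from a naive transcription of the unreduced argument.

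\textbf{Handleslides.} You assert that ``the analogue of~\cite[Lemma~48]{SeidelSmith6:Kh-symp} (or equivalently Perutz's theorem, cf.\ Lemma~\ref{lemma:handleslides}) applies unchanged in $\rssspace{n}$.'' This is not a citation you can make for free. Lemma~\ref{lemma:handleslides} is a statement about symmetric products of surfaces $\Sym^{g+n}(\Sigma\setminus\ws)$, not about the open subset $\rssspace{n}$ of a Hilbert scheme of points on the affine surface $S$; and Seidel-Smith's Lemma~48 constructs a Hamiltonian isotopy in $\ssspace{n}\subset\Hilb^n(S)$, not in $\rssspace{n}\subset\Hilb^{n-1}(S)$, which is a genuinely different ambient space (different dimension, different symplectic form). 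Whether that isotopy ``restricts'' or has an analogue in the reduced space is exactly the verification the paper declines to carry out; instead the paper proves handleslide invariance by showing directly that $\rCipLag_A$ and $\rCipLag'_A$ are isomorphic objects of the Fukaya category of $\rssspace{n}$, via an explicit count of small triangles between $\Theta_{A,A'}$, $\Theta_{A',A''}$, and $\Theta_{A,A''}$ (with a grading argument and a projected-domain argument to rule out other contributions). Your proposal needs either that argument or a real verification that the Hamiltonian isotopy construction goes through in $\rssspace{n}$.

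\textbf{Stabilization.} You claim stabilization invariance follows ``verbatim'' from the neck-stretching argument of Section~\ref{sec:ssseq-stab-invariance}, run at a point $p_i$ away from the basepoint $p = p_{2n}$. This breaks down when the marked component of $L$ is presented as a one-bridge unknot, i.e.\ consists only of the deleted arcs $A_n$ and $B_n$. In that case every $p_i$ on the marked component is an endpoint of a deleted arc, so there is no place to run the neck-stretching argument away from the basepoint, and one cannot use handleslides over the retained arcs to move the stabilization site off the marked component. The paper treats this with a separate direct argument (Figure~\ref{fig:bad-stab}): after handleslides one arranges that $A_n$ and $B_n$ meet only at endpoints and $p_{2n-1}$ is adjacent to the non-compact region, and then the stabilized complex is seen to be chain-isomorphic to the unstabilized one by inspection, because the two new arcs not containing $p_{2n}$ are adjacent to the unbounded region and hence contribute nothing. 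This case is not covered by your ``verbatim'' claim and is not optional.

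Two smaller remarks: your trick of trading a handleslide of $A_i$ over $A_n$ for handleslides over the other $A$-arcs (using connectedness of $S^2 \setminus A_n$, as in the proof of Theorem~\ref{thm:sar-dcov-invt}) is a legitimate alternative to the paper's choice of verifying that case directly, so that part is fine. Also note the paper proves Theorem~\ref{thm:rKh-invt} with $\ZZ$ coefficients rather than $\Field$; this is inessential to the structure of the argument but affects what ``the top class'' and the triangle counts mean, and your proposal is silent on it.
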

\begin{proof}
 We must check invariance under:
 \begin{enumerate}[label=(\arabic*), ref=\arabic*]
 \item Regular homotopies of the bridge diagram.
 \item\label{item:good-handleslide} Handleslides of the $A_i$ and $B_i$, where we do not handleslide over the deleted curves $A_n$ or $B_n$.
 \item\label{item:bad-handleslide} Handleslides of an $A_i$ across $A_n$ or of a $B_i$ across $B_n$.
 \item Stabilizations of the bridge diagram.
 \end{enumerate}
 
 As usual, invariance under isotopies is easy: isotopies of the $A_i$
 and $B_i$ give isotopies of the Lagrangians $\rCipLag_A$ and
 $\rCipLag_B$.
 
 Handlesliding $A_n$ or $B_n$ over another curve has no effect at all
 on the Lagrangians. So, suppose that $A_i$ and $A'_i$ are related by
 a handleslide over $A_j$, where $i,j<n$: the case of handleslides
 among $B_1,\dots,B_{n-1}$ is symmetric. Choose $A'_i$ so that it
 intersects $A_i$ only at its endpoints (and transversally) and is
 disjoint from $A_k$ for $k\neq i$. For $k\neq i$ let $A'_k$ be a
 small pushoff of $A_k$ intersecting $A_k$ only at its endpoints (and
 transversally).  Let
 $\rCipLag'_A=\Sigma_{A'_1}\times\cdots\times\Sigma_{A'_i}\times\cdots\times
 \Sigma_{A'_{n-1}}$. (See the top row of Figure~\ref{fig:sss-handleslides}.) To prove that
 $\HF(\rCipLag_A,\rCipLag_B)\cong\HF(\rCipLag'_A,\rCipLag_B)$ one
 could verify that the construction of Seidel-Smith's Hamiltonian
 isotopy~\cite[Lemma 48]{SeidelSmith6:Kh-symp} restricts to give a
 Hamiltonian isotopy from $\rCipLag_A$ to $\rCipLag'_A$. Instead,
 however, we will adapt Ozsv\'ath-Szab\'o's handleslide invariance
 proof for Heegaard Floer homology~\cite[Section
 9]{OS04:HolomorphicDisks}, which boils down to showing that
 $\rCipLag_A$ and $\rCipLag'_A$ are isomorphic objects of the Fukaya
 category.

 \begin{figure}
   \centering
   \begin{overpic}[tics=10, scale=1.35]{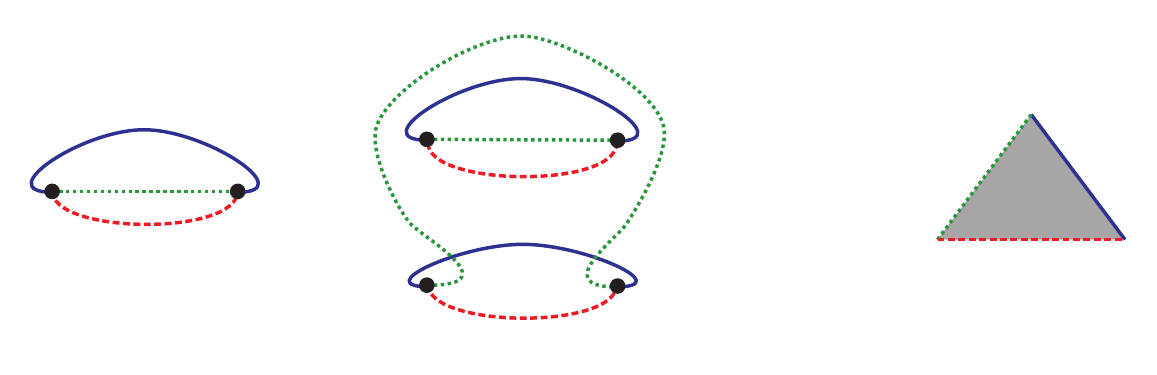}
     \put(25,0){(a)}
     \put(88,0){(b)}
     \put(11,10){\textcolor{red}{$A$}}
     \put(11,17){\textcolor{green}{$A'$}}
     \put(11,22){\textcolor{darkblue}{$A''$}}
     \put(2,13){$\Theta$}
     \put(35,5){$\Theta$}
     \put(50.5,11){$x$}
     \put(35,17.5){$\Theta$}
   \end{overpic}
   \caption{\textbf{A triple diagram for a handleslide.} (a) Part of the
     triple $(A,A',A'')$. $A$ is dashed (red), $A'$ is dotted (green),
     and $A''$ is solid (blue). The $\Theta$s denote $\Theta_{A,A'}$
     or $\Theta_{A,A''}$. (b) Orientation convention for
     triangle maps.}
   \label{fig:triangle-domains}
 \end{figure}

As a relatively graded abelian group, $\CF(\rCipLag_A,\rCipLag'_A)$ is
isomorphic to $H_*(S^2)^{\otimes (n-1)}$. In particular, the
differential on $\CF(\rCipLag_A,\rCipLag'_A)$ must vanish for grading
reasons. For $k\neq i$ let $A''_k$ be a small deformation of
$A_k$, intersecting each of $A_k$ and $A'_k$ only in the endpoints,
and so that $A'_k$ is between $A_k$ and $A''_k$. Choose
$A''_i$ similarly, but here $A''_i$ necessarily intersects $A'_i$ in
two interior points (as well as both endpoints); see Figure~\ref{fig:triangle-domains}.
Let $\rCipLag''_A$ be the corresponding Lagrangian.  The
top-graded generator $\Theta_{A,A''}\in\CF(\rCipLag_A,\rCipLag''_A)$
represents the identity map of $\rCipLag_A$ in the Fukaya
category. We
will show that if $\Theta_{A,A'}\in\CF(\rCipLag_A,\rCipLag'_A)$ is the
top-graded generator and
$\Theta_{A',A''}\in\CF(\rCipLag'_A,\rCipLag''_A)$ is a chain
representing the top-graded homology class then the composition (via triangle maps)
$\Theta_{A',A''}\circ \Theta_{A,A'}=\Theta_{A,A''}$. A symmetric
argument shows that 
the composition the other way is the
identity element of $\rCipLag'_A$, implying that $\Theta_{A,A'}$ and
$\Theta_{A',A''}$ are the desired isomorphisms between $\rCipLag_A$
and $\rCipLag'_A$. 

To see that
$\Theta_{A',A''}\circ \Theta_{A,A'}=\Theta_{A,A''}$, arrange $A$,
$A'$, and $A''$ as in Figure~\ref{fig:triangle-domains}.  In
particular, $\Theta_{A,A'}$ and $\Theta_{A,A''}$ are both the tuple of
left endpoints of the $n$ $A_k$-arcs. The generator $\Theta_{A',A''}$
is represented by a cycle which is the sum of this tuple and some
other tuples, each of which has at least one point projecting to the
intersection marked $x$ in the figure. (See the proof of
Lemma~\ref{lem:ss-fixed-pts-subcx} for a discussion of computing the
gradings of generators.)

The constant triangles at $\Theta_{A,A'}=\Theta_{A,A''}$ are, in fact,
transversely cut out and give one contribution of $\Theta_{A,A''}$ in
$\Theta_{A',A''}\circ \Theta_{A,A'}$. (One way to see this is to note
that the local picture near $\Theta_{A,A'}$ is the same as if $A'$
were a small Hamiltonian-isotopic copy of $A$, and the constant triangles
contribute in the case of a small Hamiltonian isotopy. Alternately,
lift $A$, $A'$, and $A''$ to the branched double cover, perturb away
the triple point, and note that there is a unique small triangle
connecting the three generators.) It remains to see that no other
triangles cancel these. However, we can read off the (projections to
$\CC$ of) corners of
holomorphic triangles from the projected domain, and there is no other
projected domain with non-negative multiplicities and corners at
$\Theta_{A,A'}$, $\Theta_{A,A''}$, and points in
$\Theta_{A',A''}$. Thus, $\Theta_{A',A''}\circ
\Theta_{A,A'}=\Theta_{A,A''}$, as desired.

The same argument works for handlesliding $A_i$ over $A_n$.

 Turning to stabilization invariance, recall that stabilizations of
 the arc diagram occur at one of the $p_i$. Since we have already
 verified isotopy and handleslide invariance, it suffices to
 prove stabilization invariance at one $p_i$ for each link
 component. So, if the link component containing $p_{2n}$ involves at
 least $2$ $A_i$-arcs then it suffices to prove stabilization at a
 $p_i$ which is not an endpoint of $A_n$ or $B_n$. In this case, we
 can perform a sequence of handleslides and isotopies so that
 the $p_i$ is adjacent to the non-compact region in $\RR^2$, and then
 the neck-stretching argument from Section~\ref{sec:ssseq-invariance}
 implies stabilization invariance. 

 \begin{figure}
   \centering
   \includegraphics{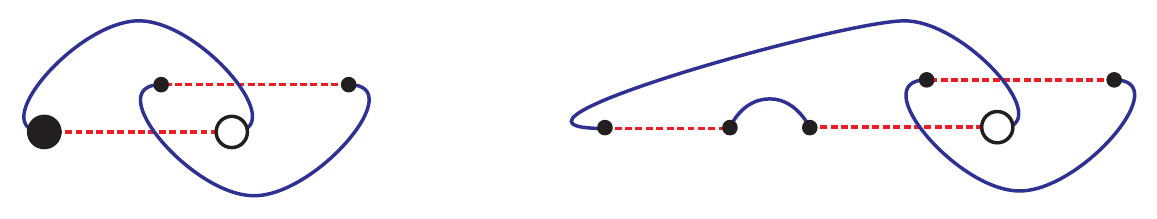}
   \caption{\textbf{Stabilizing when the basepoint is on a one-bridge
       unknot.} The case of the Hopf link is shown, but the un-marked
     components could, of course, be more complicated. The key
     features are that the marked component is a one-bridge unknot and
     the point $p_{2n-1}$, which is drawn as a large black dot, is
     adjacent to the non-compact region in $\CC$. The basepoint $p_{2n}$
     is drawn as a large white dot in both pictures.}
   \label{fig:bad-stab}
 \end{figure}

 The remaining case is that $A_n$ and $B_n$ together form an unknot
 component. Let $p_{2n-1}$ and $p_{2n}$ denote the endpoints of $A_n$
 (and $B_n$); $p_{2n}$ is the basepoint, and we will stabilize at
 $p_{2n-1}$. In this case, perform a sequence of handleslides and
 isotopies so that $p_{2n-1}$ is adjacent to the non-compact
 region and $A_n$ and $B_n$ only intersect at their endpoints (as in
 Figure~\ref{fig:bad-stab}). In the stabilized diagram, the unknot component has two
 $A_i$-arcs and two $B_i$-arcs. Of these four arcs, the two that do
 not contain the basepoint $p_{2n}$ are adjacent to the non-compact
 region. Therefore it easily follows that the chain complex after the
 stabilization is chain isomorphic to the chain complex before the
 stabilization. See Figure~\ref{fig:bad-stab}.
\end{proof}

% \begin{remark}
%   There is also a $D_{2^m}$-equivariant reduced Khovanov homology
%   which, by a combination of the arguments in
%   Sections~\ref{sec:ssseq-invariance} and~\ref{sec:rKh}, is also a
%   knot invariant. The $w_2$-localized $\ZZ/2^m$-equivariant reduced
%   Khovanov homology is a free $w_2^{-1}H^*(\ZZ/2^m)$-module of rank
%   $2^{|L|-1}$.
% \end{remark}

\begin{remark}
  Abouzaid-Smith's
  techniques~\cite{AbouzaidSmith:arc-alg,AbouzaidSmith:KhSympKh}
  presumably also lead to a proof of Theorem~\ref{thm:rKh-invt}.
\end{remark}

\subsection{Some conjectures}\label{sec:ssss-conj}

We start by recalling some earlier conjectures. First, Seidel-Smith
conjectured a relationship between symplectic Khovanov homology and
combinatorial Khovanov homology:
\begin{conjecture}\cite[Conjecture 2]{SeidelSmith6:Kh-symp}
  $\Kh^{\mathit{symp},\ell}(L)\cong\oplus_{i-j=\ell}\Kh^{i,j}(L)$.
\end{conjecture}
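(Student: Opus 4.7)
The plan is to pass through Khovanov's arc algebra $H^n$ as a common algebraic model on both sides. On the combinatorial side, $\Kh(L)$ is computable from a horizontal cut of a bridge diagram, producing complexes of bimodules over $H^n$ which are then paired via a (derived) tensor product; this is the Chen--Khovanov tangle calculus. On the symplectic side, I would aim to show that the full $\Ainf$-subcategory of the Fukaya category of $\ssspace{n}$ generated by the Lagrangians $\CipLag_A$ associated to all crossingless matchings of $\{p_1,\dots,p_{2n}\}$ is quasi-equivalent to (a dg model of) $H^n$. Given such an equivalence, the symplectic Khovanov complex $\CF(\CipLag_A,\CipLag_B)$ would be identified with the Yoneda pairing of the corresponding $H^n$-bimodules, which recovers $\Kh(L)$ up to the bigrading collapse $i-j=\ell$.

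Concretely, the first steps would be: (i) enumerate the generators of $\CipLag_{A_0}\cap\CipLag_{A_1}$ for every pair of crossingless matchings and match them bijectively, with relative gradings, to the generators of $\lsub{a_0}(H^n)_{a_1}$, using Manolescu's grading computation together with Lemma~\ref{lem:Yn-1-conn} to fix the absolute grading; and (ii) using the cylindrical formulation of Section~\ref{sec:ss-cyl}, verify that the triangle product $\HF(\CipLag_{A_0},\CipLag_{A_1})\otimes \HF(\CipLag_{A_1},\CipLag_{A_2})\to \HF(\CipLag_{A_0},\CipLag_{A_2})$ reproduces the multiplication in $H^n$ by counting holomorphic triangles whose projected domains fit standard combinatorial templates; positivity of projected domains and the residual $O(2)$-symmetry constrain these counts substantially. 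Once the underlying algebra is identified, one establishes that the bimodules assigned by both theories to elementary tangles (cups, caps, positive and negative crossings) agree, and then functoriality of the tangle calculus on both sides upgrades this to an isomorphism for arbitrary $L$.

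The hard part will be proving formality of the relevant $\Ainf$-category: the vanishing up to $\Ainf$-equivalence of all higher operations $m_k$ for $k\geq 3$. One natural tactic is to exploit the full circle-subgroup of $O(2)$ rotating $(u,v)\in\CC^2$, which preserves each $\CipLag_A$; this should give rise to a pure Hodge/weight structure on the $\Ainf$-endomorphism algebra of $\bigoplus_A\CipLag_A$ which, combined with a Kaledin-style formality criterion, would force triviality of the higher operations. A complementary tactic, using the machinery of this paper, is to compare spectral sequences: the $D_{2^m}$-equivariant spectral sequences with abutments computed by Theorem~\ref{thm:khsymp-loc} should match the Bar-Natan and Lee spectral sequences on $\Kh(L)$ on the combinatorial side, so that agreement of the $E_\infty$-pages together with the module structure over $H^*(D_{2^m})$ pins down the isomorphism on the nose. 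The principal obstruction in either approach is controlling holomorphic disks in $\ssspace{n}$ concretely enough to extract multiplicative (and higher) information, rather than only the rank of the Floer cohomology, since Manolescu's and Seidel--Smith's identifications of $\KhSymp$ with $\Kh$ in special cases rely on ranks and do not immediately promote to $\Ainf$-statements.
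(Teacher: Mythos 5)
This statement is a \emph{conjecture} in the paper, not a theorem: the paper only recalls Seidel--Smith's Conjecture~2, remarks that Abouzaid--Smith proved it in characteristic zero, and explicitly notes that the positive-characteristic case was open at the time of writing. There is no proof in the paper to compare your proposal against, so the task you were given cannot be carried out as stated.

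That said, a few remarks on your outline. Your first tactic---identifying the $\Ainf$-endomorphism algebra of the crossingless-matching Lagrangians $\bigoplus_A\CipLag_A$ with Khovanov's arc algebra $H^n$, including the higher products, and then invoking the Chen--Khovanov/Bar-Natan tangle calculus---is exactly the Abouzaid--Smith strategy, and the formality step you flag as the hard part is indeed where all the work lives. The circle action on $(u,v)$ inducing a weight grading is also the right intuition; this is how the characteristic-zero formality result is established. So on that front you have correctly identified the shape of the argument, while of course the substance (comparing products by counting triangles, proving formality, controlling the tangle bimodules) is left entirely open in your sketch and constitutes the content of a full paper.

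Your second tactic, however, is circular in the context of this paper. You propose to match the $D_{2^m}$-equivariant spectral sequences abutting (via Theorem~\ref{thm:khsymp-loc}) to $2^{|L|}$ copies of $w_2^{-1}H^*(\ZZ/2^m)$ with the Bar-Natan and Lee spectral sequences on the combinatorial side, and use that to pin down the isomorphism. But the identification of those equivariant Floer spectral sequences with the Bar-Natan/Szab\'o/Sarkar--Seed--Szab\'o deformations is itself raised only as a conjecture or open question in Section~\ref{sec:ssss-conj} of this paper; it depends on (and does not independently establish) Conjecture~2. Finally, note one technical caveat your sketch elides: what you can read off from positivity of projected domains and Lemma~\ref{lem:cyl-Yn} constrains which triangles may contribute, but pinning down the signed/integer counts (to recover the actual arc-algebra multiplication rather than just its rank) requires an honest moduli-space analysis; the residual $O(2)$-symmetry does not by itself determine the triangle counts, since distinct homotopy classes in an $O(2)$-orbit can contribute independently.
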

(The Khovanov differential has bigrading $(1,0)$.) 
Abouzaid-Smith recently proved this conjecture in characteristic
$0$~\cite{AbouzaidSmith:KhSympKh}; at the time of writing, the positive
characteristic case remains open.

Perhaps inspired by Lee's deformation of the Khovanov
complex\cite{Lee05:Khovanov}, Bar-Natan
constructed~\cite{BarNatan05:Kh-tangle-cob} a bigraded chain complex
$\BNCx(L)$ freely generated over $\Field[H]$, with the differential in
bigrading $(1,0)$ and the formal variable $H$ in bigrading $(0,-2)$,
satisfying the following: the chain homotopy type of $\BNCx(L)$ over
$\Field[H]$ is a link invariant; $\BNCx(L)/\{H=0\}=\KhCx(L)$; and the
homology of the localized complex $H^{-1}\BNCx(L)$ is $2^{|L|}$ copies
of $\Field[H,H^{-1}]$. On the symplectic side, if we consider the
action of $\ZZ/2\subset \SO(2)$ on $\ssspace{n}$
then by Theorem~\ref{thm:khsymp-loc},
$\alpha^{-1}\eHF(\CipLag'_A,\CipLag_B)$ is isomorphic to
$2^{|L|}$ copies of $\Field[\alpha,\alpha^{-1}]$. Therefore, the
following seems to be a reasonable conjecture.
\begin{conjecture}
  There is an ungraded isomorphism of $\Field[H]\cong\Field[\alpha]$-modules
  \[
  \eHF(\CipLag'_A,\CipLag_B)\cong H_*(\BNCx(L)).
  \]
\end{conjecture}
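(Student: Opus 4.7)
The plan is to promote Abouzaid--Smith's identification $\KhSymp(L)\cong \Kh(L)$ to an equivariant statement, and then match both sides of the conjecture against a common intermediary given by an $H$-deformed Khovanov-type complex. First, I would try to show that the chain-level $\ZZ/2 \subset \SO(2)$ rotation action on $\CF(\CipLag'_A,\CipLag_B)$ --- or more precisely, its $\tau^{\#}$ involution in the notation of Observation~\ref{obs:concrete-hocolim} --- corresponds, under a suitable model for Abouzaid--Smith's quasi-isomorphism, to a distinguished chain-level endomorphism of $\KhCx(L)$ of homological degree $1$ and quantum degree $-2$. The differential on the freed complex $\ECF$ would then take the schematic form $d_{\Kh} + \alpha(1 + \tau^{\#})$, and one would aim to identify $\alpha(1+\tau^{\#})$ with (conjugate to) Bar--Natan's $H$-deformation of the Khovanov differential.

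The second step is to check that the resulting candidate isomorphism $\eHF(\CipLag'_A,\CipLag_B) \cong H_*(\BNCx(L))$ respects the $\Field[\alpha] \cong \Field[H]$-module structure. For this I would set up a skein long exact sequence for $\eHF(\CipLag'_A,\CipLag_B)$ analogous to the one used in the proofs of Theorems~\ref{thm:sar-dcov-invt} and~\ref{thm:KC-equi-invt}, arising from the unoriented resolutions of a crossing, and verify that it matches the skein triangle on $H_*(\BNCx(L))$ both as $\Field[\alpha]$-modules and in the normalization on the unknot. Combined with the localization theorem (Theorem~\ref{thm:khsymp-loc}), which forces both sides to become $2^{|L|}$ copies of $\Field[\alpha,\alpha^{-1}]$ after inverting $\alpha$, a Mayer--Vietoris-type induction on crossings should propagate the isomorphism from tangles with one crossing to all links.

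An alternative, more direct approach would be to use Manolescu's Heegaard-diagrammatic description of $(\ssspace{n}, \CipLag_A, \CipLag_B)$ via the surface $S$, and to argue geometrically that the $\ZZ/2$-fixed set of rotation --- a copy of $\Hilb^n$ of a sub-Heegaard-surface together with a discrete component --- accounts precisely for the $2^{|L|}$ canonical generators of Lee/Bar--Natan homology indexed by orientations (as in the proof of Lemma~\ref{lem:cob-or}). One would then hope, as in Section~\ref{sec:new-dcov}, to run the proof of invariance of the $\ZZ/2$-action through bridge moves and match the resulting cobordism maps with Bar--Natan's TQFT, at least up to unoriented isomorphism.

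The main obstacle I expect is Step 1: establishing a chain-level identification of the geometric involution $\tau^{\#}$ on $\CF(\CipLag'_A, \CipLag_B)$ with an explicit combinatorial endomorphism of $\KhCx(L)$. Abouzaid--Smith's equivalence is proved by a degeneration of the Fukaya category and is a priori only defined up to a large chain of quasi-isomorphisms, so pinning down the image of the rotation action requires either a strict model for their functor or an axiomatic characterization of $H$-type deformations strong enough to force uniqueness. A secondary obstacle is that Abouzaid--Smith's theorem is currently available only in characteristic zero, whereas the equivariant construction of Section~\ref{sec:ssseq} uses $\Field = \Field_2$; closing this gap, or constructing an ad hoc characteristic-two identification in low crossing number and extending by skein, seems unavoidable.
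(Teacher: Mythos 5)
The statement you were asked to prove is explicitly labeled a \emph{conjecture} in the paper; the authors offer no proof, only the circumstantial evidence from Theorem~\ref{thm:khsymp-loc} (that the $\alpha$-localized equivariant cohomology has the correct rank $2^{|L|}$) and the formal parallel between the $H^*(\ZZ/2)\cong\Field[\alpha]$-module structure and Bar-Natan's $\Field[H]$-module structure. There is therefore no paper proof to compare your proposal against, and your outline --- while reasonable as a research program --- does not close the problem either, as you yourself acknowledge.

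Two comments on the substance. First, your principal obstacle is exactly the one lurking behind the paper's phrasing: Abouzaid--Smith's equivalence $\KhSymp\cong\Kh$ is known only over fields of characteristic zero, whereas the equivariant constructions of Section~\ref{sec:ssseq} are carried out over $\Field_2$; and even over $\CC$ the equivalence arises from a zig-zag of $A_\infty$-quasi-equivalences rather than a preferred strict functor, so ``pushing forward'' the chain-level involution $\sigma^{\#}$ to a specific endomorphism of $\KhCx(L)$ is indeed the hard point, not a formality. Second, your ``alternative, more direct approach'' misidentifies the fixed locus: for the rotation-by-$\pi$ action $\sigma(z,u,v)=(z,-u,-v)$, the fixed set in $S$ is the $2n$ discrete branch points $\{(p_i,0,0)\}$ and contains no sub-Heegaard-surface; the Heegaard-surface fixed locus is the fixed set of the \emph{reflection} $\tau(z,u,v)=(z,u,-v)$ and is relevant to the Seidel--Smith--Szab\'o conjecture, not to this Bar-Natan one. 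The discrete fixed set does supply the $2^{|L|}$ Lee/Bar-Natan-type generators (cf.\ Lemma~\ref{lem:ss-fixed-pts-subcx}), but that observation only recovers the localized isomorphism already proved in Theorem~\ref{thm:khsymp-loc}, not the unlocalized isomorphism the conjecture asserts.
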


\begin{question}
  Is there a refinement of the above conjecture relating the single
  grading of $\eHF(\CipLag'_A,\CipLag_B)$ and the
  bigrading of $\BNCx(L)$?
\end{question}

On a parallel front, Szab\'o constructed a bigraded chain complex
$\SzCx(L)$ freely generated over $\Field[W]$, with the differential in
bigrading $(1,0)$ and the formal variable $W$ in bigrading $(-1,-2)$,
so that $\KhCx(L)=\SzCx(L)/(W=0)$, and the chain homotopy type of
$\SzCx(L)$ over $\Field[W]$ is a link
invariant~\cite{Szabo:geometric-ss}. Moreover, he conjectured that the
chain complex $\SzCx(L)$ should be chain homotopy equivalent to a
similar looking chain complex constructed earlier by
Ozsv\'ath-Szab\'o~\cite{BrDCov} via holomorphic curves. On the
symplectic side, Seidel-Smith used the $\ZZ/2=\{1,\tau\}$ action
coming from $\tau(u,v)=(u,-v)$ (Equation~\eqref{eq:dm-action-on-uv})
to construct the $\ZZ/2$-equivariant symplectic Khovanov cohomology
$\HF_{\{1,\tau\}}(\CipLag'_A,\CipLag_B)$ as a module over $H^*(\ZZ/2)$
($=\Field[W]$ say), and propose~\cite[Section 4.5]{SeidelSmith10:localization} that the resulting spectral
sequence from $\KhSymp(L)$ may be isomorphic to the Ozsv\'ath-Szab\'o
spectral sequence
$\Kh(L)\Rightarrow \HFa\bigl(\Sigma(m(L))\#(S^2\times S^1)\bigr)$~\cite{BrDCov}. Combining the two
conjectures, we get:
\begin{conjecture}[Seidel-Smith-Szab\'o]
  As modules over $\Field[W]$,
  $\HF_{\{1,\tau\}}(\CipLag'_A,\CipLag_B)$ is isomorphic to the
  homology of $\SzCx(L)$ after collapsing the bigrading $(i,j)$ into a
  single grading $i-j$.
\end{conjecture}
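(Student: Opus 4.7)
The plan is to identify $\HF_{\{1,\tau\}}(\CipLag'_A,\CipLag_B)$ with the Ozsv\'ath-Szab\'o holomorphic-curve complex from~\cite{BrDCov} underlying their spectral sequence from $\Kh(L)$ to $\HFa(\Sigma(m(L))\#(S^2\times S^1))$, and then invoke Szab\'o's conjecture that $\SzCx(L)$ is chain homotopy equivalent to that complex.

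First I would analyze the fixed set of $\tau$ on $S$. Since $\tau(z,u,v)=(z,u,-v)$, the fixed surface $S^\tau=\{u^2+p(z)=0\}$ is the hyperelliptic Riemann surface realized as the branched double cover of $\CC$ along $\{p_1,\dots,p_{2n}\}$. The Lagrangian spheres $\Sigma_{A_i}$ and $\Sigma_{B_i}$ meet $S^\tau$ in the preimages $\pi^{-1}(A_i)$ and $\pi^{-1}(B_i)$, which are exactly the Heegaard circles used by Seidel-Smith and Manolescu to pass from symplectic Khovanov homology to branched-double-cover Heegaard Floer homology. I would verify Hypothesis~\ref{hyp:equivariant-transversality} via the index-doubling condition~\ref{item:ET:index-double} from Section~\ref{sec:equi-equi}, which amounts to a Riemann-Roch computation for a $\ZZ/2$-equivariant Cauchy-Riemann operator over the branched double cover. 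Granted this, Proposition~\ref{prop:equi-is-equi} lets us compute $\HF_{\{1,\tau\}}(\CipLag'_A,\CipLag_B)$ from the bicomplex~\eqref{eq:equiequi} using a $\tau$-invariant almost complex structure of the form $\Hilb^n(j)$ arising from a Manolescu cylinder; Seidel-Smith already sketched the localization $\theta^{-1}\HF_{\{1,\tau\}}\cong \HFa(\Sigma(L)\#(S^2\times S^1))\otimes\Field[\theta,\theta^{-1}]$ in~\cite[Section 4.4]{SeidelSmith10:localization}.

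The heart of the proof is then a chain-level identification of this equivariant complex with Ozsv\'ath-Szab\'o's branched-double-cover complex, realized as a filtered iterated mapping cone over the cube of complete resolutions of $L$. I would use a neck-stretching argument along cylinders separating the arcs of the bridge diagram: degenerating a $\tau$-equivariant holomorphic disk in $\ssspace{n}$ should produce, for each edge of the resolution cube, a count of triangles matching Ozsv\'ath-Szab\'o's triangle map, while the higher equivariant operations coming from the $\Ainf$-action of $H^*(\ZZ/2)$ on $\HF_{\{1,\tau\}}$ match their iterated composition of cobordism maps. Concretely, one wants to choose a $\tau$-equivariant family of almost complex structures in the cylindrical formulation of Section~\ref{sec:ss-cyl} for which the required fibered product arguments run as in Section~\ref{sec:ssseq-stab-invariance}, and to check the correspondence on a generating set of intersection points arising from a sufficiently twisted bridge diagram in the sense of Lemma~\ref{lem:ss-fixed-pts-subcx}.

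The main obstacle is this middle step. Matching the iterated mapping cone structures requires not just identifying each page of the spectral sequences, but also tracking the higher equivariant operations and the resolution-cube filtration compatibly; this is significantly more delicate than the localization isomorphism. In addition, the argument routes through the (still conjectural) Szab\'o-Ozsv\'ath-Szab\'o equivalence. A cleaner alternative would be to bypass this route by combinatorializing $\HF_{\{1,\tau\}}$ via a nice bridge diagram analogous to Corollary~\ref{cor:Hen-nice}, and directly matching the resulting combinatorial count against Szab\'o's configurations on the cube of resolutions; this would attack the Seidel-Smith-Szab\'o conjecture in one step, but developing a nice-diagram framework for symplectic Khovanov homology remains a nontrivial open problem.
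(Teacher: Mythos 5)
This is stated in the paper as a \emph{conjecture}, not a theorem: the paper offers no proof, and indeed explicitly notes that the statement is obtained by ``combining the two conjectures'' — Seidel-Smith's proposal that their equivariant spectral sequence agrees with the Ozsv\'ath-Szab\'o spectral sequence $\Kh(L)\Rightarrow\HFa(\Sigma(m(L))\#(S^2\times S^1))$, and Szab\'o's conjecture that $\SzCx(L)$ is chain homotopy equivalent to the Ozsv\'ath-Szab\'o complex from~\cite{BrDCov}. So there is no proof in the paper to compare your proposal against, and your proposal itself does not — and cannot, as currently structured — constitute a proof: you explicitly route through the still-open Szab\'o--Ozsv\'ath-Szab\'o equivalence, and the ``heart of the proof'' (the chain-level identification of the equivariant symplectic Khovanov complex with the Ozsv\'ath-Szab\'o iterated mapping cone, tracking both the cube filtration and the higher equivariant $\Ainf$-operations) is itself a major open problem you are describing rather than carrying out.

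That said, as a strategy sketch your preliminary observations are sound. The fixed set of $\tau(z,u,v)=(z,u,-v)$ on $S$ is the hyperelliptic curve $\{u^2+p(z)=0\}$, and the Lagrangian spheres do meet it in the Heegaard circles for $\Sigma(L)$; this is exactly the geometric link Seidel-Smith exploit. Verifying equivariant transversality via Condition~\ref{item:ET:index-double} (index doubling along the fixed locus) is the right way to make Proposition~\ref{prop:equi-is-equi} applicable, so that $\HF_{\{1,\tau\}}$ can be computed from the explicit bicomplex~\eqref{eq:equiequi} with $\tau$-invariant complex structures. Your closing suggestion — combinatorializing via a nice bridge diagram and matching directly against Szab\'o's configurations, bypassing~\cite{BrDCov} — is the more self-contained route and is worth pursuing, but as you yourself note, a nice-diagram framework for symplectic Khovanov homology does not yet exist. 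In short: the proposal is a reasonable research program but not a proof, which is consistent with the paper leaving the statement as a conjecture.
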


Finally, Sarkar-Seed-Szab\'o combined $\BNCx(L)$ and $\SzCx(L)$ into
a single bigraded chain complex $\totCx(L)$, freely generated over a
polynomial ring $\Field[H,W]$, with the differential in
bigrading $(1,0)$, and the formal variables $H$ and $W$ in bigradings
$(0,-2)$ and $(-1,-2)$, respectively~\cite{SSS:spectrals}. The construction satisfies the
following properties (listed in \cite[Section~1]{SSS:spectrals}):
\begin{itemize}[leftmargin=*]
\item The chain homotopy type of $\totCx(L)$ over $\Field[H,W]$ is a
  link invariant.
\item $\BNCx(L)=\totCx(L)/(W=0)$ and $\SzCx(L)=\totCx(L)/(H=0)$;
  in particular, $\KhCx(L)=\totCx(L)/(H=W=0)$.
\item The homology of $H^{-1}\mathcal{C}_{\mathit{tot}}$ is
  $2^{|L|}$ copies of $\Field[H,H^{-1},W]$.  
\end{itemize}
On the symplectic side, there is an $O(2)$-action, and we may restrict
to dihedral group $D_{2^m}$-action, as described in
Equation~\eqref{eq:dm-action-on-uv}; in particular, we can look at the
$\ZZ/2\times\ZZ/2=\{1,\sigma,\tau,\sigma\tau\}$ action combining the
above two actions. 
\begin{question}
  Is the $D_{2^m}$-equivariant symplectic Khovanov cohomology
  $\HF_{D_{2^m}}(\CipLag'_A,\CipLag_B)$ (in particular, the
  $\ZZ/2\times\ZZ/2$-equivariant Floer cohomology) related to
  $\totCx(L)$? If so, how are the module structures (over
  $H^*(D_{2^m})$ and $\Field[H,W]$, respectively) related? How do
  the single grading on the symplectic side and the bigrading on the
  combinatorial side interact?
\end{question}

%%% Local Variables: 
%%% mode: latex
%%% TeX-master: "HEquivariant.tex"
%%% End: 

\appendix
\section{Instructive Examples}\label{app:examples}
The examples in this section are intended to help the reader
unfamiliar with homotopy coherence to come to terms with the notions
used in this paper. Vogt's original paper~\cite{Vogt73:hocolim} and
the Kamps-Porter's book~\cite{KampsPorter97:abstract} are also
pleasantly written, and the reader is encouraged to consult them for
more information.

We start with examples illustrating homotopy coherent diagrams for almost
complex structures and complexes,
Definitions~\ref{def:ho-coh-diag-strs} and~\ref{def:ho-coh-diag-cxs},
respectively.

\begin{example}\label{eg:coherent-diag}
  The first few terms of a $\ECat\ZZ/2$-diagram in $\ol{\JSpace}$ consist of:
	  \begin{enumerate}
  \item Two cylindrical almost complex structures $F(a)= J_{a}$ and $F(b)=J_{b}$.
  \item Two (equivalence classes of sequences of) eventually cylindrical complex structures $F(\alpha) = \wt{J}_{\alpha} \in \ol{\JSpace}(J_a,J_b)$, and $F(\beta) = \wt{J}_{\beta} \in \ol{\JSpace}(J_b, J_a)$, and two others associated to the identity maps and equal to $J_{a}$ and $J_{b}$.
  \item Two smooth families of eventually cylindrical complex structures indexed by $t \in [0,1]$:
  	\begin{align*}
  	F(\alpha, \beta)&= \widetilde{J}_{(\alpha, \beta, t)} \in \ol{\JSpace}(J_b, J_b) & 
  	F(\beta, \alpha) &= \widetilde{J}_{(\beta, \alpha, t)} \in \ol{\JSpace}(J_a, J_a)\\
        \shortintertext{which satisfy:}
	\wt{J}_{(\alpha, \beta, 1)} &= \wt{J}_{\alpha \circ \beta} = \wt{J}_{\Id_b} = J_b &
	\wt{J}_{(\alpha, \beta, 0)} &= \wt{J}_{\alpha} \circ \wt{J}_{\beta} = (\wt{J}_{\beta} , \wt{J}_{\alpha})\\
	\wt{J}_{(\beta, \alpha, 1)} &= \wt{J}_{\beta \circ \alpha} = \wt{J}_{\Id_a} = J_a &
	\wt{J}_{(\beta, \alpha, 0)} &= \wt{J}_{\beta} \circ \wt{J}_{\alpha}=(\wt{J}_{\alpha} , \wt{J}_{\beta}).
  	\end{align*}
        and six other $[0,1]$-families of almost complex structures determined by the data of the first two steps.	

\item Two smooth families of cylindrical-at-infinity complex structures indexed by $(t_1,t_2) \in [0,1]^2$:
	\begin{align*}
	F(\alpha, \beta, \alpha) &= \wt{J}_{(\alpha, \beta, \alpha, t_1, t_2)} \in \ol{\JSpace}(J_a, J_a) &
	F(\beta, \alpha, \beta) &= \wt{J}_{(\beta, \alpha, \beta, t_1, t_2)} \in \ol{\JSpace}(J_b, J_b)\\
        \shortintertext{which satisfy:}
	\wt{J}_{(\alpha, \beta, \alpha, 1,t_2)} &= \wt{J}_{(\alpha, \beta \circ \alpha, t_2)} = \wt{J}_{(\alpha, \Id_a, t_2)} = \wt{J}_{\alpha} &
	\wt{J}_{(\alpha, \beta, \alpha, t_1, 1)} &= \wt{J}_{(\alpha \circ \beta, \alpha, t_1)} = \wt{J}_{(\Id_b, \alpha, t_1)} = \wt{J}_{\alpha}\\
	\wt{J}_{(\alpha, \beta, \alpha, 0,t_2)} &= (\wt{J}_\alpha, \wt{J}_{(\alpha,\beta,t_2)}) &
	\wt{J}_{(\alpha, \beta, \alpha, t_1, 0)} &= (\wt{J}_{(\beta, \alpha, t_2)}, \wt{J}_\alpha) \\
	\wt{J}_{(\beta, \alpha, \beta, 1, t_2)} &= \wt{J}_{(\beta, \alpha \circ \beta, t_2)} = \wt{J}_{(\beta, \Id_b, t_2)} = \wt{J}_{\beta} &
	\wt{J}_{(\beta, \alpha, \beta, t_1, 1)}&=\wt{J}_{(\beta \circ \alpha, \beta, t_1)} = \wt{J}_{(\Id_a, \beta, t_1)} = \wt{J}_{\beta}\\
	\wt{J}_{(\beta, \alpha, \beta, 0, t_2)} &= (\wt{J}_{\beta}, \wt{J}_{(\beta, \alpha, t_2)}) &
	\wt{J}_{(\beta, \alpha, \beta, t_1, 0)} &= (\wt{J}_{(\alpha, \beta, t_2)}, \wt{J}_{\beta})
	\end{align*}
	and fourteen other $[0,1]^2$-families of almost complex structures determined by the data of the first three steps.
	\end{enumerate}
\end{example}

\begin{example}\label{eg:coherent-diag-complexes}
  The first few terms of a homotopy coherent $\ECat\ZZ/2$-diagram in $\Complexes$ consist of:
	\begin{enumerate} 
	\item Two chain complexes $G(a)$ and $G(b)$ over $\Field$.
	\item Two chain maps $G_{\alpha} \co G(a) \rightarrow G(b)$ and $G_{\beta}\co G(b) \rightarrow G(a).$
	\item Two maps
	\begin{align*}
	G_{\beta, \alpha}\co G(a) \rightarrow G(a) \ \ \ \ & \ \ \
	G_{\alpha, \beta}\co G(b) \rightarrow G(b)
  \shortintertext{which satisfy:}
  \partial \circ G_{\beta, \alpha} = G_{\beta, \alpha}\circ \partial + G_{\beta} \circ G_{\alpha} +\Id_{G(a)} \ \ \ \ & \ \ \
  \partial \circ G_{\alpha, \beta} = G_{\alpha, \beta}\circ \partial + G_{\alpha} \circ G_{\beta} + \Id_{G(b)}.\end{align*}

\item Two maps 
\begin{align*}
	G_{\alpha, \beta, \alpha} \co G(a) \rightarrow G(a) \ \ \ \ \ \
	G_{\beta, \alpha, \beta} \co G(b) \rightarrow G(b)
  \shortintertext{which satisfy:}
	\partial \circ G_{\alpha, \beta, \alpha} = G_{\alpha, \beta, \alpha} \circ \partial + G_{\alpha,\beta}\circ G_{\alpha} + G_{\alpha} \circ G_{\beta, \alpha}\\
	\partial \circ G_{\beta, \alpha, \beta} = G_{\beta, \alpha, \beta} \circ \partial + G_{\beta, \alpha}\circ G_{\beta} + G_{\beta} \circ G_{\alpha,\beta}.\end{align*}
\end{enumerate}  
\end{example}

The following example illustrates homotopy colimits,
Definition~\ref{def:hocolim}. In particular, we observe that a notion
of iterated mapping cones defined using homotopy colimits agrees with
a more direct definition in terms of complexes.
\begin{example}\label{eg:hocolim}
Let $\Cat$ be the category
\[
\Cat=\vcenter{\hbox{\xymatrix{\bullet\ar[d]_{f_2} & \bullet \ar[l]_{f_1}\ar[d]^{f_3}\\
  \bullet & \bullet\ar[l]^{f_4}}}}\big/(f_2 \circ f_1 = f_4 \circ f_3).
\]
A homotopy coherent $\Cat$-diagram $G\co\Cat\to\Complexes$
induces a diagram
\begin{equation}
\vcenter{\hbox{\xymatrix@=4em{
  C_{01}\ar[d]_{G(f_2)} & C_{11}\ar[l]_{G(f_1)}\ar[d]^{G(f_3)}\ar[dl]\\
  C_{00} & C_{10}\ar[l]^{G(f_4)}
}}}
\label{square}
\end{equation}
where the diagonal map is $G_{f_2,f_1} + G_{f_4,f_3}$. Since $G_{f_2,f_1}$
is a homotopy between $G(f_2)\circ G(f_1)$ and $G(f_2 \circ f_1)$ and
$G_{f_4, f_3}$ is a homotopy between $G(f_4 \circ f_3)$ and $G(f_4)
\circ G(f_3)$, the diagonal map is a homotopy between $G(f_2) \circ
G(f_1)$ and $G(f_4) \circ G(f_3)$.

Let $\Cat_+$ be the result of adding a single object $*$ to $\Cat$ and
a morphism from each non-terminal vertex of $\Cat$ to $0$. The
homotopy coherent diagram $G$ extends uniquely to a homotopy coherent
diagram $G_+\co \Cat_+\to \Complexes$ with $G_+(*)=0$ the zero
complex. We label the arrows as:
\begin{equation}
\label{squarecomplex3}
\vcenter{\hbox{\xymatrix@=2em{
  C_{01}\ar[dd]_{G(f_2)} \ar@/^4pc/[rrrrd]^{G(f_6)}& & C_{11}\ar[ll]_{G(f_1)}\ar[dd]^{G(f_3)}\ar[ddll]\ar[drr]^{G(f_5)}\\
  & & & & 0\\
  C_{00} & & C_{10}\ar[ll]^{G(f_4)} \ar[urr]_{G(f_7)} & & \ \ \ .
}}}
\end{equation}  

We will show that the homotopy colimit of $G_+$ is quasi-isomorphic to the
total mapping cone of the diagram~(\ref{square}).
  
First, let us consider the general form of some basic differentials in
the homotopy colimit. Let $f:a\rightarrow b$ be a morphism in the
original category $\Cat$. Let $x \in G(a)$. Then
\begin{align*}
\partial(G(f);\{0,1\};x)&= (G(f);\{0,1\}; \partial x) + (G(f);\{1\};x) + (G(f);\{0\};x)  \\
  			  &= (G(f); \{0,1\}; \partial x) + 	x + G(f)(x).
\end{align*}  
Note that if $G(b)=0$ then $G(f)(x)=0$ automatically, so we have $d(G(f);\{0,1\};x)= (G(f); \{0,1\}; \partial x) + x$. Schematically, we draw the following picture:
\[
\xymatrix{
  & (G(f);G(a))\ar[dl]\ar[dr]^{G(f)} &\\
  G(a) & & G(b).
}
\]
Next, let $g:b\rightarrow c$ be another morphism composable with $f$. Then we have
\begin{align*}
\partial(G(g,f); \{0,1\}\otimes \{0,1\}; x) = (G(g,f);&  \{0,1\}\otimes \{0,1\}; \partial x)  + (G(f);  \{0,1\}; x) \\&+ (G(g\circ f); \{0,1\}; x)+ (G(g); \{0,1\}; G(f)(x)) \\ &+ G_{g,f}(x).
\end{align*}  
Notice that if $G(b)=0$, then $G(g,f)(x)=0$ and the last term
vanishes. (However, importantly, $(G(g\circ f); \{0,1\}; x)$ may be nonzero,  even though $g \circ f$ is a zero map.) Schematically we draw the following
picture:
\[
\xymatrix{
  & (G(g,f);G(a))\ar[dl]\ar[d]^{G_{g,f}}\ar[dr]^{G(f)} & \\
  (G(f);G(a)) & G(b) & (G(g);G(b)).
}
\]
  
Now let us look at the homotopy colimit of the homotopy-coherent diagram (\ref{squarecomplex3}). As a vector space, it is the direct sum of the following sixteen terms.

\begin{itemize}
\item The four chain complexes $C_{11}, C_{01}, C_{10}$, and $C_{00}$.
\item Terms corresponding to the eight morphisms in the diagram, to wit,
	\begin{align*}
	&(G(f_1); \{0,1\}; C_{11}) & 
	&(G(f_2); \{0,1\}; C_{01}) & 
	&(G(f_3); \{0,1\}; C_{11}) &
	&(G(f_4); \{0,1\}; C_{10}) \\
	&(G(f_5); \{0,1\}; C_{11}) &
	&(G(f_6); \{0,1\}; C_{01}) &
	&(G(f_7); \{0,1\}; C_{10}) &
	&(G(f_2,f_1); \{0,1\}; C_{11}).
	\end{align*}
\item Four terms corresponding to pairs of composable morphisms, to wit,
	\begin{align*}
	&(G(f_2; f_1); \{0,1\}\otimes \{0,1\}; C_{11}) &
	&(G(f_4; f_3); \{0,1\}\otimes \{0,1\}; C_{11}) \\
	&(G(f_6; f_1); \{0,1\}\otimes \{0,1\}; C_{11}) &
	&(G(f_7; f_3); \{0,1\}\otimes \{0,1\}; C_{11}).
	\end{align*}
\end{itemize}

From here on, no terms in $I_*^{\otimes n}$ will appear other than $\{0,1\}$ and $\{0,1\}\otimes \{0,1\}$, and which is appropriate will be clear from context. So, in the interests of readability we will omit these terms. The differentials on the homotopy colimit are as follows:
\begin{align*}
&\partial(G(f_2,f_1); x) = (G(f_2,f_1);\partial x) + (G(f_1);x) + (G(f_2 \circ f_1); x) + (G(f_2);G(f_1)x) + G_{f_2,f_1}x \\
&\partial(G(f_4,f_3);x) = (G(f_4,f_3);\partial x) + (G(f_3);x) + (G(f_2 \circ f_1);x) + (G(f_4); G(f_3)x) + G_{f_4,f_3}x \\
&\partial(G(f_6,f_1);) = (G(f_6,f_1); \partial x) + (G(f_5); x) + (G(f_1); x) + (G(f_6); G(f_1)x) \\
&\partial(G(f_7,f_3);x) = (G(f_7, f_3; \partial x) + (G(f_5);x)+ (G(f_3); x) + (G(f_6);G(f_3)x)\\ 
&\partial (G(f_2 \circ f_1); x) = (G(f_2 \circ f_1);\partial x) + x + G(f_2\circ f_1)x \\
&\partial (G(f_1); x) = (G(f_1); \partial x) + x + G(f_1)x \\
&\partial (G(f_2); x) = (G(f_2);\partial x) + x + G(f_2)x \\
&\partial (G(f_3); x) = (G(f_3);\partial x) + x + G(f_3)x \\
&\partial (G(f_4);x) = (G(f_4);\partial x) + x + G(f_4)x \\
&\partial (G(f_5);x) = (G(f_5);\partial x) + x \\
&\partial (G(f_6);x) = (G(f_6); \partial x) +x \\
&\partial (G(f_7;x) = (G(f_7); \partial x) + x 
\end{align*}  
  
We will break the total complex into seven subcomplexes, six of which are acyclic and one of which is the mapping cone. The six trivial subcomplexes are split off by edge reductions along blank arrows in the homotopy colimit expressed using the diagrammatic conventions explained above; the interested reader is encouraged to draw the diagram and follow along.

Our first subcomplex $C_1$ has a basis consisting of all elements 
\begin{align*}
&a^1_x = (G(f_6,f_1); x) \\ 
&b^1_x =(G(f_5); x) + G(f_1); x) + (G(f_6); G(f_1)x) + (G(f_6,f_1); \partial x)
\end{align*}
for $x$ an element of $C_{11}$. Then the differential $\partial$ on this complex is $\partial(a^1_x)=b^1_x$ and $\partial(b^1_x) = 0$.

Our second subcomplex $C_2$ has a basis consisting of all elements 
\begin{align*}
&a^2_x = (G(f_7, f_3); x) + (G(f_6,f_1);x) \\
&b^2_x = (G(f_1);x) + (G(f_3);x) + (G(f_6); G(f_1)x) \\ &\ \ \ \ + (G(f_7); G(f_3)x)+ (G(f_7, f_3); \partial x) + (G(f_6,f_1); \partial x)
\end{align*} 
for $x \in C_{11}$. As previously, $\partial(a^2_x)=b^2_x$ and $\partial(b^2_x) = 0$.

Our third subcomplex $C_3$ has basis consisting of all elements
\begin{align*}
&a^3_x = (G(f_2,f_1); x) + (G(f_7, f_3); x) + (G(f_6,f_1);x) \\
&b^3_x = (G(f_3);x) + (G(f_2 \circ f_1); x) + (G(f_6);G(f_1)x) \\ &\ \ \ \ + (G(f_2);G(f_1)x)+ (G(f_7);G(f_3)x)+ G_{f_2, f_1}x  \\ &\ \ \ \ + (G(f_2,f_1); \partial x)+ (G(f_7, f_3); \partial x) + (G(f_6,f_1); \partial x)
\end{align*}
for $x \in C_{11}$. As previously, $\partial(a^3_x)=b^3_x$ and $\partial(b^3_x) = 0$.

Our fourth subcomplex $C_4$ has a basis consisting of all elements 
\begin{align*}
&a^4_x = (G(f_2 \circ f_1); x) \\
&b^4_x = x + G(f_2 \circ f_1)x + (G(f_2 \circ f_1); \partial x)
\end{align*}
for $x \in C_{11}$. As previously, $\partial(a^4_x)=b^4_x$ and $\partial(b^4_x) = 0$.

Our fifth subcomplex $C_5$ has a basis consisting of all elements 
\begin{align*}
&a^5_x = (G(f_6); x) \\
&b^5_x = x + (G(f_6); \partial x)
\end{align*}
for $x \in C_{01}$. As previously, $\partial(a^5_x)=b^5_x$ and $\partial(b^5_x) = 0$.

Our sixth subcomplex $C_6$ has a basis consisting of all elements 
\begin{align*}
&a^6_x = (G(f_7); x) \\
&b^6_x = x + (G(f_7); \partial x)
\end{align*}
for $x \in C_{10}$. As previously, $\partial(a^6_x)=b^6_x$ and $\partial(b^6_x) = 0$.

Finally we come to the subcomplex remaining after splitting all six of these acyclic complexes. $C_7$ has a basis consisting of elements
\begin{align*}
&a^7_x = (G(f_4,f_3); x) + (G(f_2,f_1); x) + (G(f_7, f_3); x) + (G(f_6,f_1);x), \ \ x \in C_{11} \\
&b^7_x = (G(f_6);x) + (G(f_2); x), \ \ x \in C_{01} \\
&c^7_x = (G(f_4);x) + (G(f_7);x), \ \ x \in C_{10} \\
&e^7_x = x, \ \ x \in C_{00}.
\end{align*}
with differentials
\begin{align*}
&\partial a^7_x = a^7_{\partial x} + b^7_{G(f_1)x} + c^7_{G(f_3)x}+e^7_{(G_{f_2,f_1}+G_{f_4,f_3})x} \\
&\partial b^7_x = b^7_{\partial x} + e^7_{G(f_2)x} \\
&c^7_x = c^7_{\partial x} + d^7_{G(f_4)x}  \\
&e^7_x = e^7_{\partial x}
\end{align*}

This is the mapping cone on (\ref{square}). Using the previous diagrammatic conventions, it is the complex shown here.  
  \[
  \begin{tikzpicture}
    \node at (0,0) (top) {$(G(f_6,f_1);C_{11})+(G(f_7,f_3);C_{11})+(G(f_2,f_1);C_{11})+(G(f_4,f_3);C_{11})$};
    \node at (-5.5,-2) (centl) {$(G(f_6);C_{01})+(G(f_2);C_{01})$};
    \node at (5.5,-2) (centr) {$(G(f_4);C_{10})+(G(f_7);C_{10})$};
    \node at (0,-4) (bot) {$C_{00}$};
    \draw[->] (top) to node[above, sloped]{\lab{G(f_1)}} (centl);
    \draw[->] (top) to node[above, sloped]{\lab{G(f_3)}} (centr);
    \draw[->] (centl) to node[below, sloped]{\lab{G(f_2)}} (bot);
    \draw[->] (centr) to node[below,sloped]{\lab{G(f_4)}} (bot);
    \draw[->] (top) to node[right]{\lab{G_{f_2,f_1}+G_{f_4,f_3}}} (bot);
  \end{tikzpicture}
  \]
\end{example}

%%% Local Variables: 
%%% mode: latex
%%% TeX-master: "HEquivariant.tex"
%%% End: 

\bibliographystyle{hamsalpha}
\bibliography{heegaardfloer}
\end{document}

%%% Local Variables: 
%%% mode: latex
%%% TeX-master: t
%%% End: 